\documentclass[a4paper,11pt]{article}

\usepackage{graphicx, rotating}
\usepackage{subfigure}
\usepackage[justification=centering]{caption}

\usepackage[utf8]{inputenc}
\usepackage[toc]{appendix}
\usepackage{twoopt}

\usepackage{amssymb}
\usepackage{amsthm}
\usepackage{amsmath,a4wide}
\numberwithin{equation}{section}
\usepackage{hyperref}
\usepackage{mathtools}
\usepackage{amsfonts}

\usepackage{authblk}

\usepackage{wrapfig}
\usepackage{graphicx, rotating}
\usepackage{subfigure}
\usepackage[justification=centering]{caption}

\usepackage{marvosym}
\usepackage{verbatim}

\usepackage{pdfsync}
\usepackage{caption}
\usepackage{hyperref}
\usepackage{enumerate}
\mathtoolsset{showonlyrefs}

\theoremstyle{plain}
\newtheorem{theorem}{Theorem}[section]

\theoremstyle{plain}
\newtheorem{mtheorem}[theorem]{Metatheorem}

\theoremstyle{plain}
\newtheorem{corollary}[theorem]{Corollary}

\theoremstyle{plain}
\newtheorem{lemma}[theorem]{Lemma}

\newtheorem*{lemma*}{Lemma}

\theoremstyle{plain}
\newtheorem{proposition}[theorem]{Proposition}

\newtheorem{definition}[theorem]{Definition}

\theoremstyle{remark}
\newtheorem*{remark}{\textbf{Remark}}

\theoremstyle{remark}

\newtheorem{example}[theorem]{Example}
\theoremstyle{plain}

\theoremstyle{definition}

\providecommand{\norm}[1]{\lVert #1 \rVert}

\newcommand{\R}{\mathbb{R}}
\newcommand{\C}{\mathbb{C}}
\newcommand{\T}{\mathbb{T}}
\newcommand{\Rd}{\mathbb{R}^d}
\newcommand{\Z}{\mathbb{Z}}
\newcommand{\N}{\mathbb{N}}
\newcommand{\Lt}[1][d]{L^2(\R^{#1})}
\newcommand{\G}{\mathcal{G}}
\newcommand{\F}{\mathcal{F}}

\newcommand{\indicator}{\raisebox{2pt}{$\chi$}}
\renewcommand{\l}{\lambda}
\renewcommand{\L}{\Lambda}

\newcommand{\vol}{\textnormal{vol}}
\renewcommand{\H}{\mathbb{H}}

\newcommand{\spec}{\text{SPEC}}

\DeclareMathOperator*{\esssup}{ess\,sup}
\DeclareMathOperator*{\essinf}{ess\,inf}
\DeclareMathOperator*{\supp}{supp}
\DeclareMathOperator*{\sinc}{sinc}

\newcommandtwoopt{\xarrow}[2][0.5cm][0]{\mathrel{\rotatebox[origin=c]{#2}{$\xrightarrow{\rule{#1}{0pt}}$}}}

\usepackage{xcolor}

\newcommand{\NO}[1]{\textcolor{darkgray}{#1}}

\title{
	\textbf{Time-Frequency Analysis} \\ \textit{Lecture Notes}
}

\author{
	Markus Faulhuber
}

\affil{NuHAG, Faculty of Mathematics, University of Vienna}

\date{Summer Term 2021\\ Version: \today}

\begin{document}

\maketitle

\noindent
These lecture notes accompanied the course \textit{Time-Frequency Analysis} given at the Faculty of Mathematics of the University of Vienna in the summer term 2021. The material is suitable for an advanced undergraduate course in mathematics or a mathematics class for PhD students. Besides standard linear algebra and calculus only some basics from functional analysis are needed. A course in Fourier analysis may be of advantage, but is not needed. The course contained 4 academic units per week. The appendices and Section \ref{sec:frame_set} were not presented in class.

\bigskip

\noindent
I gratefully acknowledge the feedback from my students throughout the course and for pointing out several typos in the lecture notes. In particular, the comments of L.\ Köhldorfer, F.\ Moscatelli, I.\ Shafkulovska, and E.\ Stefanescu helped to improve these lecture notes.

\newpage
\tableofcontents

\newpage

\vfill

\textit{Hitherto communication theory was based on two alternative methods of signal analysis. One is the description of the signal as a function of time; the other is Fourier analysis. Both are idealizations, as the first method operates with sharply defined instants of time, the second with infinite wave-trains of rigorously defined frequencies. But our everyday experiences --- especially our auditory sensations --- insist on a description in terms of \textsf{both} time and frequency.}

\bigskip
-- \cite{Gab46} D.~Gabor. Theory of Communication, \textit{Journal of the Institution of the Electrical Engineers}, 93(26):429--457 (1946).

\bigskip

\begin{wrapfigure}{l}{.2\textwidth}
	\vspace{-.25cm}
	\includegraphics[width=.2\textwidth]{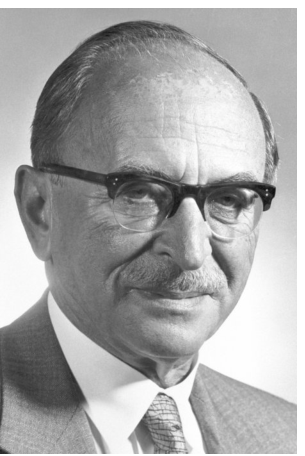}
\end{wrapfigure}
\noindent
\textbf{Short biography}. Dennis Gabor (originally D\'enes G\'abor, June 5, 1900 -- February 8, 1979) was born in Budapest, Austria-Hungary at that time. He was an electrical engineer and physicist. After gaining his doctorate from the Charlottenburg Technical University in Berlin, now known as the Technical University of Berlin, he worked for Siemens AG. In 1933 he left Nazi Germany, as he was considered to be Jewish. Britain invited him to work at the development department of the British Thomson-Houston company (BTH) and he also became a British citizen. In 1947 he invented holography, for which he was awarded the Nobel Prize in Physics in 1971:

``\textit{For his invention and development of the holographic method.}"

\url{https://www.nobelprize.org/prizes/physics/1971/gabor/biographical/}

\newpage
\textbf{Information.}
The main sources for these lecture notes are the textbooks by Folland \cite{Fol89}, de Gosson \cite{Gos11} and Gröchenig \cite{Gro01}. Whenever a lack of references occurs, one of these sources may be consulted.

\section*{Some Engineering Terminology}
\addcontentsline{toc}{section}{Some Engineering Terminology}
In this section we are going to introduce some important notions from sampling theory. We mainly sum up the introduction from \cite{Zay93} and quote some passages verbatim. One aim of this section is to introduce some engineering terms and `translate' them into mathematical terms.

The most important term in communication engineering and signal analysis is the term \textit{signal}. Actually, there are several types of signals which we need to distinguish, namely continuous (analog) signals and discrete (digital) signals. Whenever we only speak of a signal, we mean the first type. Mathematically, a signal is nothing but a (continuous) function, which is why we may use the words signal and function interchangeably. From a physical point of view, a signal $f(t)$ may represent the voltage difference at time $t$ between two points in an electrical circuit or the pressure of a sound wave. A digital signal, on the other hand, is just a sequence of numbers $(c_k)_{k \in \Z}$.

A prototype task in communication might be to convert an analog signal into a digital signal, transmit the digital signal to a receiver and convert it back to an analog signal (maybe even the original one). By processing a signal $f$ we mean operating on it in some fashion and usually also require that this operation is reversible. Mathematically, this means we apply some transformation (linear or not) to the function $f$ which we require to be invertible.

Lastly, we want to transmit information in a ``cheap" fashion, meaning that we want to extract as little as possible data from a signal in order to reconstruct it at the receiver. This is where sampling theory begins.

By sampling a signal $f$, i.e., evaluating $f$ at discrete points, we apply a (linear) transformation to convert an analog signal into a digital signal. Ways of converting the discrete signal back to the continuous signal usually go under the name \textit{sampling theorem}. We will study a specific sampling theorem (actually \textit{the} sampling theorem par excellence) later on. There, it will be of importance that the signal is \textit{band-limited}. This simply means that the so-called \textit{spectrum} of the signal has finite support. On the other hand, a signal which only lasts for a finite duration is called \textit{time-limited}. The classical uncertainty principle tells us that it is impossible for a signal to be both, time- and band-limited. This would pose great problems for digital communication, but (luckily) signals can be almost time- and band-limited at the same time.

Throughout the manuscript, we will also assign more physical and engineering terminology to certain mathematical concepts.

\section*{Notation and Recap}\label{sec_Intro}
\addcontentsline{toc}{section}{Notation and Recap}

One of the most essential tools in time-frequency analysis is the Fourier transform of a function $f$. The values $f(x)$ represent the temporal behavior of a signal and its Fourier transform $\widehat{f}(\omega)$ describes the amplitudes of the occurring frequencies.

The variables $x$ and $\omega$ will usually be in $\Rd$. For the special case $d = 1$, $x$ is a point in time and $\omega$ is a specific frequency. More generally, $x$ can be chosen from a locally compact Abelian group $G$ and $\omega$ is then a character from the dual group $\widehat{G}$. In the case of $\Rd$, the dual group $\widehat{\Rd}$ is isomorphic to $\Rd$ and we will not distinguish between them. Another prominent example of a locally compact Abelian group is the $d$-dimensional torus $\T^d$ which has dual group $\Z^d$ (and vice versa).

We start with introducing some notation and a collection of essential results.

\subsection*{Notation}
\addcontentsline{toc}{subsection}{Notation}
Vectors in $\Rd$ are always considered to be column vectors;
\begin{equation}
	x =
	\begin{pmatrix}
		x_1\\
		\vdots\\
		x_d
	\end{pmatrix}
\end{equation}
However, for convenience reasons we will usually write
\begin{equation}
	x = (x_1, \ \ldots, \ x_d).
\end{equation}
Another notation we will use is the following
\begin{equation}
	x^2 = x \cdot x = x^T x.
\end{equation}
Hence, $\cdot$ denotes the standard Euclidean inner product on $\Rd$ and $x^T$ is, of course, the transpose of $x$. The Euclidean norm is denoted by $|.|$ and given by
\begin{equation}
	|x| = \sqrt{x \cdot x} \ .
\end{equation}
If $M \in \R^{d \times d}$ is a square matrix, we will also use the notation
\begin{equation}
	M x^2 = x \cdot Mx
	\quad \text{ and } \quad
	M^{-T} = (M^{-1})^T = (M^T)^{-1}.
\end{equation}
In the latter case, $M$ needs to be invertible (of course).

The complex vector $z \in \C^d$ consists of a real and imaginary part denoted by
\begin{equation}
	z = \Re(z) + i \ \Im(z).
\end{equation}
We will use the same notation in $\C^d$ as in $\Rd$;
\begin{equation}
	z^2 = z \cdot z = z^T z, \quad z \in \C^d.
\end{equation}
However, note that this is not the inner product on $\C^d$. The norm of an element in $\C^d$ is given by
\begin{equation}
	|z|^2 = \overline{z} \cdot z = z^*z, \quad z^* = \overline{z}^T.
\end{equation}

The integral
\begin{equation}
	\int_{\Rd} f(x) \, dx  = \int_\R \dots \int_\R f(x_1, \ \ldots, \, x_d) \, dx_1 \dots dx_d
\end{equation}
is the Lebesgue integral of $f$ on $\Rd$ and the measure of a measurable set $\Omega \subset \Rd$ is given by
\begin{equation}
	|\Omega| = \int_{\Rd} \indicator_\Omega(x) \, dx,
\end{equation}
where $\indicator_\Omega$ is the indicator function on the set $\Omega$. For a number $1 \leq p < \infty$, the $L^p$-norm, or simply $p$-norm, of a function $f$ is denoted by
\begin{equation}
	\norm{f}_p = \left(\int_{\Rd} |f(x)|^p dx\right)^{1/p}.
\end{equation}

The Banach space of all functions with finite $p$-norm is denoted by $L^p ( \Rd )$. Of course, a function is then only defined up to an equivalence class and $f$ is actually a representative of the class. The usual adjustment for the case $p = \infty$ is
\begin{equation}
	\norm{f}_\infty = \esssup_{x \in \Rd} |f(x)| .
\end{equation}
At this point we re-call Hölder's inequality. For $f \in L^p(\Rd)$ and $g \in L^q(\Rd)$, with $\frac{1}{p} + \frac{1}{q} = 1$, we have
\begin{equation}\label{eq_Holder}
	\norm{f \, g}_1 \leq \norm{f}_p \norm{g}_q.
\end{equation}
For $p = 2$, we actually get a Hilbert space with inner product
\begin{equation}
	\langle f, g \rangle = \int_{\Rd} f(x) \, \overline{g(x)} \, dx, \qquad \langle f, f \rangle = \norm{f}_2^2. 
\end{equation}
In particular, Hölder's inequality for the case $p=q=2$ now implies the Cauchy-Schwarz inequality by using the triangle inequality.
\begin{equation}
	|\langle f, g \rangle| = |\int_{\Rd} f(t) \overline{g(t)} \, dt | \leq \int_{\Rd} |f(t) g(t)| \, dt = \norm{f g}_1 \leq \norm{f}_2 \norm{g}_2.
\end{equation}
A signal $f \in \Lt$ is said to have finite energy, as $\norm{f}_2^2$ is considered as the energy of a signal. Thus, the Hilbert space $\Lt$ is also called the space of finite-energy signals.

\subsection*{The Fourier Transform}
\addcontentsline{toc}{subsection}{The Fourier Transform}
The Fourier transform is usually first defined for functions in $L^1 ( \Rd ) \cap L^2 ( \Rd )$ and then extended to all of $\Lt$ by a density argument. We will skip the technical details here and define the Fourier transform right away for functions in $\Lt$. We stress the fact that there are several different normalizations for the Fourier transform and that the choice of normalization affects all further normalizations which we need to make in the sequel.
\begin{definition}
	The Fourier transform of $f \in \Lt$ is given by
	\begin{equation}
		\mathcal{F} f (\omega) =  \widehat{f}(\omega) = \int_{\Rd} f(x) \, e^{-2 \pi i \omega \cdot x} \, dx.
	\end{equation}
\end{definition}
In engineering terms, the function $\widehat{f}$ is called the amplitude spectrum of the signal $f$. The notation $\mathcal{F} f$ reflects the fact that the Fourier transform is a linear operator acting on a function space, whereas the notation $\widehat{f}$ refers to the fact that the Fourier transform of an $\Lt$ function is again a function (in $\Lt$).

A signal $f$ is completely described by its amplitude spectrum $\widehat{f}$ (and vice versa) and can be written as a continuous superposition of its spectral values;
\begin{equation}
	f(x) = \int_{\Rd} \widehat{f}(\omega) e^{2 \pi i x \cdot \omega} \, d\omega
\end{equation}
We identify the above formula as the inverse Fourier transform $\F^{-1}$. For the formula to hold point-wise, $\widehat{f}$ needs to be integrable and $f$ needs to be continuous and integrable (continuity is necessary for the point-wise statement and integrability to assure that we can define $\widehat{f}$). The inversion formula holds on $\Lt$ (a.e.) by using the inversion formula on the dense subspace $L^1(\Rd) \cap \Lt$ and a density argument.

For an integrable function $f \in L^1(\Rd)$, we note the fact
\begin{equation}\label{eq_F_infty_1}
	\norm{\widehat{f}}_\infty \leq \norm{f}_1 ,
\end{equation}
which is refined in the following lemma.
\begin{lemma}[Riemann-Lebesgue]
	If $f \in L^1( \Rd )$, then $\widehat{f}$ is uniformly continuous and
	\begin{equation}
		\lim_{|\omega| \to \infty} |\widehat{f}(\omega)| \to 0.
	\end{equation}
\end{lemma}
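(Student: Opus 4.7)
The plan is to treat the two conclusions separately, using two elementary ingredients: the trivial $L^\infty$ bound $\norm{\widehat{f}}_\infty \leq \norm{f}_1$ from \eqref{eq_F_infty_1} and the density of smooth compactly supported functions in $L^1(\Rd)$.

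For uniform continuity, I would estimate directly
$$
|\widehat{f}(\omega + h) - \widehat{f}(\omega)| \leq \int_{\Rd} |f(x)| \, |e^{-2\pi i h \cdot x} - 1| \, dx.
$$
The right-hand side is independent of $\omega$, the integrand is dominated by $2|f(x)| \in L^1(\Rd)$, and it tends to $0$ pointwise as $h \to 0$. Dominated convergence forces the integral to $0$, and since the resulting bound is uniform in $\omega$, this gives uniform continuity of $\widehat{f}$ on $\Rd$.

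For the decay at infinity, I plan to first establish the statement on a convenient dense subspace and then extend by density. A natural choice is $C_c^2(\Rd)$. For $g$ in this class, integrating by parts twice in the $j$-th coordinate (the boundary terms vanish by compact support) yields
$$
(2\pi i \omega_j)^2 \widehat{g}(\omega) = \widehat{\partial_j^2 g}(\omega),
$$
so $|\omega_j|^2 \, |\widehat{g}(\omega)| \leq \tfrac{1}{4\pi^2} \norm{\partial_j^2 g}_1$. Summing over $j$ (or equivalently applying the same computation to the Laplacian) shows $|\widehat{g}(\omega)| = O(|\omega|^{-2})$, which certainly vanishes at infinity.

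To pass from $g$ to a general $f \in L^1(\Rd)$, given $\varepsilon > 0$ I pick $g \in C_c^2(\Rd)$ with $\norm{f - g}_1 < \varepsilon$ and estimate
$$
|\widehat{f}(\omega)| \leq |\widehat{f-g}(\omega)| + |\widehat{g}(\omega)| \leq \norm{f-g}_1 + |\widehat{g}(\omega)| < \varepsilon + |\widehat{g}(\omega)|.
$$
Choosing $|\omega|$ large enough that $|\widehat{g}(\omega)| < \varepsilon$ gives $|\widehat{f}(\omega)| < 2\varepsilon$, which is the desired conclusion. I do not foresee any genuine obstacles; the argument is entirely standard. The only mildly delicate point is invoking the density of $C_c^2(\Rd)$ in $L^1(\Rd)$, a routine fact. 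If one wished to avoid smooth approximation altogether, the same scheme works with finite linear combinations of indicator functions of rectangles in place of $C_c^2$, since their Fourier transforms are explicit products of sinc-type factors that decay componentwise.
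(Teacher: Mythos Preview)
Your argument is correct and entirely standard. Note, however, that the paper does not actually supply a proof of the Riemann--Lebesgue lemma: it is stated in the ``Notation and Recap'' section as a background fact, alongside Plancherel, Parseval, Hausdorff--Young, and Young's inequality, none of which are proved there. So there is no paper-proof to compare against; your write-up would serve perfectly well as the missing justification.
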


The next result shows that the Fourier transform is actually a unitary operator on $\Lt$.
\begin{theorem}[Plancherel]\label{thm_Plancherel}
	For $f \in \Lt$ we have
	\begin{equation}
		\norm{f}_2 = \norm{\widehat{f}}_2 .
	\end{equation}
\end{theorem}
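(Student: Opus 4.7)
The plan is to first establish the polarized version of the identity, namely Parseval's identity $\langle \widehat{f}, \widehat{g} \rangle = \langle f, g \rangle$, on a convenient dense subspace of $\Lt$, and then extend by continuity. The norm identity in the statement is the special case $g = f$.

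For the dense subspace I would take $\mathcal{A} = \{f \in L^1(\Rd) \cap \Lt : \widehat{f} \in L^1(\Rd)\}$, which is dense in $\Lt$ (it contains, for instance, the Schwartz functions, or one can build explicit approximations via truncation and convolution with a mollifier). On $\mathcal{A}$ both $f$ and $\widehat{f}$ are integrable, so the pointwise inversion formula stated earlier applies and Fubini's theorem is unproblematic. Concretely, for $f \in \mathcal{A}$,
\[
\norm{\widehat{f}}_2^2 = \int_{\Rd} \widehat{f}(\omega)\,\overline{\widehat{f}(\omega)}\,d\omega = \int_{\Rd}\int_{\Rd} \widehat{f}(\omega)\,\overline{f(x)}\,e^{2\pi i \omega \cdot x}\,dx\,d\omega,
\]
where the integrand is absolutely integrable because $\widehat{f}\in L^1$ and $f\in L^1$. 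Swapping the order of integration by Fubini and recognizing the inner integral as $\F^{-1}\widehat{f}(x) = f(x)$ produces $\norm{\widehat{f}}_2^2 = \norm{f}_2^2$.

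To finish, given an arbitrary $f \in \Lt$ I would pick a sequence $(f_n) \subset \mathcal{A}$ with $f_n \to f$ in $\Lt$. By linearity of $\F$ and the identity just proved on $\mathcal{A}$, the sequence $(\widehat{f_n})$ is Cauchy in $\Lt$ and thus converges to some $F \in \Lt$; one then checks (using the density argument already invoked to define $\F$ on $\Lt$) that $F = \widehat{f}$ almost everywhere. Passing to the limit in $\norm{\widehat{f_n}}_2 = \norm{f_n}_2$ yields the claim.

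The main obstacle is really bookkeeping rather than computation: one has to fix a dense subclass on which (i) the pointwise inversion formula holds, (ii) Fubini is trivially justified, and (iii) the extension of $\F$ from $L^1 \cap \Lt$ to $\Lt$ is compatible with the construction. Once $\mathcal{A}$ (or Schwartz space) is chosen, the algebraic heart of the argument collapses to a single application of Fubini followed by inversion, and the extension to $\Lt$ is the standard bounded-linear-transformation principle.
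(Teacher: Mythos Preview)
Your argument is correct and is essentially the classical textbook proof: establish the isometry on a dense subspace via Fubini and the inversion formula, then extend by the bounded linear transformation principle. There is nothing to object to.

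It is worth noting, however, that the paper does not prove Theorem~\ref{thm_Plancherel} at the point where it is stated; it appears in the recap section as a background result. The paper does return to Plancherel later and offers two genuinely different derivations. In the Remark following Corollary~\ref{cor_Vgf_L2}, Plancherel is obtained by a direct computation on the dense subspace spanned by time-frequency shifts of the standard Gaussian $g_0$: one checks by hand that $\langle M_\eta T_\xi g_0, M_\omega T_x g_0 \rangle = \langle \F(M_\eta T_\xi g_0), \F(M_\omega T_x g_0) \rangle$, and then expands $\langle f, f\rangle$ bilinearly. Still later, after the Stone--von Neumann theorem, the paper derives Plancherel from Schur's lemma by recognizing $\F$ as an intertwining operator between two irreducible representations of the Heisenberg group. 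Compared with your approach, these arguments avoid Fubini and the explicit inversion formula in favour of, respectively, an explicit Gaussian calculation and abstract representation theory; your proof is more self-contained and closer to the way the result is usually first encountered, while the paper's versions emphasize the time-frequency and representation-theoretic structure that the course is built around.
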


More generally, we have Parseval's formula.
\begin{theorem}[Parseval]\label{thm_Parseval}
	For $f, \, g \in \Lt$, we have
	\begin{equation}\label{eq_Parseval}
		\langle f, g \rangle = \langle \widehat{f}, \widehat{g} \rangle.
	\end{equation}
\end{theorem}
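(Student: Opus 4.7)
The plan is to obtain Parseval's identity as a direct corollary of Plancherel's theorem (Theorem~\ref{thm_Plancherel}) via the polarization identity. In the complex Hilbert space $\Lt$, conjugate symmetry and sesquilinearity of the inner product yield the purely algebraic identity
\begin{equation}
	\langle u, v \rangle = \frac{1}{4}\sum_{k=0}^{3} i^k \norm{u + i^k v}_2^2,
\end{equation}
valid for all $u, v \in \Lt$, with no recourse to the Fourier transform whatsoever.

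First, I would apply this identity to $u = f$ and $v = g$, so that $\langle f, g \rangle$ is expressed as a weighted sum of the four quantities $\norm{f + i^k g}_2^2$. Next, since $\mathcal{F}$ is linear on $\Lt$ (immediate from its integral definition and linearity of the Lebesgue integral), we have $\widehat{f + i^k g} = \widehat{f} + i^k \widehat{g}$ for each $k \in \{0,1,2,3\}$. Plancherel then gives
\begin{equation}
	\norm{f + i^k g}_2^2 = \bigl\|\widehat{f + i^k g}\bigr\|_2^2 = \norm{\widehat{f} + i^k \widehat{g}}_2^2
\end{equation}
term-by-term. Substituting into the polarization sum and reading the polarization identity backwards on the transform side yields
\begin{equation}
	\langle f, g \rangle = \frac{1}{4}\sum_{k=0}^{3} i^k \norm{\widehat{f} + i^k \widehat{g}}_2^2 = \langle \widehat{f}, \widehat{g} \rangle,
\end{equation}
which is \eqref{eq_Parseval}.

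No substantive obstacle arises: the polarization identity is purely algebraic, the linearity of $\F$ is trivial, and Plancherel is given. The proof is essentially one line of bookkeeping once polarization is invoked. The only point worth flagging is that because $\Lt$ is a \emph{complex} Hilbert space and $\langle \cdot, \cdot \rangle$ is sesquilinear rather than symmetric, one must use the four-term complex polarization identity above rather than the simpler real-valued one; using the real version would recover only $\Re \langle f, g \rangle = \Re \langle \widehat{f}, \widehat{g}\rangle$, which is strictly weaker than what is claimed.
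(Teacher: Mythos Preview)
Your argument via the complex polarization identity is correct and is the standard way to promote Plancherel to Parseval. Note, however, that the paper does not actually prove Theorem~\ref{thm_Parseval}: it appears in the ``Notation and Recap'' section as one of several classical results that are simply stated for later use. So there is no paper proof to compare against; your polarization argument is a perfectly acceptable justification and precisely what a reader filling in the omitted proof would supply.
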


As a next result, we will give the extension of equation \eqref{eq_F_infty_1}.
\begin{theorem}[Hausdorff-Young]
	Let $1 \leq p \leq 2$ and let $q$ be such that $\frac{1}{p} + \frac{1}{q} = 1$. Then
	\begin{equation}
		\mathcal{F} \colon  L^p ( \Rd ) \to L^q ( \Rd )
		\qquad
		\textnormal{ and }
		\qquad
		\norm{\widehat{f}}_q \leq \norm{f}_p .
	\end{equation}
\end{theorem}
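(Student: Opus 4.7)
The plan is to obtain the inequality by Riesz-Thorin interpolation between the two endpoint cases $p=1$ and $p=2$ that are already available in the excerpt. At the endpoint $p=1$, $q=\infty$, inequality \eqref{eq_F_infty_1} gives $\norm{\widehat{f}}_\infty \leq \norm{f}_1$, so $\F \colon L^1(\Rd) \to L^\infty(\Rd)$ is bounded with operator norm at most $1$. At the endpoint $p=q=2$, Plancherel's theorem (Theorem \ref{thm_Plancherel}) gives $\norm{\widehat{f}}_2 = \norm{f}_2$, so $\F \colon L^2(\Rd) \to L^2(\Rd)$ has operator norm exactly $1$.

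For a general $1 \leq p \leq 2$, set $\theta = 2 - \tfrac{2}{p} \in [0,1]$; then
\begin{equation}
\tfrac{1}{p} = \tfrac{1-\theta}{1} + \tfrac{\theta}{2}, \qquad \tfrac{1}{q} = 1 - \tfrac{1}{p} = \tfrac{\theta}{2} = \tfrac{1-\theta}{\infty} + \tfrac{\theta}{2},
\end{equation}
so $(p,q)$ sits on the complex interpolation segment joining $(1,\infty)$ and $(2,2)$. The Riesz-Thorin theorem then yields
\begin{equation}
\norm{\F}_{L^p \to L^q} \leq \norm{\F}_{L^1 \to L^\infty}^{1-\theta} \cdot \norm{\F}_{L^2 \to L^2}^{\theta} \leq 1^{1-\theta} \cdot 1^{\theta} = 1,
\end{equation}
which is the claim.

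The main obstacle is invoking Riesz-Thorin itself, which relies on the three-lines lemma from complex analysis. A self-contained version of the argument would start from simple functions $f$ with $\norm{f}_p = 1$ and test functions $g$ with $\norm{g}_q = 1$, construct an analytic family $f_z$, $g_z$ on the strip $0 \leq \Re(z) \leq 1$ that interpolates the relevant $L^p$-normalizations and reduces to $f,g$ at $z = \theta$, and apply the maximum principle to the bounded analytic function $z \mapsto \langle \F f_z, g_z \rangle$, whose boundary values on $\Re(z) = 0$ and $\Re(z) = 1$ are controlled by the two endpoint bounds above via Hölder's inequality \eqref{eq_Holder}. Density of simple functions in $L^p(\Rd)$ for $p < \infty$ then extends the estimate from the dense subspace to all of $L^p(\Rd)$, and duality identifies the limit in $L^q(\Rd)$ with $\widehat{f}$.
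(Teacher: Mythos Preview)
Your argument via Riesz--Thorin interpolation between the endpoints $(p,q)=(1,\infty)$ (from \eqref{eq_F_infty_1}) and $(p,q)=(2,2)$ (Plancherel) is the standard and correct proof of Hausdorff--Young, and your sketch of the three-lines argument is accurate. Note, however, that the paper does not actually prove this theorem: it is listed in the ``Notation and Recap'' section as a known result without proof, so there is no paper proof to compare against. Your write-up is therefore more than what the paper supplies.
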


\begin{definition}
	The convolution of two functions $f$ and $g$ is defined as
	\begin{equation}
		f * g \, (x) = \int_{\Rd} f(y) g(x-y) \, dy
	\end{equation}
\end{definition}
We recall that convolution is taken to point-wise multiplication under the Fourier transform. 
\begin{align}
	\F(f*g)(\omega) & = \int_{\Rd} \left( \int_{\Rd} f(y)g(x-y) \, dy \right) e^{-2 \pi i \omega \cdot x} \, dx\\
	& = \int_{\Rd} \left( \int_{\Rd} f(y)g(x-y) \, dy \right) e^{-2 \pi i \omega \cdot y} e^{-2 \pi i \omega \cdot (x-y)} \, dx\\
	& \hspace*{-.25cm}\stackrel{\textnormal{Fubini}}{=} \int_{\Rd} f(y) e^{-2 \pi i \omega \cdot y} \left( \int_{\Rd} g(x-y) e^{-2 \pi i \omega \cdot (x-y)} \, dx \right) \, dy\\
	& = (\F f \, \F g)(\omega).\label{eq_conv_FT}
\end{align}
Also, we will use Young's convolution inequality.
\begin{theorem}[Young's Inequality]\label{thm_Young_conv}
	Let $1 \leq p,q \leq r \leq \infty$ with $\frac{1}{p} + \frac{1}{q} = \frac{1}{r} + 1$ and suppose $f \in L^p(\Rd)$ and $g \in L^q(\Rd)$. Then
	\begin{equation}
		\norm{f * g}_r \leq \norm{f}_p \norm{g}_q .
	\end{equation}
\end{theorem}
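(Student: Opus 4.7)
The plan is to prove Young's inequality by a direct application of a three-term Hölder inequality inside the convolution integral, then integrate in the outer variable and invoke Fubini. The algebraic identity $1/p + 1/q = 1/r + 1$ is precisely what makes a certain triple of Hölder conjugate exponents close up.

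First I would dispose of the boundary cases separately. When $r = \infty$ the hypothesis gives $1/p + 1/q = 1$ and Hölder's inequality applied to the integrand of $(f*g)(x)$ directly yields $|(f*g)(x)| \leq \|f\|_p \|g(x-\cdot)\|_q = \|f\|_p \|g\|_q$ by translation invariance of the $L^q$-norm. When one of $p,q$ equals $1$ (say $q=1$), then $r = p$, and the bound $\|f*g\|_p \leq \|f\|_p \|g\|_1$ follows from Minkowski's integral inequality applied to the convolution written as a vector-valued integral $\int g(y)\, (\tau_y f)\, dy$.

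For the generic case $1 < p, q < r < \infty$, the key move is the factorization
\begin{equation}
|f(y) g(x-y)| = \bigl[|f(y)|^{p} |g(x-y)|^{q}\bigr]^{1/r} \cdot |f(y)|^{1 - p/r} \cdot |g(x-y)|^{1 - q/r}.
\end{equation}
I would apply Hölder's inequality with three exponents $r$, $s := pr/(r-p)$, $t := qr/(r-q)$ to the three factors above, integrating in $y$. The crucial check is that $1/r + 1/s + 1/t = 1/r + (r-p)/(pr) + (r-q)/(qr) = 1/p + 1/q - 1/r$, which equals $1$ exactly by the hypothesis on $p,q,r$. Moreover, by the definitions of $s$ and $t$, the second factor raised to $s$ gives $|f(y)|^p$ and the third factor raised to $t$ gives $|g(x-y)|^q$, so after Hölder one obtains
\begin{equation}
|(f*g)(x)| \leq \left(\int |f(y)|^{p} |g(x-y)|^{q}\, dy\right)^{1/r} \|f\|_{p}^{1-p/r}\, \|g\|_{q}^{1-q/r}.
\end{equation}

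To finish I would raise both sides to the $r$-th power and integrate over $x$. Fubini's theorem combined with the translation-invariance of Lebesgue measure turns the remaining double integral into $\|f\|_p^p \|g\|_q^q$, yielding $\|f*g\|_r^r \leq \|f\|_p^r \|g\|_q^r$; taking $r$-th roots closes the argument. The main obstacle (more a bookkeeping hurdle than a conceptual one) is guessing the correct three-term factorization, but the exponents are uniquely forced once one demands that the outer integral produce $\|f\|_p$ and $\|g\|_q$ and that the three Hölder exponents be conjugate — the required identity $1/r + 1/s + 1/t = 1$ then matches the hypothesis exactly, which is a satisfying consistency check.
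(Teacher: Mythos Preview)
Your argument is correct and is the standard proof of Young's inequality via the three-exponent H\"older factorization; the bookkeeping of the exponents $r$, $s = pr/(r-p)$, $t = qr/(r-q)$ and the subsequent Fubini step are all in order, and your separate treatment of the endpoint cases ($r=\infty$ via H\"older, $q=1$ via Minkowski's integral inequality) is clean.

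There is nothing to compare against here: the paper states Young's inequality in its preliminary ``Notation and Recap'' section as a recalled classical fact and gives no proof. Your write-up would serve well as the missing argument.
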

In particular, if $f, \ g\in L^1(\Rd)$, then the convolution satisfies
\begin{equation}\label{eq_conv_L1}
	\norm{f*g}_1 \leq \norm{f}_1 \norm{g}_1 \ ,
\end{equation}
We remark that \eqref{eq_conv_L1} shows that $L^1(\Rd)$ is a Banach algebra under convolution and, by employing the Lemma of Riemann-Lebesgue, that the Fourier transform maps $L^1(\Rd)$ into a (dense) subalgebra of $C_0(\Rd)$.

The following theorem and its proof are taken from \cite[App.~A]{Fol89} and states that the Fourier transform of a Gaussian is another Gaussian.
\begin{theorem}[Fourier Transform of a Gaussian]\label{thm_FT_Gauss}
	Let $M$ be a $d \times d$ matrix with real entries and the properties
	\begin{equation}
		M^T = M
		\quad \text{ and } \quad
		M > 0,
	\end{equation}
	where the second assertion means that $M$ is positive definite. Then, for any $\omega \in \Rd$ we have
	\begin{equation}
		\int_{\Rd} e^{-\pi M x^2} e^{-2 \pi i \omega \cdot x} \, dx = \det(M)^{-1/2} e^{- \pi M^{-1} \omega^2}
	\end{equation}
\end{theorem}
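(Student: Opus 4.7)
The plan is to reduce to the one-dimensional, standard Gaussian case by separating three ingredients: (i) the scalar identity $\widehat{e^{-\pi x^2}}(\omega)=e^{-\pi\omega^2}$, (ii) a Fubini/dilation step handling diagonal $M$, and (iii) a linear change of variables that reduces a general symmetric positive definite $M$ to a diagonal one.

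First, I would establish the baseline identity in dimension $d=1$: define $F(\omega)=\int_\R e^{-\pi x^2}e^{-2\pi i\omega x}\,dx$ and differentiate under the integral sign. Using $\frac{d}{dx}e^{-\pi x^2}=-2\pi x\,e^{-\pi x^2}$ and integration by parts in $x$, I expect to arrive at the ODE $F'(\omega)=-2\pi\omega F(\omega)$. Combined with the classical normalization $F(0)=\int_\R e^{-\pi x^2}\,dx=1$, this yields $F(\omega)=e^{-\pi\omega^2}$. (Differentiation under the integral is justified by the Gaussian's rapid decay, so interchange of derivative and integral is routine; this is the only nontrivial analytic step and where I expect the real work to sit.)

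Next, I would handle the case $M=\mathrm{diag}(a_1,\dots,a_d)$ with $a_j>0$. The exponent $Mx^2=\sum_j a_j x_j^2$ makes the integrand a tensor product, so Fubini and the substitution $x_j\mapsto x_j/\sqrt{a_j}$ in each coordinate turn the integral into $\prod_{j=1}^d a_j^{-1/2}\,e^{-\pi\omega_j^2/a_j}$, which equals $\det(M)^{-1/2}e^{-\pi M^{-1}\omega^2}$ in this diagonal setting.

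Finally, for a general symmetric positive definite $M$, I would invoke the spectral theorem to write $M=O^T D O$ with $O\in O(d)$ and $D$ positive diagonal. The substitution $y=Ox$ has unit Jacobian (since $|\det O|=1$) and transforms the quadratic form as $Mx^2=x\cdot Mx=y\cdot Dy=Dy^2$, while the linear form becomes $\omega\cdot x=(O\omega)\cdot y$. Thus the integral equals the diagonal case already computed, with frequency variable $O\omega$, giving
\begin{equation}
\det(D)^{-1/2}e^{-\pi D^{-1}(O\omega)^2}=\det(M)^{-1/2}e^{-\pi\,\omega\cdot O^T D^{-1}O\omega}=\det(M)^{-1/2}e^{-\pi M^{-1}\omega^2},
\end{equation}
using $\det D=\det M$ and $M^{-1}=O^T D^{-1}O$. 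The linear-algebra bookkeeping in this last step is the only place one must be careful not to misplace transposes; everything else is a direct consequence of the 1D computation.
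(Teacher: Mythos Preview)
Your proposal is correct and follows essentially the same three-step route as the paper: a one-dimensional ODE computation via differentiation under the integral and integration by parts, a Fubini factorization for diagonal $M$, and an orthogonal change of variables via the spectral theorem for general $M$. The only cosmetic difference is that the paper treats a general scalar $M>0$ already in Step~1 (so Step~2 simply factors and cites Step~1), whereas you normalize to $M=1$ first and absorb the scaling into Step~2 via the substitution $x_j\mapsto x_j/\sqrt{a_j}$; this is a harmless reorganization of the same argument.
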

\begin{proof}
	The proof proceeds in 3 steps. First, we prove the case for $d = 1$, then for the case that $M$ is a diagonal matrix and, lastly, the general case stated in the theorem.
	
	\bigskip
	\noindent
	\textit{Step 1}. Let $d = 1$ so $M$ is a real, positive scalar. Let
	\begin{equation}
		I(\omega) = \int_{\R} e^{- \pi M x^2} e^{-2 \pi i \omega x} \, dx.
	\end{equation}
	Then, we are allowed to differentiate under the integral and we have
	\begin{align}
		I'(\omega) & = \int_{\R} (-2 \pi i x) e^{-\pi M x^2} e^{-2 \pi i \omega x} \, dx = \frac{i}{M} \int_{\R} \dfrac{d}{dx} (e^{-\pi M x^2}) e^{-2 \pi i \omega x} \, dx\\
		& \stackrel{\text{Int.~by parts}}{=} - \frac{i}{M} \int_{\R} e^{- \pi M x^2} \dfrac{d}{dx} (e^{-2 \pi i \omega x}) \, dx = -\frac{2 \pi \omega}{M} I(\omega).
	\end{align}
	Therefore, we have that $\frac{I'(\omega)}{I(\omega)} = - \frac{2 \pi \omega}{M}$, or $\frac{d}{d \omega} \log(I(\omega)) = - \frac{2 \pi \omega}{M}$. Therefore,
	\begin{equation}
		\log(I(\omega)) = -\frac{\pi \omega^2}{M} + C,
	\end{equation}
	or, equivalently
	\begin{equation}
		I(\omega) = e^{- \frac{\pi \omega^2}{M} + C} = C_1 \, e^{-\frac{\pi \omega^2}{M}}.
	\end{equation}
	Now,
	\begin{equation}
		C_1 = I(0) = \int_{\R} e^{-\pi M x^2} \, dx = M^{-1/2},
	\end{equation}
	which shows the result for the first case.
	
	\bigskip
	\noindent
	\textit{Step 2}. Suppose $M$ is a $d \times d$ diagonal matrix. Then the integral under consideration factors;
	\begin{equation}
		\int_{\Rd} e^{-\pi M x^2} e^{-2 \pi i \omega \cdot x} \, dx = \left(\int_{\R} e^{-\pi M_{1,1} x_1^2} e^{-2 \pi i \omega_1 x_1} \, dx_1\right) \ldots \left(\int_{\R} e^{-\pi M_{d,d} x_d^2} e^{-2 \pi i \omega_d x_d}, \, dx_d\right).
	\end{equation}
	Hence, the result follows by employing the result from the first step.
	
	\bigskip
	\noindent
	\textit{Step 3}. Now, suppose $M$ is not a diagonal matrix. Since $M^T = M$ there is a rotation $R$ such that $R^T M R = D$ is diagonal. Setting $x = R y$ and using the fact that $R^T = R^{-1}$, by step 2 we obtain
	\begin{align}
		\int_{\Rd} e^{- \pi M x^2} e^{-2 \pi i \omega \cdot x} \, dx & = \underbrace{|\det(R)|}_{=1}\int_{\Rd} e^{- \pi (Ry) \cdot M (Ry)} e^{- 2 \pi i \omega \cdot R y} \, dy = \int_{\Rd} e^{- \pi D y^2} e^{-2 \pi i (R^T \omega) \cdot y} \, dy\\
		& = \det(D)^{-1/2} e^{-\pi  (R^{-1} \omega) \cdot D^{-1} (R^{-1} \omega)} = \det(M)^{-1/2} e^{-\pi M^{-1} \omega^2}.
	\end{align}
\end{proof}

Another prominent function in this manuscript is the indicator function of the interval $[-\frac{1}{2}, \frac{1}{2}]$. Its Fourier transform is the $\sinc$ function, also called cardinal sine.
\begin{align}
	\F \indicator_{[-\frac{1}{2}, \frac{1}{2}]} (\omega) & = \int_\R \indicator_{[-\frac{1}{2}, \frac{1}{2}]}(x) e^{-2 \pi i \omega x} \, dx = \int_{-1/2}^{1/2} e^{-2 \pi i \omega x} \, dx\\
	& = \frac{e^{-2 \pi i \omega x}}{-2 \pi i \omega} \Big|_{x=-1/2}^{1/2} = - \frac{e^{-\pi i \omega} - e^{\pi i \omega}}{2 i \pi \omega} = -\frac{\sin(-\pi \omega)}{\pi \omega} = \frac{\sin(\pi \omega)}{\pi \omega} = \sinc(\omega).
\end{align}
We note that the value at 0 equals 1 (by using l'Hospital's rule) and that the $\sinc$-function is not Lebesgue-integrable. However, it is a finite energy signal (since $\indicator_{[-\frac{1}{2}, \frac{1}{2}]} \in \Lt[]$). Thus, $\sinc$ possesses a Fourier transform as an $\Lt[]$-function. Since it is an even function, its Fourier transform and its inverse Fourier transform \footnote{Note that the Fourier transform of the $\sinc$-function cannot be computed by real integration methods, but that it can be computed using complex methods (using the homotopy version of Cauchy's integral theorem and the residue theorem). The advantage of the complex method is that the singularity at $x=0$ (which is a removable singularity) can be avoided by choosing a ``roundabout" contour near 0.} coincide;
\begin{equation}
	\F \sinc(\omega) = \F^{-1} \sinc(\omega) = \F^{-1} (\F \indicator_{[-\frac{1}{2}, \frac{1}{2}]})(\omega) = \indicator_{[-\frac{1}{2}, \frac{1}{2}]}(\omega).
\end{equation}

\subsection*{Fourier Series}
\addcontentsline{toc}{subsection}{Fourier Series}
By
\begin{equation}
	e_k(t) = e^{2 \pi i k \cdot t}, \quad t \in \T^d
\end{equation}
we denote the complex exponential with frequency $k \in \Z^d$. We say that $e_k$\footnote{More generally, if $\omega \in \Rd$, we say that $e_\omega(t) = e^{2 \pi i \omega \cdot t}$ is a pure frequency of frequency $\omega$. However, note that, in general, $e_\omega$ need not be periodic with period 1.} is a pure (integer) frequency. Recall that the set of pure integer frequencies, i.e.,
\begin{equation}
	\{e^{2 \pi i k \cdot t} \mid k \in \Z^d\}
\end{equation}
is an orthonormal basis for $L^2(\T^d)$ or $L^2([0,1]^d)$. It is easy to see that the system is orthonormal by a simple calculation (which we perform only for $d = 1$ for notational convenience, the $d$-dimensional case follows by iterated integration.);
\begin{equation}
	\langle e_k, e_l \rangle_{L^2(\T)} = \int_\T e^{2 \pi i k t} e^{-2 \pi i l t} \, dt = \int_\T e^{2 \pi i (k-l) t} \, dt =
	\begin{cases}
		1, & k = l\\
		0, & \text{ else}
	\end{cases}
\end{equation}
By the Weierstrass approximation theorem, the trigonometric polynomials $\sum_{k=-N}^N c_k e_k(t)$ are dense in $C(\T^d)$ and, therefore, in $L^2(\T^d)$. Therefore, the set of pure integer frequencies is an orthonormal basis for $L^2(\T^d)$. As a consequence, a function $f \in L^2(\T^d)$ can be expanded as
\begin{equation}\label{eq_Fourier_expansion}
	f(t) = \sum_{k \in \Z^d} c_k e^{2 \pi i k \cdot t},
\end{equation}
with coefficients
\begin{equation}
	c_k = \langle f, e_k \rangle_{L^2(\T^d)}.
\end{equation}
Plancherel's theorem holds in this case as well;
\begin{equation}
	\norm{(c_k)}_{\ell^2(\Z^d)} = \norm{f}_{L^2(\T^d)}.
\end{equation}
Also, for $f \in L^1(\T^d) \supset L^2(\T^d)$\footnote{Note that we do not have such an inclusion for $L^p(\Rd)$-spaces because the Lebesgue measure on $\Rd$ is not finite.} the Lemma of Riemann-Lebesgue holds;
\begin{equation}
	\langle f, e_k \rangle_{L^2(\T^d)} \to 0, \quad |k| \to \infty.
\end{equation}
Furthermore, using the multi-index convention $|k|=k_1+ \ldots +k_d$ now, if we denote the space of trigonometric polynomials of order not larger than $N$ by
\begin{equation}
	T_N = \{ p_N(t) = \sum_{|k|=-N}^N c_k e^{2 \pi i k \cdot t} \mid t \in \T^d, \, c_k \in \C, \, N \in \N\},
\end{equation}
then the truncated Fourier series is the best approximating element in $T_N$ for any $f \in L^2(\T^d)$;
\begin{equation}
	\norm{f - \sum_{|k|=-N}^N \langle f, e_k \rangle e_k}_{L^2(\T^d)} \leq \norm{f - p_N}_{L^2(\T^d)}, \quad \forall p_N \in T_N.
\end{equation}

Equation \eqref{eq_Fourier_expansion} admits the following interpretation. A periodic signal $f \in L^2(\T^d)$ can be decomposed into pure waves of integer frequencies with amplitudes $(c_k)_{k \in \Z^d}$. The amplitudes $(c_k)_{k \in \Z^d}$ are derived from taking linear measurements of the signal $f$ with respect to the orthonormal system $\{e_k\}$.

A goal of time-frequency analysis, following the ideas in \cite{Gab46}, is the decomposition of non-periodic signals into ``simple building blocks" similar to \eqref{eq_Fourier_expansion}, such that the occurring coefficients can be interpreted as the amplitudes of the building blocks. In particular, the coefficients should contain some information on how ``strong" a certain frequency is at a certain point in time.

\newpage
\section{Basic Concepts in Time-Frequency Analysis}\label{sec_TFA}
We consider functions from the Hilbert space $\Lt$ and define the following operators;
\begin{equation}
	T_x f(t) = f(t-x)
	\qquad \textnormal{ and } \qquad
	M_\omega f(t) = f(t) \, e^{2 \pi i \omega \cdot t}.
\end{equation}
These operators (illustrated in Figure \ref{fig_TF-shifts}) are usually called translation operator and modulation operator, respectively. However, in the field of time-frequency analysis we say $T_x$ is a time-shift by $x$ and $M_\omega$ is called a frequency-shift by $\omega$. These operators are unitary on $\Lt$, i.e.,
\begin{equation}
	\langle T_x f, T_x g \rangle = \langle f, g \rangle
	\qquad \textnormal{ and } \qquad
	\langle M_\omega f, M_\omega g \rangle = \langle f, g \rangle.
\end{equation}
\begin{figure}[ht]
	\subfigure[A Gauss function.]{
	\includegraphics[width=.45\textwidth]{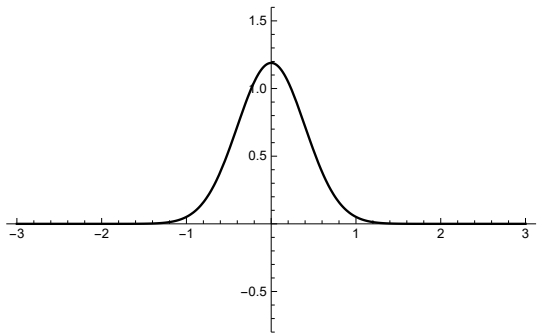}
	}
	\hfill
	\subfigure[A time-shifted Gauss function.]{
	\includegraphics[width=.45\textwidth]{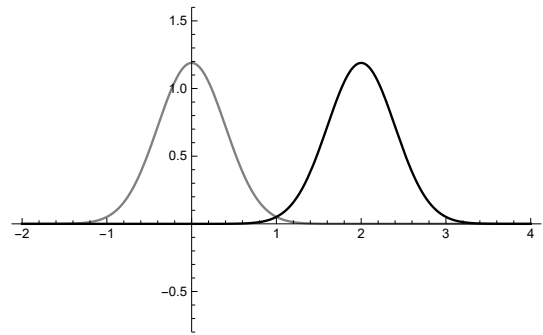}
	}
	
	\bigskip

	\subfigure[A frequency shifted Gauss function (real part) with its Gaussian envelop.]{
	\includegraphics[width=.45\textwidth]{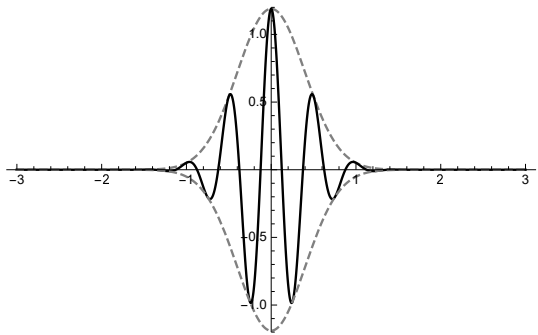}
	}
	\hfill
	\subfigure[A time-frequency shifted Gauss function (real part) with its Gaussian envelop.]{
	\includegraphics[width=.45\textwidth]{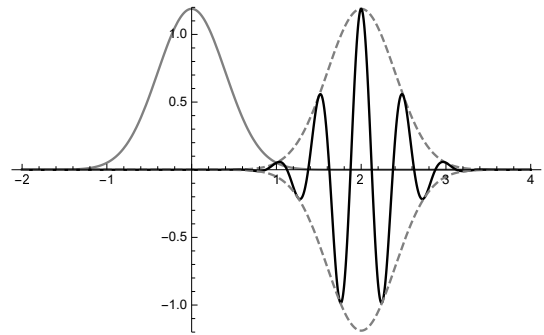}
	}
	\caption{\footnotesize{The standard Gauss function $g_0(t) = 2^{1/4}e^{-\pi t^2}$ and its time-, frequency- and time-frequency-shifted copies (real part) $T_2 g_0$, $M_2 g_0$, $M_2 T_2 g_0$, respectively.}\label{fig_TF-shifts}}
\end{figure}
In general, they do not commute. We have the canonical commutation relation
\begin{equation}\label{eq_comm_rel}
	M_\omega T_x = e^{2 \pi i \omega \cdot x} \, T_x M_\omega,
\end{equation}
which is at the heart of time-frequency analysis. Equation \eqref{eq_comm_rel} shows that time-shifts and frequency-shifts do not commute in general. However, they commute up to a phase factor. A phase factor is a complex number of modulus 1 ($|c| = 1$, $c \in \C$). The commutation relation for time- and frequency-shifts \eqref{eq_comm_rel} will accompany us throughout the course. Sometimes the non-commutativity may simply be ignored (e.g., because we are interested in absolute values), other times the bookkeeping of the exponential factors might be annoying but important and again other times the non-commutativity will be crucial to obtain (beautiful) results right at the heart of time-frequency analysis.

Equation \eqref{eq_comm_rel} follows from a direct computation;
\begin{align}
	M_\omega T_x f(t) = f(t-x) \, e^{2 \pi i \omega \cdot t}
	= f(t-x) \, e^{2 \pi i \omega \cdot (t-x)} e^{2 \pi i \omega \cdot x}
	= e^{2 \pi i \omega \cdot x} \ T_x M_\omega f(t).
\end{align}
It is immediate from \eqref{eq_comm_rel} that time-shifts and frequency shifts commute if and only if $\omega \cdot x \in \Z$.

Furthermore, for any $1 \leq p \leq \infty$, time- and frequency-shifts (and hence time-frequency shifts) are isometries on $L^p(\Rd)$, i.e.,
\begin{equation}
	\norm{M_\omega T_x f}_p = \norm{f}_p \ .
\end{equation}

Next, we study the behavior of $T_x$ and $M_\omega$ under the Fourier transform. By a direct calculation, we will see that
\begin{equation}\label{eq_TFshifts_FT}
	\F T_x = M_{-x} \F
	\qquad \textnormal{ and } \qquad
	\F M_\omega = T_\omega \F.
\end{equation}
We start with the first assertion.
\begin{align}
	\mathcal{F}(T_x f)(\xi) = \int_{\Rd} f(t-x) \, e^{-2 \pi i \xi \cdot t} \, dt
	= \int_{\Rd} f(t) \, e^{-2 \pi i \xi \cdot (t+x)} \, dt
	= M_{-x} (\mathcal{F} f)(\xi).
\end{align}
The second formula follows analogously.
\begin{align}
	\F(M_\omega f)(\xi) = \int_{\Rd} f(t) \, e^{2 \pi i \omega \cdot t} e^{-2 \pi i \xi \cdot t} \, dt
	= \int_{\Rd} f(t) \, e^{-2 \pi i (\xi - \omega) \cdot t} \, dt
	= T_\omega (\F f)(\xi).
\end{align}
This shows that the use of the word frequency-shift is justified for the operator $M_\omega$. 

By combining \eqref{eq_comm_rel} and \eqref{eq_TFshifts_FT} we get
\begin{equation}
	\F M_\omega T_x = T_\omega M_{-x} \F = e^{2 \pi i \omega \cdot x} M_{-x} T_\omega \F
\end{equation}
The composition of $T_x$ and $M_\omega$ is called a time-frequency shift, which we denote by
\begin{equation}
	\pi(\l) = M_\omega T_x, \qquad \l = (x, \omega)  \in \R^{2d} \, .
\end{equation}

In the sequel, we will also make use of other, familiar operators. We start with the isotropic dilation operator, which we denote by $D_a$. It acts on a function in the following way;
\begin{equation}
	D_a f(t) = a^{-d/2} f( a^{-1} t), \qquad a > 0, \ t \in \Rd .
\end{equation}
The factor $a^{-d/2}$ makes the operator unitary. We are also interested in the operator's behavior under the Fourier transform. This will be helpful to get an intuitive understanding for uncertainty principles, introduced later on.
\begin{equation}
	\F D_a = D_{\frac{1}{a}} \F \ .
\end{equation}
This can easily be seen by the following calculation. First, note that $\Phi_a : \Rd \to \Rd$, $t \mapsto a^{-1} t$ is a linear diffeomorphism on $\Rd$ for $a > 0$. Hence,
\begin{align}
	\F (D_a f)(\omega) & = \int_{\Rd} a^{-d/2} f(a^{-1} t) e^{-2 \pi i \omega \cdot t} \, dt\\
	& = \int_{\Rd} a^{-d/2} f \left(\Phi_a^{-1}(a^{-1} t) \right) \underbrace{\left|\det\left(D\Phi_a^{-1}\right) \right|}_{= a^{d}} e^{-2 \pi i \omega \cdot \Phi_a^{-1}(t)} \, dt\\
	& = \int_{\Rd} a^{d/2} f(t) e^{-2 \pi i a \omega \cdot t} \, dt = D_{\frac{1}{a}} (\F f) (\omega).
\end{align}

We will also encounter a more general (anisotropic) dilation operator, which acts on functions in a similar way.

Furthermore, we note that the Fourier transform takes differentiation to multiplication with a monomial and vice versa. By $\partial_k$, we denote the partial derivative in the $k$-th component. Then
\begin{equation}\label{eq_FT_derivative}
	\F \left(\partial_k^n f \right)(\omega) = (2 \pi i \omega_k)^n \, \F f(\omega)
	\qquad \textnormal{ and }
	\qquad
	\F \Big( (-2 \pi i t_k)^n f \Big)(\omega) = \partial_k^n (\F f) (\omega).
\end{equation}
It suffices to carry out the proof for $n = 1$. First, assume that $f \in \mathcal{S}(\Rd)$ (the Schwartz space of rapidly decreasing functions). Then, by using integration by parts, we see that
\begin{align}
	\F \left(\partial_k f (t) \right)(\omega) & = \int_{\Rd} \partial_k f(t) e^{-2 \pi i \omega \cdot t} \, dt = \underbrace{f(t) e^{-2 \pi i \omega \cdot t} \Big|_{t = - \infty}^{\infty}}_{= 0} - \int_{\Rd} f(t) (-2 \pi i \omega_k) e^{-2 \pi i \omega \cdot t} \, dt\\
	& = (2 \pi i \omega_k) \F f(\omega).
\end{align}
The result for higher order derivatives follows by induction and the result for $f \in \Lt$ follows by a density argument. For the second assertion we use the differential quotient and denote the unit vector in the $k$-th direction by $\mathbf{e}_k$.
\begin{align}
	\lim_{\delta \to 0} \frac{\widehat{f}(\omega + \delta \mathbf{e}_k) - \widehat{f}(\omega)}{\delta}
	& = \int_{\Rd} f(t) e^{-2 \pi i \omega \cdot t} \lim_{\delta \to 0} \frac{e^{-2 \pi i \delta \mathbf{e}_k \cdot t} - 1}{\delta} \, dt\\
	& = \int_{\Rd} f(t) e^{-2 \pi i \omega \cdot t} (-2 \pi i t_k) \, dt.
\end{align}
The exchange of integration and taking the limit is justified by the dominated convergence theorem and for taking the limit we used l'Hospital's rule. Again, the result for $n > 1$ follows by induction and the result for $\Lt$ by a density argument.

\section{The Short-Time Fourier Transform}\label{sec_STFT}
The short-time Fourier transform is the fundamental tool in time-frequency analysis. In a distributional sense, it can be seen as a generalization of the Fourier transform, as we will see later. It aims to overcome the drawback that $f(t)$ only gives temporal information of a signal $f$ while under the Fourier transform we obtain only information about its frequency distribution $\widehat{f}(\omega)$. We will start with an illustrative example of a multi-component signal.
\begin{example}
	Let $f$ be the following multi-component signal.
	\begin{align}
		f(t) = 
		\begin{cases}
			e^{4 \pi i t} + e^{8 \pi i t}, & -3 \leq t \leq -1\\
			e^{2 \pi i t}, & -1 \leq t \leq 1\\
			e^{2 \pi i t^2}, & 1 \leq t \leq 4\\
			0, & \text{else}
		\end{cases}.
	\end{align}
	\begin{figure}[ht]
		\subfigure[Real part of the multi-component signal.]
		{
			\includegraphics[width=.45\textwidth]{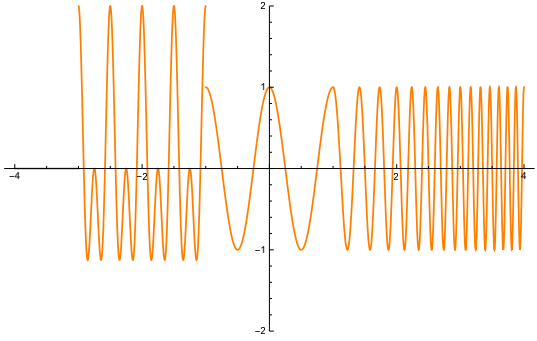}
		}
		\hfill
		\subfigure[Imaginary part of the multi-component signal.]
		{
			\includegraphics[width=.45\textwidth]{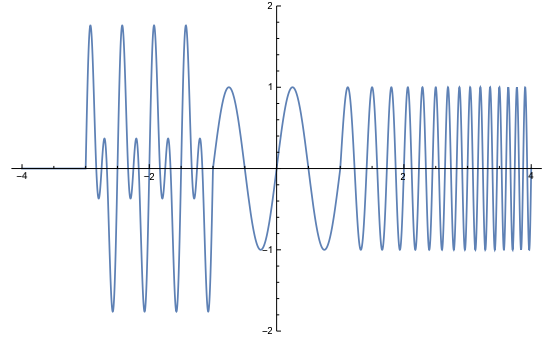}
		}
		\caption{\footnotesize{A multi-component signal consisting of a superposition of two pure frequencies with 2 and 4 Hz, followed by a pure frequency of 1 Hz and a linear chirp, i.e., the frequency increases linearly.}}
	\end{figure}
	\begin{figure}[ht]
		\centering
		\subfigure[Absolute value of the positive frequencies of $\widehat{f}(\omega)$.]{
			\includegraphics[width=.45\textwidth]{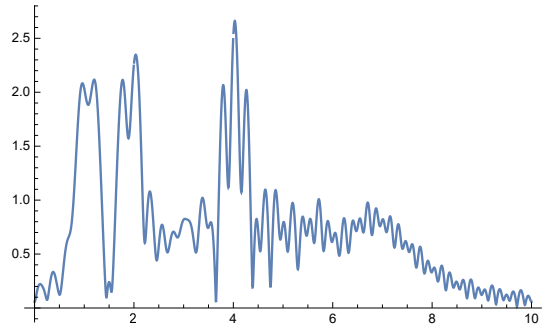}
		}
		\caption{\footnotesize{The positive part of the Fourier transform of the multi-component signal. There are peeks around the major occurring frequencies at 1,2 and 4 Hz. Also, the contributions of the chirp are apparent throughout the frequency band $[2,8]$ Hz.}}
	\end{figure}
	
	In order to obtain local information of the signal $f$, we can use a window function $g$. We will use the standard box function $b_0$, which is the generator of all $b$-splines, and the standard Gaussian window $g_0$.
	\begin{align}
		b_0(t) =
		\begin{cases}
			1, & -\frac{1}{2} \leq t \leq \frac{1}{2}\\
			0, & \text{else}
		\end{cases}
		\qquad
		\text{ and }
		\qquad
		g_0(t) = 2^{1/4} e^{- \pi t^2}.
	\end{align}
	We note that both windows are normalized $\norm{b_0}_2 = 1$ and $\norm{g_0}_2 = 1$.
	\begin{figure}[ht]
		\subfigure[Real part of the signal localized around 0 with the box function $b_0$.]
		{
			\includegraphics[width=.45\textwidth]{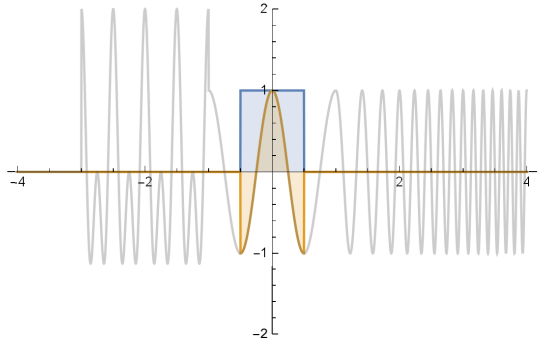}
		}
		\hfill
		\subfigure[Real part of the signal localized around 0 with the standard Gaussian $g_0$.]
		{
			\includegraphics[width=.45\textwidth]{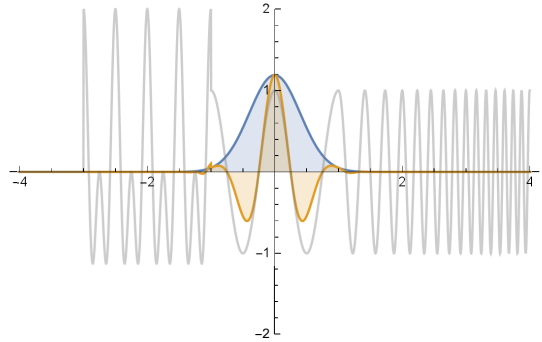}
		}
		\caption{\footnotesize{Localization of the multi-component signal with the box function and the standard Gaussian.}}\label{fig_signal_window}
	\end{figure}
	\begin{figure}[ht]
		\subfigure[Absolute value of the Fourier transform of the localized signal using $b_0$.]
		{
			\includegraphics[width=.45\textwidth]{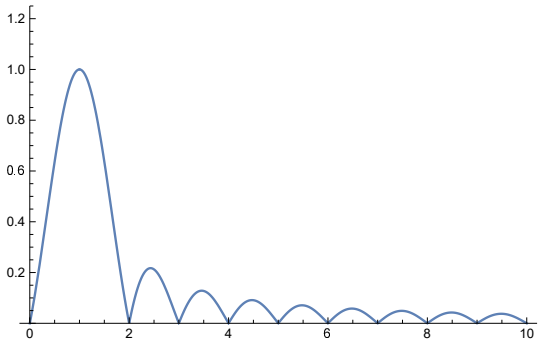}
		}
		\hfill
		\subfigure[Absolute value of the Fourier transform of the localized signal using $g_0$.]
		{
			\includegraphics[width=.45\textwidth]{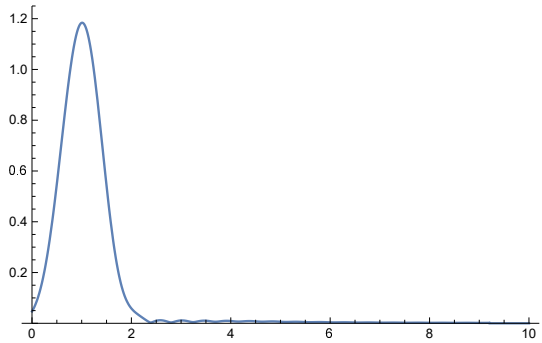}
		}
		\caption{\footnotesize{The Fourier transform of the windowed signals reveals a local behavior as well.}}\label{fig_FT}
	\end{figure}
	However, a joint time-frequency distribution should yield a picture in the time-frequency plane which reveals the local behavior of the signal. We illustrate such a representation with 2 spectrograms. The precise definition of a spectrogram will be given in Section \ref{sec_spec}. As can already be seen from Figure \ref{fig_signal_window} and Figure \ref{fig_FT}, the choice of the window crucially affects the localization properties of the signal. This fact will also become apparent in Figure \ref{fig_spec}.
	\begin{figure}
		\subfigure[A spectrogram using the window $b_0$.]
		{
			\includegraphics[width=.45\textwidth]{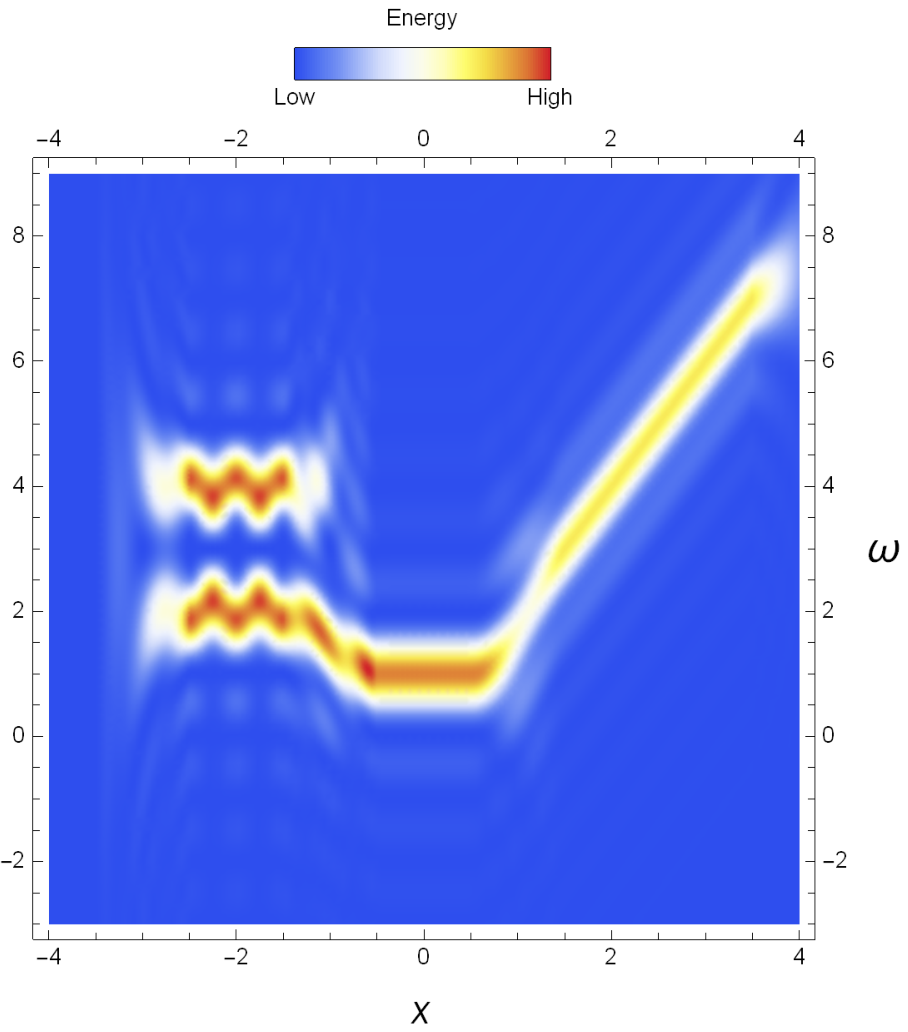}
		}
		\hfill
		\subfigure[A spectrogram using the window $g_0$.]
		{
			\includegraphics[width=.45\textwidth]{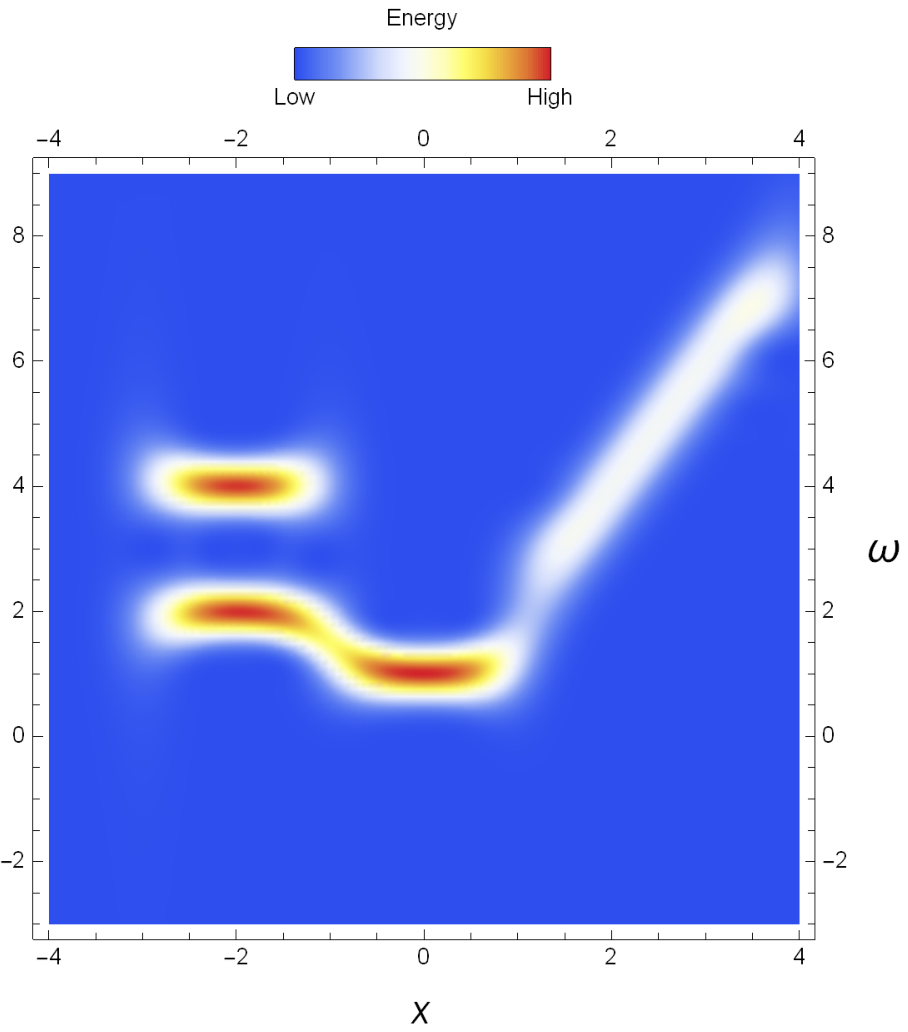}
		}
		\caption{\footnotesize{The spectrograms of the multi-component signal with different localization windows. The $x$-direction indicates the position to which the window is shifted and the $\omega$-direction shows the values of the energy distribution of the local Fourier transfroms.}}\label{fig_spec}
	\end{figure}
	We note that the use of the box function $b_0$ does not yield a satisfying localization of the signal in the Fourier domain, as smoothness of a function affects the decay of its Fourier transform. Therefore, the localized Fourier transform with window $b_0$ decays only like $\frac{1}{\omega}$. This can also be seen in Figure \ref{fig_spec} (a).
	\flushright{$\diamond$}
\end{example}

After this motivating example, we come to the definition of the short-time Fourier transform (STFT) and discuss some of its properties. We mainly follow the textbook by Gröchenig \cite[Chap.~3]{Gro01}.
\begin{definition}
	For a (non-zero) function $g$, called a window function, the short-time Fourier transform of a function $f$ with respect to the window $g$ is given by
	\begin{equation}
		V_g f(x, \omega) = \int_{\Rd} f(t) \overline{g(t-x)} e^{-2 \pi i \omega \cdot t} \, dt = \langle f, \pi(\l) g \rangle, \qquad \l = (x, \omega) \in \R^{2d}.
	\end{equation}
\end{definition}

For the moment, we only give this formal definition without further specification of $f$ and $g$. The STFT is also called sliding-window Fourier transform or voice transform. The notation $V_g f$ comes from the latter name. The name sliding window Fourier transform comes from the interpretation of $V_gf(x,.)$ ($x$ fixed) as a local Fourier transform of $f$ around the point $x$. As $x$ varies, the window $g$ slides along the $x$-axis to all possible positions. Written as an inner product, the STFT is often interpreted as a linear measurement, taken at the point $\l \in \R^{2d}$.
\begin{example}\label{ex_STFT_g0}
	We will compute the STFT of the standard Gaussian
	\begin{equation}
		g_0(t) = 2^{d/4} e^{- \pi t^2},
	\end{equation}
	so $\norm{g_0}_2 = 1$, with itself as a window.
	\begin{align}
		V_{g_0} g_0 (x, \omega) & = 2^{d/2} \int_{\Rd} e^{-\pi t^2} e^{-\pi (t-x)^2} e^{-2 \pi i \omega \cdot t} \, dt\\
		& = 2^{d/2} \int_{\Rd} e^{-\pi (t+\frac{x}{2})^2} e^{-\pi (t-\frac{x}{2})^2} e^{-2 \pi i \omega \cdot (t+\frac{x}{2})} \, dt\\
		& = 2^{d/2} e^{-\pi i \omega \cdot x} \int_{\Rd} e^{-2 \pi (t^2 + \frac{x^2}{4})} e^{-2 \pi i \omega \cdot t} \, dt\\
		& = e^{-\pi i \omega \cdot x} e^{- \frac{\pi}{2} x^2} 2^{d/2} \int_{\Rd} e^{-2 \pi t^2} e^{-2 \pi i \omega \cdot t} \, dt\\
		& = e^{-\pi i \omega \cdot x} e^{-\frac{\pi}{2} x^2} \underbrace{\int_{\Rd} e^{-\pi t^2} e^{-2 \pi i \frac{\omega}{2^{1/2}} \cdot t} \, dt}_{\F g_0\left(\frac{\omega}{2^{1/2}}\right)}\\
		& = e^{-\pi i \omega \cdot x} e^{-\frac{\pi}{2} (x^2 + \omega^2)}.
	\end{align}
	Thus, the STFT of a Gaussian is again a Gaussian (in the time-frequency plane with a precisely determined phase factor).
	\flushright{$\diamond$}
\end{example}

We will now study the basic properties of the STFT. Besides writing the STFT as an inner product of $f$ and $M_\omega T_x g$, we can also write it in various other ways, which can be useful at times.
\begin{align}\label{eq_STFT_notation}
	V_g f(x, \omega) & = \F(f \, T_x \overline{g})(\omega)\\
	& = \langle \widehat{f}, T_\omega M_{-x} \widehat{g} \rangle\\
	& = e^{-2 \pi i x \cdot \omega} \F (\widehat{f} \, T_\omega \overline{\widehat{g}})(-x)\\
	& = e^{-2 \pi i x \cdot \omega} (f * M_\omega \overline{g}^\vee)(x)\\
	& = (\widehat{f} * M_{-x} \widehat{\overline{g}}^\vee)(\omega),
\end{align}
where $g^\vee(x) = g(-x)$ is the flip operation.

So far, we have not discussed when the STFT is well-defined. Clearly, all of the above formulas make sense for $\Lt$ functions because if $f,g \in \Lt$, then the product $f \, T_x \overline{g} \in L^1(\Rd)$ by the Cauchy-Schwarz inequality and $V_gf(x,\omega) = \F (f \, T_x \overline{g})(\omega)$ is defined point-wise. For most of the time, we will be happy with the assumption that $f,g \in \Lt$ (because then we may use Parseval's theorem and Plancherel's theorem). However, by Hölder's inequality \eqref{eq_Holder} $f \, T_x \overline{g} \in L^1(\Rd)$ whenever $f \in L^p(\Rd)$ and $g \in L^q(\Rd)$ with $\frac{1}{p} + \frac{1}{q} = 1$, so the STFT is again defined point-wise in this case.

The following result can partially be seen as a ``Riemann-Lebesgue-like" statement for the STFT.
\begin{lemma}
	Let $f,g \in \Lt$, then $V_g f$ is uniformly continuous on $\R^{2d}$.
\end{lemma}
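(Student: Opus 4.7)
The strategy is to bound $|V_g f(x+h,\omega+k) - V_g f(x,\omega)|$ by an expression depending only on $(h,k)$ (not on $(x,\omega)$) that tends to $0$ as $(h,k)\to 0$. I split the increment via the telescoping identity
\begin{equation}
V_g f(x+h,\omega+k) - V_g f(x,\omega) = \big[V_g f(x+h,\omega+k) - V_g f(x+h,\omega)\big] + \big[V_g f(x+h,\omega) - V_g f(x,\omega)\big]
\end{equation}
and estimate the two pieces separately using the inner-product form $V_g f(x,\omega) = \langle f, M_\omega T_x g\rangle$.

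For the second bracket, the modulation cancels out cleanly: since $M_\omega$ is unitary on $\Lt$, applying Cauchy--Schwarz and then translation-invariance of the $L^2$-norm gives
\begin{equation}
|V_g f(x+h,\omega) - V_g f(x,\omega)| = |\langle f, M_\omega(T_{x+h}-T_x)g\rangle| \leq \norm{f}_2\,\norm{T_h g - g}_2,
\end{equation}
and the right-hand side is independent of $(x,\omega)$ and tends to $0$ as $h\to 0$ by strong continuity of translation on $\Lt$ (a standard fact, provable by approximation of $g$ in $L^2$-norm by continuous compactly supported functions).

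For the first bracket, the naïve expansion $M_{\omega+k}-M_\omega = M_\omega(M_k-I)$ combined with Cauchy--Schwarz would produce $\norm{(M_k-I)T_{x+h}g}_2$, which after a change of variable acquires a phase factor depending on $x+h$, so it is not uniformly small in $x$. The trick is to \emph{absorb the oscillation factor into $f$ rather than into $g$}: writing out the integral,
\begin{align}
|V_g f(x+h,\omega+k) - V_g f(x+h,\omega)| &= \Big|\int_{\Rd} f(t)(e^{-2\pi i k\cdot t}-1)\,\overline{g(t-x-h)}\,e^{-2\pi i \omega\cdot t}\,dt\Big| \\
&\leq \norm{(e^{-2\pi i k\cdot\bullet}-1)f}_2\,\norm{g}_2,
\end{align}
where in the last step I apply Cauchy--Schwarz and use $\norm{T_{x+h}g}_2=\norm{g}_2$. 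The resulting bound is free of $(x,\omega)$, and $\norm{(e^{-2\pi i k\cdot\bullet}-1)f}_2 \to 0$ as $k\to 0$ by the dominated convergence theorem (the integrand is pointwise $\leq 4|f|^2\in L^1$ and tends to $0$ pointwise).

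Combining the two bounds yields
\begin{equation}
|V_g f(x+h,\omega+k) - V_g f(x,\omega)| \leq \norm{f}_2\,\norm{T_h g - g}_2 + \norm{g}_2\,\norm{(e^{-2\pi i k\cdot\bullet}-1)f}_2,
\end{equation}
which depends only on $(h,k)$, proving uniform continuity. The only subtle point is the asymmetric choice in the Cauchy--Schwarz step for the frequency increment; routing the oscillating factor through $f$ (which is translation-free in the integral) rather than through $g$ is what rescues uniformity.
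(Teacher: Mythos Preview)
Your proof is correct and follows the same approach the paper sketches---namely, reducing to the strong continuity of the translation and modulation groups on $\Lt$---but you have supplied all the details the paper omits. In particular, your observation that the modulation increment must be routed through $f$ rather than through $T_{x+h}g$ (so that the bound stays independent of $x$) is precisely the point needed to upgrade continuity to \emph{uniform} continuity, and the paper's two-line proof leaves this implicit.
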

\begin{proof}
	This follows mainly from the continuity of the operator groups $\{T_x\}$ and $\{M_\omega\}$, i.e.,
	\begin{equation}
		\lim_{x \to 0} \norm{T_x f - f}_2 = 0
	\end{equation}
	and
	\begin{equation}
		\lim_{\omega \to 0} \norm{M_\omega f - f}_2 = \lim_{\omega \to 0} \norm{T_\omega \widehat{f} - \widehat{f}}_2 = 0.
	\end{equation}
	The statement follows by combining the two results.
\end{proof}
Using the Lemma  of Riemann-Lebesgue, we also conclude that $\lim_{|\omega| \to \infty} V_gf(x, \omega) = 0$ for any $x$ (because $f \, T_x \overline{g} \in L^1(\Rd)$ for $f,g \in \Lt$).

The following property of the STFT shows a fundamental concept in time-frequency analysis. The usual interpretation is that, up to a phase factor, the Fourier transform rotates the time-frequency plane (by $90^\circ$ in the case $d = 1$). Also, it shows that if we localize a signal $f$ with a window $g$, we localize its spectrum $\widehat{f}$ with the Fourier transform $\widehat{g}$ of the original window.
\begin{proposition}[Fundamental Identity of Time-Frequency Analysis]\label{pro_fitf}
	For $f,g \in \Lt$, the following holds;
	\begin{equation}\label{eq_fitf}
		V_g f(x, \omega) = e^{-2 \pi i x \cdot \omega} V_{\widehat{g}} \widehat{f} (\omega, -x).
	\end{equation}
\end{proposition}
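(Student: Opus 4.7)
My plan is to prove the identity by writing $V_g f(x,\omega)$ as an inner product, moving to the Fourier side via Parseval's theorem (Theorem \ref{thm_Parseval}), and then using the interaction of $\F$ with $T_x$ and $M_\omega$ (equation \eqref{eq_TFshifts_FT}) together with the canonical commutation relation \eqref{eq_comm_rel} to rearrange the result into the claimed form. The list of alternative expressions for the STFT in \eqref{eq_STFT_notation} already hints at this strategy, since the second line there, $\langle \widehat{f}, T_\omega M_{-x}\widehat{g}\rangle$, is essentially the starting point for the identity.

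Concretely, I would start from $V_g f(x,\omega) = \langle f, M_\omega T_x g\rangle$ and apply Parseval to get $\langle \widehat{f}, \F(M_\omega T_x g)\rangle$. Then I compute $\F(M_\omega T_x g) = T_\omega \F(T_x g) = T_\omega M_{-x}\widehat{g}$ using the two formulas in \eqref{eq_TFshifts_FT}. To bring this into the form $\pi(\mu)\widehat{g}$ for a point $\mu = (\omega,-x)$ in the time-frequency plane of $\widehat{f}$, I rewrite $T_\omega M_{-x}$ as $M_{-x}T_\omega$ up to a phase, using the commutation relation $M_\omega T_x = e^{2\pi i \omega\cdot x}T_x M_\omega$, which gives $T_\omega M_{-x} = e^{2\pi i x\cdot \omega}\,M_{-x}T_\omega$. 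Pulling the (constant) phase out of the inner product (conjugated, since it sits in the second slot) leaves
\begin{equation}
V_g f(x,\omega) = e^{-2\pi i x\cdot\omega}\,\langle \widehat{f}, M_{-x}T_\omega \widehat{g}\rangle = e^{-2\pi i x\cdot\omega}\,V_{\widehat{g}}\widehat{f}(\omega,-x),
\end{equation}
which is exactly \eqref{eq_fitf}.

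There is no real obstacle here: both $f$ and $g$ are in $\Lt$, so all the ingredients (Parseval, the Fourier conjugation rules for $T_x$ and $M_\omega$, and the commutation relation) apply without any integrability issues, and the only point to be careful about is bookkeeping of signs in the exponent — in particular, remembering that the scalar factor moves out of the second argument of $\langle\cdot,\cdot\rangle$ as its complex conjugate, which is precisely what turns $e^{+2\pi i x\cdot\omega}$ into the $e^{-2\pi i x\cdot\omega}$ that appears on the right-hand side.
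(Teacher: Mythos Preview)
Your proof is correct and is essentially the same as the paper's: both use Parseval's theorem, the Fourier intertwining rules \eqref{eq_TFshifts_FT}, and the commutation relation \eqref{eq_comm_rel}. The only cosmetic difference is the order of operations---the paper first commutes $M_\omega T_x$ to $T_x M_\omega$ and then applies Parseval, whereas you apply Parseval first and commute $T_\omega M_{-x}$ to $M_{-x}T_\omega$ afterwards---but the content is identical.
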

\begin{proof}
	The proof is a straight-forward computation.
	\begin{align}
		V_g f(x, \omega) & = \langle f, M_\omega T_x g \rangle
		= \langle f, e^{2 \pi i \omega \cdot x} T_x M_\omega g \rangle\\
		& = e^{-2 \pi i \omega \cdot x} \langle f, T_x M_\omega g \rangle
		= e^{-2 \pi i \omega \cdot x} \langle \F f, \F T_x M_\omega g \rangle\\
		& = e^{-2 \pi i \omega \cdot x} \langle \F f, M_{-x} T_\omega \F g \rangle
		= e^{-2 \pi i \omega \cdot x} V_{\widehat{g}} \widehat{f} (\omega, -x).
	\end{align}
\end{proof}
The identity is sometimes also referred to as the \textit{basic identity of time-frequency analysis}. Also, we conclude that $\lim_{|x| \to \infty} V_g f(x, \omega) = 0$ for any $\omega$ because $|V_g f(x,\omega)| = |V_{\widehat{g}} \widehat{f} (\omega, -x)|$ and we may apply the Lemma of Riemann-Lebesgue to $\widehat{f} \, T_\omega \widehat{g}$, for $\widehat{f}, \widehat{g} \in \Lt$.

The next formula is known as the covariance principle. Its interpretation is that a time-frequency shift by $(\xi, \eta) \in \R^{2d}$ simply translates the STFT in the time-frequency plane (again, up to a phase factor).
\begin{proposition}[Covariance Principle]\label{pro_covar}
	For $f,g \in \Lt$, we have
	\begin{equation}\label{eq_covar}
		V_g (M_\eta T_\xi  f)(x, \omega) = e^{-2 \pi i \xi \cdot (\omega - \eta)} V_g f(x - \xi, \omega - \eta), \qquad x, \xi, \omega, \eta \in \Rd.
	\end{equation}
\end{proposition}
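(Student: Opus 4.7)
The plan is to compute $V_g(M_\eta T_\xi f)(x,\omega)$ by rewriting the STFT as the inner product $\langle f, \pi(\l) g\rangle$, pushing the time-frequency shift on $f$ over to the window via unitarity, and then using the commutation relation \eqref{eq_comm_rel} to reassemble a single time-frequency shift acting on $g$ with the correct phase factor. The expected output $e^{-2\pi i \xi\cdot(\omega-\eta)}$ is exactly the kind of phase that appears when one transposes a translation past a modulation, so I expect the computation to drop out purely from bookkeeping.

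First I would write
\begin{equation}
V_g(M_\eta T_\xi f)(x,\omega) = \langle M_\eta T_\xi f,\, M_\omega T_x g\rangle = \langle f,\, T_{-\xi} M_{-\eta} M_\omega T_x g\rangle = \langle f,\, T_{-\xi} M_{\omega-\eta} T_x g\rangle,
\end{equation}
using that $M_\eta$ and $T_\xi$ are unitary on $\Lt$ with adjoints $M_{-\eta}$ and $T_{-\xi}$, and that modulations compose additively.

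Next I would apply \eqref{eq_comm_rel} in the form $T_{-\xi} M_{\omega-\eta} = e^{2\pi i \xi\cdot(\omega-\eta)}\, M_{\omega-\eta} T_{-\xi}$ (this is the commutation relation rewritten with the translation on the left and reading off the sign from $\omega\cdot x$ with $x = -\xi$). Combining with $T_{-\xi}T_x = T_{x-\xi}$ and pulling the scalar out of the second slot of the inner product (which conjugates it), I get
\begin{equation}
V_g(M_\eta T_\xi f)(x,\omega) = e^{-2\pi i \xi\cdot(\omega-\eta)}\,\langle f,\, M_{\omega-\eta} T_{x-\xi} g\rangle = e^{-2\pi i \xi\cdot(\omega-\eta)}\, V_g f(x-\xi,\omega-\eta),
\end{equation}
which is \eqref{eq_covar}.

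The hard part, if there is one, is just getting the sign of the phase factor right: one has to be careful that adjoints of $M_\eta$ and $T_\xi$ flip the sign, that the commutation \eqref{eq_comm_rel} picks up a further sign when read with a negative translation, and that the scalar in the second entry of the Hermitian inner product conjugates. As a sanity check I would also verify the identity by going directly to the integral definition, substituting $s = t-\xi$ in
\begin{equation}
\int_{\Rd} f(t-\xi)\, e^{2\pi i \eta\cdot t}\, \overline{g(t-x)}\, e^{-2\pi i \omega\cdot t}\,dt,
\end{equation}
and collecting the resulting exponential factors $e^{2\pi i(\eta-\omega)\cdot\xi}$; the two derivations must agree.
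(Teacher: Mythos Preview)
Your proof is correct and follows essentially the same route as the paper: write the STFT as an inner product, push $M_\eta T_\xi$ to the second slot via adjoints, apply the commutation relation \eqref{eq_comm_rel}, and extract the conjugated phase factor. The paper's proof is line-for-line the same computation, without the integral sanity check you add at the end.
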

\begin{proof}
	The proof is a straight-forward computation.
	\begin{align}
		V_g(M_\eta T_\xi  f)(x, \omega) & = \langle M_\eta T_\xi f, M_\omega T_x g \rangle\\
		& = \langle f, T_{-\xi} M_{-\eta} M_\omega T_x g \rangle\\
		& = \langle f, e^{2 \pi i \xi \cdot (-\eta + \omega)} M_{\omega-\eta} T_{x-\xi} g \rangle\\
		& = e^{-2 \pi i \xi \cdot (\omega - \eta)} V_g f(x-\xi,\omega-\eta)
	\end{align}
\end{proof}
We remark that the above proof can be adapted and that the result stays true whenever $V_gf$ is defined. The only $\Lt$ specific property we used this time was the inner product $\langle . , . \rangle$, so whenever this bracket is well-defined (e.g., by duality) the proof holds (e.g., for $\mathcal{S}(\Rd)$ and $\mathcal{S}'(\Rd)$ or $L^p(\Rd)$ and $L^q(\Rd)$ with $\frac{1}{p} + \frac{1}{q} = 1$).

The STFT also enjoys a property which is quite similar to Parseval's identity \eqref{eq_Parseval}.
\begin{theorem}[Orthogonality Relations]\label{thm_ortho}
	Let $f_1, f_2, g_1, g_2 \in \Lt$, then $V_{g_k} f_k \in \Lt[2d]$ for $k \in \{1,2\}$ and
	\begin{equation}\label{eq_OR}
		\langle V_{g_1} f_1, V_{g_2} f_2 \rangle_{\Lt[2d]} = \langle f_1, f_2 \rangle \overline{\langle g_1, g_2 \rangle}.
	\end{equation}
\end{theorem}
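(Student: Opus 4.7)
The plan is to use the representation $V_g f(x,\omega) = \mathcal{F}(f \cdot T_x \overline{g})(\omega)$ from \eqref{eq_STFT_notation}, which for each fixed $x$ expresses the STFT in the $\omega$-variable as the Fourier transform of an $L^1$-function (since $f \cdot T_x \overline{g} \in L^1(\R^d)$ by Cauchy-Schwarz, as it is the product of two $L^2$-functions; in fact it is also in $L^2(\R^d)$). This will let me invoke Plancherel and Parseval in the $\omega$-variable slice-by-slice, and then a Fubini/Tonelli argument in the $x$-variable.

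\textbf{Step 1 (membership in $L^2(\R^{2d})$).} First I would verify that $V_g f \in L^2(\R^{2d})$ with $\norm{V_g f}_2 = \norm{f}_2 \norm{g}_2$. For fixed $x$, Plancherel (Theorem \ref{thm_Plancherel}) applied to $f \cdot T_x \overline{g} \in \Lt$ gives
\begin{equation}
  \int_{\Rd} |V_g f(x,\omega)|^2 \, d\omega \;=\; \int_{\Rd} |f(t)|^2 \, |g(t-x)|^2 \, dt .
\end{equation}
Integrating in $x$ and applying Tonelli's theorem (everything is non-negative), the translation invariance of the Lebesgue measure produces $\norm{f}_2^2 \, \norm{g}_2^2$. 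This proves $V_g f \in \Lt[2d]$ and, in particular, both $V_{g_1} f_1$ and $V_{g_2} f_2$ lie in $\Lt[2d]$.

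\textbf{Step 2 (the inner-product identity).} With membership secured, I would compute the inner product slice-by-slice. For each fixed $x$, Parseval's theorem (Theorem \ref{thm_Parseval}) in the $\omega$-variable gives
\begin{equation}
  \int_{\Rd} V_{g_1} f_1(x,\omega) \, \overline{V_{g_2} f_2(x,\omega)} \, d\omega
  \;=\; \int_{\Rd} f_1(t)\, \overline{f_2(t)} \, \overline{g_1(t-x)}\, g_2(t-x) \, dt .
\end{equation}
Then I would integrate over $x \in \Rd$. Fubini applies because, by Step 1 and Cauchy-Schwarz, the function $(x,\omega) \mapsto V_{g_1} f_1(x,\omega)\, \overline{V_{g_2} f_2(x,\omega)}$ is in $L^1(\R^{2d})$. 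Switching the order of integration and making the substitution $u = t - x$ in the inner integral (with $t$ fixed) decouples the $t$- and $u$-integrations:
\begin{equation}
  \langle V_{g_1} f_1, V_{g_2} f_2 \rangle_{\Lt[2d]}
  \;=\; \left(\int_{\Rd} f_1(t)\, \overline{f_2(t)}\, dt\right)\,\left(\int_{\Rd} \overline{g_1(u)}\, g_2(u) \, du\right)
  \;=\; \langle f_1, f_2 \rangle \,\overline{\langle g_1, g_2 \rangle}.
\end{equation}

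\textbf{Main obstacle.} The only subtle point is the justification of the exchange of integration order in Step 2. This is precisely why Step 1 must come first: without the a priori bound $\norm{V_g f}_2 = \norm{f}_2 \norm{g}_2$, the integrand in the double integral is not known to be absolutely integrable on $\R^{2d}$, and Fubini could not be applied directly. Once that bound is in hand, Cauchy--Schwarz on $\Lt[2d]$ makes the rest routine. Alternatively, one could first establish Step 2 for $f_k, g_k$ in a convenient dense class (e.g.\ Schwartz functions, where all interchanges are obviously valid) and then extend by density using the norm bound from Step 1; I prefer the direct Fubini argument because it avoids separate density considerations.
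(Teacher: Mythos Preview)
Your argument is correct and follows the same slice-by-slice Parseval strategy as the paper's first proof; the difference lies only in how the interchange of integration is justified. The paper first restricts to $g_1, g_2 \in L^1(\Rd) \cap L^\infty(\Rd)$ (so that $f_k \, T_x \overline{g_k} \in \Lt$ for \emph{every} $x$, and $\overline{g_1} g_2 \in L^1(\Rd)$ automatically), proves \eqref{eq_OR} on this dense class, and then extends by density---precisely the alternative you mention and set aside. You instead establish the norm identity $\norm{V_g f}_{\Lt[2d]} = \norm{f}_2 \norm{g}_2$ first via Tonelli on a non-negative integrand, and then use Cauchy--Schwarz in $\Lt[2d]$ to legitimise Fubini directly for arbitrary $f_k, g_k \in \Lt$; this works and is arguably cleaner. (The paper also offers a second, more structural proof via the unitary factorisation $V_g f = \F_2 \mathcal{T}_a (f \otimes \overline{g})$ on $\Lt[2d]$.) One small correction to your Step~1: the parenthetical claim that $f \cdot T_x \overline{g} \in \Lt$ does not hold for \emph{every} $x$ when $f, g \in \Lt$ are arbitrary. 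The easy fix is to run Tonelli first, computing $\iint |f(t)|^2 |g(t-x)|^2 \, dt\, dx = \norm{f}_2^2 \norm{g}_2^2 < \infty$, which shows $f \cdot T_x \overline{g} \in \Lt$ for \emph{almost every} $x$; Plancherel then applies for those $x$, and the exceptional null set is harmless for the subsequent $x$-integration.
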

\begin{proof}
	For technical reasons, we first assume that $g_k \in L^1(\Rd) \cap L^\infty(\Rd) \subset \Lt$ (dense). Therefore, $f_k \, T_x g_k \in \Lt$ for all $x \in \Rd$ and Parseval's formula \eqref{eq_Parseval} applies to the integral in $\omega$.
	\begin{align}
		\langle V_{g_1} f_1, V_{g_2} f_2 \rangle_{\Lt[2d]} & = \int_{\Rd} \int_{\Rd} V_{g_1} f_1 (x, \omega) \overline{V_{g_2} f_2 (x, \omega)} \, d\omega \, dx\\
		& = \int_{\Rd} \left( \int_{\Rd} \F( f_1 \, T_x \overline{g_1})(\omega) \, \overline{\F(f_2 \, T_x \overline{g_2})(\omega)} \, d\omega \right) \, dx\\
		& = \int_{\Rd} \left( \int_{\Rd} f_1(t) \overline{f_2(t)} \; \overline{g_1(t-x)} g_2(t-x) \, dt \right) \, dx \, .
	\end{align}
	As $f_1, f_2 \in L^2(\Rd, dt)$, the product $f_1 \, \overline{f_2} \in L^1(\Rd, dt)$ and, by assumption, the product $\overline{g_1} g_2 \in L^1(\Rd, dx)$. We may therefore exchange the order of integration. We have
	\begin{align}
		\langle V_{g_1} f_1, V_{g_2} f_2 \rangle_{\Lt[2d]} & = \int_{\Rd} f_1(t) \overline{f_2(t)} \left( \int_{\Rd} \overline{g_1(t-x)} g_2(t-x) \, dx \right) \, dt\\
		& = \langle f_1, f_2 \rangle \overline{\langle g_1, g_2 \rangle}.
	\end{align}
	By a density argument the result extends to $g_k \in \Lt$, $k=1,2$.
\end{proof}
A shorter proof can be obtained by introducing the following unitary operators for functions of $2d$ variables and the tensor product of two functions;
\begin{equation}
	\mathcal{T}_a F(x,t) = F(t,t-x)
	\quad \text{ and } \quad
	\F_2 F(x,\omega) = \int_{\Rd} F(x,t) e^{-2 \pi i t \cdot \omega} \, dt.
\end{equation}
The operators $\mathcal{T}_a$ is an asymmetric coordinate change and the operator $\F_2$ is the partial Fourier transform in the second ($d$) variable(s). The tensor product of two functions is denoted by
\begin{equation}
	(f \otimes g) (x,\omega) = f(x) g(\omega)
\end{equation}

\noindent
\textit{Second Proof of the Orthogonality Relations}. If $f,g \in \Lt$, then
\begin{align}
	\langle V_{g_1} {f_1}, V_{g_2} f_2 \rangle_{\Lt[2d]} & = \langle \F_2 \mathcal{T}_a(f_1 \otimes \overline{g_1}), \F_2 \mathcal{T}_a(f_2 \otimes \overline{g_2}) \rangle_{\Lt[2d]}\\
	& = \langle f_1 \otimes \overline{g_1}, f_2 \otimes \overline{g_2} \rangle_{\Lt[2d]}\\
	& = \langle f_1, f_2\rangle \overline{\langle g_1, g_2 \rangle}.
\end{align}
\begin{flushright}
	$\square$
\end{flushright}

As a corollary, we obtain a Plancherel-like result for the STFT.
\begin{corollary}\label{cor_Vgf_L2}
	For $f,g \in \Lt$, we have
	\begin{equation}
		\norm{V_g f}_2 = \norm{f}_2 \norm{g}_2 \, .
	\end{equation}
	In particular, if $\norm{g}_2 = 1$ then
	\begin{equation}\label{eq_norm_f_Vgf}
		\norm{f}_2 = \norm{V_g f}_2, \qquad \forall f \in \Lt .
	\end{equation}
	Hence, in this case the STFT is an isometry from $\Lt$ into $\Lt[2d]$.
\end{corollary}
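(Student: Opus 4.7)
The plan is to obtain this as an immediate specialization of the Orthogonality Relations (Theorem \ref{thm_ortho}) just proved, so the work is essentially a substitution rather than a fresh argument.

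Concretely, I would set $f_1 = f_2 = f$ and $g_1 = g_2 = g$ in the identity \eqref{eq_OR}. The right-hand side becomes $\langle f, f \rangle \, \overline{\langle g, g \rangle} = \norm{f}_2^2 \, \norm{g}_2^2$, a nonnegative real number (so the complex conjugate is harmless), while the left-hand side is $\langle V_g f, V_g f \rangle_{\Lt[2d]} = \norm{V_g f}_2^2$. Theorem \ref{thm_ortho} also asserts that $V_g f \in \Lt[2d]$, so this is legitimate. Taking square roots gives the first claim $\norm{V_g f}_2 = \norm{f}_2 \norm{g}_2$. Specializing to $\norm{g}_2 = 1$ immediately yields \eqref{eq_norm_f_Vgf}, and since $f \mapsto V_g f$ is linear in $f$, a norm-preserving linear map from $\Lt$ into $\Lt[2d]$ is an isometry by definition.

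There is no real obstacle here; the content is entirely contained in Theorem \ref{thm_ortho}, and the only thing to be slightly careful about is bookkeeping of the complex conjugate (which is trivial because $\langle g, g\rangle = \norm{g}_2^2 \in \R_{\geq 0}$). One could note, but need not belabor, that the image of $\Lt$ under $V_g$ is a proper closed subspace of $\Lt[2d]$ (the STFT is not surjective), so ``isometry into'' rather than ``isometry onto'' is the correct phrasing.
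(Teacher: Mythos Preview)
Your proposal is correct and is exactly the intended argument: the corollary is stated immediately after Theorem~\ref{thm_ortho} with no separate proof, so the paper implicitly takes the same specialization $f_1=f_2=f$, $g_1=g_2=g$ in \eqref{eq_OR}. Your remarks on the conjugate and on ``into'' versus ``onto'' are accurate and appropriately brief.
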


We note that \eqref{eq_norm_f_Vgf} tells us that $f$ is completely determined by $V_g f$. Furthermore, we have the following implication
\begin{equation}
	\langle f , M_\omega T_x g \rangle = 0, \, \forall (x, \omega) \in \R^{2d}
	\qquad  \Longrightarrow \qquad
	f = 0.
\end{equation}
This means that the set of time-frequency shifts of the window $g$ is complete in $\Lt$ and, hence, spans a dense subspace of $\Lt$.

\begin{remark}
	In particular, the last result tells us that
	\begin{equation}
		X = \text{span } \{ M_\omega T_x g_0 \mid (x, \omega) \in \R^{2d} \}
	\end{equation}
	is dense in $\Lt$, where $g_0(t) = 2^{d/4} e^{- \pi t^2}$ is the $d$-dimensional standard Gaussian. We used Plancherel's theorem in order to prove this result (for general $g \in \Lt$), but there are other proofs as well (e.g., using Fourier series \cite[Chap.~1.5]{Gro01}). Now, we will prove that Plancherel's theorem is also a consequence of the fact that $X$ is dense in $\Lt$.
	
	For $f \in X$ we have
	\begin{equation}
		f(t) = \sum_{k=1}^n c_k M_{\omega_k} T_{x_k} g_0 .
	\end{equation}
	When computing $\norm{f}_2 = \langle f, f \rangle$, we need to compute inner products of time-frequency shifted Gaussians. We recall that
	\begin{align}
		\langle g_0, M_\omega T_x g_0 \rangle = e^{-\pi i x \cdot \omega} e^{-\frac{\pi}{2} (x^2 + \omega^2)}
	\end{align}
	We compute
	\begin{align}
		\langle M_\eta T_\xi g_0 , M_\omega T_x g_0 \rangle
		& = \langle g_0 , T_{-\xi} M_{\omega - \eta} T_x g_0 \rangle \\
		& = \langle g_0, e^{2 \pi i \xi \cdot (\omega-\eta)} M_{\omega-\eta} T_{x-\xi} g_0 \rangle \\
		& = e^{-2 \pi i \xi \cdot (\omega-\eta)} e^{-\pi i (x-\xi) \cdot (\omega - \eta)} e^{-\frac{\pi}{2} ((x-\xi)^2+(\omega-\eta)^2)} \\
		& = e^{-\pi i (x+\xi) \cdot (\omega - \eta)} e^{-\frac{\pi}{2} ((x-\xi)^2+(\omega-\eta)^2)}
	\end{align}
	Now, we perform the same computation for the Fourier transforms of the time-frequency shifted Gaussians.
	\begin{align}
		\langle \F (M_\eta T_\xi g_0), \F (M_\omega T_x g_0) \rangle
		& = \langle e^{2 \pi i \eta \cdot \xi} M_{-\xi} T_\eta \F g_0, e^{2 \pi i \omega \cdot x} M_{-x} T_{\omega} \F g_0 \rangle \\
		& = e^{2 \pi i \eta \cdot \xi} e^{-2 \pi i \omega \cdot x} \langle g_0, T_{-\eta} M_{\xi-x} T_\omega g_0 \rangle \\
		& = e^{2 \pi i \eta \cdot \xi} e^{-2 \pi i \omega \cdot x} \langle g_0, e^{2 \pi i \eta \cdot(\xi-x)} M_{\xi-x} T_{\omega-\eta} g_0 \rangle \\
		& = e^{2 \pi i \eta \cdot \xi} e^{-2 \pi i \omega \cdot x} e^{-2 \pi i \eta \cdot(\xi-x)} e^{-\pi i (\xi-x) \cdot (\omega - \eta)} e^{-\frac{\pi}{2}((\xi-x)^2+(\omega-\eta)^2)} \\
		& = e^{-\pi i (\omega - \eta) \cdot(x + \xi)} e^{-\frac{\pi}{2} ((x-\xi)^2 + (\omega-\eta)^2)} .
	\end{align}
	In the last step we used the fact that $(x-\xi)^2 = (\xi-x)^2$. Hence, these computations show that
	\begin{equation}
		\langle M_\eta T_\xi g_0, M_\omega T_x g_0 \rangle = \langle \F( M_\eta T_\xi g_0), \F (M_\omega T_x g_0) \rangle
	\end{equation}
	Therefore
	\begin{align}
		\langle f, f \rangle & = \langle \sum_{k=1}^n c_k M_{\omega_k} T_{x_k} g_0, \sum_{l=1}^n c_l M_{\omega_l} T_{x_l} g_0 \rangle \\
		& = \langle \sum_{k=1}^n c_k \F (M_{\omega_k} T_{x_k} g_0), \sum_{l=1}^n c_l \F (M_{\omega_l} T_{x_l} g_0) \rangle \\
		& = \langle \widehat{f}, \widehat{f} \rangle ,
	\end{align}
	by linearity of the inner product. Summing up, this shows that Plancherel's theorem is equivalent to the completeness of the set of time-frequency shifted standard Gaussians.
	\flushright{$\diamond$}
\end{remark}

After knowing that the information of $f$ is completely contained in $V_gf$, the question that pops up is: How can we recover $f$ from $V_g f$?

Before we can answer this question, we need a quick excursion in vector-valued integrals. In time-frequency analysis, superpositions of time-frequency shifts, such as
\begin{equation}
	f = \iint_{\R^{2d}} F(x, \omega) (M_\omega T_x g) \, d(x, \omega) .
\end{equation}
are of utmost importance. Note that $g$ is time-frequency shifted by $(x,\omega)$, but it also depends on its argument. Hence, the integral gives us back a vector (i.e., a function) in the Hilbert space $\Lt$, which depends on the same argument as $g$. Therefore, we may also speak of a function-valued integral. More generally, integrals may also be matrix- or operator-valued. Later on, we will use a weak formulation for such integrals. Furthermore, consider the following example. If $F \in \Lt[2d]$, then the conjugate-linear functional
\begin{equation}
	\ell(h) = \iint_{\R^{2d}} F(x,\omega) \overline{\langle h, M_\omega T_x g \rangle} \, d(x,\omega)
\end{equation}
is a bounded functional on $\Lt$. This can be seen by applying the Cauchy-Schwarz inequality and the Plancherel-like result from Corollary \ref{cor_Vgf_L2};
\begin{equation}
	|\ell(h)| \leq \norm{F}_{\Lt[2d]} \norm{V_g h}_2 = \norm{F}_{\Lt[2d]} \norm{g}_2 \norm{h}_2.
\end{equation}
This means that $\ell$ defines a unique function $f = \iint_{\R^{2d}} F(x,\omega) M_\omega T_x g \, d(x,\omega) \in \Lt$ with norm $\norm{f}_2 \leq \norm{F}_{\Lt[2d]} \norm{g}_2$ and satisfying $\ell(h) = \langle f, h \rangle$.

With this notion at hands, we will be able to prove the inversion formula for the STFT.
\begin{theorem}[Inversion formula for the STFT]
	Let $g \in \Lt$ and choose $\widetilde{g} \in \Lt$ such that $\langle \widetilde{g}, g \rangle \neq 0$. Then we have for all $f \in \Lt$
	\begin{equation}\label{eq_inv_STFT}
		f = \frac{1}{\langle \widetilde{g}, g \rangle} \iint_{\R^{2d}} V_g f(x, \omega) M_\omega T_x \widetilde{g} \, d(x, \omega).
	\end{equation}
\end{theorem}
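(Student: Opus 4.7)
The plan is to verify the identity \eqref{eq_inv_STFT} by testing both sides against an arbitrary $h \in \Lt$, using the weak interpretation of vector-valued integrals introduced just before the theorem. Denote the right-hand side by $\widetilde{f}$. To show $\widetilde{f} = f$ in $\Lt$, it suffices to prove $\langle \widetilde{f}, h \rangle = \langle f, h \rangle$ for every $h \in \Lt$, since the constant $\langle \widetilde{g}, g \rangle$ is nonzero by assumption and the vector-valued integral is well-defined by the bound preceding the theorem (with $F = V_g f \in L^2(\R^{2d})$, which is valid by Corollary \ref{cor_Vgf_L2}).

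First I would compute $\langle \widetilde{f}, h \rangle$ by unfolding the definition of the vector-valued integral as a conjugate-linear functional. Up to the scalar $\langle \widetilde{g}, g \rangle^{-1}$, this gives
\begin{equation}
\iint_{\R^{2d}} V_g f(x,\omega) \, \overline{\langle h, M_\omega T_x \widetilde{g} \rangle} \, d(x,\omega) = \iint_{\R^{2d}} V_g f(x,\omega) \, \overline{V_{\widetilde{g}} h(x,\omega)} \, d(x,\omega) = \langle V_g f, V_{\widetilde{g}} h \rangle_{L^2(\R^{2d})},
\end{equation}
where the first equality is just the definition of the STFT as an inner product with a time-frequency shifted window.

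Next I would invoke the Orthogonality Relations (Theorem \ref{thm_ortho}) with $f_1 = f$, $g_1 = g$, $f_2 = h$, $g_2 = \widetilde{g}$, to obtain
\begin{equation}
\langle V_g f, V_{\widetilde{g}} h \rangle_{L^2(\R^{2d})} = \langle f, h \rangle \, \overline{\langle g, \widetilde{g} \rangle} = \langle f, h \rangle \, \langle \widetilde{g}, g \rangle.
\end{equation}
Dividing by $\langle \widetilde{g}, g \rangle$ yields $\langle \widetilde{f}, h \rangle = \langle f, h \rangle$ for every $h \in \Lt$, which forces $\widetilde{f} = f$ in $\Lt$ and completes the proof.

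I do not anticipate any real obstacle here; the only conceptual point that requires care is ensuring the vector-valued integral exists as an element of $\Lt$ and that the bracket $\langle \widetilde{f}, h \rangle$ may be computed by moving the inner product inside the integral. Both are handled by the functional-analytic framework set up immediately before the theorem statement, where $\ell(h) = \iint F(x,\omega) \overline{\langle h, M_\omega T_x g\rangle}\, d(x,\omega)$ was shown to be a bounded conjugate-linear functional on $\Lt$. With $F = V_g f$ (which lies in $L^2(\R^{2d})$ by Corollary \ref{cor_Vgf_L2}) and window $\widetilde{g}$, the whole argument reduces to a clean application of the Orthogonality Relations.
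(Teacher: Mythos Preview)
Your proposal is correct and follows essentially the same approach as the paper: define $\widetilde{f}$ as the weak vector-valued integral, compute $\langle \widetilde{f}, h \rangle$ by moving the inner product inside (justified by the boundedness of the functional $\ell$ established just before the theorem, using $V_g f \in \Lt[2d]$ from Corollary~\ref{cor_Vgf_L2}), and then apply the Orthogonality Relations (Theorem~\ref{thm_ortho}) to obtain $\langle \widetilde{f}, h \rangle = \langle f, h \rangle$ for all $h$. The paper's proof is structurally identical.
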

\begin{proof}
	By Corollary \ref{cor_Vgf_L2} $V_g f \in \Lt[2d]$. Therefore, the following vector-valued integral is a well defined function $\widetilde{f} \in \Lt$;
	\begin{equation}
		\widetilde{f} = \frac{1}{\langle \widetilde{g}, g \rangle} \iint_{\R^{2d}} V_g f(x, \omega) M_\omega T_x \widetilde{g} \, d(x, \omega).
	\end{equation}
	By using the orthogonality relations, we see that
	\begin{align}
		\langle \widetilde{f}, h \rangle & =
		\frac{1}{\langle \widetilde{g}, g \rangle} \iint_{\R^{2d}} V_g f(x, \omega) \underbrace{\langle M_\omega T_x \widetilde{g}, h \rangle}_{\overline{V_{\widetilde{g}} h}} \, d(x, \omega)\\
		& = \frac{1}{\langle \widetilde{g}, g \rangle} \langle V_g f, V_{\widetilde{g}} h \rangle_{\Lt[2d]} = \frac{1}{\langle \widetilde{g}, g \rangle} \langle f, h \rangle \overline{\langle g, \widetilde{g} \rangle}\\
		& = \langle f, h \rangle.
	\end{align}
	This formula holds for any $h \in \Lt$ and we conclude that $\widetilde{f} = f$.
\end{proof}
Note that, as $g \neq 0$, we may actually choose $\widetilde{g} = g$ in the inversion formula. Hence, in this case the inversion formula reads
\begin{equation}
	f = \frac{1}{\norm{g}_2^2} \iint_{\R^{2d}} V_g f(x, \omega) M_\omega T_x g \, d(x, \omega) = \frac{1}{\norm{g}_2^2} \iint_{\R^{2d}} V_g f(\l) \, \pi(\l) g \, d\l, \qquad
	 \l = (x, \omega) \in \R^{2d}.
\end{equation}
Of course, this formula simplifies a bit if we assume that $\norm{g}_2 = 1$,
\begin{equation}
	f = \iint_{\R^{2d}} V_g f(\l) \, \pi(\l)g \, d\l.
\end{equation}
Hence, a function $f \in \Lt$ can be written as a continuous superposition of a time-frequency shifted window with weights obtained from its STFT. In this sense, \eqref{eq_inv_STFT} is similar to the inversion formula of the Fourier transform. The main difference is that the elementary building blocks, i.e., the complex exponential $e^{2 \pi i x \cdot \omega}$, are not in $\Lt$ whereas the elementary functions $M_\omega T_x \widetilde{g}$ are usually picked to be particularly nice $\Lt$-functions.

The time-frequency analysis of a signal now usually consists of three steps.

\noindent
\textit{Analysis.} Given a signal $f$, its STFT $V_g f$ is computed. We will interpret it as a joint time-frequency distribution of $f$. The window $g$ crucially influences the analysis and one usually demands that $g$ and $\widehat{g}$ decay sufficiently fast.

\noindent
\textit{Processing.} The STFT $V_g f(x,\omega)$ is then transformed into a new function $F(x,\omega)$. Some typical processing steps are truncation of $V_g f$ to a region where something interesting seems to happen or where $|V_g f|$ is above a given threshold. In an engineering language such processing steps are called feature extraction, signal segmentation and signal compression, depending on the purpose. Furthermore, the STFT is also used as a pre-processing tool in the area of machine learning.

\noindent
\textit{Synthesis.} The processed signal is then reconstructed by using the modified inversion formula
\begin{equation}
	\widetilde{f} = \frac{1}{\langle \widetilde{g}, g \rangle} \iint_{\R^{2d}} F(x,\omega) M_\omega T_x \widetilde{g} \, d(x,\omega).
\end{equation}
Note that signal $\widetilde{f}$ is reconstructed from $F(x,\omega)$ and not from $V_g f(x,\omega)$. Hence, $\widetilde{f}$ may naturally differ from $f$. Also we remark again that distinct windows might be used for the analysis and the synthesis.

It is also customary to write the inversion formula as a superposition of rank-one operators. Consider a Hilbert space $\mathcal{H}$ and let $u \otimes \overline{v}$ denote the rank-one operator defined by
\begin{equation}
	(u \otimes \overline{v})(h) = \langle h, v \rangle_\mathcal{H} u, \quad u,v,h \in \mathcal{H}.
\end{equation}
Then, the inversion formula \eqref{eq_inv_STFT} is the following continuous resolution of the identity operator
\begin{equation}
	I_{\Lt} = \frac{1}{\langle \widetilde{g}, g \rangle} \iint_{\R^{2d}} M_\omega T_x \widetilde g \otimes \overline{M_\omega T_x g} \, d(x, \omega).
\end{equation}

\section{Quadratic Time-Frequency Representations}
Again, for most of this chapter we will follow the textbook by Gröchenig \cite{Gro01}.

Quadratic representations are often interpreted as joint (probability) densities of a function $f \in \Lt$ and its Fourier transform $\widehat{f} \in \Lt$ on $\R^{2d}$. The space $\R^{2d}$ might be called time-frequency plane or phase space, depending on the specific situation.

Such joint time-frequency representations were investigated by E.~Wigner \cite{Wig32} in the context of quantum mechanics with the goal of finding a joint probability distribution for the position and momentum variables. Now, quadratic representations are also popular in engineering and we will call them (quadratic) time-frequency representations. Mathematically, we are looking for a sesquilinear form $G(f,g)(x,\omega)$; that is $G$ is linear in the first argument $f$ and conjugate linear in the second argument $g$. Then, there are two ways to make $G$ ``quadratic" in $f$. We either consider $Cf=|G(f,g)|^2$ with $g$ fixed or $Cf = G(f,f)$. In both cases the quadratic form satisfies
\begin{equation}\label{eq_quadratic_TFrep}
	C(c_1 f + c_2 h) = |c_1|^2 \, C f + |c_2|^2 \, C h + c_1 \overline{c_2} \, G(f,h) + \overline{c_1} c_2 \, G(h,f), \quad c_1, c_2 \in \C.
\end{equation}
The non-linearity in \eqref{eq_quadratic_TFrep} causes problems because the superposition of two signals $f$ and $h$ introduces the cross-terms $G(f,h)$ and $G(h,f)$. These are often hard to separate from the main components of interest $Cf$ and $Ch$. Also, the analysis and interpretation of these cross-terms is difficult.

On the positive side, a quadratic time-frequency representation of the form $G(f,f)$ does not depend on a window $g$ and should display the time-frequency content of $f$ in a pure, unobstructed form.

\subsection{The Spectrogram}\label{sec_spec}
In this section we are going to briefly discuss the spectrogram and some of its properties. We have seen examples already at the beginning of Section \ref{sec_STFT}.
\begin{definition}
	Let $g \in \Lt$ be a window with the property $\norm{g}_2 = 1$. Then, the spectrogram of $f$ with respect to the window $g$ is given by
	\begin{equation}
		\spec_g f(x, \omega) = |V_g f(x, \omega)|^2.
	\end{equation}
\end{definition}
As already apparent from its definition, the spectrogram $\spec_g f$ inherits properties from the STFT. In particular, it is covariant and preserves the energy and, furthermore, is non-negative (by definition);
\begin{itemize}
	\item $\spec_g f(x, \omega) \geq 0, \qquad \forall (x, \omega) \in \R^{2d}$,
	\item $\spec_g \left(M_\eta T_\xi f \right)(x, \omega) = \spec_g f(x-\xi, \omega-\eta)$ by Proposition \ref{pro_covar}, which we also called the Covariance Principle,
	\item $\norm{\spec_g f}_{L^1(\R^{2d})} = \norm{f}_{\Lt}^2$ by Corollary \ref{cor_Vgf_L2}.
\end{itemize}
The last point demonstrates the fact that, up to normalization of $f$, the spectrogram may serve as a probability density on $\R^{2d}$ of the joint time-frequency content of a signal $f$.

\subsection{The Rihaczek Distribution}
We will now study the Rihaczek distribution as described in \cite{Gro03_FeiStr}, which is a rather simple time-frequency representation. It is more or less simply the tensor product of $f$ and its Fourier transform $\widehat{f}$.
\begin{equation}\label{eq_Riha}
	R f(x, \omega) = f(x) \overline{\widehat{f}(\omega)} e^{-2 \pi i x \cdot \omega}.
\end{equation}

The Rihaczek distribution is intimately connected to the STFT by the Fourier transform (on $\R^{2d}$)
\begin{equation}\label{eq_Riha_Vff}
	R f(x, \omega) = \widehat{V_f f}(\omega, -x).
\end{equation}
This is confirmed by a direct computation.
\begin{align}
	\widehat{V_f f}(\omega, -x) & = \int_{\Rd} \left( \int_{\Rd} V_f f(\xi, \eta) e^{-2 \pi i (\xi \cdot \omega - x \cdot \eta)} \, d\eta \right) \, d \xi\\
	& = \int_{\Rd} \left( \int_{\Rd} \left(\int_{\Rd} f(t) \overline{f(t-\xi)} e^{-2 \pi i \eta \cdot t} \, dt \right) e^{-2 \pi i (\xi \cdot \omega - x \cdot \eta)} \, d\eta \right) \, d \xi\\
	& = \int_{\Rd} \underbrace{\left( \int_{\Rd} \left(\F \left(f \, T_\xi \overline{f}\right) (\eta) \right) \,  e^{2 \pi i x \cdot \eta} \, d\eta \right)}_{= \F^{-1} \left( \F \left(f \, T_\xi \overline{f}\right) \right)(x) } e^{- 2\pi i \xi \cdot \omega} \, d \xi\\
	& = \int_{\Rd} f(x) \overline{f(x-\xi)} e^{-2 \pi i \xi \cdot \omega} \, d\xi
	= f(x) \int_{\Rd} \overline{f(\xi)} e^{-2 \pi i (-\xi + x) \cdot \omega} \, d\xi\\
	& = f(x) \overline{\int_{\Rd} f(\xi) e^{2 \pi i (-\xi + x) \cdot \omega} \, d\xi}
	= f(x) \overline{\widehat{f}(\omega)} e^{-2 \pi i x \cdot \omega}\\
	= R f(x, \omega).
\end{align}

We can also state \eqref{eq_Riha_Vff} in a more general way.
\begin{equation}
	R(f,g)(x, \omega) = f(x) \overline{\widehat{g}(\omega)} e^{-2 \pi i x \cdot \omega} = \widehat{V_g f} (\omega, -x).
\end{equation}
We call $R(f,g)$ the cross-Rihaczek distribution of $f$ and $g$. In this context, we should $R f = R(f, f)$ actually call the auto-Rihaczek distribution of $f$. We note that, up to the complex exponential, the STFT factors under the Fourier transform. We will mainly use $Rf$ when presenting the classical uncertainty principle of Heisenberg, Pauli and Weyl.

\subsection{The Ambiguity Function}
Another time-frequency representation, which is up to a complex exponential the same as the STFT, is the cross-ambiguity function of two functions $f$ and $g$.
\begin{definition}\label{def_Afg}
	For $f$ and $g$ in $\Lt$, their cross-ambiguity function is defined as
	\begin{equation}
		A(f,g)(x,\omega) = \int_{\Rd} f(t+\tfrac{x}{2}) \overline{g(t-\tfrac{x}{2})} e^{-2 \pi i \omega \cdot t} \, dt = e^{\pi i x \cdot \omega} V_g f(x, \omega).
	\end{equation}
	For the case $f = g$, we write $A f = A(f,f)$ and call it the (auto-)ambiguity function of $f$.
\end{definition}
By definition, it follows immediately that
\begin{equation}
	\overline{A f(-x,-\omega)} = A f(x, \omega).
\end{equation}
Particularly, $Af(0,0)$ is a real value. Of course, most properties of the STFT carry over to the cross-ambiguity function. In particular the orthogonality relations in Theorem \ref{thm_ortho} imply
\begin{equation}
	\norm{Af}_{\Lt[2d]} = \norm{f}_{\Lt}^2.
\end{equation}

However, the ambiguity function $Af$ is a quadratic representation of a function $f$, whereas the STFT with window $g$, i.e., $V_gf$, is a linear transformation of $f$. Therefore, $f$ can only be recovered up to a phase factor by $Af$. In particular we have $A(c f) = A f$ for any $|c| = 1$. Such a $c$ is called a phase factor. If $f(0) \neq 0$, we have the following inversion formula
\begin{equation}\label{eq_ambi_inv}
	f(x) = \frac{1}{\, \overline{f(0)} \,} \int_{\Rd} Af(x, \omega) e^{\pi i x \cdot \omega} \, d \omega.
\end{equation}
This can be obtained as follows. First, we note that for fixed $x \in \Rd$, $Af(x, .)$ is the Fourier transform of $t \mapsto f(t+\tfrac{x}{2}) \overline{f(t-\tfrac{x}{2})}$. Hence, by the Fourier inversion formula we have
\begin{equation}
	f(t+\tfrac{x}{2})\overline{f(t-\tfrac{x}{2})} = \int_{\Rd} Af(x, \omega)e^{2 \pi i t \cdot \omega} \, d \omega .
\end{equation}
By setting $t = \tfrac{x}{2}$, we obtain the desired result. Furthermore, it is easy to check that for a phase factor $|c| = 1$, equation \eqref{eq_ambi_inv} holds for $c f$ as well.

Of course, it is not quite ``fair" to compare the inversion formula for the auto-ambiguity function with the inversion formula for the STFT. If we introduce an ``auto-STFT", i.e., $V_f f$, then we can also only recover $f$ from $V_f f$ up to a phase factor. The key difference is that for the STFT, we completely know the window function $g$. Therefore, for the cross-ambiguity function, we may completely recover $f$ from $A(f,g)$ by knowing $g$.

The ambiguity function occurs naturally in radar applications and, therefore, is often called the radar ambiguity function. We will briefly discuss a simplified model as described in \cite[Chap.~4.2]{Gro01}. Suppose that the distance and speed of an unidentified flying object (UFO) are to be determined. For this purpose a transmitter/receiver station sends a test signal $f$ towards the UFO. The signal $f$ is reflected and a part of the reflected signal is received as an echo $e$. The test signal might be a ``pulse" of short duration of the form $f(x) = \sigma(x) e^{2 \pi i \omega_0 \cdot x}$., where $\sigma$ is a nice envelope signal of slow variation. This means that $\supp(\widehat{\sigma}) \subset [-W,W]$ with $W$ small compared to the carrier frequency $\omega_0$, so $\supp(\widehat{f}) \subset[\omega_0-W,\omega_0+W]$. Let $r$ be the distance between the transmitter/receiver station and the UFO, $v$ the relative speed between the station and th UFO and $c$ the speed of light. Then, the echo $e$ is received with a time lag $\Delta t = \frac{2r}{c}$. Moreover, each frequency $\omega$ in the band $[\omega_0-W,\omega_0+W]$ of $f$ undergoes a Doppler shift $\Delta \omega = -\frac{2\omega v}{c}$. Since the bandwidth $2W$ is small compared to $\omega_0$, the frequency shifts can be approximated by $\Delta \omega \approx -\frac{2 \omega_0 v}{c}$, independently of the exact form of $f$. We ignore any further distortion of the signal $f$ in this example, therefore the echo $e$ has the form
\begin{equation}
	M_{\Delta\omega}T_{\Delta t} f.
\end{equation}
At the receiver, the echo is then compared to time-frequency shifts of the original signal $f$ by taking the correlation
\begin{equation}
	|\langle e, M_\omega T_x f\rangle| = |V_f f(x-\Delta t, \omega-\Delta \omega)| = |Af(x-\Delta t, \omega-\Delta \omega)|.
\end{equation}
The values of $\Delta t$ and $\Delta \omega$, and hence the distance $r$ and the velocity $v$ of the UFO, can be determined by means of the following lemma.

\begin{lemma}\label{lem_ambi_max}
	Let $f \in \Lt$, $f \neq 0$, then
	\begin{equation}
		|Af(x, \omega)| < Af(0,0) = \norm{f}_2^2,
	\end{equation}
	for all $(x, \omega) \neq (0,0)$.
\end{lemma}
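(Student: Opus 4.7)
The statement at the origin is immediate: $Af(0,0) = V_f f(0,0) = \langle f, f \rangle = \norm{f}_2^2$. So the real content is the strict inequality for $(x,\omega) \neq 0$. My plan is to reduce to Cauchy--Schwarz and then rule out the equality case.

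First, using the identity $Af(x,\omega) = e^{\pi i x \cdot \omega} V_f f(x,\omega)$ from Definition \ref{def_Afg}, the phase factor has modulus one, so
\begin{equation}
|Af(x,\omega)| = |V_f f(x,\omega)| = |\langle f, \pi(\l) f\rangle|.
\end{equation}
Since $\pi(\l) = M_\omega T_x$ is unitary on $\Lt$, Cauchy--Schwarz gives $|\langle f, \pi(\l) f \rangle| \leq \norm{f}_2 \norm{\pi(\l) f}_2 = \norm{f}_2^2$. Hence $|Af(x,\omega)| \leq Af(0,0)$ always, and equality can hold only when $f$ and $\pi(\l) f$ are linearly dependent, i.e.\ when there exists $c \in \C$ (necessarily with $|c|=1$ since $\pi(\l)$ is an isometry and $f \neq 0$) such that $M_\omega T_x f = c f$.

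The main step is to show that this eigenequation forces $f = 0$ as soon as $\l \neq 0$. I would split into two cases. If $x = 0$ and $\omega \neq 0$, the relation becomes $f(t)(e^{2\pi i \omega \cdot t} - c) = 0$ a.e., and the set $\{t : e^{2\pi i \omega \cdot t} = c\}$ is a countable union of hyperplanes orthogonal to $\omega$, hence of Lebesgue measure zero, forcing $f = 0$. If $x \neq 0$, taking absolute values in the pointwise identity $f(t-x)\, e^{2\pi i \omega \cdot t} = c f(t)$ yields $|f(t-x)| = |f(t)|$ a.e., and iterating gives $|f(t-nx)| = |f(t)|$ for every $n \in \Z$. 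Foliating $\Rd$ by lines parallel to $x$ and applying Fubini,
\begin{equation}
\norm{f}_2^2 = \int_{x^\perp} \sum_{n \in \Z} \int_0^1 |f(y + (s+n)x)|^2 \, ds \, dy = \sum_{n \in \Z} \int_{x^\perp} \int_0^1 |f(y+sx)|^2 \, ds \, dy,
\end{equation}
which is a sum of infinitely many equal nonnegative terms; finiteness forces each to vanish, so $f = 0$ a.e., again a contradiction.

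The hard part, or at least the only non-cosmetic point, will be the translation case: one has to be careful that ``$|f|$ is periodic along $x$'' really is incompatible with $f \in \Lt$. The Fubini slicing above is the cleanest way to make this rigorous in arbitrary dimension. Everything else reduces to unitarity of $\pi(\l)$ and the equality case of Cauchy--Schwarz.
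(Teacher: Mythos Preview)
Your proof is correct and follows essentially the same route as the paper: apply Cauchy--Schwarz, then rule out the equality case $M_\omega T_x f = c f$ by showing it forces $|f|$ (or $|\widehat f|$) to be periodic, which is incompatible with $f\in\Lt$. The only cosmetic difference is that for the case $x=0,\ \omega\neq 0$ the paper passes to the Fourier side (so that $|\widehat f|$ becomes $\omega$-periodic) rather than arguing directly that $\{t: e^{2\pi i\omega\cdot t}=c\}$ is null, and the paper states the periodicity-contradiction without writing out your Fubini slicing.
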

\begin{proof}
	The inequality
	\begin{equation}
		|Af(x, \omega)| = |\langle f, M_\omega T_x f \rangle| \leq \norm{f}_2^2
	\end{equation}
	is just the Cauchy-Schwarz inequality. Equality holds if and only if $M_\omega T_x f = c f$ (i.e., we have linear dependence) for some $(x, \omega) \in \R^{2d}$ and $|c| = 1$. Now, assume $x \neq 0$ and further assume that $T_x |f| = |c M_\omega T_x f| = |c f| = |f|$, then $|f|$ would be periodic with period $x$. However, as $f \in \Lt$, it cannot be periodic unless $f = 0$. If $x = 0$ and $\omega \neq 0$, then, analogously, $T_\omega| \widehat{f}| = |\widehat{f}|$ would be $\omega$-periodic. Therefore, $|Af(x, \omega)|$ is maximal if and only if $(x, \omega) = (0,0)$.
	
	The value of $Af(0,0)$ yields the energy of $f$, i.e.,
	\begin{equation}
		Af(0,0) = \int_{\Rd} f(t) \overline{f(t)} \, dt = \langle f, f \rangle = \norm{f}_2^2.
	\end{equation}
\end{proof}
Now, to find the lag $(\Delta t, \Delta \omega)$, one determines experimentally the values $(x_0,\omega_0)$ where the correlation function $|\langle e, M_\omega T_x f \rangle|$ takes its maximum. According to the lemma above, the position of this maximum is the desired lag $(\Delta t, \Delta \omega)$.

We give now two examples of auto-ambiguity functions, which are used to measure time-frequency concentrations.
\begin{example}\label{ex_ambi}
	\begin{enumerate}[(a)]
	\item	The ambiguity function of the box function is given by
		\begin{equation}
			A b_0(x, \omega) =
			\begin{cases}
				\frac{\sin(\pi \omega (1-|x|))}{\pi	\omega}, & |x| \leq 1\\
				0, & \text{else}
			\end{cases}
		\end{equation}
		\begin{figure}[ht!]
			\centering
			\includegraphics[width=.45\textwidth]{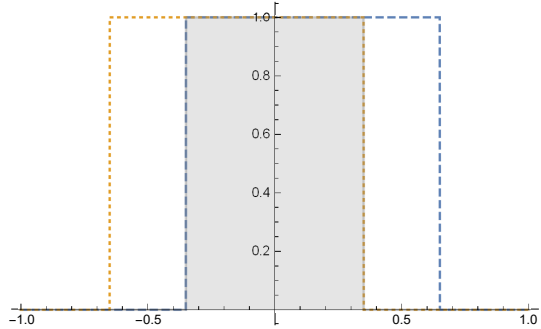}
			\caption{\footnotesize{Illustration of the function $b_0(t+\tfrac{x}{2})b_0(t-\tfrac{x}{2})$ for some $x$ with $|x|\leq 1$.}}
		\end{figure}
		
		\noindent
		We show this by a directly computing
		\begin{equation}
			A b_0 (x, \omega) = \int_\R  b_0(t+\tfrac{x}{2})b_0(t-\tfrac{x}{2}) e^{-2 \pi i \omega \cdot t} \, dt.
		\end{equation}
		We distinct the cases $x \in [0,1]$ and $x \in [-1,0]$. We start with the case $x \in [0,1]$.
		\begin{align}
			\int_\R  b_0(t+\tfrac{x}{2})b_0(t-\tfrac{x}{2}) e^{-2 \pi i \omega t} \, dt
			& = \int_{\tfrac{-1+x}{2}}^{\tfrac{1-x}{2}} e^{-2 \pi i \omega t} \, dt\\
			& = \int_{\tfrac{-1+x}{2}}^{\tfrac{1-x}{2}} \cos(2 \pi \omega t) \, dt
			= \frac{\sin(2 \pi \omega t)}{2 \pi \omega} \Bigg|_{t = \tfrac{-1+x}{2}}^{\tfrac{1-x}{2}}\\
			& = \frac{\sin(2 \pi \omega \frac{1-x}{2})}{2 \pi \omega} - \frac{\sin(2 \pi \omega \frac{-1+x}{2})}{2 \pi \omega}\\
			& = \frac{\sin(\pi \omega (1-x))}{\pi \omega}.
		\end{align}
		The case $x \in [-1,0]$ follows easily by the symmetry $A b_0(x,\omega) = A b_0(-x, \omega)$ and the result follows.
	
		\item Next, we compute the ambiguity function of the 1-dimensional standard Gaussian $g_0(t) = 2^{1/4} e^{- \pi t^2}$, $t \in \R$ (the case for of the $d$-dimensional standard Gaussian follows just as easily.) As an in-between step, we will use the dilation operator $D_a g(t) = a^{-1/2} g(a^{-1} t)$ and its behavior under the Fourier transform and Theorem \ref{thm_FT_Gauss}, which shows that $\F g_0 = g_0$.
		\begin{align}\label{eq_ambi_Gauss}
			A g_0(x, \omega)& = 2^{1/2} \int_\R e^{-\pi(t+\tfrac{x}{2})^2} e^{-\pi(t-\tfrac{x}{2})^2} e^{-2 \pi i \omega \cdot t} \, dt\\
			& = e^{- \pi \tfrac{x^2}{2}} \int_\R \underbrace{2^{1/2} e^{-2 \pi t^2}}_{D_{1/\sqrt{2}} g_0} e^{-2 \pi i \omega \cdot t} \, dt\\
			& = e^{- \pi \tfrac{x^2}{2}} \F \left(D_{1/\sqrt{2}} g_0 \right)(\omega) = e^{- \pi \tfrac{x^2}{2}} D_{\sqrt{2}} \F g_0(\omega) = e^{- \pi \tfrac{x^2}{2}} D_{\sqrt{2}} g_0(\omega)\\
			& = e^{- \tfrac{\pi}{2} (x^2 + \omega^2)}.
		\end{align}
		Note, that we also have the relation $A(f,g)(x,\omega) = e^{\pi i x \cdot \omega} V_g f(x, \omega)$. Hence, using the result from Example \ref{ex_STFT_g0} that $V_{g_0} g_0(x,\omega) = e^{-\pi i x \cdot \omega} e^{-\frac{\pi}{2} (x^2+\omega^2)}$ we would have obtained the result without any further computations.
		
		\begin{figure}[!ht]
			\subfigure[Density-plots of the absolute values of the auto-ambiguity functions of the box function $b_0$ and the standard Gaussian $g_0$.]
			{
				\includegraphics[width=.425\textwidth]{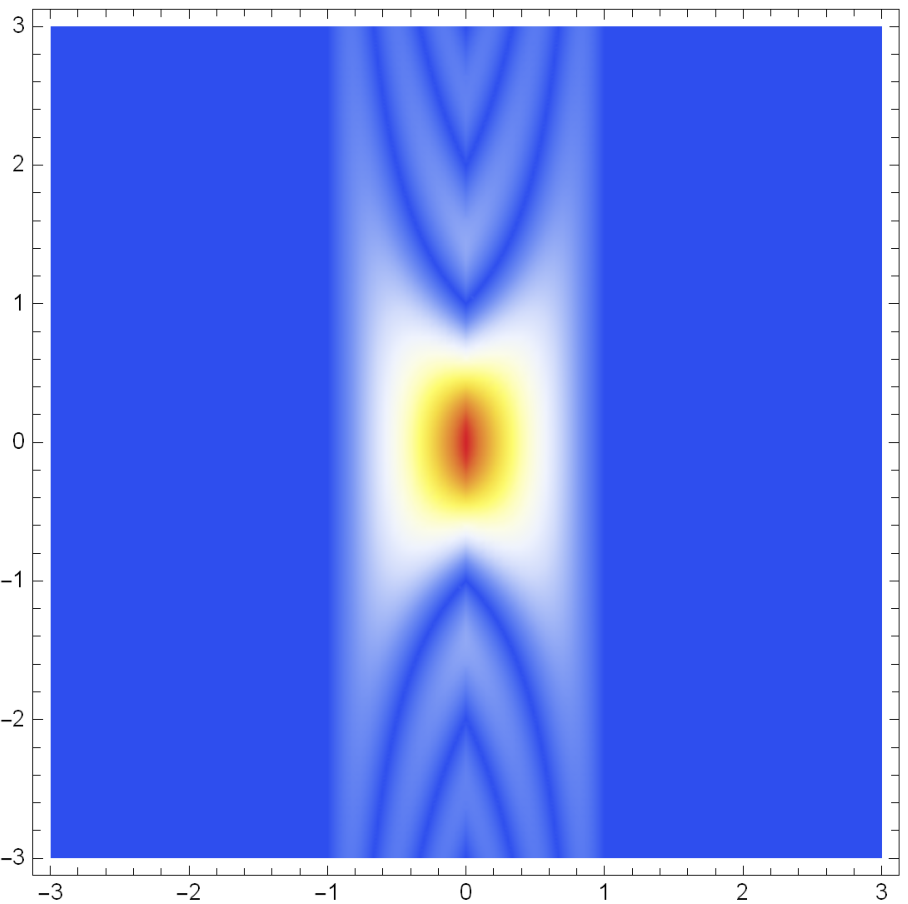}
				\hspace{0.5cm}
				\includegraphics[width=.425\textwidth]{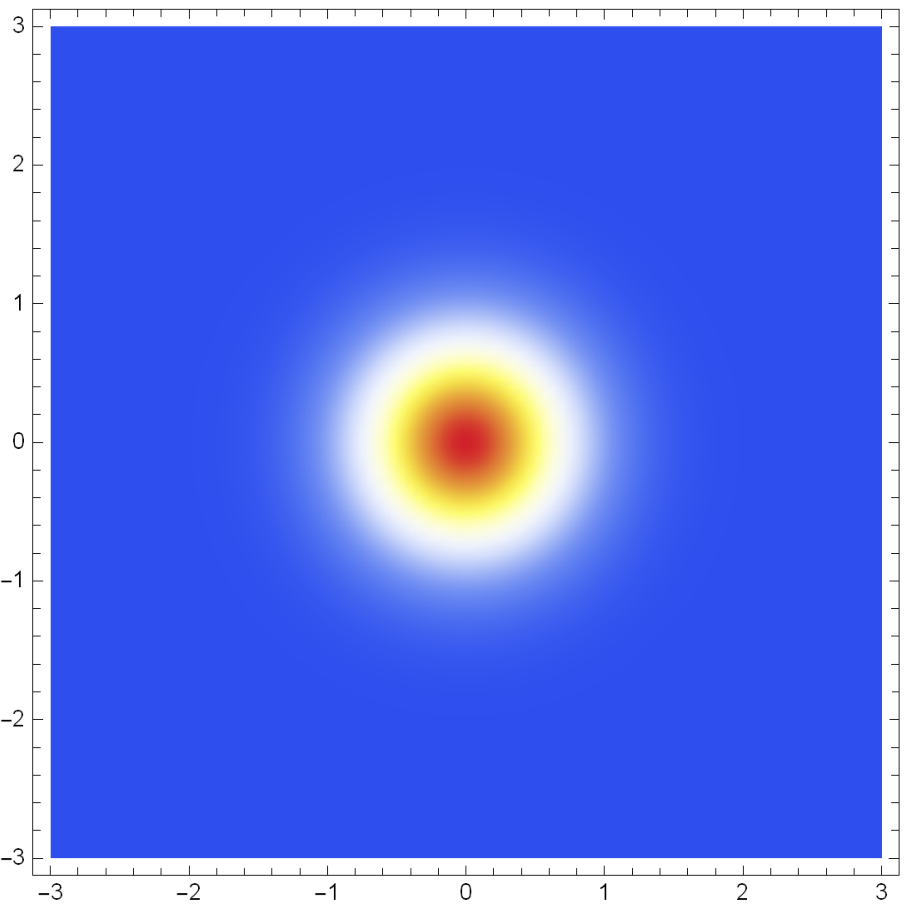}
			}
			\\
			\subfigure[3d-plots of the absolute values of the auto-ambiguity functions of the box function $b_0$ and the standard Gaussian $g_0$.]
			{
				\includegraphics[width=.425\textwidth]{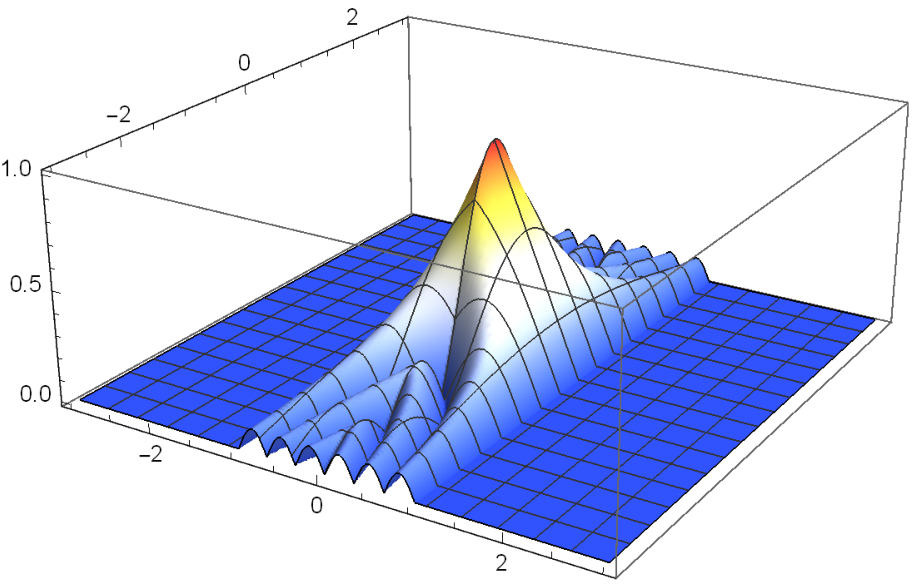}
				\hspace{0.5cm}
				\includegraphics[width=.425\textwidth]{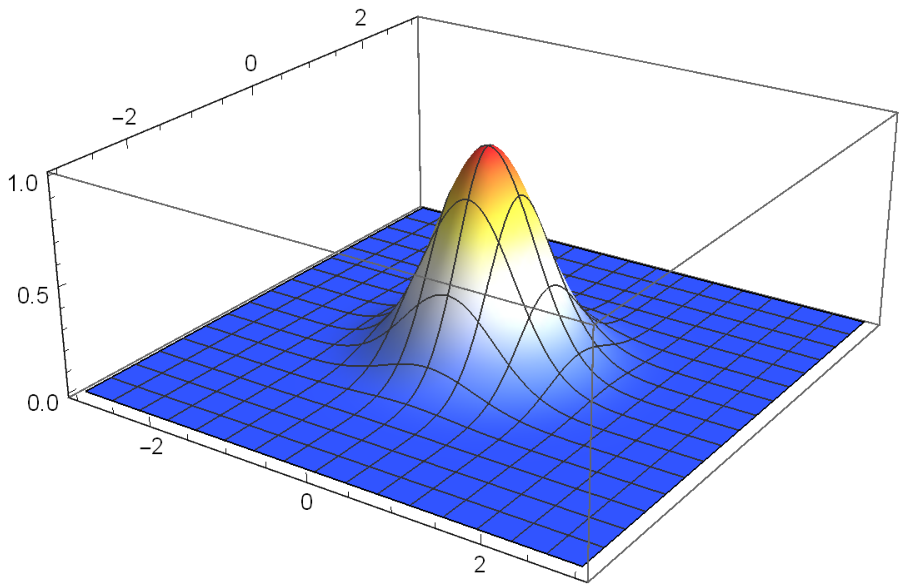}
			}
			\caption{\footnotesize{Time-frequency concentration of the box-function $b_0$ and the standard Gaussian $g_0$. }}
		\end{figure}
	\end{enumerate}
	\flushright{$\diamond$}
\end{example}
\begin{remark}
	We see that, for $x = 0$, we obtain the Fourier transform of the box function
	\begin{equation}
		A b_0(0,\omega) = \F b_0(\omega) = \sinc(\omega),
	\end{equation}
	which demonstrates, again, that the frequency resolution of the box function is not suitable for measuring time-frequency content. On the time-side, the ambiguity function measures the auto-correlation of a signal. Since the box function is an even function (i.e., $g^\vee = g$), for $\omega = 0$, we obtain the convolution of the box function with itself by using l'Hospital's rule
	\begin{equation}
		Ab_0(x,0) = \lim_{\omega \to 0} \frac{\sin(\pi \omega) (1-|x|)}{\pi \omega} = 1-|x|.
	\end{equation}
	
	For the second example, we see that for $x = 0$, we obtain the auto-correlation of the Gaussian spectrum
	\begin{equation}
		Ag_0(0,\omega) = e^{-\frac{\pi}{2} \omega^2},
	\end{equation}
	which is a dilated Gaussian, and not the standard Gaussian. The same is true in the case $\omega = 0$;
	\begin{equation}
		Ag_0(x,0) = e^{-\frac{\pi}{2} x^2}.
	\end{equation}
	This is, as in the case of the box function, the convolution of the Gaussian with itself.
	
	What causes the difference in the spectral behavior of the box function compared to the Gaussian, is the fact that $Ab_0(0,\omega)$ does actually not give the Fourier transform of $b_0$, but the auto-correlation of the spectrum $\F b_0$. But, the auto-correlation corresponds to a convolution in this case. Now, the $\sinc$-function is invariant under convolution, as can easily be seen by the following computation;
	\begin{equation}
		(\sinc * \sinc) (\omega) = (\F b_0*\F b_0)(\omega) = \F(\underbrace{b_0 \, b_0}_{=b_0})(\omega) = \F b_0 (\omega) = \sinc(\omega).
	\end{equation}
	Note that, in general, we have
	\begin{equation}
		Af(0,\omega) = \F(f \, \overline{f})(\omega) = (\F f * \F \overline{f}) (\omega)
		\quad \text{ and } \quad
		Af(x,0) = (f * \overline{f}^\vee) (x).
	\end{equation}
	\begin{flushright}
		$\diamond$
	\end{flushright}
\end{remark}

Since, we have the relation $A(f,g)(x,\omega) = e^{\pi i x \cdot \omega} V_gf(x,\omega)$, it is clear that the ambiguity functions enjoys similar properties as the STFT. In particular, the orthogonality relations and the Parseval-like identity hold;
\begin{align}
	\langle A(f_1,g_1), A(f_2,g_2) \rangle_{\Lt[2d]} & = \langle e^{\pi i x \cdot \omega} V_{g_1} f_1, e^{\pi i x \cdot \omega} V_{g_2} f_2 \rangle_{\Lt[2d]}\\
	& = \langle V_{g_1} f_1, V_{g_2} f_2 \rangle_{\Lt[2d]} = \langle f_1, f_2\rangle \overline{\langle g_1, g_2 \rangle}.
\end{align}
Therefore, we also have
\begin{equation}
	\norm{A(f,g)}_{\Lt[2d]} = \norm{f}_2 \norm{g}_2.
\end{equation}
As the ambiguity function has a symmetry in the time-frequency shifts, $A(f,g)(x,\omega) = \langle \pi(-\frac{\l}{2}) f, \pi( \frac{\l}{2}) g \rangle$, the analogue to Proposition \ref{pro_fitf} (Fundamental identity of TFA) becomes more symmetric as well;
\begin{align}
	A(f,g)(x,\omega) & = e^{\pi i x \cdot \omega} V_gf(x,\omega) = e^{\pi i x \cdot \omega} e^{-2 \pi i x \cdot \omega} V_{\widehat{g}}\widehat{f} (\omega, -x)
\\
& = e^{-\pi i x \cdot \omega} V_{\widehat{g}}\widehat{f}(\omega, -x) = A(\widehat{f}, \widehat{g})(\omega, -x).
\end{align}
Lastly, we discuss a covariance result for the ambiguity function. For $f,g \in \Lt$, $x,\omega, t \in \Rd$ and $\xi_1,\xi_2, \eta_1, \eta_2 \in \Rd$, we have
\begin{align}\label{eq_cov_ambi}
	& \, A(M_{\eta_1} T_{\xi_1} f, M_{\eta_2} T_{\xi_2} g)(x,\omega)\\
	& = \int_{\Rd} (M_{\eta_1} T_{\xi_1} f)(t+\tfrac{x}{2})(M_{\eta_2} T_{\xi_2} f)(t-\tfrac{x}{2})e^{-2 \pi i \omega \cdot t} \, dt\\
	& = \int_{\Rd} f(t+\tfrac{x}{2} - \xi_1) \overline{g(t-\tfrac{x}{2}-\xi_2)} \, e^{2\pi i \eta_1 \cdot (t+\frac{x}{2})} e^{-2 \pi i \eta_2 \cdot (t-\frac{x}{2})} e^{-2 \pi i \omega \cdot t} \, dt \quad \substack{\phantom{=}}_{(t \mapsto t+\tfrac{\xi_1}{2}+\tfrac{\xi_2}{2})}\\
	& = e^{\pi i x \cdot (\eta_1+\eta_2)} \int_{\Rd} f(t+\tfrac{x - \xi_1 + \xi_2}{2}) \overline{g(t-\tfrac{x - \xi_1 + \xi_2}{2})} \,  e^{-2 \pi i (\omega-\eta_1+\eta_2) \cdot (t+\frac{\xi_1+\xi_2}{2})} \, dt\\
	& = e^{\pi i x \cdot (\eta_1+\eta_2)} e^{-\pi i \omega \cdot (\xi_1+\xi_2)} e^{\pi i (\eta_1-\eta_2) \cdot (\xi_1+\xi_2)} \int_{\Rd} f(t+\tfrac{x - \xi_1 + \xi_2}{2}) \overline{g(t-\tfrac{x - \xi_1 + \xi_2}{2})} \,  e^{-2 \pi i (\omega-\eta_1+\eta_2) \cdot t} \, dt\\
	& = e^{\pi i x \cdot (\eta_1+\eta_2)} e^{-\pi i \omega \cdot (\xi_1+\xi_2)} e^{\pi i (\eta_1-\eta_2) \cdot (\xi_1+\xi_2)} A(f,g)(x-\xi_1+\xi_2,\omega-\eta_1+\eta_2).
\end{align}
In particular, for $\xi_1=\xi_2=\xi$ and $\eta_1=\eta_2=\eta$ we obtain
\begin{equation}
	A(M_\eta T_\xi f, M_\eta T_\xi g)(x,\omega) = e^{2 \pi i (x \cdot \eta - \omega \cdot \xi)} A(f,g)(x,\omega).
\end{equation}
This shows that shifting, both, $f$ and $g$ in the time-frequency plane, only yields a phase factor.

\subsection{The Wigner Distribution}
The Wigner distribution was ``invented" by Eugene Wigner in 1932 \cite{Wig32} in the context of quantum mechanics. We quote from the introduction in \cite{Gosson_Wigner_2017}:

\textit{In this paper \textnormal{[\cite{Wig32}; author's note]}, Wigner introduced a probability quasi-distribution that allowed him to express quantum mechanical expectation values in the same form as the averages of classical statistical mechanics. There have been many speculations about how Wigner got his idea; his eponymous transform seems to be pulled out of thin air.}

We will now study some of the properties of this almost mysterious transform, including Hudson's theorem, which will essentially take us all the way to very recent mathematical research. Besides the textbook of Gröchenig \cite{Gro01}, the book by de Gosson \cite{Gosson_Wigner_2017} may be consulted as a further reference and for more information on the Wigner transform.

\begin{definition}\label{def_Wigner}
	The Wigner distribution of a function $f \in \Lt$ is given by
	\begin{equation}
		W f(x, \omega) = \int_{\Rd} f(x + \tfrac{t}{2}) \overline{f(x - \tfrac{t}{2})} e^{-2 \pi i \omega \cdot t} \, dt.
	\end{equation}
	The cross-Wigner transform of two functions $f, g \in \Lt$ is given by
	\begin{equation}
	W (f,g) (x, \omega) = \int_{\Rd} f(x + \tfrac{t}{2}) \overline{g(x - \tfrac{t}{2})} e^{-2 \pi i \omega \cdot t} \, dt.
	\end{equation}
\end{definition}
\noindent
We quote from the introduction in \cite{Gosson_Wigner_2017}:

\textit{Truly, Wigner's definition, in modern notation did not remind of anything one had seen before; a rapid glance suggests it is something of a mixture of a Fourier transform and a convolution. And, yet it worked! For instance, under some mild assumptions on the function $\psi$ \textnormal{[which may be a quantum state or wave function; author's note]} we recover the probability amplitudes $|\psi(x)|^2$ and $|\phi(p)|^2$ \textnormal{[$\phi = \widehat{\psi}$, author's note]} of quantum mechanics:
\begin{align}
	\int_{-\infty}^\infty W\psi(x,p) \, dp = |\psi(x)|^2,\\
	\int_{-\infty}^\infty W\psi(x,p) \, dx = |\phi(p)|^2;
\end{align}
assuming $\psi$ normalized to unity and integrating any of these equalities, we get
\begin{equation}
	\int_{-\infty}^\infty \int_{-\infty}^\infty W\psi(x,p) \, dp \, dx = 1,
\end{equation}
so that the Wigner transform $W\psi$ can be used as a mock probability distribution.
}

The fact that we (can) only call the Wigner transform a quasi-distribution stems from the fact that it fails to be non-negative in general. The only exceptions come from Gaussian functions, which is Hudson's Theorem. Before we prove that $Wf$ is a quasi-distribution, we need to describe the connections to the other time-frequency representations.

We recall that the reflection of a function $g$ is denoted by $g^\vee$;
\begin{equation}
	g^\vee(t) = g(-t), \qquad t \in \Rd .
\end{equation}

The following lemma shows the algebraic connection between $V_g f$ (and hence $A(f,g)$) and $W(f,g)$.
\begin{lemma}[Algebraic Relations]\label{lem_Wigner_ambi_algebraic}
	For $f,g \in \Lt$ we have
	\begin{equation}
		W(f,g)(x, \omega) = 2^d e^{4 \pi i x \cdot \omega} V_{g^\vee} f(2x, 2\omega) = 2^d A(f, g^\vee) (2x, 2 \omega).
	\end{equation}
\end{lemma}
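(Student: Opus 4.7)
The plan is to start from the defining integral of $W(f,g)$ and manipulate it by two successive substitutions so that it acquires the shape of an STFT integral evaluated at the doubled point $(2x, 2\omega)$. The first substitution $t \mapsto 2s$ in the integration variable produces the factor $2^d$ from the Jacobian (since $t \in \Rd$) and turns the exponent $-2\pi i \omega \cdot t$ into $-4 \pi i \omega \cdot s$, while the arguments of $f$ and $\overline{g}$ become $x+s$ and $x-s$. The second substitution $s \mapsto u - x$ shifts the argument of $f$ to $u$ and of $\overline{g}$ to $2x - u$, at the cost of extracting an exponential factor $e^{-4\pi i \omega \cdot x}$ out of the integral.

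After these two substitutions, the remaining integral is precisely
\begin{equation}
\int_{\Rd} f(u) \, \overline{g(2x - u)} \, e^{-4 \pi i \omega \cdot u} \, du,
\end{equation}
which one recognizes as $V_{g^\vee} f(2x, 2\omega)$, because $g^\vee(u - 2x) = g(2x - u)$ and $\langle f, \pi(2x, 2\omega)\, g^\vee\rangle$ produces exactly this integrand. Moving the extracted phase $e^{-4\pi i \omega \cdot x}$ to the other side yields the first equality $W(f,g)(x,\omega) = 2^d e^{4 \pi i x \cdot \omega} V_{g^\vee} f(2x, 2\omega)$.

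The second equality is then essentially bookkeeping: by Definition \ref{def_Afg} we have $A(f, g^\vee)(y, \xi) = e^{\pi i\, y \cdot \xi}\, V_{g^\vee} f(y, \xi)$, so substituting $y = 2x$ and $\xi = 2\omega$ gives $A(f, g^\vee)(2x, 2\omega) = e^{4 \pi i x \cdot \omega}\, V_{g^\vee} f(2x, 2\omega)$, and multiplying by $2^d$ matches the middle expression.

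There is no real obstacle here; the only thing to be careful about is keeping track of the $d$-dimensional Jacobian $2^d$ when rescaling the integration variable, and making sure the phase $e^{-4\pi i \omega \cdot x}$ picked up in the shift substitution is moved to the correct side to become $e^{4\pi i x \cdot \omega}$ in the final identity. Everything else is a routine change of variables combined with the already-established formula relating $A$ and $V_g$.
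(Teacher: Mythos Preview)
Your proof is correct and follows essentially the same route as the paper: the paper performs the single substitution $u = x + \tfrac{t}{2}$, while you split this into the two steps $t \mapsto 2s$ and $s \mapsto u - x$, which compose to exactly that substitution. One small bookkeeping slip: after the shift $s = u - x$ the extracted phase is $e^{+4\pi i \omega \cdot x}$, not $e^{-4\pi i \omega \cdot x}$, so it already sits on the correct side and there is nothing to ``move''; with that correction your narrative matches the computation you state at the end.
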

\begin{proof}
	We will use the substitution $u = x + \tfrac{t}{2}$ in the definition of $W(f,g)$ and compute
	\begin{align}
		W(f,g)(x, \omega) & = \int_{\Rd} f(x+\tfrac{t}{2}) \overline{g(x-\tfrac{t}{2})} e^{-2 \pi i \omega \cdot t} \, dt\\
		& = 2^d \int_{\Rd} f(u) \overline{g(-(u-2x)} e^{-2 \pi i \omega \cdot (2u-2x)} \, du\\
		& = 2^d e^{4 \pi i x \cdot \omega} \langle f, M_{2\omega} T_{2x} g^\vee \rangle\\
		& = 2^d e^{4 \pi i x \cdot \omega} V_{g^\vee} f(2x, 2\omega).
	\end{align}
\end{proof}
We will now collect properties of the Wigner distribution from the STFT. Some of these properties readily follow by using Lemma \ref{lem_Wigner_ambi_algebraic}.
\begin{proposition}\label{pro_Wigner}
	For $f,g \in \Lt$ the cross-Wigner distribution has the following properties.
	\begin{enumerate}[(i)]
		\item $W(f,g) \in L^\infty(\R^{2d}) \cap C_0(\R^{2d})$, in particular $W(f,g)$ is uniformly continuous on $\R^{2d}$ and is bounded by
		\begin{equation}
			\norm{W(f,g)}_\infty \leq 2^d \norm{f}_2 \norm{g}_2 .
		\end{equation}
		\item $W(f,g) = \overline{W(g,f)}$. In particular, $W f$ is real-valued.
		\item For $\xi_1, \xi_2, \eta_1, \eta_2 \in \Rd$ we have
		\begin{align}
			& \, W(M_{\eta_1}T_{\xi_1}f, \, M_{\eta_2}T_{\xi_2}g)\\
			= & \, e^{\pi i (\xi_1-\xi_2) \cdot (\eta_1+\eta_2)} e^{2 \pi i x \cdot (\eta_1-\eta_2)} e^{-2 \pi i \omega \cdot (\xi_1-\xi_2)} \, W(f,g)(x-\tfrac{\xi_1+\xi_2}{2}, \omega -\tfrac{\eta_1+\eta_2}{2}).
		\end{align}
		In particular, $Wf$ is covariant, i.e.,
		\begin{equation}
			W(M_\eta T_\xi f)(x, \omega) = Wf(x-\xi, \omega-\eta).
		\end{equation}
		\item $W(\widehat{f}, \widehat{g})(x, \omega) = W(f,g)(-\omega, x)$
		\item Also, we have the Parseval-like identity, which goes under the name Moyal's formula;
		\begin{equation}\label{eq_Moyal}
			\langle W(f_1,g_1), W(f_2,g_2) \rangle_{\Lt[2d]} = \langle f_1,f_2 \rangle \overline{\langle g_1, g_2 \rangle}.
		\end{equation}
	\end{enumerate}
\end{proposition}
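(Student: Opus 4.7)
The unifying strategy is to reduce each assertion to a known fact about the STFT by means of the algebraic relation established in Lemma \ref{lem_Wigner_ambi_algebraic}, namely
\begin{equation}
W(f,g)(x,\omega) = 2^d e^{4\pi i x\cdot\omega}\, V_{g^\vee} f(2x, 2\omega).
\end{equation}
For (i), the bound is immediate from Cauchy--Schwarz applied to the inner-product form of the STFT: since $|V_{g^\vee}f(x,\omega)| = |\langle f, \pi(\l)g^\vee\rangle| \le \norm{f}_2 \norm{g^\vee}_2 = \norm{f}_2\norm{g}_2$, the prefactor $2^d$ carries over and the sup bound follows. Uniform continuity and decay at infinity follow from the corresponding properties of the STFT: we already know $V_{g^\vee}f$ is uniformly continuous on $\R^{2d}$ (the unnamed lemma right before Proposition \ref{pro_fitf}), and the composition with the affine map $(x,\omega)\mapsto(2x,2\omega)$ and multiplication by the unimodular chirp $e^{4\pi i x\cdot \omega}$ preserves uniform continuity; membership in $C_0(\R^{2d})$ uses the remarks after Proposition \ref{pro_fitf} stating $V_gf(x,\omega)\to 0$ in each variable, combined with the fundamental identity to handle both variables simultaneously.

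For (ii), I would compute directly from the definition: conjugating $W(f,g)(x,\omega)$ and substituting $t\mapsto -t$ interchanges the roles of $f$ and $g$ and flips the sign of the exponential argument to give $W(g,f)(x,\omega)$. Real-valuedness of $Wf$ is the special case $f=g$. For (iii), my plan is a straightforward computation modelled on the covariance calculation \eqref{eq_cov_ambi} for the ambiguity function. Insert $M_{\eta_1}T_{\xi_1}f$ and $M_{\eta_2}T_{\xi_2}g$ into the definition, shift the integration variable by $\tfrac{\xi_1+\xi_2}{2}$ to centre the arguments on the midpoint, and then collect the accumulated exponentials. The symmetric choice of the midpoint is exactly what makes the cross-phase $e^{\pi i(\xi_1-\xi_2)\cdot(\eta_1+\eta_2)}$ appear cleanly. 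Setting $\xi_1=\xi_2$ and $\eta_1=\eta_2$ collapses all phase factors to $1$, giving the covariance $W(M_\eta T_\xi f)(x,\omega) = Wf(x-\xi,\omega-\eta)$.

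For (iv), I would combine Lemma \ref{lem_Wigner_ambi_algebraic} with the Fundamental Identity of TFA (Proposition \ref{pro_fitf}). Observing that $\widehat{g^\vee} = (\widehat{g})^\vee$, the algebraic relation gives $W(\widehat{f},\widehat{g})(x,\omega) = 2^d e^{4\pi i x\cdot\omega} V_{(\widehat{g})^\vee}\widehat{f}(2x,2\omega)$. Applying \eqref{eq_fitf} (with the window $g^\vee$) in the form $V_{g^\vee}f(X,\Omega) = e^{-2\pi i X\cdot\Omega}V_{\widehat{g^\vee}}\widehat{f}(\Omega,-X)$ at $(X,\Omega)=(-2\omega,2x)$ lets me identify both sides up to phase; the only subtlety is carefully matching the chirps $e^{4\pi i x\cdot\omega}$ on both sides. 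For (v) (Moyal's formula), I would substitute Lemma \ref{lem_Wigner_ambi_algebraic} into the inner product and change variables $(x,\omega)\to(x/2,\omega/2)$ in $\R^{2d}$, which produces a Jacobian $2^{-2d}$ that exactly cancels the $2^{2d}$ arising from the product of the two $2^d$ prefactors, while the chirp $e^{4\pi i x\cdot\omega}$ cancels with its conjugate. The resulting expression is $\langle V_{g_1^\vee}f_1, V_{g_2^\vee}f_2\rangle_{\Lt[2d]}$, to which Theorem \ref{thm_ortho} applies, yielding $\langle f_1,f_2\rangle\overline{\langle g_1^\vee,g_2^\vee\rangle} = \langle f_1,f_2\rangle\overline{\langle g_1,g_2\rangle}$ since reflection is an isometry.

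The only step requiring real care is the phase-factor bookkeeping in (iii) and (iv); the rest is essentially a direct translation of STFT properties through Lemma \ref{lem_Wigner_ambi_algebraic}. In particular, the $\Lt$ hypothesis is enough throughout because we may invoke Cauchy--Schwarz and Theorem \ref{thm_ortho} without any additional integrability assumption.
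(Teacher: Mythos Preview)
Your proposal is correct and matches the paper's proof essentially line by line: (i), (iv), and (v) reduce to STFT properties via Lemma~\ref{lem_Wigner_ambi_algebraic} exactly as you describe, (ii) is the same direct substitution $t\mapsto -t$, and (iii) is the same direct computation. One minor slip: in (iii) the correct substitution in the Wigner integral is $t\mapsto t+\xi_1-\xi_2$, not a shift by $\tfrac{\xi_1+\xi_2}{2}$ --- the latter is the ambiguity-function shift from \eqref{eq_cov_ambi}, where the roles of the inner variable $t$ and the outer variable $x$ are interchanged relative to Definition~\ref{def_Wigner}.
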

\begin{proof}
The proofs follow mainly by direct computation.
	\begin{enumerate}[(i)]
		\item The assertion that $W(f,g)$ is uniformly continuous and decaying to 0 follows from the respective properties of $V_gf$ by using Lemma \ref{lem_Wigner_ambi_algebraic}. To show the $L^\infty$-bound we compute
		\begin{align}
			|W(g,f)(x,\omega)| & = 2^d |V_{g^\vee} f(2x,2\omega)| = 2^d |\langle f, M_{2 \omega} T_{2x} g^\vee \rangle|\\
			& \leq 2^d \norm{f}_2 \norm{M_{2 \omega} T_{2x} g^\vee}_2 = 2^d \norm{f}_2 \norm{g}_2.
		\end{align}
		In the above calculations we used, again, Lemma \ref{lem_Wigner_ambi_algebraic} and the Cauchy-Schwarz inequality. Also we used that $M_\omega$ and $T_x$ are unitary and that flipping $g$, i.e., $g \mapsto g^\vee$, is unitary as well.
		\item By using the substitution $t \mapsto -t$, we compute
		\begin{align}
			W(f,g)(x, \omega) & = \int_{\Rd} f(x+\tfrac{t}{2}) \overline{g(x-\tfrac{t}{2})} e^{-2 \pi i \omega \cdot t} \, dt\\
			& = \overline{\int_{\Rd} \overline{f(x-\tfrac{t}{2})} g(x+\tfrac{t}{2}) e^{-2 \pi i \omega \cdot t} \, dt}\\
			& = \overline{W(g,f)(x, \omega)}.
		\end{align}
		\item We compute
		\begin{align}
			& \, W(M_{\eta_1} T_{\xi_1} f, M_{\eta_2} T_{\xi_2}g)(x,\omega)\\
			& = \int_{\Rd} (M_{\eta_1} T_{\xi_1} f)(x+\tfrac{t}{2}) \overline{(M_{\eta_2} T_{\xi_2}g)(x-\tfrac{t}{2})} e^{-2\pi i \omega \cdot t} \, dt\\
			& = \int_{\Rd} f(x-\xi_1+\tfrac{t}{2}) \overline{g(x-\xi_2-\tfrac{t}{2})} e^{2 \pi i \eta_1 \cdot (x+\frac{t}{2})} e^{-2\pi i \eta_2 \cdot(x-\frac{t}{2})} e^{-2 \pi i \omega \cdot t} \, dt \quad \substack{\phantom{=}}_{(t \mapsto t+\xi_1-\xi_2)}\\
		& = e^{2 \pi i x \cdot(\eta_1-\eta_2)}\int_{\Rd} f(x-\tfrac{\xi_1+\xi_2}{2} + \tfrac{t}{2}) \overline{g(x-\tfrac{\xi_1+\xi_2}{2}-\tfrac{t}{2})} e^{-2 \pi i (\omega-\frac{\eta_1+\eta_2}{2}) \cdot (t+\xi_1-\xi_2)} \, dt\\
		& = e^{2 \pi i x \cdot(\eta_1-\eta_2)} e^{-2 \pi i \omega \cdot(\xi_1-\xi_2)} \int_{\Rd} f(x-\tfrac{\xi_1+\xi_2}{2} + \tfrac{t}{2}) \overline{g(x-\tfrac{\xi_1+\xi_2}{2}-\tfrac{t}{2})} e^{-2 \pi i (\omega-\frac{\eta_1+\eta_2}{2}) \cdot t} \, dt\\
		& = e^{2 \pi i x \cdot(\eta_1-\eta_2)} e^{-2 \pi i \omega \cdot(\xi_1-\xi_2)} e^{\pi i (\xi_1-\xi_2)(\eta_1+\eta_2)} W(f,g)(x-\tfrac{\xi_1+\xi_2}{2},\omega-\tfrac{\eta_1+\eta_2}{2}).
		\end{align}
		\item We note that $\widehat{g^\vee} = \widehat{g}^\vee$. We compute
		\begin{align}
			W(\widehat{f}, \widehat{g})(x, \omega) & = 2^d e^{4 \pi i x \cdot \omega} \langle \widehat{f}, M_{2 \omega} T_{2x} \widehat{g}^\vee \rangle\\
			& = 2^d e^{4 \pi i x \cdot \omega} \langle f, T_{-2\omega} M_{2 x} g^\vee \rangle\\
			& = 2^d e^{-4 \pi i x \cdot \omega} \langle f, M_{2x} T_{-2 \omega} g^\vee \rangle\\
			& = W(f,g)(-\omega, x).
		\end{align}
		In the above computations we used Lemma \ref{lem_Wigner_ambi_algebraic}, Parseval's formula \eqref{eq_Parseval} and the commutation relations \eqref{eq_comm_rel}.
		\item We compute
		\begin{align}
			& \, \iint_{\R^{2d}} W(f_1,g_1)(x,\omega) \overline{W(f_2,g_2)(x, \omega)} \, dx d\omega\\
			= & \, 2^{2d} \iint_{\R^{2d}} V_{g_1^\vee} f_1(2x, 2\omega) \overline{V_{g_2^\vee} f_2 (2x, 2\omega)} \, dx d\omega\\
			= & \, \langle f_1, f_2 \rangle \overline{\langle g^\vee_1, g^\vee_2 \rangle}\\
			= & \, \langle f_1, f_2 \rangle \overline{\langle g_1, g_2 \rangle}
		\end{align}
		When changing from the integral formulas to the inner products, we made the substitution $(2x, 2 \omega) \mapsto (x, \omega)$, which is why the factor $2^{2d}$ disappeared. Also, we used the orthogonality relations \eqref{eq_OR} of the STFT.
	\end{enumerate}
\end{proof}

Before we proceed, we introduce some more notation. By $\mathcal{T}_s$ we denote the symmetric coordinate change defined by
\begin{equation}\label{eq_symcoord_shift}
	\mathcal{T}_s F(x,t) = F(x+\tfrac{t}{2}, x-\tfrac{t}{2}).
\end{equation}
We also note that its inverse is given by
\begin{equation}\label{eq_symcoord_shift_inv}
	\mathcal{T}_s^{-1} F(x,t) = F(\tfrac{x+t}{2},x-t).
\end{equation}
Furthermore, we re-call the notion of the partial Fourier transforms $\F_1$ and $\F_2$, which transform a function of two arguments only in the first and the second argument, respectively.
\begin{equation}
	\F_1 F(x,\omega) = \int_{\Rd} F(t,\omega) e^{-2 \pi i x \cdot t} \, dt.
	\qquad \text{ and } \qquad
	\F_2 F(x,\omega) = \int_{\Rd} F(x,t) e^{-2 \pi i \omega \cdot t} \, dt.
\end{equation}
Using these (unitary) operators, the cross-Wigner distribution can be written as
\begin{equation}\label{eq_Wigner_tensor}
	W(f,g)(x,\omega) = \F_2 \left(\mathcal{T}_s (f \otimes \overline{g})(x,t)\right)(x,\omega),
\end{equation}
where $(f \otimes g)(x, \omega) = f(x) g(\omega)$ is the already familiar tensor product. From this factorization, we easily obtain an inversion formula for the (auto-)Wigner distribution.
\begin{equation}
	f(x) \overline{f(y)} = \left( \mathcal{T}_s^{-1} \F_2^{-1} W f \right)(x,y) = \int_{\Rd} Wf \left(\tfrac{x+y}{2}, \omega \right) e^{2 \pi i (x-y) \cdot \omega} \, d\omega.
\end{equation}
Now, if $f(0) \neq 0$, then
\begin{equation}
	f(x) = \frac{1}{\overline{f(0)}} \int_{\Rd} Wf \left( \tfrac{x}{2}, \omega \right) e^{2 \pi i x \cdot \omega} \, d\omega.
\end{equation}
Of course, this inversion formula is true only up to a global phase factor $|c|=1$.

The next lemma is highly important and reveals a deep connection between the ambiguity function and the Wigner distribution via the Fourier transform.
\begin{lemma}\label{lem_Wigner_ambi_FT}
	For $f, g \in \Lt$, we have
	\begin{equation}
		W(f,g)(x, \omega) = \F A(f,g)(\omega, -x)
		\qquad \text{ and } \qquad
		A(f,g)(x, \omega) = \F W(f,g)(\omega, -x)
	\end{equation}
\end{lemma}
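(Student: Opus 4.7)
My plan is to recognize both $A(f,g)$ and $W(f,g)$ as partial Fourier transforms of one and the same kernel on $\R^{2d}$, and then reduce each identity to Fourier inversion plus a variable swap. Concretely, I would define
$$
K(x,t) := f(t+\tfrac{x}{2})\,\overline{g(t-\tfrac{x}{2})}, \qquad (x,t)\in\R^{2d}.
$$
Comparing with Definition \ref{def_Afg} gives $A(f,g)(x,\omega) = (\F_2 K)(x,\omega)$, where $\F_2$ denotes the partial Fourier transform in the second slot. A direct inspection of the Wigner integrand shows that $f(x+\tfrac{t}{2})\overline{g(x-\tfrac{t}{2})} = K(t,x)$, so, after renaming the integration variable,
$$
W(f,g)(x,\omega) = \int_{\Rd} K(t,x)\,e^{-2\pi i \omega \cdot t}\,dt = (\F_1 K)(\omega, x).
$$
So the Wigner distribution is obtained by transforming in the \emph{first} slot of $K$, while the ambiguity function transforms in the \emph{second}; this swap of slots is exactly what the identities encode.

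To derive $W(f,g)(x,\omega) = \F A(f,g)(\omega,-x)$, I would factor the $2d$-Fourier transform as $\F = \F_1\F_2$ (Fubini) and use the involution $\F_2\F_2 F(x,t) = F(x,-t)$. Then
$$
\F A(f,g)(\omega,-x) = (\F_1 \F_2\F_2 K)(\omega,-x) = \int_{\Rd} K(u,x)\,e^{-2\pi i u\cdot\omega}\,du,
$$
which by the reformulation above is exactly $W(f,g)(x,\omega)$. The second identity $A(f,g)(x,\omega) = \F W(f,g)(\omega,-x)$ follows by the mirror-image argument: set $H(x,t) := f(x+\tfrac{t}{2})\overline{g(x-\tfrac{t}{2})}$, so that $W(f,g) = \F_2 H$ (this is precisely \eqref{eq_Wigner_tensor}) and $H(x,t) = K(t,x)$; one then reads off $A(f,g)(x,\omega) = (\F_1 H)(\omega,x)$ and the same three-line computation closes the argument.

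The only step that is not purely symbolic is the justification of the iterated Fourier integrals. For $f,g \in \mathcal{S}(\Rd)$ everything is classical, since $K, H \in \mathcal{S}(\R^{2d})$ and all exchanges of integration are legal. For general $f,g \in \Lt$, the substitution $(x,t)\mapsto(t+\tfrac{x}{2},\,t-\tfrac{x}{2})$ has unit Jacobian, so $\norm{K}_{\Lt[2d]} = \norm{H}_{\Lt[2d]} = \norm{f}_2\norm{g}_2$ by Fubini; hence $K,H \in \Lt[2d]$ and $\F, \F_1, \F_2$ act on them unitarily. Both sides of each identity are continuous in $(f,g) \in \Lt \times \Lt$ with respect to the $\Lt[2d]$-topology (using Corollary \ref{cor_Vgf_L2} and Proposition \ref{pro_Wigner}(i)), so the identities extend from $\mathcal{S}$ to $\Lt$ by density. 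This density argument is the only real (and quite mild) technical hurdle.
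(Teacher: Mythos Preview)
Your proof is correct and follows essentially the same route as the paper's: both recognize $A(f,g)$ and $W(f,g)$ as partial Fourier transforms (in different slots) of the single kernel $\mathcal{T}_s(f\otimes\overline{g})$---your $K$ is just this kernel with the two arguments swapped---and then use the factorization $\F=\F_1\F_2$ together with a density argument to pass from Schwartz functions to $\Lt$. The only cosmetic difference is that the paper cancels $\F_2^{-1}\F_2$ via $\F^{-1}W(f,g)$, whereas you use $\F_2\F_2 F(x,t)=F(x,-t)$; these are equivalent bookkeeping choices.
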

\begin{proof}
	We note that the Fourier transform on $\R^{2d}$ factors into the partial Fourier transforms on $\Rd$, i.e., $\F = \F_1 \F_2 = \F_2 \F_1$. By using \eqref{eq_Wigner_tensor} we have
	\begin{align}
		\F^{-1} W(f,g)(x, \omega) & = \F_1^{-1} \F_2^{-1} \F_2 \mathcal{T}_s(f \otimes \overline{g})(x,\omega)\\
		& = \int_{\Rd} \mathcal{T}_s(f \otimes \overline{g})(t,\omega) e^{2 \pi i t \cdot x} \, dt\\
		& = \int_{\Rd} f(t+\tfrac{\omega}{2}) \overline{g(t-\tfrac{\omega}{2})} e^{2 \pi i x \cdot t} \, dt\\
		& = A(f,g)(\omega,-x).
	\end{align}
	This proves the first assertion. The second one follows analogously.
	\begin{align}
		\F W(f,g)(\omega, -x) & = \F_1 \F_2 \F_2^{-1} \mathcal{T}_s(f \otimes \overline{g})(\omega,x)\\
		& = \int_{\Rd} \mathcal{T}_s(f \otimes \overline{g})(t,x) e^{-2 \pi i \omega \cdot t} \, dt\\
		& = \int_{\Rd} f(t+\tfrac{x}{2}) \overline{f(t-\tfrac{x}{2})} e^{-2 \pi i \omega \cdot t} \, dt\\
		& = A(f,g)(x, \omega).
	\end{align}
	The computations certainly hold for Schwartz functions (or on the dense subspace $L^1(\Rd) \cap \Lt$) and the formulas extend to all $f,g \in \Lt$ by continuity of $\F$ and $\mathcal{T}_s$.
\end{proof}
After this discussion on the connection with other time-frequency representations, we will discuss the support properties of the Wigner distribution. To simplify notation, we are only going to discuss this in the case $d=1$ (hence, for intervals and not for $d$-dimensional hyper cubes).
\begin{lemma}
	For $f \in \Lt[]$, if $\supp(f) \subset [a,b]=\mathcal{Q}_{[a,b]}$, then $Wf(x,\omega) = 0$ for $x \notin \mathcal{Q}_{[a,b]}$. Likewise, if $\supp(\widehat{f}) \subset \mathcal{Q}_{[c,d]}$, then $Wf(x,\omega) = 0$ for $\omega \notin \mathcal{Q}_{[c,d]}$.
\end{lemma}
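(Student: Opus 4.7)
The first claim follows directly from the integral definition of $Wf$. Fix $x \notin [a,b]$; I want to argue that the integrand $f(x+\tfrac{t}{2})\overline{f(x-\tfrac{t}{2})}$ vanishes for almost every $t$. Suppose $x < a$ (the case $x > b$ is symmetric). If the integrand is nonzero then both $x+\tfrac{t}{2} \in [a,b]$ and $x-\tfrac{t}{2} \in [a,b]$, which force $\tfrac{t}{2} \geq a - x > 0$ and $-\tfrac{t}{2} \geq a - x > 0$ simultaneously — a contradiction. Hence the integrand is a.e.\ zero and $Wf(x,\omega) = 0$ for every $\omega$. I expect this bookkeeping to be the main (very mild) obstacle: the argument is essentially the observation that the symmetric coordinate change $\mathcal{T}_s$ maps the vertical strip $\{x \notin [a,b]\}$ off the square $[a,b] \times [a,b]$, so one must state it with a tiny bit of care about which half-plane $t$ lives in.

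For the second claim I would avoid redoing the same computation and instead invoke Proposition \ref{pro_Wigner}(iv), which says
\begin{equation}
W\widehat{f}(x,\omega) = Wf(-\omega,x).
\end{equation}
Assuming $\supp(\widehat{f}) \subset [c,d]$, the first part applied to $\widehat{f}$ (playing the role of the original function) gives $W\widehat{f}(x,\omega) = 0$ whenever $x \notin [c,d]$. Translating through the identity above, this reads $Wf(-\omega,x) = 0$ for $x \notin [c,d]$, and renaming $(u,v) = (-\omega,x)$ yields $Wf(u,v) = 0$ whenever the second argument $v$ lies outside $[c,d]$, which is precisely what the lemma asserts.

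Overall the proof is short and conceptual: the first support statement is a direct consequence of the geometry of the symmetric coordinate change hidden inside the definition of $Wf$, and the second is obtained for free from the Fourier-covariance $W(\widehat{f},\widehat{g})(x,\omega) = W(f,g)(-\omega,x)$ already established in Proposition \ref{pro_Wigner}. The only genuine work is the elementary two-line argument in the first step; the rest is a symmetry reduction.
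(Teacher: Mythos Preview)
Your proposal is correct and follows essentially the same approach as the paper. For the first claim the paper phrases the key observation slightly more compactly via convexity --- if $x+\tfrac{t}{2}$ and $x-\tfrac{t}{2}$ both lie in $[a,b]$, then so does their midpoint $x = \tfrac{1}{2}(x+\tfrac{t}{2}) + \tfrac{1}{2}(x-\tfrac{t}{2})$ --- which avoids your case split, but the content is identical; for the second claim the paper invokes exactly the same Fourier covariance $Wf(x,\omega) = W\widehat{f}(\omega,-x)$ from Proposition~\ref{pro_Wigner}(iv) that you use.
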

\begin{proof}
	$Wf(x,\omega) \neq 0$ is possible only if $x + \frac{t}{2} \in \supp(f)$ and $x - \frac{t}{2} \in \supp(f)$. Then $x = \frac{1}{2}(x+\frac{t}{2})+\frac{1}{2}(x-\frac{t}{2}) \in \mathcal{Q}_{[a,b]}$. Thus $Wf(x,\omega) = 0$ for $x \notin \mathcal{Q}_{[a,b]}$.
	
	The second statement follows from the relation $Wf(x,\omega) = W \widehat{f}(\omega,-x)$.
\end{proof}
So, in contrast to the ambiguity function (and the STFT), the Wigner transform preserves the support properties of $f$ and $\widehat{f}$ exactly (so does the Rihaczek distribution). Also, note that the auto-Wigner distribution is taken as a joint (quasi-) probability distribution in phase space. The interpretation of the STFT usually depends on the auxiliary window and the ``sliding" of the window $g$ spreads out the support of $f$.

The next result is at the origin of interest in quantum mechanics and shows that the Wigner distribution yields the correct marginal densities, which then allow to interpret the Wigner distribution as a joint (quasi-) probability density function.
\begin{lemma}\label{lem_marginal_density}
	Let $f, \widehat{f} \in L^1(\Rd) \cap \Lt$. Then
	\begin{equation}
		\int_{\Rd} Wf(x,\omega) \, d\omega = |f(x)|^2,
	\end{equation}
	\begin{equation}
		\int_{\Rd} Wf(x,\omega) \, dx = |\widehat{f}(\omega)|^2.
	\end{equation}
	In particular, if $\norm{f}_2 = \norm{\widehat{f}}_2 = 1$, then
	\begin{equation}
		\iint_{\R^{2d}} Wf(x,\omega) \, d(x,\omega) = 1.
	\end{equation}
\end{lemma}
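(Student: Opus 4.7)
The plan is to identify the partial integral over $\omega$ as the inverse Fourier transform evaluated at $t=0$, and then derive the second marginal from the first via the Fourier symmetry of the Wigner distribution (Proposition \ref{pro_Wigner} (iv)).

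For the first identity, I fix $x\in\Rd$ and introduce the auxiliary function $h_x(t) = f(x+\tfrac{t}{2})\,\overline{f(x-\tfrac{t}{2})}$. Comparing with Definition \ref{def_Wigner}, the map $\omega \mapsto Wf(x,\omega)$ is precisely the Fourier transform of $h_x$ in the variable $t$; this is also the content of the factorization \eqref{eq_Wigner_tensor}, namely $Wf = \F_2 \mathcal{T}_s(f\otimes\overline{f})$. Integrating against $d\omega$ amounts to applying the inverse Fourier transform and evaluating at $t=0$. If Fourier inversion is justified pointwise, this yields
\begin{equation}
\int_{\Rd} Wf(x,\omega)\,d\omega = h_x(0) = f(x)\overline{f(x)} = |f(x)|^2,
\end{equation}
which is exactly the first marginal.

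For the second identity I would invoke Proposition \ref{pro_Wigner} (iv), which gives $Wf(x,\omega) = W\widehat{f}(\omega,-x)$. Since by assumption $\widehat{f}\in L^1(\Rd)\cap\Lt$, and since $\widehat{\widehat{f}}(y)=f(-y)$ also belongs to $L^1(\Rd)\cap\Lt$, the first marginal identity applies to $\widehat{f}$ in place of $f$. The substitution $y=-x$ then produces
\begin{equation}
\int_{\Rd} Wf(x,\omega)\,dx = \int_{\Rd} W\widehat{f}(\omega,-x)\,dx = \int_{\Rd} W\widehat{f}(\omega,y)\,dy = |\widehat{f}(\omega)|^2.
\end{equation}
The third identity is then immediate: integrating the first marginal once more over $x$ and using Fubini together with Plancherel's theorem (Theorem \ref{thm_Plancherel}) gives $\iint_{\R^{2d}} Wf(x,\omega)\,d(x,\omega) = \norm{f}_2^2 = 1$.

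The main obstacle is the careful justification of pointwise Fourier inversion in the first step, which is exactly where the hypothesis $f,\widehat{f}\in L^1(\Rd)\cap\Lt$ is spent. The assumption $\widehat{f}\in L^1$ ensures that (a representative of) $f$ is continuous and bounded, so that $h_x$ is continuous and integrable in $t$ for every $x$. Dually, using the relation to the STFT from Lemma \ref{lem_Wigner_ambi_algebraic} together with $\widehat{f}\in L^1$, one can check that $Wf(x,\cdot)$ is integrable in $\omega$, so that the standard $L^1$-inversion theorem applies and the Fourier inversion step becomes rigorous. All other manipulations (Fubini for the second marginal and for the third identity) are standard once the integrands are controlled.
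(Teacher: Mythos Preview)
Your proof is correct and follows essentially the same approach as the paper: the paper likewise writes $Wf = \F_2 \mathcal{T}_s(f\otimes\overline{f})$, verifies $Wf(x,\cdot)\in L^1(\Rd,d\omega)$ via Lemma~\ref{lem_Wigner_ambi_algebraic} (written as a convolution of $L^1$ functions and Young's inequality), applies $L^1$ Fourier inversion to obtain the first marginal, and then uses Proposition~\ref{pro_Wigner}(iv) for the second marginal. Your justification that $h_x\in L^1$ (via $f\in L^1$ and $f$ bounded from $\widehat{f}\in L^1$) is a detail the paper leaves implicit, so your write-up is in fact slightly more careful on that point.
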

\begin{proof}
	We use the algebraic relation \ref{lem_Wigner_ambi_algebraic} between the Wigner transform and the STFT and write the STFT as a convolution of Fourier transforms (see \eqref{eq_STFT_notation});
	\begin{equation}\label{eq_marginal_density_aux1}
		|Wf(x,\omega)| = 2^d|V_{f^\vee} f(2x,2\omega)| = 2^d|\widehat{f} * M_{-2x} \widehat{\overline{f}}^\vee(2\omega)|.
	\end{equation}
	By Young's Theorem \ref{thm_Young_conv} we have the convolution inequality \eqref{eq_conv_L1} which tells us that the convolution of two $L^1(\Rd)$-functions is again an $L^1(\Rd)$-function. Hence, under the Assumption that $\widehat{f} \in L^1(\Rd)$ and \eqref{eq_marginal_density_aux1} we see that, for any fixed $x \in \Rd$, $Wf(x,\omega) \in L^1(\Rd, d\omega)$. Therefore, the Fourier inversion formula for the partial Fourier transform $\F_2$ is applicable and yields
	\begin{align}
		\int_{\Rd} Wf(x,\omega) \, d\omega & = \left(\F_2^{-1} \left( \F_2 \mathcal{T}_s(f \otimes \overline{f})\right)\right)(x,0) = \mathcal{T}_s(f \otimes \widehat{f})(x,0) = |f(x)|^2.
	\end{align}
	Now, using Proposition \ref{pro_Wigner}(iv) we compute
	\begin{equation}
		\int_{\Rd} Wf(x,\omega) \, dx = \int_{\Rd} W \widehat{f}(\omega,-x) \, dx = |\widehat{f}(\omega)|^2.
	\end{equation}
\end{proof}
In the language of quantum mechanics, this means that $\int_{\Rd} Wf(x,\omega) \, d\omega$ yields the probability density $|f(x)|^2$ for the position variable and $\int_{\Rd}WF(x,\omega) \, dx$ yields the density $|\widehat{f}|^2$ for the momentum variable.

Also, we remark that, for a suitable subspace of $\Lt$ such that $Wf \in L^1(\R^{2d})$ (which allows us to use Fubini's theorem), we have
\begin{equation}
	\norm{f}_2^2 = \int_{\Rd}\int_{\Rd} Wf(x,\omega) \, dx \, d\omega = \int_{\Rd}\int_{\Rd} Wf(x,\omega) \, d\omega \, dx = \norm{\widehat{f}}_2^2.
\end{equation}
Thus, Lemma \ref{lem_marginal_density} implies Plancherel's theorem (on this subspace, which is actually dense).

We will now discuss the positivity of the Wigner distribution and, hence, whether it can serve as an energy density or joint probability density. We mention right away that, in general the Wigner distribution fails to be positive. For example, let $f$ be an odd function, i.e., $f(t) = -f^\vee(t) = -f(-t)$, then
\begin{equation}
	Wf(0,0) = \int_{\Rd} f(\tfrac{t}{2}) \overline{f(-\tfrac{t}{2})} \, dt = -2^d \int_{\Rd} f(t) \overline{f^\vee(-t)} \, dt = -2^d \int_{\Rd} |f(t)|^2 \, dt = -2^d \norm{f}_2^2 < 0.
\end{equation}
Negative values cannot be interpreted as a probability or as an energy density. However, a single point in the time-frequency plane or phase space also has no physical meaning. Now, the hope is that, maybe there is a ``nice" subspace of ``physically realizable" functions for which the Wigner distribution is positive? The answer to this question is given by Hudson's theorem \cite{Hud74}.

\begin{theorem}[Hudson]\label{thm_Hudson}
	Assume that $f \in \Lt$. Then $Wf(x, \omega) > 0$ for all $(x,\omega) \in \R^{2d}$ if and only if $f$ is a generalized Gaussian, i.e.,
	\begin{equation}\label{eq_gen_Gauss}
		f(t) = e^{- \pi t \cdot A t + 2 \pi b \cdot t + c},
	\end{equation}
	where $A \in GL(\C,d)$ is an invertible $d \times d$ matrix with entries from $\C$ and positive definite real part ($\Re(A) > 0$) and with $\overline{A}^T = A^* = A$, $b \in \C^d$ and $c \in \C$.
\end{theorem}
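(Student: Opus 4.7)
My plan is to prove the \emph{if} direction by a direct Gaussian computation and the \emph{only if} direction via the Bargmann-type entire-function representation of the STFT with the standard Gaussian window. For the \emph{if} direction, substituting $f(t) = e^{-\pi t \cdot At + 2 \pi b \cdot t + c}$ into the definition of $Wf$ produces a Gaussian integral in $t$ whose quadratic form has positive-definite real part; the complex-analytic extension of Theorem~\ref{thm_FT_Gauss} then yields $Wf(x, \omega) = K \, e^{-Q(x, \omega)}$ with $K > 0$ and $Q$ a real positive-definite quadratic form on $\R^{2d}$, so $Wf > 0$ everywhere.

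For the \emph{only if} direction, I will first show that if $Wf \geq 0$ and $f \neq 0$, then $V_{g_0} f$ has no zeros, where $g_0(t) = 2^{d/4} e^{-\pi t^2}$. By Moyal's formula \eqref{eq_Moyal} applied to $f$ and $h = M_{\omega_0} T_{x_0} g_0$, combined with the covariance relation in Proposition~\ref{pro_Wigner}(iii) and the fact that $W g_0$ is real and even,
\begin{equation*}
|V_{g_0} f(x_0, \omega_0)|^2 = (W f * W g_0)(x_0, \omega_0).
\end{equation*}
Using Lemma~\ref{lem_Wigner_ambi_algebraic} together with Example~\ref{ex_STFT_g0}, one gets $Wg_0(x, \omega) = 2^d e^{-2 \pi (x^2 + \omega^2)} > 0$ for all $(x, \omega)$. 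Meanwhile $Wf \geq 0$ by hypothesis and $Wf \not\equiv 0$, since $\norm{Wf}_2 = \norm{f}_2^2 \neq 0$ again by Moyal; continuity of $Wf$ (Proposition~\ref{pro_Wigner}(i)) then forces it to be strictly positive on a nonempty open set. The integrand of the convolution is therefore nonnegative everywhere and strictly positive on a set of positive measure, giving strict pointwise positivity of $|V_{g_0} f|^2$.

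Next I will exhibit $V_{g_0} f$ as a Gaussian factor times an entire function. Expanding the STFT integral,
\begin{equation*}
V_{g_0} f(x, \omega) = 2^{d/4} e^{-\pi x^2} G(z), \qquad G(z) := \int_{\Rd} f(t) \, e^{-\pi t^2} \, e^{2 \pi t \cdot z} \, dt,
\end{equation*}
with $z = x - i \omega \in \C^d$, and $G$ is entire. Cauchy--Schwarz gives the uniform bound $|V_{g_0} f| \leq \norm{f}_2$, so $|G(z)| \leq \norm{f}_2 \, e^{\pi x^2} \leq \norm{f}_2 \, e^{\pi |z|^2}$; thus $G$ is an entire function of order at most $2$ with no zeros. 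Hadamard's factorization theorem (in one variable directly; in several variables by restricting $G$ to each complex line, applying the one-variable theorem with uniform degree bound, and reassembling via the Taylor expansion at the origin) forces $G(z) = e^{P(z)}$ with $P$ a polynomial of total degree $\leq 2$. Writing $P(z) = -\pi z \cdot \widetilde{A} z + 2 \pi \widetilde{b} \cdot z + \widetilde{c}$ with $\widetilde{A}$ symmetric (which we may assume by symmetrising the quadratic form), and inverting the Gaussian transform $G$ yields $f(t) = e^{-\pi t \cdot A t + 2 \pi b \cdot t + c}$ of the claimed shape; the constraint $\Re A > 0$ is forced by $f \in \Lt$.

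The main obstacle is the several-variable Hadamard factorization: deducing that a zero-free entire function on $\C^d$ of joint growth order $\leq 2$ is an exponential of a polynomial of total degree $\leq 2$. The cleanest execution is to observe that $\log G$ is well-defined and entire on $\C^d$ (since $G$ is zero-free and $\C^d$ is simply connected), restrict to complex lines through the origin to conclude via the one-variable Hadamard theorem that the restriction is polynomial of degree $\leq 2$ with a growth bound uniform in the direction, and then extract the global polynomial structure from the Taylor expansion of $\log G$ at $z = 0$.
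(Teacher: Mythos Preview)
Your proposal is correct and follows essentially the same architecture as the paper's proof: Moyal's formula shows $|V_{g_0}f|^2$ is a convolution of $Wf\geq 0$ with the strictly positive Gaussian $Wg_0$, hence $V_{g_0}f$ (equivalently the Bargmann transform $Bf$) is zero-free; complex analysis then forces the logarithm to be a quadratic polynomial, and restricting to the imaginary axis identifies $\F(f\,g_0)$ as a Gaussian, whence $f$ is Gaussian.

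The two genuine but minor differences are these. For the \emph{if} direction the paper does not compute $Wf$ directly: it observes that every generalized Gaussian is a time-frequency shift of a metaplectic image of $g_0$, and invokes the symplectic covariance of $W$ together with $Wg_0(x,\omega)=2^d e^{-2\pi(x^2+\omega^2)}>0$; this avoids the complex-matrix extension of Theorem~\ref{thm_FT_Gauss} that your direct computation needs. For the \emph{only if} direction the paper uses Caratheodory's inequality rather than Hadamard factorization: writing $Bf=e^{q}$ with $q$ entire, the bound $|Bf(z)|\leq \norm{f}_2 e^{\frac{\pi}{2}|z|^2}$ gives $\Re q(z)\leq c+\frac{\pi}{2}|z|^2$, and Caratheodory's inequality (one variable, applied on every complex line through the origin and reassembled exactly as you do) forces $q$ to be a polynomial of degree $\leq 2$. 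Your Hadamard route and the paper's Caratheodory route are really the same argument in different clothing---both reduce to the one-variable statement along lines and use the Taylor expansion at the origin to globalize---so neither buys a real advantage over the other.
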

We will postpone the proof to a later point as we need more preparation for the proof.

A bit more generally, Theorem \ref{thm_Hudson} can also be stated as follows \cite{GroJamMal19}, \cite{Lie90}.
\begin{theorem}[Hudson-Lieb]\label{thm_Hudson_Lieb}
	Let $f,g \in \Lt$ (non-zero). Then $W(f,g) \geq 0$ for all $(x,\omega) \in \R^{2d}$ if and only if $f = K \, g$, $K > 0$ and $g$ is a generalized Gaussian of the form \eqref{eq_gen_Gauss}. In this case, $W(f,g)(x, \omega) > 0$ for all $(x, \omega) \in \R^{2d}$.
\end{theorem}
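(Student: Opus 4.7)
The plan is to prove the two directions separately, observing that Hudson's Theorem \ref{thm_Hudson} does essentially all of the work once the cross-case is reduced to the auto-case. The sufficiency direction is immediate: if $g$ is a generalized Gaussian of the form \eqref{eq_gen_Gauss} and $f = Kg$ with $K > 0$, then sesquilinearity of the cross-Wigner distribution gives $W(f,g) = K \cdot Wg$, and Hudson's Theorem supplies $Wg > 0$ pointwise, hence $W(f,g) > 0$ pointwise.

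For the necessity direction, the strategy is to show that $f$ must be a strictly positive real scalar multiple of $g$ and then invoke Hudson's Theorem on $g$. Assume $W(f,g) \geq 0$; in particular $W(f,g)$ is real-valued, so property (ii) of Proposition \ref{pro_Wigner} yields $W(f,g) = \overline{W(g,f)} = W(g,f)$. Writing out both sides via Definition \ref{def_Wigner}, these are the partial Fourier transforms $\F_2$ (in $t$) of the kernels $f(x+\tfrac{t}{2})\overline{g(x-\tfrac{t}{2})}$ and $g(x+\tfrac{t}{2})\overline{f(x-\tfrac{t}{2})}$, which both lie in $L^2(\R^{2d})$ by Cauchy--Schwarz. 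Uniqueness of $\F_2$ on $L^2$-slices forces these kernels to agree a.e., and under the linear substitution $u = x + \tfrac{t}{2}$, $v = x - \tfrac{t}{2}$ this becomes
\begin{equation}
	f(u)\overline{g(v)} = g(u)\overline{f(v)} \qquad \text{for a.e.~} (u,v) \in \R^{2d}.
\end{equation}

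Since $g \neq 0$, I can fix $v_0$ with $g(v_0) \neq 0$ for which this identity holds for a.e.\ $u$, and solve for $f = \lambda g$ a.e.\ with $\lambda = \overline{f(v_0)}/\overline{g(v_0)} \in \C$; reinserting $f = \lambda g$ into the same identity forces $\lambda = \bar{\lambda}$, so $K := \lambda \in \R$. Then $W(f,g) = K \, Wg \geq 0$, and the marginal formula $\int_{\Rd} Wg(x,\omega) \, d\omega = |g(x)|^2$ from Lemma \ref{lem_marginal_density} --- which is strictly positive on a set of positive measure since $g \neq 0$ --- rules out $K < 0$. Hence $K > 0$ and $Wg \geq 0$ pointwise, so Hudson's Theorem \ref{thm_Hudson} identifies $g$ as a generalized Gaussian of the form \eqref{eq_gen_Gauss} and simultaneously upgrades $Wg \geq 0$ to $Wg > 0$ pointwise, giving also the final assertion $W(f,g) > 0$ pointwise.

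The main obstacle I foresee is purely technical: the passage from $\F_2 K_1 = \F_2 K_2$ to a.e.\ equality of the kernels themselves is a routine Fubini/uniqueness argument, but it must be run carefully so that the subsequent choice of $v_0$ remains compatible with the \emph{almost everywhere} quantifier. Apart from that bookkeeping, the argument is short and essentially algebraic, because all of the hard content --- the rigidity statement that pointwise non-negativity of a Wigner distribution forces a Gaussian --- has been outsourced to Hudson's Theorem.
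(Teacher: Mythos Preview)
The paper does not prove Theorem \ref{thm_Hudson_Lieb}; it is stated with references to \cite{GroJamMal19} and \cite{Lie90}, and only the auto-case (Theorem \ref{thm_Hudson}) is proved in the appendix. So there is nothing to compare against, and your reduction of the cross-case to the auto-case is a genuine and clean contribution. The sufficiency direction and the kernel argument leading to $f=\lambda g$ with $\lambda\in\R$ are correct; the factorization $W(f,g)=\F_2\mathcal{T}_s(f\otimes\overline g)$ from \eqref{eq_Wigner_tensor} lets you invoke injectivity of the unitary $\F_2\mathcal{T}_s$ directly, which is slightly slicker than the slice-by-slice Fubini argument you sketch.

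There is one real gap. To rule out $K<0$ you invoke Lemma \ref{lem_marginal_density}, but that lemma carries the hypothesis $g,\widehat g\in L^1(\Rd)\cap L^2(\Rd)$, which you do not have at this stage; for a generic $g\in L^2(\Rd)$ the integral $\int_{\Rd}Wg(x,\omega)\,d\omega$ need not even exist. A clean replacement that uses only tools already in the paper: if $K<0$ then $Wg\le 0$ everywhere, and Moyal's formula \eqref{eq_Moyal} together with the covariance of Proposition \ref{pro_Wigner}(iii) gives
\[
0\le |\langle g,\,M_\eta T_\xi g_0\rangle|^2=\langle Wg,\,W(M_\eta T_\xi g_0)\rangle_{L^2(\R^{2d})}=\iint_{\R^{2d}} Wg(z)\,T_{(\xi,\eta)}Wg_0(z)\,dz\le 0,
\]
since $Wg_0>0$ by Hudson. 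Hence $V_{g_0}g(\xi,\eta)=0$ for all $(\xi,\eta)$, forcing $g=0$ by Corollary \ref{cor_Vgf_L2}. With this fix your argument is complete; note also that the appendix proof of Hudson's theorem actually starts from the hypothesis $Wg\ge 0$, so your final appeal to Theorem \ref{thm_Hudson} from $Wg\ge 0$ (rather than $>0$) is legitimate.
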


We note that $W(f,g)$ is not necessarily real-valued. However, if it is non-negative and real-valued, then $f$ and $g$ are the same generalized Gaussian up to a positive constant and the cross-Wigner distribution is essentially an auto-Wigner distribution and actually positive everywhere. Hudson's original theorem dates back to 1974 and the extension of Lieb was established in 1990.

A related and hard problem, which still remains open in full generality, is to characterize pairs of functions $f$ and $g$ for which $W(f,g)(x, \omega) \neq 0$ for all $(x,\omega) \in \R^{2d}$. Only very recently examples of pairs $(f,g)$, which are not Gaussians and yield zero-free Wigner distributions, or likewise zero-free STFTs and ambiguity functions, have been found and described by Gröchenig, Jaming and Malinnikova in their article \cite{GroJamMal19} published in 2020. The prototype example they give is obtained from the one-sided exponential.
\begin{example}
	Let $f$ be a one-sided exponential, i.e.,
	\begin{align}
		f(t) = \indicator_{\R_+}(t) e^{-\pi t} =
		\begin{cases}
			e^{- \pi t}, & t > 0\\
			0, & else
		\end{cases}.
	\end{align}
	\begin{figure}[ht]
		\centering
		\includegraphics[width=.7\textwidth]{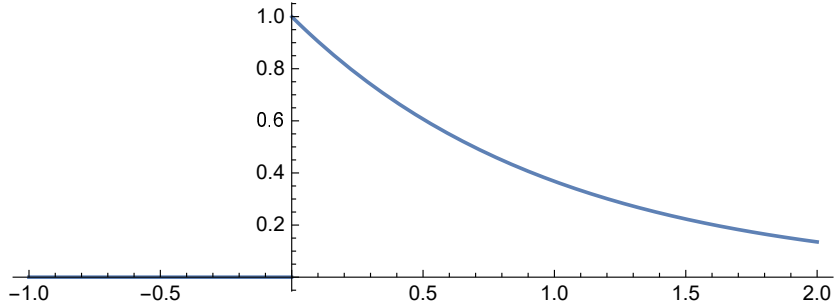}
		\caption{\footnotesize{The one-sided exponential function.}}
	\end{figure}
	Assume $x \geq 0$, then the ambiguity function is given by
	\begin{align}
		A f(x, \omega) & = \int_\R \indicator_{\R_+}(t-\tfrac{x}{2})\indicator_{\R_+}(t+\tfrac{x}{2}) e^{-\pi (-\frac{x}{2})} e^{-\pi (t+\frac{x}{2})} e^{-2 \pi i \omega t} \, dt\\
		& = \int_{\frac{x}{2}}^\infty e^{-2 \pi t} e^{-2 \pi i \omega t} \, dt
		= \int_{\frac{x}{2}}^\infty e^{-2 \pi i (\omega - i) t} \, dt
		= \frac{e^{-2 \pi i (\omega - i) t}}{-2 \pi i (\omega-i)} \Bigg|_{t=\frac{x}{2}}^\infty\\
		& = \frac{e^{-\pi i (\omega-i) x}}{2\pi i(\omega-i)}.
	\end{align}
	The part for $x < 0$ follows analogously and we get
	\begin{equation}
		Af(x,\omega) = \frac{e^{-\pi i(\omega-i)|x|}}{2 \pi i(\omega-i)}.
	\end{equation}
	In particular, $A f(x, \omega) \neq 0$ for all $(x, \omega) \in \R^2$. It follows immediately that $W(f, f^\vee) \neq 0$ by the algebraic relation in Lemma \ref{lem_Wigner_ambi_algebraic}.
	\begin{figure}[ht]
		\includegraphics[width=.45\textwidth]{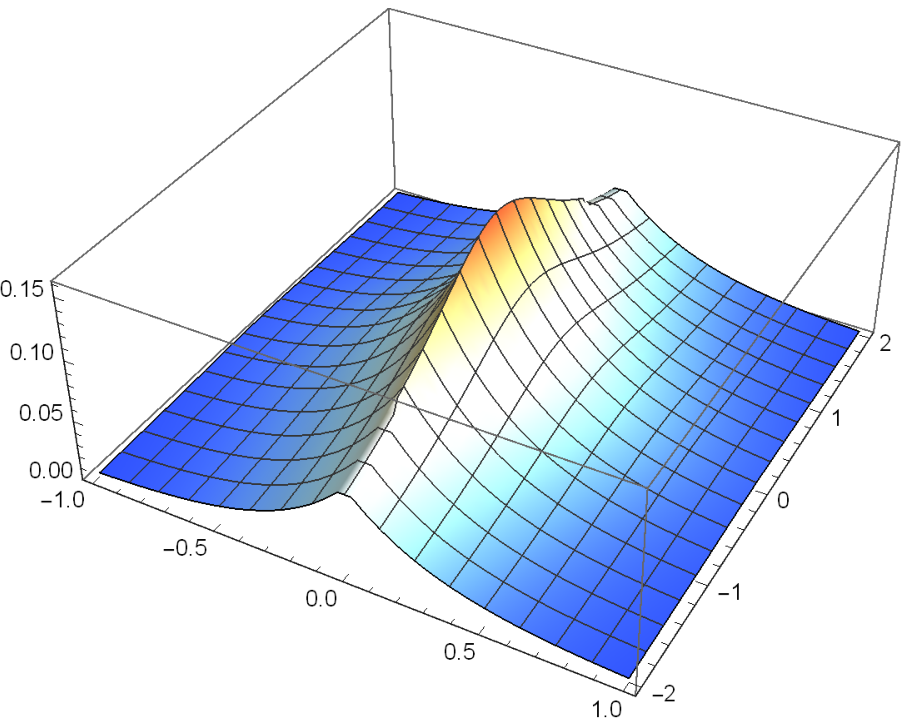}
		\hfill
		\includegraphics[width=.4\textwidth]{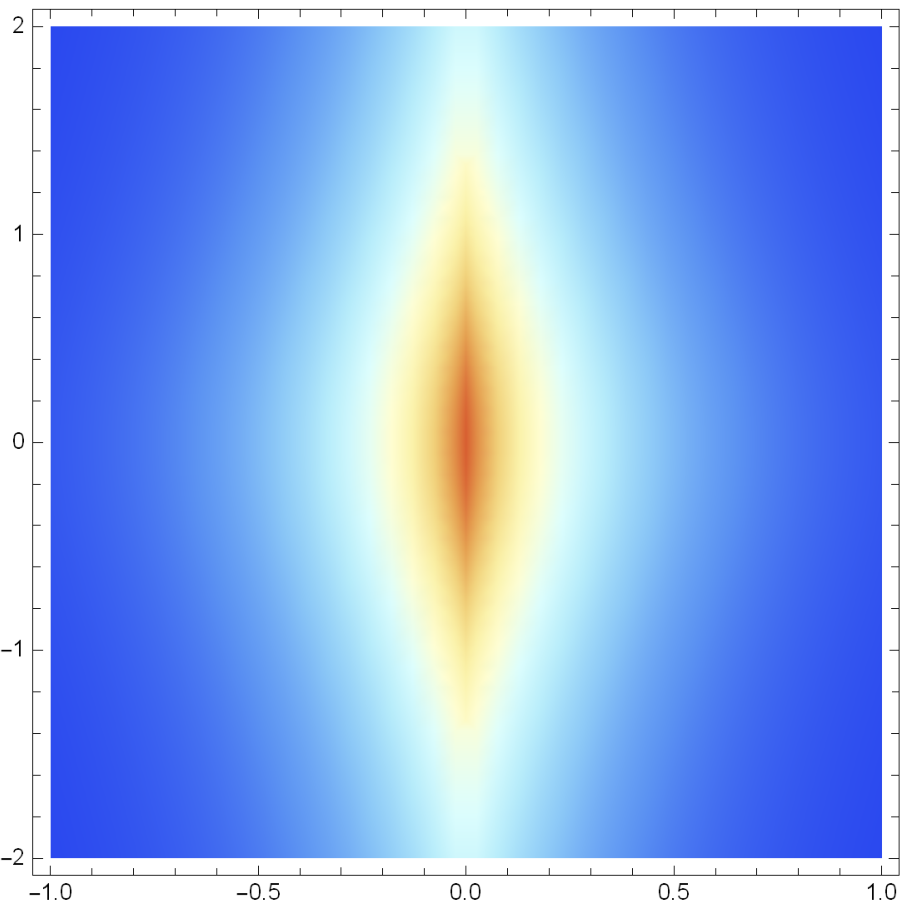}
		\caption{\footnotesize{3d plot and density plot of the absolute values of the ambiguity function of the 1-sided exponential. While it is well-concentrated in the time domain, the localization in the frequency domain is not satisfying, as the decay is only $\mathcal{O}(\frac{1}{\omega})$. This is caused by the discontinuity of the complex exponential (as smoothness on the time-side corresponds to fast decay in the frequency domain).}}
	\end{figure}
	
	On the other hand, it follows from the Hudson-Lieb Theorem \ref{thm_Hudson_Lieb} that if $f = \pm f^\vee$, i.e., $f$ is either an odd or even function, then $A(f, f^\vee) = \pm A f$ can only be positive if $f$ is a generalized Gaussian.
	\begin{itemize}
		\item If $f = f^\vee$, i.e., $f$ is even, then $W(f, f^\vee) = W f$. Also, $W f$ is real-valued and $Wf(0,0) = 2^d \norm{f}_2^2$. By Hudson's theorem the Wigner transform is only positive for Gaussians. Therefore, in this case the Wigner distribution must possess zeros if $f$ is not a generalized Gaussian. Now, by the algebraic relation given by Lemma \ref{lem_Wigner_ambi_algebraic} the same holds for the ambiguity function $Af$.
		\item If $f = - f^\vee$, i.e., $f$ is odd, then $W(f, f^\vee) = - Wf$ and, $W (f,f^\vee)(0,0) = 2^d \norm{f}_2^2$. Therefore, as $Wf$ is real-valued, the Wigner distribution $W(f,f^\vee)$ is real-valued and by the Hudson-Lieb theorem either possesses zeros or $f$ is a generalized Gaussian. Again, by Lemma \ref{lem_Wigner_ambi_algebraic} the same is true for the ambiguity function $Af$.
	\end{itemize}
	This shows that if the ambiguity function $Af$ and cross Wigner distribution $W(f,f^\vee)$ are zero-free, then $f$ does naturally not possess ``simple" symmetries, meaning that it cannot be an even or an odd function, unless it is a Gaussian.
	\flushright $\diamond$
\end{example}

Curiously, this prototype example was (almost) already given by Augustus Janssen more than 20 years earlier in 1996. In \cite{Jan96}, Janssen already computed the ambiguity function of the one-sided exponential (evaluated only at integer points, i.e., $(x, \omega) \in \Z^2$). However, it was not noticed that a zero-free ambiguity function was already given in \cite{Jan96} until the appearance of \cite{GroJamMal19}. The authors re-discovered this example independently as the one-sided exponential is also a prototype example of so-called totally positive functions of finite type. These functions have attracted considerable attraction in recent years in Gabor analysis and we will encounter them later on as well.

\begin{example}
	Next, we will compute the Wigner distribution of the point measure or delta distribution $\delta_0$, where $\langle \delta_x,f \rangle = \overline{f(x)}$. For $f \in \mathcal{S}(\Rd)$, by using \eqref{eq_Wigner_tensor}, we have
	\begin{align}
		\langle W(\delta_0, \delta_0), f \rangle & = \langle \F_2 \mathcal{T}_s (\delta_0 \otimes \overline{\delta_0}), f \rangle\\
		& = \langle \delta_0 \otimes \overline{\delta_0}, \mathcal{T}_s^{-1} \F_2^{-1} f \rangle\\
		& = \overline{\mathcal{T}_s^{-1} \F_2^{-1} f(0,0)}
	\end{align}
	Using the formula \eqref{eq_symcoord_shift_inv} for $\mathcal{T}_s^{-1}$, we obtain
	\begin{align}
		\overline{\mathcal{T}_s^{-1} \F_2^{-1} f(0,0)} & = \overline{\int_{\Rd} f \left( \tfrac{x+y}{2} , \omega \right) e^{2 \pi i (x-y) \cdot \omega} \, d\omega} \ \Bigg|_{x=y=0}\\
		& = \int_{\Rd} \overline{f(0,\omega)} \, d\omega\\
		& = \langle \delta_0 \otimes \mathbf{1}, f \rangle,
	\end{align}
	where $\mathbf{1}$ is the constant 1-function. Therefore, $W \delta_0 = \delta_0 \otimes \mathbf{1} \in \mathcal{S}'(\Rd)$.
	\flushright{$\diamond$}
\end{example}

\section{The Poisson Summation Formula}
The Poisson summation formula is a highly useful tool in time-frequency analysis, but also in many other branches of mathematics. It relates the periodization of a function $f$ to a Fourier series on the torus $\T^d$ with Fourier coefficients obtained from the Fourier transform on $\Lt$.

\begin{definition}\label{def_periodization}
	Given a function $f$, its $\alpha$-periodization is defined by
	\begin{equation}\label{eq_periodization}
		\mathcal{P}_\alpha f(t) = \sum_{k \in \Z^d} f(t + \alpha k), \qquad t \in \Rd, \alpha > 0.
	\end{equation}
\end{definition}
We note that if $f \in L^1(\Rd)$, then $\mathcal{P}_\alpha f \in L^1(\T^d)$. If $\alpha = 1$, we will omit the index and write
\begin{equation}
	\mathcal{P} f(t) = \mathcal{P}_1 f(t).
\end{equation}
Also, we have the following property.
\begin{lemma}\label{lem_periodization_trick}
	If $f \in L^1(\Rd)$, then for all $\alpha > 0$ we have
	\begin{equation}
		\int_{\Rd} f(t) \, dt = \int_{[0,\alpha]^d} \left( \mathcal{P}_\alpha f(t) \right) \, dt.
	\end{equation}
\end{lemma}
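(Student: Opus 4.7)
The plan is to decompose $\R^d$ as a disjoint (up to measure zero) union of translates of the fundamental cube $Q_\alpha = [0,\alpha]^d$ along the lattice $\alpha \Z^d$, and then exchange summation and integration. Concretely, one writes
\begin{equation}
\int_{\R^d} f(t)\,dt \;=\; \sum_{k \in \Z^d} \int_{Q_\alpha + \alpha k} f(t)\,dt \;=\; \sum_{k \in \Z^d} \int_{Q_\alpha} f(t + \alpha k)\,dt,
\end{equation}
where the second equality uses the translation invariance of Lebesgue measure with the substitution $t \mapsto t + \alpha k$.

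Next I would swap the sum with the integral to obtain
\begin{equation}
\sum_{k \in \Z^d} \int_{Q_\alpha} f(t + \alpha k)\,dt \;=\; \int_{Q_\alpha} \sum_{k \in \Z^d} f(t + \alpha k)\,dt \;=\; \int_{Q_\alpha} \mathcal{P}_\alpha f(t)\,dt,
\end{equation}
which gives the desired identity.

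The only point that needs care is the justification of this exchange, since $f$ is only assumed to be in $L^1(\R^d)$ and is in general complex-valued. I would handle it by first applying Tonelli's theorem to $|f|$: the computation above, performed for $|f|$, shows
\begin{equation}
\int_{Q_\alpha} \sum_{k \in \Z^d} |f(t + \alpha k)|\,dt \;=\; \int_{\R^d} |f(t)|\,dt \;=\; \norm{f}_1 < \infty,
\end{equation}
so the periodization $\mathcal{P}_\alpha |f|$ belongs to $L^1(Q_\alpha)$, and in particular $\sum_{k} |f(t + \alpha k)| < \infty$ for almost every $t \in Q_\alpha$. This absolute convergence then allows Fubini's theorem (applied to the product measure on $Q_\alpha \times \Z^d$ with counting measure on $\Z^d$) to justify the interchange of summation and integration for $f$ itself, completing the argument. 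The main obstacle is really just this measure-theoretic bookkeeping; once Tonelli is invoked on $|f|$, everything else is a routine change of variables.
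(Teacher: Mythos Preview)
Your proof is correct and follows exactly the same approach as the paper: partition $\Rd$ into the lattice translates $\alpha k + [0,\alpha]^d$, change variables in each piece, and swap sum and integral via Fubini. You are simply more explicit about the Tonelli-then-Fubini justification than the paper, which just cites Fubini directly.
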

\begin{proof}
	We note that the cubes $\alpha k + [0,\alpha]^d$ form a (disjoint) partition of $\Rd$ (up to an overlap of measure 0).
	We compute
	\begin{align}
		\int_{\Rd} f(t) \, dt & = \sum_{k \in \Z^d} \int_{\alpha k + [0,\alpha]^d} f(t) \, dt\\
		& = \int_{[0,\alpha]^d} \left( \sum_{k \in \Z^d} f(t + \alpha k) \right) \, dt.
	\end{align}
	As $f \in L^1(\Rd)$, the exchange of summation and integration is justified by Fubini's theorem.
\end{proof}

We are now ready to prove the Poisson summation formula in its standard version.
\begin{proposition}[Poisson Summation Formula]\label{pro_PSF_Zd}
	Let $f$ be continuous and $\varepsilon > 0$, $C > 0$. Assume that $|f(t)| \leq C (1+|t|)^{-d-\varepsilon}$ and $|\widehat{f}(\omega)| \leq C (1+|\omega|)^{-d-\varepsilon}$. Then
	\begin{equation}\label{eq_PSF_Zd}
		\sum_{k \in \Z^d} f(t + k) = \sum_{l \in \Z^d} \widehat{f}(l) e^{2 \pi i l \cdot t}.
	\end{equation}
	The above identity holds point-wise for all $t \in \Rd$ and both sums converge absolutely for all $t \in \Rd$.
\end{proposition}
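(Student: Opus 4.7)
The strategy is standard: periodize $f$, verify that the periodization is a well-defined continuous function on $\T^d$, compute its Fourier coefficients using Lemma \ref{lem_periodization_trick}, and then recognize those coefficients as the samples $\widehat{f}(l)$. Throughout, the two decay hypotheses do the work: the decay of $f$ controls the left-hand side and ensures $\mathcal{P}f$ is continuous; the decay of $\widehat{f}$ controls the right-hand side and ensures the resulting Fourier series converges absolutely back to $\mathcal{P}f$.

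First, I would verify absolute convergence of the left-hand side. Using the hypothesis $|f(t+k)| \leq C(1+|t+k|)^{-d-\varepsilon}$, and noting that for $t$ in a compact set the tail $\sum_{|k| \geq N}(1+|t+k|)^{-d-\varepsilon}$ is dominated by a convergent integral comparable to $\int_{|x|\geq N/2}(1+|x|)^{-d-\varepsilon}\,dx$, one obtains absolute and locally uniform convergence of $\mathcal{P}f(t)=\sum_{k \in \Z^d} f(t+k)$. Since each summand is continuous and the convergence is uniform on compact sets, $\mathcal{P}f$ is continuous on $\Rd$; it is clearly $\Z^d$-periodic, hence defines an element of $C(\T^d) \subset L^1(\T^d) \cap L^2(\T^d)$.

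Next, I would compute the Fourier coefficients of $\mathcal{P}f$. By uniform convergence on $[0,1]^d$ one may interchange the sum and integral, and a translation in each summand together with Lemma \ref{lem_periodization_trick} yields
\begin{align}
c_l = \langle \mathcal{P}f, e_l\rangle_{L^2(\T^d)} = \int_{[0,1]^d}\Big(\sum_{k \in \Z^d} f(t+k)\Big)e^{-2\pi i l\cdot t}\,dt = \int_{\Rd} f(t) e^{-2\pi i l\cdot t}\,dt = \widehat{f}(l).
\end{align}
Here the phase factor $e^{-2\pi i l\cdot t}$ is $\Z^d$-periodic in $t$, so the translations by $k$ absorb harmlessly when I reassemble the integral over $\Rd$.

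Finally, the decay hypothesis on $\widehat{f}$ gives $\sum_{l \in \Z^d}|\widehat{f}(l)| \leq C\sum_{l\in\Z^d}(1+|l|)^{-d-\varepsilon} < \infty$, so the formal Fourier series $\sum_l \widehat{f}(l) e^{2\pi i l\cdot t}$ converges absolutely and uniformly on $\Rd$ to a continuous $\Z^d$-periodic function $h$. Both $\mathcal{P}f$ and $h$ are continuous functions on $\T^d$ with the same Fourier coefficients $\{\widehat{f}(l)\}_{l\in\Z^d}$; by uniqueness of Fourier series on $L^2(\T^d)$ (which applies since both functions lie in $L^2(\T^d)$) they agree a.e., and since both are continuous they agree pointwise, which is precisely \eqref{eq_PSF_Zd}.

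\textbf{Main obstacle.} The only genuinely delicate point is the justification of exchanging sum and integral when computing the coefficients, which requires the locally uniform convergence of $\mathcal{P}f$ established in the first step; everything else is bookkeeping. A minor subtlety worth mentioning is that one must invoke uniqueness of Fourier series (or equivalently, that a continuous function with absolutely summable Fourier coefficients is recovered by its Fourier series) to conclude pointwise equality rather than equality in $L^2$.
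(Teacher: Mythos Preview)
Your proposal is correct and follows essentially the same approach as the paper: periodize $f$, identify the Fourier coefficients of $\mathcal{P}f$ as $\widehat{f}(l)$ via the periodization trick, and use the summability of $\widehat{f}(l)$ to conclude. You are in fact more careful than the paper about the final step, explicitly invoking continuity of both sides together with uniqueness of Fourier coefficients to upgrade $L^2$-equality to pointwise equality; the paper glosses over this point.
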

\begin{proof}
	The decay conditions imply that $f$ as well as $\widehat{f}$ are in $L^1(\Rd)$ and by assumption (and the Lemma of Riemann-Lebesgue) both are continuous. As $f \in L^1(\Rd)$ we have that $\sum_{k \in \Z^d}f(t+k) = \mathcal{P} f(t) \in L^1(\T^d)$. We want to check the Fourier coefficients of $\mathcal{P} f$.
	\begin{align}
		\F_{\T^d} (\mathcal{P} f)(l) & = \int_{\T^d} \mathcal{P} f(t) e^{-2 \pi i l \cdot t} \, dt\\
		& = \int_{\T^d} \left( \sum_{k \in \Z^d} f(t+k) e^{-2 \pi i l \cdot(t+k)} \right) \, dt\\
		& = \int_{\Rd} f(t) e^{-2 \pi i l \cdot t} \, dt = \F f(l) = \widehat{f}(l).
	\end{align}
	By assumption $\sum_{l \in \Z^d} | \widehat{f}(l)| < \infty$, we see that $\mathcal{P} f$ has an absolutely converging Fourier series
	\begin{equation}
		\mathcal{P} f(t) = \sum_{k \in \Z^d} f(t+k) = \sum_{l \in \Z^d} \widehat{f}(l) e^{2 \pi i l \cdot t}.
	\end{equation}
\end{proof}
\begin{remark}
	\begin{enumerate}[(i)]
		\item The conditions in Proposition \ref{pro_PSF_Zd} are stronger than needed for the point-wise equality to hold, but they ensure absolute convergence of both series as well as the point-wise equality. The conditions on $f$ and $\widehat{f}$ under which the formula holds point-wise have been worked out, e.g., by Gröchenig in \cite{Gro_Poisson_1996}.
	
		\item Note that we assume that $f \in L^1(\Rd)$, which, by the Lemma of Riemann-Lebesgue, implies that $\widehat{f}$ is continuous. On the Fourier side, we also assume that $\widehat{f} \in L^1(\Rd)$, which also implies that $\F^{-1} \widehat{f}$ is continuous. Many authors hence drop the condition on $f$ being continuous as they implicitly assume that $f(t) = \F^{-1} \widehat{f}(t)$.
		
		\item If we replace the absolute convergence in Proposition \ref{pro_PSF_Zd} by convergence in $\Lt$ and pointwise equality by equality almost everywhere, we obtain a weaker version of the Poisson summation formula.
		
		\textit{If $\sum_{k \in \Z^d} f(t+k) \in L^2(\T^d)$ and $\sum_{k \in \Z^d} |\widehat{f}(k)|^2<\infty$, then \eqref{eq_PSF_Zd} holds almost everywhere.}
		\item The Poisson summation formula can also be written for arbitrary lattices $\L \subset \Rd$ and their duals. For a lattice $\L =  A \Z^d$, $A \in GL(\R,d)$, the dual lattice is given by $\L^\perp = A^{-T} \Z^d$ and the Poisson summation formula reads
		\begin{equation}
			\sum_{\l \in \L} f(t + \l) = \vol(\L)^{-1} \sum_{\l^\perp \in \L^\perp} \widehat{f}(\l^\perp) e^{2 \pi i \l^\perp \cdot t}
		\end{equation}
		\item In analytic number theory the Poisson summation formula yields the following functional equation for the Jacobi theta function $\theta(\tau) = \sum_{k \in \Z} e^{\pi i \tau k^2}$, $\tau \in \H$;
		\begin{equation}
			\theta(\tau) = \sqrt{\tfrac{i}{\tau}} \ \theta( -\tfrac{1}{\tau} )
		\end{equation}
		This functional equation was already used by Bernhard Riemann to come up with the functional equation for the Riemann zeta function $\zeta(s)$.
	\end{enumerate}
\end{remark}

Using Proposition \ref{pro_PSF_Zd} for the function $M_\omega T_x f(t)$ at $t=0$, we obtain
\begin{align}
	\sum_{k \in \Z^d} f(k-x)e^{2\pi i \omega \cdot k} & = \sum_{k \in \Z^d} M_\omega T_x f(k)
	= \sum_{l \in \Z^d} \F(M_\omega T_x f)(l)\\
	& = \sum_{l \in \Z^d} e^{2 \pi i \omega \cdot x} M_{-x} T_\omega \F f(l)
	= \sum_{l \in \Z^d} \widehat{f}(l-\omega) e^{2 \pi i x \cdot (\omega - l)},
\end{align}
or, likewise, by exchanging the order of translation and modulation we obtain
\begin{equation}
	\sum_{k \in \Z^d} f(k-x) e^{2 \pi i \omega (k-x)} = \sum_{l \in \Z^d} \widehat{f}(l-\omega) e^{-2 \pi i x \cdot l}.
\end{equation}

\subsection{The Whittaker-Nyquist-Kotelnikov-Shannon Sampling Theorem}\label{sec_WNKS}
We are now going to state and prove a sampling theorem for band-limited functions. The information theoretical problem is as follows. Assuming that a continuous signal is band-limited, how many discrete measurements does one need in order to be able to completely determine the signal by its samples?

The solution to this question is often (only) referred to as the Nyquist-Shannon Sampling Theorem. However, it is also known as the Whittaker-Nyquist-Kotelnikov-Shannon (WNKS) Sampling Theorem \footnote{Often, only the contributions of the information theorists H.~Nyquist (1928) and C.~Shannon (1949) are mentioned, but the mathematical foundations were known already decades before to E.~Whittaker (around 1915). Probably the problem had been treated mathematically already even earlier. For a long time the work of V.~Kotelnikov from 1933 was not known outside Russia and it became more prominent only in the 1950s, after Shannon's work. Kotelinkov's work was probably the first work which treated the problem of sampling a continuous function in an information theoretical context \cite{Kot33} (see also \cite[Chap.~2]{Zay93})}.

\begin{definition}
	A function $f \in \Lt[]$ is called band-limited with band limit $\frac{\mathsf{B}}{2}$ and band width $\mathsf{B}$ if its Fourier transform $\widehat{f}$ is supported on the interval $[-\tfrac{\mathsf{B}}{2}, \tfrac{\mathsf{B}}{2}]$, i.e., $\widehat{f}(\omega) = 0$ for all $|\omega| > \frac{\mathsf{B}}{2}$.
\end{definition}
We will now state and prove the sampling theorem for the case $\mathsf{B} = 1$, but the general case follows in the same manner.
\begin{theorem}[WNKS Sampling Theorem]
	Let $f$ be a band-limited function with band limit $\frac{1}{2}$. In particular
	\begin{equation}\label{eq_WNKS_f}
		f(t) = \int_{-1/2}^{1/2} \widehat{f}(\omega) e^{2 \pi i t \cdot \omega} \, d\omega,
	\end{equation}
	with $\widehat{f} \in L^2([-\frac{1}{2}, \frac{1}{2}]) \left( \subset L^1([-\frac{1}{2}, \frac{1}{2}] \right)$ and, hence, $f$ is also continuous. Then $f$ can be reconstructed from its samples on $\Z$ by the formula
	\begin{equation}
		f(t) = \sum_{k \in \Z} f(k) \frac{\sin(\pi (t-k))}{\pi(t-k)}, \quad t \in \R.
	\end{equation}
	The series converges absolutely and uniformly on compact subsets of $\R$.
\end{theorem}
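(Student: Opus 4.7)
The starting observation is that $\widehat{f}$, viewed as a function on $[-\tfrac{1}{2}, \tfrac{1}{2}]$, lies in $L^2$ (and hence, since the interval is bounded, also in $L^1$) of that interval, so it admits a Fourier series expansion on the torus of length one. The first step is to compute its Fourier coefficients
\[
c_k = \int_{-1/2}^{1/2} \widehat{f}(\omega)\,e^{-2\pi i k\omega}\,d\omega
\]
and recognize, by comparing with \eqref{eq_WNKS_f} evaluated at $t = -k$, that $c_k = f(-k)$. Thus on $[-\tfrac{1}{2},\tfrac{1}{2}]$ one has the $L^2$-convergent expansion
\[
\widehat{f}(\omega) = \sum_{k \in \Z} f(-k)\,e^{2\pi i k\omega}.
\]

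The second step is to insert this expansion into \eqref{eq_WNKS_f}, swap sum and integral, and compute each resulting integral via the calculation of $\F\indicator_{[-1/2,1/2]}$ performed earlier in the notes:
\[
f(t) = \sum_{k \in \Z} f(-k) \int_{-1/2}^{1/2} e^{2\pi i(t+k)\omega}\,d\omega = \sum_{k \in \Z} f(-k)\,\sinc(t+k) = \sum_{k \in \Z} f(k)\,\frac{\sin(\pi(t-k))}{\pi(t-k)},
\]
using the evenness of $\sinc$ and a reindexing $k \mapsto -k$ in the last step.

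For the convergence claims, Plancherel's theorem for Fourier series on $L^2([-\tfrac{1}{2},\tfrac{1}{2}])$ gives $(f(k))_{k\in\Z} \in \ell^2(\Z)$ with $\sum_k |f(k)|^2 = \|\widehat{f}\|^2_{L^2([-1/2,1/2])}$. A crucial auxiliary fact is that $\sum_{k\in\Z} |\sinc(t-k)|^2 = 1$ for every $t\in\R$, which follows by applying Parseval's identity in $L^2([-\tfrac{1}{2},\tfrac{1}{2}])$ to the unit-norm function $\omega \mapsto e^{-2\pi i t\omega}$ with respect to the orthonormal basis $\{e^{2\pi i k\omega}\}_{k\in\Z}$. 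Cauchy--Schwarz then yields
\[
\sum_{k\in\Z} |f(k)\,\sinc(t-k)| \leq \Big(\sum_{k\in\Z}|f(k)|^2\Big)^{1/2} \Big(\sum_{k\in\Z}|\sinc(t-k)|^2\Big)^{1/2} = \|\widehat{f}\|_{L^2([-1/2,1/2])},
\]
uniformly in $t\in\R$, giving absolute convergence uniformly on $\R$. Uniform convergence on compact sets follows from the same estimate applied to tails, using the $\ell^2$-decay of $(f(k))$.

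The main technical subtlety I expect is the rigorous justification of the interchange of summation and integration in the second step. The cleanest route is to note that the partial sums of the Fourier series converge to $\widehat{f}$ in $L^2([-\tfrac{1}{2},\tfrac{1}{2}])$, so that pairing against the bounded function $\omega \mapsto e^{2\pi i t\omega}$ (which lies in $L^2$ of the interval) converges uniformly in $t$ by Cauchy--Schwarz; alternatively, the dominating bound established in the convergence paragraph justifies the swap a posteriori. Aside from this, the identification of the Fourier coefficients with pointwise values $f(-k)$ relies on the continuity of $f$, which is guaranteed because $\widehat{f}\in L^1$ makes \eqref{eq_WNKS_f} a pointwise identity.
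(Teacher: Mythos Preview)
Your proof is correct and, at the computational level, coincides with the paper's. The paper, however, deliberately casts the identity
\[
\widehat{f}(\omega) \;=\; \sum_{k \in \Z} f(k)\,e^{-2\pi i k \omega}\quad\text{on }[-\tfrac12,\tfrac12]
\]
as an instance of the Poisson summation formula (writing $\widehat{f} = (\mathcal{P}\widehat{f})\,\indicator_{[-1/2,1/2]}$ and applying Proposition~\ref{pro_PSF_Zd} in reverse), since this theorem sits in the section devoted to that formula. You instead expand $\widehat{f}$ directly as a Fourier series on the interval and read off the coefficients from \eqref{eq_WNKS_f}; this is the same computation without the PSF label, so slightly more self-contained but missing the intended pedagogical link. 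Your convergence argument is more explicit than the paper's (which simply cites Cauchy--Schwarz in $\ell^2(\Z)$): your Parseval identity $\sum_{k}|\sinc(t-k)|^2 = 1$ in fact yields uniform convergence on all of $\R$, marginally stronger than the ``compact subsets'' asserted in the statement.
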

\begin{proof}
	Note that $\widehat{f}$ coincides with its periodization on $[-\frac{1}{2}, \frac{1}{2}]$, i.e.,
	\begin{equation}
		\widehat{f}(\omega) = (\mathcal{P}\widehat{f}(\omega)) \, \indicator_{[-\frac{1}{2}, \frac{1}{2}]}(\omega) = \sum_{l \in \Z} \widehat{f}(\omega + l) \indicator_{[-\frac{1}{2}, \frac{1}{2}]}(\omega).
	\end{equation}
	We apply the Poisson summation formula (in reverse) to obtain
	\begin{equation}
		\sum_{l \in \Z} \widehat{f}(\omega + l) = \sum_{k \in \Z} f(k) e^{-2 \pi i k \omega}.
	\end{equation}
	Next, we use the fact that $f$ is the inverse Fourier transform of $\widehat{f}$ to apply the Poisson summation formula;
	\begin{align}
		f(t) & = \F^{-1} (\widehat{f})(t) = \int_\R \widehat{f}(\omega) e^{2 \pi i \omega t} \, d\omega = \int_\R \sum_{l \in \Z} \widehat{f}(\omega + l) \indicator_{[-\frac{1}{2}, \frac{1}{2}]}(\omega) e^{2 \pi i \omega t} \, dt\\
		& = \int_\R \sum_{k \in \Z} f(k) e^{-2 \pi i k \omega}  \indicator_{[-\frac{1}{2}, \frac{1}{2}]} (\omega) e^{2 \pi i t \omega}\, d \omega = \sum_{k \in \Z} f(k) \underbrace{\int_{\R} \indicator_{[-\frac{1}{2}, \frac{1}{2}]}(\omega) e^{2 \pi i (t-k) \omega} \, d \omega}_{= \F^{-1} \left(\indicator_{[-\frac{1}{2},\frac{1}{2}]}\right)(t-k)}\\
		& = \sum_{k \in \Z} f(k) \frac{\sin(\pi (t-k))}{\pi (t-k)}
	\end{align}
	Our arguments all work in the $\Lt[]$-sense, however, by applying the Cauchy-Schwarz inequality (for the space $\ell^2(\Z)$) to our statement, we actually see that the convergence is absolutely and uniformly on compact subsets of $\R$.
\end{proof}
The value $\frac{1}{2}$, i.e., $\mathsf{B} = 1$ was of course chosen on purpose to obtain a statement and a proof in a simple form. More generally, if $\widehat{f}$ is supported on the interval $[-\frac{\mathsf{B}}{2}, \frac{\mathsf{B}}{2}]$, then
\begin{align}
	f(t) & = \sum_{k \in \Z} f(\tfrac{k}{\mathsf{B}}) \sinc(\mathsf{B}(t-\tfrac{k}{\mathsf{B}}))\\
	& = \sum_{k \in \Z} f(\mathsf{T} k) \sinc\left(\frac{t - \mathsf{T} k}{\mathsf{T}}\right),
\end{align}
where we set $\mathsf{T} = \frac{1}{\mathsf{B}}$. The value of $\mathsf{T}$ then gives the time between two successive samples. The rate $\frac{1}{\mathsf{T}}$ is the sampling rate and in the previous case is exactly $\mathsf{B}$, which is known as the Nyquist rate. This is the minimum rate at which we need to sample in order to reconstruct $f$ completely (see, e.g., \cite[Chap.~2]{Zay93}) by means of a sampling function, in this case the $\sinc$ function. In general, assuming a band-limited signal with band limit $\frac{\mathsf{B}}{2}$, we distinguish 3 cases:
\begin{enumerate}[(i)]
	\item $\mathsf{T} > \frac{1}{\mathsf{B}}$: undersampling, no guaranteed reconstruction
	\item $\mathsf{T} = \frac{1}{\mathsf{B}}$: critical sampling, reconstruction is possible with the WKNS sampling formula, but losing 1 sample leads to undersampling
	\item $\mathsf{T} < \frac{1}{\mathsf{B}}$: oversampling, reconstruction is possible, but the samples are linearly dependent. Note that the reconstruction formula $f(t) = \sum_{k \in \Z} f(\mathsf{T} k) \sinc\left(\frac{t - \mathsf{T} k}{\mathsf{T}}\right)$ still holds, because we extended the frequency-band $[-\frac{1}{2\mathsf{T}}, \frac{1}{2\mathsf{T}}] \supset [-\frac{\mathsf{B}}{2}, \frac{\mathsf{B}}{2}]$. Due to the linear dependence of the samples it is still possible to reconstruct if (some) samples are lost.
\end{enumerate}
\begin{figure}[ht]
		\subfigure[Undersampling of $\sinc(t)^2$. The sampling rate is only half the Nyquist rate, hence we can only recover $\sinc(t)$, but not $\sinc(t)^2$. Note that $\sinc(t)$ and $\sinc(t)^2$ coincide at the sampling points $k \in \Z$.]
		{
			\includegraphics[width=.45\textwidth]{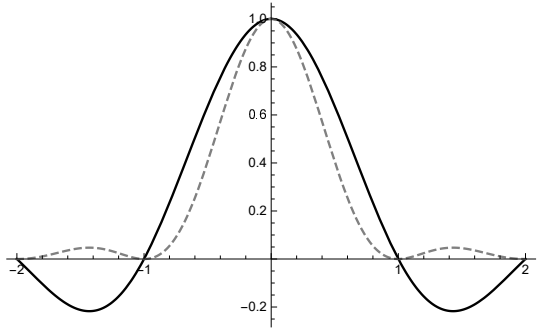}
		}
		\hfill
		\subfigure[Reconstruction of $\sinc(t)^2$ using the WKNS sampling theorem.]
		{
			\includegraphics[width=.45\textwidth]{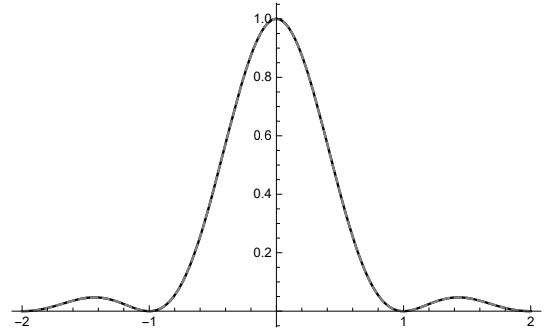}
		}
		\\
		\subfigure[Reconstruction of $\sinc(t)^2$ at the Nyquist rate with the lost sample $f(0)$.]
		{
			\includegraphics[width=.45\textwidth]{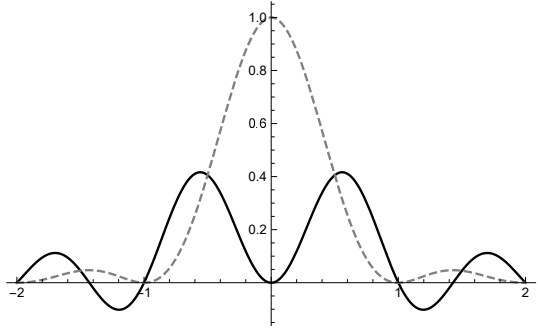}
		}
		\hfill
		\subfigure[Reconstruction of $\sinc(t)^2$ at the Nyquist rate with the lost sample $f(\tfrac{1}{2})$.]
		{
			\includegraphics[width=.45\textwidth]{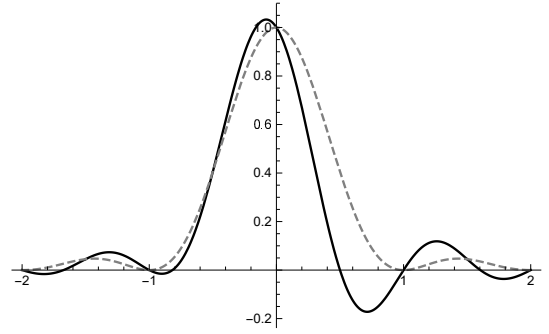}
		}
		\caption{\footnotesize{The function $\sinc(t)^2$ (gray, dashed) and some un-/successful reconstructions (black, solid) using the WKNS sampling theorem. Note that $\F(\sinc(t)^2)(\omega)$ is supported on $[-1,1]$ and therefore the Nyquist rate is $\mathsf{B} = 2$.}}
	\end{figure}

A further generalization of the WKNS sampling theorem is obtained by shifting the spectrum of the band-limited function $f$ by $\omega_0$, i.e.,
\begin{equation}
	\supp(T_{\omega_0}\widehat{f}) \subset [\omega_0 - \frac{\mathsf{B}}{2}, \omega_0 + \frac{\mathsf{B}}{2}]. 
\end{equation}
So, if $\supp(\widehat{f}) \subset [-\frac{\mathsf{B}}{2}, \frac{\mathsf{B}}{2}]$, then $T_{\omega_0} \widehat{f}$ is supported on $[\omega_0 - \frac{\mathsf{B}}{2}, \omega_0 + \frac{\mathsf{B}}{2}]$. Since, by \eqref{eq_TFshifts_FT} we have
\begin{equation}
	M_\omega = \F^{-1} T_\omega \F,
\end{equation}
it follows that $\F^{-1} T_{\omega_0} \widehat{f} = M_{\omega_0} \F^{-1} \widehat{f} = M_{\omega_0} f$.

Now, define the function $f_1(t) = M_{\omega_0} f(t)$, which has Fourier transform $\widehat{f_1}(\omega) = T_{\omega_0} \widehat{f}(\omega)$. We may apply the Poisson summation formula to $M_{-\omega_0}f_1(t) = f(t)$;
\begin{equation}
	f(t) = e^{-2 \pi i \omega_0 t} f_1(t) = \sum_{k \in \Z} e^{-2 \pi i \omega_0 \mathsf{T} k} f_1(\mathsf{T} k) \sinc\left( \tfrac{t-\mathsf{T}k}{\mathsf{T}}\right)
\end{equation}
Hence, the function $f_1$ which is band-limited within the interval $[\omega_0 - \frac{\mathsf{B}}{2}, \omega_0 + \frac{\mathsf{B}}{2}]$ can be written as
\begin{equation}
	f_1(t) = \sum_{k \in \Z} f_1(\mathsf{T} k) \sinc\left( \tfrac{t-\mathsf{T}k}{\mathsf{T}}\right) e^{2 \pi i \omega_0 (t-\mathsf{T} k)}
\end{equation}

Now, assume that $\supp(\widehat{f}) \subset \bigcup_{l=1}^N I_l$, where $I_l = \omega_l + [-\frac{B_l}{2}, \frac{B_l}{2}]$, $\omega_l \in \R$ and
\begin{equation}
	|I_m \cap I_n| = 0,
\end{equation}
for $l \neq m$. Then we can only apply the WKNS sampling theorem directly to a single frequency band $\omega_m + I_m$. To apply it to the whole signal we need to cut the spectrum into single frequency bands;
\begin{equation}
	\widehat{f}(\omega) =\sum_{l \in \Z} \widehat{f}(\omega) \indicator_{I_l}(\omega) = \sum_{l \in \Z} \widehat{f_l}(\omega).
\end{equation}
By setting $f_l(t) = \int_{I_l} \widehat{f_l}(\omega) e^{2 \pi i t \omega} \, d\omega$ we have a band-limited function with $\supp(\widehat{f_l}) \subset I_l$ which can be recovered by the sampling theorem.
\begin{equation}
	f_l(t) = \sum_{k \in \Z} f_l(\mathsf{T}_l k) \sinc \left( \frac{t-\mathsf{T}_l k}{\mathsf{T}_l}\right)e^{2 \pi i \omega_l (t- \mathsf{T}_l k)}.
\end{equation}
To recover the original signal, we need to ``glue" together the band-limited functions $f_l$. If
\begin{equation}
	f(t) = \int_\R \widehat{f}(\omega) e^{2 \pi i t \omega} \, d\omega,
\end{equation}
then we also have $f(t) = \sum_{l=1}^N \int_{I_l} \widehat{f_l}(\omega) e^{2 \pi i t \omega} \, d\omega = \sum_{l=1}^N f_l(t)$ and we get
\begin{equation}
	f(t) = \sum_{l=1}^N \sum_{k \in \Z} f_l(\mathsf{T}_l k) \sinc \left( \frac{t-\mathsf{T}_l k}{\mathsf{T}_l} \right) e^{2 \pi i \omega_l(t-\mathsf{T}_l k)}.
\end{equation}

Lastly, assume $f \in L^1(\R) \cap C(\R)$ is given as the Fourier transform of an $L^1(\R) (\cap \Lt[])$ function $\widehat{f}$ (so $f \in \Lt[]$ as well), so it need not be band-limited. Then, we may slice the Fourier domain into frequency bands $I_l = l + [-\frac{1}{2}, \frac{1}{2}]$ of band-width 1, so $\bigcup_{l \in \Z} I_l = \R$ (and $|I_m \cap I_n| = 0$). As
\begin{equation}
	\widehat{f}(\omega) = \sum_{l \in \Z} \widehat{f}(\omega) \indicator_{I_l}(\omega) = \sum_{l \in \Z} \widehat{f_l}(\omega),
\end{equation}
we may apply the WKNS sampling theorem to every frequency band $I_l$ and obtain
\begin{align}
	f(t) & = \sum_{l \in \Z} \sum_{k \in \Z} f_l(k) \sinc(t-k) e^{2 \pi i l(t-k)}\\
	& = \sum_{l \in \Z} \sum_{k \in \Z} f_l(k) \sinc(t-k) e^{2 \pi i l t} \underbrace{e^{-2 \pi i l k}}_{=1},
\end{align}
where $f_l(\omega) = \int_{I_l} \widehat{f_l}(\omega) e^{2 \pi i t \omega} \, d\omega$. So, we need to perform an inverse Fourier transform on every frequency band to obtain a part of the signal $f$ which is then band-limited on $I_l$. We note that a band-limited function actually has infinite support in the time domain, as we will see in the next section. So, even if the signal $f$ has a finite time duration, none of the pieces $f_l$ will have a finite time duration as they are all band-limited.

In the $\Lt[]$-sense we can actually write
\begin{equation}\label{eq_WKNS_non-bandlimited}
	f = \sum_{k,l \in \Z} f_l(k) M_l T_k \sinc = \sum_{k,l \in \Z} \langle f, M_l T_k \sinc \rangle M_l T_k \sinc = \sum_{k,l \in \Z} V_{\sinc} f(k,l) \, \pi(k,l) \sinc.
\end{equation}
The equality of the coefficients $f_l(k) = \langle f, M_l T_k \sinc \rangle$ follows because the system $\{M_l T_k \sinc \mid k,l \in \Z\}$ is actually orthonormal with dense span in $\Lt[]$. Thus, the coefficients $f_l(k)$ in the WKNS expansion are therefore uniquely given by the inner products of $f$ with the (integer) time-frequency shifted $\sinc$-function, or likewise, by sampling the STFT $V_{\sinc}f$ on (the integer lattice) $\Z^2$. However, this can also be confirmed by a direct computation.
\begin{align}
	f_l(t) & = \int_{\R} \widehat{f}(\omega) \indicator_{l+[-\frac{1}{2}, \frac{1}{2}]}(\omega) e^{2 \pi i t \omega} \, d\omega = \F^{-1}(\widehat{f} \, T_l \indicator_{[-\frac{1}{2}, \frac{1}{2}]})(t) = f(t) * (M_l \sinc)(t)\\
	& \stackrel{\eqref{eq_STFT_notation}}{=} e^{2 \pi i t l} V_{\sinc} f(t,l),
\end{align}
as $\overline{\sinc}^\vee(t) = \sinc(t)$. Hence, sampling $f_l$ on $\Z$ yields
\begin{equation}
	f_l(k) = \underbrace{e^{2 \pi i k l}}_{=1} V_{\sinc} f(k,l), \quad k \in \Z.
\end{equation}


\begin{remark}
	Beating the Nyquist rate and still obtaining perfect reconstruction is possible, if further restrictions of the signal are known. One such restriction might be that the (digitized) signal is ``sparse", i.e., having few non-zero entries, in some domain (e.g., with respect to a certain basis). The concept of sparsity is excessively used in the area of compressed sensing. However, this would lead too far away from the intended topics of this course and will hence not be discussed any further.
\end{remark}

\section{Uncertainty Principles}
Classical uncertainty principles are statements about the pair $(f,\widehat{f})$, including inequalities on their supports or a vanishing theorem. We will introduce uncertainty principles in a non-rigorous and rather descriptive way as stated in \cite{Gro03_FeiStr}. They can be summed up in the following metatheorems. 
\newcounter{tmp}
\begingroup
\setcounter{tmp}{\value{theorem}}
\setcounter{theorem}{0} 
\renewcommand\thetheorem{\Alph{theorem}}
\begin{mtheorem}\label{mthm_A}
	A function $f$ and its Fourier transform $\widehat{f}$ cannot both be simultaneously small.
\end{mtheorem}

\begin{mtheorem}\label{mthm_B}
	A function $f$ occupies an area of at least 1 in the time-frequency plane (or phase space).
\end{mtheorem}

\begin{mtheorem}\label{mthm_C}
	Every time-frequency representation comes with its own uncertainty principle.
\end{mtheorem}
\endgroup

\setcounter{theorem}{\thetmp} 
The above metatheorems should be understood in a heuristic sense rather than in a precise mathematical way. At this point, we remark that the Nyquist sampling rate might be understood as a reverse uncertainty principle. We called the case $\mathsf{T} > \frac{1}{\mathsf{B}}$ undersampling. Formulated differently, we might put it as
\begin{equation}
	\mathsf{T} \mathsf{B} > 1.
\end{equation}
Hence, on average we have less than 1 sample per unit square in the time-frequency plane, which is too little information to recover a (band-limited) signal.

In order to obtain rigorous mathematical statements, we need precise definitions of the terms ``small" and ``to occupy an area". Metatheorem \ref{mthm_C} stresses the fact that we cannot beat the ``classical" uncertainty principles by choosing a different ``time-frequency representation". To be more precise, the idea is to take an uncertainty principle of type \ref{mthm_A} or \ref{mthm_B} for the pair $(f,\widehat{f})$ and replace it by a statement about the STFT or the Wigner distribution. The statement might be a (support) inequality or a vanishing result.

The size of a function is usually measured by some $L^p$-norm. In the case that $p = 2$, we also speak of the energy. Also, decay conditions might be used. In order to describe how much area a function occupies, we need the concept of the essential support of a function and, already established, the notion of the time-frequency plane.

We start with the very classical Heisenberg-Pauli-Weyl uncertainty principle.
\begin{theorem}[Heisenberg-Pauli-Weyl Uncertainty Principle]\label{thm_HPW_UP}
	Let $f \in \Lt[]$ and $a,b \in \R$. Then
	\begin{equation}
		\left( \int_\R (x-a)^2 |f(x)|^2 \, dx \right)^{1/2} \left( \int_\R (\omega-b)^2 |\widehat{f}(\omega)|^2 \, d\omega \right)^{1/2} \geq \frac{1}{4 \pi} \norm{f}^2_2 \, .
	\end{equation}
\end{theorem}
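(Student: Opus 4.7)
The plan is to first reduce to the centered case $a = b = 0$ by a time-frequency shift, then handle the centered inequality via integration by parts and Plancherel. Setting $g(x) = M_{-b} T_{-a} f(x) = e^{-2\pi i b x} f(x+a)$, a direct computation (using \eqref{eq_TFshifts_FT}) shows $|g(x)|^2 = |f(x+a)|^2$ and $|\widehat{g}(\omega)|^2 = |\widehat{f}(\omega+b)|^2$. Since time- and frequency-shifts are $L^2$-isometries, $\norm{g}_2 = \norm{f}_2$, and the substitutions $y = x+a$ and $\eta = \omega + b$ yield
\begin{align}
\int_\R x^2 |g(x)|^2 \, dx &= \int_\R (y - a)^2 |f(y)|^2 \, dy,\\
\int_\R \omega^2 |\widehat{g}(\omega)|^2 \, d\omega &= \int_\R (\eta - b)^2 |\widehat{f}(\eta)|^2 \, d\eta.
\end{align}
So it suffices to prove the centered inequality $\norm{x f}_2 \, \norm{\omega \widehat{f}}_2 \geq \tfrac{1}{4\pi} \norm{f}_2^2$.

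I would establish the centered estimate first for $f \in \mathcal{S}(\R)$. The key step is integration by parts applied to $|f|^2 = f \overline{f}$, whose derivative is $\tfrac{d}{dx} |f(x)|^2 = 2 \Re \bigl(\overline{f(x)} \, f'(x)\bigr)$. For Schwartz $f$ the boundary term vanishes and
\begin{equation}
\norm{f}_2^2 = -\int_\R x \, \frac{d}{dx} |f(x)|^2 \, dx = -2 \Re \int_\R x \, \overline{f(x)} \, f'(x) \, dx.
\end{equation}
Taking absolute values and applying the Cauchy-Schwarz inequality yields $\norm{f}_2^2 \leq 2 \, \norm{x f}_2 \, \norm{f'}_2$. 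By the Fourier-differentiation identity \eqref{eq_FT_derivative} and Plancherel's theorem (Theorem \ref{thm_Plancherel}),
\begin{equation}
\norm{f'}_2 = \norm{\F f'}_2 = \norm{2 \pi i \omega \widehat{f}(\omega)}_2 = 2\pi \, \norm{\omega \widehat{f}}_2,
\end{equation}
which produces the claimed inequality with constant $\tfrac{1}{4\pi}$.

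Finally, I would extend from $\mathcal{S}(\R)$ to $L^2(\R)$ by density. If either factor on the left-hand side of the claim is infinite the inequality is trivial, so one may assume both $x f \in L^2(\R)$ and $\omega \widehat{f} \in L^2(\R)$; the latter is equivalent (via \eqref{eq_FT_derivative}) to $f$ being weakly differentiable with $f' \in L^2$. A standard approximation by Schwartz functions $f_n$ with $f_n \to f$, $x f_n \to x f$, and $f_n' \to f'$ in $L^2$ then transfers the inequality to the limit. The main obstacle is precisely this density step, as it requires showing that Schwartz functions are dense in the mixed weighted-Sobolev space $\{f \in L^2(\R) : x f \in L^2(\R), \, f' \in L^2(\R)\}$ in a topology strong enough to pass to the limit in all three quantities simultaneously. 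Once this is in place, the computational core of the proof is just the two displays above.
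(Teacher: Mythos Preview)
Your proposal is correct and follows essentially the same route as the paper's first proof: reduce to $a=b=0$ via the time-frequency shift $M_{-b}T_{-a}f$, use integration by parts to obtain $\norm{f}_2^2 = -2\Re\int x\,\overline{f}\,f'$, apply Cauchy--Schwarz, and convert $\norm{f'}_2$ to $2\pi\norm{\omega\widehat{f}}_2$ via Plancherel and \eqref{eq_FT_derivative}. The paper handles the boundary term and domain issues with the same remark that the inequality is trivial unless $xf,\,\omega\widehat{f}\in L^2$ and that the integration-by-parts computation is valid for Schwartz functions; your explicit density step is a slightly more careful formulation of the same passage.
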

\begin{proof}
%
	
	We note that by looking at the function
	\begin{equation}
		f_{a,b}(x) = e^{-2 \pi i b \cdot x} f(x+a) = M_{-b} T_{-a} f(x),
	\end{equation}
	we may assume that $a = b = 0$. Further, we need some necessary technical assumptions. For the following calculations we actually need to assume that $x f(x) \in \Lt[]$ and $\omega \widehat{f}(\omega) \in \Lt[]$, so $f', \widehat{f}' \in \Lt[]$ as well. However, on the other hand, if either of the assumptions is not met, then the inequality is trivially true. An (unmotivated) integration by parts shows that
	\begin{equation}
		\int_\R \left(x f(x)\right) \overline{f'(x)} \, dx = x |f(x)|^2 \Bigg|_{x = - \infty}^\infty - \int_\R \left( |f(x)|^2 + x f'(x) \overline{f(x)} \right) \, dx.
	\end{equation}
	Assuming $\lim_{x \to \pm \infty} x |f(x)|^2 = 0$, which holds for any Schwarz function, the above equation can be re-arranged in the following way
	\begin{equation}
		\int_\R |f(x)|^2 \, dx = - \int_\R x f(x) \overline{f'(x)} \, dx - \int_\R x \overline{f(x)} f'(x) \, dx = - 2 \Re \left( \int_\R x f(x) \overline{f'(x)} \, dx \right),
	\end{equation}
	as $x \in \R$. We note the simple estimate
	\begin{equation}
		\left| - 2 \Re \left(\int_\R x f(x) \overline{f'(x)} \, dx\right) \right| \leq 2 \left| \int_\R x f(x) \overline{f'(x)} \, dx \right|.
	\end{equation}
	Now, we use the Cauchy-Schwarz inequality to obtain
	\begin{equation}
		\left| \int_\R x f(x) \overline{f'(x)} \, dx \right| = |\langle xf, f' \rangle| \leq \norm{xf}_2 \norm{f'}_2 = \left(\int_\R x^2 |f(x)|^2 \, dx \right)^{1/2} \left( \int_\R |f'(x)|^2 \, dx \right)^{1/2}.
	\end{equation}
	From the last three calculations it follows that
	\begin{equation}
		\int_\R |f(x)|^2 \, dx \leq 2 \left(\int_\R x^2 |f(x)|^2 \, dx \right)^{1/2} \left( \int_\R |f'(x)|^2 \, dx \right)^{1/2}.
	\end{equation}
	By combining Plancherel's theorem and the action of the Fourier transform on derivatives \eqref{eq_FT_derivative} we get
	\begin{equation}
		\left( \int_\R |f'(x)|^2 \, dx \right)^{1/2} = \left( \int_\R |(2 \pi i \omega) \widehat{f}(\omega)|^2 \, d\omega \right)^{1/2}.
	\end{equation}
	Combining the last lines, we obtain
	\begin{equation}
		\frac{\norm{f}_2^2}{4 \pi} \leq \norm{xf}_2 \norm{\omega\widehat{f}}_2
	\end{equation}
\end{proof}
We note that in the statement of Theorem \ref{thm_HPW_UP} we have a factor of $2 \pi$ due to our choice of the normalization of the Fourier transform. Often, the uncertainty principle is stated with a factor $\frac{1}{2}$ due to a different convention of normalizing the Fourier transform. In quantum mechanics, the statement usually also involves Planck's reduced constant $\hbar$. We will make a little excursion into the field of quantum mechanics now.

\textit{Excursion}. The following paragraphs are the author's interpretation of quantum mechanical results and it should be emphasized that the author does not possess a solid background on the subject.

In quantum mechanics, the (energy -- or density -- of the) Fourier transform is something which can be measured and, hence, depends on measure(able) units. One way to write it is the following.
\begin{equation}
	\F_\hbar f(p) = \left( \frac{1}{2 \pi \hbar} \right)^{d/2} \int_{\Rd} f(x) e^{- \frac{i}{\hbar} p \cdot x} \, dx
\end{equation}
Now, assuming $\norm{f}_2 = 1$, the expression $|f(x)|^2$ is the probability density of the chance to find a particle at a certain position, or rather in an interval. The measure unit is meter [m]. The expression $\hbar =  \frac{h}{2 \pi}$ is Planck's constant $h$ divided by $2 \pi$. It is named after physicist Max Planck\footnote{Max Planck was awarded the Nobel Prize in Physics ``in recognition of the services he rendered to the advancement of Physics by his discovery of energy quanta" in 1919 for the year 1918. During the selection process in 1918, the Nobel Committee for Physics decided that none of the year's nominations met the criteria as outlined in the will of Alfred Nobel. According to the Nobel Foundation's statutes, the Nobel Prize can in such a case be reserved until the following year, and this statute was then applied. Max Planck therefore received his Nobel Prize for 1918 one year later, in 1919.\\ \url{https://www.nobelprize.org/prizes/physics/1918/summary/}} in recognition of his discovery that the energy of harmonically oscillating systems can only change in integer multiples of some smallest portion proportional to the oscillating frequency. Planck's constant describes this proportionality factor; $\Delta E = \hbar \omega$ ($\Delta E$ being the change of energy, $\omega$ being the circle frequency). Hence, $h$ measure energy times time, [J s], which can also be expressed as [m$^2$ kg/s]. Lastly, we note that $p$, hence, has measure unit [m kg/s] which is velocity times mass, also known as momentum. Hence, $|\widehat{f}|^2$ (with the normalization induced by $\F_\hbar$) is the probability density that the particle has a certain momentum. For $d = 1$, Theorem \ref{thm_HPW_UP} is usually stated as
\begin{equation}
	\Delta_f x \Delta_f p \geq \frac{\hbar}{2}.
\end{equation}
The interpretation is that a particle's position and momentum cannot be measured to arbitrary precision at the same time. It is highly important to understand that this is not a restriction due to measurement errors, it is imposed by a mathematical theorem! Admittedly, expressed as a mathematical theorem the uncertainty principle is much more prosaic and loses some of its mystery. It is a simple inequality for standard deviations.

A model where it finds applications is in a simple model of the hydrogen atom. The hydrogen atom consists of a core with one proton and it has an electron which oscillates around it. In the simplest model, the electron only moves within one direction ($\R$) and satisfies the Schrödinger equation of the harmonic oscillator. Then $a = \int_\R x |f(x)|^2 \, dx$ is the expected position of the electron and $b = \int_\R p |\widehat{f}(p)|^2 \, dp$ is its expected momentum. Finally, $\Delta_f x$ and $\Delta_f p$ are the standard deviations of the probability densities $|f(x)|^2$ and $|\widehat{f}(p)|^2$.

If $f \in \Lt$ for general $d \in \N$ (the physically most relevant case being $d = 3$ or $d = 3N$ for $N$ particles), then the uncertainty principle holds for conjugate coordinates, i.e.,
\begin{equation}
	\Delta_f x_k \Delta_f p_k \geq \frac{\hbar}{2}, \qquad k = 1, \ldots, d.
\end{equation}

We note that in time-frequency analysis the pair $(x,p)$ of position and momentum is simply exchanged by the pair $(x,\omega)$. In quantum mechanics, the point $(x,p) \in \R^{2d}$ is said to be a point in phase-space, which is the same concept as the time-frequency plane. Finally, by setting $\hbar = \frac{1}{2 \pi}$ we obtain the Fourier transform as used in time-frequency analysis. Mathematically, there is hence no big difference between the fields of quantum mechanics and time-frequency analysis.
\begin{flushright}
	$\diamond$
\end{flushright}

The Heisenberg-Pauli-Weyl uncertainty principle can more generally be derived from an inequality about (non-commuting) self-adjoint operators on a Hilbert space. For this, let $A$ and $B$ be two linear operators on a Hilbert space $\mathcal{H}$, i.e.,
\begin{align}
	A: & \mathcal{H} \to \mathcal{H} & B: & \mathcal{H} \to \mathcal{H}\\
	& f \mapsto Af & & f \mapsto Bf
\end{align}
We denote the commutator of these operators by
\begin{equation}
	[A,B] = AB - BA.
\end{equation}
\begin{lemma}\label{lem_UCP_AB}
	Let $A$ and $B$ be (possibly unbounded) self-adjoint operators on $\mathcal{H}$. Then
	\begin{equation}
		\norm{(A-a)f}_\mathcal{H} \norm{(B-b)f}_\mathcal{H} \geq \tfrac{1}{2} | \langle [A,B] f, f \rangle_\mathcal{H}|,
	\end{equation}
	for all $a,b \in \R$ and for all $f$ in the domain of $AB$ and $BA$. Equality holds if and only if $(A-a)f = i c(B-b)f$ for some $c \in \R$.
\end{lemma}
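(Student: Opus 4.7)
The plan is to reduce to the centered case $a=b=0$ and then express $\langle [A,B]f,f\rangle_\mathcal{H}$ as twice the imaginary part of an inner product that Cauchy-Schwarz can bound. First I would observe that for real scalars $a,b$ we have $[A-a, B-b] = [A,B]$, since the constant multiples of the identity commute with everything. Replacing $A$ by $A-a$ and $B$ by $B-b$ (both still self-adjoint on the same domain), it therefore suffices to prove
\begin{equation}
\norm{Af}_\mathcal{H}\,\norm{Bf}_\mathcal{H}\;\geq\;\tfrac{1}{2}\,|\langle [A,B]f,f\rangle_\mathcal{H}|.
\end{equation}

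Next I would use self-adjointness to rewrite the commutator term. Since $f$ lies in the domain of both $AB$ and $BA$, we can move $A$ and $B$ across the inner product, giving
\begin{equation}
\langle [A,B]f,f\rangle_\mathcal{H} = \langle ABf,f\rangle_\mathcal{H} - \langle BAf,f\rangle_\mathcal{H} = \langle Bf,Af\rangle_\mathcal{H} - \langle Af,Bf\rangle_\mathcal{H} = 2i\,\Im\langle Bf,Af\rangle_\mathcal{H}.
\end{equation}
Taking absolute values and using $|\Im z|\leq |z|$ followed by the Cauchy-Schwarz inequality then yields
\begin{equation}
|\langle [A,B]f,f\rangle_\mathcal{H}| = 2\,|\Im\langle Bf,Af\rangle_\mathcal{H}| \leq 2\,|\langle Bf,Af\rangle_\mathcal{H}| \leq 2\,\norm{Bf}_\mathcal{H}\,\norm{Af}_\mathcal{H},
\end{equation}
which is the desired inequality after undoing the centering substitution.

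For the equality statement, both of the estimates above must be saturated simultaneously. Cauchy-Schwarz becomes an equality precisely when $Af$ and $Bf$ are linearly dependent, say $Af = \alpha Bf$ for some $\alpha\in\C$ (the degenerate case $Bf=0$ is absorbed into this by setting $\alpha$ arbitrary). Substituting this into $\langle Bf,Af\rangle_\mathcal{H} = \overline{\alpha}\,\norm{Bf}_\mathcal{H}^2$, the inequality $|\Im z|\leq |z|$ is an equality exactly when $\overline{\alpha}$ is purely imaginary, i.e. $\alpha = ic$ with $c\in\R$. Translating back, this reads $(A-a)f = ic(B-b)f$, completing the characterization. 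No deep step is involved; the only mild subtlety — and the one I would flag as the place to be careful — is keeping track of domains when pushing $A,B$ across the inner product, which is exactly why the hypothesis $f\in\mathrm{dom}(AB)\cap\mathrm{dom}(BA)$ appears in the statement.
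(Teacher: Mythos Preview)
Your proof is correct and follows essentially the same approach as the paper: both express $\langle [A,B]f,f\rangle_\mathcal{H}$ as (twice the imaginary part of) the inner product $\langle (B-b)f,(A-a)f\rangle_\mathcal{H}$ via self-adjointness, then apply $|\Im z|\leq|z|$ followed by Cauchy--Schwarz, and analyze equality in the same two steps. The only cosmetic difference is that you first center to $a=b=0$ using $[A-a,B-b]=[A,B]$, whereas the paper carries the shifts $(A-a),(B-b)$ throughout.
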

\begin{proof}
	We rewrite the commutator and use the fact that $A$ and $B$ are self-adjoint to obtain
	\begin{align}
		\langle [A,B]f, f\rangle_\mathcal{H} & = \langle \left( (A-a)(B-b)-(B-b)(A-a) \right) f, f \rangle_\mathcal{H}\\
		& = \langle (B-b)f,(A-a)f \rangle_\mathcal{H} - \langle (A-a)f, (B-b)f \rangle_\mathcal{H}\\
		& = \langle (B-b)f,(A-a)f \rangle_\mathcal{H} - \overline{\langle (B-b)f, (A-a)f \rangle_\mathcal{H}}\\
		& = 2 \Im \left(\langle (B-b)f, (A-a)f \rangle_\mathcal{H} \right).
	\end{align}
	Now, we use the Cauchy-Schwarz inequality and get
	\begin{align}
		|\langle [A,B]f, f\rangle_\mathcal{H}| & \leq 2 |\langle (B-b)f, (A-a)f \rangle_\mathcal{H}|\label{eq_UCP_AB}\\
		& \leq 2 \norm{(A-a)f}_{\mathcal{H}} \norm{(B-b)f}_{\mathcal{H}}\label{eq_UCP_AB_CSI}
	\end{align}
	Equality in \eqref{eq_UCP_AB} holds if and only if $\langle(B-b)f, (A-a)f\rangle_\mathcal{H}$ is purely imaginary, and equality in \eqref{eq_UCP_AB_CSI} (Cauchy-Schwarz inequality) holds if and only if $(A-a)f = \l (B-b)f$, $\l \in \C$. Together, this implies that for equality to hold $\l = i c$ with $c \in \R$.
\end{proof}

To deduce Theorem \ref{thm_HPW_UP} from Lemma \ref{lem_UCP_AB} we need to introduce the position and momentum operators\footnote{The factor $\frac{1}{2 \pi i}$ for the momentum operator $P$ clearly comes from the normalization of the Fourier transform. In quantum mechanics, it is often stated as $P = -i \hbar \partial_x$.} on $\Lt[]$, given by
\begin{equation}
	X f(x) = x f(x)
	\quad \text{ and } \quad
	Pf(x) = \tfrac{1}{2 \pi i} f'(x).
\end{equation}
The position operator $X$ gives back the expectation value of the position of a particle in the quantum state $f$;
\begin{equation}
	\langle X f, f\rangle = \int_{\R} X f(x) \overline{f(x)} \, dx = \int_\R x |f(x)|^2 \, dx.
\end{equation}
The momentum operator gives back the expectation value of the momentum of the particle in the quantum state $f$;
\begin{equation}
	\langle P f, f \rangle = \langle \F P f, \F f \rangle \stackrel{\eqref{eq_FT_derivative}}{=} \langle X \F f, \F f \rangle = \int_\R \omega |\widehat{f}(\omega)|^2 \, d\omega.
\end{equation}
Having settled the notation, we note that the Schwartz space $\mathcal{S}(\R)$ is a common domain for $X, P, XP, PX$. The largest common domain for these operators is the subspace
\begin{equation}
	\{f \in \Lt[] \mid x f(x), f', xf' \in \Lt[]\}.
\end{equation}
Also, we note that these operators are the infinitesimal generators of the modulation  and translation groups, i.e.,
\begin{equation}
	\dfrac{d}{d\omega} M_\omega f \Big|_{\omega=0} = 2 \pi i X f
	\quad \text{ and } \quad
	\dfrac{d}{dx} T_x f \Big|_{x=0} = -2 \pi i P f.
\end{equation}
To apply Lemma \ref{lem_UCP_AB} we need to check that $X$ and $P$ are indeed self-adjoint on $\Lt[]$. We start with the position operator and compute
\begin{equation}
	\langle X f, f \rangle = \int_\R x f(x) \overline{f(x)} \, dx = \int_\R f(x) \overline{(x f(x))} \, dx = \langle f, X f \rangle,
\end{equation}
as $x = \overline{x}$ because $x \in \R$. Now, we compute for the momentum operator
\begin{equation}
	\langle P f, f \rangle = \langle \F P f, \F f \rangle = \langle X \F f, \F f \rangle = \langle \F f, X \F f \rangle = \langle f, \F^{-1} X \F f \rangle = \langle f, P f \rangle,
\end{equation}
where we used Parseval's identity and \eqref{eq_FT_derivative}\footnote{Alternatively, one may write the inner product as an integral and use integration by parts to establish the result for a suitable subspace of $\Lt[]$, e.g., $\mathcal{S}(\R)$.}.

\begin{proof}[Second proof of Theorem \ref{thm_HPW_UP}]
	Assume that $f$ is in the domain of $X,P,XP,PX$. Then the commutator of $X$ and $P$, applied to $f$, is given by
	\begin{equation}
		[X,P]f(x) = \tfrac{1}{2 \pi i} (x f'(x)-(xf)'(x)) = \tfrac{1}{2 \pi i} \left(x f'(x) - (f(x)+x f'(x))\right) = -\tfrac{1}{2 \pi i} f(x).
	\end{equation}
	Thus, by Lemma \ref{lem_UCP_AB} we have
	\begin{equation}\label{eq_UCP_XP}
		\tfrac{1}{4 \pi} \norm{f}_2^2 = \tfrac{1}{2} |\langle [X,P]f, f \rangle| \leq \norm{(X-a)f}_2^2 \norm{(P-b)f}_2^2.
	\end{equation}
	The first factor is
	\begin{equation}
		\norm{(X-a)f}_2^2 = \int_\R (x-a)^2 |f(x)|^2 \, dx
	\end{equation}
	and by Plancherl's theorem and \eqref{eq_FT_derivative} the second factor is
	\begin{equation}
		\norm{(P-b)f}_2^2 = \norm{\F((P-b)f)}_2^2 = \norm{(X-b)\F f}_2^2 = \int_ \R (\omega-b)^2 |\widehat{f}(\omega)|^2 \, d\omega.
	\end{equation}
\end{proof}
Equality in \eqref{eq_UCP_XP} holds if and only if $(P-b)f = i c(X-a)f$ for some $c \in \R$. This is the differential equation
\begin{equation}
	f'-2 \pi i b f = -2 \pi c(x-a) f.
\end{equation}
The solutions to this differential equation are exactly scalar multiples of the time-frequency shifted, dilated standard Gaussian $g_0$;
\begin{equation}
	M_b T_a e^{-\pi c t^2} = e^{-\pi c (t-a)^2}e^{2 \pi i b t}.
\end{equation}
As we assume $f \in \Lt[]$, we need $c > 0$ and can indeed write $M_b T_a D_{1/\sqrt{c}} g_0$ (up to a scalar).

A mathematical consequence which can be drawn from the uncertainty principle is the following result.
\begin{corollary}
	For $f \in \Lt[]$ we have
	\begin{equation}
		\norm{X f}_2^2 + \norm{P f}_2^2 \geq \tfrac{1}{2\pi} \norm{f}_2^2,
	\end{equation}
	with equality if and only if $f(t) = c e^{-\pi t^2}$.
\end{corollary}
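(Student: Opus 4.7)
The plan is to deduce this corollary directly from the Heisenberg-Pauli-Weyl uncertainty principle (Theorem \ref{thm_HPW_UP}) combined with the arithmetic-geometric mean inequality. Specifically, applying Theorem \ref{thm_HPW_UP} with $a = b = 0$ yields
\begin{equation}
\norm{X f}_2 \, \norm{P f}_2 = \norm{x f(x)}_2 \, \norm{\omega \widehat{f}(\omega)}_2 \geq \tfrac{1}{4 \pi} \norm{f}_2^2,
\end{equation}
and since $\alpha^2 + \beta^2 \geq 2 \alpha \beta$ for $\alpha, \beta \geq 0$, I can chain these two inequalities to obtain
\begin{equation}
\norm{X f}_2^2 + \norm{P f}_2^2 \geq 2 \, \norm{X f}_2 \, \norm{P f}_2 \geq \tfrac{1}{2 \pi} \norm{f}_2^2.
\end{equation}

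For the equality characterization, I would trace through both inequalities. Equality in the Heisenberg step (with $a = b = 0$) forces, via the discussion following \eqref{eq_UCP_XP}, that $P f = i c X f$ for some $c \in \R$, and since we need $f \in \Lt[]$ this requires $c > 0$. Solving the resulting differential equation $f' = -2\pi c x f$ gives $f(x) = K e^{-\pi c x^2}$ for some constant $K$. Equality in the AM-GM step additionally demands $\norm{X f}_2 = \norm{P f}_2$. But from $P f = i c X f$ we immediately read off $\norm{P f}_2 = c \norm{X f}_2$, so this second equality pins down $c = 1$. Thus equality holds precisely when $f(t) = c \, e^{-\pi t^2}$ for some scalar $c \in \C$.

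The argument is essentially a one-line deduction and I do not expect any serious obstacle; the only item requiring a moment of care is checking that the AM-GM equality case is compatible with the Heisenberg equality case and that together they force the specific dilation parameter $c = 1$ (rather than allowing an arbitrary Gaussian width). Since the norms $\norm{X f}_2$ and $\norm{P f}_2$ are both finite (otherwise the inequality is trivially true), the use of Theorem \ref{thm_HPW_UP} is legitimate, and no further technical hypotheses are needed.
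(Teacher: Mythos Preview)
Your proposal is correct and follows essentially the same route as the paper: apply the Heisenberg--Pauli--Weyl inequality with $a=b=0$, chain it with the AM--GM inequality $\alpha^2+\beta^2\geq 2\alpha\beta$, and then trace the equality cases to force the Gaussian with the specific width parameter. Your equality analysis is in fact slightly more explicit than the paper's in separating the role of the $L^2$ constraint (forcing $c>0$) from the AM--GM constraint (forcing $c=1$).
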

\begin{proof}
	We apply the inequality
	\begin{equation}
		2 \alpha \beta \leq \alpha^2 + \beta^2
	\end{equation}
	to the uncertainty principle with $a=b=0$ and $\alpha = \norm{Xf}_2$ and $\beta = \norm{P f}_2$. For equality we need $\alpha = \beta$ and equality in the uncertainty principle. This means we need
	\begin{equation}
		Pf = \pm i X f
		\quad \Longleftrightarrow \quad
		f'(x) = \pm 2 \pi x f(x)
	\end{equation}
	and the only $\Lt[]$ solutions are the Gaussians $c \, e^{-\pi x^2}$, $c \in \C$.
\end{proof}

Theorem \ref{thm_HPW_UP} can also be formulated by means of the Rihaczek distribution.
\begin{equation}
	\left( \iint_{\R^2} |(x-a) (\omega-b) Rf(x,\omega)|^2 \, d(x,\omega) \right)^{1/2} \geq \frac{1}{4 \pi} \norm{f}_2^2.
\end{equation}
We already identified the Rihaczek distribution as a very crude time-frequency representation. This is due to the fact that it still represents the temporal and spectral behavior separately, i.e., it is (up to a phase factor) a simple tensor product of $f$ and $\widehat{f}$. Nonetheless, Metatheorem \ref{mthm_C} tells us that the inability to exactly measure the instantaneous time-frequency behavior of a function $f$ is not due to the crude representation of $Rf$.

The next results treats $f$ and $\widehat{f}$ separately as well. In this case, we measure the size of a function by its support. This leads to the qualitative statement that a function cannot simultaneously be time- and band-limited.

\begin{theorem}[Benedicks]
	Assume $f \in L^1(\Rd) \cap L^2(\Rd)$. If
	\begin{equation}
		|\supp(f)| \ |\supp(\widehat{f})| < \infty,
	\end{equation}
	then $f \equiv 0$.
\end{theorem}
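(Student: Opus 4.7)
My strategy follows the classical Benedicks argument: combine a dilation reduction, the Poisson summation formula, and the identity theorem for trigonometric polynomials. Write $E = \supp f$ and $F = \supp \widehat{f}$, both of finite Lebesgue measure (if either has measure zero, then $f \equiv 0$ trivially). Using $\F D_\alpha = D_{1/\alpha} \F$, I rescale $f$ by $D_\alpha$, which replaces $|F|$ by $\alpha^{-d}|F|$ while keeping $|E|$ finite. Choosing $\alpha$ large, I may therefore assume without loss of generality that $|F| < 1$.

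Unfolding the integer translates of $F$ gives $\sum_{\nu \in \Z^d} |(F + \nu) \cap [0,1)^d| = |F| < 1$, so the set $U := \bigcup_{\nu \in \Z^d}(F + \nu) \cap [0,1)^d$ has measure strictly less than $1$ and $S := [0,1)^d \setminus U$ has positive Lebesgue measure. By construction, for every $t \in S$ one has $t - \nu \notin F$ for all $\nu \in \Z^d$, so $\widehat{f}(\ell - t) = 0$ for every $\ell \in \Z^d$.

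Next, I apply the Poisson summation formula to the time-frequency shift $M_t T_x f$, exactly as in the calculations immediately following Proposition \ref{pro_PSF_Zd}. This yields
\begin{equation*}
\sum_{k \in \Z^d} f(k - x)\, e^{2\pi i t \cdot k} \;=\; \sum_{\ell \in \Z^d} \widehat{f}(\ell - t)\, e^{2\pi i x \cdot (t - \ell)},
\end{equation*}
valid a.e.\ in $(x,t)$. Since $\widehat{f}$ is bounded by $\norm{f}_1$ and supported on a set of finite measure, $\widehat{f} \in L^1 \cap L^2$, which provides enough integrability to justify this identity via the $L^2$-version mentioned in the remark after Proposition \ref{pro_PSF_Zd}. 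Combining with the previous paragraph, for a.e.\ $x \in \Rd$ the trigonometric series
\begin{equation*}
P_x(t) := \sum_{k \in \Z^d} f(k - x)\, e^{2\pi i t \cdot k}
\end{equation*}
vanishes for a.e.\ $t \in S$, hence on a subset of $\T^d$ of positive measure.

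The punchline uses $|E| < \infty$: by Fubini, $\int_{[0,1)^d} \#\{k \in \Z^d : k - x \in E\}\, dx = |E|$, so the $\Z^d$-periodic function $x \mapsto \#\{k : k - x \in E\}$ is finite at almost every $x$. For such $x$, only finitely many terms of $P_x$ are nonzero, so $P_x$ is a \emph{trigonometric polynomial} on $\T^d$. But a trigonometric polynomial is real-analytic, and a real-analytic function on $\T^d$ vanishing on a set of positive Lebesgue measure must vanish identically. Therefore $P_x \equiv 0$, which forces all its coefficients to vanish: $f(k - x) = 0$ for every $k \in \Z^d$ and a.e.\ $x$. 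A final application of Fubini yields $f \equiv 0$. The main obstacle is justifying the Poisson summation formula in this low-regularity regime, since the pointwise decay hypotheses of Proposition \ref{pro_PSF_Zd} need not hold; the $L^2$-version suffices, or alternatively one can regularize by convolving with a smooth compactly supported mollifier, apply the classical identity to the mollified function, and pass to the limit.
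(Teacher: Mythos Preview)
Your argument is essentially the paper's proof with the roles of $f$ and $\widehat{f}$ interchanged: the paper rescales so that $|\supp f|<1$, shows that the periodization $\mathcal{P}(M_{-\omega}f)$ is a trigonometric polynomial in $x$ (because $|\supp\widehat f|<\infty$) vanishing on a set of positive measure, and concludes $\widehat f\equiv 0$; you rescale so that $|\supp\widehat f|<1$ and run the dual argument to conclude $f\equiv 0$ directly. The mechanism---Poisson summation plus ``trigonometric polynomial vanishing on a set of positive measure''---is identical.

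One small slip to fix: from $t-\nu\notin F$ for all $\nu\in\Z^d$ you obtain $\widehat f(t-\nu)=0$, not $\widehat f(\ell-t)=0$; these lie on the cosets $t+\Z^d$ and $-t+\Z^d$, which differ in general. Replace $F$ by $-F$ in the definition of $U$ (still of measure $<1$), or equivalently apply your construction to $-t$, and the right-hand side of the Poisson identity vanishes as claimed.
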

\begin{proof}
	Let $A = \{ x \in \Rd \mid f(x) \neq 0 \}$ and $B = \{\omega \in \Rd \mid \widehat{f}(\omega) \neq 0 \}$. By using dilations of $f$, we may assume that $|A| < 1$. Then, by using Lemma \ref{lem_periodization_trick}, we obtain
	\begin{align}
		| \{ x \in \T^d \mid \exists k \in \Z^d \colon f(k+x) \neq 0 \} |
		& = |\{ x \in \T^d \mid \sum_{k \in \Z^d} \indicator_A(k+x) \geq 1 \}| \\
		& \leq \int_{\T^d} \sum_{k \in \Z^d} \indicator_A(k+x) \, dx
		= \int_{\Rd} \indicator_A(x) \, dx = |A| < 1 = | \T^d |.
	\end{align}
	Therefore, the set $N_x = \{x \in \T^d \mid f(k+x) = 0, \forall k \in \Z\}$ has positive measure.
	
	Next, we observe that
	\begin{equation}
		\int_{\T^d} \sum_{l \in \Z^d} \indicator_B(l+\omega) \, d\omega = \int_{\Rd} \indicator_B(\omega) \, d\omega = |B|.
	\end{equation}
	Assume $|B| < \infty$. This implies that $\sum_{l \in \Z^d} \indicator_B(l+\omega)$ is finite (almost everywhere) on $\T^d$. Hence, for (almost all) $\omega \in \T^d$, the set $\widehat{N_\omega} = \{l \in \Z^d \mid \widehat{f}(l+\omega) \neq 0 \}$ is finite.
	
	For fixed $\omega \in \T^d$ consider the periodization
	\begin{equation}
		\mathcal{P}(M_{-\omega} f)(x) = \sum_{k \in \Z^d} f(k+x)e^{-2 \pi i \omega \cdot (k+x)} = \sum_{k \in \Z^d} T_{-x} M_{-\omega}f(k),
	\end{equation}
	which is a well defined Fourier series for $f \in L^1(\Rd) \cap \Lt$. By the Poisson summation formula we have
	\begin{equation}
		\sum_{k \in \Z^d} (T_{-x} M_{-\omega} f)(k) = \sum_{l \in \Z^d} \F(T_{-x} M_{-\omega} f)(l) = \sum_{l \in \Z^d} M_{x} T_{-\omega} \widehat{f}(l).
	\end{equation}
	Hence, the $l$-th Fourier coefficient of the above periodization equals
	\begin{equation}
		\F(M_{-\omega} f)(l) = \widehat{f}(l+\omega), \qquad l \in \Z^d.
	\end{equation}
	Since the set $\widehat{N_\omega}$ is finite, this expression is non-zero for only finitely many $l \in \Z^d$, which means that $\mathcal{P}(M_{-\omega}f)$ equals a trigonometric polynomial $Q$. However, as $\mathcal{P}(M_{-\omega} f)$ vanishes for all $x \in N_x$, the trigonometric polynomial $Q$ vanishes on $N_x$ as well. But a trigonometric polynomial that vanishes on a set of positive measure must be identically zero, i.e., $Q \equiv 0$. Therefore, the Fourier coefficients $\widehat{f}(\omega+l)$ are 0 for all $l \in \Z^d$ and (almost all) $\omega \in \T^d$. Hence, $\widehat{f} \equiv 0$ and $f \equiv 0$.
\end{proof}
Benedicks' uncertainty principle holds more generally for $f \in L^p(\Rd)$. It also leads to a qualitative uncertainty principle for the Rihaczek distribution:

\begin{center}
	\textit{If $|\supp(Rf)| < \infty$, then $f \equiv 0$.}
\end{center}

This motivates the question, whether such uncertainty principles can be obtained for $V_gf$, $A(f,g)$ and $W(f,g)$ as well. The statement in Metatheorem \ref{mthm_C} already suggests that we will obtain similar uncertainty principles in these cases.

We will need the following results to prove qualitative uncertainty principles for the STFT, the ambiguity function and the Wigner distribution.
\begin{lemma}\label{lem_Vgf_product}
	Assume $f_1, f_2, g_1, g_2 \in \Lt$. Then
	\begin{equation}
		\F \left( V_{g_1} f_1 \, \overline{V_{g_2} f_2} \right)(x, \omega) = \left(V_{f_2}f_1 \, \overline{V_{g_2} g_1}\right)(-\omega, x).
	\end{equation}
\end{lemma}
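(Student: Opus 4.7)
The plan is to verify the identity by a direct computation, evaluating the Fourier transform on $\R^{2d}$ of the pointwise product $V_{g_1}f_1\cdot\overline{V_{g_2}f_2}$ as an iterated integral, then using Fubini and a change of variables to recognise the result as a product of two STFTs.

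First I would substitute the definition
\begin{equation}
V_{g_j}f_j(u,v)=\int_{\Rd} f_j(t)\overline{g_j(t-u)}e^{-2\pi i v\cdot t}\,dt
\end{equation}
into the product $V_{g_1}f_1(u,v)\,\overline{V_{g_2}f_2(u,v)}$, producing a double integral over variables $s,t\in\Rd$ whose integrand involves $f_1(s)\overline{f_2(t)}\overline{g_1(s-u)}g_2(t-u)e^{-2\pi i v\cdot(s-t)}$. Next I would apply the $2d$-dimensional Fourier transform in $(u,v)$ with dual variable $(x,\omega)$. Integrating in $v$ first collapses the exponentials $e^{-2\pi i v\cdot(s-t)}e^{-2\pi i\omega\cdot v}$ and forces the relation $t=s+\omega$ (in the distributional sense, or rigorously by first interpreting this as the Fourier inversion of the partial $\F_2$). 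This reduces the problem to a $(s,u)$-integral.

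The crucial step is then the substitution $\tau=s-u$ in the remaining $u$-integral: the integrand then factors as a product of a function of $s$ alone times a function of $\tau$ alone, so by Fubini the double integral splits as
\begin{equation}
\left(\int_{\Rd} f_1(s)\overline{f_2(s-(-\omega))}e^{-2\pi i x\cdot s}\,ds\right)\cdot\left(\int_{\Rd}\overline{g_1(\tau)}g_2(\tau+\omega)e^{2\pi i x\cdot\tau}\,d\tau\right).
\end{equation}
The first factor is immediately recognised as $V_{f_2}f_1(-\omega,x)$. The second factor, after pulling the complex conjugate outside, becomes $\overline{\int g_1(\tau)\overline{g_2(\tau-(-\omega))}e^{-2\pi i x\cdot\tau}\,d\tau}=\overline{V_{g_2}g_1(-\omega,x)}$. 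Multiplying yields exactly $(V_{f_2}f_1\cdot\overline{V_{g_2}g_1})(-\omega,x)$, as claimed.

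The main obstacle is bookkeeping: keeping track of signs, conjugations, and the roles of $\pm\omega$ so that each factor ends up in the correct STFT form. For rigour, the Fubini applications require absolute integrability of the full four-fold integrand, which is cleanly obtained by first assuming $f_j,g_j\in\mathcal S(\Rd)$ so that every $V_{g_j}f_j$ is in the Schwartz space on $\R^{2d}$ and all intermediate integrals converge absolutely; the result then extends to arbitrary $f_j,g_j\in\Lt$ by density, using Corollary \ref{cor_Vgf_L2} to guarantee $V_{g_j}f_j\in L^2(\R^{2d})$ and the continuity of pointwise multiplication $L^2\times L^2\to L^1(\R^{2d})$ together with continuity of the Fourier transform. (A more slick alternative would use the factorisation $V_gf=\F_2\mathcal T_a(f\otimes\overline g)$ from the second proof of Theorem \ref{thm_ortho}, translating the problem into manipulations with partial Fourier transforms, but the direct route above keeps the variables visible and makes the appearance of the swap $(x,\omega)\mapsto(-\omega,x)$ transparent, reflecting the fact that this lemma is simply a pointwise version of the orthogonality relations—indeed, evaluating both sides at $(x,\omega)=(0,0)$ recovers \eqref{eq_OR}.)
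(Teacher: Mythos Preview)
Your computation is correct and the bookkeeping checks out; the density argument via $\mathcal S(\Rd)$ is the right way to make the formal $v$-integration rigorous.

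However, your route differs from the paper's. The paper first passes to the Fourier side using Parseval in the form $\F(F\overline G)(x,\omega)=\langle\widehat F,\,T_{(x,\omega)}\widehat G\rangle$, and then substitutes the explicit formula $\widehat{V_gf}(\xi,\eta)=e^{2\pi i\xi\cdot\eta}f(-\eta)\overline{\widehat g(\xi)}$ (the cross-Rihaczek distribution, equation \eqref{eq_Riha_Vff}). Because that formula is already a tensor product up to a phase, the resulting $(\xi,\eta)$-integral factors immediately into an $\eta$-integral (giving $V_{f_2}f_1(-\omega,x)$) and a $\xi$-integral (giving $\overline{V_{g_2}g_1(-\omega,x)}$ after a Parseval step). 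Your approach stays entirely on the time side, expanding the product of STFTs as a four-fold integral and collapsing it via the $v$-integration and the substitution $\tau=s-u$. Your method is more self-contained in that it does not invoke the Rihaczek identity as a prerequisite, while the paper's method explains structurally \emph{why} the answer factors: the Fourier transform of an STFT is already a pure tensor in disguise.
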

\begin{proof}
	Recall that the Fourier transform of the STFT is given by the (cross-)Rihaczek distribution, i.e.,
	\begin{equation}
		\widehat{V_g f}(\xi, \eta) = e^{2 \pi i \xi \cdot \eta} f(-\eta) \overline{\widehat{g}(\xi)}.
	\end{equation}
	Furthermore, by using Parseval's identity we obtain
	\begin{equation}
	\F (f \, \overline{g})(\omega) = \int_{\Rd} f(t) \overline{g(t)} e^{-2 \pi i \omega \cdot t} \, dt = \langle f, M_\omega g \rangle = \langle \widehat{f}, T_\omega \widehat{g} \rangle.
	\end{equation}
	Since$V_{g_1} f_1$ and $V_{g_2} f_2$ are in $\Lt[2d]$ (because $f_1,f_2,g_1,g_2 \in \Lt$), their product is in $L^1(\R^{2d})$ and we can write
	\begin{align}
		\F \left( V_{g_1} f_1 \overline{V_{g_2} f_2} \right)(x, \omega)
		& = \langle \widehat{V_{g_1}f_1}, \, T_{(x,\omega)} \widehat{V_{g_2} f_2} \rangle_{\Lt[2d]}\\
		& = \iint_{\R^{2d}} \widehat{V_{g_1} f_1}(\xi, \eta) \overline{\widehat{V_{g_2} f_2}(\xi-x, \eta-\omega)} \, d(\xi,\omega)\\
		& = \iint_{\R^{2d}} \left(e^{2 \pi i \xi \cdot \eta} f_1(-\eta) \overline{\widehat{g_1}(\xi)} \right)\\
		& \qquad \qquad \overline{\left(e^{2 \pi i (\xi-x) \cdot (\eta-\omega)} f_2(-(\eta-\omega)) \overline{\widehat{g_2}(\xi-x)} \right)} \, d(\xi, \eta)\\
		& = \int_{\Rd} f_1(\eta) \overline{f_2(\eta+\omega)} e^{-2 \pi i x \cdot \eta} \, d\eta \, \overline{\int_{\Rd} \widehat{g_1}(\xi) \overline{\widehat{g_2}(\xi-x)} e^{-2 \pi i \omega \cdot (\xi-x))} \, d\xi}
	\end{align}
	The first integral is already the desired expression $V_{f_2} f_1(-\omega, x)$. The second integral can also be expressed by
	\begin{equation}
		\langle \widehat{g_1}, T_x M_\omega \widehat{g_2} \rangle = \langle g_1, M_x T_{-\omega} g_2 \rangle = V_{g_2} f_2 (-\omega, x).
	\end{equation}
\end{proof}
We obtain the following corollary.
\begin{corollary}\label{cor_Vgf_products}
	For $f, g \in \Lt$, the function
	\begin{equation}
		F(x,\omega) = e^{2 \pi i x \cdot \omega} V_g f(x, \omega) \, V_g f(-x,-\omega)
	\end{equation}
	satisfies
	\begin{equation}
		\widehat{F}(x,\omega) = F(-\omega, x).
	\end{equation}
	Furthermore, the function
	\begin{equation}
		F_{(\xi,\eta)} (x,\omega) = e^{2 \pi i x \cdot \omega} V_g \left(M_\eta T_\xi f\right)(x,\omega) \, V_g \left(M_\eta T_\xi f\right)(-x,-\omega)
	\end{equation}
	satisfies
	\begin{equation}
		\widehat{F_{(\xi, \eta)}}(x, \omega) = F_{(\xi, \eta)} (-\omega, x), \qquad
		\forall (\xi, \eta) \in \R^{2d}.
	\end{equation}
\end{corollary}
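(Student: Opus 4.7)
\bigskip
\noindent
\textbf{Proof proposal.} The plan is to recognize $F$ as a product of an STFT and the conjugate of an STFT with swapped arguments, so that Lemma \ref{lem_Vgf_product} can be applied directly. The key preliminary identity will be the symmetry
\begin{equation}
V_g f(-x,-\omega) \;=\; e^{-2\pi i x\cdot\omega}\,\overline{V_f g(x,\omega)}, \qquad f,g\in\Lt,\ (x,\omega)\in\R^{2d}.
\end{equation}
To obtain this, I would start from the definition of $V_g f(x,\omega)$, perform the substitution $t\mapsto t+x$ to rewrite it as $e^{-2\pi i x\cdot\omega}$ times an integral involving $f(t+x)\overline{g(t)}$, then compare to the analogous substitution in the definition of $V_f g(-x,-\omega)$. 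Taking a complex conjugate and matching the sign of the exponent yields the formula above. (A sanity check with $f=g=g_0$ and Example \ref{ex_STFT_g0}, where $V_{g_0}g_0(x,\omega)=e^{-\pi i x\cdot\omega}e^{-\frac{\pi}{2}(x^2+\omega^2)}$, confirms the phase.)

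With this in hand, multiplying both sides by $e^{2\pi i x\cdot\omega}V_g f(x,\omega)$ immediately gives
\begin{equation}
F(x,\omega) \;=\; e^{2\pi i x\cdot\omega}\,V_g f(x,\omega)\,V_g f(-x,-\omega) \;=\; V_g f(x,\omega)\,\overline{V_f g(x,\omega)}.
\end{equation}
Now I would apply Lemma \ref{lem_Vgf_product} with the choices $f_1=f$, $g_1=g$, $f_2=g$, $g_2=f$, which gives
\begin{equation}
\F\bigl(V_g f\cdot \overline{V_f g}\bigr)(x,\omega) \;=\; \bigl(V_{f_2}f_1\cdot \overline{V_{g_2}g_1}\bigr)(-\omega,x) \;=\; \bigl(V_g f\cdot \overline{V_f g}\bigr)(-\omega,x) \;=\; F(-\omega,x),
\end{equation}
proving the first claim. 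The invocation of Lemma \ref{lem_Vgf_product} is legitimate because $f,g\in\Lt$ ensures $V_g f,V_f g\in L^2(\R^{2d})$ by Corollary \ref{cor_Vgf_L2}, so the product lies in $L^1(\R^{2d})$ and its Fourier transform is well defined.

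For the second assertion I would simply observe that the derivation above never used any property of $f$ beyond $f\in\Lt$, and that $M_\eta T_\xi f\in\Lt$ for every $(\xi,\eta)\in\R^{2d}$. Replacing $f$ by $M_\eta T_\xi f$ throughout the argument (in the symmetry identity and in Lemma \ref{lem_Vgf_product} with $f_1=M_\eta T_\xi f$, $g_1=g$, $f_2=g$, $g_2=M_\eta T_\xi f$) yields $F_{(\xi,\eta)}(x,\omega)=V_g(M_\eta T_\xi f)(x,\omega)\overline{V_{M_\eta T_\xi f}g(x,\omega)}$ and therefore $\widehat{F_{(\xi,\eta)}}(x,\omega)=F_{(\xi,\eta)}(-\omega,x)$. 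The main obstacle is the bookkeeping of exponential phase factors in the symmetry identity; once that is nailed down correctly, the rest is an essentially automatic application of Lemma \ref{lem_Vgf_product}.
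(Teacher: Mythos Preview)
Your proposal is correct and follows essentially the same approach as the paper: you establish the symmetry $e^{2\pi i x\cdot\omega}V_g f(-x,-\omega)=\overline{V_f g(x,\omega)}$, rewrite $F=V_g f\cdot\overline{V_f g}$, and apply Lemma~\ref{lem_Vgf_product} with $f_1=f$, $g_1=g$, $f_2=g$, $g_2=f$; the second part then follows by replacing $f$ with $M_\eta T_\xi f$. This is precisely what the paper does.
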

\begin{proof}
	It suffices to prove the first assertion, the second follows by replacing $f$ by $M_\eta T_\xi f$. We note that
	\begin{equation}
		V_g f(-x, -\omega) e^{2 \pi i x \cdot \omega} = \int_{\Rd} f(t) \overline{g(t+x)} e^{2 \pi i \omega \cdot (t+x)} \, dt = \int_{\Rd} f(t-x) \overline{g(t)} e^{2 \pi i \omega \cdot t} \, dt = \overline{V_f g(x, \omega)}.
	\end{equation}
	Therefore, we can write
	\begin{equation}
		F(x, \omega) = \left( V_g f \, \overline{V_f g} \right) (x,\omega).
	\end{equation}
	By using Lemma \ref{lem_Vgf_product} we see that
	\begin{equation}
		\widehat{F}(x,\omega) = \F \left( V_g f \, \overline{V_f g} \right)(x, \omega) = \left(V_g f \, \overline{V_f g}\right)(-\omega, x) = F(-\omega, x).
	\end{equation}
\end{proof}

\begin{theorem}
	Let $f,g \in \Lt$, then the following are equivalent:
	\begin{enumerate}[(i)]
		\item $|\supp(V_g f)| < \infty$\label{thm_UP_Vgf}
		\item $|\supp(A(f,g))| < \infty$\label{thm_UP_Afg}
		\item $|\supp(W(f,g))| < \infty$\label{thm_UP_Wfg}
		\item Either $f \equiv 0$ or $g \equiv 0$ (or both).\label{thm_UP_0}
	\end{enumerate}
\end{theorem}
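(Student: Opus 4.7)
The implication (iv) $\Rightarrow$ each of (i), (ii), (iii) is trivial since all three objects vanish identically. The equivalence (i) $\Leftrightarrow$ (ii) is immediate from Definition \ref{def_Afg}, which gives $|A(f,g)| = |V_g f|$ pointwise, so their supports agree. For (iii) $\Leftrightarrow$ (iv), Lemma \ref{lem_Wigner_ambi_algebraic} gives $|W(f,g)(x,\omega)| = 2^d |V_{g^\vee} f(2x, 2\omega)|$, so the support of $W(f,g)$ is (up to rescaling) the support of $V_{g^\vee} f$; hence (iii) reduces to (i) applied to the pair $(f, g^\vee)$, with conclusion $f \equiv 0$ or $g^\vee \equiv 0$, i.e., $g \equiv 0$. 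The whole argument therefore rests on (i) $\Rightarrow$ (iv).

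The plan for (i) $\Rightarrow$ (iv) is to reduce to Benedicks' uncertainty theorem via the function from Corollary \ref{cor_Vgf_products},
\begin{equation}
    F(x,\omega) = e^{2 \pi i x \cdot \omega} \, V_g f(x,\omega) \, V_g f(-x,-\omega).
\end{equation}
A short check using Cauchy-Schwarz, $\norm{V_g f}_\infty \leq \norm{f}_2 \norm{g}_2$ and $\norm{V_g f}_2 = \norm{f}_2 \norm{g}_2$ (Corollary \ref{cor_Vgf_L2}) shows $F \in L^1(\R^{2d}) \cap \Lt[2d]$. Under hypothesis (i), $\supp(F) \subset \supp(V_g f) \cap (-\supp(V_g f))$ has finite measure, and the identity $\widehat{F}(x,\omega) = F(-\omega, x)$ from Corollary \ref{cor_Vgf_products} forces $\supp(\widehat{F})$ to have finite measure as well (it is merely a rotation of $\supp(F)$). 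Benedicks' theorem then yields $F \equiv 0$.

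The main obstacle is the last leg: the pointwise identity $V_g f(x,\omega) V_g f(-x,-\omega) = 0$ does not immediately imply that $V_g f$ itself vanishes — it only says one of the two factors does at every point. I would resolve this by exploiting the shift-invariance of the hypothesis. By the covariance principle (Proposition \ref{pro_covar}), $|V_g (M_\eta T_\xi f)(x,\omega)| = |V_g f(x-\xi, \omega-\eta)|$, so $V_g(M_\eta T_\xi f)$ has support of finite measure for every $(\xi, \eta) \in \R^{2d}$. Running the same Benedicks argument on the auxiliary function $F_{(\xi,\eta)}$ from Corollary \ref{cor_Vgf_products} gives $F_{(\xi,\eta)} \equiv 0$ for every $(\xi,\eta) \in \R^{2d}$. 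Evaluating $F_{(\xi,\eta)}$ at $(x,\omega) = (0,0)$ then yields $\langle M_\eta T_\xi f, g \rangle = 0$ for all $(\xi, \eta)$, i.e., $V_f g \equiv 0$ on $\R^{2d}$. A final appeal to the orthogonality relations (Corollary \ref{cor_Vgf_L2}) delivers $\norm{f}_2 \norm{g}_2 = \norm{V_f g}_2 = 0$, so $f \equiv 0$ or $g \equiv 0$, which is (iv).
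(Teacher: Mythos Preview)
Your proof is correct and follows essentially the same route as the paper: reduce the equivalences among (i)--(iii) to the algebraic relations, then for (i) $\Rightarrow$ (iv) apply Benedicks' theorem to the auxiliary functions $F_{(\xi,\eta)}$ of Corollary~\ref{cor_Vgf_products}, using covariance to propagate the finite-support hypothesis, and evaluate at the origin to conclude $V_g f \equiv 0$. Your explicit verification that $F \in L^1 \cap L^2$ and your identification of the ``obstacle'' (that $F \equiv 0$ alone does not kill $V_g f$) are the same observations the paper makes, just spelled out a bit more carefully.
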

\begin{proof}
	The equivalence of \eqref{thm_UP_Vgf}, \eqref{thm_UP_Afg} and \eqref{thm_UP_Wfg} follows from the algebraic relations in Lemma \ref{lem_Wigner_ambi_algebraic}. Hence, we only need to prove \eqref{thm_UP_Vgf} $\Rightarrow$ \eqref{thm_UP_0} as the other direction is trivially true.
	We use the function from Corollary \ref{cor_Vgf_products}
	\begin{equation}
		F_{(\xi, \eta)}(x, \omega) = e^{2 \pi i x \cdot \omega} \, V_g (M_\eta T_\xi f)(x, \omega) \, V_g (M_\eta T_\xi f)(-x, -\omega).
	\end{equation}
	By the covariance principle, given by formula \eqref{eq_covar} in Proposition \ref{pro_covar}, we see that
	\begin{equation}
		|V_g \left( M_\eta T_\xi f\right)(x,\omega)| = |V_g f(x-\xi, \omega-\eta)|.
	\end{equation}
	By assumption, the support of $V_g f$ has finite measure, which now implies that $F_{(\xi, \eta)}$ has finite measure. By Corollary \ref{cor_Vgf_products}, $\widehat{F_{(\xi, \eta)}}$ must have finite measure as well and we have a pair $(F, \widehat{F})$ where, both, $F$ and $\widehat{F}$ have finite measure. By Benedicks' theorem, this implies that $F_{(\xi, \eta)} \equiv 0$ for all $(\xi, \eta) \in \R^{2d}$. Using the covariance principle again and evaluating $F_{(\xi, \eta)}$ at $(x, \omega) = (0,0)$, we see that
	\begin{equation}
		|F_{(-\xi,-\eta)}(0,0)| = |V_g \left(M_{-\eta} T_{-\xi} f\right)(0,0)|^2 = |V_g f(\xi,\eta)|^2, \qquad \forall (\xi, \eta) \in \R^{2d}.
	\end{equation}
	Particularly, this shows that $V_g f \equiv 0$ and by the isometry property $\norm{V_g f}_2 = \norm{f}_2 \norm{g}_2$, obtained from the orthogonality relations \eqref{eq_OR}, we conclude that either $f \equiv 0$ or $g \equiv 0$ (or both).
\end{proof}
\begin{remark}
	This result was, e.g., established by Janssen \cite{Jan98}, who also proved the following result. Let $H$ be any half-space of $\R^{2d}$, then
	\begin{equation}
		|\{ (x,\omega) \in H \mid Wf(x,\omega) \neq 0 \}| \in \{0, \infty\}.
	\end{equation}
\end{remark}

We will now continue with essential support conditions. Hence, we need to define the essential support of a function.
\begin{definition}
	A function is said to be $\varepsilon$-concentrated on a (measurable) set $T \subset \Rd$, if
	\begin{equation}
		\left(\int_{\Rd \backslash T} |f(t)|^2 \, dt\right)^{1/2} \leq \varepsilon \norm{f}_2^2 .
	\end{equation}
\end{definition}
For small $\varepsilon$, the definition above tells us that most of the energy is concentrated in $T$, which may than be considered as the essential support of $f$. The first idea to modify Benedicks' theorem is to replace the support condition by an essential support condition using the $\varepsilon$-concentration. This leads to the uncertainty principle of Donoho-Stark
\begin{theorem}[Donoho-Stark]
	Let $f \in \Lt$ (not the zero-function) be $\varepsilon_T$-concentrated on $T \subset \Rd$ with $\widehat{f}$ being $\varepsilon_\Omega$-concentrated on $\Omega \subset \Rd$. Then
	\begin{equation}
		|T||\Omega| \geq (1 - \varepsilon_T - \varepsilon_\Omega)^2.
	\end{equation}
\end{theorem}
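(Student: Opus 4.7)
The plan is to translate the two concentration hypotheses into operator statements and derive the bound from a norm estimate for a composition of projections. Introduce the time-limiting operator $P_T f = \indicator_T \, f$ and the band-limiting operator $Q_\Omega f = \F^{-1}(\indicator_\Omega \, \widehat{f})$. Both are orthogonal projections on $\Lt$, hence have operator norm $\leq 1$. By Plancherel (Theorem \ref{thm_Plancherel}), the hypothesis that $\widehat{f}$ is $\varepsilon_\Omega$-concentrated on $\Omega$ becomes $\norm{f - Q_\Omega f}_2 \leq \varepsilon_\Omega \norm{f}_2$, while $\norm{f - P_T f}_2 \leq \varepsilon_T \norm{f}_2$ is immediate.

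Next, I would estimate $\norm{f - Q_\Omega P_T f}_2$ by triangle inequality:
\begin{equation}
\norm{f - Q_\Omega P_T f}_2 \leq \norm{f - Q_\Omega f}_2 + \norm{Q_\Omega(f - P_T f)}_2 \leq (\varepsilon_T + \varepsilon_\Omega)\norm{f}_2,
\end{equation}
using $\norm{Q_\Omega}_{\textnormal{op}} \leq 1$. If $1 - \varepsilon_T - \varepsilon_\Omega \leq 0$ the conclusion is trivial, so assume the opposite; then the reverse triangle inequality gives $\norm{Q_\Omega P_T f}_2 \geq (1 - \varepsilon_T - \varepsilon_\Omega) \norm{f}_2$, i.e.,
\begin{equation}
\norm{Q_\Omega P_T}_{\textnormal{op}} \geq 1 - \varepsilon_T - \varepsilon_\Omega.
\end{equation}

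The crucial step is the complementary upper bound $\norm{Q_\Omega P_T}_{\textnormal{op}} \leq \sqrt{|T||\Omega|}$. I would obtain this via the Hilbert-Schmidt norm. Writing $Q_\Omega f = (\F^{-1}\indicator_\Omega) * f$, one sees that $Q_\Omega P_T$ is the integral operator with kernel
\begin{equation}
K(x,y) = \indicator_T(y) \, \F^{-1}\indicator_\Omega(x - y).
\end{equation}
Then by Fubini and Plancherel,
\begin{equation}
\norm{Q_\Omega P_T}_{\textnormal{HS}}^2 = \int_{\Rd}\!\int_{\Rd} |K(x,y)|^2 \, dx \, dy = |T| \cdot \norm{\F^{-1}\indicator_\Omega}_2^2 = |T|\,|\Omega|,
\end{equation}
and since the operator norm is dominated by the Hilbert-Schmidt norm, this yields the bound. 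Combining with the lower bound gives $(1 - \varepsilon_T - \varepsilon_\Omega)^2 \leq |T|\,|\Omega|$.

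The main potential obstacle is the step identifying $Q_\Omega P_T$ with its integral kernel and justifying the Hilbert-Schmidt computation: if $|\Omega| = \infty$ then $\F^{-1}\indicator_\Omega$ is not in $\Lt$ and the Hilbert-Schmidt argument fails, but in that case the claimed inequality is again trivial, so nothing is lost. Everything else is a short chain of triangle inequalities and applications of the unitarity of $\F$.
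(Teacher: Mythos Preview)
Your proof is correct and follows essentially the same route as the paper: introduce the projections $P_T$ and $Q_\Omega$, obtain the lower bound $\norm{Q_\Omega P_T f}_2 \geq (1-\varepsilon_T-\varepsilon_\Omega)\norm{f}_2$ via the triangle inequality, and bound the operator norm above by the Hilbert--Schmidt norm $\sqrt{|T|\,|\Omega|}$ of the integral kernel. Your explicit handling of the degenerate cases $1-\varepsilon_T-\varepsilon_\Omega\le 0$ and $|\Omega|=\infty$ matches the paper's ``without loss of generality'' reduction to finite-measure sets.
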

\begin{proof}
	Without loss of generality we may assume that $T$ and $\Omega$ have finite measure. We introduce the time-limiting and band-limiting operators
	\begin{equation}
		P_T f = \indicator_T f
		\quad \text{ and } \quad
		Q_\Omega f(t) = \F^{-1}(\indicator_\Omega \widehat{f})(x) = \int_\Omega \widehat{f}(\omega) e^{2 \pi i x \cdot \omega} \, d\omega.
	\end{equation}
	Both operators are orthogonal projections on $\Lt$. The range of $P_T$ is $L^2(T, dx)$ and the range of $Q_\Omega$ consists of all $\Lt$ functions with spectrum in $\Omega$, i.e., $\supp(\widehat{f}) \subset \Omega$. With this notation, $f$ is $\varepsilon_T$-concentrated if and only if
	\begin{equation}
		\norm{f - P_T f}_2 \leq \varepsilon_T \norm{f}_2 .
	\end{equation}
	Similarly, $\widehat{f}$ is $\varepsilon_\Omega$-concentrated if and only if
	\begin{equation}
		\norm{f - Q_\Omega f}_2 \leq \varepsilon_\Omega \norm{f}_2.
	\end{equation}
	Since $\norm{Q_\Omega}_{op} \leq 1$, we obtain
	\begin{equation}
		\norm{f - Q_\Omega P_T f}_2 \leq \norm{f - Q_\Omega f}_2 + \norm{Q_\Omega (f - P_T f)}_2 \leq (\varepsilon_\Omega + \varepsilon_T) \norm{f}_2,
	\end{equation}
	and consequently
	\begin{equation}\label{eq_DS_aux1}
		\norm{Q_\Omega P_T f}_2 \geq \norm{f}_2 - \norm{f - Q_\Omega P_T f}_2 \geq (1-\varepsilon_\Omega-\varepsilon_T) \norm{f}_2.
	\end{equation}
	
	Next, we compute the integral kernel and then the Hilbert-Schmidt norm of $Q_\Omega P_T$. We have
	\begin{align}
		Q_\Omega P_T f(x) & = \F^{-1} (\indicator_\Omega \F (P_T f))(x)\\
		& = \int_\Omega\left(\int_T f(t) e^{-2 \pi i \omega \cdot t} \, dt\right)e^{2 \pi i x \cdot \omega} \, d\omega
	\end{align}
	Since $T$ and $\Omega$ have finite support and $f \in L^2(T) \subset L^1(T)$, this double integral converges absolutely and by Fubini we may exchange the order of integration. This yields
	\begin{equation}
		Q_\Omega P_T f(x) = \int_{\Rd} k(x,t) f(t) \, dt,
	\end{equation}
	with integral kernel
	\begin{equation}
		k(x,t) = \indicator_T(t) \int_\Omega e^{2 \pi i(x-t)\cdot \omega} \, d\omega = \indicator_T(t) T_t(\F^{-1} \indicator_\Omega)(x).
	\end{equation}
	The Hilbert-Schmidt norm of $Q_\Omega P_T$ is given by
	\begin{equation}
		\norm{Q_\Omega P_T}_{H.S.}^2 = \iint_{\R^{2d}}|k(x,t)|^2 \, d(x,t).
	\end{equation}
	Since the translation operator $T_t$ and the (inverse) Fourier transform are unitary, we have for fixed $t$ that
	\begin{align}
		\int_{\Rd} |k(x,t)|^2 \, dx & = \indicator_T(t) \norm{T_t \F^{-1} \indicator_\Omega}_2^2\\
		& = \indicator_T(t) \norm{\indicator_\Omega}_2^2\\
		& = |\Omega| \indicator_T(t),
	\end{align}
	and, therefore,
	\begin{equation}\label{eq_DS_aux2}
		\iint_{\R^{2d}} |k(x,t)|^2 \, d(x,t) = |\Omega||T|.
	\end{equation}
	By combining \eqref{eq_DS_aux1} and \eqref{eq_DS_aux2} and the fact that the operator norm is dominated by the Hilbert-Schmidt norm, we obtain
	\begin{align}
		(1-\varepsilon_T-\varepsilon_\Omega)^2 \norm{f}_2^2 & \leq \norm{Q_\Omega P_T f}_2^2\\
		& \leq \norm{Q_\Omega P_T}_{op}^2 \norm{f}_2^2\\
		& \leq \norm{Q_\Omega P_T}_{H.S.}^2 \norm{f}_2^2\\
		& \leq |T||\Omega| \norm{f}_2^2.
	\end{align}
\end{proof}
By letting $\varepsilon$ tend to 0, we obtain a precise statement for Metatheorem \ref{mthm_B}. The essential support conditions can be transferred to the STFT, ambiguity function and Wigner distribution as well. The next result is called the ``Weak Uncertainty Principle for the STFT".
\begin{theorem}[Weak Uncertainty Principle for the STFT]
	Assume $f,g \in \Lt$ with $\norm{f}_2 = \norm{g}_2 = 1$. Let $U \subset \R^{2d}$ and $\varepsilon \geq 0$ such that
	\begin{equation}
		\iint_U |V_gf(x,\omega)|^2 \, d(x,\omega) \geq 1 - \varepsilon.
	\end{equation}
	Then, $|U| \geq 1 - \varepsilon$.
\end{theorem}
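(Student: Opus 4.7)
The plan is to prove this directly by combining two facts already established in the excerpt: a pointwise bound on $V_g f$ coming from the Cauchy--Schwarz inequality, and the isometry property from Corollary \ref{cor_Vgf_L2}.

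First, I would observe that the STFT is pointwise bounded by the product of the $L^2$-norms. Indeed, writing the STFT as an inner product,
\begin{equation}
|V_g f(x,\omega)| = |\langle f, \pi(\lambda) g\rangle| \leq \norm{f}_2 \norm{\pi(\lambda) g}_2 = \norm{f}_2 \norm{g}_2 = 1,
\end{equation}
since time-frequency shifts are unitary and $\norm{f}_2 = \norm{g}_2 = 1$ by hypothesis. Hence $|V_g f(x,\omega)|^2 \leq 1$ for every $(x,\omega) \in \R^{2d}$.

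Second, I would use the hypothesis on $U$ and estimate the integral trivially from above using this pointwise bound:
\begin{equation}
1 - \varepsilon \leq \iint_U |V_g f(x,\omega)|^2 \, d(x,\omega) \leq \iint_U 1 \, d(x,\omega) = |U|.
\end{equation}
This gives the desired inequality $|U| \geq 1 - \varepsilon$.

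There is essentially no obstacle here; the argument is a one-line consequence of the sup-norm bound $\norm{V_g f}_\infty \leq \norm{f}_2 \norm{g}_2$ combined with the $L^2$-normalization $\norm{V_g f}_2 = \norm{f}_2 \norm{g}_2 = 1$ from Corollary \ref{cor_Vgf_L2} (the latter is what makes the hypothesis $\iint_U |V_g f|^2 \geq 1-\varepsilon$ meaningful, since it says that on $U$ we capture essentially all of the unit $L^2$-mass of $V_g f$). The word ``weak'' in the name of the theorem reflects the fact that we have not exploited any structural information about $V_g f$ beyond these two basic inequalities; sharper forms (for instance in the spirit of Donoho--Stark) would require using the reproducing kernel structure of the range of $V_g$.
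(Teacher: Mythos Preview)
Your proof is correct and essentially identical to the paper's: both use the Cauchy--Schwarz bound $|V_gf(x,\omega)|\leq\norm{f}_2\norm{g}_2=1$ and then estimate $1-\varepsilon\leq\iint_U|V_gf|^2\leq|U|$. Your remark about Corollary~\ref{cor_Vgf_L2} is accurate context but not needed for the argument itself.
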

\begin{proof}
	The Cauchy-Schwarz inequality implies
	\begin{equation}
		|V_gf(x,\omega)| = | \langle f, M_\omega T_x g \rangle| \leq \norm{f}_2 \norm{g}_2 = 1.
	\end{equation}
	Hence,
	\begin{equation}
		1 - \varepsilon \leq \iint_U |V_g f(x,\omega)|^2 \, d(x,\omega) \leq \norm{V_gf}_\infty^2 |U| \leq |U|.
	\end{equation}
\end{proof}
It is obvious that the same inequality holds if we replace $V_gf$ by $A(f,g)$. By the relation in Lemma \ref{lem_Wigner_ambi_FT} the inequality holds for the Wigner distribution $W(f,g)$ as well.

We note that there are many more uncertainty principles and for an overview, the reader is referred to \cite{Gro03_FeiStr}.

\section{Gabor Systems and Frames}
The following section follows the textbooks of Gröchenig \cite{Gro01} and Christensen \cite{Christensen_2016}.

After our (extensive) preliminary work, we will now study Gabor systems and Gabor frames. The field might be called Gabor analysis and is considered to be a sub-field of (applied) harmonic analysis. It has been a fruitful domain for engineers for decades and has particular applications in wireless communications and is fundamental in the new 5G transmission system. Gabor systems are named after Dennis Gabor who studied such systems in his article \textit{Theory of Communication} \cite{Gab46} in 1946. However, he was not the first person to study these systems, they have been considered already by John von Neumann in the context of (reformulating) quantum mechanics \cite{Neumann_Quantenmechanik_1932} in the 1930s.

\medskip

From a sampling theoretic point of view, we have seen the WKNS sampling theorem for band-limited functions and an extension to non-band-limited function, which involved sampling the STFT with a $\sinc$-window. For general windows $g$, we have only dealt with continuous time-frequency representations, which is fine for many theoretical purposes. However, this is not always satisfactory for practical purposes and we would like to have something similar to the extension of the WKNS sampling theorem. For this purpose, we consider the inversion formula \eqref{eq_inv_STFT} for the STFT
\begin{equation}
	f = \frac{1}{\langle \widetilde{g}, g \rangle} \iint_{R^{2d}} V_gf(x, \omega) M_\omega T_x \widetilde{g} \, d(x,\omega).
\end{equation}
This is a continuous expansion of $f$ with respect to the uncountable system of functions
\begin{equation}
	\{M_\omega T_x \widetilde{g} \mid (x,\omega) \in \R^{2d} \}.
\end{equation}
The coefficients in this continuous expansions are given by the evaluations of $V_gf$ at $(x,\omega)$. We might call $g$ the analysis window and $\widetilde{g}$ the synthesis window.

However, $\Lt$ is a separable Hilbert space and we should be able to find a series expansion of $f \in \Lt$ with respect to a countable set of time-frequency shifts of $\widetilde{g}$. Assume that, in the time-frequency plane, $\widetilde{g}$ has its essential time-frequency support on a set $E \subset \R^{2d}$. Let $(x_1,\omega_1)$ and $(x_2, \omega_2)$ be two neighboring points in the time-frequency plane, then, for $i = 1,2$, $M_{\omega_i} T_{x_i} \widetilde{g}$ has essential time-frequency support $E + (x_i,\omega_i)$. The two supports will have a ``large" overlap for neighboring points, and also, with similar reasoning, the information carried by the coefficients $V_gf(x_i,\omega_i)$ is roughly the same for $i=1,2$. The representation of $f$ by the inversion formula is thus highly redundant and also the interpretation of $V_gf$ as the time-frequency content becomes rather vague.

The goal is, therefore, to obtain a discrete representation of $f$ by a countable set of time-frequency shifts of the synthesis window $\widetilde{g}$, such that, in the time-frequency plane, the essential supports of $M_\omega T_x \widetilde{g}$  have minimal overlap. A first attempt is to replace the integral by a Riemann sum over a sufficiently dense discrete set, representing $f$ as
\begin{equation}\label{eq_expansion_dual}
	f = \mathop{\sum \sum}_{k,l \in \Z^d} \langle f, M_{\beta l} T_{\alpha k} \, g \rangle M_{\beta l} T_{\alpha k} \, \widetilde{g},
\end{equation}
$\alpha$ and $\beta$ sufficiently small.

More generally (or modestly, as one prefers), we could also try to find an expansion of the form
\begin{equation}\label{eq_Gabor_expansion}
	f = \mathop{\sum \sum}_{k,l \in \Z^d} c_{k,l} M_{\beta l} T_{\alpha k} g,
\end{equation}
with coefficients $c_{k,l} = c_{k,l}(f)$ to be determined. For $\alpha, \beta$ chosen such that the essential time-frequency supports $(\alpha k, \beta l) + E$ of $M_{\beta l} T_{\alpha k} g$ are almost disjoint, the coefficients $c_{k,l}$ obtain more meaning as a measure of the time-frequency content of $f$ in the region $(\alpha k, \beta l) + E$. We note that \eqref{eq_Gabor_expansion} can be seen as a generalized Fourier series. Consider the function space $L^2([0,1]^d)$, then any element $f \in L^2([0,1]^d)$ has an expansion of the form
\begin{equation}
	f(t) = \sum_{l \in \Z^d} c_l \, e^{2 \pi i l \cdot t} = \sum_{l \in \Z^d} c_l \, M_l \indicator_{[0,1]^d}(t),
\end{equation}
where the $c_l$ are the Fourier coefficients. The set $\{M_l \indicator_{[0,1]^d} \}$ yields an orthonormal basis for $L^2([0,1]^d)$. Shifting the set by $T_k$, $k \in \Z^d$, we obtain an orthonormal basis for $\Lt$ and have an expansion of the form
\begin{equation}
	\mathop{\sum \sum}_{k,l \in \Z^d} c_{k,l} M_l T_k \indicator_{[0,1]^d}.
\end{equation}
As discussed in the previous sections, the properties of $\indicator_{[0,1]^d}$ are not really satisfactory for time-frequency analysis. Gabor (as well as already earlier von Neumann) considered the Hilbert space $\Lt[]$ and exchanged the indicator function by the Gaussian function and used time-frequency shifts $M_l T_k$ with respect to $\Z^2$, i.e., (k,l) $\in \Z^2$. Expansions of type \eqref{eq_Gabor_expansion} are now usually called Gabor expansions and the coefficients are called Gabor coefficients.

Yet, a third way of discretization is obtained from the interpretation of the spectrogram $|V_g f(x,\omega)|^2$ as an energy density of $f$ in a time-frequency cell centered at $(x,\omega)$. We then need to sample $V_gf$ densely enough so that the energy is preserved under the discretization. This means that, for positive constants $0 < A \leq B < \infty$, the following inequality needs to be fulfilled
\begin{equation}\label{eq_Frame_Zd}
	A \norm{f}_2^2 \leq \mathop{\sum \sum}_{k,l \in \Z^d} |V_gf(\alpha k, \beta l)|^2 \leq B \norm{f}_2^2, \qquad \forall f \in \Lt.
\end{equation}
Determining pairs $(\alpha, \beta) \in \R_+^2$ such that the above inequality holds, is in general a very challenging topic. The meaning of the right-hand side of inequality \eqref{eq_Frame_Zd} is that the sampling operation is continuous on $\Lt$, while the left-hand side of \eqref{eq_Frame_Zd} expresses that $f$ is uniquely determined by the samples of the STFT. We remark that $f$ also depends continuously on the samples $V_g f(\alpha k, \beta l)$. From this point of view, the discretization problem for the STFT is reminiscent of related problems and questions in the theory of band-limited functions, such as the Nyquist-Shannon sampling theorem.

We will see that the three approaches of discretization are equivalent and yield the same answers. However, expansions of type \eqref{eq_expansion_dual} and \eqref{eq_Gabor_expansion} raise new mathematical questions.
\begin{enumerate}[(a)]
	\item In general, the time-frequency atoms $M_{\beta l} T_{\alpha k} g$ are not orthogonal. Therefore, the question pops up in which way the series \eqref{eq_expansion_dual} and \eqref{eq_Gabor_expansion} converge.
	\item Given a window $g$, how can we determine a dual window $\widetilde{g}$ in \eqref{eq_expansion_dual}? More generally, how can we determine the Gabor coefficients in \eqref{eq_Gabor_expansion}?
	\item What are suitable lattice parameters $\alpha$ and $\beta$ such that $f$ is uniquely determined by samples of $V_gf$ on $\alpha \Z^d \times \beta \Z^d$.
	\item How does the uncertainty principle manifest itself in the context of discrete time-frequency representations?
	\item Can we construct orthonormal bases of the form $\{M_{\beta l} T_{\alpha k} g \mid k,l \in \Z^d \}$, where $g$ has ``nice" properties in the time-frequency plane?
\end{enumerate}

Even after these questions may have been answered, the applied signal analysit will still not be happy. The Gabor expansion \eqref{eq_Gabor_expansion} still lives within the infinite-dimensional space $\Lt$. From an engineering point of view, we therefore require another discretization step, which yields a finite-dimensional model of time-frequency analysis and gives numerical algorithms. We will not deal with these questions in the scope of the course. We refer the interested reader to \cite[Chap.~8]{FeiStr98} as a starting point for further reading.

\subsection{Frames}
We start with the general theory of frames. Frames extend the concept of bases, which leads to, in general, non-orthogonal, overcomplete systems, which means that the elements will also not be linearly independent. Motivated by \eqref{eq_Frame_Zd}, we start with the following definition.
\begin{definition}
	Let $\mathcal{H}$ be a (separable) Hilbert space. A set $\{e_\gamma \mid \gamma \in \Gamma\}$ in $\mathcal{H}$ is called a frame if there exist positive constants $0<A\leq B<\infty$ such that for all $f \in \mathcal{H}$ the following inequality holds
	\begin{equation}\label{eq_frame}
		A \norm{f}_\mathcal{H}^2 \leq \sum_{\gamma \in \Gamma} |\langle f, e_\gamma \rangle|^2 \leq B \norm{f}_\mathcal{H}^2.
	\end{equation}
	Any two constants $A,B$ satisfying \eqref{eq_frame} are called frame bounds (or frame constants). If $A = B$ the set $\{e_\gamma \mid \gamma \in \Gamma \}$ is said to constitute a tight frame.
\end{definition}
\begin{example}
	\begin{enumerate}[(i)]
		\item Any orthonormal basis is a (tight) frame with optimal lower and upper frame bound $A=B=1$. Any positive $A<1$ and any finite $B > 1$ gives a lower or upper frame bound, respectively.
		\item The union of any two orthonormal basis is a tight frame with optimal frame bounds $A=B=2$.
		\item The union of any orthonormal basis with $L$ arbitrary unit vectors is a frame with bounds $A=1$ and $B=L+1$.

		\medskip
		We give now examples in the finite dimensional Hilbert space $\Rd$.
		\item Let $\mathcal{H} = \Rd$ and let
		\begin{equation}
			\mathbf{F} = \{v_k = (v_{k,1}, \ldots, v_{k,d}) \mid k = 1, \ldots, K\},
		\end{equation}
		of $K$ column-vectors, $K \geq d$. Then, the so-called frame operator is given by
		\begin{equation}
			S_{\mathbf{F}}: \Rd \to \Rd \qquad f \mapsto S_{\mathbf{F}} f = \sum_{k=1}^K \langle f, v_k \rangle v_k = D_\mathbf{F} C_\mathbf{F} f,
		\end{equation}
		where $C_\mathbf{F}$ and $D_\mathbf{F}$ are the matrices (operators) given by
		\begin{equation}
			C_{\mathbf{F}} =
			\begin{pmatrix}
				v_{1,1} & \ldots & v_{1,d}\\
				& \ddots &\\
				v_{K,1} & \ldots & v_{K,d}
			\end{pmatrix}
			\quad \text{ and } \quad
			D_\mathbf{F} = C_\mathbf{F}^T.
		\end{equation}
		The matrix $C_\mathbf{F}$ is the coefficient matrix and its transpose (adjoint) is the synthesis matrix. Note that $S_\mathbf{F}$ is its own transpose (it is self-adjoint) and that the elements $f$ and $S_\mathbf{F} f$ may in general differ from one another.
		
		Furthermore, let $\sigma_1 \leq \ldots \leq \sigma_d$ denote the singular values of $C_\mathbf{F}$. Then, for any $f \in \Rd$, we have
		\begin{equation}
			\sigma_1^2 \norm{f}^2 \leq \norm{C_\mathbf{F} f}^2 = \langle S_\mathbf{F} f, f \rangle = \sum_{k=1}^K |\langle f, v_k \rangle|^2 \leq \sigma_d^2 \norm{f}^2.
		\end{equation}
			\item For $N \in \N$, consider the following collection of vectors
			\begin{equation}
				\mathbf{F}_N = \{ \left(\cos(2 \pi k/N), \sin(2 \pi k/N)\right) \mid k = 0, \ldots N-1 \}
			\end{equation}
			
		Let $N = 1$. Then, $\mathbf{F}_1$ does not span $\R^2$ and cannot be a frame. The upper frame bound is $B=1$, whereas the lower frame bound vanishes, i.e., $A = 0$. For $N = 2$, we obtain a unit vector and its negative. Hence, the upper bound is $B=2$ but $\mathbf{F}_2$ does not span $\R^2$ and so the lower bound is $A=0$.
		
		Hence, let $N \geq 3$. Then, the coefficient operator is given by the matrix
		\begin{equation}
			C_{\mathbf{F}_N} =
			\begin{pmatrix}
				\cos(2 \pi 0/N) & \sin(2 \pi 0/N)\\
				\vdots & \vdots\\
				\cos(2 \pi (N-1)/N) & \sin(2 \pi (N-1)/N)
			\end{pmatrix}.
		\end{equation}
		We compute the Gram matrix
		\begin{equation}
			C_{\mathbf{F}_N}^T C_{\mathbf{F}_N} =
			\begin{pmatrix}
				\sum_{k=0}^{N-1} \cos(2 \pi k/N)^2 & \sum_{k=0}^{N-1} \cos(2 \pi k/N) \sin(2 \pi k/N)\\
				\sum_{k=0}^{N-1} \cos(2 \pi k/N) \sin(2 \pi k/N) &\sum_{k=0}^{N-1} \sin(2 \pi k/N)^2
			\end{pmatrix}.
		\end{equation}
		It is easily seen that
		\begin{equation}
			\sum_{k=0}^{N-1} \cos(2 \pi k/N) \sin(2 \pi k/N) = 0,
		\end{equation}
		as the parity of the sine terms makes everything cancel out. Next, we observe that
		\begin{equation}
			\sum_{k=0}^{N-1} \cos(2 \pi k/N)^2 + \sum_{k=0}^{N-1} \sin(2 \pi k/N)^2 = N.
		\end{equation}
		Hence, we only need to compute the value of the sum $\sum_{k=0}^{N-1} \cos(2 \pi k/N)^2$. We use the trigonometric identity $\cos(x)^2 = \frac{1}{2} + \frac{\cos(2x)}{2}$ and obtain
		\begin{equation}
			\sum_{k=0}^{N-1} \cos(2 \pi k/N)^2 =  \frac{1}{2} \sum_{k=0}^{N-1} \left(1 + \cos(2 \pi 2k/N)\right) = \frac{N}{2} +  \frac{1}{2} \sum_{k=0}^{N-1} \cos(2 \pi 2k/N).
		\end{equation}
		Now, we write
		\begin{align}
			\sum_{k=0}^{N-1} \cos(2 \pi 2k/N) & = \Re \left( \sum_{k=0}^{N-1} e^{2 \pi i 2k/N} \right)\\
			& = \frac{1 - \left(e^{4 \pi i/N}\right)^N}{1 - e^{4 \pi i/N}} = 0,
		\end{align}
		for $N \geq 3$, by the formula$\sum_{k=0}^{N-1} q^k = \frac{1-q^N}{1-q}$, $q \neq 1$.
		Therefore,
		\begin{equation}
			\sum_{k=0}^{N_1} \cos(2 \pi k/N)^2 = \sum_{k=0}^{N-1} \sin(2 \pi k/N)^2 = \frac{N}{2}.
		\end{equation}
		It follows that
		\begin{equation}
			C_{\mathbf{F}_N}^T C_{\mathbf{F}_N} =
			\begin{pmatrix}
				\frac{N}{2} & 0\\
				0 & \frac{N}{2}
			\end{pmatrix},
		\end{equation}
		and, so the squared singular values are $\sigma_1^2 = \sigma_2^2 = \frac{N}{2}$. It readily follows that the collections of functions $\mathbf{F}_N$, $N \geq 3$ yield tight frames with frame bounds $A = B = \frac{N}{2}$.
	\end{enumerate}
	\flushright{$\diamond$}
\end{example}

After these first, simple examples, we will now introduce the concept of unconditional convergence and some important operators.
\begin{definition}
	Let $\{e_\gamma \mid \gamma \in \Gamma\}$ be a countable set in a Banach space $\mathcal{B}$. The series $\sum_{\gamma \in \Gamma} e_\gamma$ is said to converge unconditionally to the element $f \in \mathcal{B}$ if for every $\varepsilon > 0$ there exists a finite set $F_0 \subset \Gamma$ such that
	\begin{equation}
		\norm{f - \sum_{\gamma \in F} e_\gamma}_\mathcal{B} < \varepsilon
		\qquad \text{ for all finite set } F \supset F_0 .
	\end{equation}
\end{definition}

Since the index set $\Gamma$ is countable, there exists a bijective map $\pi : \N \to \Gamma$ such that $\Gamma$ can be enumerated by $\N$. Then, the convergence of $\sum_{\gamma \in \Gamma} e_\gamma$ can be defined by the convergence of the partial sums $\sum_{k=1}^N e_{\pi(k)}$, i.e.,
\begin{equation}\label{eq_uncond_conv_N}
	f = \lim_{N \to \infty} \sum_{k=1}^N e_{\pi(k)}.
\end{equation}
This approach encounters the following problems. First, if $\Gamma$ is an unstructured index set, then there is no canonical enumeration $\pi$ and, thus, no natural sequence of partial sums. Second, in general it is not clear whether the limit in \eqref{eq_uncond_conv_N} is independent of the enumeration $\pi$. However, if the series converges unconditionally, then these problems cannot arise. This is shown by the following result.
\begin{proposition}
	Let $\{e_\gamma \mid \gamma \in \Gamma\}$ be a countable set in the Banach space $\mathcal{B}$. Then, the following are equivalent.
	\begin{enumerate}[(i)]
		\item $\sum_{\gamma \in \Gamma} e_\gamma$ converges unconditionally to $f \in \mathcal{B}$.
		\item For every enumeration $\pi : \N \to \Gamma$ the sequence of partial sums $\sum_{k=1}^N e_{\pi(k)}$ converges to $f \in \mathcal{B}$, i.e.,
		\begin{equation}
			\lim_{N \to \infty} \norm{f - \sum_{k=1}^N e_{\pi(k)}}_{\mathcal{B}} = 0 .
		\end{equation}
		In particular, the limit does not depend on the specific enumeration $\pi$.
	\end{enumerate}
\end{proposition}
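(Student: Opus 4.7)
The plan is to prove both implications, with (i) $\Rightarrow$ (ii) being essentially a bookkeeping exercise and (ii) $\Rightarrow$ (i) requiring a contradiction argument that constructs a ``bad'' enumeration from the failure of unconditional convergence.

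For the direction (i) $\Rightarrow$ (ii), I would fix an arbitrary enumeration $\pi : \N \to \Gamma$ and, given $\varepsilon > 0$, pick the finite set $F_0 \subset \Gamma$ furnished by unconditional convergence. Since $\pi$ is a surjection and $F_0$ is finite, there exists some $N_0 \in \N$ with $\{\pi(1), \ldots, \pi(N_0)\} \supset F_0$. For every $N \geq N_0$ the set $\{\pi(1), \ldots, \pi(N)\}$ is a finite superset of $F_0$, so the definition of unconditional convergence immediately gives $\norm{f - \sum_{k=1}^N e_{\pi(k)}}_\mathcal{B} < \varepsilon$. Independence of the limit from $\pi$ then follows at once.

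For the converse, I would proceed by contrapositive. If unconditional convergence fails, then there is some $\varepsilon > 0$ such that for every finite $F_0 \subset \Gamma$ one can find a finite $F \supset F_0$ with $\norm{f - \sum_{\gamma \in F} e_\gamma}_\mathcal{B} \geq \varepsilon$. Fixing an auxiliary enumeration $\sigma : \N \to \Gamma$ (which exists because $\Gamma$ is countable), the idea is to build inductively an increasing chain of finite sets $A_1 \subset A_2 \subset \ldots$ together with a compatible enumeration $\pi$: at stage $k+1$ I would start from $A_k \cup \{\sigma(k+1)\}$, invoke the failure hypothesis to produce a finite superset $A_{k+1}$ with $\norm{f - \sum_{\gamma \in A_{k+1}} e_\gamma}_\mathcal{B} \geq \varepsilon$, and define $\pi$ on the next block of integers $(N_k, N_{k+1}]$ (with $N_k = |A_k|$) to be any ordering of $A_{k+1} \setminus A_k$. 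The forced inclusion of $\sigma(k+1)$ at each stage guarantees $\bigcup_k A_k = \Gamma$, so $\pi$ is a bijection, while the inequality $\norm{f - \sum_{j=1}^{N_k} e_{\pi(j)}}_\mathcal{B} \geq \varepsilon$ along the subsequence $(N_k)$ contradicts the assumed convergence in (ii).

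The main obstacle is coordinating this inductive construction so that three things hold simultaneously: $\pi$ is a bijection (ensured by splicing each $\sigma(k)$ into $A_k$), the enlargement step is always feasible (delivered by the negation of (i)), and the ``bad'' index $N_k$ coincides exactly with $|A_k|$ (so that the internal ordering of newly added elements plays no role in the lower bound on the partial sum). Once this bookkeeping is arranged, checking that the constructed $\pi$ is a bijection and that the partial sums fail to converge to $f$ is routine.
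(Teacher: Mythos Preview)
Your proof is correct and follows the same overall strategy as the paper for the nontrivial direction: build an increasing chain of finite subsets of $\Gamma$ that witness the failure of unconditional convergence, and define the bad enumeration by listing their successive differences.

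The paper organizes the chain slightly differently. It alternates between ``bad'' sets $F_{2n}$ with $\norm{f - \sum_{\gamma \in F_{2n}} e_\gamma}_{\mathcal{B}} \geq \varepsilon$ and ``good'' sets $F_{2n+1} = \{\pi(1),\ldots,\pi(N_{2n+1})\}$ taken as initial segments of a fixed enumeration $\pi$; the latter simultaneously guarantee surjectivity of the new enumeration and satisfy $\norm{f - \sum_{\gamma \in F_{2n+1}} e_\gamma}_{\mathcal{B}} < \varepsilon/2$ by the assumed convergence of $\pi$. The paper then concludes that the resulting rearrangement is not even Cauchy, since consecutive blocks have norm at least $\varepsilon/2$. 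Your version is more direct: you use only bad sets, ensure surjectivity by forcing $\sigma(k+1)\in A_{k+1}$, and contradict (ii) simply by exhibiting a subsequence of partial sums bounded away from $f$. This avoids the $\varepsilon/2$ bookkeeping; the price is that you only show the partial sums fail to converge to $f$, whereas the paper's argument shows they fail to converge at all---but for the stated equivalence that extra strength is not needed.
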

\begin{proof}
	$(i) \Rightarrow (ii)$: Let $\pi : \N \to \Gamma$ be an enumeration of $\Gamma$ and let $\varepsilon > 0$. Since $\sum_{\gamma \in \Gamma} e_\gamma$ converges unconditionally, there is a finite set $F_0 \subset \Gamma$ such that
	\begin{equation}
		\norm{ f - \sum_{\gamma \in F} e_\gamma}_\mathcal{B} < \varepsilon, \qquad \text{ for all finite } F \supset F_0.
	\end{equation}
	Now, choose $N_0$ large enough such that $F_0 \subset \{\pi(1), \ldots, \pi(N_0) \}$. Then
	\begin{equation}
		\norm{ f - \sum_{k=1}^N e_{\pi(k)}}_\mathcal{B} < \varepsilon, \qquad \text{ for } N \geq N_0.
	\end{equation}
	
	$(ii) \Rightarrow (i)$: Assume that every rearrangement of $\sum_{\gamma \in \Gamma} e_\gamma$ converges to $f \in \mathcal{B}$, but not unconditionally. Then, there exists $\varepsilon > 0$ such that for every finite set $F \subset \Gamma$ there is $F' \supset F$ with
	\begin{equation}
		\norm{ f - \sum_{\gamma \in F'} e_\gamma}_\mathcal{B} \geq \varepsilon .
	\end{equation}
	Now, fix an enumeration $\pi : \N \to \Gamma$. Since $\sum_{k=1}^\infty e_{\pi(k)}$ converges, there is an index $N_0 \in \N$ such that
	\begin{equation}
		\norm{f - \sum_{k=1}^N e_{\pi(k)}}_\mathcal{B} < \frac{\varepsilon}{2}, \qquad \forall N \geq N_0 .
	\end{equation}
	By induction, we can therefore construct a sequence of finite sets $F_n \subset \Gamma$ of cardinality $N_n$ with the following properties;
	\begin{enumerate}[(a)]
		\item $F_n \subset F_{n+1}$ for $n \in \N$
		\item $\norm{ f - \sum_{\gamma \in F_{2n}} e_\gamma}_{\mathcal{B}} > \varepsilon$ for the sets with even index
		\item $F_{2n+1}$ is of the form $\{ \pi(1), \ldots , \pi(N_{2n+1})\}$ where $N_{2n+1} (\geq N_0)$ is chosen large enough so that $F_{2n+1} \supset F_{2n}$. Then
		\begin{equation}
			\norm{ f - \sum_{\gamma \in F_{2n+1}} e_\gamma}_\mathcal{B} = \norm{ f - \sum_{k=1}^{N_{2n+1}} e_{\pi(k)}}_\mathcal{B} < \frac{\varepsilon}{2} .
		\end{equation}
	\end{enumerate}
	Now, we define a new rearrangement $\sigma : \N \to \Gamma$ by enumerating the elements in the finite sets $F_1, F_2 \backslash F_1, \ldots, F_{n+1} \backslash F_n, \ldots$ consecutively. Then, we have
	\begin{align}
		\norm{  \sum_{k=N_{2n}+1}^{N_{2n+1}} e_{\sigma(k)} }_\mathcal{B}
		& = \norm{ \sum_{\gamma \in F_{2n+1}} e_\gamma - \sum_{\gamma \in F_{2n}} e_\gamma}_\mathcal{B}\\
		& \geq \norm{ f - \sum_{\gamma \in  F_{2n}} e_\gamma}_\mathcal{B} - \norm{ f - \sum_{\gamma \in  F_{2n+1}} e_\gamma}_\mathcal{B} > \varepsilon - \frac{\varepsilon}{2} = \frac{\varepsilon}{2}.
	\end{align}
	Therefore, $\sum_{k=1}^\infty e_{\sigma(k)}$ does not converge, which contradicts the assumption.
\end{proof}
If $\Gamma = \Z^d$, then $F_0$ can be taken to be the cube $\{k \in \Z^d \mid |k_j| \leq N, j=1, \ldots, d\}$ or the ball $\{k \in \Z^d \mid |k| \leq N \}$. Thus, unconditional convergence implies convergence of the rectangular and radial partial sums, but the convergence of the rectangular and radial partial sums does in general not imply unconditional convergence.
%

Often, it is necessary to interchange the action of a linear operator with summation. If a series converges unconditionally, then this interchange is always justified.
\begin{lemma}\label{lem_operator_uncond_conv}
	Let $A$ be a bounded operator between two Banach spaces $\mathcal{B}_1$ and $\mathcal{B}_2$. If $f = \sum_{\gamma \in \Gamma} e_\gamma$ converges unconditionally in $\mathcal{B}_1$, then $\sum_{\gamma \in \Gamma} A e_\gamma$ converges unconditionally in $\mathcal{B}_2$ and
	\begin{equation}
		\sum_{\gamma \in \Gamma} A e_\gamma = A \sum_{\gamma \in \Gamma} e_\gamma = A f.
	\end{equation}
\end{lemma}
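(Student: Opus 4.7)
The plan is to verify the $\varepsilon$-definition of unconditional convergence for the series $\sum_{\gamma \in \Gamma} A e_\gamma$ directly, exploiting the two key properties of a bounded linear operator: its linearity (which commutes with \emph{finite} sums) and its continuity (which turns $\mathcal{B}_1$-norm estimates into $\mathcal{B}_2$-norm estimates up to the factor $\norm{A}_{\mathrm{op}}$).

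First I would dispose of the trivial case $A = 0$, for which both sides vanish. Otherwise, given $\varepsilon > 0$, I would set $\varepsilon' = \varepsilon / \norm{A}_{\mathrm{op}}$ and invoke the unconditional convergence of $\sum_{\gamma \in \Gamma} e_\gamma$ in $\mathcal{B}_1$ to produce a finite set $F_0 \subset \Gamma$ with
\begin{equation}
\norm{f - \sum_{\gamma \in F} e_\gamma}_{\mathcal{B}_1} < \varepsilon',
\qquad \text{for all finite } F \supset F_0.
\end{equation}
For any such $F$, linearity of $A$ on finite sums gives $A \sum_{\gamma \in F} e_\gamma = \sum_{\gamma \in F} A e_\gamma$, and then boundedness of $A$ yields
\begin{equation}
\norm{A f - \sum_{\gamma \in F} A e_\gamma}_{\mathcal{B}_2}
= \norm{A \Bigl(f - \sum_{\gamma \in F} e_\gamma \Bigr)}_{\mathcal{B}_2}
\leq \norm{A}_{\mathrm{op}} \norm{f - \sum_{\gamma \in F} e_\gamma}_{\mathcal{B}_1}
< \norm{A}_{\mathrm{op}} \, \varepsilon' = \varepsilon.
\end{equation}
This is exactly the statement that $\sum_{\gamma \in \Gamma} A e_\gamma$ converges unconditionally to $A f$ in $\mathcal{B}_2$, and it simultaneously identifies the limit as $A f = A \sum_{\gamma \in \Gamma} e_\gamma$.

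There is no substantive obstacle here; the only point worth being careful about is that linearity of $A$ may only be moved inside a \emph{finite} sum for free, which is why the argument must be phrased in terms of the $\varepsilon$-net of finite subsets $F \supset F_0$ rather than by naively pulling $A$ past an infinite sum. The continuity of $A$ then promotes this finite-sum commutation to the limit statement.
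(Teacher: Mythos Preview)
Your proof is correct and is essentially identical to the paper's own argument: choose $F_0$ so that the tail is less than $\varepsilon/\norm{A}_{\mathrm{op}}$ in $\mathcal{B}_1$, then use linearity on finite sums and the operator bound to push the estimate into $\mathcal{B}_2$. The only cosmetic difference is that you explicitly separate out the case $A=0$, which the paper leaves implicit.
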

\begin{proof}
	Given $\varepsilon > 0$, there is a finite set $F_0 \subset \Gamma$ such that
	\begin{equation}
		\norm{f - \sum_{\gamma \in F} e_\gamma}_{\mathcal{B}_1} < \frac{\varepsilon}{\norm{A}_{op}},
	\end{equation}
	for all $F \supset F_0$. Therefore
	\begin{align}
		\norm{ A f - \sum_{\gamma \in F} A e_\gamma}_{\mathcal{B}_2} = \norm{A (f - \sum_{\gamma \in F} e_\gamma)}_{\mathcal{B}_2} \leq \norm{A}_{op}  \norm{f - \sum_{\gamma \in F} e_\gamma}_{\mathcal{B}_1} < \varepsilon,
	\end{align}
	for $F \supset F_0$. Thus $\sum_{\gamma \in \Gamma} A e_\gamma$ converges unconditionally to the limit $A f$.
\end{proof}
The next lemma is especially useful in the area of Gabor analysis, as it shows that series over sets with product structure, i.e., $\Gamma = \Gamma_1 \times \Gamma_2$, can be seen as iterated series. The statement will only be given for the set $\Z^d \times \Z^d$.
\begin{lemma}
	Suppose that $\sum_{(k,l) \in \Z^{2d}} e_{k,l}$ converges unconditionally to $f \in \mathcal{B}$. Then the partial sum $s_{k,N} = \sum_{|l| \leq N} e_{k,l}$ converges to some element $g_k \in \mathcal{B}$ for each $k \in \Z^d$, and $f = \sum_{k \in \Z^d} g_k$ with unconditional convergence. Likewise, $\sum_{|k| \leq M} e_{k,l}$ converges to some element $h_l \in \mathcal{B}$ for each $l \in \Z^d$ and $f = \sum_{l \in \Z^d} h_l$.
\end{lemma}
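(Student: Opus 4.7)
The plan is to extract a Cauchy-type criterion from the definition of unconditional convergence, apply it to each ``row'' subseries to get $g_k$, and then patch everything together to recover $f$.

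First I would establish the following Cauchy criterion: if $\sum_{\gamma \in \Gamma} e_\gamma$ converges unconditionally to $f$, then for every $\varepsilon > 0$ there exists a finite $F_0 \subset \Gamma$ such that $\|\sum_{\gamma \in G} e_\gamma\|_\mathcal{B} < 2\varepsilon$ for every finite $G \subset \Gamma$ with $G \cap F_0 = \emptyset$. This is immediate by applying the definition to both $F = F_0$ and $F = F_0 \cup G$ and using the triangle inequality.

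Next, fix $k \in \Z^d$. Setting $F_0^{(k)} = \{l \in \Z^d : (k,l) \in F_0\}$, which is finite, the criterion applied to sets of the form $\{k\} \times G$ shows that $\sum_{l \in \Z^d} e_{k,l}$ satisfies the same Cauchy condition in $\mathcal{B}$. Since $\mathcal{B}$ is complete, this forces the partial sums $s_{k,N} = \sum_{|l| \leq N} e_{k,l}$ (and every rearrangement) to converge to a common element $g_k \in \mathcal{B}$; hence $\sum_l e_{k,l}$ converges unconditionally to $g_k$.

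The last step is to show $f = \sum_{k \in \Z^d} g_k$ unconditionally. Given $\varepsilon > 0$, take $F_0$ from the definition applied to $\varepsilon/2$ and let $K_0 = \{k \in \Z^d : \exists\, l \text{ with } (k,l) \in F_0\}$, a finite set. For any finite $K \supset K_0$, choose for each $k \in K$ an integer $N_k$ large enough that simultaneously
\begin{equation}
\bigl\| g_k - \sum_{|l| \leq N_k} e_{k,l} \bigr\|_\mathcal{B} < \frac{\varepsilon}{2|K|}
\quad \text{and} \quad
\{l : (k,l) \in F_0\} \subset \{l : |l| \leq N_k\}.
\end{equation}
Then $F := \bigcup_{k \in K} \{k\} \times \{l : |l| \leq N_k\}$ is a finite set with $F \supset F_0$, so $\|f - \sum_{(k,l) \in F} e_{k,l}\|_\mathcal{B} < \varepsilon/2$, and the triangle inequality gives $\|f - \sum_{k \in K} g_k\|_\mathcal{B} < \varepsilon$. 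The statement for $h_l$ follows by interchanging the roles of the two coordinates.

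The main (small) obstacle is the bookkeeping in the last paragraph: one has to pick the truncation levels $N_k$ depending on $K$ in a way that simultaneously absorbs $F_0$ and controls each tail $\|g_k - s_{k,N_k}\|$, so that the ``double'' partial sum $\sum_{(k,l) \in F} e_{k,l}$ is both close to $f$ (via $F \supset F_0$) and close to $\sum_{k \in K} g_k$ (via the row approximations). Everything else is a direct application of completeness and the Cauchy criterion.
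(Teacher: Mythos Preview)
Your proof is correct and follows essentially the same strategy as the paper's: extract a Cauchy-type criterion from unconditional convergence, use it to show each row series $\sum_l e_{k,l}$ converges in $\mathcal{B}$, and then combine the row approximations with a finite set $F \supset F_0$ to obtain $\|f - \sum_{k \in K} g_k\| < \varepsilon$. The only cosmetic differences are that you state the Cauchy criterion explicitly up front and allow a separate truncation level $N_k$ for each $k \in K$, whereas the paper works with a single cutoff $N$ uniform over $k$ and passes to the limit via a $\limsup$; both variants do the same bookkeeping.
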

\begin{proof}
	As $f = \sum_{k,l \in \Z^d} e_{k,l}$ converges unconditionally, there exist $M_0, N_0 \in \N$ such that
	\begin{equation}
		\norm{f - \sum_{|k| \leq M} \sum_{|l| \leq N} e_{k,l}}_\mathcal{B} < \varepsilon
	\end{equation}
	for every $M \geq M_0$ and $N \geq N_0$. If $F \subset \Z^d$ is an arbitrary finite subset and $N \geq N' \geq N_0$, then by the above estimate
	\begin{align}
		\norm{\sum_{k \in F}  \sum_{N' \leq |l| \leq N} e_{k,l}}_\mathcal{B}
		& = \norm{\sum_{\substack{k \in F \\ \text{or } |k| \leq M_0}} s_{k,N} - \sum_{\substack{k \in F \\ \text{or } |k| \leq M_0}} s_{k,N'}}_\mathcal{B} \\
		& \leq \norm{f - \sum_{\substack{k \in F \\ \text{or } |k| \leq M_0}} s_{k,N'}}_\mathcal{B}
		+ \norm{f - \sum_{\substack{k \in F \\ \text{or } |k| \leq M_0}} s_{k,N}}_\mathcal{B} \\
		& < 2 \varepsilon .
	\end{align}
	Consequently, the sequence $a_N = \sum_{k \in F} s_{k,N}$ is a Cauchy sequence and thus converges in $\mathcal{B}$. In particular, if $F = \{ k \}$, then the partial sums $s_{k,N}$ converge to some element $g_k = \sum_{l \in \Z^d} e_{k,l} \in \mathcal{B}$. Further,
	\begin{equation}
		\lim_{N \to \infty} \sum_{k \in F} s_{k,N} = \sum_{k \in F} g_k .
	\end{equation}
	Next, we show that $\sum_{k \in \Z^d} g_k$ converges unconditionally to $f = \sum_{k,l \in \Z^d} e_{k,l}$. We note that the last estimate (involving the $2 \varepsilon$) above holds uniformly for all finite sets $F \subset \Z^d$. If $F \supset\{ k \in \Z^d \mid |k| \leq M_0 \}$, then, by the above calculations, we have
	\begin{align}
		\norm{f - \sum_{k \in F} g_k}_\mathcal{B}
		& \leq \norm{f - \sum_{k \in F} s_{k,N}}_\mathcal{B}
		+ \norm{\sum_{k \in F} s_{k,N} - \sum_{k \in F} g_k}_\mathcal{B} \\
		& \leq \varepsilon + \limsup_{N \leq N'} \norm{(s_{k,N} - s_{k,N'})}_\mathcal{B} < 3 \varepsilon.
	\end{align}
	Hence, $f = \sum_{k \in \Z^d} g_k$ with unconditional convergence.
\end{proof}
We remark that the converse is not true in general. The convergence of an iterated sum does not necessarily imply the unconditional convergence of the double series.

We continue with some important operators and a study of their properties.
\begin{definition}
	Let $\mathcal{H}$ be a (separable) Hilbert space. For a sequence $(e_\gamma)_{\gamma \in \Gamma}$, the coefficient or analysis operator is given by
	\begin{equation}
		C: \mathcal{H} \to \ell^2(\Gamma), \qquad C f = (\langle f, e_\gamma \rangle)_{\gamma \in \Gamma} .
	\end{equation}
	The synthesis or reconstruction operator is defined for a (finite) sequence $c = (c_\gamma)_{\gamma \in \Gamma}$ by
	\begin{equation}
		D: \ell^2(\Gamma) \to \mathcal{H}, \qquad D c = \sum_{\gamma \in \Gamma} c_\gamma e_\gamma \in \mathcal{H}.
	\end{equation}
	For $f \in \mathcal{H}$, the frame operator on $\mathcal{H}$ is then given by
	\begin{equation}
		S: \mathcal{H} \to \mathcal{H}, \qquad S f = \sum_{\gamma \in \Gamma} \langle f, e_\gamma \rangle e_\gamma .
	\end{equation}
\end{definition}

We will now study the fundamental properties of these operators and start with the following result.
\begin{lemma}
	Let $\Gamma$ be a countable set without accumulation points. Consider the sequence $(e_\gamma)_{\gamma \in \Gamma}$ in the Hilbert space $\mathcal{H}$ and suppose that $\sum_{\gamma \in \Gamma} c_\gamma e_\gamma$ is convergent for all sequences $(c_\gamma)_{\gamma \in \Gamma} \in \ell^2(\Gamma)$. Then
	\begin{equation}
		D: \ell^2(\Gamma) \to \mathcal{H},
		\qquad
		D (c_\gamma)_{\gamma \in \Gamma} = \sum_{\gamma \in \Gamma} c_\gamma e_\gamma
	\end{equation}
	defines a bounded linear operator. The adjoint operator is given by
	\begin{equation}
		D^*: \mathcal{H} \to \ell^2(\Gamma),
		\qquad
		D^*f = (\langle f, e_\gamma \rangle)_{\gamma \in \Gamma}
	\end{equation}
	Furthermore,
	\begin{equation}\label{eq_Bessel_D}
		\sum_{\gamma \in \Gamma} |\langle f, e_\gamma\rangle|^2 \leq \norm{D}_{op}^2 \norm{f}_\mathcal{H}^2, \qquad \forall f \in \mathcal{H}.
	\end{equation}
\end{lemma}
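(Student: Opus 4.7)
The plan is to prove the three assertions in turn: boundedness of $D$ by the Banach-Steinhaus theorem, then the formula for $D^*$ by Hilbert space duality, and finally the Bessel-type estimate \eqref{eq_Bessel_D} as a direct consequence.

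First, I would fix any enumeration $\Gamma = \{\gamma_n\}_{n \in \N}$, available since $\Gamma$ is countable, and define the finite-rank operators
\begin{equation}
D_N : \ell^2(\Gamma) \to \mathcal{H}, \qquad D_N c = \sum_{n=1}^N c_{\gamma_n} e_{\gamma_n}.
\end{equation}
Each $D_N$ is linear and bounded. By hypothesis, $D_N c \to Dc$ in $\mathcal{H}$ for every $c \in \ell^2(\Gamma)$, so in particular $\sup_N \norm{D_N c}_\mathcal{H} < \infty$ holds pointwise. The Banach-Steinhaus (uniform boundedness) theorem then yields $M := \sup_N \norm{D_N}_{op} < \infty$, and letting $N \to \infty$ in $\norm{D_N c}_\mathcal{H} \leq M \norm{c}_{\ell^2}$ gives $\norm{Dc}_\mathcal{H} \leq M \norm{c}_{\ell^2}$. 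Linearity of $D$ is inherited from the $D_N$. This step is the main engine of the proof; everything else is routine.

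Second, to identify $D^*$, I would exploit continuity of the inner product together with norm convergence of the defining series to write, for any $c \in \ell^2(\Gamma)$ and $f \in \mathcal{H}$,
\begin{equation}
\langle Dc, f \rangle = \Big\langle \sum_{\gamma \in \Gamma} c_\gamma e_\gamma, f \Big\rangle = \sum_{\gamma \in \Gamma} c_\gamma \overline{\langle f, e_\gamma \rangle}.
\end{equation}
On the other hand, by definition of the adjoint, $\langle Dc, f \rangle = \langle c, D^*f \rangle_{\ell^2} = \sum_\gamma c_\gamma \overline{(D^*f)_\gamma}$. Equality for every $c \in \ell^2(\Gamma)$ — tested, say, against the standard unit vectors $\delta_{\gamma_0}$ — forces $(D^* f)_\gamma = \langle f, e_\gamma \rangle$ for every $\gamma \in \Gamma$. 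Boundedness of $D^*$ automatically places this sequence in $\ell^2(\Gamma)$.

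Third, the Bessel-type estimate follows by a one-line computation using the standard equality $\norm{D^*}_{op} = \norm{D}_{op}$ for bounded operators between Hilbert spaces:
\begin{equation}
\sum_{\gamma \in \Gamma} |\langle f, e_\gamma \rangle|^2 = \norm{D^* f}_{\ell^2}^2 \leq \norm{D^*}_{op}^2 \norm{f}_\mathcal{H}^2 = \norm{D}_{op}^2 \norm{f}_\mathcal{H}^2,
\end{equation}
which is exactly \eqref{eq_Bessel_D}. The only real subtlety is the interpretation of ``$\sum_\gamma c_\gamma e_\gamma$ is convergent'': any reasonable reading (convergence of partial sums under a fixed enumeration, or unconditional net convergence over finite subsets of $\Gamma$) produces the pointwise boundedness of the truncation family needed for Banach-Steinhaus, so the argument is insensitive to this choice.
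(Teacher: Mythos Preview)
Your proof is correct and follows essentially the same route as the paper: truncate to finite sums, invoke Banach--Steinhaus for boundedness of $D$, identify $D^*$ via the inner-product pairing, and read off the Bessel estimate from $\norm{D^*}_{op}=\norm{D}_{op}$. The only cosmetic difference is that the paper truncates over $\{\gamma:|\gamma|\le n\}$ (finite since $\Gamma$ has no accumulation points) rather than via an enumeration, and it verifies $(\langle f,e_\gamma\rangle)\in\ell^2(\Gamma)$ directly from the convergence of $\sum\overline{c_\gamma}\langle f,e_\gamma\rangle$ rather than by first invoking existence of $D^*$.
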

\begin{proof}
	Consider the sequence of bounded linear operators
	\begin{equation}
		D_n : \ell^2(\Gamma) \to \mathcal{H},
		\qquad
		D_n(c_\gamma)_{\gamma \in \Gamma} = \sum_{|\gamma| \leq n} c_\gamma e_\gamma .
	\end{equation}
	Then $D_n \to D$ point-wise as $n \to \infty$. By the Banach-Steinhaus theorem (uniform boundedness principle) $D$ is bounded.
	
	Next, we want to find the adjoint operator $D^*$. Let $f \in \mathcal{H}$, $(c_\gamma)_{\gamma \in \Gamma} \in \ell^2(\Gamma)$, then
	\begin{equation}
		\langle f, D(c_\gamma) \rangle_\mathcal{H}
		= \langle f, \sum_{\gamma \in \Gamma} c_\gamma e_\gamma \rangle_\mathcal{H}
		= \sum_{\gamma \in \Gamma}\overline{c_\gamma} \langle f, e_\gamma\rangle_\mathcal{H}.
	\end{equation}
	We will now present a way to find $D^* f$. As, by assumption, the series $\sum_{\gamma \in \Gamma} \overline{c_\gamma} \langle f, e_\gamma \rangle_\mathcal{H}$ converges for all $(c_\gamma)_{\gamma \in \Gamma} \in \ell^2(\Gamma)$, this implies that $(\langle f, e_\gamma\rangle_\mathcal{H})_{\gamma \in \Gamma} \in \ell^2(\Gamma)$ as well (see, e.g., \cite[p.~145]{Heuser_FA}). Therefore, we can write
	\begin{equation}
		\langle f, D(c_\gamma) \rangle_\mathcal{H} = \langle (\langle f, e_\gamma \rangle_\mathcal{H}), (c_\gamma) \rangle_{\ell^2(\Gamma)}.
	\end{equation}
	Thus, we conclude that
	\begin{equation}
		D^* f = (\langle f, e_\gamma \rangle )_{\gamma \in \Gamma}.
	\end{equation}
	
	The adjoint of a bounded operator is itself bounded, and $\norm{D}_{op} = \norm{D^*}_{op}$. By assumption, we therefore have
	\begin{equation}
		\norm{D^* f}_\mathcal{H}^2 \leq \norm{D}_{op}^2 \norm{f}_\mathcal{H}^2, \qquad \forall f \in \mathcal{H}.
	\end{equation}
\end{proof}
\noindent
We note that we also showed that $D^* = C$ and, consequently, $C^* = D$.

Sequences $(e_\gamma)_{\gamma \in \Gamma}$ for which an inequality of type \eqref{eq_Bessel_D} holds, play a fundamental role in the area of time-frequency analysis, not least because they already fulfill one half of the frame inequality.
\begin{definition}
	A sequence $(e_\gamma)_{\gamma \in \Gamma}$ in a Hilbert space $\mathcal{H}$ is called a Bessel sequence if there exists a constant $B > 0$ such that
	\begin{equation}\label{eq_Bessel}
		\sum_{\gamma \in \Gamma} | \langle f, e_\gamma \rangle|^2 \leq B \norm{f}_\mathcal{H}^2, \qquad \forall f \in \mathcal{H}.
	\end{equation}
\end{definition}
Any number $B$ satisfying \eqref{eq_Bessel} is called a Bessel bound for $(e_\gamma)_{\gamma \in \Gamma}$. We will now express the Bessel condition in terms of the synthesis operator.
\begin{theorem}
	Let $(e_\gamma)_{\gamma \in \Gamma}$ be a sequence in a Hilbert space $\mathcal{H}$ and $B > 0$ be given. Then $(e_\gamma)_{\gamma \in \Gamma}$ is a Bessel sequence with Bessel bound $B$ if and only if
	\begin{equation}
		D: (c_\gamma)_{\gamma \in \Gamma} \mapsto \sum_{\gamma \in \Gamma} c_\gamma e_\gamma
	\end{equation}
	is a well-defined bounded operator from $\ell^2(\Gamma)$ into $\mathcal{H}$ and $\norm{D}_{op} \leq \sqrt{B}$.
\end{theorem}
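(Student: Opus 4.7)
The plan is to prove the two implications separately, using the previous lemma to handle the easy direction and a partial-sum / Cauchy-sequence argument for the harder one.

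For the implication ``$D$ bounded with $\norm{D}_{op} \leq \sqrt{B}$ $\Rightarrow$ Bessel sequence'', I would simply invoke the previous lemma: under the hypothesis that $D$ is well defined and bounded on $\ell^2(\Gamma)$, its adjoint is given by $D^\ast f = (\langle f, e_\gamma\rangle)_{\gamma\in\Gamma}$. Since $\norm{D^\ast}_{op} = \norm{D}_{op} \leq \sqrt{B}$, this yields
\begin{equation}
	\sum_{\gamma\in\Gamma} |\langle f, e_\gamma\rangle|^2 = \norm{D^\ast f}_{\ell^2(\Gamma)}^2 \leq B\norm{f}_{\mathcal{H}}^2,
\end{equation}
which is exactly the Bessel inequality with bound $B$.

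For the harder implication ``Bessel sequence $\Rightarrow$ $D$ is well defined and bounded by $\sqrt{B}$'', I would first deal with the finitely supported case. For a finite set $F \subset \Gamma$ and scalars $(c_\gamma)_{\gamma\in F}$, duality and the Cauchy-Schwarz inequality give
\begin{align}
	\norm{\sum_{\gamma\in F} c_\gamma e_\gamma}_{\mathcal{H}} &= \sup_{\norm{f}_{\mathcal{H}}=1} \Bigl|\bigl\langle \sum_{\gamma\in F} c_\gamma e_\gamma, f\bigr\rangle_{\mathcal{H}}\Bigr| \leq \sup_{\norm{f}_{\mathcal{H}}=1} \Bigl(\sum_{\gamma\in F}|c_\gamma|^2\Bigr)^{1/2} \Bigl(\sum_{\gamma\in F} |\langle e_\gamma, f\rangle|^2\Bigr)^{1/2} \\
	&\leq \sqrt{B}\,\Bigl(\sum_{\gamma\in F}|c_\gamma|^2\Bigr)^{1/2},
\end{align}
where in the last step I use the Bessel assumption applied to each $f$ with $\norm{f}_{\mathcal{H}}=1$.

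Given this uniform bound on finite partial sums, the next step is to establish unconditional convergence of $\sum_{\gamma\in\Gamma} c_\gamma e_\gamma$ for arbitrary $(c_\gamma)\in\ell^2(\Gamma)$. Pick any enumeration $\pi \colon \N \to \Gamma$ and let $s_n = \sum_{k=1}^n c_{\pi(k)} e_{\pi(k)}$. For $m > n$, the previous estimate applied to $F = \{\pi(n+1),\ldots,\pi(m)\}$ gives
\begin{equation}
	\norm{s_m - s_n}_{\mathcal{H}} \leq \sqrt{B}\,\Bigl(\sum_{k=n+1}^m |c_{\pi(k)}|^2\Bigr)^{1/2},
\end{equation}
which tends to $0$ as $n,m\to\infty$ because $(c_\gamma)\in\ell^2(\Gamma)$. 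Hence $(s_n)$ is Cauchy in $\mathcal{H}$ and converges. Applying the same argument to any rearrangement shows that the limit is independent of the enumeration, so the series converges unconditionally, and $D$ is well defined on all of $\ell^2(\Gamma)$. Passing to the limit in the finite estimate yields $\norm{D(c_\gamma)}_{\mathcal{H}} \leq \sqrt{B}\,\norm{(c_\gamma)}_{\ell^2(\Gamma)}$, i.e.\ $\norm{D}_{op} \leq \sqrt{B}$.

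The main obstacle I anticipate is the well-definedness in the second direction: one must argue that the series $\sum c_\gamma e_\gamma$ converges for an \emph{arbitrary} $\ell^2$-sequence on a possibly unstructured index set $\Gamma$, and that the limit does not depend on the order of summation. This is precisely where the finite-support estimate together with the completeness of $\mathcal{H}$ (Cauchy-sequence argument) does the decisive work.
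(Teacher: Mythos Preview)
Your proof is correct and follows essentially the same approach as the paper: for the direction ``Bessel $\Rightarrow$ $D$ bounded'' both you and the paper use the duality/Cauchy--Schwarz estimate on finite partial sums followed by a Cauchy-sequence argument, and for the converse both invoke the previous lemma (the paper cites the inequality \eqref{eq_Bessel_D}, which is exactly your adjoint computation). The only minor difference is that you address unconditional convergence within the proof, whereas the paper defers this to the subsequent corollary and works with ordered partial sums here.
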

\begin{proof}
	For the first part, assume that $(e_\gamma)_{\gamma \in \Gamma}$ is a Bessel sequence with Bessel bound $B$. Let $(c_\gamma)_{\gamma \in \Gamma} \in \ell^2(\Gamma)$. Now, we want to show that $D (c_\gamma)_{\gamma \in \Gamma}$ is well-defined, i.e., that $\sum_{\gamma \in \Gamma} c_\gamma e _\gamma$ is convergent\footnote{Note that, for the moment, ``well-defined" means convergent for spherical partial sums in this context. As we will see later, the convergence is actually unconditional.}. Consider $m,n \in \N$, $n > m$. Then
	\begin{align}
		\norm{\sum_{|\gamma| \leq n} c_\gamma e_\gamma - \sum_{|\gamma| \leq m} c_\gamma e_\gamma}_\mathcal{H} & = \norm{\sum_{|\gamma| = m+1}^n c_\gamma e_\gamma}\\
		& = \sup_{\norm{g}_\mathcal{H} = 1} \left| \langle \sum_{|\gamma| = m+1}^n c_\gamma e_\gamma, g \rangle \right|\\
		& \leq \sup_{\norm{g}_\mathcal{H} = 1} \sum_{|\gamma| = m+1}^n | c_\gamma \langle e_\gamma, g \rangle|\\
		& \leq \left( \sum_{|\gamma|=m+1}^n |c_\gamma|^2\right)^{1/2} \sup_{\norm{g}_\mathcal{H}=1} \left( \sum_{|\gamma|=m+1}^n |\langle e_\gamma, g \rangle|^2 \right)^{1/2}\\
		& \leq \sqrt{B} \left( \sum_{|\gamma|=m+1}^n |c_\gamma|^2\right)^{1/2} .
	\end{align}
	Since $(c_\gamma)_{\gamma \in \Gamma} \in \ell^2(\Gamma)$, we know that $(\sum_{|\gamma| \leq n} | c_\gamma|^2)_{n \in \N}$ is a Cauchy sequence in $\C$.	The above calculation shows that $(\sum_{|\gamma| \leq n} c_\gamma e_\gamma)_{n \in \N}$ is a Cauchy sequence in $\mathcal{H}$ and therefore convergent. Thus, $D (c_\gamma)_{\gamma \in \Gamma}$ is well-defined. Clearly, $D$ is linear. Also, as $\norm{D (c_\gamma)}_\mathcal{H} = \sup_{\norm{g}_\mathcal{H} = 1} | \langle D (c_\gamma), g \rangle|$, a calculation as above shows that $D$ is bounded and that $\norm{D}_{op} \leq \sqrt{B}$.
	
	For the opposite implication, suppose that $D$ is well-defined and that $\norm{D}_{op} \leq \sqrt{B}$. Then \eqref{eq_Bessel_D} shows that $(e_\gamma)_{\gamma \in \Gamma}$ is a Bessel sequence with bound $B$.
\end{proof}
\begin{corollary}
	If $(e_\gamma)_{\gamma \in \Gamma}$ is a sequence in $\mathcal{H}$ and $\sum_{\gamma \in \Gamma} c_\gamma e_\gamma$ is convergent for all $(c_\gamma)_{\gamma \in \Gamma} \in \ell^2(\Gamma)$, then $(e_\gamma)_{\gamma \in \Gamma}$ is a Bessel sequence.
\end{corollary}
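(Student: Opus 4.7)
The plan is to derive the corollary directly from the preceding lemma. By hypothesis, the series $\sum_{\gamma \in \Gamma} c_\gamma e_\gamma$ converges in $\mathcal{H}$ for every $(c_\gamma)_{\gamma \in \Gamma} \in \ell^2(\Gamma)$. This is precisely the hypothesis of the lemma that was just proved. Therefore the synthesis operator
\begin{equation}
D : \ell^2(\Gamma) \to \mathcal{H}, \qquad D(c_\gamma)_{\gamma \in \Gamma} = \sum_{\gamma \in \Gamma} c_\gamma e_\gamma
\end{equation}
is a well-defined bounded linear operator, and its adjoint $D^* : \mathcal{H} \to \ell^2(\Gamma)$ is given by $D^* f = (\langle f, e_\gamma \rangle)_{\gamma \in \Gamma}$. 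In particular, $D^*f \in \ell^2(\Gamma)$ for every $f \in \mathcal{H}$.

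Next I would invoke inequality \eqref{eq_Bessel_D}, which was established within the lemma. It states that
\begin{equation}
\sum_{\gamma \in \Gamma} |\langle f, e_\gamma \rangle|^2 = \norm{D^* f}_{\ell^2(\Gamma)}^2 \leq \norm{D}_{op}^2 \norm{f}_\mathcal{H}^2, \qquad \forall f \in \mathcal{H}.
\end{equation}
Setting $B = \norm{D}_{op}^2 < \infty$, this is exactly the Bessel inequality \eqref{eq_Bessel} with bound $B$, so $(e_\gamma)_{\gamma \in \Gamma}$ is a Bessel sequence.

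Since the proof is essentially a one-line consequence of the previous lemma, there is no real obstacle. The only subtle point worth flagging is that the boundedness of $D$ (the step hidden behind \eqref{eq_Bessel_D}) was obtained from the Banach–Steinhaus theorem applied to the truncations $D_n$, so the hypothesis of pointwise convergence on all of $\ell^2(\Gamma)$ is used in an essential way: it is what guarantees that the partial sum operators are pointwise bounded, hence uniformly bounded, which in turn upgrades the pointwise convergence into an operator norm bound and thus into the Bessel estimate.
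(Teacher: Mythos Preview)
Your proof is correct and takes essentially the same approach as the paper: the corollary is stated there without proof precisely because it is an immediate consequence of the preceding lemma (via the Bessel estimate \eqref{eq_Bessel_D}) or, equivalently, of the theorem characterizing Bessel sequences in terms of the boundedness of $D$. Your added remark on the role of Banach--Steinhaus correctly identifies where the work is actually done.
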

As the Bessel condition \eqref{eq_Bessel} remains the same, regardless of how the elements $(e_\gamma)_{\gamma \in \Gamma}$ are ordered, we obtain the following result.
\begin{corollary}
	If $(e_\gamma)_{\gamma \in \Gamma}$ is a Bessel sequence in $\mathcal{H}$, then $\sum_{\gamma \in \Gamma} c_\gamma e_\gamma$ converges unconditionally for all $(c_\gamma)_{\gamma \in \Gamma} \in \ell^2(\Gamma)$.
\end{corollary}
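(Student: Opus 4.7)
The plan is to reduce unconditional convergence of $\sum_{\gamma \in \Gamma} c_\gamma e_\gamma$ to the $\ell^2$-summability of the coefficients by means of a single ``Bessel-on-finite-subsets'' estimate. Throughout, $B$ denotes the Bessel bound of $(e_\gamma)_{\gamma \in \Gamma}$, and I write $f := \sum_{\gamma \in \Gamma} c_\gamma e_\gamma$ for the element furnished by the preceding corollary (as the limit of spherical partial sums).

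First I would establish the key inequality: for every \emph{finite} subset $F' \subset \Gamma$,
\begin{equation}
	\Bigl\| \sum_{\gamma \in F'} c_\gamma e_\gamma \Bigr\|_\mathcal{H} \leq \sqrt{B} \, \Bigl( \sum_{\gamma \in F'} |c_\gamma|^2 \Bigr)^{1/2}.
\end{equation}
This is obtained exactly as in the proof of the preceding theorem: write the left-hand side as $\sup_{\|g\|_\mathcal{H} = 1} |\langle \sum_{\gamma \in F'} c_\gamma e_\gamma, g \rangle|$, apply the Cauchy--Schwarz inequality in $\ell^2(F')$, and finally use the Bessel bound $\sum_{\gamma \in F'} |\langle e_\gamma, g \rangle|^2 \leq B \|g\|_\mathcal{H}^2$. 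Note that this estimate is index-free; in particular it does not rely on any ordering or on spherical partial sums.

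Next, given $\varepsilon > 0$, I use $(c_\gamma)_{\gamma \in \Gamma} \in \ell^2(\Gamma)$ to choose a finite set $F_0 \subset \Gamma$ with $\sum_{\gamma \notin F_0} |c_\gamma|^2 < \varepsilon^2/(4B)$. For any finite $F \supset F_0$, I pick a spherical partial sum $s_n := \sum_{|\gamma| \leq n} c_\gamma e_\gamma$ with $\{|\gamma| \leq n\} \supset F$ and large enough that $\|f - s_n\|_\mathcal{H} < \varepsilon/2$ (possible by the corollary). Writing
\begin{equation}
	f - \sum_{\gamma \in F} c_\gamma e_\gamma = (f - s_n) + \sum_{\gamma \in \{|\gamma| \leq n\} \setminus F} c_\gamma e_\gamma
\end{equation}
and applying the key inequality to the finite set $\{|\gamma| \leq n\} \setminus F \subset \Gamma \setminus F_0$, the triangle inequality gives
\begin{equation}
	\Bigl\| f - \sum_{\gamma \in F} c_\gamma e_\gamma \Bigr\|_\mathcal{H} \leq \frac{\varepsilon}{2} + \sqrt{B} \, \Bigl( \sum_{\gamma \notin F_0} |c_\gamma|^2 \Bigr)^{1/2} < \frac{\varepsilon}{2} + \frac{\varepsilon}{2} = \varepsilon.
\end{equation}
Since this holds for every finite $F \supset F_0$, the series converges unconditionally to $f$ by definition.

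There is no serious obstacle; the whole argument rests on the finite-subset Bessel estimate, which is already implicit in the preceding theorem's proof. The only subtle bookkeeping is reconciling the net-theoretic notion of unconditional convergence (over arbitrary finite sets $F$) with the a~priori only spherical-partial-sum convergence supplied by the corollary, which is handled by inserting the auxiliary spherical sum $s_n$ in the triangle inequality.
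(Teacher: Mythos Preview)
Your proof is correct. The core idea—the finite-subset Bessel estimate $\|\sum_{\gamma \in F'} c_\gamma e_\gamma\| \leq \sqrt{B}\,(\sum_{\gamma \in F'}|c_\gamma|^2)^{1/2}$—is exactly what the paper uses as well, so the approaches are essentially the same. One small citation slip: the element $f$ comes from the preceding \emph{theorem} (Bessel $\Leftrightarrow$ $D$ bounded), not from the preceding corollary, which states the converse implication.

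The paper packages the argument slightly differently: it observes that the Bessel condition is reorder-invariant and says the result follows by repeating the proof of the lemma on bounded operators preserving unconditional convergence. Concretely, $(c_\gamma)_\gamma = \sum_\gamma c_\gamma \delta_\gamma$ converges unconditionally in $\ell^2(\Gamma)$ (orthonormal expansion), and since $D$ is bounded, the images $c_\gamma e_\gamma = D(c_\gamma \delta_\gamma)$ converge unconditionally in $\mathcal{H}$. This bypasses your auxiliary spherical sum $s_n$ entirely. Your bridging step is perfectly valid but is an extra move; the operator-theoretic formulation buys you the identification of the limit and the net-convergence in one stroke.
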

The proof of the last corollary is a simple repetition of the proof of Lemma \ref{lem_operator_uncond_conv}. As a consequence, we obtain that, if the Gabor system
\begin{equation}
	\G(g, \alpha, \beta) = \{ M_{\beta l} T_{\alpha k} g \mid k,l \in \Z^d \}
\end{equation}
is a Gabor frame, then the Gabor series
\begin{equation}
	f = \sum_{k,l \in \Z^d} c_{k,l} M_{\beta l} T_{\alpha k} g
\end{equation}
converges unconditionally. In particular, the rectangular partial sums
\begin{equation}
	s_{K,L} = \sum_{\substack{k \in \Z^d \\ |k_j| \leq K}} \sum_{\substack{l \in \Z^d \\ |l_j| \leq L}} c_{k,l} M_{\beta l} T_{\alpha k} g
\end{equation}
as well as the radial partial sums
\begin{equation}
	\widetilde{s}_{N} = \sum_{\substack{k,l \in \Z^d \\ |k|^2 + |l|^2 \leq N}} c_{k,l} M_{\beta l} T_{\alpha k} g
\end{equation}
both converge to $f$.

We will now state some basic properties of the frame operator.
\begin{proposition}\label{pro_frame_properties}
	Suppose $\{e_\gamma \mid \gamma \in \Gamma\}$ is a frame. Let $S : \mathcal{H} \to \mathcal{H}$ be the associated frame operator, i.e.,
	\begin{equation}
		S f = \sum_{\gamma \in \Gamma} \langle f, e_\gamma \rangle e_\gamma .
	\end{equation}
	Then, $S$ is self-adjoint and it is a positive, invertible operator satisfying $A I_\mathcal{H} < S < B I_\mathcal{H}$ and $B^{-1} I_\mathcal{H} < S^{-1} < A^{-1} I_\mathcal{H}$.
	
	Furthermore, the optimal frame bounds are $B_* = \norm{S}_{op}$ and $A^* = \norm{S^{-1}}_{op}^{-1}$.
\end{proposition}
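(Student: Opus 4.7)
The plan is to exploit the factorization of the frame operator in terms of the analysis and synthesis operators already discussed in the excerpt: since $C = D^\ast$, we have $S = DC = DD^\ast = C^\ast C$. Self-adjointness of $S$ is then immediate from $(DD^\ast)^\ast = DD^\ast$. The key identity
\begin{equation}
\langle Sf, f\rangle_\mathcal{H} = \langle DD^\ast f, f\rangle_\mathcal{H} = \langle D^\ast f, D^\ast f\rangle_{\ell^2(\Gamma)} = \sum_{\gamma \in \Gamma} |\langle f, e_\gamma\rangle|^2
\end{equation}
lets me rewrite the frame inequality \eqref{eq_frame} as the operator inequality $A\, I_\mathcal{H} \leq S \leq B\, I_\mathcal{H}$, which immediately gives positivity.

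Next I would argue invertibility. From $\langle Sf,f\rangle \geq A\|f\|^2$ and Cauchy--Schwarz I get $\|Sf\|\,\|f\| \geq \langle Sf,f\rangle \geq A\|f\|^2$, so $\|Sf\| \geq A\|f\|$. This shows $S$ is bounded below, hence injective with closed range. Self-adjointness then closes the loop: $\overline{\text{ran}(S)}^\perp = \ker(S^\ast) = \ker(S) = \{0\}$, so the range is dense, and being closed it equals $\mathcal{H}$. Thus $S^{-1}$ exists and is bounded with $\|S^{-1}\|_{op} \leq A^{-1}$.

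To pass from $A\,I \leq S \leq B\,I$ to $B^{-1}\,I \leq S^{-1} \leq A^{-1}\,I$, I can proceed without invoking the full spectral theorem: setting $g = S^{-1/2}f$ is tempting but unnecessary. Instead, substitute $f = S^{-1}h$ into the frame inequality. The middle term becomes $\langle S(S^{-1}h), S^{-1}h\rangle = \langle h, S^{-1}h\rangle$, and $\|f\|^2 = \langle S^{-1}h, S^{-1}h\rangle$. Combining this with the already-established bound $\|S^{-1}h\| \leq A^{-1}\|h\|$ and, symmetrically, a lower bound obtained from $\|Sf\|\leq B\|f\|$, yields $B^{-1}\|h\|^2 \leq \langle S^{-1}h,h\rangle \leq A^{-1}\|h\|^2$, which is precisely the desired operator inequality.

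For the optimality claim, since $S$ is positive and self-adjoint, its operator norm is attained as the numerical radius: $\|S\|_{op} = \sup_{\|f\|=1} \langle Sf, f\rangle$. Hence the smallest admissible upper frame bound is exactly $B_\ast = \|S\|_{op}$. For the optimal lower bound, the largest $A$ satisfying \eqref{eq_frame} equals $\inf_{\|f\|=1}\langle Sf,f\rangle$; writing $f = S^{-1/2}h / \|S^{-1/2}h\|$ (or equivalently substituting $f = S^{-1}h$ and using the dual variational formula) shows that this infimum equals $1/\sup_{\|h\|=1}\langle S^{-1}h,h\rangle = 1/\|S^{-1}\|_{op}$, giving $A^\ast = \|S^{-1}\|_{op}^{-1}$. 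The main obstacle I anticipate is the bookkeeping for this last duality between $\inf \langle Sf,f\rangle$ and $\|S^{-1}\|_{op}$; everything else is a direct consequence of the factorization $S = DD^\ast$ together with the frame inequality.
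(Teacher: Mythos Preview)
Your opening coincides with the paper's: the factorization $S=DD^*=C^*C$ gives self-adjointness, and the identity $\langle Sf,f\rangle=\sum_\gamma|\langle f,e_\gamma\rangle|^2$ recasts the frame inequality as $A\,I_\mathcal{H}\leq S\leq B\,I_\mathcal{H}$.

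Where you diverge is in the invertibility argument. The paper rewrites $A\,I\leq S\leq B\,I$ as $\norm{I-B^{-1}S}_{op}\leq\frac{B-A}{B}<1$ and invokes a Neumann series, which has the bonus of giving an explicit expansion of $S^{-1}$. Your bounded-below argument ($\norm{Sf}\geq A\norm{f}$, closed range, self-adjointness to kill the cokernel) is equally valid and perhaps more conceptual, though it does not hand you a formula for $S^{-1}$.

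One genuine gap: your ``symmetric'' step for the lower bound $B^{-1}I\leq S^{-1}$ does not go through as written. From $\norm{Sf}\leq B\norm{f}$ you correctly get $\norm{S^{-1}h}\geq B^{-1}\norm{h}$, but combining this with the substituted frame inequality only yields $\langle S^{-1}h,h\rangle\geq A\norm{S^{-1}h}^2\geq AB^{-2}\norm{h}^2$, which is too weak. The paper sidesteps this by observing that operator inequalities are preserved under multiplication by a positive \emph{commuting} operator: multiplying $A\,I\leq S\leq B\,I$ by $S^{-1}$ gives $A\,S^{-1}\leq I\leq B\,S^{-1}$ directly. If you want to stay with your substitution idea, apply Cauchy--Schwarz in the inner product $\langle u,v\rangle_S=\langle Su,v\rangle$ with $u=S^{-1}h$, $v=h$ to obtain $\norm{h}^4\leq\langle S^{-1}h,h\rangle\langle Sh,h\rangle\leq B\norm{h}^2\langle S^{-1}h,h\rangle$.
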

\begin{proof}
	Obviously, the frame operators is given by $S = D C  = C^* C = D D^*$ and consequently $S$ is self-adjoint. Since
	\begin{equation}
		\langle S f, f \rangle = \sum_{\gamma \in \Gamma} |\langle f, e_\gamma \rangle|^2,
	\end{equation}
	the operator inequality $A I \leq S \leq B I$ is just the frame inequality rewritten. $S$ is invertible on $\mathcal{H}$ because $A > 0$ by the assumption that $\{e_\gamma \mid \gamma \in \Gamma \}$ is a frame, which shows that $S$ is injective. From the frame inequality we also deduce that
	\begin{equation}
		B I - S \leq (B-A) I
		\quad \text{ and, also, } \quad
		0 \leq I - B^{-1} S \leq \frac{B-A}{B} I.
	\end{equation}
	This implies that
	\begin{equation}
		\norm{(I - B^{-1} S)}_{op} = \sup_{\norm{f}_\mathcal{H} = 1} | \langle (I - B^{-1}S)f,f \rangle| \leq \frac{B-A}{B} = 1- \frac{A}{B} < 1.
	\end{equation}
	Therefore, the inverse frame operator can be constructed by a Neumann series (see Appendix \ref{app_Neumann}). As inequalities are preserved under multiplication with positive commuting operators, we also have $A S^{-1} \leq S S^{-1} \leq B S^{-1}$.
	
	The statement on the optimal upper frame bound follows from the frame inequality \eqref{eq_frame} and the fact that the norm of a positive (hence self-adjoint) operator is given by $\norm{S}_{op} = \sup \{\langle S f , f \rangle \mid \norm{f} = 1 \}$. The statement for the optimal lower frame bound follows similarly.
\end{proof}
As a consequence of Proposition \ref{pro_frame_properties} we obtain a first reconstruction formula for $f$ from the frame coefficients.
\begin{corollary}
	Let $\{e_\gamma \mid \gamma \in \Gamma\}$ be a frame with bounds $0 < A \leq B < \infty$. Then $\{S^{-1} e_\gamma \mid \gamma \in \Gamma\}$ is a frame with frame bounds $0 < B^{-1} \leq A^{-1} < \infty$, the so-called canonical dual frame. Now, every $f \in \mathcal{H}$ has an expansion of the form
	\begin{equation}\label{eq_expansion_dual_coeff}
		f = \sum_{\gamma \in \Gamma} \langle f, S^{-1} e_\gamma \rangle e_\gamma
	\end{equation}
	and
	\begin{equation}\label{eq_expansion_dual_window}
		f = \sum_{\gamma \in \Gamma} \langle f, e_\gamma \rangle S^{-1} e_\gamma ,
	\end{equation}
	where both sums converge unconditionally in $\mathcal{H}$.
\end{corollary}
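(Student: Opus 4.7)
My plan is to exploit the fact that the frame operator $S$ is self-adjoint, positive, and invertible (by Proposition~\ref{pro_frame_properties}), and that $S^{-1}$ is therefore also self-adjoint, positive, and bounded. The whole proof will be routine manipulation with these properties together with the unconditional convergence lemma (Lemma~\ref{lem_operator_uncond_conv}).

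First I would verify that the canonical dual system is indeed a frame by computing its frame operator. Using the self-adjointness of $S^{-1}$, I would write
\begin{equation}
\sum_{\gamma \in \Gamma} |\langle f, S^{-1} e_\gamma\rangle|^2 = \sum_{\gamma \in \Gamma} |\langle S^{-1} f, e_\gamma\rangle|^2 = \langle S(S^{-1} f), S^{-1} f\rangle = \langle S^{-1} f, f\rangle,
\end{equation}
where the middle equality uses that $\sum_\gamma |\langle h, e_\gamma\rangle|^2 = \langle Sh, h\rangle$ for all $h \in \mathcal{H}$. Since Proposition~\ref{pro_frame_properties} gives $B^{-1} I_\mathcal{H} \leq S^{-1} \leq A^{-1} I_\mathcal{H}$, this immediately yields the frame inequality
\begin{equation}
B^{-1} \|f\|_\mathcal{H}^2 \leq \sum_{\gamma \in \Gamma} |\langle f, S^{-1} e_\gamma\rangle|^2 \leq A^{-1} \|f\|_\mathcal{H}^2,
\end{equation}
so $\{S^{-1} e_\gamma\}$ is a frame with bounds $B^{-1}, A^{-1}$. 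As a by-product, the calculation also identifies the frame operator of $\{S^{-1} e_\gamma\}$ as $S^{-1}$ itself.

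Next I would derive the two expansion formulas by factoring the identity through $S$. Since $\{e_\gamma\}$ is a Bessel sequence, the series $\sum_\gamma \langle h, e_\gamma\rangle e_\gamma = Sh$ converges unconditionally in $\mathcal{H}$ for every $h$. Applying the bounded operator $S^{-1}$ to this series and invoking Lemma~\ref{lem_operator_uncond_conv} yields
\begin{equation}
f = S^{-1}(Sf) = \sum_{\gamma \in \Gamma} \langle f, e_\gamma\rangle S^{-1} e_\gamma,
\end{equation}
with unconditional convergence, giving \eqref{eq_expansion_dual_window}. For \eqref{eq_expansion_dual_coeff}, I substitute $h = S^{-1} f$ and use self-adjointness of $S^{-1}$:
\begin{equation}
f = S(S^{-1} f) = \sum_{\gamma \in \Gamma} \langle S^{-1} f, e_\gamma\rangle e_\gamma = \sum_{\gamma \in \Gamma} \langle f, S^{-1} e_\gamma\rangle e_\gamma,
\end{equation}
again unconditionally convergent by the Bessel property.

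There is really no hard step here; everything reduces to the spectral bookkeeping encoded in $B^{-1} I \leq S^{-1} \leq A^{-1} I$ and to invoking Lemma~\ref{lem_operator_uncond_conv} to push the bounded operator $S^{-1}$ past the unconditionally convergent sum. The only point that requires a moment of care is making sure that the conversion $\langle S^{-1} f, e_\gamma\rangle = \langle f, S^{-1} e_\gamma\rangle$ (and analogously for the frame operator computation) is justified by the self-adjointness of $S$ and $S^{-1}$ on all of $\mathcal{H}$.
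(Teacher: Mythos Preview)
Your proposal is correct and follows essentially the same approach as the paper's proof: the same computation $\sum_\gamma |\langle f, S^{-1} e_\gamma\rangle|^2 = \langle S^{-1} f, f\rangle$ combined with the operator inequality $B^{-1} I \leq S^{-1} \leq A^{-1} I$ from Proposition~\ref{pro_frame_properties}, and the same factorizations $f = S(S^{-1} f)$ and $f = S^{-1}(Sf)$ for the two expansions. The only cosmetic difference is that the paper justifies unconditional convergence by noting that both coefficient sequences lie in $\ell^2(\Gamma)$, whereas you invoke Lemma~\ref{lem_operator_uncond_conv} for the second expansion; both justifications are equivalent here.
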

\begin{proof}
	First, as $S$ is self-adjoint, observe that
	\begin{equation}
		\sum_{\gamma \in \Gamma} | \langle f, S^{-1} e_\gamma \rangle|^2 = \sum_{\gamma \in \Gamma} | \langle S^{-1} f, e_\gamma \rangle|^2 = \langle S (S^{-1} f), S^{-1} f \rangle = \langle S^{-1} f, f \rangle.
	\end{equation}
	Therefore, Proposition \ref{pro_frame_properties} implies that
	\begin{equation}
		B^{-1} \norm{f}_\mathcal{H}^2 \leq \langle S^{-1} f, f \rangle = \sum_{\gamma \in \Gamma} |\langle f, S^{-1} e_\gamma \rangle|^2 \leq A^{-1} \norm{f}_\mathcal{H}^2 .
	\end{equation}
	Thus, the collection $\{S^{-1} e_\gamma \mid \gamma \in \Gamma \}$ is a frame with bounds $B^{-1}$ and $A^{-1}$.
	Using the factorization $I_\mathcal{H} = S^{-1} S = S S^{-1}$, we obtain the series expansions
	\begin{equation}
		f = S(S^{-1} f) = \sum_{\gamma \in \Gamma} \langle S^{-1} f, e_\gamma \rangle e_\gamma = \sum_{\gamma \in \Gamma} \langle f, S^{-1} e_\gamma \rangle e_\gamma
	\end{equation}
	and
	\begin{equation}
		f = S^{-1} S f = \sum_{\gamma \in \Gamma} \langle f, e_\gamma \rangle S^{-1} e_\gamma .
	\end{equation}
	Because both $(\langle f, e_\gamma\rangle)_{\gamma \in \Gamma}$ and $(\langle f, S^{-1} e_\gamma \rangle)_{\gamma \in \Gamma}$ are in $\ell^2(\Gamma)$, both series converge unconditionally.
\end{proof}

The two reconstruction formulas of $f$ should be compared to orthonormal expansions. On one hand, \eqref{eq_expansion_dual_coeff} provides a non-orthogonal expansion of $f$ with respect to the frame elements $e_\gamma$ with coefficients obtained from the inner products of $f$ with the canonical dual frame. On the other hand, \eqref{eq_expansion_dual_window} is a reconstruction of $f$ from the measurements with respect to the frame, i.e., from the frame coefficients, with the elements of the canonical dual frame as expanding functions. For orthonormal basis and tight frames these two aspects -- series expansion with respect to a set of vectors and reconstruction from inner products -- coincide. However, in contrast to orthonormal bases, the coefficients in a frame expansion of type \eqref{eq_expansion_dual_coeff} are in general not unique. The coefficients $\langle f, S^{-1} e_\gamma \rangle$ are canonical in the following sense.
\begin{proposition}
	If $\{ e_\gamma \mid \gamma \in \Gamma \}$ is a frame for $\mathcal{H}$ and
	\begin{equation}
		f = \sum_{\gamma \in \Gamma} c_\gamma e_\gamma
	\end{equation}
	for some coefficients $c \in \ell^2(\Gamma)$, then
	\begin{equation}
		\sum_{\gamma \in \Gamma} |c_\gamma|^2 \geq \sum_{\gamma \in \Gamma} | \langle f, S^{-1} e_\gamma \rangle |^2 ,
	\end{equation}
	with equality if and only if $c_\gamma = \langle f, S^{-1} e_\gamma \rangle$ for all $\gamma \in \Gamma$.
\end{proposition}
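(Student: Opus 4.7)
The plan is the standard orthogonal decomposition argument: write the given coefficient sequence as the canonical one plus an error term, and show that the error lives in the kernel of the synthesis operator, which is orthogonal (in $\ell^2(\Gamma)$) to the canonical sequence.

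First, I would set $a_\gamma = \langle f, S^{-1} e_\gamma \rangle$ and observe, using self-adjointness of $S^{-1}$, that $a_\gamma = \langle S^{-1} f, e_\gamma \rangle$. In operator language this reads $a = C(S^{-1}f)$, so $a$ belongs to the range of the analysis operator $C = D^*$. By Proposition \ref{pro_frame_properties} and the preceding discussion, $S^{-1} f \in \mathcal{H}$ is well defined and $(a_\gamma)_{\gamma \in \Gamma} \in \ell^2(\Gamma)$.

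Next, I would verify that $D a = f$. Indeed,
\begin{equation}
D a = \sum_{\gamma \in \Gamma} \langle S^{-1} f, e_\gamma \rangle e_\gamma = S(S^{-1} f) = f,
\end{equation}
where the first equality uses the definition of $D$ (together with unconditional convergence since $a \in \ell^2(\Gamma)$ and $\{e_\gamma\}$ is Bessel), and the second equality is the definition of the frame operator. Since by hypothesis $D c = f$, subtracting gives $D(c - a) = 0$, i.e.\ $c - a \in \ker D$.

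The final step is to note that $\ker D = \ker C^* = (\mathrm{Range}\, C)^\perp$ in $\ell^2(\Gamma)$. Because $a = C(S^{-1} f) \in \mathrm{Range}\, C$ and $c - a \in (\mathrm{Range}\, C)^\perp$, the vectors $a$ and $c - a$ are orthogonal in $\ell^2(\Gamma)$. By the Pythagorean identity,
\begin{equation}
\sum_{\gamma \in \Gamma} |c_\gamma|^2 = \|c\|_{\ell^2}^2 = \|a\|_{\ell^2}^2 + \|c - a\|_{\ell^2}^2 = \sum_{\gamma \in \Gamma} |\langle f, S^{-1} e_\gamma \rangle|^2 + \|c - a\|_{\ell^2}^2,
\end{equation}
which gives the desired inequality, with equality if and only if $\|c - a\|_{\ell^2} = 0$, i.e.\ $c_\gamma = \langle f, S^{-1} e_\gamma \rangle$ for all $\gamma \in \Gamma$.

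There is no real obstacle here; the only subtle point is to recognize that the canonical coefficient sequence $a$ coincides with $C$ applied to $S^{-1} f$, so that the splitting $c = a + (c - a)$ is the orthogonal decomposition of $c$ with respect to the closed subspace $\mathrm{Range}\, C \subset \ell^2(\Gamma)$ and its orthogonal complement $\ker D$.
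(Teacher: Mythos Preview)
Your proof is correct and follows essentially the same strategy as the paper: establish that the canonical coefficients $a$ and the difference $c-a$ are orthogonal in $\ell^2(\Gamma)$, then apply the Pythagorean identity. The only difference is cosmetic: the paper verifies orthogonality by the direct computation $\langle c, a\rangle_{\ell^2} = \langle f, S^{-1} f\rangle = \|a\|_{\ell^2}^2$ (evaluating $\langle f, S^{-1} f\rangle$ two ways), whereas you invoke the abstract identity $\ker D = (\mathrm{Range}\, C)^\perp$ to reach the same conclusion.
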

\begin{proof}
	Set $a_\gamma = \langle f, S^{-1} e_\gamma \rangle$. Then, $f = \sum_{\gamma \in \Gamma} a_\gamma e_\gamma$ and
	\begin{equation}
		\langle f, S^{-1} f \rangle = \sum_{\gamma \in \Gamma} a_\gamma \langle e_\gamma, S^{-1} f \rangle = \sum_{\gamma \in \Gamma} |a_\gamma|^2 .
	\end{equation}
	On the other hand,
	\begin{equation}
		\langle f, S^{-1} f \rangle = \sum_{\gamma \in \Gamma} c_\gamma \langle e_\gamma, S^{-1} f \rangle = \sum_{\gamma \in \Gamma} c_\gamma \overline{a_\gamma} = \langle c, a \rangle_{\ell^2}
	\end{equation}
	Therefore, we see that $\norm{a}_{\ell^2}^2 = \langle c, a \rangle_{\ell^2}$. Now, we compute
	\begin{align}
		\norm{c}_{\ell^2}^2 & = \norm{c - a + a}_{\ell^2}^2\\
		& = \norm{c - a}_{\ell^2}^2 + \norm{a}_{\ell^2}^2 + \langle c-a, a \rangle_{\ell^2} + \langle a, c - a \rangle_{\ell^2}\\
		& = \norm{c - a}_{\ell^2}^2 + \norm{a}_{\ell^2}^2 \geq \norm{a}_{\ell^2}^2 ,
	\end{align}
	with equality if and only if $c = a$.
\end{proof}

\begin{lemma}\label{lem_ONB_tight_frame}
	\begin{enumerate}[(a)]
		\item Let $\{ e_\gamma \mid \gamma \in \Gamma \}$ be a tight frame with frame bounds $A=B=1$ and $\norm{e_\gamma}_\mathcal{H} = 1$ for all $\gamma \in \Gamma$. Then $\{ e_\gamma \mid \gamma \in \Gamma \}$ is an orthonormal basis for $\mathcal{H}$.
		\item If $\{e_\gamma \mid \gamma \in \Gamma \}$ is a frame, then $\{ S^{-1/2} e_\gamma \mid e_\gamma \}$ is a tight frame with frame bounds $A=B=1$.
		\item Let $\{e_\gamma \mid \gamma \in \Gamma \}$ be a frame. Then the inverse frame operator is given by
		\begin{equation}
			S^{-1} f = \sum_{\gamma \in \Gamma} \langle f, S^{-1} e_\gamma \rangle S^{-1} e_\gamma .
		\end{equation}
		Thus, $S^{-1}$ is the frame operator associated to the canonical dual frame $\{ S^{-1} e_\gamma \mid \gamma \in \Gamma \}$.
	\end{enumerate}
\end{lemma}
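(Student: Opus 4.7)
The plan is to handle the three parts in reverse order of depth: (a) reduces to one clever choice of test vector, (b) is a short computation using functional calculus for the positive invertible operator $S$, and (c) falls out by symmetry from the definition of the dual frame.

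For part (a), my plan is to exploit the Parseval-type identity $\sum_\gamma |\langle f, e_\gamma\rangle|^2 = \|f\|_\mathcal{H}^2$ (which is the tight frame condition with $A=B=1$) by testing it on an individual frame vector $e_{\gamma_0}$. Since $\|e_{\gamma_0}\|_\mathcal{H}=1$, the identity becomes
\begin{equation}
1 = |\langle e_{\gamma_0}, e_{\gamma_0}\rangle|^2 + \sum_{\gamma \neq \gamma_0} |\langle e_{\gamma_0}, e_\gamma\rangle|^2 = 1 + \sum_{\gamma \neq \gamma_0} |\langle e_{\gamma_0}, e_\gamma\rangle|^2,
\end{equation}
forcing all cross-inner-products to vanish. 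This gives orthonormality. For completeness, if $f \perp e_\gamma$ for all $\gamma$, then $\|f\|_\mathcal{H}^2 = \sum_\gamma |\langle f, e_\gamma\rangle|^2 = 0$, hence $f=0$, so $\{e_\gamma\}$ spans a dense subspace (and being orthonormal, it is an orthonormal basis).

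For part (b), I would use the fact that $S$ is positive and invertible (Proposition \ref{pro_frame_properties}), so by the standard functional calculus (or via the Neumann series referred to earlier) $S^{-1/2}$ exists, is positive and self-adjoint, and commutes with $S$ and $S^{-1}$. Let $\widetilde S$ denote the frame operator associated with $\{S^{-1/2}e_\gamma\}$. For any $f \in \mathcal{H}$, using self-adjointness of $S^{-1/2}$ and the unconditional convergence that allows the bounded operator $S^{-1/2}$ to be pulled inside the sum (Lemma \ref{lem_operator_uncond_conv}), one computes
\begin{align}
\widetilde{S} f &= \sum_{\gamma \in \Gamma} \langle f, S^{-1/2} e_\gamma \rangle S^{-1/2} e_\gamma = S^{-1/2} \sum_{\gamma \in \Gamma} \langle S^{-1/2} f, e_\gamma \rangle e_\gamma = S^{-1/2} S S^{-1/2} f = f.
\end{align}
Thus $\widetilde S = I_\mathcal{H}$, and rewriting this as $\langle \widetilde S f, f\rangle = \|f\|_\mathcal{H}^2$ is exactly the tight frame condition with $A=B=1$. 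I should also briefly verify Bessel bound (upper frame bound) to ensure $\{S^{-1/2}e_\gamma\}$ is a bona fide frame, but this follows from the identity $\widetilde S = I$ itself.

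For part (c), the cleanest route is to apply part of the previous corollary to the canonical dual frame $\{S^{-1}e_\gamma\}$ and identify its frame operator. Using self-adjointness of $S^{-1}$ and Lemma \ref{lem_operator_uncond_conv} to move $S^{-1}$ in and out of the sum, I compute
\begin{equation}
\sum_{\gamma \in \Gamma} \langle f, S^{-1} e_\gamma \rangle S^{-1} e_\gamma = S^{-1}\sum_{\gamma \in \Gamma} \langle S^{-1} f, e_\gamma \rangle e_\gamma = S^{-1} S S^{-1} f = S^{-1} f,
\end{equation}
which is exactly the claim. I expect no serious obstacle; the only point requiring a bit of care is justifying the interchange of the bounded operator with the infinite sum in parts (b) and (c), which is precisely what Lemma \ref{lem_operator_uncond_conv} combined with the unconditional convergence of the frame expansion provides.
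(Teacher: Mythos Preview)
Your proposal is correct and follows essentially the same argument as the paper: in (a) you test the Parseval identity on a single frame element to force orthogonality (and you add the completeness observation, which the paper leaves implicit in the frame definition); in (b) and (c) you factor $I = S^{-1/2} S S^{-1/2}$ and $S^{-1} = S^{-1} S S^{-1}$ just as the paper does. The only cosmetic difference is that the paper takes the inner product $\langle f,f\rangle$ after the expansion in (b), while you identify the frame operator $\widetilde S = I$ directly, but these are the same computation.
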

\begin{proof}
	\textit{(a)}: By the frame inequality \eqref{eq_frame} we have
	\begin{equation}
		1 = \norm{e_{\gamma'}}_\mathcal{H}^2 = \sum_{\gamma \in \Gamma} | \langle e_{\gamma'}, e_\gamma \rangle|^2 = 1 + \sum_{\gamma \neq \gamma'} |\langle e_{\gamma'}, e_\gamma \rangle|^2 .
	\end{equation}
	Consequently, $\langle e_{\gamma'}, e_\gamma \rangle = \delta_{\gamma', \gamma}$.
	
	\textit{(b)}: 
	Note that since $S$ is a positive operator, the operator $S^{-1/2}$ is well-defined and positive by the spectral theorem for bounded, self-adjoint operators. Writing $f$ as
	\begin{equation}
		f = S^{-1/2} S (S^{-1/2} f) = \sum_{\gamma \in \Gamma} \langle f, S^{-1/2} e_\gamma \rangle S^{-1/2} e_\gamma ,
	\end{equation}
	we obtain
	\begin{equation}
		\norm{f}_\mathcal{H}^2 = \langle f, f \rangle = \sum_{\gamma \in \Gamma} | \langle f, S^{-1/2} e_\gamma \rangle|^2 .
	\end{equation}
	
	\textit{(c)}: We note that
	\begin{equation}
		S^{-1} f = S^{-1} S (S^{-1} f) = \sum_{\gamma \in \Gamma} \langle f, S^{-1} e_\gamma \rangle S^{-1} e_\gamma .
	\end{equation}
\end{proof}
We note that the elements of the tight frame $\{ S^{-1/2} e_\gamma \mid \gamma \in \Gamma \}$ need not be normalized in general, therefore, it need not be an orthonormal basis.

\subsection{Gabor Systems}
We have now settled the theory of frames in an abstract setting. Now, we will return to the special collection of time-frequency shifts of a window function and sampling of the STFT. This will yield a discrete representation of a function $f \in \Lt$ where we may also put a physical interpretation on the coefficients, as desired.

We re-call the definition of a time-frequency shift.
\begin{definition}
	Let $\gamma = (x,\omega) \in \Rd \times \Rd$. A time-frequency shift by $\gamma$ is denoted by
	\begin{equation}
		\pi(\gamma) = M_\omega T_x .
	\end{equation}
\end{definition}
Next, we define Gabor systems and Gabor frames.
\begin{definition}
	Let $g \in \Lt$ (non-zero) and consider a countable set $\Gamma \subset \R^{2d}$. The collection
	\begin{equation}
		\G (g, \Gamma) = \{ \pi(\gamma) g \mid \gamma \in \Gamma\}
	\end{equation}
	is called a Gabor system. If $\G(g,\Gamma)$ is a frame for $\Lt$, i.e., the frame inequality
	\begin{equation}
		A \norm{f}_2^2 \leq \sum_{\gamma \in \Gamma} |\langle f, \pi(\gamma) g \rangle|^2 = \sum_{\gamma \in \Gamma} |V_g f(\gamma)|^2 \leq B \norm{f}_2^2, \qquad \forall f \in \Lt,
	\end{equation}
	is fulfilled for some constants $0 < A \leq B < \infty$, then it is called a Gabor frame.
\end{definition}
In short, we may as well write $\norm{V_g f(\gamma)}_{\ell^2(\Gamma)} \asymp \norm{f}_{L^2(\Rd)}$. So, the energy of the coefficients extracted from the measurements of the STFT is comparable to the energy of the signal.

We note that Gabor systems are sometimes (inaccurately) also called Weyl-Heisenberg systems\footnote{Weyl-Heisenberg systems are derived from  the so-called Weyl-Heisenberg operators, which yield symmetric time-frequency shifts. In general, the elements of a Gabor system and of a Weyl-Heisenberg system differ by (exactly determinable) phase factors. Nonetheless, both systems have the same properties, in particular, if one system is a frame, so is the other.}. The Gabor frame operator is given by
\begin{equation}
	S_{g,\Gamma} f = \sum_{\gamma \in \Gamma} \langle f, \pi(\gamma) g \rangle \ \pi(\gamma) g.
\end{equation}

\subsection{Gabor Systems over Lattices and Point Sets}

We have introduced Gabor systems for general (countable) point sets in $\Gamma \subset \R^{2d}$. However, there are comparably few results on Gabor systems and frames for general point sets compared to the case where the set possesses a group structure, i.e., when the set is a lattice.

\begin{definition}
	A (full-rank) lattice $\L$ in $\Rd$ is a discrete, co-compact subgroup of $\Rd$. That is, it can be represented by an invertible matrix $M \in GL(\R,d)$;
	\begin{equation}
		\L = \{ k_1 v_1 + \ldots + k_d v_d \mid v_1, \ldots , v_d \in \Rd, \ k_1, \ldots , k_d \in \Z \} = M \Z^d,
	\end{equation}
	where the matrix $M$ has columns $v_1, \ldots, v_d$;
	\begin{equation}
	 M = \begin{pmatrix}
		v_{1,1} & v_{2,1} & & v_{d,1}\\
		\vdots & \vdots & \cdots & \vdots\\
		v_{1,d} & v_{2,d} & & v_{d,d}
	 \end{pmatrix}
	\end{equation}
	In particular, the vectors $v_1 , \ldots , v_d$ constitute a basis for $\Rd$.
\end{definition}

We note that the matrix representing a lattice is not unique. This is owed to the fact that we may choose from countably many bases (so-called unimodular matrices) for $\Z^d$. However, there is a characteristic number, called the volume of the lattice, which is unique.
\begin{definition}
	The volume of the lattice $\L = M \Z^d \subset \Rd$ is given by
	\begin{equation}
		\vol(\L) = |\det(M)|.
	\end{equation}
	The reciprocal of the volume is called the density of the lattice
	\begin{equation}
		\delta(\L) = \frac{1}{\vol(\L)}.
	\end{equation}
\end{definition}
Let $\L$ be a lattice in $\R^{2d}$ and $g \in \Lt$. If $\G(g,\L) = \{ \pi(\l) g \mid \l \in \L \}$ is the resulting Gabor system, then we say that the system has redundancy $red(\G) = \delta(\L)$. This measures the grade of overcompleteness of the system. 

The notion of density is one of the most important concepts in Gabor analysis. A lot of (no-go) results for the characterization of Gabor frames are formulated by density results. The density of a lattice can also be generalized to arbitrary point sets by means of the Beurling density.
\begin{definition}
	Let $\Gamma \subset \Rd$ be a point set and let $\mathbf{B}_R(x)$ be the ball of radius $R$ centered at $x$. The lower and upper Beurling density of $\Gamma$ are given by
	\begin{equation}
		D^-(\Gamma) = \liminf_{R \to \infty} \frac{\min_{x \in \Rd} \# (\Gamma \cap \mathbf{B}_R(x))}{\vol(\mathbf{B}_R(x))}
	\end{equation}
	and
	\begin{equation}
		D^+(\Gamma) = \limsup_{R \to \infty} \frac{\max_{x \in \Rd} \# (\Gamma \cap \mathbf{B}_R(x))}{\vol(\mathbf{B}_R(x))},
	\end{equation}
	respectively.
\end{definition}
If the point set is a lattice $\L$, then we have
\begin{equation}
	\delta(\L) = D^-(\L) = D^+(\L).
\end{equation}
Also, we note that in the definition of the Beurling density, the balls $\mathbf{B}_R(x)$ could, e.g., be replaced by cubes without changing the values of the densities. The notion of Beurling density is particularly important for non-uniform Gabor systems, i.e., where the index set $\Gamma \subset \R^{2d}$ does not possess a group structure.

If $\Gamma \subset \R^{2d}$ is not a lattice, then we say that the Gabor system $\G(g, \Gamma) = \{\pi(\gamma) \mid \gamma \in \Gamma \}$ is a non-uniform Gabor system. The analysis of such systems is rather difficult as, in general, we cannot exploit any group structure.

We will now return to the study of lattices.
\begin{definition}\label{def_dual_lattice}
	Let $\L =M \Z^d \subset \Rd$ be a lattice. Then, its dual lattice is given by the set $\L^\perp$ of all vectors $v^\perp$ such that $v^\perp \cdot v \in \Z$ for all $v \in \L$.
\end{definition}
We can also find an explicit matrix representation for the dual lattice.
\begin{equation}
		\L^\perp = M^{-T} \Z^d .
	\end{equation}
It is easy to see that
\begin{equation}\label{def_adjoint_lattice}
	\vol(\L^\perp) = |\det(M^{-T})| = |\det(M^{-1})| = \frac{1}{\vol(\L)} = \delta(\L) .
\end{equation}
Further, we note that if $\l \in \L$ and $\l^\perp \in \L^\perp$, then $e^{2 \pi i \l^\perp \cdot \l} = 1$, by the definition of the dual lattice. Also, $(\L^\perp)^\perp = \L$. However, in time-frequency analysis it is often of greater use to work with the adjoint lattice instead of the dual lattice.
\begin{definition}
	Let $\L \subset \R^{2d}$ be a lattice and let $\L^\circ \subset \R^{2d}$ be its adjoint lattice. Then a point $\l^\circ = (x^\circ, \omega^\circ) \in \R^{2d}$ belongs to $\L^\circ$ if and only if
	\begin{equation}
		\pi(\l)\pi(\l^\circ) = \pi(\l^\circ) \pi(\l)
	\end{equation}
	for all $\l \in \L$.
\end{definition}
We note that the adjoint lattice is only defined for even dimensions. This results from the fact that we deal with lattices in the time-frequency plane and treat two variables simultaneously.

Having the group structure of the lattice at hand, the canonical dual frame and its frame operator are particularly nice to compute.
\begin{proposition}\label{pro_canonical_dual_Gabor}
	Let $\L \subset \R^{2d}$ be a lattice and let $\G(g,\L)$ be a frame for $\Lt$. Then, there exists a dual window $\widetilde{g} \in \Lt$ such that the canonical dual frame of $\G(g,\L)$ is $\G(\widetilde{g}, \L)$. Consequently, every $f \in \Lt$ possesses an expansion of the form
	\begin{align}
		f & = \sum_{\l \in \L} \langle f, \pi(\l) g \rangle \ \pi(\l) \widetilde{g} \\
		& = \sum_{\l \in \L} \langle f, \pi(\l) \widetilde{g} \rangle \ \pi(\l) g ,	
	\end{align}
	with unconditional convergence in $\Lt$. Further, the following norm equivalences hold:
	\begin{equation}
		A \norm{f}_2^2 \leq \sum_{\l \in \L} |V_g f(\l)|^2 \leq B \norm{f}_2^2,
	\end{equation}
	\begin{equation}
		B^{-1} \norm{f}_2^2 \leq \sum_{\l \in \L} |V_{\widetilde{g}} f(\l)|^2 \leq A^{-1} \norm{f}_2^2 ,
	\end{equation}
	for all $f \in \Lt$.
\end{proposition}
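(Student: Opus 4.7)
The plan is to exploit the group structure of the lattice $\L$ to show that the Gabor frame operator
\begin{equation}
S_{g,\L} f = \sum_{\l \in \L} \langle f, \pi(\l) g \rangle \, \pi(\l) g
\end{equation}
commutes with every time-frequency shift $\pi(\mu)$ for $\mu \in \L$. Once we establish this, the canonical dual frame simplifies dramatically: since $S_{g,\L}^{-1}$ inherits the commutation property (via $S^{-1} \pi(\mu) = S^{-1}\pi(\mu) S S^{-1} = S^{-1} S \pi(\mu) S^{-1} = \pi(\mu) S^{-1}$), we can define $\widetilde{g} := S_{g,\L}^{-1} g$ and obtain $S_{g,\L}^{-1} \pi(\l) g = \pi(\l) S_{g,\L}^{-1} g = \pi(\l) \widetilde{g}$, which means the canonical dual frame is $\G(\widetilde{g}, \L)$.

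To prove the commutation $\pi(\mu) S_{g,\L} = S_{g,\L} \pi(\mu)$ for $\mu \in \L$, I would compute $S_{g,\L}(\pi(\mu) f)$ directly. Using $\pi(\mu)^* = \pi(\mu)^{-1}$ together with the composition law $\pi(\lambda_1)\pi(\lambda_2) = e^{-2 \pi i x_1 \cdot \omega_2} \pi(\lambda_1 + \lambda_2)$ that follows from \eqref{eq_comm_rel}, one can write $\pi(\mu)^{-1} \pi(\l) = c(\mu,\l) \pi(\l - \mu)$ for an explicit unimodular phase factor $c(\mu,\l)$. Reindexing the sum via $\l \mapsto \l + \mu$, which is legal precisely because $\L$ is closed under addition, a second application of the composition law produces a phase factor $c(\mu,\l+\mu)$ that exactly cancels with $c(\mu,\l)$, leaving $\pi(\mu) S_{g,\L} f$. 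Bookkeeping of these phase factors is the main technical step; the cancellation is the reason why this works for lattices but not for arbitrary point sets $\Gamma \subset \R^{2d}$.

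With commutation in hand, the expansions follow immediately from the abstract frame identities \eqref{eq_expansion_dual_coeff} and \eqref{eq_expansion_dual_window} applied to the frame $\G(g,\L)$ with canonical dual $\G(\widetilde{g},\L) = \{S_{g,\L}^{-1}\pi(\l)g\}_{\l \in \L}$, and the unconditional convergence is inherited from the fact that $(\langle f,\pi(\l)g\rangle)_{\l\in\L}$ and $(\langle f,\pi(\l)\widetilde{g}\rangle)_{\l\in\L}$ lie in $\ell^2(\L)$ (so the Bessel-sequence corollary applies). Finally, the norm equivalences are a direct restatement of the frame inequality for $\G(g,\L)$ and the already-proved fact (Proposition \ref{pro_frame_properties} together with its corollary) that the canonical dual frame has bounds $B^{-1} \leq A^{-1}$; no new work is needed beyond observing that $\sum_{\l \in \L} |V_{\widetilde g}f(\l)|^2 = \sum_{\l \in \L} |\langle f, \pi(\l)\widetilde g\rangle|^2$ is exactly the frame expansion functional for the dual frame.
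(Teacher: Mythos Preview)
Your proposal is correct and follows essentially the same route as the paper: prove that $S_{g,\L}$ commutes with $\pi(\mu)$ for $\mu\in\L$ via the lattice reindexing and phase-factor cancellation, deduce the same for $S_{g,\L}^{-1}$, set $\widetilde g = S_{g,\L}^{-1} g$, and invoke the general frame corollaries for the expansions and norm equivalences. The only cosmetic difference is that the paper computes $\pi(\l')^{-1} S_{g,\L}\,\pi(\l')$ and observes that the phase factors cancel as complex conjugates, whereas you reindex $S_{g,\L}(\pi(\mu)f)$ directly; both arguments are equivalent.
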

\begin{proof}
	We start by noting that the Gabor frame operator commutes with the time-frequency shifts $\pi(\l)$. Given $f \in \Lt$ and $\l' \in \L$, we compute
	\begin{align}
		\pi(\l')^{-1} S_{g,\L} \, \pi(\l') f
		& = \sum_{\l \in \L} \langle \pi(\l') f, \pi(\l) g \rangle \, \pi(\l')^{-1} \pi(\l) g\\
		& = \sum_{\l \in \L} \langle f, \pi(\l')^{-1} \pi(\l) g \rangle \, \pi(\l')^{-1} \pi(\l) g\\
		& = \sum_{\l \in \L} \langle f, \pi(\l-\l') g \rangle \, \pi(\l-\l') g\\
		& = S_{g,\L} f.
	\end{align}
	Note that, when merging $\pi(\l')^{-1}\pi(\l)$, the appearing phase factors cancel out as they appear as complex conjugates and that $\l-\l' \in \L$.
	
	Consequently, the inverse frame operator $S_{g, \L}^{-1}$ commutes with these time-frequency shifts as well. The canonical dual frame consists of functions of the form
	\begin{equation}
		S_{g,\L}^{-1} \pi(\l) g = \pi(\l) S_{g,\L}^{-1} g.
	\end{equation}
	We take $\widetilde{g} = S_{g,\L}^{-1} g$. The other assertions follow from the general theory for frames in a Hilbert space and have already been proven.
\end{proof}
The function
\begin{equation}
	\widetilde{g} = S_{g,\L}^{-1} g
\end{equation}
is called the canonical dual window to $g$. A simple consequence, following from the general theory and Proposition \ref{pro_canonical_dual_Gabor}, is the following.
\begin{corollary}
	Let $\L \subset \R^{2d}$ be a lattice and let $\G(g, \L)$ be a Gabor frame for $\Lt$ with canonical dual window $\widetilde{g} = S_{g,\L}^{-1} g$. Then, the inverse frame operator is given by
	\begin{equation}
		S_{g,\L}^{-1} f = \sum_{\l \in \L} \langle f, \pi(\l) \widetilde{g} \rangle \ \pi(\l) \widetilde{g} .
	\end{equation}
\end{corollary}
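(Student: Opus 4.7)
The plan is to combine two results already established: part (c) of Lemma \ref{lem_ONB_tight_frame} (which identifies the inverse of an abstract frame operator as the frame operator of the canonical dual frame) together with Proposition \ref{pro_canonical_dual_Gabor} (which identifies the canonical dual frame of a Gabor frame over a lattice as another Gabor system with window $\widetilde{g} = S_{g,\L}^{-1} g$). Once these are in hand, essentially no computation is required.

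First I would invoke Lemma \ref{lem_ONB_tight_frame}(c) with the index set $\Gamma = \L$ and frame elements $e_\l = \pi(\l) g$, which gives the expansion
\begin{equation}
	S_{g,\L}^{-1} f = \sum_{\l \in \L} \langle f, S_{g,\L}^{-1} \pi(\l) g \rangle \, S_{g,\L}^{-1} \pi(\l) g,
\end{equation}
with unconditional convergence in $\Lt$. Next, I would recall that, in the proof of Proposition \ref{pro_canonical_dual_Gabor}, it was shown that the Gabor frame operator $S_{g,\L}$ commutes with every time-frequency shift $\pi(\l)$, $\l \in \L$, hence so does its inverse. Consequently,
\begin{equation}
	S_{g,\L}^{-1} \pi(\l) g = \pi(\l) S_{g,\L}^{-1} g = \pi(\l) \widetilde{g}, \qquad \l \in \L.
\end{equation}
Substituting this identity into both occurrences of $S_{g,\L}^{-1} \pi(\l) g$ in the expansion above yields the claimed formula
\begin{equation}
	S_{g,\L}^{-1} f = \sum_{\l \in \L} \langle f, \pi(\l) \widetilde{g} \rangle \, \pi(\l) \widetilde{g}.
\end{equation}
Since $\G(\widetilde{g}, \L)$ is itself a Gabor frame (with bounds $B^{-1}$ and $A^{-1}$, as noted in Proposition \ref{pro_canonical_dual_Gabor}), the sequence $(\langle f, \pi(\l) \widetilde{g} \rangle)_{\l \in \L}$ lies in $\ell^2(\L)$, and the series converges unconditionally in $\Lt$, completing the proof.

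There is really no obstacle here: the entire content has been prepared by the two preceding results. The only subtlety worth pointing out is that the commutation $S_{g,\L}^{-1} \pi(\l) = \pi(\l) S_{g,\L}^{-1}$ is crucial and relies on $\l \in \L$; it would fail for a non-uniform index set, which is why the statement is restricted to the lattice case.
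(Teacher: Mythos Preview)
Your proof is correct and follows exactly the approach the paper indicates: the corollary is stated as ``a simple consequence, following from the general theory and Proposition \ref{pro_canonical_dual_Gabor}'', and you have spelled out precisely that combination---Lemma \ref{lem_ONB_tight_frame}(c) for the abstract inverse frame operator together with the commutation $S_{g,\L}^{-1}\pi(\l)=\pi(\l)S_{g,\L}^{-1}$ from Proposition \ref{pro_canonical_dual_Gabor}.
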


\section{The Symplectic and the Metaplectic Group}\label{sec_symp_meta}
In this section, we will define the symplectic and the metaplectic group and extract some of their features which are useful in time-frequency analysis. However, we will not go into depth and some statements will be given without proof as this would go beyond the scope of the course.
\subsection{The Symplectic Group}
Computing the commutator of two time-frequency shifts yields
\begin{equation}
	[\pi(\l'), \pi(\l)] = \left(1 - e^{2 \pi i (\omega \cdot x' - x \cdot \omega')}\right) \pi(\l') \pi(\l), \qquad \l = (x,\omega), \ \l' = (x', \omega').
\end{equation}
We note that if $\l \in \L$ and $\l^\circ \in \L^\circ$, where $\L$ and $\L^\circ$ are adjoint lattices to one-another, then the commutator vanishes. By introducing the skew-symmetric form
\begin{equation}
	\sigma(\l',\l) = \l' \cdot J \l,
\end{equation}
where
$J = \begin{pmatrix}
	0 & I\\
	-I & 0
\end{pmatrix}$ and $I$ is the identity matrix in $\R^{d \times d}$, we can write the commutator in the following way;
\begin{equation}
	[\pi(\l'), \pi(\l)] = \left(1 - e^{2 \pi i \sigma(\l', \l)}\right) \pi(\l') \pi(\l).
\end{equation}
The form $\sigma$ is called the standard symplectic form and $J$ is called the standard symplectic matrix. Note that $J^T = J^{-1}$ and $J^2 = -I$. Clearly, $\sigma(\l,\l) = 0$ and $\sigma(\l',\l) = - \sigma(\l, \l')$.

The notion of the symplectic form motivates the following definition.
\begin{definition}
	A matrix $S \in GL(\R,2d)$ is called symplectic, if it preserves the symplectic form $\sigma$, i.e.,
	\begin{equation}
		\sigma(S\l',S\l) = \sigma(\l',\l).
	\end{equation}
\end{definition}
From the above definition, we see that an equivalent definition of symplectic matrices is given by the characterization
\begin{equation}
	S^T J S = J.
\end{equation}
From the above characterization, it readily follows that $\det(S) = \pm 1$ if $S \in Sp(\R,2d)$, which in particular implies that $S$ is invertible.

We will now show that symplectic matrices actually form a group under matrix multiplication, denoted by $Sp(\R,2d)$\footnote{The notation for the symplectic group is not consistent in the literature. It may as well be denoted by $Sp(\R,d)$.}.
\begin{proposition}
	The set of symplectic matrices forms a group under matrix multiplication.
\end{proposition}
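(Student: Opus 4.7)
The plan is to verify the four group axioms—closure, associativity, identity, and inverses—using the characterization $S \in Sp(\R,2d) \iff S^T J S = J$ that was derived just before the statement. Associativity is inherited for free from matrix multiplication, so the work reduces to the remaining three axioms.

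For the identity, I would simply observe that $I^T J I = J$, so $I \in Sp(\R,2d)$. For closure, given $S_1, S_2 \in Sp(\R,2d)$, I would compute
\begin{equation}
(S_1 S_2)^T J (S_1 S_2) = S_2^T (S_1^T J S_1) S_2 = S_2^T J S_2 = J,
\end{equation}
which shows $S_1 S_2 \in Sp(\R,2d)$.

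The one step that is slightly more than a direct rewrite is showing that inverses exist and stay in the group. Invertibility of $S$ is already recorded in the excerpt via $\det(S) = \pm 1$, so $S^{-1}$ makes sense. To see $S^{-1}$ is symplectic, I would start from $S^T J S = J$, multiply both sides on the right by $S^{-1}$ to obtain $S^T J = J S^{-1}$, and then multiply on the left by $(S^T)^{-1} = (S^{-1})^T$ to get
\begin{equation}
J = (S^{-1})^T J S^{-1},
\end{equation}
which is precisely the defining condition for $S^{-1} \in Sp(\R,2d)$.

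The main (minor) obstacle is only the bookkeeping in the last step: one must remember that $(S^T)^{-1} = (S^{-1})^T$ and be careful about the order in which one multiplies from the left versus the right. Beyond that, the proof is a short sequence of algebraic manipulations with no analytical subtleties.
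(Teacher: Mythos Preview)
Your proof is correct and follows essentially the same approach as the paper: the closure computation is identical, and your chain $S^T J S = J \Rightarrow S^T J = J S^{-1} \Rightarrow J = (S^{-1})^T J S^{-1}$ is exactly the paper's derivation (written there as $S^{-T} J S^{-1} = J$).
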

\begin{proof}
	Let $S$ and $S'$ be symplectic. Then
	\begin{equation}
		(S S')^T J S S' = S'^T (S^T J S) S' = S'^T J S' = J.
	\end{equation}
	So, the matrix $S S' \in Sp(\R,2d)$. Clearly, matrix multiplication is associative and the identity matrix is symplectic. So, we need to check that the inverse matrix $S^{-1} \in Sp(\R,2d)$. We have
	\begin{equation}
		S^T J S = J \; \Leftrightarrow \; S^T J = J S^{-1} \; \Leftrightarrow \; J = S^{-T} J S^{-1}.
	\end{equation}
\end{proof}
\begin{proposition}
	A matrix $S$ is symplectic if and only if $S^T$ is symplectic.
\end{proposition}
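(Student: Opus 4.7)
The plan is to show the forward implication directly and obtain the converse for free by applying the forward implication to $S^T$ (since $(S^T)^T = S$). Concretely, assume $S$ is symplectic, i.e., $S^T J S = J$. I want to derive $S J S^T = J$, which is exactly the symplectic condition for $S^T$.

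First I would observe that $S$ is invertible: since $\det(J) \ne 0$, taking determinants in $S^T J S = J$ gives $\det(S)^2 = 1$, so $S^{-1}$ exists. Combined with the identity $J^{-1} = -J$ (immediate from $J^2 = -I$), the relation $S^T J S = J$ can be rearranged into an explicit formula for $S^{-1}$. Multiplying $S^T J S = J$ on the right by $S^{-1}$ and on the left by $J^{-1} = -J$ yields
\begin{equation}
	S^{-1} = -J\, S^T J.
\end{equation}

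Next, I would plug this formula into the identity $S S^{-1} = I$. This gives $-S J S^T J = I$, hence
\begin{equation}
	S J S^T = -J^{-1} = J,
\end{equation}
which is precisely the statement that $S^T$ is symplectic. For the converse, if $S^T$ is symplectic, then applying what we just proved to the matrix $S^T$ in place of $S$ shows that $(S^T)^T = S$ is symplectic.

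I do not anticipate a real obstacle here; the only subtlety is keeping track of signs via $J^{-1} = -J$, and making sure $S$ is invertible before performing the algebraic manipulations (which is guaranteed by the determinant argument, or alternatively by the proof of the preceding proposition that established $Sp(\R,2d)$ is a group). An even shorter route, if preferred, is to read off $S^{-1} = -J S^T J$ directly from the proof of the previous proposition and then just verify $S J S^T = J$ by substitution, but the computation above is essentially the same.
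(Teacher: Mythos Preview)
Your proof is correct and follows essentially the same approach as the paper: both arguments reduce to passing from $S^T J S = J$ to $S J S^T = J$ via invertibility of $S$ and the identity $J^{-1} = -J$. The only cosmetic difference is that the paper invokes the previously established group property (so that $S^{-1}$ is symplectic, giving $S^{-T} J S^{-1} = J$, whose inverse yields $S J S^T = J$), whereas you derive the explicit formula $S^{-1} = -J S^T J$ and substitute into $S S^{-1} = I$; these are the same computation reorganized.
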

\begin{proof}
	Let $S$ be symplectic, then $S^{-1}$ is symplectic as well and we have
	\begin{align}
		S^{-T} J S^{-1} = J & \; \Leftrightarrow \; (S^{-T} J S^{-1})^{-1} = J^{-1} \; \Leftrightarrow \; S J^{-1} S^{T} = J^{-1}.
	\end{align}
	As $J^{-1} = -J$, we have
	\begin{equation}
		-S J S^T = -J \; \Leftrightarrow S J S^T = J
	\end{equation}
	Therefore, $S$ is symplectic if and only if $S^T$ is symplectic.
\end{proof}

It is common, and often useful, to write symplectic matrices as block matrices
\begin{equation}
	S =
	\begin{pmatrix}
		A & B\\
		C & D
	\end{pmatrix},
\end{equation}
where $A, B, C, D$ are $d \times d$ matrices. Using this notation we will now prove that $\det(S) = 1$ for $S \in Sp(\R, 2d)$ (see also \cite[Chap.~4]{Fol89}, \cite{MacMac03}, \cite{DufSal17}).
\begin{proposition}
	If $S \in Sp(\R,2d)$, then $\det(S) = 1$.
\end{proposition}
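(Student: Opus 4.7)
The plan is to first observe that taking determinants of the defining relation $S^T J S = J$ and using $\det(J) = 1$ (which follows since $J = \begin{pmatrix} 0 & I \\ -I & 0 \end{pmatrix}$ has determinant $(-1)^d \cdot (-1)^d = 1$, because swapping the first $d$ rows with the last $d$ rows yields $(-1)^{d^2}\cdot \det(-I)\cdot \det(I) = (-1)^{d^2+d}=1$, or by a direct computation) immediately gives $\det(S)^2 = 1$. The real work is to rule out the possibility $\det(S) = -1$.

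For the harder part, I would use the block decomposition $S = \begin{pmatrix} A & B \\ C & D \end{pmatrix}$ and unpack the identity $S^T J S = J$ into the three conditions
\begin{equation}
A^T C = C^T A, \qquad B^T D = D^T B, \qquad A^T D - C^T B = I.
\end{equation}
Assuming first that $A$ is invertible, the Schur complement yields $\det(S) = \det(A)\,\det(D - CA^{-1}B)$. The first symmetry $A^T C = C^T A$ rewrites as $CA^{-1} = A^{-T} C^T$, and combining this with the third relation in the form $D = A^{-T} + A^{-T} C^T B$ gives exactly $D - CA^{-1}B = A^{-T}$. Therefore $\det(S) = \det(A)\,\det(A^{-T}) = 1$.

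The main obstacle is the case when $A$ is not invertible, since then the Schur complement argument breaks down. The cleanest way around this is a density/continuity argument: the map $S \mapsto \det(S)$ is continuous on $\mathrm{Sp}(\R,2d)$ and takes values in $\{+1,-1\}$, so it is constant on each connected component. One then checks that the symplectic matrices with $A$ invertible are dense in $\mathrm{Sp}(\R,2d)$ — for instance, by showing that for any symplectic $S$, the perturbation $S_t = \begin{pmatrix} I & tI \\ 0 & I \end{pmatrix} S$ lies in $\mathrm{Sp}(\R,2d)$ (as the upper-triangular factor is easily seen to be symplectic) and its upper-left block is invertible for all but finitely many $t$. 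Passing to the limit $t\to 0$, together with $\det(I) = 1$ as an anchor, forces $\det(S) = 1$ in general.

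As a remark, a slicker route avoiding cases is to invoke the Pfaffian: for any $2d\times 2d$ skew-symmetric $K$ one has $\operatorname{Pf}(B^T K B) = \det(B)\operatorname{Pf}(K)$, so $S^T J S = J$ directly yields $\det(S)\operatorname{Pf}(J) = \operatorname{Pf}(J)$, and since $\operatorname{Pf}(J) = 1 \ne 0$ we conclude $\det(S) = 1$. This proof is essentially the same as noting that $S$ preserves the symplectic form $\sigma$, hence also the top exterior power $\sigma^d$, which is a nonzero multiple of the standard volume form on $\R^{2d}$.
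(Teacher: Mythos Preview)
Your Schur-complement computation when $\det A \ne 0$ is correct and clean. The density step, however, has a gap: you assert that the upper-left block $A + tC$ of $U_t S$ is invertible for all but finitely many $t$, i.e.\ that $\det(A+tC)$ is not the zero polynomial. For an arbitrary pair of $d\times d$ matrices this can fail (singular matrix pencils exist), and you do not say why the symplectic conditions rule it out. They do, but it takes an argument: from $A^TC = C^TA$ one computes $(A+iC)^*(A+iC) = A^TA + C^TC + i(A^TC - C^TA) = A^TA + C^TC$, which is positive definite because $\bigl(\begin{smallmatrix} A \\ C \end{smallmatrix}\bigr)$ has full column rank; hence $\det(A+iC) \ne 0$ and the polynomial $z \mapsto \det(A+zC)$ is not identically zero. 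This step is missing from your write-up.

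Your closing Pfaffian / top-exterior-power remark is a complete proof on its own, and the paper records exactly these two alternatives in the remark following the proposition. The paper's \emph{main} proof, however, is entirely different from any of your routes: it shows $\det S > 0$ directly by writing $S^TS + I = S^T(S + JSJ^{-1})$, noting that the left side has determinant strictly greater than $1$, and then computing via a block unitary factorization that $\det(S + JSJ^{-1}) = \bigl|\det\bigl((A+D)+i(B-C)\bigr)\bigr|^2 \ge 0$, whence $\det S > 0$. This avoids any case distinction on the invertibility of $A$, whereas your Schur approach is more transparent in the generic case but needs the density patch above to be complete.
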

\begin{proof}
	Let $S \in Sp(\R,2d)$. Since $\det(J) = 1$, it is obvious from
	\begin{equation}
		S^T J S = J
	\end{equation}
	that $\det(S) = \pm 1$.
	
	We consider the matrix $S^T S + I$. Since $S^T S$ is symmetric and positive definite, the eigenvalues of $S^T S + I$ are real and greater than 1 because
	\begin{equation}
		\min_{|x|=1} x \cdot (S^T S + I) x = \min_{|x|=1} (x \cdot S^T S x + |x|^2) = \min_{|x|=1} (|Sx|^2+|x|^2) > 1.
	\end{equation}
	As $\det(S) \neq 0$, it is invertible and we have
	\begin{equation}\label{eq_symp_det1_proof}
		S^T S + I = S^T (S + S^{-T}) = S^T(S + J S J^{-1}).
	\end{equation}
	Next, we compute
	\begin{align}
		S + J S J^{-1} & =
		\begin{pmatrix}
			A & B\\
			C & D
		\end{pmatrix}
		+
		\begin{pmatrix}
			0 & I\\
			-I & 0
		\end{pmatrix}
		\begin{pmatrix}
			A & B\\
			C & D
		\end{pmatrix}
		\begin{pmatrix}
			0 & -I\\
			I & 0
		\end{pmatrix}
		=
		\begin{pmatrix}
			A & B\\
			C & D
		\end{pmatrix}
		+
		\begin{pmatrix}
			D & -C\\
			-B & A
		\end{pmatrix}\\
		& =
		\begin{pmatrix}
			A+D & B-C\\
			-B+C & A+D
		\end{pmatrix}
	\end{align}
	Setting $A+D = E$ and $B-C = F$ we make the unitary transform (polar factorization)
	\begin{equation}
		S + J S J^{-1} =
		\begin{pmatrix}
			E & F\\
			-F & E
		\end{pmatrix}
		=
		\frac{1}{\sqrt{2}}
		\begin{pmatrix}
			I & I\\
			i I & -i I
		\end{pmatrix}
		\begin{pmatrix}
			E+iF & 0\\
			0 & E-iF
		\end{pmatrix}
		\frac{1}{\sqrt{2}}
		\begin{pmatrix}
			I & -i I\\
			I & i I
		\end{pmatrix}
	\end{equation}
	Note that\footnote{Note that the determinant of a block diagonal matrix
	$S = \begin{pmatrix}
			A & B\\
			C & D
		\end{pmatrix}$ can be computed by the formula $\det(S) = \det(AD - BC)$ if all blocks have the same size and the blocks $C$ and $D$ commute \cite{Sil00}.}
		\begin{equation}
			\det\left(\frac{1}{\sqrt{2}}
			\begin{pmatrix}
				I & I\\
				i I & -i I
			\end{pmatrix}\right) = \left(\tfrac{1}{\sqrt{2}}\right)^{2d} \det(-2 i I) = 2^{-d} (-2 i)^{d} = (-i)^d
		\end{equation}
		and
		\begin{equation}
			\det\left(\frac{1}{\sqrt{2}}
			\begin{pmatrix}
				I & -i I\\
				I & i I
			\end{pmatrix}\right) = \left(\tfrac{1}{\sqrt{2}}\right)^{2d} \det(2 i I) = 2^{-d} (2 i)^{d} = i^d.
		\end{equation}
		Hence, these factors cancel to 1 when taking the determinant. Now, we plug this factorization into \eqref{eq_symp_det1_proof} and obtain
	\begin{align}
		0 < 1 < \det(S^T S + I) & = \det(S^T(S+JSJ^{-1}))\\
		& = \det(S) \det(E+iF) \det(E-iF)\\
		& = \det(S) \det(E+iF) \, \overline{\det(E+iF)}\\
		& = \det(S) |\det(E+iF)|^2
	\end{align}
	We see that none of the factors $\det(S)$ and $|\det(E+iF)|^2$ can be zero. So, we may divide by $|\det(E+iF)|^2 > 0$ and obtain
	\begin{equation}
		\det(S) > 0.
	\end{equation}
	Since $\det(S) \in \{-1,1\}$, it follows that $\det(S) = 1$.
\end{proof}
\begin{remark}
	We note that the proof given above is not the standard proof found in textbooks, but it does not require more than basic linear algebra. A very quick proof -- once one knows the Pfaffian $\text{Pf}(S)$ of a matrix $S$ and the necessary properties -- is as follows (see, e.g., \cite{MacMac03});
	\begin{equation}
		\text{Pf}(J) = \text{Pf}(S^T J S) = \det(S) \text{Pf}(J)
		\quad \Longrightarrow \quad
		\det(S) = 1.
	\end{equation}
	
	An equivalent proof is obtained via exterior 2-forms (see \cite{DufSal17}). For this, we note that, using the language of differential forms, we may write
	\begin{equation}
		\sigma = dx \wedge d\omega = \sum_{k=1}^d dx_k \wedge d\omega_k.
	\end{equation}
	Then
	\begin{equation}
		\frac{\sigma^d}{d!} = \frac{\sigma \wedge \ldots \wedge \sigma}{d!} = dx_1 \wedge d\omega_1 \wedge \ldots \wedge dx_d \wedge d\omega_d = \vol_{\R^{2d}}
	\end{equation}
	Yet, another quick proof is via the theory of Lie groups (see, e.g., \cite{Fol89}). Noting that $Sp(\R,2d)$ is connected and $I \in Sp(\R,2d)$ gives the result.
	\begin{flushright}
		$\diamond$
	\end{flushright}
\end{remark}

From the block notation, we also get the following characterization of symplectic matrices.
\begin{proposition}
	A matrix $S$ is symplectic if and only if the two sets of equivalent conditions are fulfilled.
	\begin{equation}\label{eq_cond_symp1}
		A B^T = B A^T, \ C D^T = D C^T
		\quad \text{ and } \quad
		A D^T - B C^T = I.
	\end{equation}
	\begin{equation}\label{eq_cond_symp2}
		A^T C = C^T A, \ B^T D = D^T B
		\quad \text{ and } \quad
		A^T D - C^T B = I,
	\end{equation}
	This yields the following formula for the inverse of a symplectic matrix
	\begin{equation}\label{eq_symp_inv}
		S^{-1} =
		\begin{pmatrix}
			D^T & - B^T\\
			-C^T & A^T
		\end{pmatrix} .
	\end{equation}
\end{proposition}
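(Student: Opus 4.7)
The plan is to carry out the two block-matrix computations $S^T J S$ and $S J S^T$ explicitly and read off the respective conditions, then recover the inverse formula from the defining identity.

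First I would write $S = \begin{pmatrix} A & B \\ C & D \end{pmatrix}$, $S^T = \begin{pmatrix} A^T & C^T \\ B^T & D^T \end{pmatrix}$, and $J = \begin{pmatrix} 0 & I \\ -I & 0 \end{pmatrix}$, and multiply. A direct computation gives
\begin{equation}
    S^T J S = \begin{pmatrix} A^T C - C^T A & A^T D - C^T B \\ B^T C - D^T A & B^T D - D^T B \end{pmatrix}.
\end{equation}
Setting this equal to $J$ block by block yields exactly the conditions in \eqref{eq_cond_symp2}: the diagonal blocks give the symmetry relations $A^T C = C^T A$ and $B^T D = D^T B$, the upper-right block gives $A^T D - C^T B = I$, and the lower-left block reproduces the transpose of the upper-right identity and so is automatic.

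Next I would repeat the computation with $S J S^T$. This produces
\begin{equation}
    S J S^T = \begin{pmatrix} A B^T - B A^T & A D^T - B C^T \\ C B^T - D A^T & C D^T - D C^T \end{pmatrix},
\end{equation}
and setting this equal to $J$ gives the conditions in \eqref{eq_cond_symp1}. Since the previous proposition established that $S$ is symplectic if and only if $S^T$ is symplectic, the two conditions $S^T J S = J$ and $S J S^T = J$ are equivalent, and hence so are the two systems \eqref{eq_cond_symp1} and \eqref{eq_cond_symp2}.

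Finally, for the inverse formula, I would rearrange $S^T J S = J$: multiplying on the left by $J^{-1}$ and on the right by $S^{-1}$ yields $S^{-1} = J^{-1} S^T J$. Using $J^{-1} = -J$ and multiplying out gives
\begin{equation}
    S^{-1} = -J S^T J = -\begin{pmatrix} 0 & I \\ -I & 0 \end{pmatrix} \begin{pmatrix} A^T & C^T \\ B^T & D^T \end{pmatrix} \begin{pmatrix} 0 & I \\ -I & 0 \end{pmatrix} = \begin{pmatrix} D^T & -B^T \\ -C^T & A^T \end{pmatrix},
\end{equation}
as claimed. As a sanity check, I would verify $S \cdot S^{-1} = I$ directly, which reduces to precisely the four identities in \eqref{eq_cond_symp1}. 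There is no real obstacle here; the proof is essentially bookkeeping, and the main thing to be careful about is ensuring that the redundant lower-left block of each product really is the transpose of the upper-right block, so that no extra condition is being imposed.
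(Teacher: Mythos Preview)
Your proposal is correct and follows essentially the same approach as the paper: both compute $S^T J S$ and $S J S^T$ in block form and read off the two sets of conditions, invoking the previous proposition that $S$ is symplectic iff $S^T$ is. You are actually slightly more thorough than the paper in that you explicitly derive the inverse formula via $S^{-1} = -J S^T J$, whereas the paper simply states \eqref{eq_symp_inv} as a consequence without spelling out the computation.
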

\begin{proof}
	Let
	$S =
	\begin{pmatrix}
		A & B\\
		C & D
	\end{pmatrix}$, then
	$S^T =
	\begin{pmatrix}
		A^T & C^T\\
		B^T & D^T
	\end{pmatrix}$. We use that $S^T \in Sp(\R,2d)$ and compute
	\begin{align}
		S J S^T & =
		\begin{pmatrix}
			A & B\\
			C & D
		\end{pmatrix}
		\begin{pmatrix}
			0 & I\\
			-I & 0
		\end{pmatrix}
		\begin{pmatrix}
			A^T & C^T\\
			B^T & D^T
		\end{pmatrix} =
		\begin{pmatrix}
			A & B\\
			C & D
		\end{pmatrix}
		\begin{pmatrix}
			B^T & D^T\\
			-A^T & -C^T
		\end{pmatrix}\\
		& =
		\begin{pmatrix}
			A B^T - B A^T & A D^T - B C^T\\
			C B^T - D A^T & C D^T - D C^T
		\end{pmatrix} =
		\begin{pmatrix}
			0 & I\\
			-I & 0
		\end{pmatrix}.
	\end{align}
	Note that
	\begin{equation}
		C B^T - D A^T = -I \; \Leftrightarrow \; (D A^T - C B^T)^T = I^T \; \Leftrightarrow \; A D^T - B C^T = I.
	\end{equation}
	We have proved \eqref{eq_cond_symp1}. For proving \eqref{eq_cond_symp2} we make a similar computation to obtain
	\begin{equation}
		S^T J S =
		\begin{pmatrix}
			A^T C - C^T A & A^T D - C^T B\\
			B^T C - D^T A & B^T D - D^T B
		\end{pmatrix} =
		\begin{pmatrix}
			0 & I\\
			-I & 0
		\end{pmatrix}.
	\end{equation}
\end{proof}
Note that in the case $d = 1$ we recover the well-known inversion formula for determinant 1 matrices;
\begin{equation}
	S =
	\begin{pmatrix}
		a & b\\
		c & d
	\end{pmatrix}
	\quad \text{ and } \quad
	S^{-1} =
	\begin{pmatrix}
		d & -b\\
		-c & a
	\end{pmatrix}.
\end{equation}
Also, we see that in the case $d = 1$, conditions \eqref{eq_cond_symp1} as well as \eqref{eq_cond_symp2} collapse to the fact that $\det(S) = ad - bc = 1$. In particular, $Sp(\R,2) = SL(\R,2)$. This is the only case where the symplectic group coincides with the special linear group. In general, the symplectic group $Sp(\R,2d)$ is a proper subgroup of $SL(\R,2d)$. To be more precise, the dimension of the group $Sp(\R, 2d)$ is $(2d+1)d$. This is due to the fact that the system of equations
\begin{equation}
	S^T J S = J
\end{equation}
is redundant. We note that $J^T = -J$, which means that $J$ is skew-symmetric. This implies that $S^T J S$ is skew-symmetric as well. The (vector) space of skew-symmetric matrices has dimension
$\begin{pmatrix}
	2d\\
	2
\end{pmatrix} = (2d-1)d$. Hence, the identity $S^T J S = J$ puts $(2d-1)d$ many constraints on the $(2d)^2$ variables of $S$ and leaves $(2d+1)d$ free variables.
\begin{example}
	Consider the following diagonal matrix
	\begin{equation}
		M = \begin{pmatrix}
			\alpha & 0 & 0 & 0\\
			0 & \frac{1}{\alpha	} & 0 & 0\\
			0 & 0 & \alpha & 0\\
			0 & 0 & 0 & \frac{1}{\alpha}
		\end{pmatrix}.
	\end{equation}
	Then, clearly, $\det(M) = 1$, but the blocks do not satisfy \eqref{eq_cond_symp1} as
	\begin{equation}
		A D^T - B C^T =
		\begin{pmatrix}
			\alpha^2 & 0\\
			0 & \frac{1}{\alpha^2}
		\end{pmatrix} \neq I, \quad \alpha \in \R, \, \alpha \neq \pm 1.
	\end{equation}
	This example is easily extended to higher dimensions and we see that $Sp(\R,2d) \subsetneq SL(\R, 2d)$, for $d > 1$, by example.
	\begin{flushright}
		$\diamond$
	\end{flushright}
\end{example}
Our next goal is to find generator matrices of the symplectic group. For this purpose, we define the following sets, which are actually subgroups of $Sp(\R,2n)$. We define the group of (symplectic) shearing and dilation matrices, respectively, as follows;
\begin{equation}
	U_P = \left\{
	\begin{pmatrix}
		I & P\\
		0 & I
	\end{pmatrix} \mid P = P^T \right\},
	\qquad
	V_Q = \left\{
	\begin{pmatrix}
		I & 0\\
		Q & I
	\end{pmatrix} \mid Q = Q^T \right\}
\end{equation}
\begin{equation}
	D_L = \left\{
	\begin{pmatrix}
		L & 0\\
		0 & L^{-T}
	\end{pmatrix} \mid \det(L) \neq 0 \right\}.
\end{equation}
\begin{proposition}\label{pro_decomposition_symplectic}
	Let the sets $U_P$, $V_Q$ and $D_L$ be defined as above. Then
	\begin{equation}
		V_Q D_L U_P = \left\{
		\begin{pmatrix}
			A & B\\
			C & D
		\end{pmatrix} \in Sp(\R,2d)
		 \mid \det(A) \neq 0 \right\}.
	\end{equation}
\end{proposition}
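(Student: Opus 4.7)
The plan is to prove both inclusions by direct matrix computation, using the characterizations of symplectic matrices given in \eqref{eq_cond_symp1} and \eqref{eq_cond_symp2}.

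First, I would verify that $U_P$, $V_Q$, and $D_L$ are indeed subgroups of $Sp(\R,2d)$ (this is routine: for $U_P$ use $P = P^T$, for $V_Q$ use $Q = Q^T$, for $D_L$ use $L^{-T}$ in the bottom-right block, and check $S^T J S = J$ in each case). Then for the inclusion $\subseteq$, I would carry out the product
\begin{equation}
V_Q D_L U_P = \begin{pmatrix} I & 0 \\ Q & I \end{pmatrix} \begin{pmatrix} L & 0 \\ 0 & L^{-T} \end{pmatrix} \begin{pmatrix} I & P \\ 0 & I \end{pmatrix} = \begin{pmatrix} L & LP \\ QL & QLP + L^{-T} \end{pmatrix}.
\end{equation}
Since $L$ is invertible, the upper-left block is invertible, and the product lies in $Sp(\R,2d)$ because each factor does.

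For the reverse inclusion $\supseteq$, given $S = \begin{pmatrix} A & B \\ C & D \end{pmatrix} \in Sp(\R,2d)$ with $\det(A) \neq 0$, I would set
\begin{equation}
L := A, \qquad P := A^{-1}B, \qquad Q := CA^{-1},
\end{equation}
so that by construction $L = A$, $LP = B$, $QL = C$, and it remains to check (a) that $P$ and $Q$ are symmetric (so that $U_P$, $V_Q$ make sense) and (b) that $QLP + L^{-T} = D$. For symmetry of $P$, I would use $AB^T = BA^T$ from \eqref{eq_cond_symp1}: multiplying on the left by $A^{-1}$ and on the right by $A^{-T}$ gives $B^T A^{-T} = A^{-1} B$, i.e.\ $P^T = P$. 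Symmetry of $Q$ follows analogously from $A^T C = C^T A$ in \eqref{eq_cond_symp2}. Finally, the identity $A^T D - C^T B = I$ rearranges to $D = A^{-T} + A^{-T} C^T B$, and using $A^{-T} C^T = C A^{-1}$ (again from \eqref{eq_cond_symp2}) this becomes $D = A^{-T} + C A^{-1} B = L^{-T} + QLP$, as required.

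The only mildly delicate step is keeping track of which of the two equivalent sets of symplectic conditions \eqref{eq_cond_symp1} and \eqref{eq_cond_symp2} is needed for each verification — both must be invoked (one for $P^T = P$ and one for $Q^T = Q$ and the bottom-right block identity). Everything else is bookkeeping once the substitutions $L = A$, $P = A^{-1}B$, $Q = CA^{-1}$ are in hand.
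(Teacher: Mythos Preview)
Your proof is correct and follows essentially the same route as the paper: compute the product $V_Q D_L U_P$ explicitly, then for the reverse inclusion set $L=A$, $P=A^{-1}B$, $Q=CA^{-1}$ and verify the symmetry of $P,Q$ and the bottom-right block identity via \eqref{eq_cond_symp1} and \eqref{eq_cond_symp2}. Your version is in fact more detailed than the paper's, which merely asserts that these verifications follow from the symplectic conditions without writing them out.
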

\begin{proof}
	By a direct computation we see that
	\begin{equation}
		\begin{pmatrix}
			I & 0\\
			Q & I
		\end{pmatrix}
		\begin{pmatrix}
			L & 0\\
			0 & L^{-T}
		\end{pmatrix}
		\begin{pmatrix}
			I & P\\
			0 & I
		\end{pmatrix} =
		\begin{pmatrix}
			L & L P\\
			Q L & Q L P + L^{-T}
		\end{pmatrix}
	\end{equation}
	From this calculation, we see that we may simply take $A = L$ and $\det(A) \neq 0$. It follows that $P = A^{-1} B$ and $Q = C A^{-1}$. Now, we must show that
	\begin{equation}
		P = P^T, \qquad Q = Q^T, \qquad D = Q L P + L^{-T} = C A^{-1} B + A^{-T}.
	\end{equation}
	However, this follows from \eqref{eq_cond_symp1} and \eqref{eq_cond_symp2}.
\end{proof}
We state the next result without proof. A proof can be found in, e.g., \cite[Chap.~4.1.]{Fol89} or \cite[Chap.~3]{Gos11}
\begin{theorem}
	The symplectic group is generated by $U_P \cup D_L \cup \{J\}$ or $V_Q \cup D_L \cup \{J\}$.
\end{theorem}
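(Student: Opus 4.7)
My plan is to prove the first generating assertion; the second follows by a symmetric argument, using that the identity $J U_P J^{-1} = V_{-P}$ (verified by a direct computation using $J^{-1} = -J$) shows the two candidate generating sets produce the same subgroup of $Sp(\R, 2d)$. Set $G := \langle U_P \cup D_L \cup \{J\} \rangle$.

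The identity above also shows that $V_Q \in G$ for every symmetric $Q$, so by Proposition~\ref{pro_decomposition_symplectic} we immediately get
\begin{equation*}
	G \supset V_Q D_L U_P = \{S \in Sp(\R, 2d) : \det A \neq 0\} .
\end{equation*}
What remains is to show that any $S = \begin{pmatrix} A & B \\ C & D \end{pmatrix} \in Sp(\R, 2d)$ with $\det A = 0$ also lies in $G$. Since
\begin{equation*}
	SV_Q = \begin{pmatrix} A + BQ & B \\ C + DQ & D \end{pmatrix},
\end{equation*}
it would suffice to find a symmetric $Q$ such that $A + BQ$ is invertible; then $SV_Q \in G$ by the previous step, and since $V_Q \in G$ we conclude $S = (SV_Q)V_Q^{-1} \in G$.

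The existence of such a $Q$ is the heart of the argument, and I would recast it geometrically. The condition $(A+BQ)v = 0$ is equivalent to $(v, Qv) \in M := \ker(A \mid B)$, so I seek a symmetric $Q$ whose graph $L_Q := \{(v, Qv) : v \in \R^d\}$ is transverse to $M$. The key observation is that $M$ is a Lagrangian subspace of $(\R^{2d}, \sigma)$: for $(v_i, w_i) \in M$, the image $S(v_i, w_i)^T = (0, Cv_i + Dw_i)^T$ lies in the vertical Lagrangian $M_0 := \{(0, w) : w \in \R^d\}$, on which $\sigma$ vanishes, and $S$ preserves $\sigma$, so $\sigma$ vanishes on $M$ as well. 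Combined with $\dim M = 2d - \mathrm{rank}(A \mid B) = d$, this shows $M$ is Lagrangian. Since the graphs of symmetric matrices are precisely the Lagrangians transverse to $M_0$, the task has been reduced to the classical fact that in any symplectic vector space, for any two Lagrangians $M$ and $M_0$ there exists a third Lagrangian transverse to both.

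This last transversality fact is the main obstacle in a fully self-contained presentation. The cleanest resolution is to invoke that the set of Lagrangians transverse to a fixed one is Zariski open and dense in the irreducible Lagrangian Grassmannian, so two such sets always meet. An elementary alternative is to choose a symplectic basis adapted to $M$, parameterize the Lagrangians transverse to $M$ by symmetric matrices, and check that the additional condition of being transverse to $M_0$ amounts to the non-vanishing of a polynomial in these parameters which is visibly not identically zero (e.g., by exhibiting one parameter value where it is nonzero, handled separately in the two cases $\dim(M \cap M_0) = 0$ and $\dim(M \cap M_0) > 0$).
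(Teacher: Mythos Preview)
The paper does not prove this theorem; it explicitly states the result without proof and refers to \cite[Chap.~4.1]{Fol89} and \cite[Chap.~3]{Gos11}. Your argument is correct and in fact recovers precisely the factorization the paper quotes from de~Gosson immediately after the theorem: writing $S = (SV_Q)V_{-Q}$ with $\det(A+BQ)\neq 0$ exhibits $S$ as a product of two symplectic matrices each having invertible upper-left block, and then Proposition~\ref{pro_decomposition_symplectic} finishes the job. The conjugation identity $J U_P J^{-1} = V_{-P}$ is correct and cleanly handles both the equivalence of the two generating sets and the inclusion $V_Q\subset G$.

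The geometric recasting via $M=\ker(A\,|\,B)=S^{-1}M_0$ being Lagrangian is elegant and correct. The only soft spot is the final step, the existence of a Lagrangian transversal to two given Lagrangians. Your Zariski-density sketch is valid but imports machinery foreign to the level of these notes; your ``elementary alternative'' is not quite complete as written, since ``exhibiting one parameter value where it is nonzero'' is exactly the existence statement you are trying to prove, so the case split needs to actually produce such a value. One clean way: choose a basis so that $M\cap M_0=\{0\}\times\R^r\times\{0\}$ inside $M_0=\{0\}\times\R^d$, extend to a symplectic basis, and write down $Q$ block-diagonally (identity on the first $r$ coordinates, zero on the rest works after this normalization). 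Alternatively, the result appears as a lemma in the very sources the paper cites, so for these notes a reference would be acceptable.
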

In \cite[Chap.~3]{Gos11} we also find that any $S \in Sp(\R,2n)$ can be factored into two symplectic matrices $S'$ and $S''$ with $\det(A') \neq 0$ and $\det(A'') \neq 0$. This factorization is however not unique.

We are now going to introduce symplectic lattices.
\begin{definition}
	A lattice $\L$ is called symplectic if its generating matrix is a scalar multiple of a symplectic matrix, i.e.,
	\begin{equation}
		\L = r S \Z^{2d}, \qquad S \in Sp(\R,2d)
	\end{equation}
	and $\Z^{2d}$ is equipped with the standard basis.
\end{definition}
\begin{example}
	The requirement that $\Z^{2d}$ is equipped with the standard basis is crucial. For example, consider the permutation matrix
	\begin{equation}
		P_{2,3} =
		\begin{pmatrix}
			1 & 0 & 0 & 0\\
			0 & 0 & 1 & 0\\
			0 & -1 & 0 & 0\\
			0 & 0 & 0 & 1
		\end{pmatrix}
		\quad \text{ and } \quad
		\det(P_{2,3})=1.
	\end{equation}
	Particularly, we make a symplectic coordinate change in the second and third coordinate. Then, as lattices,
	\begin{equation}
		\L_I = I \Z^{2d}
		\quad \text and \ \quad
		\L_{2,3} = P_{2,3} \Z^{2d}
	\end{equation}
	coincide, but $P_{2,3} \Z^{2d}$ is not a symplectic lattice! It is quickly verified that $P_{2,3}$ is indeed not a symplectic matrix, as $A D^T - B C^T = 0 \neq I$. The reason why we need to say that $P_{2,3} \Z^{2d}$ is not a symplectic lattice comes from the fact that we mixed the coordinates $(x_1,x_2,\omega_1,\omega_2)$ to $(x_1,-\omega_1,x_2,\omega_2)$. From the time-frequency analyst's viewpoint, this means that we take the following time-frequency shifts
	\begin{equation}
		\pi((x_1,-\omega_1);(x_2,\omega_2)) = M_{(x_2,\omega_2)} T_{(x_1,-\omega_1)}
	\end{equation}
	instead of
	\begin{equation}
		\pi((x_1,x_2);(\omega_1,\omega_2)) = M_{(\omega_1,\omega_2)} T_{(x_1,x_2)}
	\end{equation}
	Indeed, this has an effect on resulting Gabor systems and the frame properties, as we will see later on.
	\begin{flushright}
		$\diamond$
	\end{flushright}
\end{example}

We note that for any lattice we have $\L = - \L$. So, without loss of generality we may assume that $r > 0$ if $\L$ is symplectic. The volume of the symplectic lattice $\L$ is then given by $\vol(\L) =  r^{2d}$. We note the following facts. The adjoint lattice is given by
\begin{equation}
	\L^\circ = J \L^\perp.
\end{equation}
Let $\L = r S \Z^{2d}$ be a symplectic lattice with $S \in Sp(\R,2d)$. Then the adjoint lattice is simply given by
\begin{equation}
	\L^\circ = r^{-2} \L.
\end{equation}
This can be seen by explicitly writing
\begin{equation}
	\L^\circ = r^{-1} J S^{-T} \Z^{2d} = r^{-1} \underbrace{J S^{-T} J^{-1}}_{=S} \Z^{2d} = r^{-2} r S \Z^{2d} = r^{-2} \L .
\end{equation}
Noting that $r^{2d} = \vol(\L)$ we may as well write
\begin{equation}
	\L^\circ = \vol(\L)^{-1/d} \L
\end{equation}
in this case. In particular, any symplectic lattice $\L$ of volume $\vol(\L) = 1$ is its own adjoint, i.e., $\L = \L^\circ$. This implies that any 2-dimensional lattice of unit volume is its own adjoint since $Sp(\R,2) = SL(\R,2)$.

For working in the time-frequency plane (or phase space), it is useful to introduce a version of the Fourier transform, the so-called symplectic Fourier transform.
\begin{definition}
	Let $F \in \Lt[2d]$. Then the symplectic Fourier transform is given by
	\begin{equation}
		\F_\sigma F(z) = \iint_{\R^{2d}} F(z') e^{-2 \pi i \sigma(z',z)}, \, dz'
	\end{equation}
\end{definition}
It has the following properties.
\begin{proposition}
	The symplectic Fourier transform is unitary and involutive, i.e.,
	\begin{equation}
		\norm{\F_\sigma F}_2 = \norm{F}_2
		\quad \text{ and } \quad
		\F_\sigma \circ \F_\sigma = I .
	\end{equation}
\end{proposition}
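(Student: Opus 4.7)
The plan is to express $\F_\sigma$ in terms of the ordinary Fourier transform $\F$ on $\R^{2d}$ and then to leverage Plancherel's theorem (Theorem \ref{thm_Plancherel}) and the standard Fourier inversion identity $\F \F F(z) = F(-z)$. First I would write $z = (x,\omega)$, $z' = (x',\omega')$ and observe that
\begin{equation}
\sigma(z',z) = z' \cdot Jz = (x',\omega') \cdot (\omega,-x) = z' \cdot (Jz),
\end{equation}
so that the symplectic Fourier transform is, pointwise,
\begin{equation}
\F_\sigma F(z) = \iint_{\R^{2d}} F(z') e^{-2\pi i z' \cdot (Jz)} \, dz' = \F F(Jz).
\end{equation}
In short, $\F_\sigma F = (\F F) \circ J$. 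All subsequent computations would be carried out on the dense subspace $\mathcal{S}(\R^{2d})$ (or $L^1 \cap L^2$) and extended to $L^2(\R^{2d})$ by continuity.

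For unitarity, the representation $\F_\sigma F = (\F F) \circ J$ combined with $|\det J| = 1$ shows that the change of variables $z \mapsto Jz$ is measure-preserving, hence
\begin{equation}
\norm{\F_\sigma F}_2^2 = \iint_{\R^{2d}} |\F F(Jz)|^2 \, dz = \iint_{\R^{2d}} |\F F(u)|^2 \, du = \norm{\F F}_2^2 = \norm{F}_2^2
\end{equation}
by Plancherel on $\R^{2d}$.

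For the involutive property, I compute $\F_\sigma^2 F(z) = \F_\sigma(\F F \circ J)(z) = \F(\F F \circ J)(Jz)$. To push the inner $J$ through the Fourier transform, I substitute $u = Jz'$ (using $|\det J|=1$ and $J^T = -J$) to get
\begin{equation}
\F(\F F \circ J)(w) = \iint_{\R^{2d}} \F F(Jz') e^{-2\pi i z' \cdot w} \, dz' = \iint_{\R^{2d}} \F F(u) e^{-2\pi i u \cdot (Jw)} \, du = \F\F F(Jw).
\end{equation}
Evaluating at $w = Jz$ and invoking the standard identity $\F\F F(y) = F(-y)$ together with $J^2 = -I$ yields
\begin{equation}
\F_\sigma^2 F(z) = \F\F F(J^2 z) = \F\F F(-z) = F(z),
\end{equation}
which is the desired involution.

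The only real obstacle is bookkeeping the signs arising from $J$, $J^T = -J$, and $J^{-1} = -J$, together with ensuring the pointwise Fubini-style manipulations are legal; both issues are handled cleanly by working on a dense subspace and then passing to $L^2$ by continuity of $\F$ and of the unitary pullback $F \mapsto F \circ J$.
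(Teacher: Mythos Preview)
Your proposal is correct and follows essentially the same approach as the paper: both arguments hinge on the identity $\F_\sigma F(z) = \F F(Jz)$, then combine $J^2 = -I$ with the standard relation $\F\F F(z) = F(-z)$ to obtain the involution, while unitarity follows from Plancherel together with $|\det J| = 1$. Your write-up is simply more explicit about the intermediate change of variables (showing $\F(G\circ J)(w) = \F G(Jw)$), which the paper's proof compresses into a single line.
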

\begin{proof}
	First, we note that the symplectic Fourier transform can be expressed by the usual Fourier transform
	\begin{equation}
		\F_\sigma F(z) = \F F(J z), \qquad z \in \R^{2d}.
	\end{equation}
	Next, we observe that $J^2 = -I$. The usual Fourier transform gives $\F (\F F)(z) = F(-z)$ and, thus
	\begin{equation}
		\F_\sigma (\F_\sigma F)(z) = \F(\F F)(-z) = F(z).
	\end{equation}
\end{proof}
The proposition tells us in particular that the symplectic Fourier transform is its own inverse, thus
\begin{align}
	\F_\sigma F(z) = \iint_{\R^{2d}} F(z') e^{-2 \pi i \sigma(z',z)} \, dz'\\
	F(z) = \iint_{\R^{2d}} \F_\sigma F(z') e^{-2 \pi i \sigma(z',z)} \, dz' .
\end{align}
Furthermore, the symplectic Fourier transform behaves well under the action of the symplectic group:
\begin{align}
	\F_\sigma F(Sz) & = \iint_{\R^{2d}} F(z') e^{-2 \pi i \sigma(z', Sz)} \, dz' \\
	& = \iint_{\R^{2d}} F(z') e^{-2 \pi i \sigma(S^{-1} z', S^{-1} S z)} \, dz' \\
	& = \iint_{\R^{2d}} F(S z'') e^{-2 \pi i \sigma(z'', z)} \, dz'' \\
	& = \F_\sigma( F(S \, .))(z),
\end{align}
where we used the fact that $S^{-1}$ is symplectic and the substitution $z'' = S^{-1} z'$.

From the proposition above, it also follows that the symplectic Fourier transform satisfies the following variant of the Plancherel formula;
\begin{equation}
	\langle \F_\sigma F, G \rangle = \langle F, \F_\sigma G \rangle .
\end{equation}
With this notation, we may also formulate a symplectic version of the Poisson summation formula. Let $\L$ be a lattice and let $\L^\circ$ be its adjoint lattice. For a continuous function $F$ satisfying the decay condition $|F(z)| \leq C (1+|z|)^{-2d-\varepsilon}$ and $|\F_\sigma F(z')| \leq C (1+|z'|)^{-2d-\varepsilon}$ with $\varepsilon > 0$, $C > 0$ the following equality holds point-wise.
\begin{equation}
	\sum_{\l \in \L} F(\l + z) = \vol(\L)^{-1} \sum_{\l^\circ \in \L^\circ} \F_\sigma F(\l^\circ) e^{2 \pi i \sigma(\l^\circ, z)} .
\end{equation}
Also, we note that, by using the symplectic Fourier transform we can write the relations between the STFT and the Rihaczek distribution as well as the relation between the ambiguity function and the Wigner distribution in the following way.
\begin{equation}
	V_g f(z) = \F_\sigma \left(R(f,g)\right)(z)
	\quad \text{ and } \quad
	R(f,g)(z) = \F_\sigma \left( V_g f \right)(z)
\end{equation}
and
\begin{equation}
	A(f,g)(z) = \F_\sigma \left(W(f,g)\right)(z)
	\quad \text{ and } \quad
	W(f,g)(z) = \F_\sigma \left( A(f,g) \right)(z) .
\end{equation}

\begin{example}
	\textit{Let $\G(g,\L)$ be a Gabor system in $\Lt[]$ with $\vol(\L) = 2$ and $g$ an odd function, i.e., $g(t) = - g(-t)$ such that $\G(g,\L)$ is a Bessel system and $Ag$ satisfies the decay conditions for the (symplectic) Poisson summation formula to hold point-wise\footnote{Later we will see that there is actually an appropriate function space.}. Then the Gabor system cannot be a frame.}
	
	The full proof requires the following preliminary result obtained from the theory on Toeplitz matrices and Laurent operators (see Appendix \ref{app_Toeplitz}). We will simply state the necessary result without proof (see \cite{Jan96} for the original result and \cite{Faulhuber_Note_2018} for the necessary extension).
	
	For Gabor systems over lattices of density $\delta(\L) \in 2\N$, the sharp lower frame bound is given by the minimum of
	\begin{equation}
		0 \leq A = \min_{z \in \R^2} \vol(\L)^{-1} \sum_{\l^\circ \in \L^\circ} Ag(\l^\circ)e^{2 \pi i \sigma(\l^\circ,z)}.
	\end{equation}
	Now, we re-call that we have the algebraic relation and the symplectic Fourier transform connection between the ambiguity function and the Wigner distribution. We have
	\begin{align}
		\sum_{\l^\circ \in \L^\circ} Ag(\l^\circ) & = \underbrace{\vol(\L^\circ)^{-1}}_{=2^{-1}} \sum_{\l \in \L} \F_\sigma(Ag)(\l) = \sum_{\l \in \L} 2^{-1} Wg(\l) = \sum_{\l \in \L} 2^{-1} 2 A_{-g} g(2 \l)\\ & = - \sum_{\l \in \L} Ag(2\l).
	\end{align}
	Now, since $\L$ is symplectic, we note that $\L^\circ = 2 \L$, or $2 \l \in \L^\circ$ for all $\l \in \L$. Therefore, we have
	\begin{equation}
		\sum_{\l^\circ \in \L^\circ} Ag(\l^\circ) = - \sum_{\l^\circ \in \L^\circ} Ag(\l^\circ) = 0.
	\end{equation}
	This shows that, in this case,
	\begin{equation}
		0 = A = \min_{z \in \R^2} \vol(\L)^{-1} \sum_{\l^\circ \in \L^\circ} Ag(\l^\circ)e^{2 \pi i \sigma(\l^\circ,z)}
	\end{equation}
	by choosing $z = 0$.
	
	The example extends easily to Gabor systems $\G(g,\L)$ in $\Lt$, with $g(t) = -g(-t)$ and symplectic lattices $\L$ with $\vol(\L)^{-1} = 2^{d}$ (so that $\L^\circ = 2 \L$). At the moment, it is an open problem to extend the result to non-symplectic lattices. Also, it is not know whether the result can be extended to periodic structures or irregular point sets with lower Beurling density $2^d$.
	\begin{flushright}
		$\diamond$
	\end{flushright}
\end{example}

\subsection{The Heisenberg Group}
Time-frequency shifts are the fundamental operators in time-frequency analysis. In this chapter we change the point of view and consider the collection of time-frequency shifts $\{M_\omega T_x \mid (x,\omega) \in \R^{2d} \}$ as an object of independent interest. The Heisenberg group and its representation theory emerge as the underlying structure. One may even say that time-frequency analysis can be seen as an aspect of the theory on the Heisenberg group.

Time-frequency shifts do not commute and this requires delicate book keeping of the appearing phase factors. A composition of two time-frequency shifts yields
\begin{equation}\label{eq_tf_composition}
	\pi(\l) \pi(\l') = e^{-2 \pi i \omega' \cdot x} \pi(\l + \l') .
\end{equation}
Often, this additional phase factor can simply be ignored, sometimes its appearance might be considered annoying. However, this phase factor is essential for the deeper structure of time-frequency shifts and it is the reason for the appearance of a non-commutative group in time-frequency analysis.

We introduce an additional coordinate in addition to $x$ and $\omega$. By \eqref{eq_tf_composition}, time-frequency shifts are not closed under composition. As suggested by \eqref{eq_tf_composition}, we add the torus $\R/\Z = \{e^{2 \pi i \tau} \mid \tau \in \R \}$ and try to find a group multiplication on $\R^{2d} \times \T$ which is consistent with \eqref{eq_tf_composition}. It will be advantageous to consider symmetric time-frequeny shifts of the form
\begin{equation}
	\rho(\l) = M_{\omega/2} T_x M_{\omega/2} = T_{x/2} M_\omega T_{x/2} = e^{-\pi i \omega \cdot x} \pi(\l).
\end{equation}
We note that we also have the following identities
\begin{equation}
	A(f,g)(\l) = \langle \pi(-\l/2) f, \pi(\l/2) g \rangle = \langle f , \rho(\l) g \rangle .
\end{equation}

We compute
\begin{equation}\label{eq_Hr_composition}
	e^{2 \pi i \tau} \rho(\l) e^{2 \pi i \tau'} \rho(\l') = e^{2 \pi i(\tau + \tau')} e^{-\pi i \sigma(\l,\l')} \rho(\l + \l').
\end{equation}
From \eqref{eq_Hr_composition} we deduce the following abstract group multiplication on $\R^{2d} \times \T$.
\begin{definition}
	The reduced Heisenberg group $\mathbf{H}_r$ is the locally compact space $\mathbf{H}_r = \R^{2d} \times \T$ with the multiplication
	\begin{equation}\label{eq_Hr_multiplication}
		(x,\omega,e^{2\pi i \tau}) \circ (x',\omega', e^{2 \pi i \tau'}) = (x+x', \omega+\omega', e^{2 \pi i (\tau+\tau')}e^{\pi i(x' \cdot \omega - x \cdot \omega')}) .
	\end{equation}
\end{definition}
Since unitary operators form a group and since unitary operators of the form $e^{2 \pi i \tau} \rho(x,\omega)$ are closed under composition by \eqref{eq_Hr_composition}, $\mathbf{H}_r$ is indeed a group. For an element in $\mathbf{H}_r$ we write $\mathbf{h}_r = (x,\omega,e^{2 \pi i \tau})$. It follows from \eqref{eq_Hr_multiplication} that the neutral element, denoted by $\mathbf{1}_r$, and the inverse of an element, denoted by $\mathbf{h}_r^{-1}$, are given by
\begin{equation}
	\mathbf{1}_r = (0,0,1)
	\quad \text{ and } \quad
	\mathbf{h}_r^{-1} = (-x, - \omega, e^{-2 \pi i \tau}),
\end{equation}
respectively.

As a topological object $\mathbf{H}_r$ is identical with $\R^{2d} \times \T$. It inherits the product metric from $\R^{2d} \times \T$, which means that a sequence $({\mathbf{h}_r}_n)_n = \left((x_n, \omega_n, e^{2 \pi i \tau_n})\right)_n$ in $\mathbf{H}_r$ converges to $\mathbf{h}_r \in \mathbf{H}_r$ if and only if
\begin{equation}
	\lim_{n \to \infty} (|x-x_n|, | \omega - \omega_n|, |\tau - \tau_n|) = (0,0,\Z)
	\quad \Leftrightarrow \quad
	\lim_{n \to \infty} |\mathbf{h}_r - {\mathbf{h}_r}_n| = (0,0,1) = \mathbf{1}_r.
\end{equation}

The algebraic structure of $\mathbf{H}_r$, however, is different from the algebraic structure of $\R^{2d} \times \T$, which is an Abelian group. But $\mathbf{H}_r$ with group multiplication \eqref{eq_Hr_multiplication} is non-Abelian. For instance, take the products
\begin{equation}
	(x,0,1) \circ (0,\omega,1) = (x, \omega, e^{- \pi i x \cdot \omega})
	\quad \text{ and } \quad
	(0,\omega,1) \circ (x,0,1) = (x, \omega, e^{\pi i x \cdot \omega}).
\end{equation}

There are several versions of ``the" Heisenberg group. In the theory of Lie groups the simply connected version is usually preferred. This leads to the following definition.
\begin{definition}
	The full Heisenberg group $\mathbf{H} = \mathbf{H}(d)$ is the Euclidean space $\R^{2d} \times \R$ under the group multiplication
	\begin{equation}\label{eq_H_multiplication}
		(x,\omega,\tau) \bullet (x',\omega',\tau') = (x+x',\omega+\omega',\tau+\tau' + \tfrac{1}{2}(x' \cdot \omega - x \cdot \omega'))
	\end{equation}
\end{definition}
A comparison of the multiplications \eqref{eq_Hr_multiplication} and \eqref{eq_H_multiplication} shows that $\mathbf{H}_r$ is a quotient of $\mathbf{H}$ with respect to the subgroup $\{0\} \times \{0\} \times \Z$. The mapping $(x,\omega,\tau) \mapsto (x, \omega, e^{2 \pi i \tau})$ is a homomorphism from $\mathbf{H}$ onto $\mathbf{H}_r$.

As every locally compact group, the Heisenberg group has a Haar measure, which is invariant under (left) group translations. For the full Heisenberg group $\mathbf{H}$, it is the Lebesgue measure on $\R^{2d+1}$ and for $\mathbf{H}_r$ it is the Lebesgue measure on $\R^{2d} \times \T$.
\begin{lemma}\label{lem_Haar_measure}
	The Lebesgue measure $d \mathbf{h} = d(x,\omega,\tau)$ is invariant under (left) translations of $\mathbf{H}$. This means that
	\begin{equation}
		\int_{\mathbf{H}} F(\mathbf{h}) \, d\mathbf{h}= \int_{\mathbf{H}} F(\mathbf{h}_0 \bullet \mathbf{h}) \, d\mathbf{h}
	\end{equation}
	for every $F \in L^1(\R^{2d+1})$ and $\mathbf{h}_0 \in \mathbf{H}$.
\end{lemma}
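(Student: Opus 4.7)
My plan is to prove this by a direct change of variables. The statement is that the Lebesgue measure on $\R^{2d+1}$ is left-invariant under the group operation $\bullet$, so I will set up the substitution $\mathbf{h} \mapsto \mathbf{h}_0 \bullet \mathbf{h}$ and verify that its Jacobian determinant equals $1$.

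Concretely, I would fix $\mathbf{h}_0 = (x_0, \omega_0, \tau_0) \in \mathbf{H}$ and consider the map
\begin{equation}
\Phi_{\mathbf{h}_0} \colon \R^{2d+1} \to \R^{2d+1}, \qquad (x, \omega, \tau) \mapsto \mathbf{h}_0 \bullet (x, \omega, \tau) = \Big( x_0 + x, \ \omega_0 + \omega, \ \tau_0 + \tau + \tfrac{1}{2}(x \cdot \omega_0 - x_0 \cdot \omega) \Big),
\end{equation}
where I have used the definition \eqref{eq_H_multiplication} of the group multiplication with the roles of the primed and unprimed variables in the second factor. This is a smooth diffeomorphism of $\R^{2d+1}$ (the inverse being left multiplication by $\mathbf{h}_0^{-1}$), so the standard change of variables formula applies and it remains to compute $|\det D\Phi_{\mathbf{h}_0}|$.

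Writing the coordinates in the order $(x, \omega, \tau)$ and using the block structure $x \in \Rd$, $\omega \in \Rd$, $\tau \in \R$, the differential is
\begin{equation}
D\Phi_{\mathbf{h}_0} = \begin{pmatrix} I_d & 0 & 0 \\ 0 & I_d & 0 \\ \tfrac{1}{2} \omega_0^T & -\tfrac{1}{2} x_0^T & 1 \end{pmatrix},
\end{equation}
which is block lower-triangular with unit diagonal blocks, so $\det D\Phi_{\mathbf{h}_0} = 1$. Applying the change of variables $\mathbf{h} \mapsto \Phi_{\mathbf{h}_0}(\mathbf{h})$ to $\int_{\mathbf{H}} F(\mathbf{h}) \, d\mathbf{h}$ therefore yields $\int_{\mathbf{H}} F(\mathbf{h}_0 \bullet \mathbf{h}) \, d\mathbf{h}$, which is the required identity.

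There is essentially no obstacle: the only subtlety is bookkeeping the block structure of the $(2d+1) \times (2d+1)$ Jacobian correctly, noticing that the quadratic-looking term $\tfrac{1}{2}(x \cdot \omega_0 - x_0 \cdot \omega)$ is in fact linear in $(x, \omega)$ with $\mathbf{h}_0$ fixed, so its partial derivatives contribute only to the last row and not to the diagonal. The analogous statement for $\mathbf{H}_r$ then follows immediately, since $\mathbf{H}_r$ is the quotient of $\mathbf{H}$ by the central subgroup $\{0\} \times \{0\} \times \Z$ and the Haar measure on the quotient is induced by the Lebesgue measure on $\R^{2d} \times [0,1)$.
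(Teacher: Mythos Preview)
Your proof is correct and essentially the same as the paper's. The paper writes out the iterated integral and invokes translation invariance of the Lebesgue measure in each coordinate separately, whereas you package the same observation as a single Jacobian computation; both amount to noting that left multiplication by $\mathbf{h}_0$ is an affine map on $\R^{2d+1}$ with unit Jacobian.
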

\begin{proof}
	By definition,
	\begin{equation}
		\int_{\mathbf{H}} F(\mathbf{h}) \, d\mathbf{h} = \int_{\Rd} \int_{\Rd} \int_{\R} F(x,\omega,\tau) \, d(x,\omega,\tau) .
	\end{equation}
	Since $F \in L^1(\R^{2d+1})$, every order of integration is allowed. Then
	\begin{align}
		\int_{\mathbf{H}} F(\mathbf{h}_0 \bullet \mathbf{h}) \, d\mathbf{h}
		& = \int_{\R^{2d}} \int_{\R} F\left((x_0,\omega_0,\tau_0) \bullet (x,\omega,\tau)\right) \,d\tau \, d(x,\omega)\\
		& = \int_{\R^{2d}} \int_{\R} F\left(x_0+x, \omega_0+\omega, \tau_0+\tau+\tfrac{1}{2}(x\cdot \omega_0 - x_0\cdot\omega)\right) \,d\tau \, d(x,\omega)\\
		& = \int_{\R^{2d}} \int_{\R} F(x,\omega,\tau) \, d\tau \, d(x,\omega) = \int_{\mathbf{H}} F(\mathbf{h}) \, d\mathbf{h} ,
	\end{align}
	as the Lebesgue measure is invariant under the appearing translations.
\end{proof}
Similarly, the Haar measure on $\mathbf{H}_r$ is
\begin{equation}
	\int_{\mathbf{H}_r} F(\mathbf{h}_r) \, d\mathbf{h}_r = \int_{\R^{2d}} \int_0^1 F(x, \omega, e^{2 \pi i \tau}) \, d\tau \, d(x,\omega) .
\end{equation}
With the invariant (left) Haar measure at hand, we can now define a convolution on $L^1(\mathbf{H})$ or $L^1(\mathbf{H}_r)$. We will now omit the notation for the group multiplication.
\begin{definition}
	For $F_1, F_2 \in L^1(\mathbf{H})$, the (left) convolution is defined as
	\begin{equation}\label{eq_H_convolution}
		(F_1 * F_2)(\mathbf{h}_0) = \int_{\mathbf{H}} F_1(\mathbf{h}) F_2(\mathbf{h}^{-1} \mathbf{h}_0) \, d\mathbf{h}.
	\end{equation}
\end{definition}
It is obvious how to come up with the definition of the convolution for $\mathbf{H}_r$. With this operation, the Heisenberg group becomes a Banach algebra and satisfies
\begin{equation}
	\norm{F_1 * F_2}_{L^1(\mathbf{H})} \leq \norm{F_1}_{L^1(\mathbf{H})}\norm{F_2}_{L^1(\mathbf{H})} .
\end{equation}
While $L^1(\mathbf{H})$ and $L^1(\R^{2d+1})$ coincide as Banach spaces, they differ as Banach algebras. $L^1(\R^{2d+1})$ with the usual convolution is commutative, while $L^1(\mathbf{H})$ is not commutative. We will now compare the two convolutions. In terms of coordinates, \eqref{eq_H_convolution} becomes
\begin{equation}
	(F_1 * F_2)(x_0,\omega_0,\tau_0) = \int_{\R^{2d}}\int_{\R} F_1(x,\omega,\tau)F_2(x_0-x,\omega_0-\omega,\tau_0-\tau+\tfrac{1}{2}(x \cdot \omega_0 - x_0 \cdot \omega)) \, d\tau \, d(x,\omega) .
\end{equation}

In time-frequency analysis, the variables $x$ and $\omega$ have a physical meaning, while the auxiliary variable $\tau$ is added to create a group structure. It is sometimes necessary to extend a function from the time-frequency plane to the Heisenberg group. Since $\T$ is compact, it is often more convenient to extend to the reduced Heisenberg group $\mathbf{H}_r$. The extension we use is given by
\begin{equation}
	F_r(x,\omega,e^{2 \pi i \tau}) = e^{-2 \pi i \tau} F(x,\omega).
\end{equation}
We note that $\norm{F}_{L^p(\R^{2d})} = \norm{F_r}_{L^p(\mathbf{H}_r)}$ and the convolution of the extension yields a new operation on $L^1(\R^{2d})$:
\begin{equation}
	F_r *_{\mathbf{H}_r} G_r(x, \omega) = (F_r * G_r)(x,\omega,e^{2 \pi i \tau})
\end{equation}
Let $F,G \in L^1(\R^{2d})$, then
\begin{align}
	& (F_r * G_r)(x,\omega,e^{2 \pi i \tau})\\
	= \, & \int_{\R^{2d}} \int_0^1 F_r(x',\omega',e^{2 \pi i \tau'}) G_r(x-x',\omega-\omega', e^{2 \pi i (\tau - \tau')} e^{\pi i (x' \cdot \omega - x \cdot \omega')} \, d\tau' \, d(x',\omega')\\
	= \, & e^{-2 \pi i \tau} \int_{\R^{2d}} F(x',\omega') G(x-x',\omega-\omega') e^{\pi i (x' \cdot \omega - x \cdot \omega')} \, d(x',\omega').
\end{align}
This leads to the following notation.
\begin{definition}
	For two functions $F, G \in L^1(\R^{2d})$, the twisted convolution is given by
	\begin{equation}\label{eq_def_twisted_conv}
		F \natural G(x,\omega) = \int_{\R^{2d}} F(x',\omega')G(x-x',\omega-\omega')e^{\pi i(x' \cdot \omega - x \cdot \omega')} \, d(x',\omega') .
	\end{equation}
\end{definition}
With this notation we get
\begin{equation}\label{eq_twisted_conv_Hr}
	F_r *_{\mathbf{H}_r} G_r = (F \natural G)_r .
\end{equation}
Most properties of the ordinary convolution carry over to the twisted convolution. However, as the twisted convolution is intimately connected to $\mathbf{H}_r$, it is non-commutative and enjoys some properties which might come as a surprise. For example, $L^2 \natural L^2 \subset L^2$ and $L^2$ is actually a Banach algebra under the twisted convolution.

We started our excursion to the Heisenberg group from the symmetric time-frequency shifts $\rho(\l)$. However, we could as well have started from time-frequency shifts $\pi(\l)$, which lead, again, to a new group law. To have our notation closer to the standard literature, however, we will not use $\pi(\l)$ but its reflected version $\widetilde{\pi}(\l) = T_x M_\omega$. The composition of two such time-frequency shifts yields
\begin{equation}
	\widetilde{\pi}(\l) \widetilde{\pi}(\l') = e^{2 \pi i x' \cdot \omega} \widetilde{\pi}(\l+\l').
\end{equation}
\begin{definition}
	The polarized Heisenberg group $\mathbf{H}^{pol}$ is $\R^{2d+1}$ under the group multiplication
	\begin{equation}\label{eq_Hpol_multiplication}
		(x,\omega,\tau) \odot (x',\omega',\tau') = (x+x',\omega+\omega', \tau+\tau'+x'\cdot\omega) .
	\end{equation}
\end{definition}
The quotient $\mathbf{H}^{pol}/(\{0\}\times\{0\}\times\Z)$ under the homomorphism $(x,\omega,\tau) \mapsto (x,\omega,e^{2 \pi i \tau})$ is then the reduced polarized Heisenberg group under the group multiplication
\begin{equation}
	(x,\omega,e^{2 \pi i \tau}) \circledcirc (x',\omega',e^{2 \pi i \tau'}) = (x+x',\omega+\omega',e^{2 \pi i(\tau+\tau'+x'\cdot\omega)}) .
\end{equation}
Note that the inverse element in $\mathbf{H}^{pol}$ is given by $(x,\omega,\tau)^{-1} = (-x,-\omega,-\tau+x\cdot \omega)$, whereas in $\mathbf{H}$ it was given by $(x,\omega,\tau)^{-1} = (-x,-\omega,-\tau)$.

We note that $\mathbf{H}^{pol}$ can be identified with a matrix group with elements of the form
\begin{equation}
	\begin{pmatrix}
		1 & \omega_1 & \ldots & \omega_d & \tau\\
		0 & 1 & \ldots & 0 & x_1\\
		\vdots & & \ddots & & \vdots\\
		0 & 0 & \ldots & 1 & x_d\\
		0 & 0 & \ldots & 0 & 1
	\end{pmatrix}
\end{equation}
From this identification, it is rather easy to note that \eqref{eq_Hpol_multiplication} indeed defines a group multiplication. The polarized version of the Heisenberg group with multiplication given by \eqref{eq_Hpol_multiplication} can be generalized to associate a Heisenberg-type group to any locally compact Abelian group \cite{Igu72}. On the other hand, the appearance of the symplectic form is special to the symmetric version of the Heisenberg group.

On $\Rd$, however, the choice of $\mathbf{H}$ versus $\mathbf{H}^{pol}$ is more a matter of taste and convenience, as the following lemma shows.
\begin{lemma}\label{lem_iso_H_Hpol}
	$\mathbf{H}$ and $\mathbf{H}^{pol}$ are isomorphic via the group isomorphism
	\begin{equation}
		\iota(x,\omega,\tau) = (x,\omega, \tau + \tfrac{1}{2} x \cdot \omega)
	\end{equation}
	from $\mathbf{H}$ onto $\mathbf{H}^{pol}$.
\end{lemma}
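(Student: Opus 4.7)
The plan is to verify the two requirements for a group isomorphism: that $\iota$ is a bijection of the underlying sets, and that it intertwines the two group multiplications $\bullet$ and $\odot$.

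First I would observe that as a map $\R^{2d+1} \to \R^{2d+1}$, $\iota$ is manifestly bijective (and a diffeomorphism), with inverse $\iota^{-1}(x,\omega,\tau) = (x,\omega, \tau - \tfrac{1}{2} x \cdot \omega)$; only the third coordinate is altered, and by a smooth function of $(x,\omega)$. So the content of the lemma lies entirely in the algebraic identity $\iota(\mathbf{h} \bullet \mathbf{h}') = \iota(\mathbf{h}) \odot \iota(\mathbf{h}')$.

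Next I would carry out the direct computation. Write $\mathbf{h} = (x,\omega,\tau)$ and $\mathbf{h}' = (x',\omega',\tau')$. Using \eqref{eq_H_multiplication} and then applying $\iota$, the third coordinate of $\iota(\mathbf{h} \bullet \mathbf{h}')$ becomes
\begin{equation}
\tau + \tau' + \tfrac{1}{2}(x'\cdot\omega - x\cdot\omega') + \tfrac{1}{2}(x+x')\cdot(\omega+\omega').
\end{equation}
Expanding the last term and collecting, the two occurrences of $x\cdot\omega'$ cancel while the two of $x'\cdot\omega$ add, giving
\begin{equation}
(\tau + \tfrac{1}{2}x\cdot\omega) + (\tau' + \tfrac{1}{2}x'\cdot\omega') + x'\cdot\omega,
\end{equation}
which is exactly the third coordinate of $\iota(\mathbf{h}) \odot \iota(\mathbf{h}')$ according to \eqref{eq_Hpol_multiplication}. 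The first two coordinates agree trivially on both sides, so the homomorphism property holds.

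Since no step is substantively difficult, there is no real obstacle here; the only thing to be careful about is the sign/factor bookkeeping in the third-coordinate computation, where it is tempting to lose a factor of $\tfrac{1}{2}$ or a cross-term. I would therefore present the expansion in full rather than abbreviate it, so the cancellation of $\tfrac{1}{2} x \cdot \omega' \pm \tfrac{1}{2} x \cdot \omega'$ and the combination of $\tfrac{1}{2} x' \cdot \omega + \tfrac{1}{2} x' \cdot \omega = x' \cdot \omega$ are visible. Combined with the observation about bijectivity, this completes the proof.
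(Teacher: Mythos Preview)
Your proof is correct and follows exactly the same approach as the paper: note that $\iota$ is a bijection on $\R^{2d+1}$, then verify the homomorphism property by direct computation of the third coordinate on both sides. The only cosmetic difference is that you explicitly exhibit the inverse and spell out the cancellation of cross-terms, whereas the paper leaves the final simplification to the reader.
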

\begin{proof}
	We see that $\iota$ is a bijection on $\R^{2d+1}$. For $\mathbf{h}, \mathbf{h}' \in \mathbf{H}$ we compute
	\begin{align}
		\iota (\mathbf{h} \bullet \mathbf{h}') & = \iota\left(x+x',\omega+\omega',\tau+\tau'+ \tfrac{1}{2}(x' \cdot \omega - x \cdot \omega') \right)\\
		& = \left(x+x',\omega+\omega',\tau+\tau'+\tfrac{1}{2}(x' \cdot \omega - x \cdot \omega')+\tfrac{1}{2} (x+x')\cdot(\omega+\omega')\right) .
	\end{align}
	Now, we compute the multiplication of $\iota(\mathbf{h})$ and $\iota(\mathbf{h}')$ in $\mathbf{H}^{pol}$;
	\begin{align}
		\iota(\mathbf{h}) \odot \iota(\mathbf{h}') & = \left(x,\omega,\tau+\tfrac{1}{2}x \cdot \omega\right) \odot \left(x',\omega',\tau'+\tfrac{1}{2}x' \cdot \omega'\right)\\
		& = \left(x+x',\omega+\omega',\tau+\tau'+\tfrac{1}{2}(x \cdot \omega + x' \cdot \omega') + x' \cdot \omega \right)\\
		& = \iota(\mathbf{h} \bullet \mathbf{h}') .
	\end{align}
\end{proof}

\subsection{Representation Theory}
In the previous part, we have chosen a path contrary to the usual direction in mathematics. We deduced the Heisenberg group from the composition rules of time-frequency shifts. More often, however, algebraic structures are studied by means of their morphisms into matrices or operators. This study is called representation theory and is an own mathematical branch. We will only collect the most important notions and results. Also, we note that the concepts we are about to introduce make sense for arbitrary locally compact groups, but we will limit ourselves to the study of the Heisenberg group.

\begin{definition}\label{def_uni_rep}
	A (unitary) representation $(\mathcal{H}, \pi)$ of a locally compact group $\mathbf{H}$ is a strongly continuous homomorphism $\pi$ from $\mathbf{H}$ into the group of unitary operators $\mathcal{U}(\mathcal{H})$ on the Hilbert space $\mathcal{H}$. This means that
	\begin{enumerate}[(i)]
		\item $\pi( \mathbf{h}_1 \mathbf{h}_2) = \pi( \mathbf{h}_1) \pi( \mathbf{h}_2), \quad \mathbf{h}_1, \mathbf{h}_2 \in \mathbf{H}$.
		\item $\pi(\mathbf{h}^{-1}) = \pi(\mathbf{h})^{-1} = \pi(\mathbf{h})^*, \quad \mathbf{h} \in \mathbf{H}$.
		\item If $\lim_{k \to \infty} \mathbf{h}_k = \mathbf{h}$ and $f \in \mathcal{H}$, then $\lim_{k \to \infty} \pi(\mathbf{h}_k) f = \pi(\mathbf{h}) f$.
	\end{enumerate}
\end{definition}

\begin{definition}
	Two representations $(\mathcal{H}_1, \pi_1)$ and $(\mathcal{H}_2, \pi_2)$ are equivalent if there exists a unitary operator $U : \mathcal{H}_1 \to \mathcal{H}_2$ such that
	\begin{equation}
		U \, \pi_1(\mathbf{h}) \, U^{-1} = \pi_2(\mathbf{h}) \quad \forall \mathbf{h} \in \mathbf{H}.
	\end{equation}
	In this case, $U$ is called the intertwining operator.
\end{definition}
Often the attention is restricted to a special class of representations, namely irreducible representations.
\begin{definition}
	A representation $(\mathcal{H}, \pi)$ is called irreducible if $\{0\}$ and $\mathcal{H}$ are the only closed subspaces that are invariant \footnote{A subspace $V \subset \mathcal{H}$ is called invariant under an operator $\pi: \mathcal{H} \to \mathcal{H}$ if $\pi V \subset V$, i.e., $\pi v \in V$ for all $v \in V$.} under all operators $\pi(\mathbf{h})$, $\mathbf{h} \in \mathbf{H}$.
\end{definition}

The goal of representation theory is to understand all unitary representations of a locally compact group. Since in a technical sense every representation can be decomposed into irreducible representations, it is often sufficient to classify these. Our modest goal is to understand the basic aspects of the representation theory of the Heisenberg group and to find all its representations.

Actually, the heart of time-frequency analysis is one particular representation of $\mathbf{H}$.

\begin{example}[The Schrödinger Representation]
	The Schrödinger representation is a representation of the Heisenberg group by (symmetric) time-frequency shifts. It acts on $\Lt$ by means of the unitary operators
	\begin{equation}
		\rho(\mathbf{h}) = \rho(x, \omega, \tau) = e^{2 \pi i \tau} M_{\omega/2} T_x M_{\omega/2} = e^{2 \pi i \tau} e^{ \pi i x \cdot \omega} T_x M_\omega .
	\end{equation}
	\flushright{$\diamond$}
\end{example}
It follows from \eqref{eq_Hr_composition} and \eqref{eq_Hr_multiplication} that $\rho$ is indeed a unitary representation of $\mathbf{H}$. By using the dilations $\delta_\kappa(x,\omega,\tau) = (\kappa x, \omega, \kappa \tau)$ with $\kappa \in \R \backslash \{0\}$, we obtain a one-parameter family $\rho_\kappa$ of unitary representations as follows:
\begin{equation}
	\rho_\kappa(\mathbf{h}) = \rho \circ \delta_\kappa(\mathbf{h}) = \rho(\kappa x, \omega , \kappa \tau) =  e^{2 \pi i \kappa \tau} e^{\pi i \kappa x \cdot \omega} T_{\kappa x} M_{\omega} .
\end{equation}
With the isomorphism $\iota$ from Lemma \ref{lem_iso_H_Hpol} these representations can be easily transformed to the polarized Heisenberg group. For $(x, \omega, \tau) \in \mathbf{H}^{pol}$, the representation $\rho'_\kappa$ is given by
\begin{equation}
	\rho'_\kappa(x,\omega,\tau) = \rho_\kappa( \iota^{-1}(x,\omega,\tau)) = e^{2 \pi i \kappa \tau} T_{\kappa x} M_\omega,
\end{equation}
which is compatible with the composition of time-frequency shifts $\pi^\vee(x,\omega) = T_x M_\omega$.

As functions are usually easier to deal with than operators, one may consider taking inner products to go from a representation $(\mathcal{H}, \pi)$ to functions on the corresponding group.
\begin{definition}
	Let $(\mathcal{H}, \pi)$ be a representation of $\mathbf{H}$ and $f,g \in \mathcal{H}$. Then, the function $\mathbf{h} \mapsto \langle f, \pi(\mathbf{h}) g \rangle$ is called a representation coefficient of $\pi$.
\end{definition}

\begin{example}
	For the Schrödinger representation we obtain the following representation coefficients:
	\begin{equation}\label{eq_rep_coeff_Schrödinger}
		\langle f, \rho(x, \omega, \tau) g \rangle = e^{-2 \pi i \tau} \langle f, \rho(x,\omega) g \rangle = e^{-2 \pi i \tau} A(f,g)(x,\omega) = e^{-2 \pi i \tau} e^{\pi i x \cdot \omega} V_g f(x, \omega).
	\end{equation}
	\flushright{$\diamond$}
\end{example}
Thus, up to the phase factor $e^{-2 \pi i \tau} e^{\pi i x \cdot \omega}$, the representation coefficients of $\rho$ coincide with the STFT. Therefore, we may use our knowledge on the STFT to analyze the Schrödinger representation. On the other hand, abstract methods from representation theory are applicable to the STFT. The identity \eqref{eq_rep_coeff_Schrödinger} is the very reason why the Heisenberg group and its representation theory play such an important role in time-frequency analysis.

We will now state the main properties of the Schrödinger representation.
\begin{theorem}\label{thm_Schrödinger_irreducible}
	Each representation $\rho_\kappa$, $\kappa \in \R \backslash \{0\}$, is an irreducible, unitary representation of $\mathbf{H}$ with kernel $\{0\} \times \{0\} \times \tfrac{1}{\kappa} \Z$.
\end{theorem}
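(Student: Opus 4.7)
My plan is to split the claim into four pieces, treating the three easy ones (unitarity, strong continuity, homomorphism property) quickly, then concentrating on irreducibility, and finally identifying the kernel.

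\textbf{Unitarity, continuity, and homomorphism.} Each factor in $\rho_\kappa(x,\omega,\tau) = e^{2\pi i\kappa\tau}e^{\pi i\kappa x\cdot\omega} T_{\kappa x} M_\omega$ is unitary on $\Lt$, so $\rho_\kappa(\mathbf h)$ is unitary. Strong continuity would reduce to the strong continuity of the translation and modulation groups, already invoked for instance in the proof that $V_gf$ is uniformly continuous. For the homomorphism property, I would compute $\rho_\kappa(\mathbf h_1)\rho_\kappa(\mathbf h_2)$ directly using \eqref{eq_tf_composition} to pass a modulation past a translation, collect the resulting phase factor, and compare with the group law \eqref{eq_H_multiplication} applied inside $\rho_\kappa\circ\delta_\kappa$; the symmetric splitting of the cocycle built into $\mathbf H$ is exactly what makes the bookkeeping come out right.

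\textbf{Irreducibility.} This is the main point. Let $V\subset\Lt$ be a closed $\rho_\kappa$-invariant subspace with $V\neq\{0\}$, pick a non-zero $f\in V$, and let $g\in V^\perp$. Since $V$ is invariant under every unitary $\rho_\kappa(\mathbf h)$, and $V^\perp$ is then invariant under the adjoints $\rho_\kappa(\mathbf h)^*=\rho_\kappa(\mathbf h^{-1})$, we get
\begin{equation}
\langle \rho_\kappa(x,\omega,0) f,\, g\rangle = 0 \qquad \forall\, (x,\omega)\in\R^{2d}.
\end{equation}
Stripping off the unimodular prefactor and renaming $\kappa x\mapsto y$ (legitimate because $\kappa\neq 0$), this says $\langle T_y M_\omega f, g\rangle=0$ for every $(y,\omega)\in\R^{2d}$. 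Using the canonical commutation relation \eqref{eq_comm_rel} to swap $T_y$ and $M_\omega$ absorbs a further phase factor and gives $V_f g(y,\omega)=\langle g,M_\omega T_y f\rangle=0$ for all $(y,\omega)$. Corollary \ref{cor_Vgf_L2} now yields $\|g\|_2\|f\|_2=\|V_f g\|_2=0$, and since $f\neq 0$ we conclude $g=0$. Hence $V^\perp=\{0\}$ and $V=\Lt$. The hard part, conceptually, is recognizing that the representation coefficient is (up to a unimodular factor) precisely the STFT, so that the orthogonality relations do the heavy lifting; once this identification is made the argument is short.

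\textbf{Kernel.} Suppose $\rho_\kappa(x,\omega,\tau)=I$, i.e.
\begin{equation}
e^{2\pi i\kappa\tau}e^{\pi i\kappa x\cdot\omega}\,T_{\kappa x}M_\omega = I.
\end{equation}
Applied to $f=\indicator_{[0,1]^d}$, the right-hand side is supported in $[0,1]^d$ while the left-hand side is supported in $\kappa x+[0,1]^d$, forcing $\kappa x=0$ and hence $x=0$. The remaining identity $M_\omega f = cf$ with $|c|=1$ means $e^{2\pi i\omega\cdot t}=c$ almost everywhere on $[0,1]^d$, which forces $\omega=0$. With $x=\omega=0$ the equation reduces to $e^{2\pi i\kappa\tau}=1$, i.e.\ $\tau\in\tfrac1\kappa\Z$. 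Conversely, every such triple does lie in the kernel, by a direct substitution. Thus $\ker\rho_\kappa=\{0\}\times\{0\}\times\tfrac1\kappa\Z$, as claimed.
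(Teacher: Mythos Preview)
Your proof is correct and follows essentially the same route as the paper: the irreducibility argument is identical (identify the representation coefficient with the STFT and use that $V_f g\equiv 0$ forces $g=0$), and the kernel argument is a minor variant (you use the full indicator and equality of supports, then argue directly that $e^{2\pi i\omega\cdot t}$ constant on a cube forces $\omega=0$, while the paper uses a small-support test function and repeats the support trick on the Fourier side). The only structural difference is that you treat general $\kappa$ directly via the substitution $\kappa x\mapsto y$, whereas the paper handles $\kappa=1$ first and then transports the conclusion through the automorphism $\delta_\kappa$.
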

\begin{proof}
	We verify the list of properties specified in Definition \ref{def_uni_rep}. Obviously, $\pi(\mathbf{h})$ is a unitary operator for each $\mathbf{h} \in \mathbf{H}$. It follows from the group law of $\mathbf{H}$ and the composition law of $\rho(x,\omega,\tau)$ that $\rho$ is a homomorphism from $\mathbf{H}$ into the group of unitary operators. Furthermore
	\begin{align}
		\rho(\mathbf{h}^{-1}) = \rho(-x,-\omega-\tau)
		= e^{-2 \pi i \tau} M_{-\omega/2} T_{-x} M_{-\omega/2}
		= \rho(\mathbf{h})^*.
	\end{align}
	
	\textit{Irreducibility}: Suppose $\mathcal{K} \neq \{0\}$ is an invariant closed subspace for $\rho$, i.e., $\rho \mathcal{K} \subset \mathcal{K}$. We have to show that $\mathcal{K} = \mathcal{H} = \Lt$.
	
	Fix a non-zero element $g \in \mathcal{K}$ and let $f \in \mathcal{K}^\perp$. Since $\rho(\mathbf{h}) g \in \mathcal{K}$ by the invariance of $\mathcal{K}$, we conclude that
	\begin{equation}
		0 = | \langle f, \rho(\mathbf{h}) g \rangle| = |A(f,g)(x,\omega)| = |V_gf(x,\omega)| \quad \forall (x,\omega) \in \R^{2d}.
	\end{equation}
	Since the STFT, and hence the ambiguity function, is one-to-one, we conclude that $f = 0$ and therefore $\mathcal{K}^\perp = \{0\}$.
	
	\textit{Strong continuity}: Suppose that $\mathbf{h}_n \to \mathbf{h}$ in $\mathbf{H}$ and $f \in \Lt$. Since $\norm{\rho(\mathbf{h}_n) f - \rho(\mathbf{h}) f}_2 = \norm{\rho(\mathbf{h}^{-1} \mathbf{h}_n) f - f}_2$, it suffices to assume that $\lim_{n \to \infty} \mathbf{h}_n = 0$ and to show that
	\begin{equation}
		\lim_{|x_n|+|\omega_n|+|\tau_n| \to 0} \norm{e^{2 \pi i \tau_n} M_{\omega_n/2} T_{x_n} M_{\omega_n/2}f - f}_2 = 0.
	\end{equation}
	This follows from the separate continuities
	\begin{equation}
		\lim_{|\tau_n| \to 0}\norm{e^{2 \pi i \tau_n} f - f}_2 = 0, \, \lim_{|x_n| \to 0} \norm{T_x f - f}_2 = 0 \, \text{ and } \, \lim_{|\omega_n| \to 0} \norm{M_\omega f - f}_2 = 0.
	\end{equation}
	
	\textit{Kernel of $\rho$}: Obviously $\rho(0,0,k) = I$ for $k \in \Z$. Conversely, if $\rho(x,\omega,\tau) = I$, then $e^{2 \pi i \tau} e^{\pi i x \cdot \omega} M_{\omega} f = T_{-x} f$ for all $f \in \Lt$. If $x \neq 0$, choose $f \neq 0$ with $\supp(f) \subset \{t \in \Rd \mid |t| \leq |x|/2 \}$. Then $\supp(M_\omega f) \cap \supp(T_{-x} f) = \emptyset$, which is a contradiction. Thus $x = 0$, which yields $e^{2 \pi i \tau} M_\omega f = f$. The same argument applied to $e^{2 \pi i \tau} T_\omega \widehat{f} = \widehat{f}$ yields $\omega = 0$. This implies that $\rho(\mathbf{h}) = I$ only for $\mathbf{h} = (0,0,k)$, $k \in \Z$.
	
	Since the dilation $\delta_\kappa(x,\omega,\tau) = (\kappa x, \omega, \kappa \tau)$ is an automorphism of $\mathbf{H}$ and since $\rho_\kappa = \rho \circ \delta_\kappa$, the representations $\rho_\kappa$ are irreducible with kernel $\delta_\kappa^{-1} (\{0\}\times\{0\}\times\Z) = \{0\}\times\{0\}\times\frac{1}{\kappa}\Z$.
\end{proof}

Since the kernel of $\rho$ is $\{0\} \times \{0\} \times \Z$, $\rho$ restricts to an irreducible, unitary representation of the reduced Heisenberg group $\mathbf{H}_r$, which we also call $\rho$, via
\begin{equation}
	\rho(x, \omega, e^{2 \pi i \tau}) = e^{2 \pi i \tau} e^{\pi i x \cdot \omega} T_x M_\omega .
\end{equation}
If we work only with the Schrödinger representation, it is sufficient to consider $\mathbf{H}_r$ instead of $\mathbf{H}$. In this case, the compactness of the center of $\mathbf{H}_r$ is advantageous in certain constructions.

For a deeper investigation of the representation of $\mathbf{H}$ we need another (standard) construction from representation theory, namely the integrated representation. This will set up a bijective correspondence between unitary representations of $\mathbf{H}$ and so-called non-degenerate $*$-representations of the Banach algebra $L^1(\mathbf{H})$.

\begin{definition}[Integrated Representation]
	Given a unitary representation $(\mathcal{H}, \pi)$ of $\mathbf{H}$ and given $F \in L^1(\mathbf{H})$, $\pi(F)$ denotes the operator
	\begin{equation}\label{eq_rep_operator}
		\pi(F) = \int_{\mathbf{H}} F(\mathbf{h}) \pi(\mathbf{h}) \, d \mathbf{h} .
	\end{equation}
	The operator-valued integral is defined in the weak sense by
	\begin{equation}
		\langle \pi(F) f, g \rangle = \int_{\mathbf{H}} F(\mathbf{h}) \langle \pi(\mathbf{h}) f, g \rangle \, d \mathbf{h},
	\end{equation}
	for all $f,g \in \mathcal{H}$.
\end{definition}
\begin{example}
	The irreducible unitary representations of $\Rd$ can be identified with the characters $\chi_\omega(x) = e^{2 \pi i x \cdot \omega}$, $x, \omega \in \Rd$. The integrated representation is then given by
	\begin{equation}
		\chi_\omega(f) = \int_{\Rd} f(x) e^{2 \pi i x \cdot \omega} \, dx = \widehat{f}(-\omega).
	\end{equation}
	The integrated representation can thus be seen as a substitute for the Fourier transform on non-Abelian groups.
	\flushright{$\diamond$}
\end{example}
The main properties of the integrated representation readily follow from its definition.
\begin{proposition}\label{pro_int_rep}
	\begin{enumerate}[(a)]
		\item\label{pro_int_rep_a} The operator $\pi(F)$ is bounded on $\mathcal{H}$ with operator norm
		\begin{equation}
			\norm{\pi(F)}_{op} \leq \norm{F}_1 .
		\end{equation}
		\item\label{pro_int_rep_b} The mapping $F \mapsto \pi(F)$ is an algebra homomorphism from $L^1(\mathbf{H})$ into the set of bounded operators on $\mathcal{H}$.
		\item\label{pro_int_rep_c} Let $F^*(\mathbf{h}) = \overline{F(\mathbf{h}^{-1})}$ denote the involution on $L^1(\mathbf{H})$. Then $\pi(F^*) = \pi(F)^*$.
	\end{enumerate}
\end{proposition}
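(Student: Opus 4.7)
The plan is to handle the three assertions in order, exploiting in each case the weak definition \eqref{eq_rep_operator} together with the group-homomorphism property of $\pi$ and the (two-sided) invariance of the Haar measure on $\mathbf{H}$ established in Lemma \ref{lem_Haar_measure}. Throughout, I will freely use that $\pi(\mathbf{h})$ is unitary, so $|\langle \pi(\mathbf{h}) f, g\rangle| \leq \norm{f}_\mathcal{H} \norm{g}_\mathcal{H}$ by Cauchy--Schwarz.

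For part \eqref{pro_int_rep_a}, I would estimate
\begin{equation}
|\langle \pi(F) f, g\rangle| \leq \int_\mathbf{H} |F(\mathbf{h})|\, |\langle \pi(\mathbf{h}) f, g\rangle|\, d\mathbf{h} \leq \norm{f}_\mathcal{H} \norm{g}_\mathcal{H} \int_\mathbf{H} |F(\mathbf{h})|\, d\mathbf{h},
\end{equation}
and then take the supremum over $g$ with $\norm{g}_\mathcal{H} = 1$ to conclude that $\norm{\pi(F) f}_\mathcal{H} \leq \norm{F}_1 \norm{f}_\mathcal{H}$. A small preliminary step is to observe that this same estimate also shows $\pi(F)$ is a well-defined bounded linear operator (i.e. the weak integral really defines an element of $\mathcal{H}$), via the Riesz representation theorem applied to the conjugate-linear functional $g \mapsto \overline{\langle \pi(F) f, g\rangle}$.

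For part \eqref{pro_int_rep_b}, linearity of $F \mapsto \pi(F)$ in $F$ is immediate from linearity of the integral. The substantive content is the identity $\pi(F_1 * F_2) = \pi(F_1) \pi(F_2)$. Unfolding the definitions and invoking the group-law $\pi(\mathbf{h}_1 \mathbf{h}_2) = \pi(\mathbf{h}_1)\pi(\mathbf{h}_2)$ gives
\begin{align}
\langle \pi(F_1 * F_2) f, g\rangle
 & = \int_\mathbf{H} \!\!\int_\mathbf{H} F_1(\mathbf{h}') F_2(\mathbf{h}'^{-1}\mathbf{h}) \langle \pi(\mathbf{h}) f, g\rangle\, d\mathbf{h}'\, d\mathbf{h} \\
 & = \int_\mathbf{H} F_1(\mathbf{h}') \int_\mathbf{H} F_2(\mathbf{k}) \langle \pi(\mathbf{h}') \pi(\mathbf{k}) f, g\rangle\, d\mathbf{k}\, d\mathbf{h}',
\end{align}
where in the inner integral I substitute $\mathbf{k} = \mathbf{h}'^{-1} \mathbf{h}$ and use the left-invariance of the Haar measure from Lemma \ref{lem_Haar_measure}. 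Recognising the inner integral as $\langle \pi(F_2) f, \pi(\mathbf{h}')^* g\rangle$ and then the outer integral as $\langle \pi(F_1) (\pi(F_2) f), g\rangle$ completes the step. The application of Fubini's theorem is justified by the $L^1$-bound from part \eqref{pro_int_rep_a}, combined with $\norm{F_1 * F_2}_1 \leq \norm{F_1}_1 \norm{F_2}_1$.

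For part \eqref{pro_int_rep_c}, I would compute for $f,g \in \mathcal{H}$
\begin{equation}
\langle \pi(F^*) f, g\rangle = \int_\mathbf{H} \overline{F(\mathbf{h}^{-1})} \langle \pi(\mathbf{h}) f, g\rangle\, d\mathbf{h},
\end{equation}
apply the change of variables $\mathbf{h} \mapsto \mathbf{h}^{-1}$, which in view of the explicit form of the group law preserves the Lebesgue measure on $\R^{2d+1}$ (the map is $(x,\omega,\tau) \mapsto (-x,-\omega,-\tau)$, whose Jacobian determinant has absolute value one; equivalently, $\mathbf{H}$ is unimodular), and then use $\pi(\mathbf{h}^{-1}) = \pi(\mathbf{h})^*$ to obtain
\begin{equation}
\langle \pi(F^*) f, g\rangle = \int_\mathbf{H} \overline{F(\mathbf{h})} \langle f, \pi(\mathbf{h}) g\rangle\, d\mathbf{h} = \overline{\int_\mathbf{H} F(\mathbf{h}) \langle \pi(\mathbf{h}) g, f\rangle\, d\mathbf{h}} = \overline{\langle \pi(F) g, f\rangle} = \langle f, \pi(F) g\rangle.
\end{equation}
This is $\langle \pi(F)^* f, g\rangle$, as desired. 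The only real care-point throughout is the bookkeeping of the invariance properties of $d\mathbf{h}$ --- left-invariance for part \eqref{pro_int_rep_b} and inversion-invariance (unimodularity) for part \eqref{pro_int_rep_c}; neither is deep for $\mathbf{H}$, but both must be explicitly invoked. The other routine point is the justification of Fubini, which is covered by the trivial bound $|\langle \pi(\mathbf{h}) f, g\rangle| \leq \norm{f}_\mathcal{H}\norm{g}_\mathcal{H}$ together with the fact that $F, F_1 * F_2 \in L^1(\mathbf{H})$.
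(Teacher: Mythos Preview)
Your proof is correct and follows essentially the same approach as the paper: the same Cauchy--Schwarz estimate for (a), the same substitution $\mathbf{k}=\mathbf{h}'^{-1}\mathbf{h}$ with left-invariance of Haar measure for (b), and the same inversion $\mathbf{h}\mapsto\mathbf{h}^{-1}$ using unimodularity for (c). The paper presents (b) by computing $\langle\pi(F_1*F_2)f,g\rangle$ and $\langle\pi(F_1)\pi(F_2)f,g\rangle$ separately and matching them, whereas you chain them in a single computation, but this is purely cosmetic.
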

\begin{proof}
	\begin{enumerate}[(a)]
		\item The sesquilinear form $(f,g) \mapsto \int_{\mathbf{H}} F(\mathbf{h}) \langle \pi(\mathbf{h}) f, g \rangle \, d\mathbf{h}$ satisfies the estimates
		\begin{equation}
			| \langle \pi(F) f, g \rangle| \leq \sup_{\mathbf{h} \in \mathbf{H}} |\langle \pi(\mathbf{h}) f, g \rangle| \int_\mathbf{H} |F(\mathbf{h})| \, d \mathbf{h} \leq \norm{F}_1 \norm{f}_\mathcal{H} \norm{g}_\mathcal{H} .
		\end{equation}
		Therefore, $\pi(\mathcal{H})$ defines a bounded linear operator on $\mathcal{H}$ \footnote{If $Q: \mathcal{H} \times \mathcal{H} \to \C$ is a bounded sesquilinear form on the Hilbert space $\mathcal{H}$, satisfying the estimate $|Q(f,g)| \leq C \norm{f}_\mathcal{H} \norm{g}_\mathcal{H},$ then there exists a unique bounded operator $A$ on $\mathcal{H}$ with norm $\norm{A}_{op} \leq C$ such that $Q(f,g) = \langle A f, g \rangle_{\mathcal{H}}.$} (see also \cite{Con_FA90}) with $\norm{\pi(F)}_{op} \leq \norm{F}_1$.
		\item Since $F \mapsto \pi(F)$ is obviously linear, we only need to check that $\pi(F_1 * F_2) = \pi(F1)\pi(F_2)$. The proof resembles the proof of the multiplicativity of the Fourier transform, that is, $\widehat{f_1 * f_2} = \widehat{f_1} \widehat{f_2}$. With $f,g \in \mathcal{H}$ we compute first that
		\begin{align}
			\langle \pi(F_1 * F_2)f, g \rangle
			& = \int_{\mathbf{H}} (F_1 * F_2) (\mathbf{h}) \, \langle \pi(\mathbf{h}) f, g \rangle \, d \mathbf{h}\\
			& = \int_\mathbf{H} \int_\mathbf{H} F_1(\mathbf{h}') F_2(\mathbf{h}'^{-1} \mathbf{h}) \langle \pi(\mathbf{h}) f, g \rangle \, d\mathbf{h}' \, d \mathbf{h}\\
			& = \int_\mathbf{H} F_1(\mathbf{h}') \int_\mathbf{H} F_2(\mathbf{h}) \langle \pi(\mathbf{h}' \mathbf{h}) f, g \rangle \, d\mathbf{h} \, d \mathbf{h}' ,
		\end{align}
		by using the substitution $\mathbf{h} \mapsto \mathbf{h}' \mathbf{h}$ and Lemma \ref{lem_Haar_measure}. Interchanging the order of integration is justified as $F_1, F_2 \in L^1(\mathbf{H})$ and the representation coefficient is bounded. On the other hand,
		\begin{align}
			\langle \pi(F_1) \pi(F_2) f, g \rangle
			& = \int_\mathbf{H} F_1(\mathbf{h}') \langle \pi(\mathbf{h}') \pi(F_2)f, g \rangle \, d\mathbf{h}'\\
			& = \int_\mathbf{H} F_1(\mathbf{h}') \int_\mathbf{H} F_2(\mathbf{h}) \langle \pi(\mathbf{h}) f, \pi(\mathbf{h}')^* g \rangle \, d\mathbf{h} \, d\mathbf{h}'\\
			& = \int_\mathbf{H} F_1(\mathbf{h}') \int_\mathbf{H} F_2(\mathbf{h}) \langle \pi(\mathbf{h}' \mathbf{h}) f, g \rangle \, d\mathbf{h} \, d \mathbf{h}'\\
			& = \langle \pi(F_1 * F_2) f, g \rangle.
		\end{align}
		Thus $\pi(F_1 * F_2) = \pi(f_1) \pi(F_2)$.
		\item We have $\langle \pi(F^*) f, g \rangle = \int_\mathbf{H} \overline{F(\mathbf{h}^{-1})} \langle \pi(\mathbf{h}), g \rangle \, d\mathbf{h}$, whereas
		\begin{align}
			\langle \pi(F)^* f, g \rangle
			& = \langle f, \pi(F) g \rangle = \overline{\langle \pi(F) g, f \rangle}\\
			& = \int_\mathbf{H} \overline{F(\mathbf{h})} \overline{\langle \pi(\mathbf{h}) g, f \rangle} \, d \mathbf{h}\\
			& = \int_\mathbf{H} \overline{F(\mathbf{h})} \langle \pi(\mathbf{h}^{-1}) f, g \rangle \, d \mathbf{h} \qquad \left(\pi(\mathbf{h})^* = \pi(\mathbf{h})^{-1} = \pi(\mathbf{h}^{-1})\right)\\
			& = \int_\mathbf{H} \overline{F(\mathbf{h}^{-1})} \langle \pi(\mathbf{h}) f, g \rangle \, d \mathbf{h}
		\end{align}
		since the integral is invariant under the substitution $\mathbf{h} = (x, \omega, \tau) \mapsto \mathbf{h}^{-1} = (-x, -\omega, - \tau)$ (because the Haar measure is the Lebesgue measure). Thus $\pi(F^*) = \pi(F)^*$.
	\end{enumerate}
\end{proof}
We consider this construction for the Schrödinger representation $\rho$. Since $\rho$ is a representation of $\mathbf{H}_r$, we may consider the integrated representation on $L^1(\mathbf{H}_r)$. It is convenient to omit the (trivial) third component from $\rho$ and to write
\begin{equation}
	\rho(x,\omega) = \rho(x,\omega,e^{2 \pi i 0})	
\end{equation}
and
\begin{equation}
	\rho(x,\omega,e^{2 \pi i \tau}) = e^{2 \pi i \tau} \rho(x, \omega) .
\end{equation}
After expanding $\Phi \in L^1(\mathbf{H}_r)$ into a Fourier series with respect to the third coordinate, that is,
\begin{equation}
	\Phi(x,\omega,\tau) = \sum_{k \in \Z} \Phi_k(x,\omega) e^{2 \pi i k \tau},
\end{equation}
(with appropriate convergence), the integrated representation becomes
\begin{align}
	\rho(\Phi) & = \iint_{\R^{2d}} \int_0^1 \sum_{k \in \Z} \Phi_k(x, \omega)e^{2 \pi i k \tau} e^{2 \pi i \tau} \rho(x,\omega) \, d\tau \, d(x,\omega)\\
	& = \iint_{\R^{2d}} \Phi_{-1}(x,\omega) \rho(x,\omega) \, d(x,\omega) .
\end{align}
Thus, only the component $\Phi_{-1} \in L^1(\R^{2d})$ contributes. In other words, the integrated representation only sees $L^1(\R^{2d})$ instead of $L^1(\mathbf{H}_r)$. Conversely, if we (again) extend $F \in L^1(\R^{2d})$ to $L^1(\mathbf{H}_r)$ by $F_r(x,\omega,\tau) = e^{-2 \pi i \tau} F(x,\omega)$, we obtain
\begin{align}
	\rho(F_r) & = \iint_{\R^{2d}} \int_0^1 F(x,\omega) e^{-2 \pi i \tau} \rho(x,\omega) e^{2 \pi i \tau} \, d\tau \, d(x,\omega)\\
	& = \iint_{\R^{2d}} F(x,\omega) \rho(x,\omega) \, d(x, \omega) .
\end{align}
The third coordinate $\tau$ does not appear, and we may write $\rho(F) = \rho(F_r)$ without ambiguity.

We prove that the main properties of the Schrödinger representation extended to $L^1(\R^{2d})$ in a slightly more general form.
\begin{theorem}\label{thm_BanachAlgebra_twisted_conv}
	Let $(\mathcal{H}, \pi)$ be a unitary representation of $\mathbf{H}$ such that $\pi(0,0,\tau) = e^{2 \pi i \tau} I_{\mathcal{H}}$. For $F \in L^1(\R^{2d})$ write
	\begin{equation}
		\pi(F) = \pi(F_r) = \iint_{\R^{2d}} F(x,\omega) \pi(x,\omega,0) \, d(x,\omega) .
	\end{equation}
	Then
	\begin{enumerate}[(i)]
		\item\label{thm_BanachAlgebra_twisted_conv_a} $\pi( F \natural G) = \pi(F)\pi(G)$ for $F,G \in L^1(\R^{2d})$ and
		\item\label{thm_BanachAlgebra_twisted_conv_b} $\pi$ is one-to-one on $L^1(\R^{2d})$ .
	\end{enumerate}
\end{theorem}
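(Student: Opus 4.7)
For assertion (i), the plan is to reduce the twisted-convolution identity to the group-convolution multiplicativity already provided by Proposition \ref{pro_int_rep}(\ref{pro_int_rep_b}), applied on the reduced Heisenberg group $\mathbf{H}_r$. The hypothesis $\pi(0,0,\tau) = e^{2\pi i \tau} I_\mathcal{H}$ forces $\pi(0,0,k) = I_\mathcal{H}$ for $k\in\Z$, so $\pi$ descends to a unitary representation of $\mathbf{H}_r$. Using the factorization $\pi(x,\omega,e^{2\pi i\tau}) = e^{2\pi i\tau}\pi(x,\omega,0)$, a short direct computation shows that for the canonical extension $F_r(x,\omega,e^{2\pi i\tau}) = e^{-2\pi i\tau}F(x,\omega)$ one has $\pi(F_r)=\pi(F)$, the two opposite $\tau$-factors collapsing the $\tau$-integration to $1$. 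Combined with \eqref{eq_twisted_conv_Hr}, namely $F_r *_{\mathbf{H}_r} G_r = (F\natural G)_r$, the multiplicativity $\pi(F_r * G_r) = \pi(F_r)\pi(G_r)$ from Proposition \ref{pro_int_rep}(\ref{pro_int_rep_b}) translates immediately into $\pi(F\natural G) = \pi(F)\pi(G)$.

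For assertion (ii), suppose $\pi(F) = 0$, and fix arbitrary $f, g\in\mathcal{H}$ together with $(x',\omega')\in\R^{2d}$. The quantity $\langle\pi(F)\pi(x',\omega')f,\pi(x',\omega')g\rangle$ vanishes by hypothesis. Using unitarity $\pi(x',\omega')^* = \pi(x',\omega')^{-1}$ together with the conjugation identity
\begin{equation}
\pi(x',\omega')^{-1}\pi(x,\omega)\pi(x',\omega') = e^{-2\pi i\sigma((x,\omega),(x',\omega'))}\pi(x,\omega),
\end{equation}
which follows from the group law \eqref{eq_H_multiplication}, the vanishing rewrites as
\begin{equation}
\F_\sigma\!\left(F\cdot\Phi_{f,g}\right)\!(x',\omega') \,=\, \iint_{\R^{2d}} F(x,\omega)\,\Phi_{f,g}(x,\omega)\,e^{-2\pi i\sigma((x,\omega),(x',\omega'))}\,d(x,\omega) \,=\, 0,
\end{equation}
where $\Phi_{f,g}(x,\omega):=\langle\pi(x,\omega)f,g\rangle$ is continuous and satisfies $|\Phi_{f,g}|\le\norm{f}_\mathcal{H}\norm{g}_\mathcal{H}$ (by strong continuity and unitarity of $\pi$). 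Since $F\cdot\Phi_{f,g}\in L^1(\R^{2d})$ and its symplectic Fourier transform vanishes identically, injectivity of $\F_\sigma$ on $L^1$ yields $F\cdot\Phi_{f,g}=0$ almost everywhere, for every choice of $(f,g)$.

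To conclude, I will localize by a clever choice of test vectors. Fix any nonzero $f\in\mathcal{H}$ and, for each $z_0\in\R^{2d}$, set $g_{z_0}:=\pi(z_0)f$. Then $\Phi_{f,g_{z_0}}(z_0) = \langle\pi(z_0)f,\pi(z_0)f\rangle = \norm{f}_\mathcal{H}^2 > 0$, so by continuity $\Phi_{f,g_{z_0}}$ is nonzero throughout an open neighborhood $U_{z_0}$ of $z_0$; combined with $F\cdot\Phi_{f,g_{z_0}}=0$ a.e., this forces $F=0$ almost everywhere on $U_{z_0}$. Extracting a countable subcover of $\R^{2d}$ from $\{U_{z_0}\}_{z_0\in\R^{2d}}$, which is possible since $\R^{2d}$ is second-countable (Lindelöf), we conclude $F=0$ almost everywhere on all of $\R^{2d}$, proving injectivity. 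The principal obstacle is precisely this last step: the null set on which $F\Phi_{f,g}$ vanishes a priori depends on $(f,g)$, and extracting a global conclusion requires both a deliberate choice of $g$ that makes $\Phi_{f,g}$ nonvanishing near a prescribed point, and the Lindelöf property to patch the local conclusions into a global one.
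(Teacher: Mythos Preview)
Your proof is correct and follows essentially the same route as the paper: part (i) is reduced to Proposition \ref{pro_int_rep}\eqref{pro_int_rep_b} via the identity $F_r *_{\mathbf{H}_r} G_r = (F\natural G)_r$, and part (ii) uses the conjugation relation to recognize the symplectic Fourier transform of $F\cdot\Phi_{f,g}$ and invoke Fourier injectivity on $L^1$. The only difference is that the paper leaves the final implication ``$F\cdot\Phi_{f,g}=0$ a.e.\ for all $f,g$ $\Rightarrow$ $F=0$ a.e.''\ implicit, whereas you spell it out carefully with the choice $g=\pi(z_0)f$ and a Lindel\"of covering argument; this extra care is appropriate and does not constitute a different approach.
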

\begin{proof}
	\begin{enumerate}[(i)]
		\item This follows from \eqref{eq_twisted_conv_Hr} and Proposition \ref{pro_int_rep} (b):
		\begin{equation}
			\pi(F \natural G) = \pi((F \natural G)_r) = \pi(F_r *_{\mathbf{H}_r} G_r) = \pi(F_r) \pi(G_r) = \pi(F) \pi(G) .
		\end{equation}
		\item Suppose that $\pi(F) = 0$. Then, for all $f,g \in \mathcal{H}$ and all $\xi, \eta \in \Rd$, we have
		\begin{align}
			0 & = \langle \pi(F) \pi(\xi, \eta) f, \pi(\xi, \eta) g \rangle\\
			& = \iint_{\R^{2d}} F(x, \omega) \langle \pi(\xi,\eta)^{-1} \pi(x,\omega) \pi(\xi,\eta) f, g \rangle \, d(x,\omega) .
		\end{align}
	\end{enumerate}
	Since $\pi(\xi,\eta)^{-1} \pi(x,\omega) \pi(\xi,\eta) = e^{-2 \pi i(x \cdot \eta - \xi \cdot \omega)} \pi(x,\omega)$, we therefore obtain for all $\xi, \eta$ that
	\begin{equation}
		\iint_{\R^{2d}} F(x, \omega) \langle \pi(x,\omega) f, g \rangle e^{-2 \pi i(x \cdot \eta - \xi \cdot \omega)} \, d(x,\omega) = 0.
	\end{equation}
	This expression, however, is just the Fourier transform of $F(x,\omega) \langle \pi(x,\omega) f, g \rangle$ at $(\eta, - \xi)$ (one could also say it is the symplectic Fourier transform of this expression at $(-\xi, -\eta)$). Since it vanishes for all of $\R^{2d}$, we conclude from the Fourier inversion theorem, that
	\begin{equation}
		F(x,\omega) \langle \pi(x,\omega) f, g \rangle = 0, \quad \text{for almost all } (x,\omega)
	\end{equation}
	and this holds for all $f,g \in \mathcal{H}$. Thus, $F(x,\omega) = 0$ (almost everywhere) and we have proved that the extended representation is one-to-one on $L^1(\R^{2d})$.
\end{proof}
One also says that $\pi$ extends to a faithful representation of the Banach algebra $L^1(\R^{2d})$ under twisted convolution.

\subsubsection{The Stone-von Neumann Theorem}
There is a classification of all irreducible unitary representations of the Heisenberg group, given by the Stone-von Neumann theorem. Its motivation lies in the commutation relations of the quantum mechanical operators. We will follow Gröchenig \cite{Gro01}, giving von Neumann's original proof, which (once again) highlights the fundamental role of Gaussians.
\begin{theorem}[Stone-von Neumann]\label{thm_SvN}
	Every irreducible representation of $\mathbf{H}$ is equivalent to exactly one of the following representations:
	\begin{enumerate}[(i)]
		\item $\chi_{a,b} (x, \omega, \tau) = e^{2 \pi i (a \cdot x + b \cdot \omega)}$ acting on $\C$, for some $(a,b) \in \R^{2d}$, or
		\item $\rho_\kappa$ for some $\kappa \in \R \backslash \{0\}$, acting on $\Lt$.
	\end{enumerate}
\end{theorem}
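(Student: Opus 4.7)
The strategy follows von Neumann's classical approach: reduce everything to the action on the center, then use a Gaussian-valued ``projection'' to bootstrap an intertwining operator from any abstract irreducible representation with nontrivial central character to the Schrödinger representation.

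\textbf{Step 1 (Central character).} Since the center of $\mathbf{H}$ is $Z = \{(0,0,\tau) \mid \tau \in \R\}$, the operators $\pi(0,0,\tau)$ commute with every $\pi(\mathbf{h})$. By Schur's lemma (for unitary representations), each $\pi(0,0,\tau)$ is a scalar on the irreducible space, and strong continuity plus the homomorphism property force $\pi(0,0,\tau) = e^{2\pi i \kappa \tau} I_{\mathcal{H}}$ for some $\kappa \in \R$.

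\textbf{Step 2 (Degenerate case $\kappa = 0$).} If $\kappa = 0$, then $\pi$ is trivial on $Z$ and descends to an irreducible unitary representation of the abelian quotient $\mathbf{H}/Z \cong \R^{2d}$. Irreducible unitary representations of a locally compact abelian group are one-dimensional characters (another invocation of Schur), so $\mathcal{H} \cong \C$ and $\pi = \chi_{a,b}$ for a unique $(a,b) \in \R^{2d}$.

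\textbf{Step 3 (Non-degenerate case, Gaussian projection).} Assume $\kappa \neq 0$. I would first compute, in the model representation $\rho_\kappa$ on $\Lt$, that an appropriately scaled Gaussian
\begin{equation}
F_\kappa(x,\omega) = C_\kappa \, e^{-\frac{\pi |\kappa|}{2}(x^2 + \omega^2)}
\end{equation}
satisfies $F_\kappa \natural F_\kappa = F_\kappa$ and $F_\kappa^* = F_\kappa$, with constant $C_\kappa$ normalized so that $\rho_\kappa(F_\kappa)$ is precisely the rank-one orthogonal projection onto $\mathrm{span}\{g_0\}$ (where $g_0$ is the dilated Gaussian diagonalizing $\rho_\kappa$). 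The key identity is $\rho_\kappa(F_\kappa) g_0 = g_0$, which I would verify by reducing to the ambiguity-function calculation $A g_0(x,\omega) = e^{-\frac{\pi}{2}(x^2+\omega^2)}$ carried out in Example~\ref{ex_ambi}. This is the main technical obstacle: organizing the twisted-convolution computation and fixing constants correctly for arbitrary $\kappa$.

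\textbf{Step 4 (Transport to $\pi$).} Applying Theorem~\ref{thm_BanachAlgebra_twisted_conv} (suitably rescaled by $\kappa$) to the abstract representation $\pi$, the operator $\pi(F_\kappa)$ inherits the algebraic identities: it is self-adjoint and idempotent, hence an orthogonal projection on $\mathcal{H}$. Non-vanishing follows from the injectivity part of Theorem~\ref{thm_BanachAlgebra_twisted_conv} together with $F_\kappa \neq 0$. Choose a unit vector $\eta$ in the range of $\pi(F_\kappa)$.

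\textbf{Step 5 (Intertwining operator).} Define $U_0$ on the dense subspace $\mathrm{span}\{\rho_\kappa(\mathbf{h}) g_0 \mid \mathbf{h} \in \mathbf{H}\} \subset \Lt$ by
\begin{equation}
U_0\!\left(\sum_j c_j \, \rho_\kappa(\mathbf{h}_j) g_0\right) = \sum_j c_j \, \pi(\mathbf{h}_j) \eta.
\end{equation}
The crucial verification is isometry: because $g_0 = \rho_\kappa(F_\kappa) g_0$ and $\eta = \pi(F_\kappa) \eta$, the matrix coefficients $\langle \rho_\kappa(\mathbf{h}_1) g_0, \rho_\kappa(\mathbf{h}_2) g_0 \rangle$ and $\langle \pi(\mathbf{h}_1) \eta, \pi(\mathbf{h}_2) \eta \rangle$ can both be rewritten using the group law and the projection identity, yielding the same Gaussian expression in the group coordinates. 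This forces $U_0$ to be well-defined and isometric, hence it extends to an isometry $U : \Lt \to \mathcal{H}$.

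\textbf{Step 6 (Surjectivity and intertwining).} The range of $U$ is closed, contains $\eta$, and is invariant under all $\pi(\mathbf{h})$ (by construction $U \rho_\kappa(\mathbf{h}) = \pi(\mathbf{h}) U$ on the dense subspace, hence on all of $\Lt$). Irreducibility of $\pi$ then forces $\mathrm{Range}(U) = \mathcal{H}$, so $U$ is the required unitary equivalence between $\rho_\kappa$ and $\pi$.

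\textbf{Step 7 (Uniqueness).} Inequivalent central characters give inequivalent representations (an intertwiner would conjugate one central scalar action into another). Within the degenerate family, distinct characters $\chi_{a,b}$ are obviously inequivalent. This proves that the listed representations exhaust all irreducibles up to unitary equivalence, with no duplication.
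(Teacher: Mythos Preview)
Your proposal is correct and follows essentially the same route as the paper---von Neumann's Gaussian-projection argument via Schur's lemma, the twisted-convolution idempotent, and matching of matrix coefficients. The one practical difference: rather than carrying $\kappa$ through Steps~3--5 and rescaling the twisted convolution (which you flag as ``the main technical obstacle''), the paper composes $\pi$ with the dilation automorphism $\delta_\kappa^{-1}(x,\omega,\tau)=(x/\kappa,\omega,\tau/\kappa)$ to reduce immediately to $\kappa=1$, so that Theorem~\ref{thm_BanachAlgebra_twisted_conv} and Lemma~\ref{lem_Gauss_2d} apply verbatim with the single Gaussian $\Phi(x,\omega)=e^{-\frac{\pi}{2}(x^2+\omega^2)}$ and no constant-tracking.
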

Since $\{0\} \times \{0\} \times \R \subset \ker \chi_{a,b}$, the characters $\chi_{a,b}$ are simply the characters of the quotient group $\mathbf{H}/(\{0\}\times\{0\}\times\R) \cong \R^{2d}$.

The proof of the Stone-von Neumann theorem requires some preparation. We will first state and prove Schur's lemma, which gives a criterion for the irreducibility of a representation. Then, we will study some statements on Gaussians in the language of representations and twisted convolutions.

\begin{lemma}[Schur's Lemma]\label{lem_Schur}
	Let $(\mathcal{H}, \pi)$ be a unitary representation of $\mathbf{H}$. Then the following are equivalent.
	\begin{enumerate}[(a)]
		\item $\pi$ is irreducible.
		\item For every $g \in \mathcal{H} \backslash \{0\}$ the subspace spanned by the finite linear combinations of $\pi(\mathbf{h}) g$, $\mathbf{h} \in \mathbf{H}$, is dense in $\mathcal{H}$.
		\item If a bounded operator $S :  \mathcal{H}  \to \mathcal{H}$ satisfies $\pi(\mathbf{h}) S = S \pi(\mathbf{h})$ for all $\mathbf{h} \in \mathbf{H}$, then $S = c I_\mathcal{H}$ for some $c \in \C$.
	\end{enumerate}
\end{lemma}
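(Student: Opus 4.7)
The plan is to prove the equivalences by showing $(a) \Leftrightarrow (b)$ directly and then $(a) \Leftrightarrow (c)$, the latter being the only step that actually uses spectral theory.

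For $(a) \Leftrightarrow (b)$, both directions are almost automatic. If $\pi$ is irreducible and $g \neq 0$, then the closed linear span $\mathcal{K} = \overline{\mathrm{span}}\{\pi(\mathbf{h})g \mid \mathbf{h} \in \mathbf{H}\}$ is $\pi$-invariant (since $\pi(\mathbf{h}')\pi(\mathbf{h})g = \pi(\mathbf{h}'\mathbf{h})g$) and non-zero, hence equals $\mathcal{H}$. Conversely, assuming $(b)$, any non-zero closed invariant subspace $\mathcal{K}$ contains some $g \neq 0$, whose time-frequency-shifted orbit is already dense, forcing $\mathcal{K} = \mathcal{H}$.

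For $(c) \Rightarrow (a)$, I would use the orthogonal projection trick: if $\mathcal{K} \subset \mathcal{H}$ is closed and $\pi$-invariant, then so is $\mathcal{K}^\perp$ because $\pi$ is unitary (if $f \perp \mathcal{K}$, then for $k \in \mathcal{K}$, $\langle \pi(\mathbf{h})f, k\rangle = \langle f, \pi(\mathbf{h}^{-1})k\rangle = 0$). Hence the orthogonal projection $P$ onto $\mathcal{K}$ commutes with every $\pi(\mathbf{h})$, so by $(c)$ it must be $cI$. Since $P^2 = P$, we get $c \in \{0,1\}$, i.e. $\mathcal{K} \in \{\{0\}, \mathcal{H}\}$.

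The main (and only delicate) direction is $(a) \Rightarrow (c)$. Given a bounded $S$ commuting with all $\pi(\mathbf{h})$, I would first note that $S^*$ also commutes with every $\pi(\mathbf{h})$: taking adjoints in $\pi(\mathbf{h}^{-1})S = S\pi(\mathbf{h}^{-1})$ gives $S^*\pi(\mathbf{h}) = \pi(\mathbf{h})S^*$. Writing $S = A + iB$ with $A = \tfrac{1}{2}(S+S^*)$ and $B = \tfrac{1}{2i}(S - S^*)$, both $A$ and $B$ are bounded self-adjoint operators commuting with the entire representation, so it suffices to treat the self-adjoint case. For a bounded self-adjoint $A$ in the commutant, the spectral theorem produces spectral projections $E_{(-\infty, t]}$ which lie in the bicommutant of $\{A\}$; in particular, any bounded operator commuting with $A$ commutes with every $E_{(-\infty, t]}$, so each spectral projection commutes with all $\pi(\mathbf{h})$. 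By the already established direction $(c) \Rightarrow (a)$ applied to each projection's range (a closed invariant subspace), each $E_{(-\infty, t]}$ is either $0$ or $I$. This means the spectrum of $A$ is a single point $\lambda_0 \in \R$, hence $A = \lambda_0 I$, and similarly $B = \mu_0 I$, giving $S = (\lambda_0 + i\mu_0)I$.

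The main obstacle is the spectral-theoretic step in $(a) \Rightarrow (c)$: one must invoke the fact that spectral projections of a self-adjoint operator lie in its bicommutant, which is the substantive input from operator theory; everything else is formal manipulation with invariant subspaces.
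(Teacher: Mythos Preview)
Your proof is correct and follows essentially the same route as the paper. The arguments for $(a)\Leftrightarrow(b)$ and $(c)\Rightarrow(a)$ match the paper almost verbatim; for $(a)\Rightarrow(c)$ the paper also reduces to the self-adjoint case via $S^*$ being in the commutant and then invokes the spectral theorem to produce a commuting projection, the only cosmetic difference being that the paper phrases this step by contradiction (``if $S\neq cI_\mathcal{H}$, the spectral theorem yields a nontrivial projection $P$ commuting with $S$ and all $\pi(\mathbf{h})$'') whereas you argue directly that every spectral projection is $0$ or $I_\mathcal{H}$.
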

\begin{proof}
	$(a) \Longleftrightarrow (b)$: For $g \in \mathcal{H}$, $g \neq 0$, let $\mathcal{H}_g = \text{span}\{ \pi(\mathbf{h}) g \mid \mathbf{h} \in \mathbf{H} \}$. If
	\begin{equation}
		f = \sum_{k=1}^n c_k \pi(\mathbf{h_k}) g \in \mathcal{H}_g,
	\end{equation}	
	then $\pi(\mathbf{h}) f = \sum_{k=1}^n c_k \pi(\mathbf{h h}_k) g\in \mathcal{H}_g$ as well. Thus, $\mathcal{H}_g$ is invariant under $\pi$. Since all $\pi(\mathbf{h})$, $\mathbf{h} \in \mathbf{H}$, are unitary, hence bounded, operators, the closure $\overline{\mathcal{H}_g}$ is also invariant. If $\pi$ is irreducible, then $\overline{\mathcal{H}_g} = \mathcal{H}$, as claimed. 
	
	Conversely, for any $g \in \mathcal{H} \backslash \{0\}$ we have that $\overline{\mathcal{H}_g} = \mathcal{H}$ by assumption and these are the smallest (non-trivial) subspaces of $\mathcal{H}$ invariant under $\pi$. Hence, only $\{0\}$ and $\mathcal{H}$ are invariant, so $\pi$ is irreducible.
	
	\medskip
	
	$(c) \Longrightarrow (a)$: Suppose that $\mathcal{K}$ is a closed invariant subspace for $\pi$ and let $P$ be the orthogonal projection onto $\mathcal{K}$. Since $\langle f, \pi(\mathbf{h}) g\rangle = \langle \pi(\mathbf{h}^{-1}) f, g \rangle = 0$ for all $f \in \mathcal{K}$, $g \in \mathcal{K}^\perp$, and $\mathbf{h} \in \mathbf{H}$, we conclude that $\mathcal{K}^\perp$ is also invariant under $\pi$. Therefore, $P \pi(\mathbf{h})(I_\mathcal{H} - P) f = 0$ for all $f \in \mathcal{H}$, and consequently
	\begin{equation}
		P \pi(\mathbf{h}) f = P \pi(\mathbf{h}) P f = \pi(\mathbf{h}) P f, \quad \forall \mathbf{h} \in \mathbf{H}.
	\end{equation}
	Therefore, by assumption $P = c I_\mathcal{H}$. But since $P = P^2$, we must have either $c=0$ and $\mathcal{K} = \{0\}$ or $c=1$ and $\mathcal{K} = \mathcal{H}$. Thus, $\pi$ is irreducible.
	
	\medskip
	
	$(a) \Longrightarrow (c)$: Suppose that $\pi$ is irreducible and that $S \pi(\mathbf{h}) = \pi(\mathbf{h}) S$ for all $\mathbf{h} \in \mathbf{H}$. Since
	\begin{equation}
		S^* \pi(\mathbf{h}) = \left( \pi(\mathbf{h}^{-1}) S \right)^* = \left( S \pi(\mathbf{h}^{-1}) \right)^* = \pi(\mathbf{h}) S^*,
	\end{equation}
	$S^* S$ also commutes with all $\pi(\mathbf{h})$ and we may assume, without loss of generality, that $S$ is self-adjoint. If $S \neq c I_\mathcal{H}$, then the spectral theorem for (bounded) self-adjoint operators implies the existence of an orthogonal projection $P$, $P \neq 0$, $P \neq I_\mathcal{H}$, that commutes with $S$ and with all $\pi(\mathbf{h})$, $\mathbf{h} \in \mathbf{H}$. Then $P \mathcal{H}$ is invariant under $\pi$ and $P \mathcal{H}$ is not trivial. This contradicts the irreducibility of $\pi$.
\end{proof}
As announced, we are going to state more properties about Gaussians.
\begin{lemma}\label{lem_Gauss_2d}
	Let $\varphi(t) = 2^{d/4} e^{-\pi t^2}$ be the normalized standard Gaussian in $\Lt$. We define
	\begin{equation}
		\Phi^{\xi, \eta}(x, \omega) = \langle \rho(\xi,\eta) \varphi, \rho(x,\omega) \varphi \rangle
	\end{equation}
	and set $\Phi = \Phi^{0,0}$. Then
	\begin{enumerate}[(a)]
		\item\label{lem_Gauss_2d_a}
		\begin{equation}
			\Phi(x,\omega) = e^{-\tfrac{\pi}{2} (x^2+\omega^2)}
			\quad \text{ and } \quad
			\Phi^{\xi,\eta}(x,\omega) = e^{\pi i (x \cdot \eta - \xi \cdot \omega)} \Phi(x-\xi, \omega - \eta)
		\end{equation}
		\item\label{lem_Gauss_2d_b}
		\begin{equation}
			\Phi \natural \Phi^{\xi,\eta} = e^{- \tfrac{\pi}{2}(\xi^2 + \eta^2)} \Phi .
		\end{equation}
		\item\label{lem_Gauss_2d_c} If $(\mathcal{H}, \pi)$ is a unitary representation of the Heisenberg group such that $\pi(0,0,\tau) = e^{2 \pi i \tau} I_\mathcal{H}$, then
		\begin{equation}
			\pi(\xi,\eta,0)\pi(\Phi) = \pi(\Phi^{\xi, \eta}) .
		\end{equation}
	\end{enumerate}
\end{lemma}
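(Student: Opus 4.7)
The plan is to prove (a) and (c) by direct computation and then deduce (b) from (c) via the integrated representation applied to $\rho$.

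For (a), use the identity $\rho(\l) = e^{-\pi i x\cdot\omega}\pi(\l)$ linking the symmetric and ordinary time-frequency shifts to write $\Phi(x,\omega) = \langle\varphi,\rho(x,\omega)\varphi\rangle = e^{\pi i x\cdot\omega}V_\varphi\varphi(x,\omega)$. Then Example \ref{ex_STFT_g0} gives $V_\varphi\varphi(x,\omega) = e^{-\pi i x\cdot\omega}e^{-\pi(x^2+\omega^2)/2}$, so the two phases cancel and $\Phi(x,\omega) = e^{-\pi(x^2+\omega^2)/2}$. For the second formula, I will use the composition rule $\rho(\l_1)\rho(\l_2) = e^{-\pi i\sigma(\l_1,\l_2)}\rho(\l_1+\l_2)$ derived from \eqref{eq_Hr_composition}. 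Writing $\rho(\xi,\eta)^* = \rho(-\xi,-\eta)$, I compute $\Phi^{\xi,\eta}(x,\omega) = \langle\varphi,\rho(-\xi,-\eta)\rho(x,\omega)\varphi\rangle = e^{\pi i\sigma((-\xi,-\eta),(x,\omega))}\Phi(x-\xi,\omega-\eta)$ (complex-conjugating the scalar when moving it out of the second slot), and $\sigma((-\xi,-\eta),(x,\omega)) = \eta\cdot x - \xi\cdot\omega$ delivers the claimed phase $e^{\pi i(x\cdot\eta - \xi\cdot\omega)}$.

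For (c), pull the time-frequency shift inside the integral defining $\pi(\Phi)$ and use the Heisenberg group law: $\pi(\xi,\eta,0)\pi(x,\omega,0) = \pi((\xi,\eta,0)\bullet(x,\omega,0)) = \pi(\xi+x,\eta+\omega,\tfrac{1}{2}(x\cdot\eta-\xi\cdot\omega))$, which, by the hypothesis $\pi(0,0,\tau) = e^{2\pi i\tau}I_{\mathcal H}$, factors as $e^{\pi i(x\cdot\eta-\xi\cdot\omega)}\pi(\xi+x,\eta+\omega,0)$. After the substitution $(x,\omega)\mapsto(x-\xi,\omega-\eta)$ under the integral, the product of the residual phase with $\Phi(x-\xi,\omega-\eta)$ is, by part (a), exactly $\Phi^{\xi,\eta}(x,\omega)$, and the integral becomes $\pi(\Phi^{\xi,\eta})$.

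For (b), combine (c) with Theorem \ref{thm_BanachAlgebra_twisted_conv}: applied to the Schrödinger representation $\rho$, we obtain $\rho(\Phi\natural\Phi^{\xi,\eta}) = \rho(\Phi)\rho(\Phi^{\xi,\eta}) = \rho(\Phi)\rho(\xi,\eta,0)\rho(\Phi)$. The key observation is that $\rho(\Phi)$ is the rank-one orthogonal projection $P_\varphi : f\mapsto\langle f,\varphi\rangle\varphi$. This is verified by computing $\langle\rho(\Phi)f,g\rangle = \iint A\varphi(x,\omega)\overline{A(g,f)(x,\omega)}\,d(x,\omega)$ and invoking the orthogonality relations \eqref{eq_OR} (transferred from the STFT to the ambiguity function via $A(f,g) = e^{\pi i x\cdot\omega}V_g f$), which give $\langle\varphi,g\rangle\langle f,\varphi\rangle = \langle P_\varphi f,g\rangle$. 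Then $P_\varphi\rho(\xi,\eta,0)P_\varphi f = \langle f,\varphi\rangle\langle\rho(\xi,\eta,0)\varphi,\varphi\rangle\varphi = \overline{\Phi(\xi,\eta)}\,P_\varphi f = \Phi(\xi,\eta)\rho(\Phi)f$, using that $\Phi$ is real. Therefore $\rho(\Phi\natural\Phi^{\xi,\eta}) = \Phi(\xi,\eta)\rho(\Phi) = \rho(\Phi(\xi,\eta)\Phi)$, and the injectivity of $\rho$ on $L^1(\R^{2d})$ from Theorem \ref{thm_BanachAlgebra_twisted_conv}(ii) yields $\Phi\natural\Phi^{\xi,\eta} = \Phi(\xi,\eta)\Phi = e^{-\pi(\xi^2+\eta^2)/2}\Phi$.

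The main obstacle is identifying $\rho(\Phi) = P_\varphi$ cleanly via the orthogonality relations; once this is in hand, (b) drops out of (c) with essentially no further work. The alternative is to evaluate the twisted convolution in (b) by a direct $2d$-dimensional Gaussian integral, which is routine but demands careful completion of the square in the complex quadratic form arising in the exponent and a bit of bookkeeping for the phase from \eqref{eq_def_twisted_conv}.
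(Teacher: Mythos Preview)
Your proof is correct. Parts (a) and (c) follow essentially the same route as the paper: direct computation of $\Phi$ via the known STFT of the Gaussian, the composition law \eqref{eq_Hr_composition} for the covariance formula, and pulling $\pi(\xi,\eta,0)$ into the integral defining $\pi(\Phi)$ for (c).

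For (b), however, your approach differs genuinely from the paper's. The paper computes the twisted convolution $\Phi \natural \Phi^{\xi,\eta}(x,\omega)$ directly: it expands the integrand, uses the group law to rewrite $\langle \rho(\xi,\eta)\varphi, \rho(x-x',\omega-\omega')\varphi\rangle$ as a phase times a shifted STFT, observes that all the complex exponentials cancel, and recognizes what remains as the $L^2(\R^{2d})$ inner product $\langle V_\varphi\varphi, V_{\rho(\xi,\eta)\varphi}(\rho(x,\omega)\varphi)\rangle$, to which the orthogonality relations \eqref{eq_OR} apply pointwise in $(x,\omega)$. You instead lift everything to the operator level: apply (c) and Theorem~\ref{thm_BanachAlgebra_twisted_conv}\eqref{thm_BanachAlgebra_twisted_conv_a} to write $\rho(\Phi\natural\Phi^{\xi,\eta}) = \rho(\Phi)\rho(\xi,\eta,0)\rho(\Phi)$, identify $\rho(\Phi)$ as the rank-one projection $P_\varphi$ via the orthogonality relations for the ambiguity function, compute $P_\varphi\rho(\xi,\eta,0)P_\varphi = \Phi(\xi,\eta)P_\varphi$, and conclude by injectivity of $\rho$ on $L^1(\R^{2d})$. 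Both arguments ultimately rest on the orthogonality relations, but your route is more structural: it explains \emph{why} the identity holds --- $\rho(\Phi)$ is rank one --- rather than verifying it by integral manipulation. The paper's computation, on the other hand, is self-contained at the function level and does not need to invoke Theorem~\ref{thm_BanachAlgebra_twisted_conv} or the injectivity statement. There is no circularity in your approach, since Theorem~\ref{thm_BanachAlgebra_twisted_conv} is proved in the paper before Lemma~\ref{lem_Gauss_2d} and independently of it.
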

\begin{proof}
	\begin{enumerate}[(a)]
		\item By a direct computation and using the fact that the Gaussian is invariant under the Fourier transform, we see (once again) that
		\begin{equation}
			\langle \varphi, \rho(x,\omega) \varphi \rangle = A \varphi = \Phi(x,\omega).
		\end{equation}
		Then, a direct computation shows
		\begin{align}
			\Phi^{\xi,\eta}(x,\omega) & = \langle \rho(\xi,\eta) \varphi, \rho(x,\omega) \varphi \rangle = \langle M_{\frac{\eta}{2}} T_\xi M_{\frac{\eta}{2}} \varphi, M_{\frac{\omega}{2}} T_x M_{\frac{\omega}{2}} \varphi \rangle \\
			& = \langle \varphi, M_{-\frac{\eta}{2}} T_\xi M_{-\frac{\eta}{2}} M_{\frac{\omega}{2}} T_x M_{\frac{\omega}{2}} \varphi \rangle \\
			& = e^{-\pi i \eta \cdot \xi} e^{\pi i \eta \cdot x} \langle \varphi, T_{-\xi} M_{\frac{\omega-\eta}{2}} T_x M_{\frac{\omega-\eta}{2}} \varphi \rangle \\
			& = e^{-\pi i \eta \cdot \xi} e^{\pi i \eta \cdot x} e^{-\pi i \xi \cdot(\omega-\eta)} \langle \varphi, M_{\frac{\omega-\eta}{2}} T_{x-\xi} M_{\frac{\omega-\eta}{2}} \varphi \rangle \\
			& = e^{\pi i (x \cdot \eta - \xi \cdot \omega)} \Phi(x-\xi, \omega-\eta).
		\end{align}
		
		\item According to the definition of the twisted convolution \eqref{eq_def_twisted_conv}, we have
		\begin{equation}\label{eq_Gauss_twisted}
			\Phi \natural \Phi^{\xi,\eta}(x,\omega) = \iint_{\R^{2d}} \langle \varphi, \rho(x',\omega') \varphi \rangle \langle \rho(\xi,\eta) \varphi, \rho(x-x',\omega-\omega')\varphi \rangle e^{\pi i (x\cdot \omega' - x' \cdot \omega)} \, d(x',\omega') .
		\end{equation}
		By \eqref{eq_rep_coeff_Schrödinger}, $\langle \varphi, \rho(x',\omega')\varphi \rangle = e^{\pi i x' \cdot \omega'} V_\varphi \varphi(x',\omega')$ and, by a similar calculation
		\begin{align}
			\langle \rho(\xi,\eta) \varphi, \rho(x-x',\omega-\omega') \varphi \rangle
			& = \langle \rho(\xi,\eta) \varphi, e^{\pi i(x \cdot \omega' - x' \cdot \omega)} \rho(-x',-\omega') \rho(x,\omega) \varphi \rangle \quad \text{by } \eqref{eq_Hr_composition}\\
			& = e^{-\pi i(x \cdot \omega' - x' \cdot \omega)} \langle \rho(x', \omega') \rho(\xi,\eta) \varphi, \rho(x,\omega) \varphi \rangle\\
			& = e^{-\pi i(x \cdot \omega' - x' \cdot \omega)} e^{- \pi i x' \cdot \omega'} \overline{V_{\rho(\xi,\eta) \varphi} (\rho(x,\omega)\varphi) (x',\omega')} .
		\end{align}
		Substituting these expressions into \eqref{eq_Gauss_twisted}, all the complex exponentials cancel and the twisted convolution becomes an inner product of two STFTs. We will use the orthogonality relations \eqref{eq_OR} to finish the proof.
		\begin{align}
			\Phi \natural \Phi^{\xi,\eta}(x,\omega)
			& = \iint_{\R^{2d}} V_\varphi \varphi(x',\omega') \overline{V_{\rho(\xi,\eta) \varphi} (\rho(x,\omega)\varphi) (x',\omega')} \, d(x',\omega')\\
			& = \langle \varphi, \rho(x,\omega) \varphi \rangle \overline{\langle \varphi, \rho(\xi,\eta) \varphi \rangle}\\
			& = \Phi(\xi,\eta) \Phi(x,\omega)\\
			& = e^{-\tfrac{\pi}{2}(\xi^2+\eta^2)} \Phi(x,\omega) .
		\end{align}
		\item
		\begin{align}
			\pi(\xi,\eta,0)\pi(\Phi)
			& = \iint_{\R^{2d}} \Phi(x,\omega) \pi(\xi,\eta,0) \pi(x,\omega,0) \, d(x,\omega)\\
			& = \iint_{\R^{2d}} e^{\pi i (x \cdot \eta - \xi \cdot \omega)} \Phi(x,\omega) \pi(x+\xi,\omega+\eta,0) \, d(x,\omega)\\
			& = \iint_{\R^{2d}} e^{\pi i (x \cdot \eta - \xi \cdot \omega)} \Phi(x-\xi,\omega-\eta) \pi(x,\omega) \, d(x,\omega)\\
			& = \pi(\Phi^{\xi,\eta}),
		\end{align}
		by (a).
	\end{enumerate}
\end{proof}

Note in particular that the above lemma shows that $\Phi \natural \Phi = \Phi$. This means that the twisted convolution operator $F \mapsto F \natural \Phi$ is a projection. For a given representation $(\mathcal{H}, \pi)$ of $\mathbf{H}$, this observation will help us to define a subspace of $\mathcal{H}$ which corresponds to the Gaussians in $\Lt$.

We can now prove the Stone-von Neumann theorem.
\begin{proof}(of the Stone-von Neumann theorem \ref{thm_SvN})
	Let $(\mathcal{H}, \pi)$ be an irreducible unitary representation of $\mathbf{H}$. Since
	\begin{equation}
		\pi(0,0,\tau)\pi(x,\omega,\tau') = \pi(x,\omega,\tau+\tau') = \pi(x,\omega,\tau')\pi(0,0,\tau),
	\end{equation}
	Schur's lemma implies that $\pi(0,0,\tau) = \chi(\tau) I_\mathcal{H}$, for some $\chi(\tau) \in \C$.
	
	As $\tau \mapsto \pi(0,0,\tau)$ is a homomorphism of $\R$ into the group of unitary operators we must have $|\chi(\tau)| = 1$ and $\chi(\tau_1 + \tau_2) = \chi(\tau_1)\chi(\tau_2)$. Therefore, $\chi$ is of the form $\chi(\tau) = e^{2 \pi i\kappa \tau}$ for some $\kappa \in \R$. We distinguish two cases.
	
	\textbf{Case I: $\kappa = 0$}. We see that $\pi(x,\omega,\tau)$ no longer depends on $\tau$. Thus, $\pi$ induces an irreducible representation $\widetilde{\pi}(x,\omega) = \pi(x,\omega,\tau)$ of the Abelian group $\R^{2d}$. Now, all operators $\widetilde{\pi}(x,\omega)$ commute and by Schur's lemma \ref{lem_Schur} $\widetilde{\pi}(x,\omega) = \chi(x,\omega) I_\mathcal{H}$. Therefore, $\chi$ must be a character of the form $\chi_{a,b}(x,\omega) = e^{2 \pi i (a \cdot x + b \cdot \omega)}$. Since every subspace is invariant under the action of the identity operator, the irreducibility forces $\mathcal{H}$ to be one-dimensional.
	
	\textbf{Case II: $\kappa \neq 0$}. By considering $\pi( \delta_\kappa^{-1} (x,\omega,\tau)) = \pi(\tfrac{x}{\kappa},\omega,\tfrac{\tau}{\kappa})$ we may assume, without loss of generality, that $\kappa = 1$. We note that this representation satisfies the assumptions of Theorem \ref{thm_BanachAlgebra_twisted_conv} and Proposition \ref{lem_Gauss_2d}\eqref{lem_Gauss_2d_c}.
	
	We look at the integrated representation of $\pi$ and its action on the Gaussians $\Phi(x,\omega) = \langle \varphi, \rho(x,\omega) \varphi \rangle$ and $\Phi^{\xi, \eta}(x,\omega) = \langle \rho(\xi, \eta) \varphi, \rho(x,\omega) \varphi \rangle$, which are representation coefficients of the Schrödinger representation. Theorem \ref{thm_BanachAlgebra_twisted_conv}\eqref{thm_BanachAlgebra_twisted_conv_b} and Proposition \ref{pro_int_rep}\eqref{pro_int_rep_c} imply that $\pi(\Phi) \neq 0$ and  that $\pi(\Phi) = \pi(\Phi)^*$. Now, Lemma \ref{lem_Gauss_2d} furnishes the fundamental identity
	\begin{equation}\label{eq_proof_SvN_Gaussian}
		\pi(\Phi) \pi(\xi,\eta,0) \pi(\Phi) = \pi(\Phi) \pi(\Phi^{\xi,\eta}) = \pi(\Phi \natural \Phi^{\xi,\eta}) = e^{- \tfrac{\pi}{2} (\xi^2+\eta^2)} \pi(\Phi).
	\end{equation}
	Thus, a Gaussian appears outside of $\pi(\Phi)$ almost by magic. Further, for $(\xi, \eta) = (0,0)$, we have $\pi(\Phi)^2 = \pi(\Phi)$ and so $\pi(\Phi)$ is a non-zero, orthogonal projection.
	
	Now, choose a normalized vector $g \in \pi(\Phi) \mathcal{H}$. Then $g = \pi(\Phi) g$ and $\norm{g}_\mathcal{H}^2 = \langle g, \pi(\Phi) g \rangle$. Using Lemma \ref{lem_Gauss_2d} and \eqref{eq_proof_SvN_Gaussian} (several times), we obtain
	\begin{align}\label{eq_proof_SvN_g_Gauss}
		\langle \pi(\xi,\eta,0) g, \pi(x,\omega,0) g \rangle
		& = \langle \pi(\Phi) g, \pi(-\xi,-\eta,0) \pi(x,\omega,0) \pi(\Phi) g \rangle\\
		& = \langle \pi(\Phi) g, \underbrace{\pi(x-\xi,\omega-\eta, \tfrac{1}{2}(\xi \cdot \omega - x \cdot \eta))}_{\pi(0,0,\frac{1}{2}(\xi \cdot \omega - x \cdot \eta))\pi(x-\xi,\omega-\eta,0)} \pi(\Phi) g \rangle\\
		& = e^{\pi i (x \cdot \eta - \xi \cdot \omega)} \langle g, \pi(\Phi) \pi(x-\xi,\omega-\eta,0) \pi(\Phi) g \rangle\\
		& = e^{\pi i (x \cdot \eta - \xi \cdot \omega)} e^{- \tfrac{\pi}{2} \left( (x-\xi)^2 + (\omega- \eta)^2 \right)} \langle g, \pi(\Phi) g \rangle\\
		& = \Phi^{\xi,\eta}(x,\omega)\\
		& = \langle \rho(\xi,\eta) \varphi, \rho(x,\omega) \varphi \rangle .
	\end{align}
	
	Now, we define an operator $U$ in the following way.
	\begin{equation}
		U \Big( \sum_{k=1}^n c_k \pi(x_k, \omega_k, 0) g \Big) = \sum_{k=1}^n c_k \rho(x_k, \omega_k) \varphi .
	\end{equation}
	Then, $U$ is defined on the subspace $\mathcal{H}_g \subset \mathcal{H}$ spanned by finite linear combinations of $\pi(x,\omega,0) g$. Since $\pi$ is irreducible by assumption, $\mathcal{H}_g$ is dense in $\mathcal{H}$ by Schur's lemma \ref{lem_Schur}. Also, by Theorem \ref{thm_Schrödinger_irreducible} the Schrödinger representation is irreducible, hence, $U$ has dense range in $\Lt$. We will now show, using \eqref{eq_proof_SvN_g_Gauss}, that $U$ is an isometry on $\mathcal{H}_g$, since
	\begin{align}
		\norm{U \Big( \sum_{k=1}^n c_k \pi(x_k, \omega_k, 0) g \Big)}_2^2
		& = \norm{\sum_{k=1}^n c_k \rho(x_k, \omega_k) \varphi}_2^2\\
		& = \sum_{k=1}^n \sum_{l=1}^n c_k \overline{c_l} \langle \rho(x_k,\omega_k) \varphi, \rho(x_l, \omega_l) \varphi \rangle\\
		& = \sum_{k=1}^n \sum_{l=1}^n c_k \overline{c_l} \langle \pi(x_k,\omega_k,0) g, \pi(x_l, \omega_l,0) g \rangle\\
		& = \norm{\sum_{k=1}^n c_k \pi(x_k, \omega_k, 0) g}_\mathcal{H}^2
	\end{align}
	Consequently, $U$ extends to a unitary operator from $\mathcal{H}$ onto $\Lt$.
	
	Finally, we verify that $U$ intertwines the representations $\pi$ and $\rho$. Let
	\begin{equation}
		f = \sum_{k=1}^n c_k \pi(x_k,\omega_k,0) g \in \mathcal{H}_g.
	\end{equation}
	Then
	\begin{align}
		U \pi(x,\omega,\tau) f & = U \Big( \sum_{k=1}^n c_k e^{2 \pi i \tau} \pi(x,\omega,0) \pi(x_k,\omega_k,0) g \Big)\\
		& = \sum_{k=1}^n c_k e^{2 \pi i \tau} \rho(x,\omega) \rho(x_k,\omega_k) \varphi\\
		& = \rho(x,\omega,\tau) U f .
	\end{align}
	By continuity, the above calculations extend from $\mathcal{H}_g$ to $\mathcal{H}$. Thus, we have shown that any irreducible representation $\pi$ of $\mathbf{H}$ with $\pi(x,\omega,\tau) = e^{2 \pi i \tau} \pi(x,\omega,0)$ is unitarily equivalent to the Schrödinger representation $\rho$.
\end{proof}

A slight variation in the above proof yields the following generalization of the Stone-von Neumann theorem.
\begin{theorem}\label{thm_SvN_general}
	If $(\mathcal{H}, \pi)$ is a unitary (irreducible) representation of $\mathbf{H}$, such that $\pi(x,\omega,\tau) = e^{2 \pi i \tau} \pi(x,\omega, 0)$, then $\pi$ is a (finite or infinite) direct sum of representations equivalent to $\rho$.
\end{theorem}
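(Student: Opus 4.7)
The plan is to follow the proof of Theorem \ref{thm_SvN} and simply iterate the cyclic construction using Zorn's lemma, exploiting the fact that the Gaussian projection $\pi(\Phi)$ is available on any invariant subspace.

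First, I would observe that irreducibility is never used to construct the intertwiner $U$ in the proof of Theorem \ref{thm_SvN}; it is used only to conclude that the cyclic subspace $\mathcal{H}_g = \overline{\mathrm{span}}\{\pi(\mathbf{h})g \mid \mathbf{h}\in\mathbf{H}\}$ exhausts $\mathcal{H}$. The calculations that rely on the assumption $\pi(0,0,\tau) = e^{2\pi i\tau} I_\mathcal{H}$, namely Lemma \ref{lem_Gauss_2d}\eqref{lem_Gauss_2d_c} and the key identity $\pi(\Phi)\pi(\xi,\eta,0)\pi(\Phi) = e^{-\frac{\pi}{2}(\xi^2+\eta^2)}\pi(\Phi)$, continue to hold in our setting. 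Therefore $P := \pi(\Phi)$ is a non-zero self-adjoint projection on $\mathcal{H}$ (non-zero by the faithfulness statement in Theorem \ref{thm_BanachAlgebra_twisted_conv}\eqref{thm_BanachAlgebra_twisted_conv_b}), and for every unit vector $g \in P\mathcal{H}$ the computation \eqref{eq_proof_SvN_g_Gauss} still yields
\begin{equation}
\langle \pi(\xi,\eta,0)g, \pi(x,\omega,0)g\rangle_\mathcal{H} = \langle \rho(\xi,\eta)\varphi, \rho(x,\omega)\varphi\rangle_{\Lt}.
\end{equation}
Consequently the map $U_g : \mathcal{H}_g \to \Lt$ defined by $\sum_k c_k \pi(x_k,\omega_k,0)g \mapsto \sum_k c_k \rho(x_k,\omega_k)\varphi$ extends to a unitary intertwiner between $\pi|_{\mathcal{H}_g}$ and $\rho$, exactly as in the proof of Theorem \ref{thm_SvN}.

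Next I would invoke Zorn's lemma on the collection of all families $\{g_\alpha\}_{\alpha \in A} \subset P\mathcal{H}$ consisting of unit vectors whose associated cyclic subspaces $\mathcal{H}_{g_\alpha}$ are pairwise orthogonal, ordered by inclusion. The union of a chain of such families is again such a family, so a maximal one $\{g_\alpha\}_{\alpha \in A}$ exists. Each $\mathcal{H}_{g_\alpha}$ is closed and $\pi$-invariant, and by the previous paragraph $\pi|_{\mathcal{H}_{g_\alpha}} \cong \rho$. Set $\mathcal{K} = \bigoplus_{\alpha \in A} \mathcal{H}_{g_\alpha}$; this is a closed $\pi$-invariant subspace of $\mathcal{H}$.

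The final step, and what I expect to be the key technical point, is to show $\mathcal{K} = \mathcal{H}$. The orthogonal complement $\mathcal{K}^\perp$ is again $\pi$-invariant (since $\pi(\mathbf{h})^* = \pi(\mathbf{h}^{-1})$ also leaves $\mathcal{K}$ invariant), so the projection $P$ commutes with the projection onto $\mathcal{K}^\perp$ and thus maps $\mathcal{K}^\perp$ into itself. If $\mathcal{K}^\perp \neq \{0\}$, I need to ensure $P|_{\mathcal{K}^\perp} \neq 0$: otherwise $\pi(\Phi)h = 0$ for every $h \in \mathcal{K}^\perp$, and combined with the intertwining identity $\pi(\xi,\eta,0)\pi(\Phi) = \pi(\Phi^{\xi,\eta})$ from Lemma \ref{lem_Gauss_2d}\eqref{lem_Gauss_2d_c}, this would force $\pi(\Phi^{\xi,\eta})h = 0$ for all $(\xi,\eta)$; since the linear span of $\{\Phi^{\xi,\eta}\}$ is dense in $L^1(\R^{2d})$ (translates of a fixed Gaussian are dense) and the integrated representation is faithful by Theorem \ref{thm_BanachAlgebra_twisted_conv}\eqref{thm_BanachAlgebra_twisted_conv_b}, we would conclude $h \perp \pi(\mathbf{h}')h'$ for all $\mathbf{h}'$ and all $h' \in \mathcal{H}$, contradicting $h \neq 0$. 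Hence there is a unit vector $g' \in P \mathcal{K}^\perp$, whose cyclic subspace $\mathcal{H}_{g'} \subset \mathcal{K}^\perp$ is orthogonal to every $\mathcal{H}_{g_\alpha}$, contradicting maximality. Therefore $\mathcal{K} = \mathcal{H}$ and $\pi \cong \bigoplus_\alpha \rho$, as claimed.
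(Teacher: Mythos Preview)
Your argument is correct and follows the same overall strategy as the paper, but the paper streamlines it in two places worth noting.

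First, the paper avoids Zorn's lemma entirely by directly choosing an orthonormal basis $\{g_j\}_{j\in J}$ of the range $\pi(\Phi)\mathcal{H}$. The key observation you did not exploit is that the computation \eqref{eq_proof_SvN_g_Gauss} \emph{polarizes}: replacing the two occurrences of $g$ by $g_j$ and $g_k$ yields
\[
\langle \pi(\xi,\eta,0)g_j,\pi(x,\omega,0)g_k\rangle = \Phi^{\xi,\eta}(x,\omega)\,\langle g_j,g_k\rangle,
\]
so orthogonality of $g_j,g_k$ in $\pi(\Phi)\mathcal{H}$ automatically forces $\mathcal{H}_{g_j}\perp\mathcal{H}_{g_k}$. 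This delivers the orthogonal decomposition in one stroke, with no maximality argument.

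Second, for the step $P|_{\mathcal{K}^\perp}\neq 0$ the paper simply observes that $(\mathcal{K}^\perp,\pi|_{\mathcal{K}^\perp})$ again satisfies the hypotheses of Theorem~\ref{thm_BanachAlgebra_twisted_conv}\eqref{thm_BanachAlgebra_twisted_conv_b}, so the restricted integrated representation is one-to-one and hence $\pi(\Phi)|_{\mathcal{K}^\perp}\neq 0$; this immediately contradicts the choice of $\{g_j\}$ as an orthonormal \emph{basis} of $\pi(\Phi)\mathcal{H}$. Your route to the same conclusion works, but two details are loose: the $\Phi^{\xi,\eta}$ are twisted translates, not ordinary translates, so the parenthetical justification for density needs more care; and ``faithfulness'' in Theorem~\ref{thm_BanachAlgebra_twisted_conv}\eqref{thm_BanachAlgebra_twisted_conv_b} says $\pi(F)=0\Rightarrow F=0$, whereas what you actually use at the end is non-degeneracy, namely that $\pi(F)h=0$ for all $F\in L^1$ gives $\int F(z)\langle\pi(z,0)h,h'\rangle\,dz=0$ for all $F,h'$, hence $\langle h,h'\rangle=0$ by continuity of the representation coefficient.
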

\begin{proof}
	Choose an orthonormal basis $\{g_j \mid j \in J\}$ of the subspace $\pi(\Phi) \mathcal{H}$. Then, in \eqref{eq_proof_SvN_g_Gauss} replacing $g$ by $g_j$ and $g_k$ shows that $\mathcal{H}_{g_j}$ is orthogonal to $\mathcal{H}_{g_k}$ for $j \neq k$. Furthermore, the same proof as above shows that $\mathcal{H}_{g_j}$ is equivalent to $\rho$.
	
	We need to show that $\mathcal{H} = \oplus_{j \in J} \mathcal{H}_{g_j}$. If $\mathcal{K} = \left( \oplus_{j \in J} \mathcal{H}_{g_j} \right)^\perp \neq \{0\}$, then $\pi$ restricted to $\mathcal{K}$ satisfies the hypotheses of the theorem , and $\pi(\Phi) \mathcal{K} \neq \{0\}$. But this contradicts the choice of the orthonormal basis $\{g_j\}$, thus $\mathcal{K} = \{0\}$.
\end{proof}

As a consequence of the Stone-von Neumann theorem we obtain the following result.
\begin{corollary}
	Every irreducible unitary representation of $\mathbf{H}_r$ is equivalent to exactly one of the following representations
	\begin{enumerate}[(i)]
		\item $\chi_{a,b}(x,\omega,e^{2 \pi i \tau}) = \chi_{a,b}(x,\omega)$ acting on $\C$, for some $(a,b) \in \R^{2d}$, or
		\item $\rho_k(x,\omega,e^{2 \pi i \tau}) = e^{2 \pi i k \tau} e^{\pi i k x \cdot \omega} T_{k x} M_\omega = e^{2 \pi i k \tau} M_{\omega/2} T_{k x} M_{\omega_2}$ for some $k \in \Z\backslash\{0\}$.
	\end{enumerate}
\end{corollary}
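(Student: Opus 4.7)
The plan is to reduce the classification on $\mathbf{H}_r$ to the Stone--von Neumann theorem (Theorem \ref{thm_SvN}) via the quotient map. Recall that $\mathbf{H}_r = \mathbf{H}/N$, where $N = \{0\}\times\{0\}\times\Z \subset \mathbf{H}$, and the quotient homomorphism is $q:(x,\omega,\tau)\mapsto (x,\omega,e^{2\pi i \tau})$. First I would observe that there is a bijection between irreducible unitary representations of $\mathbf{H}_r$ and irreducible unitary representations of $\mathbf{H}$ whose kernel contains $N$: given $(\mathcal{H},\pi)$ a representation of $\mathbf{H}_r$, the pullback $\widetilde{\pi}(x,\omega,\tau) = \pi\big(q(x,\omega,\tau)\big)$ is a representation of $\mathbf{H}$ satisfying $\widetilde{\pi}(0,0,k)=I_\mathcal{H}$ for all $k\in\Z$; conversely, any representation of $\mathbf{H}$ that is trivial on $N$ factors through $q$. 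Strong continuity transfers in both directions because $q$ is continuous and open, and irreducibility is preserved because $\widetilde{\pi}$ and $\pi$ have the same invariant subspaces.

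Next I would apply Theorem \ref{thm_SvN} to the pulled-back representation $\widetilde{\pi}$. It is equivalent to exactly one of the characters $\chi_{a,b}$ with $(a,b)\in\R^{2d}$ or to $\rho_\kappa$ for some $\kappa\in\R\setminus\{0\}$. I then need to determine which of these candidates is trivial on $N$, i.e., which descends to $\mathbf{H}_r$. For the character case, $\chi_{a,b}(0,0,\tau)=1$ identically, so every $\chi_{a,b}$ descends, giving the family in (i). For $\rho_\kappa$, we compute $\rho_\kappa(0,0,k)=e^{2\pi i \kappa k} I_{L^2}$, which equals the identity for every $k\in\Z$ if and only if $\kappa\in\Z$; combined with $\kappa\neq 0$ this forces $\kappa=k\in\Z\setminus\{0\}$, yielding the formula in (ii) after substituting into $\rho_\kappa(x,\omega,\tau)=e^{2\pi i\kappa\tau}e^{\pi i\kappa x\cdot\omega}T_{\kappa x}M_\omega$ and replacing $\tau$ by $e^{2\pi i \tau}$.

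Finally, for the "exactly one" clause, I would note that if two representations of $\mathbf{H}_r$ are equivalent, so are their pullbacks to $\mathbf{H}$ (via the same intertwining operator), and conversely any unitary intertwiner between the pullbacks intertwines the original representations because every element of $\mathbf{H}_r$ is in the image of $q$. Hence uniqueness descends directly from the uniqueness in Theorem \ref{thm_SvN}: distinct pairs $(a,b)$ give inequivalent characters on $\C$, and the $\rho_k$ with different $k\in\Z\setminus\{0\}$ are pairwise inequivalent (and none of them is equivalent to a one-dimensional character since they are infinite-dimensional).

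The main obstacle should be minor bookkeeping rather than a substantial mathematical difficulty: checking that strong continuity transfers cleanly under the quotient and verifying the kernel condition for $\rho_\kappa$ carefully (including the implicit consistency check that $\rho_k$ as defined on $\mathbf{H}_r$ is well-defined, i.e., independent of the lift $\tau$). Everything else is a direct transcription of Theorem \ref{thm_SvN} through the quotient map.
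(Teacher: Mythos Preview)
Your proposal is correct and follows essentially the same approach as the paper: pull back a representation of $\mathbf{H}_r$ to $\mathbf{H}$ via the quotient map, invoke the Stone--von Neumann theorem, and then check which of the $\chi_{a,b}$ and $\rho_\kappa$ have kernel containing $\{0\}\times\{0\}\times\Z$ (equivalently, using Theorem~\ref{thm_Schrödinger_irreducible}, that $\ker\rho_\kappa=\{0\}\times\{0\}\times\tfrac{1}{\kappa}\Z$ contains $\Z$ iff $\kappa\in\Z$). Your write-up is in fact more careful than the paper's, which leaves the continuity, well-definedness, and uniqueness bookkeeping implicit.
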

\begin{proof}
	Every unitary representation of $\mathbf{H}_r$ extends to a unitary representation of $\mathbf{H}$. Therefore, the irreducible unitary representations of $\mathbf{H}_r$ occur among the list of $\chi_{a,b}$ and $\pi_\kappa$, $\kappa \in \R\backslash\{0\}$. Conversely, a representation $\pi$ of $\mathbf{H}$ yields a representation of $\mathbf{H}_r \cong \mathbf{H}/(\{0\}\times\{0\}\times\Z)$, if and only if $\ker(\pi) \supset \{0\}\times\{0\}\times\Z$. Among the irreducible representations, these are exactly $\chi_{a,b}$ and $\rho_k$.
\end{proof}

\begin{remark}
	We note that in the proof of the Stone-von Neumann theorem we have (implicitly) used Plancherel's theorem at several points.
	
	Now, let
	\begin{equation}
		\pi(x,\omega,\tau) = \rho(\omega,-x,\tau) = e^{2 \pi i \tau} e^{\pi i x \cdot \omega} M_{-x} T_\omega.
	\end{equation}
	Then, $\pi$ acting on $\Lt$ yields an irreducible unitary representation of $\mathbf{H}$ with $\pi(0,0,\tau) = e^{2 \pi i \tau} I$. Therefore, it is unitarily equivalent to the Schrödinger representation $\rho$ and there exists a unitary operator $U$ (on $\Lt$), such that
	\begin{equation}
		U^{-1} \rho(x,\omega,\tau) U = \pi(x, \omega, \tau).
	\end{equation}
	We compute the effect of the Fourier transform $\F$ on $\rho$;
	\begin{align}
		\F \rho(x,\omega,\tau) & = e^{2 \pi i \tau} e^{-\pi i x \cdot \omega} \F M_\omega T_x = e^{2 \pi i \tau} e^{-\pi i x \cdot \omega} T_\omega M_{-x} \F\\
		& = e^{2 \pi i \tau} e^{ \pi i x \cdot \omega} M_{-x} T_\omega \F = \pi(x,\omega,\tau) \F.
	\end{align}
	Thus, $U \F \rho(\mathbf{h}) = U \pi(\mathbf{h}) \F = \rho(\mathbf{h}) U \F$. By Schur's Lemma \ref{lem_Schur} , $U \F = c I$, or $\F = c U^{-1}$, i.e., $\F$ is a multiple of a unitary operator. So we have to compute a concrete Fourier transform (any one) in order to determine $c$. We choose the example of the Gaussian $g(t) = e^{-\pi t^2}$ (see again \ref{thm_FT_Gauss}) and get
	\begin{equation}
		\F g(t) = g(t).
	\end{equation}
	Hence, $c = 1$ and we see that $\F$ is indeed unitary, i.e., we obtain Plancherel's theorem.
	
	At this point, it should be mentioned that we used a slightly more general form of Schur's lemma, as given in \cite[p.~823]{How80}: If you have an intertwining operator between irreducible representations, which is densely defined and its adjoint is densely defined (the operator is said to be closable), it will be unitary up to a scalar.
	
	In a similar fashion we can derive the inversion formula for the Fourier transform. Recall that $f^\vee(t) = f(-t)$. Then
	\begin{align}
		\left(\rho(x,\omega,\tau) f\right)^\vee (t) = e^{2 \pi i \tau} e^{\pi i x \cdot \omega} T_{-x} M_{-\omega} f(-t) = \rho(-x,-\omega,\tau) f^\vee(t).
	\end{align}
	So $^\vee \rho(x,\omega,\tau) = \rho(-x-\omega, \tau)^\vee$, if we denote the reflection operator by $^\vee$ as well. Composition with $\F^2$ yields
	\begin{equation}
		^\vee \F^2 \rho(x,\omega,\tau) = \, ^\vee \F \rho(\omega,-x,\tau) \F = \, ^\vee \rho(-x,-\omega,\tau) \F^2 = \rho(x,\omega,\tau)^\vee \F^2.
	\end{equation}
	Again, Schur's Lemma \ref{lem_Schur} gives $^\vee \F^2 = c I$ and again $c = 1$. Written out, the identity $^\vee \F^2 = I$ is
	\begin{equation}
		\int_{\Rd} \F f(\omega) e^{2 \pi i x \cdot \omega} \, d\omega = f(x).
	\end{equation}
	\begin{flushright}
		$\diamond$
	\end{flushright}
\end{remark}

\subsection{The Metaplectic Group}
Now, we are going to present a group of unitary operators, the so-called metaplectic operators. We will see that these operators have generators which are closely related to the generators of the symplectic group. In fact, the metaplectic group $Mp(\R,2d)$ is a (reducible) representation of the (connected) two-fold cover of the symplectic group $Sp(\R,2d)$. This is the fastest way of introducing (and defining) the metaplectic group. Equivalently, one can define it by saying that the sequence of group homomorphisms
\begin{equation}
	0 \to \Z_2 \to Mp(\R,2d) \to Sp(\R,2d) \to 0
\end{equation}
is exact. This means that the image of each homomorphism is the kernel of the next one.

However, we are going to use a more constructive approach to the metaplectic group. Since every lattice in the time-frequency plane is of the form $\L = M \Z^{2d}$ for some $M \in GL(\R,2d)$, it is first necessary to understand which of the automorphisms $z \mapsto M z$ on $\R^{2d}$, $z=(x,\omega)$, extend to automorphisms $\mathbf{a}_M(z,\tau) = (M z, \tau)$ of $\mathbf{H}$. These are exactly the automorphisms where the matrix is symplectic, because
\begin{align}
	\mathbf{a}_M(z,\tau) \mathbf{a}_M(z',\tau') = (Mz,\tau)(Mz',\tau') = (M(z+z'),\tau + \tau' + \tfrac{1}{2} \sigma(M z, M z'))
\end{align}
and
\begin{equation}
	\mathbf{a}_M((z,\tau)(z',\tau')) = \mathbf{a}_M(z+z',\tau+\tau'+\tfrac{1}{2} \sigma(z,z')) = (M(z+z'),\tau+\tau'+\tfrac{1}{2} \sigma(z,z')).
\end{equation}

By composing the Schrödinger representation with an automorphism $\mathbf{a}_S$, $S \in Sp(\R,2d)$, we obtain a new representation $\rho_S = \rho \circ \mathbf{a}_S$ defined by
\begin{equation}
	\rho_S(z,\tau) = \rho(S z, \tau).
\end{equation}
This representation has the following properties. First, $\rho_S$ is irreducible because $\rho(S z)$ still runs through all time-frequency shifts and, second, $\rho_S(0,0,\tau) = \rho(0,0,\tau) = e^{2 \pi i \tau} I_{L^2}$.

By the Stone-von Neumann theorem the representations $\rho$ and $\rho_S$ are equivalent and there exists a unitary operator $\mu(S)$ such that
\begin{equation}\label{eq_unitary_equiv}
	\rho(S z, \tau) = \rho_S(z, \tau) = \mu(S) \rho(z, \tau) \mu(S)^{-1} .
\end{equation}

We will start with computing how the Schrödinger representation interacts with 3 important unitary operators.
\begin{example}
	\begin{enumerate}[(a)]
		\item We start with the Fourier transform operator $\F$ and how it interacts with the symmetric time-frequency shifts $\rho(x,\omega) = \rho(x,\omega,0)$.
		\begin{align}
			\F \rho(x,\omega) & = e^{\pi i x \cdot \omega} \F T_x M_\omega = e^{\pi i x \cdot \omega} M_{-x} T_\omega \F\\
			& = M_{-x/2} T_\omega M_{-x/2} \F = \rho(J (x,\omega)) \F
		\end{align}
		Thus, the representation $\rho(J(x,\omega),\tau)$ is unitarily equivalent to the Schrödinger representation with the Fourier transform as intertwining operator.
		
		\item Next, we consider the dilation operator $\mathcal{D}_L f(t) = |\det(L)|^{-1/2} f(L^{-1} t)$, with $\det(L) \neq 0$. This time, we use the standard time-frequency shifts $\pi(x,\omega) = M_\omega T_x$, instead. This simplifies the notation in the following computation
		\begin{align}
			\mathcal{D}_L M_\omega T_x f(t) & = |\det(L)|^{-1/2} e^{2 \pi i \omega \cdot (L^{-1} t)} f(L^{-1} t - x)\\
			& = |\det(L)|^{-1/2} e^{2 \pi i (L^{-T} \omega) \cdot t} f( L^{-1} (t - Lx))\\
			& = M_{L^{-T} \omega} T_{L x} \mathcal{D}_L f(t).
		\end{align}
		Thus, we obtain that $\rho(x,\omega,\tau)$ and $\rho(D_L(x,\omega),\tau)$, $D_L =
		\begin{pmatrix}
			L & 0\\
			0 & L^{-T}
		\end{pmatrix}$, are unitarily equivalent.
		\item Finally, consider the linear chirp operator $\mathcal{V}_Qf(t) = e^{\pi i Qt \cdot t} f(t)$, $Q^T = Q$. We compute
		\begin{align}
			\mathcal{V}_Q \, \rho(x,\omega) \, \mathcal{V}_Q^{-1} f(t)
			& =  e^{\pi i Q t \cdot t} e^{\pi i x \cdot \omega} e^{2 \pi i \omega \cdot(t-x)} e^{- \pi i Q(t-x) \cdot (t-x)} f(t-x)\\
			& = e^{\pi i (\omega + Q x) \cdot x} e^{2 \pi i (\omega + Qx) \cdot (t-x)} f(t-x)\\
			& =\rho(x, Qx + \omega, \tau) f(t) = \rho(V_Q (x,\omega), \tau) f(t),
		\end{align}
		where $V_Q =
		\begin{pmatrix}
			I & 0\\
			Q & I
		\end{pmatrix}$.
	\end{enumerate}
\end{example}
In the examples, we have identified the intertwining operators of the Schrödinger representation for the generator matrices $J$, $D_L$ and $V_Q$ of the symplectic group. These are, up to phase factors, the Fourier transform $\F$, the dilation operator $\mathcal{D}_L$ and the linear chirp $\mathcal{V}_Q$, respectively. We note that \eqref{eq_unitary_equiv} actually defines a whole class of unitary operators $\{c \mu(S) \mid |c| = 1\}$. The choice of the phase factor is not important as long as we consider a single operator $\mu(S_1)$, but it becomes crucial for their composition. Indeed, for $S_1, S_2 \in Sp(\R,2d)$ and a particular choice of $\mu(S_1)$ and $\mu(S_2)$, and $\mu(S_1 S_2)$, the repeated application of \eqref{eq_unitary_equiv} yields
\begin{equation}
	\mu(S_1 S_2)^{-1} \mu(S_1) \mu(S_2) \, \rho(x,\omega) \, \mu(S_2)^{-1} \mu(S_1)^{-1} \mu(S_1 S_2) = \rho((S_1 S_2)^{-1} S_1 S_2 (x,\omega)) = \rho(x,\omega).
\end{equation}
Schur's lemma, however, only implies that $\mu(S_1 S_2) = c \mu(S_1) \mu(S_2)$, for some $c \in \C$, $|c|=1$, but we cannot assert that $c=1$. We might hope to adjust the phase factor for each $\mu(S)$ so that $S \mapsto \mu(S)$ becomes a homomorphism of $Sp(\R,2d)$. In fact, the phase factors of the operators $\mu(S)$ can be chosen such that either $\mu(S_1 S_2) = \mu(S_1)\mu(S_2)$ or $\mu(S_1 S_2) = -\mu(S_1)\mu(S_2)$, but this is the only freedom we have. Assuming we made the correct choice for the phase factors (up to $\pm1$), $\mu$ is then called the metaplectic representation of $Sp(\R,2d)$, which is a double-valued unitary representation of $Sp(\R,2d)$.

The fastidious way (mentioned at the beginning of the section) to deal with the double-valuedness is to pass to the double covering group of $Sp(\R,2d)$, called the metaplectic group denoted by $Mp(\R,2d)$.\footnote{$Mp(\R,2d)$ might as well be denoted by $Mp(\R,d)$ or $Sp_2(\R,2d)$ or $Sp_2(\R,d)$.} 
As promised, we will however choose a more constructive approach.

After these preliminaries, we will define a group of unitary operators, namely quadratic Fourier transforms, closely connected to the symplectic matrices. Let
\begin{equation}
	S =
	\begin{pmatrix}
		A & B\\
		C & D
	\end{pmatrix} \in Sp(\R,2d)
	\quad \text{ and } \det(A) \neq 0.
\end{equation}
Recall that, by Proposition \ref{pro_decomposition_symplectic}, a symplectic matrix with $\det(A) \neq 0$ can be decomposed into the building blocks $V_Q$, $D_L$ and $J$. In particular, the matrix
\begin{equation}
	V_P D_L J V_{-Q} J^{-1} =
	\begin{pmatrix}
		L & LQ\\
		PL & PLQ + L^{-T}
	\end{pmatrix}
\end{equation}
is symplectic, $P = P^T$, $Q = Q^T$ and $\det(L) \neq 0$. Next, we note that
\begin{equation}
	S J^{-1} =
	\begin{pmatrix}
		B & -A\\
		D & -C
	\end{pmatrix},
\end{equation}
We might as well consider the decomposition $S_W = V_P D_L J V_Q =
\begin{pmatrix}
	LQ & L\\
	PLQ - L^{-T} & PL
\end{pmatrix}$. Then, this decomposes matrices of the form $S =
\begin{pmatrix}
	A & B\\
	C & D
\end{pmatrix}$ with $\det(B) \neq 0$. There is an associated quadratic form $W(x,x') = \frac{1}{2} Px^2 - L^{-1} x \cdot x' + Q x'^2$, called the generating function\footnote{The term ``generating function" comes from classical mechanics and need not concern us any further.} of the matrix $S_W$
. With this notion we define the quadratic Fourier transform.

\begin{definition}
	For $W(x,x') = \frac{1}{2} Px^2 - L^{-1} x \cdot x' + \frac{1}{2} Q x'^2$, the operator
	\begin{equation}\label{eq_quadratic_FT}
		\widehat{S}_{W,m} f(x) = (-i)^{d/2} i^m \sqrt{|\det(L^{-1})|}\int_{\Rd} f(x') e^{2 \pi i W(x,x')} \, dx',
	\end{equation}
	is called the quadratic Fourier transform. The integer $m \in \{0,1,2,3\}$ is called the Maslov index\footnote{Of course, if $m$ is an appropriate choice of the Maslov index, then $m+2$ is an equally good choice, but $\widehat{S}_{W,m} = - \widehat{S}_{W,m+2}$. Thus \eqref{eq_quadratic_FT} associates two operators to each quadratic form $W$.} and is chosen such that
	\begin{equation}
		m \pi \equiv \arg (\det(L^{-1})) \mod 2 \pi .
	\end{equation}
\end{definition}
We will see that $\mu(S_W) = \widehat{S}_{W,m}$. First, we start by assigning the appropriate phase factors (up to $\pm1$) to the representations of the generator matrices. We start with the standard symplectic matrix. In this case, the quadratic form becomes $W(x,x') = - x \cdot x'$ and we have
\begin{equation}
	\widehat{J} f(x) = \mu(J) f(x) = (-i)^{d/2} \int_{\Rd} f(x') e^{-2 \pi i x \cdot x'} \, dx' = (-i)^{d/2} \F f(x).
\end{equation}
We note that
\begin{equation}
	\widehat{J}^{-1} = \mu(J^{-1}) = i^{d/2} \F^{-1}.
\end{equation}
The metaplectic operator of the matrix $D_L$ is given by
\begin{equation}
	\widehat{D}_{L,m} f(x) = \mu(D_L) f(x) = i^m \sqrt{|\det(L^{-1})|} \, f(L^{-1}x) = i^m \mathcal{D}_L f(x), \quad \det(L) \neq 0.
\end{equation}
Finally, the metaplectic operator of the matrix $V_Q$ is given by
\begin{equation}
	\widehat{V}_Q f(x) = \mu(V_Q) f(x) = e^{\pi i Q x^2} f(x), \quad Q = Q^T.
\end{equation}
We have the following factorization result for the quadratic Fourier transforms, similar to the factorization of symplectic matrices with $\det(B) \neq 0$	 (or $\det(A) \neq 0$).
\begin{proposition}
	Let $W(x,x') = \frac{1}{2} P x^2 - L^{-1} x \cdot x' + \frac{1}{2} Q x'^2$.
	\begin{enumerate}[(i)]
		\item We have the factorization
		\begin{equation}
			\widehat{S}_{W,m} = \widehat{V}_P \widehat{D}_{L,m} \widehat{J} \widehat{V}_Q .
		\end{equation}
		\item The operators $\widehat{S}_{W,m}$ are unitary (extend to unitary operators) on $\Lt$. The inverse operator of $\widehat{S}_{W,m}$ is given by
		\begin{equation}
			\widehat{S}_{W,m}^{-1} = \widehat{S}_{W^*,m^*}
			\quad \text{ and } \quad
			W^*(x,x') = -W(x',x), m^* = d-m.
		\end{equation}
	\end{enumerate}
\end{proposition}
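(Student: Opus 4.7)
My plan is to prove both parts by a direct computation, exploiting the fact that each of the factors $\widehat{V}_P$, $\widehat{V}_Q$, $\widehat{D}_{L,m}$, and $\widehat{J}$ acts on a function by a simple, explicit rule.

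For (i), I would apply $\widehat{V}_P \widehat{D}_{L,m} \widehat{J} \widehat{V}_Q$ to $f$ from right to left. The operator $\widehat{V}_Q$ produces $e^{\pi i Q x'^2} f(x')$; then $\widehat{J} = (-i)^{d/2} \F$ produces the integral
\begin{equation}
(-i)^{d/2} \int_{\Rd} f(x') e^{\pi i Q x'^2} e^{-2\pi i y \cdot x'} \, dx';
\end{equation}
then $\widehat{D}_{L,m}$ evaluates this at $y = L^{-1} x$ and multiplies by $i^m \sqrt{|\det L^{-1}|}$; and finally $\widehat{V}_P$ multiplies the result by $e^{\pi i P x^2}$. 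Collecting exponents, the phase is $\pi i P x^2 - 2\pi i L^{-1} x \cdot x' + \pi i Q x'^2 = 2\pi i\, W(x,x')$, and the prefactor $(-i)^{d/2} i^m \sqrt{|\det L^{-1}|}$ is exactly the one appearing in the definition \eqref{eq_quadratic_FT}. No obstacle here beyond the bookkeeping.

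For (ii), unitarity is immediate from the factorization just established: $\widehat{V}_P$ and $\widehat{V}_Q$ are multiplications by unimodular functions, hence isometries of $\Lt$; $\widehat{J}$ is a unimodular scalar times $\F$, which is unitary by Plancherel's theorem (Theorem \ref{thm_Plancherel}); and $\widehat{D}_{L,m}$ is $i^m \mathcal{D}_L$, where $\mathcal{D}_L$ is unitary by a straightforward change of variables in the $L^2$-integral. The composition of unitaries is unitary.

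For the inverse identity I would compute the composition $\widehat{S}_{W,m} \widehat{S}_{W^*,m^*} f$ directly. Adding the two phases yields the key cancellation
\begin{equation}
W(x,y) + W^*(y,z) = \tfrac{1}{2} P(x^2 - z^2) + L^{-1}(z - x) \cdot y,
\end{equation}
so the $Q$-terms in $y$ disappear and the inner integral in $y$ becomes the Fourier representation $\int_{\Rd} e^{2\pi i L^{-1}(z-x)\cdot y}\, dy = |\det L|\,\delta(z - x)$ (interpreted distributionally; the $P$-chirp in $x^2-z^2$ then drops out upon evaluation at $z = x$). Integration in $z$ collapses to $f(x)$, and what remains is to verify that the product of the normalization constants times $|\det L|$ equals $1$. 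With $(L^*)^{-1} = -L^{-T}$ one has $|\det (L^*)^{-1}| = |\det L^{-1}|$, so the product of prefactors is $(-i)^d\, i^{m+m^*}\, |\det L^{-1}|$, and the requirement $(-i)^d i^{m+m^*} = 1$ is met exactly because $m^* = d - m$. The same relation $m^* \equiv d + m \pmod{2}$ is consistent with the Maslov condition $m^*\pi \equiv \arg(\det(-L^{-T})) \pmod{2\pi} = d\pi + m\pi \pmod{2\pi}$.

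The main point of care is that the Gaussian-chirp factors $e^{\pi i Q x^2}$ are not in $\Lt$, so strictly speaking one should carry out the integral manipulations on the Schwartz class $\mathcal{S}(\Rd)$ and extend to $\Lt$ by density and boundedness. Tracking the Maslov index carefully (modulo $4$, since $m$ and $m + 2$ give operators that differ by a sign) is the other delicate point; the symmetric choice $m^* = d - m$ is precisely what makes the phases cancel.
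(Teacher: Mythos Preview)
Your proof of (i) and of the unitarity assertion in (ii) are essentially identical to the paper's: both apply the four factors from right to left and read off the phase and prefactor.

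For the inverse formula in (ii) you take a genuinely different route. You compose the two integral operators directly, observe the cancellation $W(x,y)+W^*(y,z)=\tfrac12 P(x^2-z^2)+L^{-1}(z-x)\cdot y$, and use the distributional identity $\int_{\Rd} e^{2\pi i L^{-1}(z-x)\cdot y}\,dy=|\det L|\,\delta(z-x)$ to collapse the double integral to the identity; the Maslov relation $m^*=d-m$ then falls out of the scalar bookkeeping $(-i)^d i^{m+m^*}=1$. The paper instead inverts the factorization from (i) term by term to get $\widehat{S}_{W,m}^{-1}=\widehat{V}_{-Q}\,\widehat{J}^{-1}\,\widehat{D}_{L^{-1},-m}\,\widehat{V}_{-P}$, and then proves the commutation relation $\widehat{J}^{-1}\widehat{D}_{L^{-1},-m}=\widehat{D}_{-L^{-T},\,d-m}\,\widehat{J}$ by a change of variables in the Fourier integral, thereby rewriting the inverse in the canonical form $\widehat{V}_{P^*}\widehat{D}_{L^*,m^*}\widehat{J}\widehat{V}_{Q^*}$ with $P^*=-Q$, $L^*=-L^T$, $Q^*=-P$. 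The paper's argument stays entirely within bounded operators on $\Lt$ and needs no distributional justification; your approach requires the $\delta$-function step to be made rigorous on $\mathcal{S}(\Rd)$ (as you note), but it has the virtue of making the phase cancellation visible and explaining structurally why $W^*(x,x')=-W(x',x)$ is the correct involution.
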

\begin{proof}
	\begin{enumerate}[(i)]
		\item By definition we have
		\begin{equation}
			\widehat{J} f(x) = (-i)^{d/2} \int_{\Rd} f(x') e^{-2 \pi i x \cdot x'} \, dx'.
		\end{equation}
		We note that
		\begin{equation}
			\widehat{D}_{L,m} \widehat{J} f(x) = (-i)^{d/2} i^m \sqrt{|\det(L^{-1})|} \int_{\Rd} f(x') e^{-2 \pi i L^{-1} x \cdot x'} \, dx'.
		\end{equation}
		Applying the chirps yields
		\begin{align}
			\widehat{S}_{W,m} f(x) & = \widehat{V}_P \widehat{D}_{L,m} \widehat{J} \widehat{V}_Q f(x)\\
			& = (-i)^{d/2} i^m \sqrt{|\det(L^{-1})|} \int_{\Rd} f(x') e^{2 \pi i \left( \tfrac{1}{2} P x^2 - L^{-1} x \cdot x' + \tfrac{1}{2} Q x'^2\right)} \, dx' .
		\end{align}
		\item The operators $\widehat{V}_Q$ and $\widehat{D}_{L,m}$ are unitary and, like $\F$, the operator $\widehat{J}$ extends to a unitary operator on $\Lt$.
		
		It is easy to see that
		\begin{equation}
			\widehat{V}_Q^{-1} = \widehat{V}_{-Q}
			\quad \text{ and } \quad
			\widehat{D}_{L,m}^{-1} = \widehat{D}_{L^{-1},-m}.
		\end{equation}
		We compute
		\begin{align}
			\widehat{J}^{-1} \widehat{D}_{L^{-1},-m} & = i^{d/2} i^{-m} \sqrt{|\det(L)|} \int_{\Rd} f(L x') e^{2 \pi i x \cdot x'} \, dx'\\
			& = (-i)^{d/2} i^{d-m} \sqrt{|\det(L^{-1})} \int_{\Rd} f(x') e^{2 \pi i L^{-T} x \cdot x'} \, dx'\\
			& = \widehat{D}_{-L^{-T},d-m} \widehat{J} f(x).
		\end{align}
		From this we obtain the result by noting that
		\begin{align}
			\widehat{S}_{W,m}^{-1} f(x) & = \widehat{V}_Q^{-1} \widehat{J}^{-1} \widehat{D}_{L,m}^{-1} \widehat{V}_P^{-1} f(x) = \widehat{V}_{-Q} \widehat{D}_{-L^{-T},d-m} \widehat{J} \widehat{V}_{-P} f(x)\\
			& = (-i)^{d/2} i^{d-m} \sqrt{|\det(L^{-1})|} \int_{\Rd} f(x') e^{2 \pi i \left( -\tfrac{1}{2} Q x^2 + x \cdot L^{-1} x' - \tfrac{1}{2} P x'^2\right)} \, dx' .
		\end{align}
	\end{enumerate}
\end{proof}
From the proposition above, it follows that the operators $\widehat{S}_{W,m}$ are unitary and are thus a subset of $\mathcal{U}(\Lt)$. By adding the identity operator to this subset, we see by the Proposition that this subset is closed under inversion and, thus forms a subgroup. Adding the identity operators is necessary as it cannot be represented by an operator $\widehat{S}_{W,m}$ because $\det(B) = 0$ for the identity matrix.

\begin{definition}
	The group generated by the quadratic Fourier transforms $\widehat{S}_{W,m}$ and the identity operators is called the metaplectic group, denoted by $Mp(\R,2d)$. The elements of $Mp(\R,2d)$ are called metaplectic operators.
\end{definition}

Every $\widehat{S} \in Mp(\R,2d)$ is thus, by definition a product $\widehat{S}_{W_1,m_1} \ldots \widehat{S}_{W_k, m_k}$ of quadratic Fourier transforms associated to symplectic matrices with $\det(B) \neq 0$. Similarly to the result we had on symplectic matrices, we get a characterization of all metaplectic operators, which we state without proof and refer to \cite[Chap.~7]{Gos11}.
\begin{theorem}
	Every $\widehat{S} \in Mp(\R,2d)$ can be written as a product of two quadratic Fourier transforms $\widehat{S}_{W,m}$ and $\widehat{S}_{W',m'}$.
\end{theorem}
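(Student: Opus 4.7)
The plan is to reduce the statement to the analogous factorization theorem for symplectic matrices, which was stated (citing de Gosson) a few lines above the theorem: every $S \in Sp(\R,2d)$ can be written as a product of two symplectic matrices with invertible upper-right block. The bridge between $Mp(\R,2d)$ and $Sp(\R,2d)$ is the projection $\mu$ sending $\widehat{S}_{W,m}$ to its underlying symplectic matrix $S_W$, which I will first show is a surjective group homomorphism with kernel contained in $\{\pm I\}$; then I will factor the symplectic image, lift the factorization, and adjust signs via the Maslov index.

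First I would verify that $\widehat{S}_{W,m} \mapsto S_W$ extends to a well-defined homomorphism $\mu \colon Mp(\R,2d) \to Sp(\R,2d)$. For each generator, the intertwining identity \eqref{eq_unitary_equiv} reads $\widehat{S}_{W,m}\,\rho(z,\tau)\,\widehat{S}_{W,m}^{-1} = \rho(S_W z, \tau)$, and this extends multiplicatively to arbitrary finite products. For general $\widehat{S} \in Mp(\R,2d)$, the associated $S \in Sp(\R,2d)$ is characterized uniquely by $\widehat{S}\,\rho(z,\tau)\,\widehat{S}^{-1} = \rho(Sz,\tau)$ (uniqueness of $S$ follows by testing against translations and modulations separately). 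Consequently $\ker(\mu)$ consists of unitaries commuting with all $\rho(z,\tau)$, and Schur's Lemma \ref{lem_Schur} applied to the irreducible Schrödinger representation forces these to be scalars $cI$ with $|c|=1$. Since $\widehat{S}_{W,m+2} = -\widehat{S}_{W,m}$, the element $-I = \widehat{S}_{W,m}\,\widehat{S}_{W^{*},m^{*}+2}$ already lies in $Mp(\R,2d)$, so $\{\pm I\} \subseteq \ker(\mu)$.

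Second, given an arbitrary $\widehat{S} \in Mp(\R,2d)$ I would set $S = \mu(\widehat{S})$ and apply the symplectic factorization to write $S = S_1 S_2$ with $\det(B_i) \neq 0$ for $i=1,2$. By Proposition \ref{pro_decomposition_symplectic} applied to $S_i J$ (or directly by the $V_{P_i} D_{L_i} J V_{Q_i}$ decomposition), each $S_i$ arises as $S_{W_i}$ for a quadratic form $W_i(x,x') = \tfrac{1}{2} P_i x^2 - L_i^{-1} x \cdot x' + \tfrac{1}{2} Q_i x'^2$. Choosing any admissible Maslov indices $m_1, m_2$ and setting $\widehat{T} = \widehat{S}_{W_1,m_1}\,\widehat{S}_{W_2,m_2}$, the homomorphism property yields $\mu(\widehat{T}) = S_1 S_2 = \mu(\widehat{S})$, hence $\widehat{S}\,\widehat{T}^{-1} \in \ker(\mu) \subseteq \{\pm I\}$. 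If the sign is $+1$ we are done; otherwise we replace $m_1$ by $m_1 + 2$, which flips the sign because $\widehat{S}_{W_1,m_1+2} = -\widehat{S}_{W_1,m_1}$, giving the required expression.

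The hard part will be proving the reverse inclusion $\ker(\mu) \subseteq \{\pm I\}$, i.e.\ that $Mp(\R,2d)$ is genuinely a two-fold cover of $Sp(\R,2d)$ rather than a larger central extension by phase factors. A priori, Schur's lemma only yields $\ker(\mu) \subseteq \{cI : |c|=1\}$, and ruling out $c \notin \{\pm 1\}$ requires carefully tracking the prefactors $(-i)^{d/2} i^m \sqrt{|\det(L^{-1})|}$ through compositions of the generators $\widehat{J}, \widehat{V}_Q, \widehat{D}_{L,m}$. The cleanest route is topological: $Mp(\R,2d)$ is connected and $\pi_1(Sp(\R,2d)) \cong \Z$, so any connected covering with kernel in the circle group $\{cI\}$ must have kernel either $\{I\}$, $\{\pm I\}$, or an infinite cyclic group, and a direct check on the generators rules out the first and third possibilities. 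A secondary technical point is the symplectic factorization itself, for which one shows that the locus of $S_1 \in Sp(\R,2d)$ such that both $S_1$ and $S_1^{-1} S$ have invertible $B$-block is a nonempty Zariski-open subset, so an admissible $S_1$ always exists.
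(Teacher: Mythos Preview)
The paper does not prove this theorem; it explicitly states it without proof and refers to de Gosson's book. In de Gosson's treatment the argument is computational and direct: one shows by explicit manipulation of the integral kernels that a triple product $\widehat{S}_{W_1,m_1}\widehat{S}_{W_2,m_2}\widehat{S}_{W_3,m_3}$ can always be collapsed to a product of two quadratic Fourier transforms, and the result then follows by induction on word length. In particular, de Gosson establishes the two-factor decomposition \emph{first}, and only afterwards uses it to compute the kernel of the projection $\pi^{Mp}$.

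Your route goes in the opposite direction: you want to use $\ker(\mu)\subseteq\{\pm I\}$ to lift the symplectic factorization. The structural outline (define $\mu$ via the intertwining relation, Schur's lemma gives $\ker(\mu)\subseteq\{cI:|c|=1\}$, factor $S=\mu(\widehat S)$ downstairs, lift, and absorb a sign by shifting a Maslov index) is sound. The genuine gap is exactly where you flag it: reducing $\{cI:|c|=1\}$ to $\{\pm I\}$. Your topological sketch does not close it. With $Mp(\R,2d)$ defined purely algebraically as the subgroup of $\mathcal U(L^2)$ generated by the $\widehat S_{W,m}$, you have no a priori handle on its topology; connectedness and the covering-map property are not automatic, and in the standard references these are deduced \emph{from} the two-factor theorem rather than independently. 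Likewise, the claim that ``a direct check on the generators'' excludes the other possible kernels is precisely the bookkeeping of the prefactors $(-i)^{d/2}i^m\sqrt{|\det L^{-1}|}$ through relations in $Sp(\R,2d)$, and carrying that out honestly amounts to de Gosson's computation in another guise. So as written your argument is either incomplete or circular at the key step; to make it self-contained you would have to supply an independent proof that the only scalar operators in the group generated by the $\widehat S_{W,m}$ are $\pm I$.
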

We note that this factorization is not unique. As a corollary we obtain the following decomposition result.
\begin{corollary}
	The metaplectic group is generated by the operators $\widehat{V}_Q$, $\widehat{D}_{L,m}$ and $\widehat{J}$.
\end{corollary}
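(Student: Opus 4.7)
The plan is to chain together two results already established in the excerpt. By the preceding theorem, every $\widehat{S} \in Mp(\R,2d)$ can be written as a product of two quadratic Fourier transforms, so it suffices to show that an arbitrary quadratic Fourier transform $\widehat{S}_{W,m}$ is expressible as a product of the three distinguished generators $\widehat{V}_Q$, $\widehat{D}_{L,m}$, $\widehat{J}$.

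For this I would invoke the factorization established in the proposition just before the theorem, namely
\begin{equation}
\widehat{S}_{W,m} = \widehat{V}_P \, \widehat{D}_{L,m} \, \widehat{J} \, \widehat{V}_Q,
\end{equation}
which is valid whenever $W(x,x') = \tfrac{1}{2} P x^2 - L^{-1} x \cdot x' + \tfrac{1}{2} Q x'^2$. Thus every quadratic Fourier transform already lies in the group generated by $\{\widehat{V}_Q, \widehat{D}_{L,m}, \widehat{J}\}$.

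Combining the two facts, if $\widehat{S} \in Mp(\R,2d)$ is arbitrary, then there exist quadratic forms $W_1, W_2$ (with associated data $P_1, L_1, Q_1, m_1$ and $P_2, L_2, Q_2, m_2$) such that
\begin{equation}
\widehat{S} = \widehat{S}_{W_1,m_1} \widehat{S}_{W_2,m_2} = \widehat{V}_{P_1} \widehat{D}_{L_1,m_1} \widehat{J} \widehat{V}_{Q_1+P_2} \widehat{D}_{L_2,m_2} \widehat{J} \widehat{V}_{Q_2},
\end{equation}
using that $\widehat{V}_{Q_1}\widehat{V}_{P_2} = \widehat{V}_{Q_1+P_2}$, which itself is of the form $\widehat{V}_Q$ with $Q = (Q_1+P_2)^T = Q_1+P_2$ symmetric. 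This exhibits $\widehat{S}$ as a word in the generators $\widehat{V}_Q$, $\widehat{D}_{L,m}$, $\widehat{J}$, and the identity operator is trivially in this group, completing the argument.

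There is no real obstacle here since the two ingredients have been stated; the only mild point worth checking is that one may absorb $\widehat{V}_{Q_1}\widehat{V}_{P_2}$ into a single chirp (which is immediate because sums of symmetric matrices are symmetric), so the word length is as short as claimed but in any case lies in the generated subgroup.
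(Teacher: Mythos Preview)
Your proof is correct and follows exactly the approach the paper intends: the corollary is stated without an explicit proof, as an immediate consequence of the preceding theorem (every $\widehat{S}\in Mp(\R,2d)$ is a product of two quadratic Fourier transforms) together with the factorization $\widehat{S}_{W,m}=\widehat{V}_P\,\widehat{D}_{L,m}\,\widehat{J}\,\widehat{V}_Q$ from the earlier proposition. Your additional observation that $\widehat{V}_{Q_1}\widehat{V}_{P_2}=\widehat{V}_{Q_1+P_2}$ is a pleasant tidying step but not needed for the bare statement.
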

We end our study of the metaplectic group with the natural projection map.
\begin{theorem}
	The mapping $\widehat{S}_{W,m} \mapsto S_{W}$, which to a quadratic Fourier transform
	\begin{equation}
		\widehat{S}_{W,m} f(x) = (-i)^{d/2} i^m \sqrt{|\det(L^{-1})|} \int_{\Rd} f(x) e^{2 \pi i W(x,x')} \, dx'
	\end{equation}
	associates a symplectic matrix with $\det(B) \neq 0$ and generating function $W$, extends into a surjective group homomorphism
	\begin{equation}
		\pi^{Mp}: Mp(\R,2d) \to Sp(\R,2d),
	\end{equation}
	i.e.,
	\begin{equation}
		\pi^{Mp}(\widehat{S}_1 \widehat{S}_2) = \pi^{Mp}(\widehat{S}_1) \pi^{Mp}(\widehat{S}_2).
	\end{equation}
	The kernel of $\pi^{Mp}$ is
	\begin{equation}
		\ker(\pi^{Mp}) = \{ \pm I \}.
	\end{equation}
	Hence, $\pi^{Mp} : Mp(\R,2d) \to Sp(\R, 2d)$ is a twofold covering of the symplectic group.
\end{theorem}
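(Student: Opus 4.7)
The plan is to use the intertwining relation as the organizing principle: I would show that every $\widehat{S} \in Mp(\R,2d)$ satisfies a relation of the form $\widehat{S} \rho(z,\tau) \widehat{S}^{-1} = \rho(Sz,\tau)$ for a unique $S \in Sp(\R,2d)$, and then declare this $S$ to be the image under $\pi^{Mp}$. The three examples computed before the definition of the metaplectic group already verify this for each generator: $\widehat{J}\,\rho(z,\tau)\,\widehat{J}^{-1} = \rho(Jz,\tau)$ (modulo the scalar $(-i)^{d/2}$, which cancels in the conjugation), $\widehat{V}_Q \rho(z,\tau) \widehat{V}_Q^{-1} = \rho(V_Q z,\tau)$, and $\widehat{D}_{L,m} \rho(z,\tau) \widehat{D}_{L,m}^{-1} = \rho(D_L z,\tau)$ (again the factor $i^m$ cancels). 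Since every quadratic Fourier transform factors as $\widehat{S}_{W,m} = \widehat{V}_P \widehat{D}_{L,m} \widehat{J} \widehat{V}_Q$, composition immediately yields $\widehat{S}_{W,m}\, \rho(z,\tau)\, \widehat{S}_{W,m}^{-1} = \rho(S_W z,\tau)$, so $\pi^{Mp}(\widehat{S}_{W,m}) = S_W$ is independent of the Maslov index $m$. Uniqueness of $S$ for a given $\widehat{S}$ follows because $S$ can be read off directly from how conjugation by $\widehat{S}$ permutes the time-frequency shifts.

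The homomorphism property is then essentially automatic: if $\widehat{S}_i \rho(z,\tau) \widehat{S}_i^{-1} = \rho(S_i z,\tau)$ for $i=1,2$, then
\begin{equation}
(\widehat{S}_1 \widehat{S}_2)\, \rho(z,\tau)\, (\widehat{S}_1 \widehat{S}_2)^{-1} = \widehat{S}_1 \rho(S_2 z, \tau) \widehat{S}_1^{-1} = \rho(S_1 S_2 z,\tau),
\end{equation}
so $\pi^{Mp}(\widehat{S}_1 \widehat{S}_2) = S_1 S_2 = \pi^{Mp}(\widehat{S}_1)\pi^{Mp}(\widehat{S}_2)$; extending from the generating quadratic Fourier transforms to all of $Mp(\R,2d)$ by the representation-of-a-product argument finishes the homomorphism statement. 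Surjectivity is then immediate from the two generation results: $Sp(\R,2d)$ is generated by $\{V_Q, D_L, J\}$ and each of these has a metaplectic preimage ($\widehat{V}_Q, \widehat{D}_{L,m}, \widehat{J}$) in $Mp(\R,2d)$, so the image of $\pi^{Mp}$ contains a set of generators of $Sp(\R,2d)$ and hence equals the whole group.

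The main obstacle is computing the kernel. Suppose $\widehat{S} \in \ker(\pi^{Mp})$, i.e., $\widehat{S}\, \rho(z,\tau)\, \widehat{S}^{-1} = \rho(z,\tau)$ for all $(z,\tau) \in \mathbf{H}$. Then $\widehat{S}$ commutes with every operator of the Schrödinger representation, which is irreducible by Theorem \ref{thm_Schrödinger_irreducible}. Schur's lemma (Lemma \ref{lem_Schur}) therefore forces $\widehat{S} = cI$ for some $c \in \C$, and unitarity of $\widehat{S}$ gives $|c| = 1$. The subtle point is now to narrow $c$ down to $\pm 1$; clearly $\pm I$ do lie in the kernel, since $\widehat{S}_{W,m+2} = -\widehat{S}_{W,m}$ by the factor $i^{m+2} = -i^m$, so any quadratic Fourier transform admits exactly two metaplectic lifts differing by $-I$. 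To rule out other scalars, I would use that every $\widehat{S} \in Mp(\R,2d)$ factors as $\widehat{S}_{W_1,m_1}\widehat{S}_{W_2,m_2}$; writing $cI = \widehat{S}_{W_1,m_1}\widehat{S}_{W_2,m_2}$ and testing on a Gaussian via the explicit integral formula \eqref{eq_quadratic_FT} produces an identity in which the phase $c$ is pinned down by the discrete Maslov data $m_1, m_2 \in \{0,1,2,3\}$ to a fourth root of unity whose square equals $1$, hence $c \in \{\pm 1\}$. Thus $\ker(\pi^{Mp}) = \{\pm I\}$, and consequently $Mp(\R,2d) \to Sp(\R,2d)$ is a twofold cover.
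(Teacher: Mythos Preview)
The paper states this theorem without proof (it is one of several structural results about $Mp(\R,2d)$ in Section~\ref{sec_symp_meta} that are quoted from de~Gosson's book \cite{Gos11}), so there is no argument in the text to compare against. Your outline is the standard one and is correct in its architecture: defining $\pi^{Mp}$ via the intertwining relation $\widehat{S}\,\rho(z,\tau)\,\widehat{S}^{-1}=\rho(Sz,\tau)$, checking it on the generators $\widehat{V}_Q,\widehat{D}_{L,m},\widehat{J}$ (exactly the three example computations the paper records), and deducing the homomorphism property and surjectivity from the generation theorems for $Sp(\R,2d)$ and $Mp(\R,2d)$ is precisely how this is done.

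Your kernel computation is also on the right track but the last sentence is vague. The clean way to finish is this: if $cI=\widehat{S}_{W_1,m_1}\widehat{S}_{W_2,m_2}$, then applying $\pi^{Mp}$ gives $S_{W_1}S_{W_2}=I$, hence $S_{W_2}=S_{W_1}^{-1}=S_{W_1^\ast}$, and since a free symplectic matrix determines its generating function uniquely, $W_2=W_1^\ast$. Thus $\widehat{S}_{W_2,m_2}$ and $\widehat{S}_{W_1,m_1}^{-1}=\widehat{S}_{W_1^\ast,m_1^\ast}$ are quadratic Fourier transforms with the \emph{same} quadratic form, so they can differ only through their Maslov indices; but two admissible Maslov indices for a fixed $W$ differ by $0$ or $2$ modulo $4$, which forces $c=i^{m_2-m_1^\ast}\in\{\pm1\}$. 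That replaces your ``testing on a Gaussian'' sketch with an argument that uses only what the paper has already set up.
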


To end this whole chapter, we will consider some consequences for the STFT as well as for Gabor systems.
\begin{lemma}
	Let $f,g \in \Lt$ and $S \in Sp(\R,2d)$. Set $(x',\omega') = S (x,\omega)$. Then
	\begin{equation}
		V_g f(S (x,\omega)) = e^{\pi i (x \cdot \omega - x' \cdot \omega')} V_{\widehat{S}^{-1} g} (\widehat{S}^{-1} f) (x,\omega),
	\end{equation}
	where $\widehat{S} = \mu(S)$.
\end{lemma}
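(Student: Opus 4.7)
The plan is to exploit the intertwining identity $\rho(Sz) = \widehat{S}\,\rho(z)\,\widehat{S}^{-1}$ that defines the metaplectic representation, and then translate between the symmetric time-frequency shifts $\rho(z)$ and the standard ones $\pi(z) = M_\omega T_x$ via the phase factor $e^{\pi i x\cdot\omega}$.

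First I would work with the ambiguity function rather than the STFT directly, because the ambiguity function is precisely a representation coefficient of the Schrödinger representation: recall from \eqref{eq_rep_coeff_Schrödinger} that $A(f,g)(z) = \langle f, \rho(z) g\rangle$ and that $A(f,g)(z) = e^{\pi i x\cdot\omega} V_g f(z)$. The Stone--von Neumann theorem together with the definition of $\mu$ yields $\rho(Sz) = \widehat{S}\,\rho(z)\,\widehat{S}^{-1}$ with $\widehat{S} = \mu(S)$. Then a one-line calculation gives the symplectic covariance of the ambiguity function:
\begin{align}
A(f,g)(Sz) &= \langle f, \rho(Sz) g\rangle = \langle f, \widehat{S}\rho(z)\widehat{S}^{-1} g\rangle \\
&= \langle \widehat{S}^{-1} f, \rho(z)\widehat{S}^{-1} g\rangle = A(\widehat{S}^{-1}f, \widehat{S}^{-1}g)(z),
\end{align}
where I used that $\widehat{S}$ is unitary so $\widehat{S}^* = \widehat{S}^{-1}$.

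Second, I would convert this identity for $A$ into the claimed identity for $V_g$. Writing $(x',\omega') = Sz$, the left-hand side becomes $A(f,g)(Sz) = e^{\pi i x'\cdot \omega'} V_g f(Sz)$, while the right-hand side equals $e^{\pi i x\cdot\omega} V_{\widehat{S}^{-1}g}(\widehat{S}^{-1}f)(z)$. Equating and dividing by $e^{\pi i x'\cdot\omega'}$ yields
\begin{equation}
V_g f(S(x,\omega)) = e^{\pi i(x\cdot\omega - x'\cdot\omega')} V_{\widehat{S}^{-1}g}(\widehat{S}^{-1}f)(x,\omega),
\end{equation}
which is exactly what we want.

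The only real subtlety is that the intertwining identity $\rho\circ\mathbf{a}_S = \widehat{S}\rho\widehat{S}^{-1}$ is only determined up to a sign by Schur's lemma; however, since both sides of the target identity are bilinear in $(\widehat{S}^{-1}f, \widehat{S}^{-1}g)$, the ambiguous sign appears squared and cancels, so the formula is unambiguous. Hence no obstacle arises from the double-valuedness of $\mu$, and the proof is essentially a direct application of the Schrödinger--metaplectic intertwining relation plus the conversion factor between $A(f,g)$ and $V_g f$.
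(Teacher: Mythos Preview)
Your proof is correct and follows essentially the same route as the paper's: both use the metaplectic intertwining relation $\rho(Sz)=\widehat{S}\rho(z)\widehat{S}^{-1}$ together with the phase conversion $V_gf(z)=e^{-\pi i x\cdot\omega}\langle f,\rho(z)g\rangle$ between $\pi$ and $\rho$. The only cosmetic difference is that you package the representation coefficient as the ambiguity function $A(f,g)$, whereas the paper writes the inner products with $\rho$ and $\pi$ directly; your additional remark on the sign ambiguity cancelling is a nice touch not made explicit in the paper.
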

\begin{proof}
	The proof is by direct computation.
	\begin{align}
		V_gf(S(x,\omega)) & = e^{-\pi i x' \cdot \omega'} \langle f, \rho(S (x,\omega)) g \rangle\\
		& = e^{-\pi i x' \cdot \omega'} \langle f, \widehat{S} \rho(x,\omega) \widehat{S}^{-1} g \rangle\\
		& = e^{\pi i (x \cdot \omega - x' \cdot \omega')} \langle \widehat{S}^{-1} f, \pi(x,\omega) \widehat{S}^{-1} g \rangle\\
		& = e^{\pi i (x \cdot \omega - x' \cdot \omega')} V_{\widehat{S}^{-1} g} (\widehat{S}^{-1} f) (x,\omega).
	\end{align}
\end{proof}

This leads to the following theorem.
\begin{theorem}
	The Gabor system $\G(g,\L)$ is a frame if and only if $\G(\widehat{S} g, S \L)$ is a frame. In this case, both systems possess the same frame bounds.
\end{theorem}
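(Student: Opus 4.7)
My plan is to exploit the preceding lemma almost directly: it tells us that a symplectic change of coordinates in the time–frequency plane, combined with the corresponding metaplectic operator on the window and on the analyzed function, leaves the modulus of the short-time Fourier transform invariant. Since the frame inequality only involves $|V_gf(\lambda)|^2$ and $\widehat{S}$ is unitary, both the sum and the $L^2$-norm will transform in a compatible way.

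Concretely, I would start with the preceding lemma
\begin{equation}
	V_g f(S\lambda) = e^{\pi i(x\cdot\omega - x'\cdot\omega')} V_{\widehat{S}^{-1}g}(\widehat{S}^{-1}f)(\lambda), \qquad (x',\omega') = S(x,\omega),
\end{equation}
and rewrite it with $g$ replaced by $\widehat{S}g$ and $f$ replaced by $\widehat{S}h$. Because $\widehat{S}^{-1}\widehat{S} = I$, this yields
\begin{equation}
	\bigl|V_{\widehat{S}g}(\widehat{S}h)(S\lambda)\bigr| = \bigl|V_gh(\lambda)\bigr| \qquad \text{for all } \lambda \in \R^{2d}.
\end{equation}
Summing the squares over $\lambda \in \L$ (so that $S\lambda$ ranges over $S\L$) gives the identity
\begin{equation}
	\sum_{\lambda^\prime \in S\L}\bigl|V_{\widehat{S}g}(\widehat{S}h)(\lambda^\prime)\bigr|^2 = \sum_{\lambda \in \L}\bigl|V_gh(\lambda)\bigr|^2, \qquad \forall h \in \Lt.
\end{equation}

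Now assume $\G(g,\L)$ is a frame with bounds $0 < A \leq B < \infty$. Using the above identity together with $\|\widehat{S}h\|_2 = \|h\|_2$ (since $\widehat{S}\in Mp(\R,2d)$ is unitary), the frame inequality applied to $h$ immediately translates into
\begin{equation}
	A\|\widehat{S}h\|_2^2 \leq \sum_{\lambda^\prime \in S\L}\bigl|V_{\widehat{S}g}(\widehat{S}h)(\lambda^\prime)\bigr|^2 \leq B\|\widehat{S}h\|_2^2.
\end{equation}
Setting $f=\widehat{S}h$ (which ranges over all of $\Lt$ as $h$ does) yields the frame inequality for $\G(\widehat{S}g,S\L)$ with the same constants $A,B$. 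The converse direction follows by the same argument applied to $S^{-1}\in Sp(\R,2d)$ and $\widehat{S}^{-1}\in Mp(\R,2d)$, together with $(S^{-1}\L) = \L$ under $S$ and $\widehat{S}^{-1}(\widehat{S}g) = g$. Since the equivalence gives identical bounds in both directions, the optimal frame bounds coincide as well.

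I do not anticipate a serious obstacle here; the only subtle point is to ensure that the phase factor really does drop out after taking absolute values and that the reindexing $\lambda\mapsto S\lambda$ is a bijection between $\L$ and $S\L$ (which is clear since $S$ is invertible). Everything else is a direct unwinding of definitions using unitarity of $\widehat{S}$.
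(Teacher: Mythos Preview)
Your proof is correct and follows essentially the same approach as the paper. The paper carries out the computation directly via the intertwining relation $\rho(S\lambda)=\widehat{S}\,\rho(\lambda)\,\widehat{S}^{-1}$ to obtain $\sum_{\lambda\in S\L}|\langle f,\rho(\lambda)\widehat{S}g\rangle|^2=\sum_{\lambda\in\L}|\langle \widehat{S}^{-1}f,\rho(\lambda)g\rangle|^2$, whereas you invoke the preceding lemma (which packages the same intertwining identity) and reparametrize by $f=\widehat{S}h$; both arguments then finish with the unitarity of $\widehat{S}$.
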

\begin{proof}
	We compute
	\begin{align}
		\sum_{\l \in S \L} | \langle f, \rho(\l) \widehat{S} g \rangle|^2
		& = \sum_{\l \in \L} | \langle f, \rho(S \l) \widehat{S} g \rangle|^2\\
		& = \sum_{\l \in \L} | \langle f, \widehat{S} \rho(\l) \widehat{S}^{-1} \widehat{S} g \rangle|^2\\
		& = \sum_{\l \in \L} | \langle \widehat{S}^{-1} f, \rho(\l) g \rangle|^2 .
	\end{align}
	Now, $\G(g,\L)$ is a frame, i.e.,
	\begin{equation}
		A \norm{\widehat{S}^{-1} f}_2^2 \leq \sum_{\l \in \L} | \langle \widehat{S}^{-1} f, \rho(\l) g \rangle|^2 \leq B \norm{\widehat{S}^{-1} f}_2^2, \qquad \forall f \in \Lt.
	\end{equation}
	But, we may exchange the middle expression in the frame inequality to get
	\begin{equation}
		A \norm{\widehat{S}^{-1} f}_2^2 \leq \sum_{\l \in S \L} | \langle f, \rho(\l) \widehat{S} g \rangle|^2 \leq B \norm{\widehat{S}^{-1} f}_2^2, \qquad \forall f \in \Lt.
	\end{equation}
	Since $\norm{\widehat{S}^{-1} f}_2^2 = \norm{f}_2^2$ the result follows.
\end{proof}
We also note the following property for the Gabor frame operator.
\begin{proposition}
	Let $\L \subset \R^{2d}$, $S \in Sp(\R,2d)$ and $\widehat{S} = \mu(S)$. Let $S_{g,\L}$ be the frame operator of the Gabor frame $\G(g,\L)$. Then
	\begin{equation}
		S_{\widehat{S}g, S \L} = \widehat{S} S_{g,\L} \widehat{S}^{-1}
	\end{equation}
\end{proposition}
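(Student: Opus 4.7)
The plan is to prove the identity by a direct computation, reindexing the defining sum of the frame operator and invoking the metaplectic intertwining relation $\rho(Sz,\tau) = \widehat{S}\,\rho(z,\tau)\,\widehat{S}^{-1}$ established in \eqref{eq_unitary_equiv}. Throughout, I would work with the symmetric time-frequency shifts $\rho(\l)$ rather than $\pi(\l)$; since $\rho(\l)$ and $\pi(\l)$ differ only by a unimodular scalar, the two phase factors appearing in $\langle f,\rho(\l)g\rangle\,\rho(\l)g$ cancel and the frame operator is unchanged.

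First I would write
\begin{equation}
S_{\widehat{S}g,S\L} f \;=\; \sum_{\mu \in S\L} \langle f,\rho(\mu)\widehat{S}g\rangle\,\rho(\mu)\widehat{S}g
\end{equation}
and perform the change of variables $\mu = S\l$ with $\l \in \L$, which is a bijection between $\L$ and $S\L$ because $S$ is invertible. This turns the sum into $\sum_{\l \in \L} \langle f,\rho(S\l)\widehat{S}g\rangle\,\rho(S\l)\widehat{S}g$.

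Next I would apply the intertwining identity $\rho(S\l) = \widehat{S}\rho(\l)\widehat{S}^{-1}$ to each occurrence of $\rho(S\l)$, so the typical summand becomes
\begin{equation}
\langle f,\widehat{S}\rho(\l)\widehat{S}^{-1}\widehat{S}g\rangle\,\widehat{S}\rho(\l)\widehat{S}^{-1}\widehat{S}g \;=\; \langle f,\widehat{S}\rho(\l)g\rangle\,\widehat{S}\rho(\l)g.
\end{equation}
Using unitarity of $\widehat{S}$, the inner product equals $\langle \widehat{S}^{-1}f,\rho(\l)g\rangle$, and I can pull the remaining $\widehat{S}$ outside the sum (this is justified by the boundedness of $\widehat{S}$ together with unconditional convergence of the Gabor series, as recorded in the preceding section on frames). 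This leaves
\begin{equation}
S_{\widehat{S}g,S\L}f \;=\; \widehat{S}\sum_{\l\in\L}\langle \widehat{S}^{-1}f,\rho(\l)g\rangle\,\rho(\l)g \;=\; \widehat{S}\,S_{g,\L}\,\widehat{S}^{-1}f,
\end{equation}
which is the claimed identity.

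There is no real obstacle here; the only subtlety is bookkeeping. One must check that when writing $\rho(S\l) = \widehat{S}\rho(\l)\widehat{S}^{-1}$ the phases from the full Heisenberg element $(S\l,0)$ match those of $(\l,0)$ — this is immediate from \eqref{eq_unitary_equiv} since the $\tau$-coordinate is preserved by $\mathbf{a}_S$. Finally, although not needed for the identity itself, one may note that this proposition is consistent with the previous theorem: the operator identity $S_{\widehat{S}g,S\L} = \widehat{S}S_{g,\L}\widehat{S}^{-1}$ shows that $S_{\widehat{S}g,S\L}$ and $S_{g,\L}$ are unitarily equivalent, so in particular they share the same spectrum and hence the same optimal frame bounds $A = \norm{S_{g,\L}^{-1}}_{op}^{-1}$ and $B = \norm{S_{g,\L}}_{op}$.
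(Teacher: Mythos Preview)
Your proof is correct and follows essentially the same route as the paper: reindex the sum over $S\L$ to a sum over $\L$, apply the metaplectic intertwining relation $\rho(S\l)=\widehat{S}\rho(\l)\widehat{S}^{-1}$, move $\widehat{S}$ across the inner product by unitarity, and pull $\widehat{S}$ out of the sum using unconditional convergence (the paper cites Lemma~\ref{lem_operator_uncond_conv} for this step). Your additional remarks on the $\rho$ versus $\pi$ phase cancellation and on the spectral consequences are accurate but not needed for the proof itself.
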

\begin{proof}
	We expand the Gabor frame operator
	\begin{align}
		S_{\widehat{S}g, S \L} f & = \sum_{\l \in S \L} \langle f, \rho(\l) \widehat{S} g\rangle \rho(\l) \widehat{S} g\\
		& = \sum_{\l \in \L} \langle f, \rho(S\l) \widehat{S} g \rangle \rho(S \l) \widehat{S} g\\
		& = \sum_{\l \in \L} \langle  f, \widehat{S} \rho(\l) \widehat{S}^{-1} \widehat{S} g \rangle \widehat{S} \rho(\l) \widehat{S}^{-1} \widehat{S} g\\
		& = \sum_{\l \in \L} \langle \widehat{S}^{-1} f , \rho(\l) g \rangle \widehat{S} \rho(\l) g\\
		& = \widehat{S} S_{g,\L} \widehat{S}^{-1} f
	\end{align}
	As we assumed that $\G(g,\L)$ is a frame, the interchange of summation and the action of the operator $\widehat{S}$ from the second to last line to the last line is justified by Lemma \ref{lem_operator_uncond_conv}, as the sum converges unconditionally.
\end{proof}

\section{Function Spaces}
We will now introduce function spaces commonly used in time-frequency analysis. For this purpose, we first introduce spaces of smooth functions and some related notation before passing to a crash course on distribution theory. Besides \cite{Gro01}, the book by Bényi and Okoudjou \cite{BenOko_ModulationSpaces} serves as a resource for this chapter.

For $x \in \Rd$ and a multi-index $\alpha =(\alpha_1, \ldots , \alpha_d) \in \N_0^d$ we write $|\alpha| = \sum_{_k=1}^d \alpha_k$, $\alpha! = \prod_{k=1}^d \alpha_k!$, $x^\alpha = \prod_{k=1}^d x_k^{\alpha_k}$ and $\partial^\alpha$ for the higher order partial derivative operator $\prod_{k=1}^d \partial_{x_k}^{\alpha_k}$, where $\partial_{x_k}^{\alpha_k} = \frac{\partial^{\alpha_k}}{\partial_{x_k}^{\alpha_k}}$. Furthermore, we write $\alpha \leq \beta$ if $\alpha_k \leq \beta_k$ for all $k = 1, \ldots , d$.

Let $\Omega$ be an open subset of $\Rd$ and $k \in \N_0$.
\begin{definition}
	The collection of all functions $f: \Omega \to \C$ such that $\partial^\alpha f$ is continuous for all multi-indices $\alpha$ with $|\alpha| \leq k$ is denoted by $C^k(\Omega)$ and
	\begin{equation}
		C^\infty(\Omega) = \bigcap_{k=0}^\infty C^k(\Omega).
	\end{equation}
	$C^\infty_c$ denotes the collection of all functions $f \in C^\infty(\Omega)$ with compact support in $\Omega$.
\end{definition}
We note that for two functions $f,g \in C^{|\alpha|}(\Omega)$ the product rule holds;
\begin{equation}
	\partial^\alpha (f g) = \sum_{\beta \leq \alpha} \frac{\alpha!}{\beta!(\alpha-\beta)!} (\partial^\beta f)(\partial^{\alpha-\beta}g).
\end{equation}
We note that these spaces are examples of Fr\'echet spaces, which are complete metric spaces, that are not normable. For example, the topology on $C^\infty(\Omega)$ is that of uniform convergence of a function and all its derivatives on compact sets; more precisely, the topology of $C^\infty(\Omega)$ is given by seminorms
\begin{equation}
	p_{l,m}(f) = \sup \{ |\partial^\alpha f(x)| \mid x \in \mathbf{K}_l, |\alpha| \leq m \},
\end{equation}
where $\Omega = \cup_{l=1}^\infty \mathbf{K}_l$, $\mathbf{K}_l$ compact and $\mathbf{K}_l \subset \mathbf{K}_{l+1}^\circ$. The notation $\mathbf{K}^\circ$ stands for the interior of the set $\mathbf{K}$.

Note now that $C^\infty_c(\Omega)$ is a non-trivial subset of $C^\infty(\Omega)$, e.g., a properly scaled and translated version of the function $\psi(x) = e^{\frac{1}{|x|^2-1}}$, $|x| \leq 1$ and $\psi(x) = 0$ for $|x| > 1$ is in $C^\infty_c(\Omega)$. Inspired by the seminorms that determine the topology of $C^\infty(\Omega)$ it is tempting to impose on $C^\infty_c(\Rd)$ the topology given by the family of norms
\begin{equation}
	\norm{f}_m = \sup\{|\partial^\alpha f(x)| \mid x \in \Omega, |\alpha| \leq m\}.
\end{equation}
However, with this topology $C^\infty_c(\Omega)$ is incomplete.

Thus, one prefers to give this space a topology that makes it complete, although not metrizable; more precisely, the convergence $f_k \to 0$ as $k \to \infty$ in $C^\infty_c(\Omega)$ means that there exists a compact subset $\mathbf{K} \subset \Omega$ such that for all $k$ the supports of $f_k$ are contained in $\mathbf{K}$ and for each multi-index $\alpha$, $\partial^\alpha f_k$ converges uniformly to 0.

The properties of the convolution imply that if $f \in L^p(\Rd)$, $1 \leq p < \infty$ and $\varphi \in C^\infty_c(\Rd)$, then $f * \varphi \in C^\infty_c(\Rd)$, which ultimately yields that $C^\infty_c(\Rd)$ is dense in $L^p(\Rd)$.

We will now recall the definition of the Schwartz space $\mathcal{S}(\Rd)$ of rapidly decreasing functions.
\begin{definition}
	A function $f$ belongs to $\mathcal{S}(\Rd)$ if $f \in C^\infty(\Rd)$ and
	\begin{equation}
		\norm{f}_{N,\alpha} = \sup \{(1+|x|)^N |\partial^\alpha f(x)| \mid x \in \Rd \} < \infty,
	\end{equation}
	for all non-negative integers $N$ and multi-indices $\alpha$.
\end{definition}

Equivalently, the complete topology of $\mathcal{S}(\Rd)$ can be given through the family of seminorms
\begin{equation}
	p_{\alpha, \beta}(f) = \sup \{|x^\alpha \partial^\beta f(x)| \mid x \in \Rd \}.
\end{equation}
We note that for $f,g \in \mathcal{S}(\Rd)$ we also have $fg \in \mathcal{S}(\Rd)$ and $f*g \in \mathcal{S}(\Rd)$. Furthermore we have the (strict) inclusions
\begin{equation}
	C^\infty_c(\Rd) \subset \mathcal{S}(\Rd) \subset C^\infty(\Rd).
\end{equation}

In distribution theory, the space $C^\infty_c$ is called the space of test functions and usually denoted by $\mathcal{D}$. Its dual space is denoted by $\mathcal{D}'$ and is called the space of distributions. The dual space of $\mathcal{S}$ is denoted by $\mathcal{S}'$ is called the space of tempered distributions.

A (conjugate-)linear functional $\ell$ belongs to the space of tempered distributions $\mathcal{S}'$ if and only if there exist $M,N \in \N$ such that
\begin{equation}
	|\ell(f)| \lesssim \sum_{|\alpha| \leq M} \sum_{|\beta| \leq N} p_{\alpha,\beta}(f), \quad \forall f \in \mathcal{S}.
\end{equation}
It is common to write $\langle \ell, f \rangle$ instead of $\ell(f)$ \footnote{Note that we have complex conjugation in the second argument of the bracket $\langle \, . \,  , \, . \, \rangle$, so our functionals are actually anti-linear.}.

The prototype example of a tempered distribution is the Dirac delta $\delta_0$. For $f \in \mathcal{S}$, we defined $\delta_0(f) = \langle \delta_0, f \rangle = \overline{f(0)}$. We note that
\begin{equation}
	|\langle \delta_x, f \rangle| \leq \norm{f}_{0,0},
\end{equation}
which shows that $\delta_x$ is indeed an element of $\mathcal{S}'$.

Translations and modulations of a (tempered) distribution $\ell$ are given by
\begin{equation}
	\langle T_x \ell, f \rangle = \langle \ell, T_{-x}f \rangle
	\quad \text{ and } \quad
	\langle M_\omega \ell, f \rangle = \langle \ell, M_{-\omega} f \rangle .
\end{equation}
The derivative of a (tempered) distribution is given by
\begin{equation}
	\langle \partial^\alpha \ell, f \rangle = (-1)^\alpha \langle \ell, \partial^\alpha f \rangle .
\end{equation}
\begin{example}
	\begin{enumerate}[(i)]	
		\item The ``principle value distribution" $\text{p.v.} \frac{1}{x}$ is defined by
		\begin{equation}
			\langle \text{p.v.} \tfrac{1}{x}, f \rangle = \lim_{\varepsilon \to 0} \int_{|x| > \varepsilon} \frac{\overline{f(x)}}{x} \, dx.
		\end{equation}
		Splitting the integral over the intervals $(-\infty,-\varepsilon)$ and $(\varepsilon, \infty)$ and using integration by parts we get
		\begin{equation}
			\langle \text{p.v.}\tfrac{1}{x}, f \rangle = - \lim_{\varepsilon \to 0} \int_{|x| > \varepsilon} \log|x| \, \overline{f'(x)} \, dx + (\overline{f(-\varepsilon)} - \overline{f(\varepsilon)}) \log(\varepsilon).
		\end{equation}
		But $|(\overline{f(-\varepsilon)} - \overline{f(\varepsilon)}) \log(\varepsilon)| \leq 2 \norm{f'}_\infty \varepsilon |\log(\varepsilon)| \to 0$ as $\varepsilon \to 0$. Hence, we are left with
		\begin{equation}
			- \lim_{\varepsilon \to 0} \int_{|x| > \varepsilon} \log|x| \overline{f'(x)} \, dx = - \int_\R \log|x| \overline{f'(x)} \, dx,
		\end{equation}
		since $\log|x|$ is indeed integrable on $[-\varepsilon, \varepsilon]$. Therefore, as
		\begin{equation}
			- \int_\R \log|x| \overline{f'(x)} \, dx = \langle \log(|x|), -f' \rangle = \langle \log(|x|)',f\rangle,
		\end{equation}
		we see that, in the distributional sense,
		\begin{equation}
			(\log(|x|))' = \text{p.v.} \tfrac{1}{x}.
		\end{equation}
		
		\item For a locally integrable function $g$, we can construct a linear functional $\ell_g$ on $\mathcal{D}$ by
		\begin{equation}
			\langle \ell_g, f \rangle := \int_{\Rd} g(x) \overline{f(x)} \, dx .
		\end{equation}
		If $\supp(f) \subset \mathbf{K}$ compact, we have $|\langle \ell_g, f \rangle| \leq \norm{f}_0 \,  \norm{g \indicator_{\mathbf{K}}}_{L^1}$.
		
		Consider the sign function $\text{sign}(x)$, then $\text{sign}'(x) = 2 \delta_0$. For $\supp(f) \subset (-a, a)$, we compute
		\begin{equation}
			\langle \ell_{\text{sign}}', f \rangle = - \langle \ell_{\text{sign}}, f' \rangle = \int_{-a}^0 \overline{f'(x)} \, dx - \int_0^a \overline{f'(x)} \, dx = 2 \overline{f(0)}.
		\end{equation}
	\end{enumerate}
	\flushright{$\diamond$}
\end{example}

\subsection{Modulation Spaces}
In 1979, Hans Feichtinger presented the function space $S_0$ at a workshop at the University of Vienna, which now also goes under the name $M^1$ or $M^{1,1}$ (see also the original article of Feichtinger \cite{Fei81} and for a modern treatise see the article by Jakobsen \cite{Jakobsen_S0_2018}). Mainly during the 1980s, a whole new family of function spaces was then discovered and studied by Feichtinger, which we now call (weighted) modulation spaces. While we will only consider them on $\Rd$, Feichtinger introduced them for locally compact Abelian groups. At the center of the definition stands the idea to impose a natural norm condition on the convolution $M_\omega g * f$ of a modulated window function $g \in \mathcal{S}$ and a tempered distribution $f \in \mathcal{S}'$. By observing that this convolution is (basically) just the STFT of $f$ with window $g$, it becomes convenient to impose appropriate decay and integrability conditions on the STFT. This is how we will approach (unweighted) modulation space theory.

First, we introduce the following mixed-norm space.
\begin{definition}
	A function $F$ is said to belong to the function space $L^{p,q}(\R^{2d})$ if and only if
	\begin{equation}
		\norm{F}_{L^{p,q}} = \left( \int_{\Rd} \left( \int_{\Rd} |F(x,\omega)|^p \, dx \right)^{q/p} \, d\omega \right)^{1/q}.
	\end{equation}
\end{definition}

Now, we give the (classical) definition of a modulation space.
\begin{definition}
	Let $1 \leq p,q \leq \infty$ and fix $g \in \mathcal{S}(\Rd)$ (non-zero). The modulation space $M^{p,q}$ is the set of all tempered distributions $f \in \mathcal{S}'(\Rd)$ for which the $L^{p,q}(\R^{2d})$ mixed norm of the STFT is finite, i.e.,
	\begin{equation}
		\norm{f}_{M^{p,q}} = \left( \int_{\Rd} \left( \int_{\Rd} |V_gf(x,\omega)|^p \, dx \right)^{q/p} \, d\omega \right)^{1/q} < \infty
	\end{equation}
	For $p = \infty$ and/or $q = \infty$, the usual adjustment of using the essential supremum is made. If $p = q$, we simply write $M^p$ instead of $M^{p,p}$.
\end{definition}
We note that this definition clearly depends on the choice of $g$. It will be one goal to show that the modulation spaces are actually independent of the particular choice of the window $g \in \mathcal{S}(\Rd)$. However, until this result has been derived, we will pick the standard Gaussian $g_0(t) = 2^{d/4} e^{- \pi t^2}$ as the particular window to define modulation spaces. Then, we want to find their basic properties: determine whether they are Banach spaces, characterize equivalent norms, find dual spaces and inclusion properties.

\subsubsection{Time-Frequency Analysis of Distributions}
In order to answer the above questions on modulation spaces, we start with an auxiliary lemma, which we state without proof (since it follows from direct computation and the fact that $M_\omega$ and $t^\beta$ as well as $T_x$ and $\partial^\alpha$ commute). It will allow us to interchange the differential and multiplication operators with time-frequency shifts.
\begin{lemma}\label{lem_aux_Schwartz}
	If $g \in \mathcal{S}(\Rd)$, then
	\begin{equation}
		\partial^\alpha t^\beta (M_\omega T_x  g(t)) = \sum_{\gamma_1 \leq \alpha} \sum_{\gamma_2 \leq \beta} 
		\begin{pmatrix}
			\alpha \\ \gamma_1
		\end{pmatrix}
		\begin{pmatrix}
			\beta \\ \gamma_2
		\end{pmatrix}
		x^{\gamma_2} (2 \pi i \omega)^{\gamma_1} M_\omega T_x ( \partial^{\alpha-\gamma_1} t^{\beta-\gamma_2} g(t)),
	\end{equation}
	for all $(x,\omega) \in \R^{2d}$ and multi-indices $\alpha, \beta$.
\end{lemma}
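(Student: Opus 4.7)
My plan is to apply the two differential/multiplication operators one at a time, exploiting the commutation relations the author highlights and the Leibniz product rule twice: once to move the monomial $t^\beta$ past $T_x$, once to move $\partial^\alpha$ past $M_\omega$.

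First I would handle $t^\beta M_\omega T_x g(t)$. Since multiplication by $t^\beta$ commutes with multiplication by $e^{2\pi i \omega\cdot t}$, this equals $M_\omega(t^\beta g(t-x))$. To move $t^\beta$ past $T_x$, I write $t = (t-x) + x$ and expand via the multinomial theorem, giving
\begin{equation}
t^\beta = \sum_{\gamma_2 \leq \beta} \binom{\beta}{\gamma_2} x^{\gamma_2} (t-x)^{\beta-\gamma_2},
\end{equation}
so that
\begin{equation}
t^\beta M_\omega T_x g(t) = \sum_{\gamma_2 \leq \beta} \binom{\beta}{\gamma_2} x^{\gamma_2} \, M_\omega T_x \bigl(t^{\beta-\gamma_2} g(t)\bigr).
\end{equation}

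Next I would apply $\partial^\alpha$ to each term $M_\omega T_x h(t)$ with $h(t) = t^{\beta-\gamma_2}g(t)$. Writing this out as $e^{2\pi i \omega\cdot t} h(t-x)$ and applying the multivariate Leibniz rule,
\begin{equation}
\partial^\alpha\bigl(e^{2\pi i\omega\cdot t}\, h(t-x)\bigr) = \sum_{\gamma_1 \leq \alpha} \binom{\alpha}{\gamma_1} \bigl(\partial^{\gamma_1} e^{2\pi i\omega\cdot t}\bigr)\bigl(\partial^{\alpha-\gamma_1} h(t-x)\bigr).
\end{equation}
Using $\partial^{\gamma_1} e^{2\pi i\omega\cdot t} = (2\pi i\omega)^{\gamma_1} e^{2\pi i\omega\cdot t}$ together with the fact that translation commutes with differentiation, the inner factor becomes $(2\pi i\omega)^{\gamma_1} M_\omega T_x \partial^{\alpha-\gamma_1} h(t)$.

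Substituting this back into the sum from the first step and recalling $h = t^{\beta-\gamma_2}g$ yields exactly the claimed double sum. Since $g \in \mathcal{S}(\Rd)$, all manipulations are pointwise well-defined and no growth issue arises. The only care needed is bookkeeping the order of operations — in particular that $\partial^{\alpha-\gamma_1}$ is applied \emph{after} the multiplication by $t^{\beta-\gamma_2}$, matching the notation $\partial^{\alpha-\gamma_1} t^{\beta-\gamma_2} g(t)$ on the right-hand side — so the main (minor) obstacle is simply keeping the ordering consistent between the two Leibniz expansions rather than any analytic difficulty.
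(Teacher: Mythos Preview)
Your proposal is correct and is exactly the ``direct computation'' the paper alludes to: the paper does not write out a proof but merely remarks that the lemma follows from the commutation of $M_\omega$ with $t^\beta$ and of $T_x$ with $\partial^\alpha$, which are precisely the two facts you exploit in your two Leibniz-type expansions. Your careful remark about the ordering of $\partial^{\alpha-\gamma_1}$ and $t^{\beta-\gamma_2}$ on the right-hand side is appropriate and the only subtlety worth noting.
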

The following result shows the continuity properties of the STFT for the Schwartz space and the space of tempered distributions. We recall that continuity of $f \in \mathcal{S}'(\Rd)$ means that there exists a constants $C > 0$ and integers $M,N > 0$ such that\footnote{Recall that $X$ is the position operator, which multiplies a function with its argument and $D^\alpha$ is the differentiation operator $D^\alpha: f(t) \mapsto \partial^\alpha f(t)$.}
\begin{equation}\label{eq_cont_S'}
	|\langle f, g \rangle| \leq C \sum_{|\alpha| \leq M} \sum_{|\beta| \leq N} \norm{D^\alpha X^\beta g}_\infty,
\end{equation}
for all $g \in \mathcal{S}(\Rd)$.
\begin{corollary}
	The operator-valued map $(x,\omega) \mapsto M_\omega T_x$ is strongly continuous on $\mathcal{S}(\Rd)$ and weak$^*$-continuous on $\mathcal{S}'(\Rd)$.
\end{corollary}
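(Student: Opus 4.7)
The two statements are linked: once strong continuity on $\mathcal{S}(\Rd)$ is established, weak$^*$-continuity on $\mathcal{S}'(\Rd)$ is almost immediate from the duality definitions of $T_x$ and $M_\omega$ on distributions.

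For the strong continuity claim, fix $g\in\mathcal{S}(\Rd)$ and $(x_0,\omega_0)\in\R^{2d}$; I want to show that
\begin{equation}
\lim_{(x,\omega)\to(x_0,\omega_0)} \norm{D^\alpha X^\beta (M_\omega T_x g - M_{\omega_0}T_{x_0}g)}_\infty = 0
\end{equation}
for every pair of multi-indices $\alpha,\beta$. By Lemma \ref{lem_aux_Schwartz}, $D^\alpha X^\beta (M_\omega T_x g)$ is a finite linear combination of terms of the form $x^{\gamma_2}(2\pi i \omega)^{\gamma_1} M_\omega T_x h$, where each $h=\partial^{\alpha-\gamma_1} t^{\beta-\gamma_2} g$ again lies in $\mathcal{S}(\Rd)$. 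The polynomial coefficients $x^{\gamma_2}\omega^{\gamma_1}$ are continuous in $(x,\omega)$, so the whole problem reduces to showing, for any $h\in\mathcal{S}(\Rd)$, that $(x,\omega)\mapsto M_\omega T_x h$ is continuous from $\R^{2d}$ into $(C_b(\Rd),\norm{\cdot}_\infty)$.

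To prove this sup-norm continuity I would use the telescoping decomposition
\begin{equation}
M_\omega T_x h - M_{\omega_0} T_{x_0} h = M_\omega(T_x h - T_{x_0} h) + (M_\omega - M_{\omega_0}) T_{x_0} h.
\end{equation}
For the first summand, $\norm{M_\omega(T_x h - T_{x_0} h)}_\infty = \norm{T_x h - T_{x_0} h}_\infty \to 0$ by uniform continuity of $h$ (which is Schwartz, hence bounded and uniformly continuous). For the second summand I would use the elementary bound $|e^{2\pi i (\omega-\omega_0)\cdot t}-1|\le \min(2,\,2\pi|\omega-\omega_0|\,|t|)$, split $\Rd$ into $|t|\le R$ and $|t|>R$, and exploit the rapid decay $(1+|t|)^N h(t-x_0)\in L^\infty$ to make the tail small for large $R$ and the local part small for $\omega$ close to $\omega_0$. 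This is the main technical step, and it is where the Schwartz class (as opposed to merely $C_b$) is essential. Combining these two estimates yields the desired sup-norm continuity, and hence strong continuity in $\mathcal{S}(\Rd)$.

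For the weak$^*$-continuity on $\mathcal{S}'(\Rd)$, recall that by the definitions of $T_x$ and $M_\omega$ on distributions,
\begin{equation}
\langle M_\omega T_x f,\, g\rangle = \langle f,\, T_{-x} M_{-\omega} g\rangle, \qquad f\in\mathcal{S}'(\Rd),\ g\in\mathcal{S}(\Rd).
\end{equation}
Fix $f\in\mathcal{S}'(\Rd)$ and $g\in\mathcal{S}(\Rd)$. Applying the strong continuity already proved (with $(x,\omega)$ replaced by $(-x,-\omega)$ and time- and frequency-shifts in the opposite order, which works identically), we get $T_{-x}M_{-\omega} g \to T_{-x_0}M_{-\omega_0} g$ in $\mathcal{S}(\Rd)$ as $(x,\omega)\to(x_0,\omega_0)$. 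Since $f$ is continuous on $\mathcal{S}(\Rd)$ (via the seminorm bound \eqref{eq_cont_S'}), the pairing passes to the limit, yielding $\langle M_\omega T_x f, g\rangle \to \langle M_{\omega_0} T_{x_0} f, g\rangle$. The main obstacle is really just the sup-norm continuity argument in the first step; after that, Lemma \ref{lem_aux_Schwartz} handles all higher Schwartz seminorms mechanically, and the weak$^*$ statement is a one-line dualization.
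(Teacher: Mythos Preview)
Your proof is correct and follows the same overall strategy as the paper: use Lemma~\ref{lem_aux_Schwartz} to reduce the Schwartz seminorm continuity to sup-norm continuity of $(x,\omega)\mapsto M_\omega T_x h$ for $h\in\mathcal{S}(\Rd)$, and then dualize for the weak$^*$ statement. There are two minor tactical differences worth noting. First, the paper works only at the origin $(x_0,\omega_0)=(0,0)$; this is enough by the group property, and it has the pleasant side effect that all terms in the Lemma~\ref{lem_aux_Schwartz} expansion with $(\gamma_1,\gamma_2)\neq(0,0)$ vanish automatically (the polynomial prefactors $x^{\gamma_2}\omega^{\gamma_1}$ go to zero), leaving only the single ``main term'' $\norm{M_\omega T_x(D^\alpha X^\beta g) - D^\alpha X^\beta g}_\infty$ to estimate. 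Second, for that remaining sup-norm continuity the paper invokes density of $C^\infty_c(\Rd)$ in $\mathcal{S}(\Rd)$ and a $3\varepsilon$-argument, whereas you give a direct estimate via uniform continuity of $h$ and a tail bound using $|e^{i\theta}-1|\le\min(2,|\theta|)$. Your route is slightly more hands-on and self-contained; the paper's is a bit shorter but leans on an approximation step. Either way the argument goes through, and your dualization for $\mathcal{S}'(\Rd)$ matches the paper's exactly.
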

\begin{proof}
	For $g \in \mathcal{S}(\Rd)$ we have to show that
	\begin{equation}
		\lim_{|x|,|\omega| \to 0} \norm{D^\alpha X^\beta (M_\omega T_x g - g)}_\infty = 0,
	\end{equation}
	for all $\alpha$ and $\beta$. According to Lemma \ref{lem_aux_Schwartz} we have
	\begin{align}
		\norm{D^\alpha X^\beta (M_\omega T_x g - g)}_\infty \leq & \, \norm{M_\omega T_x(D^\alpha X^\beta g) - D^\alpha X^\beta g}_\infty\\
		& + \mathop{\sum_{\gamma_1 \leq \alpha} \sum_{\gamma_2 \leq \beta}}_{(\gamma_1,\gamma_2) \neq 0} \begin{pmatrix}
			\alpha \\ \gamma_1
		\end{pmatrix}
		\begin{pmatrix}
			\beta \\ \gamma_2
		\end{pmatrix}
		|x^{\gamma_2} (2\pi i \omega)^{\gamma_1}| \norm{D^{\alpha-\gamma_1} X^{\beta-\gamma_2} g}_\infty
	\end{align}
	The convergence of the first term, as $|x|, |\omega| \to 0$, follows immediately if $g \in C^\infty_c(\Rd)$, which is dense in $\mathcal{S}(\Rd)$. The general result follows by a density argument (3-$\varepsilon$ argument). The terms in $\mathop{\sum \sum}_{\gamma \neq 0}$ converge to 0 because $\gamma_1 \neq 0$ or $\gamma_2 \neq 0$ in all terms.
	
	If $f \in \mathcal{S}'(\Rd)$, $g \in \mathcal{S}(\Rd)$, then
	\begin{equation}
		\lim_{|x|,|\omega| \to 0} \langle M_\omega T_x f, g \rangle = \lim_{|x|,|\omega| \to 0} \langle f, T_{-x} M_{-\omega} g \rangle = \langle f, g \rangle,
	\end{equation}
	which shows the weak$^*$-continuity of $M_\omega T_x$ on $\mathcal{S}'(\Rd)$.
\end{proof}
Note that the last statement implies that the STFT of a tempered distribution is a continuous function on the time-frequency plane.
\begin{theorem}
	Let $g \in \mathcal{S}(\Rd)$ (non-zero) and $f \in \mathcal{S}'(\Rd)$. Then the STFT $V_gf$ is continuous and there are constants $C>0$, $N\geq 0$, such that
	\begin{equation}
		|V_g f(x,\omega)| \leq C (1+|x|+|\omega|)^N,
	\end{equation}
	for all $(x, \omega) \in \R^{2d}$.
\end{theorem}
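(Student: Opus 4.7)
The plan is to unpack the definition $V_g f(x,\omega) = \langle f, M_\omega T_x g\rangle$ and transfer the two required properties directly from the action of time-frequency shifts on the Schwartz space, together with the seminorm continuity estimate \eqref{eq_cont_S'} for tempered distributions.

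For continuity, I would simply appeal to the preceding corollary: the map $(x,\omega)\mapsto M_\omega T_x g$ is strongly continuous from $\mathbb{R}^{2d}$ into $\mathcal{S}(\mathbb{R}^d)$, and $f$ is continuous on $\mathcal{S}(\mathbb{R}^d)$ (this is the defining property of a tempered distribution). The composition
\begin{equation}
(x,\omega)\ \longmapsto\ \langle f, M_\omega T_x g\rangle\ =\ V_g f(x,\omega)
\end{equation}
is therefore continuous on $\mathbb{R}^{2d}$, which disposes of the first assertion.

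For the growth bound, the key step is to apply the seminorm inequality \eqref{eq_cont_S'} to $h = M_\omega T_x g$: there exist $C_0>0$ and $M,N\in\mathbb{N}$ (depending only on $f$) with
\begin{equation}
|V_g f(x,\omega)|\ =\ |\langle f, M_\omega T_x g\rangle|\ \leq\ C_0 \sum_{|\alpha|\leq M}\sum_{|\beta|\leq N} \|D^\alpha X^\beta (M_\omega T_x g)\|_\infty.
\end{equation}
Here Lemma~\ref{lem_aux_Schwartz} plays the essential role: it rewrites $D^\alpha X^\beta(M_\omega T_x g)$ as a finite sum of terms of the form $x^{\gamma_2}(2\pi i\omega)^{\gamma_1}\, M_\omega T_x(D^{\alpha-\gamma_1}X^{\beta-\gamma_2}g)$ with $\gamma_1\leq\alpha$, $\gamma_2\leq\beta$. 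Since $M_\omega T_x$ is an isometry on $L^\infty(\mathbb{R}^d)$ and $g\in\mathcal{S}(\mathbb{R}^d)$, each $\|D^{\alpha-\gamma_1}X^{\beta-\gamma_2}g\|_\infty$ is a finite constant $C_{\alpha,\beta,\gamma}(g)$. Hence
\begin{equation}
\|D^\alpha X^\beta(M_\omega T_x g)\|_\infty\ \leq\ C_{\alpha,\beta}(g)\,(1+|x|)^{|\beta|}(1+|\omega|)^{|\alpha|}\ \leq\ C_{\alpha,\beta}(g)\,(1+|x|+|\omega|)^{|\alpha|+|\beta|}.
\end{equation}
Summing the finitely many terms with $|\alpha|\leq M$, $|\beta|\leq N$ and absorbing all constants yields $|V_g f(x,\omega)|\leq C(1+|x|+|\omega|)^{M+N}$, which is the claimed estimate with exponent $N_{\textnormal{final}}:=M+N$.

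The only mildly technical point is Lemma~\ref{lem_aux_Schwartz}, which has already been recorded; everything else is bookkeeping, so I do not expect a real obstacle. The proof fits in a few lines once the seminorm estimate for $f\in\mathcal{S}'$ and the product-rule identity for $D^\alpha X^\beta(M_\omega T_x g)$ are in hand.
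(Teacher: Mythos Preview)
Your proposal is correct and follows essentially the same argument as the paper: continuity is inherited from the preceding corollary (strong continuity of $(x,\omega)\mapsto M_\omega T_x g$ into $\mathcal{S}$, paired with continuity of $f$ on $\mathcal{S}$), and the polynomial bound comes from applying \eqref{eq_cont_S'} and then Lemma~\ref{lem_aux_Schwartz} exactly as you describe. The paper's version is only cosmetically different in that it leaves the polynomial bound slightly less explicit before majorizing by $(1+|x|+|\omega|)^N$.
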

\begin{proof}
	The continuity estimate \eqref{eq_cont_S'} yields
	\begin{equation}
		|\langle f, M_\omega T_x g \rangle| \leq C \sum_{|\alpha| \leq M_1} \sum_{|\beta| \leq M_2} \norm{D^\alpha X^\beta (M_\omega T_x g)}_\infty.
	\end{equation}
	Using Lemma \ref{lem_aux_Schwartz} we obtain
	\begin{align}
		|V_g f(x,\omega)| & \leq C \sum_{\stackrel{|\alpha| \leq M_1}{|\beta| \leq M_2}} \sum_{\stackrel{\gamma_1 \leq \alpha}{\gamma_2 \leq \beta}}
		\begin{pmatrix}
			\alpha \\ \gamma_1
		\end{pmatrix}
		\begin{pmatrix}
			\beta \\ \gamma_2
		\end{pmatrix}
		|x^{\gamma_2}| \, |(2 \pi i \omega)^{\gamma_1}| \, \norm{D^{\alpha-\gamma_1} X^{\beta-\gamma_2} g}_\infty\\
		& \leq C \max_{\stackrel{|\alpha|\leq M_1}{|\beta| \leq M_2}} \norm{D^\alpha X^\beta g}_\infty \sum_{\stackrel{|\alpha| \leq M_1}{|\beta| \leq M_2}} \sum_{\stackrel{\gamma_1 \leq \alpha}{\gamma_2 \leq \beta}}
		\begin{pmatrix}
			\alpha \\ \gamma_1
		\end{pmatrix}
		\begin{pmatrix}
			\beta \\ \gamma_2
		\end{pmatrix}
		|x^{\gamma_2}| \, |(2 \pi i \omega)^{\gamma_1}|.
	\end{align}
	Since $g \in \mathcal{S}(\Rd)$, the right-hand side is a polynomial of degree $M_1$ in the variables $|x_k|$ and of degree $M_2$ in the variables $|\omega_k|$. By choosing $N = \max(M_1,M_2)$ any such polynomial is bounded by $C(1+|x|+|\omega|)^N$ for a suitable constant $C$. The continuity was already shown in the previous result.
\end{proof}
In contrast, consider a linear functional on $\mathcal{S}(\Rd)$ which comes from a (measurable) function $f$ on $\Rd$ and defines a tempered distribution by $\langle \ell_f, g \rangle = \int_{\Rd} f(t) \overline{g(t)} \, dt$. Then $f$ does not necessarily possess polynomial growth. As an example consider the function
\begin{equation}
	f(t) = e^t \cos(e^t).
\end{equation}
Since $f$ is the derivative of the bounded function $\sin(e^t) \in L^\infty(\R) \subset \mathcal{S}'(\R)$, we have $f \in \mathcal{S}'(\R)$.

For a characterization of the Schwartz class we need a new technique to create Schwartz functions. This will be provided by the following result, which then leads to a characterization of $\mathcal{S}(\Rd)$.
\begin{proposition}\label{pro_aux_Schwartz}
	Fix $g \in \mathcal{S}$ (non-zero) and assume $F(x,\omega)$ has rapid decay on $\R^{2d}$, that is, for all $n \geq 0$ there is a constant $C_n > 0$ such that
	\begin{equation}
		|F(x,\omega)| \leq C_n (1+|x|+|\omega|)^{-n} .
	\end{equation}
	Then, the integral
	\begin{equation}
		f(t) = \iint_{\R^{2d}} F(x,\omega) M_\omega T_x g(t) \, d(x,\omega)
	\end{equation}
	defines a function in $\mathcal{S}(\Rd)$.
\end{proposition}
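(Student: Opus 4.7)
To show $f \in \mathcal{S}(\R^d)$ it suffices to prove that for every pair of multi-indices $\alpha,\beta$ the quantity $\sup_{t\in\R^d} |t^\beta \partial^\alpha f(t)|$ is finite. The plan is to push $\partial^\alpha$ and multiplication by $t^\beta$ under the integral sign, expand the resulting derivative of $M_\omega T_x g$ using the Leibniz rule and a $t=(t-x)+x$ decomposition, and then absorb all the emerging polynomial factors in $x$ and $\omega$ using the rapid decay of $F$.

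Concretely, I would first observe that for fixed multi-index $\alpha$, the Leibniz rule gives
\begin{equation}
\partial^\alpha_t \bigl(M_\omega T_x g(t)\bigr) = \sum_{\gamma \leq \alpha} \binom{\alpha}{\gamma} (2\pi i \omega)^\gamma \, M_\omega T_x (\partial^{\alpha-\gamma} g)(t),
\end{equation}
and then decompose $t^\beta = ((t-x)+x)^\beta = \sum_{\delta \leq \beta} \binom{\beta}{\delta} x^{\beta-\delta} (t-x)^\delta$ to obtain
\begin{equation}
t^\beta M_\omega T_x h(t) = \sum_{\delta \leq \beta} \binom{\beta}{\delta} x^{\beta-\delta} \, M_\omega T_x (s^\delta h(s))(t)
\end{equation}
for any Schwartz $h$. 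Combining these two identities yields a finite sum in which the integrand is a linear combination of terms of the shape
\begin{equation}
F(x,\omega) \, (2\pi i \omega)^\gamma \, x^{\beta-\delta} \, M_\omega T_x (s^\delta \partial^{\alpha-\gamma} g)(t).
\end{equation}
Each $h_{\gamma,\delta}(s) = s^\delta \partial^{\alpha-\gamma}g(s)$ lies in $\mathcal{S}(\R^d)$ and is therefore bounded by a constant $C_{\gamma,\delta}$ independent of $t$, $x$, $\omega$.

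Next, I would use the rapid decay of $F$: choose $n = 2d+1+|\alpha|+|\beta|$ and apply the hypothesis to get
\begin{equation}
|F(x,\omega)| \, |\omega|^{|\gamma|} |x|^{|\beta-\delta|} \leq C_n (1+|x|+|\omega|)^{|\alpha|+|\beta|-n} = C_n (1+|x|+|\omega|)^{-(2d+1)},
\end{equation}
which is integrable on $\R^{2d}$. This bound, uniform in $t$, simultaneously justifies (via dominated convergence applied inductively to difference quotients) the interchange of $\partial^\alpha$ with the integral, and produces the estimate
\begin{equation}
\sup_{t \in \R^d} |t^\beta \partial^\alpha f(t)| \leq \sum_{\gamma \leq \alpha} \sum_{\delta \leq \beta} \binom{\alpha}{\gamma}\binom{\beta}{\delta} (2\pi)^{|\gamma|} C_{\gamma,\delta} \, C_n \iint_{\R^{2d}} (1+|x|+|\omega|)^{-(2d+1)} \, d(x,\omega) < \infty.
\end{equation}
Since $\alpha$ and $\beta$ were arbitrary, this establishes $f \in \mathcal{S}(\R^d)$.

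The main obstacle is not a deep one but is bookkeeping: one must organize the two expansions (differentiation in $t$ and the shift in the polynomial factor) so that the result is a finite sum whose integrand decouples into a bounded Schwartz factor in $t$ and a polynomial in $(x,\omega)$ that can be swallowed by the rapid decay of $F$. The only analytical point that deserves care is justifying differentiation under the integral, which is handled by the same uniform-in-$t$ integrable majorant, so the proof is essentially a double-sum estimate once the right algebraic identities are in place.
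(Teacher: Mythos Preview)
Your proposal is correct and follows essentially the same route as the paper: push $t^\beta$ and $\partial^\alpha$ onto $M_\omega T_x g$, expand into a finite sum of terms $(\text{polynomial in }x,\omega)\cdot M_\omega T_x h_{\gamma,\delta}(t)$ with $h_{\gamma,\delta}\in\mathcal{S}$, bound the Schwartz factor uniformly in $t$, and absorb the polynomial with the rapid decay of $F$. The only cosmetic differences are that the paper packages your two algebraic identities as the single preparatory Lemma~\ref{lem_aux_Schwartz} and works with the seminorms $\|D^\alpha X^\beta f\|_\infty$ rather than $\|X^\beta D^\alpha f\|_\infty$.
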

\begin{proof}
	The integral under consideration is absolutely convergent in the variable $x$ and $\omega$. Differentiating under the integral with respect to the parameter $t$ is permitted as long as the resulting integral is absolutely convergent. By our assumptions, this is true and stays true even if we replace $F$ by $F \cdot P$, where $P$ is an arbitrary polynomial. By Lemma \ref{lem_aux_Schwartz} we have
	\begin{equation}
		\partial^\alpha t^\beta f(t) =
		\sum_{\gamma_1 \leq \alpha} \sum_{\gamma_2 \leq \beta} 
		\begin{pmatrix}
			\alpha \\ \gamma_1
		\end{pmatrix}
		\begin{pmatrix}
			\beta \\ \gamma_2
		\end{pmatrix}
		\iint_{\R^{2d}} F(x,\omega) \, x^{\gamma_2} (2 \pi i \omega)^{\gamma_1} M_\omega T_x ( \partial^{\alpha-\gamma_1} t^{\beta-\gamma_2} g(t)) \, d(x, \omega).
	\end{equation}
	Set $C = \max \{ \norm{M_\omega T_x \partial^{\gamma_1} t^{\gamma_2} g}_\infty \mid \gamma_1 \leq \alpha, \gamma_2 \leq \beta \}$, which is finite since $g \in \mathcal{S}(\Rd)$, and take the supremum in the above integral with respect to $t$. Then
	\begin{equation}\label{eq_estimate_DX}
		\norm{D^\alpha X^\beta f}_\infty \leq C \iint_{\R^{2d}} |F(x,\omega)| P(x,\omega) \, d(x,\omega) < \infty,
	\end{equation}
	where $P$ is a polynomial in $|x_k|$ and $|\omega_k|$ depending only on $\alpha$ and $\beta$. In fact, $P$ is explicitly given by
	\begin{align}
		P(x, \omega) & = \sum_{\gamma_1 \leq \alpha} \sum_{\gamma_2 \leq \beta}
		\begin{pmatrix}
			\alpha \\ \gamma_1
		\end{pmatrix}
		\begin{pmatrix}
			\beta \\ \gamma_2
		\end{pmatrix}
		|x^{\gamma_2}| \, |(2 \pi i \omega)^{\gamma_1}| \\
		& = \prod_{k=1}^d (1+|x_k|)^{\beta_k} (1 + 2 \pi |\omega_k|)^{\alpha_k}.
	\end{align}
	Thus, $f \in \mathcal{S}(\Rd)$ because $F$ is rapidly decreasing.
\end{proof}
After these preparations we can now establish a characterization of $\mathcal{S}(\Rd)$ via its STFT.
\begin{theorem}\label{thm_characterization_S}
	Fix $g \in \mathcal{S}(\Rd)$ (non-zero). Then, for $f \in \mathcal{S}'(\Rd)$ the following are equivalent.
	\begin{enumerate}[(i)]
		\item $f \in \mathcal{S}(\Rd)$.
		\item $V_g f \in \mathcal{S}(\R^{2d})$.
		\item For all $n \geq 0$, there is $C_n > 0$ such that
		\begin{equation}
			|V_g f(x,\omega)| \leq C_n(1+|x|+|\omega|)^{-n}, \quad \forall (x,\omega) \in \R^{2d}.
		\end{equation}
	\end{enumerate}
\end{theorem}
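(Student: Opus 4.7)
The natural strategy is to prove the cyclic implications (i) $\Rightarrow$ (ii) $\Rightarrow$ (iii) $\Rightarrow$ (i), which pivots on relating the STFT with the Schwartz structure via the partial Fourier transform in one direction and the inversion formula in the other.

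For (i) $\Rightarrow$ (ii), I would bypass Lemma \ref{lem_aux_Schwartz} and instead observe that the function $F(x,t) = f(t) \, \overline{g(t-x)}$ belongs to $\mathcal{S}(\R^{2d})$ whenever $f, g \in \mathcal{S}(\Rd)$: indeed, $F$ is the pullback of the tensor product $f \otimes \overline{g}$ under the linear change of variables $(x,t) \mapsto (t, t-x)$, which maps $\mathcal{S}(\R^{2d})$ to itself. Then
\begin{equation}
V_g f(x,\omega) = \int_{\Rd} F(x,t) e^{-2\pi i \omega \cdot t} \, dt = \mathcal{F}_2 F(x,\omega),
\end{equation}
and since the partial Fourier transform $\mathcal{F}_2$ preserves $\mathcal{S}(\R^{2d})$ (by Fubini plus the fact that $\mathcal{F}$ preserves $\mathcal{S}(\Rd)$), we conclude $V_g f \in \mathcal{S}(\R^{2d})$.

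The implication (ii) $\Rightarrow$ (iii) is immediate, because any Schwartz function on $\R^{2d}$ satisfies the required polynomial decay estimates.

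The serious content sits in (iii) $\Rightarrow$ (i). Given only that $f \in \mathcal{S}'(\Rd)$ with $V_gf$ rapidly decreasing in the pointwise sense, I would define the candidate Schwartz function
\begin{equation}
\tilde f = \frac{1}{\|g\|_2^2} \iint_{\R^{2d}} V_g f(x,\omega) \, M_\omega T_x g \, d(x,\omega).
\end{equation}
Since the scalar coefficient $V_g f$ is rapidly decreasing by hypothesis, Proposition \ref{pro_aux_Schwartz} applies and yields $\tilde f \in \mathcal{S}(\Rd)$. What remains is to identify $\tilde f$ with $f$ as tempered distributions. For arbitrary $h \in \mathcal{S}(\Rd)$, I would first invoke the $L^2$-inversion formula for the test function $h$, which gives the absolutely convergent representation $h = \|g\|_2^{-2} \iint V_g h(x,\omega) \, M_\omega T_x g \, d(x,\omega)$, and the convergence is actually in the Schwartz topology by the same proposition. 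Since $f \in \mathcal{S}'$ is continuous, the pairing commutes with this $\mathcal{S}$-valued integral, producing
\begin{equation}
\langle f, h \rangle = \frac{1}{\|g\|_2^2} \iint_{\R^{2d}} V_g f(x,\omega) \, \overline{V_g h(x,\omega)} \, d(x,\omega) = \langle \tilde f, h \rangle.
\end{equation}
As $h \in \mathcal{S}$ was arbitrary, $f = \tilde f \in \mathcal{S}(\Rd)$.

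The main obstacle will be justifying rigorously the interchange of the distributional action of $f$ with the vector-valued integral defining $h$ in the last step; this relies on the convergence of the Riemann sums approximating the integral not merely in $\Lt$ but in the topology of $\mathcal{S}(\Rd)$, which in turn follows from Proposition \ref{pro_aux_Schwartz} applied to tails of the integrand together with the rapid decay of $V_gh$. Everything else is either linear algebra of Schwartz-preserving operations or the already established $L^2$-inversion formula.
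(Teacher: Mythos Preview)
Your proof is correct and follows essentially the same route as the paper: the factorization $V_gf = \F_2 \mathcal{T}_a(f \otimes \overline{g})$ for (i) $\Rightarrow$ (ii), the trivial (ii) $\Rightarrow$ (iii), and Proposition~\ref{pro_aux_Schwartz} applied to the inversion integral for (iii) $\Rightarrow$ (i). You are in fact more careful than the paper in the last step: the paper simply writes ``the inversion formula for the STFT shows that $\widetilde{f}=f$'', whereas at that point only the $\Lt$-inversion formula has been established and $f$ is merely in $\mathcal{S}'(\Rd)$; your argument---pairing against $h\in\mathcal{S}(\Rd)$, expanding $h$ via the inversion formula with convergence in $\mathcal{S}$, and commuting the distributional action of $f$ through the integral---is exactly the justification that the paper defers to the subsequent corollary on the $\mathcal{S}'$-inversion formula.
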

\begin{proof}
	$(i) \Rightarrow (ii)$: We use the factorization $V_gf = \mathcal{F}_2 \mathcal{T}_a(f \otimes \overline{g})$ of the STFT, where $\F_2$ is the Fourier transform with respect to the second variable and $\mathcal{T}_a F(x,t) = F(t,t-x)$ is the asymmetric coordinate change. If $f,g \in \mathcal{S}(\Rd)$, then $f \otimes \overline{g} \in \mathcal{S}(\R^{2d})$. Since $\mathcal{S}(\R^{2d})$ is invariant under both operators $\mathcal{T}_a$ and $\F_2$, it follows that $V_gf \in \mathcal{S}(\R^{2d})$.
	
	$(ii) \Rightarrow (iii)$ is obvious.
	
	$(iii) \Rightarrow (i)$: Set
	\begin{equation}
		\widetilde{f} = \norm{g}_2^{-2} \iint_{\R^{2d}} V_g f(x, \omega) M_\omega T_x g \, d(x,\omega). 
	\end{equation}
	Proposition \ref{pro_aux_Schwartz} implies that $\widetilde{f} \in \mathcal{S}(\Rd)$. At the same time the inversion formula for the STFT shows that $\widetilde{f} = f$. Thus, $f \in \mathcal{S}(\Rd)$.
\end{proof}

\begin{corollary}\label{cor_seminorms_S}
	If $g \in \mathcal{S}(\Rd)$, then the collection of seminorms
	\begin{equation}
		\norm{V_g f}_{L^\infty_{(1+|z|)^s}} = \sup_{z \in \R^{2d}} (1+|z|)^s |V_g f(z)|, \quad s \geq 0,
	\end{equation}
	forms an equivalent collection of seminorms for $\mathcal{S}(\Rd)$.
\end{corollary}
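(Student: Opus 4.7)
The plan is to show that the two families of seminorms — the standard Schwartz seminorms $p_{\alpha,\beta}(f) = \norm{X^\alpha D^\beta f}_\infty$ and the STFT seminorms $q_s(f) = \norm{V_g f}_{L^\infty_{(1+|z|)^s}}$ — are mutually dominated by finite sums, which by Theorem \ref{thm_characterization_S} already both characterize the set $\mathcal{S}(\Rd)$. Finiteness of the $q_s$ on $\mathcal{S}(\Rd)$ is precisely the content of (i) $\Rightarrow$ (iii) there, so what remains is to extract quantitative bounds from the two directions of that proof.

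For the estimate $q_s(f) \lesssim \sum p_{\alpha,\beta}(f)$, I would revisit the factorization $V_g f = \F_2 \mathcal{T}_a(f \otimes \overline{g})$. Since $\mathcal{T}_a$ preserves the Schwartz seminorms up to a finite linear combination, and $\F_2$ takes polynomial weights to derivatives (and vice versa) by \eqref{eq_FT_derivative}, a direct expansion yields, for each $s \geq 0$, an estimate of the form
\begin{equation}
(1+|x|+|\omega|)^s |V_g f(x,\omega)| \leq C_{s,g} \sum_{|\alpha|+|\beta| \leq N(s)} \norm{X^\alpha D^\beta f}_\infty,
\end{equation}
with constants depending only on $s$ and on finitely many Schwartz seminorms of $g$. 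Equivalence of $(1+|x|+|\omega|)^s$ with $(1+|(x,\omega)|)^s$ is of course harmless.

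For the reverse estimate $p_{\alpha,\beta}(f) \lesssim \sum q_s(f)$, I would use the inversion formula $f = \norm{g}_2^{-2} \iint V_g f(z) \, \pi(z) g \, dz$ together with the bound \eqref{eq_estimate_DX} from the proof of Proposition \ref{pro_aux_Schwartz}. That estimate reads
\begin{equation}
\norm{D^\alpha X^\beta f}_\infty \leq C \iint_{\R^{2d}} |V_g f(z)| \, P_{\alpha,\beta}(z) \, dz,
\end{equation}
where $P_{\alpha,\beta}$ is an explicit polynomial of degree $|\alpha|+|\beta|$. Choosing $s > |\alpha|+|\beta|+2d+1$ and pulling out $\sup_z (1+|z|)^s |V_g f(z)| = q_s(f)$ leaves the convergent integral $\iint (1+|z|)^{-s} P_{\alpha,\beta}(z) \, dz$, giving $p_{\alpha,\beta}(f) \leq C_{\alpha,\beta,g} \, q_s(f)$ for a single $s = s(\alpha,\beta)$.

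The only mild obstacle is bookkeeping the constants in the first direction: one must make sure that the windowed integrals defining $V_g f$ and their derivatives in $(x,\omega)$ can be controlled by the Schwartz seminorms of $f$ alone (with $g$-dependence absorbed into the constant). Once this is carried out using Lemma \ref{lem_aux_Schwartz}, both directions combine to show the two topologies coincide, completing the proof.
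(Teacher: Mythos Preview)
Your proposal is correct, and the direction $p_{\alpha,\beta}(f) \lesssim q_s(f)$ via the inversion formula and \eqref{eq_estimate_DX} is exactly what the paper does. The genuine difference lies in the other direction: you propose to establish $q_s(f) \lesssim \sum p_{\alpha,\beta}(f)$ by hand, tracking constants through the factorization $V_g f = \F_2 \mathcal{T}_a(f \otimes \overline{g})$ (or, as you note at the end, via Lemma \ref{lem_aux_Schwartz}), whereas the paper bypasses this computation entirely. Having shown that the identity map $I: \widetilde{\mathcal{S}}(\Rd) \to \mathcal{S}(\Rd)$ is continuous (your second direction) and that the two spaces coincide as sets by Theorem \ref{thm_characterization_S}, the paper simply invokes the open mapping theorem for Fr\'echet spaces to conclude that $I$ is a topological isomorphism. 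Your route is more elementary and self-contained --- it avoids functional-analytic machinery and yields explicit constants --- at the cost of the bookkeeping you flag; the paper's route is slicker but relies on completeness of $\widetilde{\mathcal{S}}(\Rd)$ under the $q_s$-seminorms and the open mapping theorem.
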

\begin{proof}
	Set
	\begin{equation}
		\widetilde{\mathcal{S}}(\Rd) = \{ f \in \Lt \mid \sup_{z \in \R^{2d}} (1 + |z|)^s |V_g f(z)| < \infty, \; \forall s \geq 0\}.
	\end{equation}
	Theorem \ref{thm_characterization_S} implies that $\widetilde{\mathcal{S}}(\Rd) = \mathcal{S}(\Rd)$ as an equality between two sets. It remains to show that they have the same topology. To verify this, we apply \eqref{eq_estimate_DX} to the inversion formula
	\begin{equation}
		f = \norm{g}_2^{-2} \iint_{\R^{2d}} V_g f(x,\omega) M_\omega T_x g \, d(x,\omega).
	\end{equation}
	Then, we majorize the appearing polynomial $P$ by $(1+|z|)^n$ for sufficiently large $n$. So
	\begin{align}
		\norm{D^\alpha X^\beta f}_\infty & \leq C \iint_{\R^{2d}} |V_gf(z)| \, P(z) \, dz\\
		& \leq \widetilde{C} \iint_{\R^{2d}} |V_gf(z)| \, (1+|z|)^n \, dz\\
		& \leq \sup_{z \in \R^{2d}} \left(|V_gf(z)| \, (1+|z|)^{n+2d+1}\right) \, \widetilde{C} \iint_{\R^{2d}} (1+|z|)^{-2d-1} \, dz < \infty.
	\end{align}
	This estimate shows that the identity operator $I: \widetilde{\mathcal{S}}(\Rd) \to \mathcal{S}(\Rd)$ is continuous. By the open mapping theorem it is therefore an isomorphism, and thus the two topologies coincide on $\mathcal{S}(\Rd)$.
\end{proof}

\begin{corollary}
	Assume that $g, \widetilde{g} \in \mathcal{S}(\Rd)$ (both non-zero).
	\begin{enumerate}[(a)]
		\item If $|F(x,\omega)| \leq C (1+|x|+|\omega|)^N$ for some constants $C > 0$, $N \geq 0$, then the integral
		\begin{equation}
			f = \iint_{\R^{2d}} F(x,\omega) M_\omega T_x g \, d(x,\omega),
		\end{equation}
		defines a tempered distribution f $\in \mathcal{S}'(\Rd)$ in the sense that for all $\varphi \in \mathcal{S}(\Rd)$
		\begin{equation}
			\langle f, \varphi \rangle = \iint_{\R^{2d}} F(x,\omega) \langle M_\omega T_x g, \varphi \rangle \, d(x,\omega).
		\end{equation}
		
		\item In particular, if $F = V_g f$ for some $f \in \mathcal{S}'(\Rd)$, then
		\begin{equation}\label{eq_inv_STFT_S}
			f = \frac{1}{\overline{\langle g, \widetilde{g} \rangle}} \iint_{\R^{2d}} V_g f(x,\omega) M_\omega T_x \widetilde{g} \, d(x,\omega).
		\end{equation}
	\end{enumerate}
\end{corollary}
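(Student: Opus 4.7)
\emph{Part (a): $f$ is a well-defined tempered distribution.} My plan is to show that, for every $\varphi\in\mathcal{S}(\Rd)$, the defining integral converges absolutely and that the resulting (conjugate-)linear functional $\varphi\mapsto\langle f,\varphi\rangle$ is continuous in the Schwartz topology. The key identity is
\begin{equation*}
\langle M_\omega T_x g,\varphi\rangle \;=\; \overline{\langle \varphi, M_\omega T_x g\rangle} \;=\; \overline{V_g\varphi(x,\omega)}.
\end{equation*}
By Theorem~\ref{thm_characterization_S}, $\varphi\in\mathcal{S}(\Rd)$ implies $V_g\varphi\in\mathcal{S}(\R^{2d})$, so for every $s\geq 0$ there is a constant $C_s(\varphi)$ (depending only on finitely many Schwartz seminorms of $\varphi$, by Corollary~\ref{cor_seminorms_S}) with $|V_g\varphi(x,\omega)|\le C_s(\varphi)(1+|x|+|\omega|)^{-s}$. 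Taking $s=N+2d+1$ and pairing with the polynomial bound $|F(x,\omega)|\le C(1+|x|+|\omega|)^N$ yields
\begin{equation*}
|\langle f,\varphi\rangle| \;\le\; C\,C_{N+2d+1}(\varphi)\iint_{\R^{2d}}(1+|x|+|\omega|)^{-2d-1}\,d(x,\omega)\;<\;\infty.
\end{equation*}
This simultaneously gives convergence and, via Corollary~\ref{cor_seminorms_S}, continuity of $\varphi\mapsto\langle f,\varphi\rangle$ on $\mathcal{S}(\Rd)$.

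\emph{Part (b): the inversion formula.} Let $\widetilde f$ denote the right-hand side of \eqref{eq_inv_STFT_S}, which by part (a) is a well-defined element of $\mathcal{S}'(\Rd)$ since $|V_g f(x,\omega)|\le C(1+|x|+|\omega|)^N$ for some $N$ by the earlier theorem on the STFT of tempered distributions. I will test against an arbitrary $\varphi\in\mathcal{S}(\Rd)$, so it suffices to show $\langle \widetilde f,\varphi\rangle=\langle f,\varphi\rangle$. By definition and the identity above,
\begin{equation*}
\langle \widetilde f,\varphi\rangle \;=\; \frac{1}{\overline{\langle g,\widetilde g\rangle}}\iint_{\R^{2d}} V_g f(x,\omega)\,\overline{V_{\widetilde g}\varphi(x,\omega)}\,d(x,\omega).
\end{equation*}
To match this with $\langle f,\varphi\rangle$, I will apply the $L^2$-inversion formula \eqref{eq_inv_STFT} (valid since $\varphi\in\mathcal{S}\subset\Lt$) with the roles of analysis and synthesis windows interchanged, writing
\begin{equation*}
\varphi \;=\; \frac{1}{\langle g,\widetilde g\rangle}\iint_{\R^{2d}} V_{\widetilde g}\varphi(x,\omega)\,M_\omega T_x g\,d(x,\omega).
\end{equation*}

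\emph{Closing the argument and main obstacle.} The critical step is to bring $f$ inside this vector-valued integral. My plan is to invoke part (a) directly applied to $F:=V_{\widetilde g}\varphi\in\mathcal{S}(\R^{2d})$ with the windows $(g,\widetilde g)$ swapped to $(\widetilde g,g)$: this identifies the right-hand side above as a tempered-distributional integral equal to $\varphi$, so pairing with $f\in\mathcal{S}'(\Rd)$ and using conjugate-linearity in the second slot gives
\begin{align*}
\langle f,\varphi\rangle
&\;=\;\overline{\tfrac{1}{\langle g,\widetilde g\rangle}}\iint_{\R^{2d}} \overline{V_{\widetilde g}\varphi(x,\omega)}\,\langle f,M_\omega T_x g\rangle\,d(x,\omega)\\
&\;=\;\frac{1}{\overline{\langle g,\widetilde g\rangle}}\iint_{\R^{2d}} V_g f(x,\omega)\,\overline{V_{\widetilde g}\varphi(x,\omega)}\,d(x,\omega)\;=\;\langle\widetilde f,\varphi\rangle.
\end{align*}
The main obstacle, and the only step requiring care, is justifying the interchange of $f$ with the vector-valued integral reconstructing $\varphi$; the cleanest way is to note that the reconstruction integral converges not merely in $\Lt$ but in $\mathcal{S}(\Rd)$, which follows by applying Proposition~\ref{pro_aux_Schwartz} (and the estimate \eqref{eq_estimate_DX}) to the Schwartz coefficient function $V_{\widetilde g}\varphi$. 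Continuity of $f$ on $\mathcal{S}(\Rd)$ then allows passing $f$ under the integral, completing the proof. Since this holds for every $\varphi\in\mathcal{S}(\Rd)$, we conclude $\widetilde f=f$ in $\mathcal{S}'(\Rd)$.
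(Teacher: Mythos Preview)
Your proof is correct and follows essentially the same route as the paper: part (a) via the polynomial growth of $F$ against the rapid decay of $V_g\varphi$ (using Corollary~\ref{cor_seminorms_S}), and part (b) by testing against $\varphi\in\mathcal{S}(\Rd)$, reconstructing $\varphi$ via the inversion formula with swapped windows, and noting that this reconstruction converges in $\mathcal{S}(\Rd)$ (via Proposition~\ref{pro_aux_Schwartz}) so that $f\in\mathcal{S}'(\Rd)$ may be pulled inside. You are in fact more explicit than the paper about the ``main obstacle'' of interchanging $f$ with the integral; the paper simply states that the reconstruction of $\varphi$ ``holds in $\mathcal{S}(\Rd)$'' and proceeds.
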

\begin{proof}
	\begin{enumerate}[(a)]
		\item The integral converges absolutely because $|F(z)| = \mathcal{O}(|z|^N)$ with $N$ fixed and $|V_g \varphi(z)| = \mathcal{O}(|z|^{-n})$ for all $n \geq 0$, whenever $\varphi \in \mathcal{S}(\Rd)$. Therefore, we find that
		\begin{equation}
			|\langle f, \varphi \rangle| \leq C \sup_{z \in \R^{2d}} \left(|V_g \varphi (z)|(1+|z|)^{N+2d+1}\right) \iint_{\R^{2d}} (1+|z|)^{-2d-1} \, dz.
		\end{equation}
		By Corollary \ref{cor_seminorms_S} we have that $f$ defines a continuous linear functional on $\mathcal{S}(\Rd)$, i.e., $f \in \mathcal{S}'(\Rd)$.
		
		\item By (a), the integral in \eqref{eq_inv_STFT_S} defines a tempered distribution $\widetilde{f}$ by
		\begin{equation}
			\langle \widetilde{f}, \varphi \rangle = \frac{1}{\overline{\langle g, \widetilde{g} \rangle}} \iint_{\R^{2d}} V_g f(x,\omega) \langle M_\omega T_x \widetilde{g}, \varphi \rangle \, d(x,\omega).
		\end{equation}
		However, since
		\begin{equation}
			\varphi = \frac{1}{\langle g, \widetilde{g} \rangle} \iint_{\R^{2d}} V_{\widetilde{g}} \varphi(x,\omega) M_\omega T_x g \, d(x,\omega)
		\end{equation}
		holds in $\mathcal{S}(\Rd)$, we also have
		\begin{equation}
			\langle f, \varphi \rangle = \frac{1}{\overline{\langle g, \widetilde{g} \rangle}} 
			\iint_{\R^{2d}} \langle f, M_\omega T_x g \rangle \overline{V_{\widetilde{g}} \varphi(x,\omega)} \, d(x,\omega) = \langle \widetilde{f}, \varphi \rangle,
		\end{equation}
		and so $\widetilde{f} = f$. This proves the inversion formula on $\mathcal{S}'(\Rd)$.
	\end{enumerate}
\end{proof}

\subsubsection{Properties of Modulation Spaces}
After the excursion into time-frequency analysis on the Schwartz space and for tempered distributions, we return to our original object of study, the modulation spaces. We re-call that for a tempered distribution $f$ belonging to a modulation space $M^{p,q}(\Rd)$ is equivalent to the fact that the $L^{p,q}(\R^{2d})$ mixed-norm of the STFT with a fixed Schwartz window $g$ is finite, i.e.,
\begin{equation}
	\norm{f}_{M^{p,q}(\Rd)} = \norm{V_g f}_{L^{p,q}(\R^{2d})} < \infty.
\end{equation}
We will start working with the standard Gaussian $g_0(t) = 2^{d/4} e^{-\pi t^2}$ as our window of choice. The first result that we collect on modulation spaces is the following.
\begin{proposition}
	The modulation space $M^2(\Rd)$ is the Hilbert space $\Lt$, i.e.,
	\begin{equation}
		M^2(\Rd) = \Lt .
	\end{equation}
\end{proposition}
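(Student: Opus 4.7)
Proof proposal. The plan is to establish two inclusions, with the equality of norms being automatic from Corollary \ref{cor_Vgf_L2}. The easy direction is $L^2(\R^d) \hookrightarrow M^2(\R^d)$: if $f \in L^2(\R^d)$, then $f$ defines a tempered distribution in the usual way, and by Corollary \ref{cor_Vgf_L2} applied to the window $g_0$ (which satisfies $\|g_0\|_2 = 1$) we have
\begin{equation}
\|f\|_{M^2} = \|V_{g_0} f\|_{L^2(\R^{2d})} = \|f\|_2 \|g_0\|_2 = \|f\|_2 < \infty,
\end{equation}
so $f \in M^2(\R^d)$ with matching norm.

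For the reverse inclusion $M^2(\R^d) \hookrightarrow L^2(\R^d)$, I would take $f \in \mathcal{S}'(\R^d)$ with $V_{g_0} f \in L^2(\R^{2d})$ and produce an honest $L^2$ representative. The idea is to use the inversion formula for the STFT on tempered distributions: by the corollary following Proposition \ref{pro_aux_Schwartz} (or rather the $\mathcal{S}'$-version of the inversion formula established there), we have
\begin{equation}
f = \iint_{\R^{2d}} V_{g_0} f(x,\omega)\, M_\omega T_x g_0 \, d(x,\omega)
\end{equation}
in the sense of $\mathcal{S}'(\R^d)$, since $\|g_0\|_2^2 = 1$. Now the right-hand side is exactly a vector-valued integral of the type discussed just before the inversion theorem for the STFT, and the discussion there showed that whenever $F = V_{g_0} f \in L^2(\R^{2d})$ the functional
\begin{equation}
\ell(h) = \iint_{\R^{2d}} F(x,\omega)\, \overline{\langle h, M_\omega T_x g_0\rangle} \, d(x,\omega)
\end{equation}
is bounded on $L^2(\R^d)$ with $|\ell(h)| \leq \|F\|_{L^2(\R^{2d})} \|g_0\|_2 \|h\|_2$, hence defines an element $\widetilde{f} \in L^2(\R^d)$ with $\|\widetilde{f}\|_2 \leq \|V_{g_0} f\|_{L^2(\R^{2d})}$.

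The last step is to identify $\widetilde{f}$ with $f$ as tempered distributions: for every $\varphi \in \mathcal{S}(\R^d) \subset L^2(\R^d)$, both $\langle f, \varphi \rangle$ and $\langle \widetilde{f}, \varphi \rangle$ are computed by the same weak-integral formula
\begin{equation}
\iint_{\R^{2d}} V_{g_0} f(x,\omega)\, \overline{\langle \varphi, M_\omega T_x g_0\rangle} \, d(x,\omega),
\end{equation}
the exchange being justified by Cauchy--Schwarz in $L^2(\R^{2d})$ together with the fact that $V_{g_0}\varphi \in \mathcal{S}(\R^{2d})$ from Theorem \ref{thm_characterization_S}. Therefore $f = \widetilde{f} \in L^2(\R^d)$, and we get the matching bound $\|f\|_2 \leq \|V_{g_0}f\|_{L^2(\R^{2d})} = \|f\|_{M^2}$, so both norms coincide.

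The only mildly delicate point is the identification $f = \widetilde{f}$: one must be careful that the weak definition of the vector-valued integral in $L^2$ produces the same tempered distribution as the $\mathcal{S}'$-inversion formula. This is the only step that requires attention; everything else is a direct invocation of Corollary \ref{cor_Vgf_L2} and the STFT inversion formula.
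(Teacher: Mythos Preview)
Your proof is correct, but it takes a different route from the paper's argument. The paper proceeds by direct computation: writing $V_{g_0}f(x,\omega)=\F(f\,T_x\overline{g_0})(\omega)$, it applies Plancherel in the $\omega$-variable to obtain $\int_{\Rd}|V_{g_0}f(x,\omega)|^2\,d\omega=\int_{\Rd}|f(t)|^2|g_0(t-x)|^2\,dt$, and then Fubini in $x$ separates this into $\norm{f}_2^2\norm{g_0}_2^2$. This simultaneously yields both inclusions and the norm equality in one stroke, the implicit point being that $f\,T_x\overline{g_0}\in L^2$ for almost every $x$ (together with $g_0>0$) forces $f$ to be a genuine locally square-integrable function.

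Your argument instead invokes the $\mathcal{S}'$-inversion formula and the $L^2$ vector-valued integral machinery to produce an $L^2$ representative $\widetilde f$, then identifies $\widetilde f$ with $f$ by testing against Schwartz functions. This is a bit more abstract but has the advantage that it is exactly the template used later in Proposition~\ref{pro_inv_STFT_S'} for general $M^{p,q}$, so your approach anticipates the general mechanism rather than relying on the special Plancherel structure available only for $p=q=2$. The paper's computation, on the other hand, is shorter and entirely self-contained at this point of the exposition.
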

\begin{proof}
	By assumption, the norm
	\begin{equation}
		\norm{f}_{M^2}^2 = \iint_{\R^{2d}} |V_{g_0} f(x,\omega)|^2 \, d(x,\omega)
	\end{equation}
	is finite. It follows that $V_{g_0} f(x,\omega) = \F (f T_x \overline{g_0})(\omega) \in L^2(\Rd, d\omega)$ for almost all $x \in \Rd$. By Plancherel's theorem we have
	\begin{equation}
		\int_{\Rd} |\F(f T_x \overline{g_0})(\omega)|^2 \, d\omega = \int_{\Rd} |f(t)|^2 |g_0(t-x)|^2 \, dt.
	\end{equation}
	Consequently,
	\begin{align}
		\norm{f}_{M^2}^2 & = \int_{\Rd} \left( \int_{\Rd} |f(t)|^2 |g_0(t-x)|^2 \, dt \right) \, dx\\
		& = \iint_{\R^{2d}} |f(t)|^2 |g_0(s)|^2 \, d(t,s)\\
		& = \left( \int_{\Rd} |f(t)|^2 \, dt \right) \left( \int_{\Rd} |g_0(s)|^2 \, ds \right)\\
		& = \norm{f}_2^2 \norm{g_0}_2^2 = \norm{f}_2^2.
	\end{align}
\end{proof}

Since we worked in $\Lt$ throughout most of the course anyways, we are now of course interested in modulation spaces different from $M^2(\Rd)$.

We will now (informally) define the adjoint operator of $V_g$. This will then lead to an inversion formula for the STFT for modulation spaces.
\begin{definition}
	Given a non-zero window $g$ and a function $F$ on $\R^{2d}$, let
	\begin{equation}
		V^*_g F = \iint_{\R^{2d}} F(x,\omega) M_\omega T_x g \, d(x,\omega).
	\end{equation}
	The integral is to be interpreted in the weak sense by
	\begin{align}
		\langle V^*_g F, f \rangle
		& = \iint_{\R^{2d}} F(x,\omega) \langle M_\omega T_x g, f \rangle \, d(x, \omega)\\
		& = \iint_{\R^{2d}} F(x,\omega) \overline{V_g f(x,\omega)} \, d(x,\omega)\\
		& = \langle F, V_g f \rangle .
	\end{align}
\end{definition}
The following proposition states under which conditions $V^*_g$ is a well-defined operator.
\begin{proposition}\label{pro_inv_STFT_S'}
	Let $g, \widetilde{g} \in \mathcal{S}(\Rd)$. Then
	\begin{enumerate}[(a)]
		\item $V_g^*$ maps $L^{p,q}$ into $M^{p,q}$ and satisfies
		\begin{equation}
			\norm{V_g^* F}_{M^{p,q}} \leq C \norm{V_{g_0} g}_{L^1} \norm{F}_{L^{p,q}}.
		\end{equation}
		
		\item In particular, if $F = V_{\widetilde{g}} f$, then the inversion formula
		\begin{equation}
			f = \frac{1}{\langle g, \widetilde{g} \rangle} \iint_{\R^{2d}} V_{\widetilde{g}} f(x,\omega) M_\omega T_x g \, d(x,\omega)
		\end{equation}
		holds on $M^{p,q}(\Rd)$. In short, $I_{M^{p,q}} = \langle g, \widetilde{g} \rangle^{-1} V_g^* V_{\widetilde{g}}$.
		
		\item The definition of $M^{p,q}(\Rd)$ is independent of the choice of the window $g \in \mathcal{S}(\Rd)$. Different windows yield equivalent norms.
	\end{enumerate}
\end{proposition}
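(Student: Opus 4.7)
I will unwind the weak definition of $V_g^* F$ and compute its STFT with respect to $g_0$. For any $z \in \R^{2d}$,
$$V_{g_0}(V_g^* F)(z) = \langle V_g^* F, \pi(z) g_0\rangle = \iint_{\R^{2d}} F(w)\,\overline{V_g(\pi(z) g_0)(w)}\,dw.$$
The covariance principle (Proposition \ref{pro_covar}) gives $|V_g(\pi(z)g_0)(w)| = |V_g g_0(w-z)|$, and the fundamental identity (Proposition \ref{pro_fitf}) together with symmetry under $(f,g) \mapsto (g,f)$ lets me rewrite this as $|V_{g_0} g(z-w)|$ (modulo phase factors, which are irrelevant for moduli). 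This yields the pointwise domination
$$|V_{g_0}(V_g^* F)(z)| \leq \bigl(|F| * |V_{g_0} g|\bigr)(z).$$
I then invoke the mixed-norm Young inequality $\|F * H\|_{L^{p,q}} \leq \|H\|_{L^1}\|F\|_{L^{p,q}}$ (proved by applying Minkowski's integral inequality separately in the $x$- and $\omega$-variables), and note that $V_{g_0} g \in \mathcal{S}(\R^{2d}) \subset L^1(\R^{2d})$ by Theorem \ref{thm_characterization_S}. This gives $\|V_g^* F\|_{M^{p,q}} \leq \|V_{g_0} g\|_{L^1}\|F\|_{L^{p,q}}$ as claimed.

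\textbf{Plan for part (b).} I substitute $F = V_{\widetilde g} f$ into the weak definition of $V_g^*$ and pair with a Schwartz test $h$:
$$\langle V_g^* V_{\widetilde g} f, h\rangle = \iint_{\R^{2d}} V_{\widetilde g} f(w)\,\overline{V_g h(w)}\,dw = \langle V_{\widetilde g} f, V_g h\rangle_{L^2(\R^{2d})}.$$
Since $g, h, \widetilde g \in \mathcal{S}$, Theorem \ref{thm_characterization_S} assures that $V_g h \in \mathcal{S}(\R^{2d})$, while $V_{\widetilde g} f$ has polynomial growth as a continuous function on $\R^{2d}$; hence the integral converges absolutely. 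I then extend the orthogonality relations (Theorem \ref{thm_ortho}) from $L^2\times L^2$ to the duality pairing $\mathcal{S}'\times\mathcal{S}$ by approximating $f$ in the weak-$*$ topology by Schwartz functions (using density and the weak-$*$ continuity of time-frequency shifts) to obtain $\langle V_{\widetilde g} f, V_g h\rangle = \langle f, h\rangle\,\overline{\langle \widetilde g, g\rangle} = \langle f, h\rangle\,\langle g, \widetilde g\rangle$. Dividing by $\langle g, \widetilde g\rangle$ gives the identity $f = \langle g,\widetilde g\rangle^{-1} V_g^* V_{\widetilde g} f$ in $\mathcal{S}'$; since the right-hand side lies in $M^{p,q}$ by part (a), the identity holds in $M^{p,q}$.

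\textbf{Plan for part (c).} Fix nonzero $g, \widetilde g \in \mathcal{S}$. Using the inversion from part (b) with reference window $g_0$ (taking $\widetilde g = g_0$ in the formula, so $\langle g_0, g_0\rangle = 1$), I write weakly
$$f = \iint_{\R^{2d}} V_{g_0} f(w)\,\pi(w) g_0\,dw,$$
and therefore
$$V_{\widetilde g} f(z) = \iint_{\R^{2d}} V_{g_0} f(w)\,\langle \pi(w) g_0, \pi(z)\widetilde g\rangle\,dw.$$
The covariance principle gives $|\langle \pi(w) g_0, \pi(z)\widetilde g\rangle| = |V_{\widetilde g} g_0(z-w)|$, so
$$|V_{\widetilde g} f(z)| \leq \bigl(|V_{g_0} f| * |V_{\widetilde g} g_0|\bigr)(z).$$
Since $V_{\widetilde g} g_0 \in \mathcal{S}(\R^{2d}) \subset L^1(\R^{2d})$, the mixed-norm Young inequality from part (a) yields $\|V_{\widetilde g} f\|_{L^{p,q}} \leq \|V_{\widetilde g} g_0\|_{L^1}\|V_{g_0} f\|_{L^{p,q}}$. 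Exchanging the roles of $g_0$ and $\widetilde g$ (i.e.\ inverting via $\widetilde g$ and estimating $V_{g_0} f$ against $V_{\widetilde g} f$) gives the reverse inequality. Hence the two window-induced norms are equivalent, proving window-independence and, the same argument working for any $g \in \mathcal{S}$, giving equivalence of all Schwartz-window norms.

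\textbf{Expected main obstacle.} The conceptual content is packaged in part (b): the pointwise convolution estimate in (a) and the change-of-window inequality in (c) both reduce, via the covariance principle, to a routine application of mixed-norm Young. The real care is required in (b), where I must extend the $L^2$-orthogonality relations to a $\mathcal{S}'/\mathcal{S}$ pairing and justify the Fubini-type swap that underlies $\langle V_g^* V_{\widetilde g} f, h\rangle = \langle V_{\widetilde g} f, V_g h\rangle_{L^2}$ in the presence of a distributional integrand. This hinges on the rapid decay of $V_g h$ (Theorem \ref{thm_characterization_S}) and the polynomial growth of $V_{\widetilde g} f$, so once the time-frequency characterizations of $\mathcal{S}$ and $\mathcal{S}'$ are in hand the obstacle dissolves.
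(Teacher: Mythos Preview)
Your proposal is correct and follows essentially the same route as the paper: the pointwise convolution bound $|V_{g_0}(V_g^* F)| \leq |F| * |V_{g_0} g|$ plus mixed-norm Young for (a), the $\mathcal{S}'$-inversion formula for (b), and the symmetric norm estimates for (c) are exactly the paper's argument. The only organizational differences are that the paper first verifies $V_g^* F \in \mathcal{S}'(\Rd)$ (via H\"older on the pairing $\langle F, V_g\varphi\rangle$) before computing its STFT, and for (b) it simply invokes the already-established inversion formula on $\mathcal{S}'$ rather than re-deriving it via the extended orthogonality relations as you propose.
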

\begin{proof}
	\begin{enumerate}[(a)]
		\item We first note\footnote{We refer to the textbook of Gröchenig \cite[Lemma 11.1.2]{Gro01} and remark that the proof is the same as for $L^p(\Rd)$ spaces.} that the Hölder inequality holds for mixed-norm spaces in the following form. If $F \in L^{p,q}(\R^{2d})$ and $G \in L^{p',q'}(\R^{2d})$, with $\frac{1}{p} + \frac{1}{p'} = 1$ and $\frac{1}{q} + \frac{1}{q'} = 1$, then $F G \in L^1(\R^{2d})$ and
		\begin{equation}
			\left| \iint_{\R^{2d}} F(z) \overline{G(z)} \, dz \right| \leq \norm{F}_{L^{p,q}} \norm{G}_{L^{p',q'}} .
		\end{equation}
		We will now show that $V_g^* F \in \mathcal{S}'(\Rd)$. Let $\varphi \in \mathcal{S}(\Rd)$, $F \in L^{p,q}(\R^{2d})$, then we estimate
		\begin{align}
			|\langle V_g^* F, \varphi \rangle| & = |\langle F, V_g \varphi \rangle|\\
			& \leq \norm{F}_{L^{p,q}} \norm{V_g \varphi}_{L^{p',q'}}\\
			& \leq \norm{F}_{L^{p,q}} \norm{(1+|z|)^n V_g \varphi}_\infty \norm{(1+|z|)^{-n}}_{L^{p',q'}}.
		\end{align}
		This expression is finite for sufficiently large $n$. Using the equivalence of seminorms (Corollary \ref{cor_seminorms_S}), it follows that the expression
		\begin{equation}
			V^*_g F = \iint_{\R^{2d}} F(x,\omega) M_\omega T_x g \, d(x,\omega)
		\end{equation}
		is well-defined as a tempered distribution. This has as a consequence that $V_g^* F$ has a continuous STFT, which is explicitly given by\footnote{From the second to the third line we use the covariance principle \ref{pro_covar} for the function $$\overline{V_g(M_\eta T_\xi \widetilde{g}) (x,\omega)} = \overline{\langle M_\eta T_\xi \widetilde{g}, M_\omega T_x g \rangle} = \langle M_\omega T_x g, M_\eta T_\xi \widetilde{g} \rangle = V_{\widetilde{g}} (M_\omega T_x g)(\xi,\eta).$$}
		\begin{align}
			V_{\widetilde{g}} (V_g^* F) (\xi, \eta) & = \langle V_g^*F, M_\eta T_\xi \widetilde{g} \rangle\\
			& = \iint_{\R^{2d}}F(x,\omega) \overline{V_g (M_\eta T_\xi \widetilde{g})(x,\omega)} \, d(x,\omega)\\
			& = \iint_{\R^{2d}}F(x,\omega) V_{\widetilde{g}} g(\xi-x,\eta-\omega) e^{-2 \pi i x \cdot (\eta-\omega)} \, d(x,\omega).
		\end{align}
		By taking absolute values in this identity, we obtain the pointwise estimate
		\begin{equation}\label{eq_VgVg*F}
			| V_{\widetilde{g}} (V_g^* F)(\xi,\eta)| \leq (|F| * |V_{\widetilde{g}} g|)(\xi,\eta)
		\end{equation}
		The convolution relation $L^1 * L^p \subset L^p$ extends to mixed-norm spaces, i.e., $L^1 * L^{p,q} \subset L^{p,q}$ and for $F \in L^1(\R^{2d})$ and $G \in L^{p,q}(\R^{2d})$ we have $\norm{F*G}_{L^{p,q}} \leq \norm{F}_{L^1} \norm{G}_{L^{p,q}}$. Applied to the above estimate, this yields
		\begin{equation}\label{eq_equiv_norms}
			\norm{V_{\widetilde{g}} (V_g^* F)}_{L^{p,q}} \leq C \norm{F}_{L^{p,q}} \norm{V_{\widetilde{g}} g}_{L^1}.
		\end{equation}
		Since, $g, \widetilde{g} \in \mathcal{S}(\Rd)$, $V_{\widetilde{g}} g$ decays rapidly and the right-hand side is finite.
		
		To measure the modulation space norm of $V_g^* F$, we take the fixed (canonical) window $g_0$ and obtain
		\begin{equation}
			\norm{V_g^*F}_{M^{p,q}} = \norm{V_{g_0} (V_g^*F)}_{L^{p,q}} \leq C \norm{F}_{L^{p,q}} \norm{V_{g_0} g}_{L^1} < \infty.
		\end{equation}
		
		\item If $V_{\widetilde{g}} f \in L^{p,q}(\R^{2d})$, then
		\begin{equation}
			\widetilde{f} = \frac{1}{\langle g, \widetilde{g} \rangle} V_g^* V_{\widetilde{g}} f \in M^{p,q}
		\end{equation}
		by the above proof. The equality $\widetilde{f} = f$ follows from the inversion formula for $\mathcal{S}'(\Rd)$.
		
		\item The proof of (a) also implies the equivalence of norms. Using \eqref{eq_equiv_norms} with $g = \widetilde{g} \in \mathcal{S}(\Rd)$ (we may also assume $\norm{g}_2 = 1$), we obtain
		\begin{equation}
			\norm{f}_{M^{p,q}} = \norm{V_{g_0} f}_{L^{p,q}} = \norm{V_{g_0} (V_g^* V_g f)}_{L^{p,q}} \leq C \norm{V_{g_0} g}_{L^1} \norm{V_g f}_{L^{p,q}}.
		\end{equation}
		Interchanging the roles of $g$ and $g_0$, we have
		\begin{equation}
			\norm{V_g f}_{L^{p,q}} \leq C \norm{V_g g_0}_{L^1} \norm{V_{g_0} f}_{L^{p,q}} = C_1 \norm{f}_{M^{p,q}}.
		\end{equation}
		Thus, $f \in M^{p,q}$ if and only if $V_g f \in L^{p,q}$ for some and hence all $g \in \mathcal{S}(\Rd)$ and the norms $\norm{V_{g_0}f}_{L^{p,q}}$ and $\norm{V_g f}_{L^{p,q}}$ are equivalent on $M^{p,q}(\Rd)$.
	\end{enumerate}
\end{proof}

\begin{proposition}
	If $1 \leq p,q < \infty$, then $\mathcal{S}(\Rd)$ is a dense subspace of $M^{p,q}(\Rd)$.
\end{proposition}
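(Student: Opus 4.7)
The plan is to approximate $f \in M^{p,q}(\R^d)$ by Schwartz functions using the analysis/synthesis machinery for the STFT that was developed in Proposition \ref{pro_inv_STFT_S'}. Concretely, given $f$, the STFT $V_{g_0} f$ is an element of $L^{p,q}(\R^{2d})$, and by the inversion formula on $M^{p,q}$ (Proposition \ref{pro_inv_STFT_S'}(b)) we have
\begin{equation}
	f = V_{g_0}^{*} V_{g_0} f,
\end{equation}
since $\norm{g_0}_2 = 1$. The idea is to replace $V_{g_0} f$ by a function $F_\varepsilon$ that is both close to it in $L^{p,q}$ and rapidly decreasing on $\R^{2d}$; the first property will let us control the $M^{p,q}$-error via the boundedness of $V_{g_0}^*$, while the second property will force $V_{g_0}^* F_\varepsilon$ to be a Schwartz function.

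First I would invoke the standard fact that, for $1 \leq p, q < \infty$, the space $C_c^\infty(\R^{2d})$ is dense in the mixed-norm space $L^{p,q}(\R^{2d})$. This follows from the density of simple functions, approximation of indicator functions of measurable sets by indicator functions of compact sets (using outer regularity of the Lebesgue measure), and mollification; the only subtlety compared to the ordinary $L^p$ case is the order of integration in the mixed norm, but the standard argument goes through component by component. Given $\varepsilon > 0$, pick $F_\varepsilon \in C_c^\infty(\R^{2d})$ with
\begin{equation}
	\norm{V_{g_0} f - F_\varepsilon}_{L^{p,q}} < \varepsilon.
\end{equation}

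Next I would set $f_\varepsilon = V_{g_0}^* F_\varepsilon$. Because $F_\varepsilon$ has compact support, it is trivially of rapid decay, i.e.\ $|F_\varepsilon(z)| \leq C_n (1+|z|)^{-n}$ for every $n$. Proposition \ref{pro_aux_Schwartz} then guarantees that $f_\varepsilon \in \mathcal{S}(\R^d)$. On the other hand, the bound in Proposition \ref{pro_inv_STFT_S'}(a) applied with the window $g_0$ gives
\begin{equation}
	\norm{f - f_\varepsilon}_{M^{p,q}} = \norm{V_{g_0}^*\bigl(V_{g_0} f - F_\varepsilon\bigr)}_{M^{p,q}} \leq C \norm{V_{g_0} g_0}_{L^1} \, \norm{V_{g_0} f - F_\varepsilon}_{L^{p,q}} \leq C' \varepsilon,
\end{equation}
where $\norm{V_{g_0} g_0}_{L^1}$ is finite because $V_{g_0} g_0$ is a Gaussian on $\R^{2d}$ (Example \ref{ex_STFT_g0}). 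Letting $\varepsilon \to 0$ yields the desired density.

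The main obstacle I anticipate is purely technical: verifying carefully that $C_c^\infty(\R^{2d})$ is dense in $L^{p,q}(\R^{2d})$ when $1 \leq p, q < \infty$ (the failure at $p = \infty$ or $q = \infty$ is precisely why the hypothesis excludes these endpoints; otherwise one only gets weak$^*$-density). Once that is in hand, the two boundedness statements of Proposition \ref{pro_inv_STFT_S'} and Proposition \ref{pro_aux_Schwartz} do all the real work: the former transports an $L^{p,q}$-approximation of $V_{g_0} f$ into an $M^{p,q}$-approximation of $f$, and the latter ensures that the approximants actually land in $\mathcal{S}(\R^d)$.
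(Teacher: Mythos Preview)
Your proposal is correct and follows essentially the same route as the paper: approximate $V_{g_0}f$ in $L^{p,q}$ by a rapidly decaying function, push the approximant through $V_{g_0}^*$, and invoke Proposition~\ref{pro_aux_Schwartz} and Proposition~\ref{pro_inv_STFT_S'}(a). The only cosmetic difference is that the paper truncates $V_g f$ directly by $\indicator_{K_n}$ for an exhausting sequence of compacts $K_n$ (so that $\norm{V_g f - F_n}_{L^{p,q}} \to 0$ follows from dominated convergence and no separate density lemma for $C_c^\infty$ in $L^{p,q}$ is needed), and it also records the easy inclusion $\mathcal{S}(\Rd) \subset M^{p,q}(\Rd)$ explicitly at the start.
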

\begin{proof}
	We estimate the $M^{p,q}$-norm of $f$ by
	\begin{equation}
	\norm{f}_{M^{p,q}} = \norm{V_{g_0} f}_{L^{p,q}} \leq \norm{(1+|z|)^n \, V_{g_0} f}_\infty \norm{(1+|z|)^{-n}}_{L^{p,q}}.
	\end{equation}
	If $f \in \mathcal{S}(\Rd)$, then the expression is finite, hence $\mathcal{S}(\Rd) \subset M^{p,q}(\Rd)$.
	
	Next, we choose an exhausting sequence of compact sets $K_n \subset \R^{2d}$, e.g., $K_n = \{ z \in \R^{2d} \mid |z| \leq n\}$, and a window $g \in \mathcal{S}(\Rd)$, $\norm{g}_2 = 1$. We set $F_n = V_g f \, \indicator_{K_n}$ and
	\begin{equation}
		f_n = V^*_g F_n = \iint_{\R^{2d}} F_n(x, \omega) M_\omega T_x g \, d(x,\omega) = \iint_{K_n} V_g f(x, \omega) M_\omega T_x g \, d(x,\omega).
	\end{equation}
	Since $F_n$ decays rapidly, we see that $f_n \in \mathcal{S}(\Rd)$ by Proposition \ref{pro_aux_Schwartz}. Furthermore, by Proposition \ref{pro_inv_STFT_S'} we obtain that
	\begin{equation}
		\norm{f - f_n}_{M^{p,q}} = \norm{V_g^*(V_gf - F_n)}_{M^{p,q}} \leq C \norm{ V_g f - F_n}_{L^{p,q}}.
	\end{equation}
	If $p,q < \infty$, then $\norm{f - f_n}_{M^{p,q}} \to 0$, thus $\mathcal{S}(\Rd)$ is dense in $M^{p,q}(\Rd)$.
\end{proof}

\begin{theorem}
	\begin{enumerate}[(a)]
		\item $M^{p,q}(\Rd)$ is a Banach space for $1 \leq p,q \leq \infty$.
		\item $M^{p,q}(\Rd)$ is invariant under time-frequency shifts and $\norm{M_\omega T_x f}_{M^{p,q}} \leq C \norm{f}_{M^{p,q}}$.
		\item If $p = q$, then $M^{p,q}(\Rd)$ is invariant under the Fourier transform.
	\end{enumerate}
\end{theorem}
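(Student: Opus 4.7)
The plan is to handle the three items in order, each reducing quickly to facts already established in the excerpt (the inversion formula on $\mathcal{S}'$, the mapping properties of $V_g^*$, the covariance principle, and the fundamental identity of TFA).

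For part (a), the seminorm axioms are transported from $L^{p,q}(\R^{2d})$ by the linearity of $V_g$. The only nontrivial norm property is definiteness: if $\norm{f}_{M^{p,q}} = 0$ then $V_g f = 0$ almost everywhere, but $V_g f$ is continuous for $f \in \mathcal{S}'(\Rd)$ (shown earlier in the distribution section), hence $V_g f \equiv 0$, and then the inversion formula on $\mathcal{S}'(\Rd)$ forces $f = 0$. For completeness, I would take a Cauchy sequence $(f_n) \subset M^{p,q}(\Rd)$. Then $(V_g f_n)$ is Cauchy in the Banach space $L^{p,q}(\R^{2d})$, say with limit $F$. Define
\begin{equation}
f = \norm{g}_2^{-2} \, V_g^* F \in M^{p,q}(\Rd),
\end{equation}
where the membership and the bound $\norm{f}_{M^{p,q}} \leq C \norm{F}_{L^{p,q}}$ come from Proposition~\ref{pro_inv_STFT_S'}(a). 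By the inversion formula applied to each $f_n$ we have $f_n = \norm{g}_2^{-2} V_g^* V_g f_n$, so
\begin{equation}
\norm{f - f_n}_{M^{p,q}} = \norm{g}_2^{-2} \norm{V_g^*(F - V_g f_n)}_{M^{p,q}} \leq C' \norm{F - V_g f_n}_{L^{p,q}} \xrightarrow{n \to \infty} 0.
\end{equation}

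For part (b), the covariance principle (Proposition~\ref{pro_covar}) gives
\begin{equation}
\bigl|V_g(M_\eta T_\xi f)(x,\omega)\bigr| = |V_g f(x-\xi, \omega-\eta)|,
\end{equation}
so the question reduces to translation invariance of the mixed-norm space $L^{p,q}(\R^{2d})$. By Fubini and the translation invariance of the Lebesgue measure in each slot separately, both $x$- and $\omega$-translations preserve $\norm{\cdot}_{L^{p,q}}$, and in fact $\norm{M_\eta T_\xi f}_{M^{p,q}} = \norm{f}_{M^{p,q}}$, so one may take $C = 1$.

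For part (c), I use the fundamental identity of time-frequency analysis (Proposition~\ref{pro_fitf}) in the form $|V_g f(x,\omega)| = |V_{\widehat{g}}\widehat{f}(\omega,-x)|$. Since $g \in \mathcal{S}(\Rd)$ implies $\widehat{g} \in \mathcal{S}(\Rd)$, the window $\widehat{g}$ is admissible for defining the $M^{p,q}$-norm. When $p = q$, the mixed norm collapses to the ordinary $L^p(\R^{2d})$ norm, which is invariant under the orientation-preserving change of variables $(x,\omega) \mapsto (\omega,-x)$ (Jacobian $1$). Therefore
\begin{equation}
\norm{f}_{M^p} \asymp \norm{V_g f}_{L^p(\R^{2d})} = \norm{V_{\widehat g}\widehat f}_{L^p(\R^{2d})} \asymp \norm{\widehat f}_{M^p},
\end{equation}
where the equivalences use the independence of $M^p$ from the window (Proposition~\ref{pro_inv_STFT_S'}(c)). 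The main obstacle is precisely the step that forces $p = q$: for genuinely mixed norms the swap $(x,\omega) \mapsto (\omega,-x)$ interchanges the inner and outer integrals and breaks the identity, which is exactly why no Fourier invariance of $M^{p,q}$ holds in general for $p \neq q$.
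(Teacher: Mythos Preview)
Your proof is correct and follows essentially the same route as the paper: Cauchy sequences pushed through $V_g$ into $L^{p,q}$ and pulled back via $V_g^*$ for (a), the covariance principle plus translation invariance of $L^{p,q}$ for (b), and the fundamental identity together with window independence for (c). Your observation that one may take $C=1$ in (b) is a valid sharpening in the unweighted case, and your added remark on definiteness of the norm is a small point the paper leaves implicit.
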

\begin{proof}
	\begin{enumerate}[(a)]
		\item Let $V = \{ F \in L^{p,q}(\R^{2d}) \mid F = V_{g_0} f \}$. Then $V$ is a subspace of $L^{p,q}(\R^{2d})$ which is isometrically isomorphic to $M^{p,q}(\Rd)$ by definition. Therefore, $M^{p,q}(\Rd)$ inherits its linear structure from $L^{p,q}(\R^{2d})$. We only need to show the completeness of $M^{p,q}(\Rd)$ or, equivalently, that $V$ is a closed subspace of $L^{p,q}(\R^{2d})$.
		
		Let $(f_n)_{n \in \N}$ be a Cauchy sequence in $M^{p,q}(\Rd)$. Then $(V_{g_0} f_n)_{n \in \N}$ is a Cauchy sequence in $L^{p,q}(\R^{2d})$ and so there exists a function $F \in L^{p,q}(\R^{2d})$ such that
		\begin{equation}
			\lim_{n \to \infty} \norm{V_{g_0} f_n - F}_{L^{p,q}} = 0.
		\end{equation}
		To see that $F$ is of the form $V_{g_0} f$, define
		\begin{equation}
			f = \frac{1}{\norm{g_0}_2^2} V_{g_0}^* F = \frac{1}{\norm{g_0}_2^2} \iint_{\R^{2d}} F(x,\omega) M_\omega T_x g_0 \, d(x,\omega).
		\end{equation}
		By Proposition \ref{pro_inv_STFT_S'} we have $f \in M^{p,q}(\Rd)$ and $f_n = \norm{g_0}_2^{-2} V_{g_0}^* V_{g_0} f_n$. Thus, we have the estimate
		\begin{equation}
			\norm{f - f_n}_{M^{p,q}} = \norm{g_0}_2^{-2} \norm{ V_{g_0}^*(F - V_{g_0} f)}_{M^{p,q}} \leq C \norm{F - V_{g_0} f_n}_{L^{p,q}}.
		\end{equation}
		Therefore, the limit of the Cauchy sequence $f_n$ is $f$ and thus $M^{p,q}(\Rd)$ is complete.
		
		\item The invariance properties follow from the translation invariance of $L^{p,q}(\R^{2d})$. In particular, $|V_g (M_\omega T_x f)| = |T_{(x,\omega)} V_g f|$. Thus, we have
		\begin{equation}
			\norm{M_\omega T_x f}_{M^{p,q}} = \norm{T_{(x,\omega)} V_{g_0} f}_{L^{p,q}} \leq C \norm{V_{g_0} f}_{L^{p,q}} = C \norm{f}_{M^{p,q}}.
		\end{equation}
		
		\item For the invariance under the Fourier transform we use the fundamental identity \eqref{eq_fitf} and the norm equivalence in Proposition \ref{pro_inv_STFT_S'}. We obtain that
		\begin{align}
			\norm{\widehat{f}}_{M^{p}}^p & = \norm{V_{g_0} \widehat{f}}_p^p \leq C \norm{V_{\widehat{g_0}} \widehat{f}}_p^p\\
			& = C \iint_{\R^{2d}} |V_{\widehat{g_0}} \widehat{f}(x,\omega)|^p \, d(x,\omega)\\
			& = C \iint_{\R^{2d}} |V_{g_0} f(-\omega, x)|^p \, d(x,\omega)\\
			& = C \iint_{\R^{2d}} |V_{g_0} f(x,\omega)|^p \, d(x,\omega) \leq C' \norm{f}_{M^p}^p .
		\end{align}
	\end{enumerate}
\end{proof}

We remark that, for $1 \leq  p,q < \infty$, the dual space of $L^{p,q}$ is $L^{p',q'}$, with $\frac{1}{p} + \frac{1}{p'} = 1$ and $\frac{1}{q} + \frac{1}{q'} = 1$. The same holds for the modulation spaces $M^{p,q}$ and the duality may be expressed by
\begin{equation}
	\langle f ,h \rangle = \iint_{\R^{2d}} V_{g_0} f(z) \overline{V_{g_0} h(z)} \, dz,
\end{equation}
$f \in M^{p,q}$, $h \in M^{p',q'}$.

\subsection{Feichtinger's Algebra}
In this section we collect some facts about Feichtinger's algebra which is often denoted by $S_0(\Rd)$. Its definition is often given as follows.
\begin{definition}
	Let $g \in \mathcal{S}(\Rd)$ be fixed. Then $S_0(\Rd)$ is the set of all tempered distributions such that
	\begin{equation}
		\norm{f}_{S_0} = \norm{V_g f}_{L^{1,1}} = \iint_{\R^{2d}} |V_gf(x,\omega)| \, d(x,\omega) < \infty.
	\end{equation}
\end{definition}
Thus $S_0(\Rd) = M^1(\Rd)$. We note that there are several equivalent definitions of $S_0(\Rd)$ (see \cite{Jakobsen_S0_2018}). We will state an equivalent definition after the following result.
\begin{lemma}
	Assume $f,g \in \Lt$. If $V_g f \in L^1(\R^{2d})$, then, both, $f \in S_0(\Rd)$ and $g \in S_0(\Rd)$.
\end{lemma}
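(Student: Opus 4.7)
The plan is to prove $f \in S_0$ directly; then $g \in S_0$ follows by symmetry. Indeed, a direct computation using the commutation relation \eqref{eq_comm_rel} shows that $|V_g f(x,\omega)| = |V_f g(-x,-\omega)|$, so $V_f g \in L^1(\mathbb{R}^{2d})$ as well, and the same argument applied to the pair $(g,f)$ will yield $g \in S_0$. We may also assume $f,g \neq 0$, since otherwise the conclusion is trivial.

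The key idea is to break the apparent circularity (using the inversion formula with synthesis window $g$ would require us to know $V_{g_0}g \in L^1$, which is exactly what $g \in S_0$ means). I would instead introduce an auxiliary \emph{Schwartz} window. Pick any $\phi \in \mathcal{S}(\mathbb{R}^d)$ with $\langle \phi, g\rangle \neq 0$; such $\phi$ exists because $g \neq 0$ in $L^2$ and $\mathcal{S}(\mathbb{R}^d)$ is dense in $L^2(\mathbb{R}^d)$. By Theorem \ref{thm_characterization_S}, $V_{g_0}\phi \in \mathcal{S}(\mathbb{R}^{2d})$, and in particular $V_{g_0}\phi \in L^1(\mathbb{R}^{2d})$.

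Next, apply the $L^2$ inversion formula with analysis window $g$ and synthesis window $\phi$: $f = \frac{1}{\overline{\langle \phi, g\rangle}} V_\phi^{*} V_g f$. Taking the STFT with window $g_0$ and interpreting everything in the weak sense, for each $z \in \mathbb{R}^{2d}$,
\begin{equation}
V_{g_0} f(z) \;=\; \langle f, \pi(z) g_0\rangle \;=\; \frac{1}{\overline{\langle \phi, g\rangle}} \, \langle V_g f,\, V_\phi(\pi(z) g_0)\rangle_{L^2(\mathbb{R}^{2d})}.
\end{equation}
By the covariance principle (Proposition \ref{pro_covar}), $|V_\phi(\pi(z) g_0)(z')| = |V_\phi g_0(z'-z)|$ (the accompanying phase factor has modulus one). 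Hence pointwise
\begin{equation}
|V_{g_0} f(z)| \;\leq\; \frac{1}{|\langle \phi, g\rangle|} \, \bigl(|V_g f| * |V_\phi g_0|^{\vee}\bigr)(z).
\end{equation}

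The proof now closes by Young's inequality. Since $|V_g f| \in L^1(\mathbb{R}^{2d})$ by hypothesis and $|V_\phi g_0| \in L^1(\mathbb{R}^{2d})$ by the Schwartz property, the convolution on the right lies in $L^1(\mathbb{R}^{2d})$ with
\begin{equation}
\|V_{g_0} f\|_{L^1} \;\leq\; \frac{\|V_g f\|_{L^1} \, \|V_\phi g_0\|_{L^1}}{|\langle \phi, g\rangle|} \;<\; \infty,
\end{equation}
which is precisely $f \in S_0(\mathbb{R}^d)$. Repeating the argument with the roles of $f$ and $g$ interchanged (using $V_f g \in L^1$ and an auxiliary Schwartz $\psi$ with $\langle \psi, f\rangle \neq 0$) yields $g \in S_0(\mathbb{R}^d)$. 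The only non-routine step is the choice of the intermediate Schwartz window $\phi$: it is exactly what allows Young's $L^1 * L^1 \hookrightarrow L^1$ to be applied without any a priori $S_0$-type assumption on $f$ or $g$.
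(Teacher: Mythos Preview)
Your proof is correct and follows essentially the same route as the paper: introduce an auxiliary Schwartz window non-orthogonal to $g$, apply the inversion formula (equivalently, the orthogonality relations) to express $V_{g_0}f$ through $V_gf$, bound pointwise by a convolution via the covariance principle, and finish with Young's inequality. The only cosmetic differences are that the paper uses a single auxiliary window $\widetilde{g}$ (with $\langle f,\widetilde{g}\rangle\neq 0$ and $\langle g,\widetilde{g}\rangle\neq 0$) for both halves and takes the STFT with respect to $\widetilde{g}$ itself rather than $g_0$, arriving at $|V_{\widetilde{g}}f|\le |\langle\widetilde{g},g\rangle|^{-1}\,|V_gf|*|V_{\widetilde{g}}\widetilde{g}|$; your constant should read $1/\langle\phi,g\rangle$ rather than $1/\overline{\langle\phi,g\rangle}$, but this is immaterial after taking absolute values.
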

\begin{proof}
	The assumptions imply that $V_g f$ is a well-defined continuous function on $\R^{2d}$. We will fix a window $\widetilde{g} \in \mathcal{S}(\Rd)$ such that $\langle f, \widetilde{g} \rangle \neq 0$ and $\langle g, \widetilde{g} \rangle \neq 0$. Using the inversion formula for $\mathcal{S}'(\Rd)$
	\begin{equation}
		f = \frac{1}{\langle \widetilde{g}, g \rangle} \iint_{\R^{2d}} V_g f(x,\omega) M_\omega T_x \widetilde{g} \, d(x,\omega)
	\end{equation}
	we obtain
	\begin{equation}
		V_{\widetilde{g}} f = \frac{1}{\langle \widetilde{g}, g \rangle} V_{\widetilde{g}} V^*_{\widetilde{g}} (V_g f).
	\end{equation}
	By equation \eqref{eq_VgVg*F} we have
	\begin{equation}
		|V_{\widetilde{g}} V^*_{\widetilde{g}} (V_g f)(x,\omega)| \leq (|V_g f| * |V_{\widetilde{g}} \widetilde{g}|)(x,\omega).
	\end{equation}
	Hence,
	\begin{equation}
		|V_{\widetilde{g}} f (x,\omega)| \leq \frac{1}{|\langle \widetilde{g}, g \rangle|} (|V_g f| * |V_{\widetilde{g}} \widetilde{g}|)(x,\omega).
	\end{equation}
	By using Young's convolution inequality, we obtain
	\begin{equation}
		\norm{f}_{M^1(\Rd)} = \norm{V_{\widetilde{g}}f}_{L^1(\R^{2d})} \leq \frac{1}{| \langle \widetilde{g}, g \rangle|} \norm{V_g f}_{L^1(\R^{2d})} \norm{V_{\widetilde{g}} \widetilde{g}}_{L^1(\R^{2d})} < \infty.
	\end{equation}
	Thus $f \in M^1(\Rd)$. The result for $g$ follows by interchanging the roles of $f$ and $g$.
\end{proof}
This leads to the following definition of $S_0(\Rd)$, which is also quite natural from the time-frequency point of view.
\begin{definition}
	Feichtinger's algebra $S_0(\Rd)$ is the set of all $f \in \Lt$ such that
	\begin{equation}
		\iint_{\R^{2d}} |V_f f(x,\omega)| \, d(x,\omega) < \infty .
	\end{equation}
\end{definition}

Also, $S_0(\Rd)$ is invariant under the action of the metaplectic group.
\begin{proposition}
	If $S \in Sp(\R,2d)$, then $\mu(S) = \widehat{S} \in Mp(\R,2d)$ is an isomorphism from $S_0(\Rd)$ to $S_0(\Rd)$.
\end{proposition}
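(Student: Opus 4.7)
The key identity is already available in the lemma of the previous subsection: for any $f,g \in L^2(\R^d)$ and $S \in Sp(\R,2d)$,
\begin{equation}
V_g f(Sz) \;=\; e^{\pi i(x\cdot\omega - x'\cdot\omega')}\, V_{\widehat S^{-1}g}(\widehat S^{-1}f)(z), \qquad (x',\omega')=Sz.
\end{equation}
My plan is to feed this identity into the definition $\|h\|_{S_0}=\|V_h h\|_{L^1(\R^{2d})}$, and show that $\widehat S$ acts as an isometry on $S_0(\R^d)$.

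First I would replace both $f$ and $g$ by $\widehat S f$ in the lemma. Since $\widehat S^{-1}(\widehat S f)=f$, taking absolute values kills the phase factor and gives
\begin{equation}
\bigl|V_{\widehat S f}(\widehat S f)(Sz)\bigr| \;=\; \bigl|V_f f(z)\bigr|, \qquad z\in\R^{2d}.
\end{equation}
Next I would integrate this pointwise identity over $\R^{2d}$ and perform the change of variables $w=Sz$. Because $S\in Sp(\R,2d)$ satisfies $|\det S|=1$, the Jacobian is $1$, so
\begin{equation}
\|\widehat S f\|_{S_0} \;=\; \int_{\R^{2d}} \bigl|V_{\widehat S f}(\widehat S f)(w)\bigr|\,dw \;=\; \int_{\R^{2d}} \bigl|V_f f(z)\bigr|\,dz \;=\; \|f\|_{S_0}.
\end{equation}
Thus $\widehat S$ maps $S_0(\R^d)$ isometrically into itself whenever $f\in S_0(\R^d)$, and in particular it is bounded.

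To upgrade this to an isomorphism onto $S_0(\R^d)$, I would use that $Mp(\R,2d)$ is a group, so $\widehat S^{-1}=\mu(S^{-1})$ is itself a metaplectic operator associated with the symplectic matrix $S^{-1}$. Applying the isometry statement just proved to $\widehat S^{-1}$ shows $\|\widehat S^{-1}h\|_{S_0}=\|h\|_{S_0}$ for every $h\in S_0(\R^d)$, so the range of $\widehat S$ is all of $S_0(\R^d)$ and $\widehat S$ is linear, bijective and bounded with bounded inverse on $S_0(\R^d)$ — that is, an isomorphism (in fact an isometric isomorphism).

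There is essentially no obstacle here; the one subtlety worth stating explicitly is the need for $|\det S|=1$, which is exactly the content proved earlier for the symplectic group and which is what makes the change of variables preserve the $L^1$-norm of the STFT. Everything else reduces to the intertwining identity from the previous lemma and to the group property of $Mp(\R,2d)$.
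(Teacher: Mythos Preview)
The paper states this proposition without proof, so your argument must stand on its own.

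The central idea is correct: the intertwining lemma together with the substitution $w = Sz$ and $|\det S| = 1$ shows that $\widehat S$ maps $S_0(\Rd)$ bijectively onto itself \emph{as a set}, and the group property of $Mp(\R,2d)$ handles the inverse just as you describe.

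There is, however, a gap in the boundedness step. You take $\|h\|_{S_0} = \|V_h h\|_{L^1}$ as the $S_0$-norm, but it is not: this quantity is homogeneous of degree $2$ in $h$, so it cannot be a norm. The paper uses the condition $V_f f \in L^1$ only as a \emph{set-membership} characterization of $S_0$; the actual norm is $\|f\|_{S_0} = \|V_g f\|_{L^1}$ for a \emph{fixed} window $g \in \mathcal S(\Rd)$. Your computation therefore preserves the wrong quantity and does not yield $\|\widehat S f\|_{S_0} \le C\|f\|_{S_0}$. In particular, the conclusion that $\widehat S$ is an \emph{isometric} isomorphism is too strong (consider a dilation $\widehat D_{L,m}$ acting on the norm $\|V_{g_0}\,\cdot\,\|_{L^1}$).

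The repair is minor and stays within your approach. Keep the window $g$ fixed and substitute only $f \mapsto \widehat S f$ in the lemma; this gives $|V_g(\widehat S f)(Sz)| = |V_{\widehat S^{-1}g}\, f(z)|$ and hence, after the same Jacobian argument, $\|V_g(\widehat S f)\|_{L^1} = \|V_{\widehat S^{-1}g}\, f\|_{L^1}$. Since the generators $\widehat J$, $\widehat D_{L,m}$, $\widehat V_Q$ of $Mp(\R,2d)$ all preserve $\mathcal S(\Rd)$, the window $\widehat S^{-1}g$ is again Schwartz, and the window-independence of the $M^1$-norm (Proposition~\ref{pro_inv_STFT_S'}(c)) then gives $\|\widehat S f\|_{S_0} \asymp \|f\|_{S_0}$, which is the required boundedness.
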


Feichtinger's algebra also possesses a remarkable minimality property. In fact, the search for the smallest Banach space that is invariant under time-frequency shifts led H.~Feichtinger to the discovery of $M^1 = S_0$.

In Banach space theory there is a standard procedure to construct minimal spaces which are to contain certain ``atoms". For time-frequency shifts this procedure works as follows. Given a non-zero function $g \in M^1(\Rd)$, define $\mathcal{M}(\Rd)$ to be the vector space of all (non-uniform) Gabor expansions
\begin{equation}\label{eq_Gabor_expansion_M1}
	f = \sum_{\gamma \in \Gamma} c_\gamma \pi(\gamma) g,
\end{equation}
where $\{ \pi(\gamma) = M_{\gamma_2} T_{\gamma_1} \mid \gamma \in \Gamma \subset \R^{2d}\}$ is an arbitrary countable set and
\begin{equation}
	\sum_{\gamma \in \Gamma} |c_\gamma| < \infty.
\end{equation}
The norm on $\mathcal{M}(\Rd)$ is
\begin{equation}
	\norm{f}_{\mathcal{M}} = \inf \sum_{\gamma \in \Gamma} |c_\gamma|,
\end{equation}
where the infimum is taken over all representations of \eqref{eq_Gabor_expansion_M1}. Then, $\mathcal{M}(\Rd)$ is indeed a Banach space. Moreover, it turns out that $M^1(\Rd) = S_0(\Rd) = \mathcal{M}(\Rd)$. Consequently, every $f \in M^1(\Rd)$ has an expansion of type \eqref{eq_Gabor_expansion_M1} and $\norm{f}_{\mathcal{M}}$ is an equivalent norm on Feichtinger's algebra.

Lastly, we state the minimality property of $M^1$. We start with the following result.
\begin{theorem}
	Let $B$ be a Banach space of tempered distributions with the following properties:
	\begin{enumerate}[(i)]
		\item $B$ is invariant und time-frequency shifts, and
		\begin{equation}
			\norm{M_\omega T_x f}_B \leq \norm{f}_B,
		\end{equation}
		for all $f \in B$.
		\item $M^1 \cap B \neq \{0\}$.
	\end{enumerate}
	Then $M^1$ is embedded in $B$.
\end{theorem}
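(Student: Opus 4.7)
The plan is to exploit the atomic decomposition of $M^1 = S_0$ mentioned just before the theorem, using an element of $M^1 \cap B$ as the atom. Concretely, fix any nonzero $g \in M^1 \cap B$ (possible by hypothesis (ii)). By the minimality/atomic characterization $M^1 = \mathcal{M}$ stated in the excerpt, every $f \in M^1$ admits a representation
\begin{equation}
f = \sum_{\gamma \in \Gamma} c_\gamma\, \pi(\gamma) g, \qquad \sum_{\gamma \in \Gamma} |c_\gamma| < \infty,
\end{equation}
and $\|f\|_{\mathcal{M}} = \inf \sum_\gamma |c_\gamma|$ is an equivalent norm on $M^1$, so we may in fact choose the representation with $\sum_\gamma |c_\gamma| \leq C\, \|f\|_{M^1}$ for a constant $C$ depending only on $g$.

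Now I would estimate the partial sums in the $B$-norm. By the time-frequency invariance (i) we have $\|\pi(\gamma) g\|_B \leq \|g\|_B$ for every $\gamma \in \R^{2d}$, so for any finite $F \subset \Gamma$,
\begin{equation}
\Bigl\|\sum_{\gamma \in F} c_\gamma\, \pi(\gamma) g\Bigr\|_B \;\leq\; \sum_{\gamma \in F} |c_\gamma|\, \|\pi(\gamma) g\|_B \;\leq\; \|g\|_B \sum_{\gamma \in F} |c_\gamma|.
\end{equation}
Since $\sum_\gamma |c_\gamma| < \infty$, the series is absolutely, hence unconditionally, convergent in the Banach space $B$ to some element $\tilde f \in B$ with $\|\tilde f\|_B \leq \|g\|_B \sum_\gamma |c_\gamma| \leq C\, \|g\|_B\, \|f\|_{M^1}$. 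The same series also converges to $f$ in $M^1$ and therefore in $\mathcal{S}'$; since $B \hookrightarrow \mathcal{S}'$ by assumption, convergence in $B$ also entails convergence in $\mathcal{S}'$, forcing $\tilde f = f$ as tempered distributions. This simultaneously shows that the element of $B$ obtained does not depend on the particular atomic decomposition chosen (any two decompositions must produce the same $\mathcal{S}'$-limit, namely $f$) and yields the continuous embedding $M^1 \hookrightarrow B$ with
\begin{equation}
\|f\|_B \;\leq\; C\, \|g\|_B\, \|f\|_{M^1}, \qquad f \in M^1.
\end{equation}

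The main obstacle, and the reason I would spend most of my care there, is the legitimacy of using our chosen $g \in M^1 \cap B$ as the atom: this rests on the fact (asserted but not proved in the excerpt) that the atomic space $\mathcal{M}$ built from any nonzero window in $M^1$ coincides with $M^1$ with equivalent norms. If one wants a self-contained argument, one can instead pick a Gaussian $g_0 \in \mathcal{S} \subset M^1$, use the classical atomic decomposition $f = \sum_\gamma c_\gamma \pi(\gamma) g_0$, and then reduce to the hypothesis by expanding $g_0$ itself as an $M^1$-convergent Gabor series in the atom $g \in M^1 \cap B$; a double-sum absolute convergence argument (Fubini for $\ell^1$ sums) then transports the decomposition into one with atom $g$, at the cost of a harmless constant $\|g_0\|_{\mathcal{M}(g)}\,\|g\|_B$. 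Everything else — the norm-invariance under $\pi(\gamma)$, completeness of $B$, and the $\mathcal{S}'$-identification — is immediate from the stated hypotheses.
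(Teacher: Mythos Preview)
The paper does not supply a proof of this theorem; it is stated as a fact immediately after the atomic description of $M^1=\mathcal{M}$, and the text moves on to the chain of inclusions $M^1\subset M^{p,q}\subset M^\infty$. So there is nothing in the paper to compare against directly.

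Your argument is the standard one (it is essentially the proof in Gr\"ochenig's textbook, which the notes cite as their primary source): pick $g\in M^1\cap B$, use the atomic expansion $f=\sum_\gamma c_\gamma\,\pi(\gamma)g$ with $\sum|c_\gamma|\le C\|f\|_{M^1}$, and push the series through $B$ using the uniform bound $\|\pi(\gamma)g\|_B\le\|g\|_B$. The identification of the $B$-limit with $f$ via $\mathcal{S}'$ is correct, though you are right to flag that it tacitly assumes $B\hookrightarrow\mathcal{S}'$ continuously; the phrase ``Banach space of tempered distributions'' in the notes is meant in exactly this sense. Your self-contained workaround---expanding the Gaussian $g_0$ atomically in $g$ and re-summing---is also the standard way to justify that \emph{any} nonzero $g\in M^1$ serves as a valid atom, so nothing is missing.
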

We also have the following chain of inclusions;
\begin{equation}
	M^1 \subset M^{p,q} \subset M^\infty, \quad 1 \leq p,q \leq \infty.
\end{equation}

The minimal space $M^1$ can be seen as a space of test functions with $M^\infty$ as the associated space of distributions. Indeed, weighted versions of the modulation space $M^1_{v_s}$, $v_s = (1+|z|)^s$ with $\norm{f}_{M^1_{v_s}} = \norm{(1+|z|)^s f}_{M^1}$ are the building blocks of $\mathcal{S}(\Rd)$ in the sense that $\mathcal{S} = \bigcap_{s \geq 0} M^1_{v_s}$. Furthermore, the tempered distributions are obtained from $\mathcal{S}' = \bigcup_{s \geq 0} M^\infty_{1/v_s}$. According to the above statements, any modulation space $M^{p,q}$ may be analyzed within the pair $(M^1, M^\infty)$ instead of $(\mathcal{S}, \mathcal{S}')$. Also, in contrast to $\mathcal{S}(\Rd)$ and $\mathcal{S}'(\Rd)$, which are Fr\'e{}chet spaces, the modulation spaces $M^1$ and $M^\infty$ are Banach spaces. They have a much simpler mathematical structure and are easier to use.

\subsection{Wiener Amalgam Spaces}
The Wiener space comes up in the treatment of periodic functions and the Poisson summation formula. We start with the definition of the Wiener space, which is named after N.~Wiener. For this, we introduce the notation $\mathcal{Q}_\alpha = [0,\alpha]^d$, which is the cube of with side length $\alpha$. If $\alpha = 1$, we simply write $\mathcal{Q}$
\begin{definition}
	A function $g \in L^\infty(\Rd)$ belongs to the Wiener space $W = W(\Rd)$ if
	\begin{equation}
		\norm{g}_W = \sum_{k \in \Z^d} \esssup_{x \in \mathcal{Q}} |g(x+k)| < \infty.
	\end{equation}
	The subspace of continuous functions is denoted by $W_0(\Rd)$.
\end{definition}

Observe that the norm can also be written as
\begin{equation}
	\norm{g}_W = \sum_{k \in \Z^d} \norm{g \, T_k \indicator_\mathcal{Q}}_\infty .
\end{equation}
Functions in $W$ are locally bounded and globally in $\ell^1$. Hence, the norm mixes (``amalgamates") a local property of a function, namely boundedness, with a global property. This space was first introduced by N.~Wiener to study Tauberian theorems\footnote{Tauberian theorems play an important role in analytic number theory and give conditions under which a divergent series can be summed to yield a meaningful result. Similar methods have earlier been considered by N.~H.~Abel. N.~Wiener also used Fourier analysis methods, in particular the insight that asymptotic (global; decay) properties are take to local (smoothness) properties.} \cite{Wie32}. We note that the Wiener space contains all bounded functions with compact support and is therefore dense in any $L^p(\Rd)$-space for $1 \leq p < \infty$.

More generally, Wiener amalgam spaces are defined as follows.
\begin{definition}
	We say a measurable function $f$ belongs to the Wiener amalgam space $W(L^p, \ell^q)(\Rd)$ if
	\begin{equation}
		\norm{f}_{W(L^p,\ell^q)} = \left(\sum_{k \in \Z^d} \norm{f \, T_k \indicator_\mathcal{Q}}_p^q\right)^{1/q} < \infty .
	\end{equation}
\end{definition}
Since $\mathcal{Q}$ can be covered by a finite number of cubes $\mathcal{Q}_\alpha$, we can take any size of the cube (and replace the translations by $T_{\alpha k}$) to obtain an equivalent norm. Hence, the Wiener amalgam spaces are independent of the choice of $\alpha$. Also, the integer lattice  $\Z^d$ can be replaced by an arbitrary lattice with a similar argument.

Furthermore, we note that the characteristic function $\indicator_\mathcal{Q}$ can be replaced by a bounded function $\phi$ with compact support and/or the $\ell^q$-norm may as well be replaced by an $L^q$-norm;
\begin{equation}
	\norm{f}_{W(L^p,L^q)} = \left( \int_{\Rd} \left( \int_{\Rd} | (f \, T_x \overline{\phi})(\omega)|^p \, d\omega \right)^{q/p} \, dx \right)^{1/q}.
\end{equation}
Again, we get an equivalent norm, i.e.,
\begin{equation}
	\norm{f}_{W(L^p,L^q)} \asymp \norm{f}_{W(L^p,\ell^q)}.
\end{equation}
So, we may simply write $W^{p,q}$ without ambiguity, as long as we are only interested in general properties of Wiener amalgam spaces and not specific norms of specific functions. For more details we refer to \cite{Hei03}.

We note the following connection between Wiener amalgam spaces and modulation spaces. Let $\phi \in C^\infty_c(\Rd) \subset \mathcal{S}(\Rd)$, then
\begin{align}
	\norm{f}_{W(\F L^p,L^q)} & = \left( \int_{\Rd} \left( \int_{\Rd} | \F (f \, T_x \overline{\phi})(\omega)|^p \, d\omega \right)^{q/p} \, dx \right)^{1/q}\\
	& = \left( \int_{\Rd} \left( \int_{\Rd} |V_\phi f(x, \omega)|^p \, d\omega \right)^{q/p} \, dx \right)^{1/q}\\
	& = \left( \int_{\Rd} \left( \int_{\Rd} |V_{\widehat{\phi}} \widehat{f}(\omega,-x)|^p \, d\omega \right)^{q/p} \, dx \right)^{1/q}\\
	& = \norm{\widehat{f}}_{M^{p,q}}.
\end{align}
In shorter notation, we have $\F M^{p,q} = W(\F L^p, L^q)$.

In view of the Poisson summation formula, which was our motivation for introducing the Wiener space, we note that $W(\Rd) = W^{\infty,1}(\Rd)$. The space $W \cap \F W$ is in a sense the largest natural space on which the Poisson summation formula holds point-wise. If both, $f$ and $\widehat{f}$ are in $W(\Rd) \subset L^1(\Rd)$, then they are also continuous and both series
\begin{equation}
	\sum_{k \in \Z^d} f(x+k)
	\quad \text{ and } \quad
	\sum_{k \in \Z^d} \widehat{f}(k) e^{2 \pi i k \cdot x}
\end{equation}
converge absolutely by the definition of $W(\Rd)$. Moreover, the proof of Proposition \ref{pro_PSF_Zd} shows that the sums are indeed equal for all $x \in \Rd$, which means that:

\medskip

\textit{The Poisson summation formula holds pointwise with absolute convergence in both series whenever $f$ and $\widehat{f} \in W(\Rd)$.}

\medskip

The above statement holds if we replace $W(\Rd)$ by Feichtinger's algebra $S_0(\Rd) = M^1(\Rd)$, as is shown by the following embedding result for $M^1(\Rd)$.
\begin{proposition}
	If $f \in M^1(\Rd)$, then $f \in W(\Rd)$ and $\widehat{f} \in W(\Rd)$.
\end{proposition}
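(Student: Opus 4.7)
The plan is to reduce everything to the inclusion $M^1 \subset W$, since $M^1 = M^{1,1}$ is Fourier invariant by the earlier result $\|\widehat{f}\|_{M^p} \asymp \|f\|_{M^p}$. Once $M^1 \subset W$ is established, applying it to $\widehat{f} \in M^1$ immediately yields $\widehat{f} \in W$.

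First I would fix the standard Gaussian $g_0(t) = 2^{d/4} e^{-\pi t^2}$ and apply the STFT inversion formula on $M^1$ (which holds by Proposition~\ref{pro_inv_STFT_S'}) to write, pointwise,
\begin{equation}
f(t) = \norm{g_0}_2^{-2} \iint_{\R^{2d}} V_{g_0} f(x,\omega) \, M_\omega T_x g_0(t) \, d(x,\omega).
\end{equation}
Taking absolute values, using $|M_\omega T_x g_0(t)| = |g_0(t-x)|$, and then taking the essential supremum over $t \in k + \mathcal{Q}$ for a fixed $k \in \Z^d$, I obtain
\begin{equation}
\esssup_{t \in k+\mathcal{Q}} |f(t)| \leq \norm{g_0}_2^{-2} \iint_{\R^{2d}} |V_{g_0} f(x,\omega)| \sup_{t \in k+\mathcal{Q}} |g_0(t-x)| \, d(x,\omega).
\end{equation}
Summing over $k \in \Z^d$ and applying Tonelli (all integrands nonnegative) allows me to interchange sum and integral.

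The key estimate is then to show that
\begin{equation}
C := \sup_{x \in \Rd} \sum_{k \in \Z^d} \sup_{t \in k+\mathcal{Q}} |g_0(t-x)| < \infty.
\end{equation}
After the substitution $s = t-x$, the inner supremum becomes $\sup_{s \in k-x+\mathcal{Q}} |g_0(s)|$. For any $x \in \Rd$, choose $k_0 \in \Z^d$ with $x \in k_0 + \mathcal{Q}$, so that $k - x + \mathcal{Q} \subset (k-k_0) + [-1,1]^d$. Hence
\begin{equation}
\sum_{k \in \Z^d} \sup_{s \in k-x+\mathcal{Q}} |g_0(s)| \leq \sum_{k' \in \Z^d} \sup_{s \in k' + [-1,1]^d} |g_0(s)|,
\end{equation}
which is finite and independent of $x$ because $g_0 \in \mathcal{S}(\Rd)$ has rapid decay. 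Putting everything together yields $\|f\|_W \leq C \norm{g_0}_2^{-2} \, \|V_{g_0} f\|_1 = C' \|f\|_{M^1}$.

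For $\widehat{f}$, I invoke the Fourier invariance of $M^1$ shown earlier via the fundamental identity of time-frequency analysis, which gives $\widehat{f} \in M^1$ with comparable norm. Applying the argument above to $\widehat{f}$ produces $\widehat{f} \in W$. The main obstacle is nothing deep — it is the $x$-uniform control on the Wiener-type sum over translates of $g_0$, but this is handled cleanly by the cube-enlargement trick above, which exploits only that $g_0 \in W$ (and in fact that $g_0$ is Schwartz).
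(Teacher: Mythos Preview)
Your argument is correct, but it follows a different route from the paper. The paper does not use the inversion formula; instead it picks a compactly supported window $g \in C^\infty_c(\Rd)$ with $0 \le g \le 1$ and $g \equiv 1$ on $[-1,1]^d$, so that $\indicator_\mathcal{Q}(t) \le T_x g(t)$ for every $x \in \mathcal{Q}$. This pointwise domination gives $\norm{f\,T_k \indicator_\mathcal{Q}}_\infty \le \norm{f\,T_{k+x}\overline{g}}_\infty$, and then the elementary inequality $\norm{h}_\infty \le \norm{\widehat{h}}_1$ turns the right-hand side into $\int_{\Rd} |V_g f(k+x,\omega)|\,d\omega$. Integrating over $x \in \mathcal{Q}$ and summing over $k$ reconstructs $\norm{V_g f}_{L^1(\R^{2d})}$ exactly, yielding $\norm{f}_W \le \norm{f}_{M^1}$ with constant $1$. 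For $\widehat{f}$, the paper repeats the argument with a window whose Fourier transform has the plateau property, invoking the fundamental identity of time-frequency analysis.

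Your approach replaces the plateau trick by the inversion formula and the observation that $g_0 \in W$; it is perhaps more transparent conceptually (it works with any window in $W$, not just a plateau function) but loses the sharp constant and requires the mild remark that the inversion integral converges absolutely to a continuous representative of $f$, so that ``pointwise'' really means ``up to an a.e.\ identification''. Your use of Fourier invariance of $M^1$ for the second half is equivalent to what the paper does, just packaged differently.
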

\begin{proof}
	We choose a window $g \in C^\infty_c(\Rd) \subset \mathcal{S}(\Rd)$ such that $0 \leq g(x) \leq 1$ (particularly, $g$ is real-valued) for all $x \in \Rd$ and $g(x) = 1$ for $x \in [-1,1]^d$. Then $\indicator_{\mathcal{Q}}(t) \leq T_x g(t)$ for all $x \in \mathcal{Q} = [0,1]^d$ and all $t \in \Rd$, and
	\begin{equation}
		\norm{f \, T_k \indicator_{\mathcal{Q}}}_\infty \leq \norm{f \, T_{k+x} \overline{g}}_\infty.
	\end{equation}
	The converse of the Lemma of Riemann-Lebesgue in the form $\norm{h}_\infty \leq \norm{\widehat{h}}_1$ provides the connection to the STFT of $f$ with window $g$, as for $x \in Q$,
	\begin{equation}
		\norm{f \, T_{k+x} \overline{g}}_\infty \leq \norm{\F(f \, T_{k+x}\overline{g})}_1 = \int_{\Rd} |V_g f(k+x, \omega)| \, d\omega.
	\end{equation}
	Summation over $k$ yields
	\begin{align}
		\sum_{k \in \Z^d} \norm{f \, T_k \indicator_Q}_\infty
		\leq \sum_{k \in \Z^d} \int_{\mathcal{Q}} \left( \int_{\Rd} |V_g f(k+x,\omega)| \, d\omega \right) \, dx
		= \iint_{\R^{2d}} |V_g f(x,\omega)| \, d(x,\omega).
	\end{align}
	Thus, we have shown that
	\begin{equation}
		\norm{f}_{W(\Rd)} \leq \norm{V_g f}_{L^1(\R^{2d})} = \norm{f}_{M^1}
	\end{equation}
	For the result involving $\widehat{f}$  we argue similarly, but define $g$ such that $\widehat{g}$ has the necessary properties. Then, the fundamental identity of time-frequency analysis \eqref{eq_fitf} yields the result.
\end{proof}

\subsection{The WKNS Sampling Theorem Re-Visited}
The following result is given in \cite[Chap.~10]{FeiJak20} and is a generalization of the WKNS sampling theorem for band-limited functions. We state it only for the case of band-limited functions with $\supp(\widehat{f}) \subset [-\frac{1}{2}, \frac{1}{2}]$, but the generalizations for other band-limits follow as in Section \ref{sec_WNKS}. Before we can state the result, we need to define a nice class of band-limited functions.
\begin{definition}
	For an interval $I \subsetneq [-\frac{1}{2}, \frac{1}{2}]$ we set
	\begin{equation}
		B_I^1(\R) = \{ f \in W(\R) \mid \supp(\widehat{f)} \subset I \}
	\end{equation}
\end{definition}
As remarked in \cite[Chap.~10]{FeiJak20}, it can be shown that
\begin{equation}
	B_I^1(\R) = \{f \in S_0(\R) \mid \supp(\widehat{f}) \subset I \} = \{f \in L^1(\R) \mid \supp(\widehat{f}) \subset I \}.
\end{equation}
The generalization of the classical sampling theorem, now with nice localization of the building blocks, reads as follows.
\begin{theorem}
	Let $f \in \Lt[]$ such that
	\begin{equation}
		f(t) = \int_\R \widehat{f}(\omega) e^{2 \pi i \omega t} \, dt,
	\end{equation}
	with $\supp(\widehat{f}) = I \subsetneq [-\frac{1}{2}, \frac{1}{2}]$. Let $g \in S_0(\R)$, $\norm{g}_2 = 1$, such that $\widehat{g}(\omega) = 1$ for $\omega \in I$ and $\supp(\widehat{g}) \subset [-\frac{1}{2}, \frac{1}{2}]$. Then, we have
	\begin{equation}
		f(t) = \sum_{k \in \Z} f(k) g(t-k), \quad \forall t \in \R
		\quad \text{ and } \quad
		\forall f \in B_I^1(\R),
	\end{equation}
	with absolute convergence in $(S_0(\R), \norm{.}_{S_0})$, $(W(\R), \norm{.}_W)$, and $(C_0(\R), \norm{.}_\infty)$.
\end{theorem}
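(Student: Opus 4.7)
The argument rests on two pieces: (i) the translation invariance of the $S_0$-, $W$-, and $C_0$-norms, which together with $(f(k))_k\in\ell^1$ yields absolute convergence of the reconstruction series in all three norms; and (ii) a Poisson summation argument in the Fourier domain to identify the sum with $f$.

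\textbf{Setup and absolute convergence.} Since $B_I^1(\R)\subset S_0(\R)\subset W(\R)\subset C_0(\R)$, the function $f$ is continuous and $\sum_{k\in\Z}|f(k)|\le\norm{f}_{W}<\infty$. The window $g$ lies in $S_0\subset W\cap C_0$, and its Fourier transform $\widehat g\in S_0$ is continuous with $\supp\widehat g\subset[-\tfrac12,\tfrac12]$; in particular $\widehat g(\pm\tfrac12)=0$. All three norms $\norm{\cdot}_{S_0}$, $\norm{\cdot}_W$, $\norm{\cdot}_\infty$ are invariant under integer translation, so
\[
\sum_{k\in\Z}\norm{f(k)\,T_k g}_{\bullet}=\norm{g}_{\bullet}\sum_{k\in\Z}|f(k)|<\infty\qquad(\bullet\in\{S_0,W,\infty\}),
\]
and the series $h(t):=\sum_{k\in\Z}f(k)\,g(t-k)$ converges absolutely in $(S_0,\norm{\cdot}_{S_0})$, $(W,\norm{\cdot}_W)$, and $(C_0,\norm{\cdot}_\infty)$ simultaneously.

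\textbf{Identification via Fourier analysis.} Since $S_0\subset L^1$, the absolute $L^1$-convergence allows interchanging sum and Fourier transform; using $\widehat{T_k g}=M_{-k}\widehat g$ gives
\[
\widehat h(\omega)=\widehat g(\omega)\sum_{k\in\Z}f(k)\,e^{-2\pi i k\omega}.
\]
Both $\widehat f$ (continuous with compact support, hence in $W$) and $\F\widehat f\,(\cdot)=f(-\,\cdot)$ lie in $W(\R)$, so the pointwise version of Poisson summation (the remark after Proposition \ref{pro_PSF_Zd}) applied to $\widehat f$ yields, with absolute pointwise convergence,
\[
\sum_{k\in\Z}f(k)\,e^{-2\pi i k\omega}=\sum_{l\in\Z}\widehat f(\omega+l).
\]
It remains to verify $\widehat g(\omega)\,\widehat f(\omega+l)=\delta_{l,0}\,\widehat f(\omega)$ for all $\omega\in\R$ and $l\in\Z$, whence $\widehat h=\widehat f$ and Fourier inversion gives $h=f$.

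\textbf{Main obstacle: the boundary argument.} The case $l=0$ is immediate from $\widehat g\equiv1$ on $I\supseteq\supp\widehat f$. For $l\ne0$, nonvanishing of $\widehat f(\omega+l)$ forces $\omega\in I-l$; since $I\subsetneq[-\tfrac12,\tfrac12]$ and $|l|\ge1$, the translate $I-l$ meets $[-\tfrac12,\tfrac12]$ at most at $\pm\tfrac12$. Outside $[-\tfrac12,\tfrac12]$ the factor $\widehat g(\omega)$ vanishes by the support assumption, and at $\omega=\pm\tfrac12$ the continuity of $\widehat g$ together with $\supp\widehat g\subset[-\tfrac12,\tfrac12]$ forces $\widehat g(\pm\tfrac12)=0$. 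This is the subtle step where both the strict inclusion $I\subsetneq[-\tfrac12,\tfrac12]$ and the regularity $\widehat g\in S_0$ (which delivers continuity and hence vanishing at the endpoints) are used essentially; weakening either hypothesis reintroduces boundary cross-terms and the identity $\widehat h=\widehat f$ fails.
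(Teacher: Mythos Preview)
The paper does not actually prove this theorem; it merely states it and cites \cite[Chap.~10]{FeiJak20}. Your argument is correct and is the natural one: it parallels the paper's proof of the classical WKNS theorem (periodize $\widehat{f}$, apply Poisson summation, multiply by the reconstruction kernel on the Fourier side), upgraded by the $S_0$/$W$ machinery to obtain the stronger convergence statements.

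Two minor expository comments. First, your claim that the strict inclusion $I\subsetneq[-\tfrac12,\tfrac12]$ is ``essentially used'' in the boundary step is slightly off: once the hypotheses on $g$ are granted, the vanishing $\widehat g(\pm\tfrac12)=0$ (from continuity of $\widehat g\in S_0$ and $\supp\widehat g\subset[-\tfrac12,\tfrac12]$) handles the cross-terms by itself. The strict inclusion is really needed to guarantee that such a $g\in S_0$ \emph{exists}; without it one is forced to $\widehat g=\indicator_{[-1/2,1/2]}$, which is not in $S_0$. Second, the pointwise Poisson summation you invoke is justified not by the remark immediately following Proposition~\ref{pro_PSF_Zd} but by the statement in the Wiener amalgam section that PSF holds pointwise with absolute convergence whenever both $\widehat f$ and $\F\widehat f=f^\vee$ lie in $W(\R)$, which you correctly verify.
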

For more information on the Shannon sampling theorem with ``nice" windows and a time-frequency point-of-view, the interested reader is also referred to \cite{StrTan05}.

\section{The Zak Transform and Density Principles}
\subsection{The Zak Transform}
We are now going to study the Zak transform, which is often used in time-frequency analysis to study properties of the Gabor frame operator. It is a version of the Poisson summation formula and a popular tool in engineering for the analysis of Gabor frames.

We note that Zak transform is often defined with a (positive) parameter and that there are several versions of the Zak transform, which differ by normalization. We name the expository article by Janssen \cite{Jan88}or the textbook of Gröchenig \cite{Gro01} as references. Here, we will use the Zak transform without parameter (actually, with parameter $\alpha = 1$).

\begin{definition}
	The Zak transform of a function $f$ on $\Rd$ is the function on $\R^{2d}$ defined by
	\begin{equation}
		Z f(x, \omega) = \sum_{k \in \Z^d} f(x+k) e^{-2 \pi i k \cdot \omega} .
	\end{equation}
\end{definition}
This definition is compatible with most versions of the Zak transform with parameter $\alpha$ (which normally refers to a (non-unitary) dilation of the first or both arguments) for the case $\alpha = 1$.

The Zak transform was first used by Gel'fand for a problem in differential equations \cite{Gel50}. The transform has been rediscovered several times and is now named after J.~Zak, who used it for problems in solid state physics, e.g., in \cite{Zak67}.

We will first show that the Zak transform inherits continuity and decay properties from $f$.
\begin{lemma}\label{lem_Zak_prop}
	\begin{enumerate}[(a)]
		\item If $f \in L^1(\Rd)$, then $Z f \in L^1(\mathcal{Q} \times \mathcal{Q})$.
		\item If $f \in W(\Rd)$, then $Z f \in L^\infty (\R^{2d})$.
		\item If $f \in W_0(\Rd)$, then $Z f$ is continuous on $\R^{2d}$.
		\item If $f \in \Lt$, then $Z f$ is defined almost everywhere and $Z f(x,\omega) \in L^2(\mathcal{Q}, d\omega)$ for almost all $x \in \Rd$.
	\end{enumerate}
\end{lemma}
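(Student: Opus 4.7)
\medskip

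\noindent
\textbf{Proof plan.} All four statements follow from combining the absolute convergence of $\sum_{k} f(\cdot+k)$ (in various norms) with the Fourier series structure in the $\omega$-variable. For (a), I would simply estimate $|Zf(x,\omega)| \leq \sum_{k \in \Z^d} |f(x+k)|$ and then compute
\begin{equation}
\int_{\mathcal{Q}} \int_{\mathcal{Q}} |Zf(x,\omega)| \, d\omega \, dx \leq \int_{\mathcal{Q}} \sum_{k \in \Z^d} |f(x+k)| \, dx = \int_{\Rd} |f(y)| \, dy = \norm{f}_1,
\end{equation}
where the interchange of sum and integral is justified by Tonelli's theorem (everything is non-negative) and the last equality is precisely the periodization trick from Lemma \ref{lem_periodization_trick}. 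For (b), the same pointwise estimate $|Zf(x,\omega)| \leq \sum_{k \in \Z^d} |f(x+k)|$ is used: for any $x \in \Rd$, write $x = x_0 + j$ with $x_0 \in \mathcal{Q}$, $j \in \Z^d$, and reindex the sum to obtain $\sum_{k \in \Z^d} |f(x_0 + k)| \leq \sum_{k \in \Z^d} \esssup_{t \in \mathcal{Q}} |f(t+k)| = \norm{f}_W$, which is finite by assumption and independent of $(x,\omega)$.

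For part (c), I would argue that each partial sum $\sum_{|k| \leq N} f(x+k) e^{-2\pi i k \cdot \omega}$ is a continuous function on $\R^{2d}$, and the Weierstrass M-test provides locally uniform convergence of the series on $\R^{2d}$. Given a compact set $K \subset \R^{2d}$, its projection to the $x$-coordinate is contained in finitely many translates $\bigcup_{j \in F} (j + \mathcal{Q})$; for $(x, \omega) \in K$ one then has the uniform bound $|f(x+k) e^{-2 \pi i k \cdot \omega}| \leq \max_{j \in F} \norm{f \cdot T_{-k-j} \indicator_\mathcal{Q}}_\infty$, and summing over $k \in \Z^d$ yields a finite majorant (a finite sum of shifts of the $W$-norm). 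Uniform convergence of continuous functions on each compact set implies continuity of the limit on all of $\R^{2d}$.

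For part (d), which I expect to be the main technical obstacle because the pointwise series need not converge, the strategy is to reinterpret the series as a Fourier series in $\omega$. Applying Tonelli to $|f|^2 \in L^1(\Rd)$ gives
\begin{equation}
\int_{\mathcal{Q}} \sum_{k \in \Z^d} |f(x+k)|^2 \, dx = \int_{\Rd} |f(y)|^2 \, dy = \norm{f}_2^2 < \infty,
\end{equation}
so the non-negative function $x \mapsto \sum_{k \in \Z^d} |f(x+k)|^2$ lies in $L^1(\mathcal{Q})$ and is therefore finite for almost every $x \in \Rd$ (and $1$-periodic by construction, hence finite a.e.\ on $\Rd$). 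For any such $x$, the sequence $(f(x+k))_{k \in \Z^d}$ belongs to $\ell^2(\Z^d)$, and by the Riesz--Fischer theorem its (conjugate) Fourier series $\sum_{k \in \Z^d} f(x+k) e^{-2 \pi i k \cdot \omega}$ converges in $L^2(\mathcal{Q}, d\omega)$ to a function with
\begin{equation}
\norm{Zf(x, \cdot)}_{L^2(\mathcal{Q})}^2 = \sum_{k \in \Z^d} |f(x+k)|^2.
\end{equation}
Thus $Zf(x,\omega)$ is well-defined (a.e.) as an element of $L^2(\mathcal{Q}, d\omega)$ for a.e.\ $x$, and integrating the displayed identity over $\mathcal{Q}$ incidentally also shows that $Zf \in L^2(\mathcal{Q} \times \mathcal{Q})$ with $\norm{Zf}_{L^2(\mathcal{Q} \times \mathcal{Q})} = \norm{f}_2$, which is a useful byproduct for later applications (Plancherel-type identity for the Zak transform).
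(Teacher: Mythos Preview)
Your proof is correct and follows essentially the same approach as the paper: parts (a), (b), and (d) are argued in the same way (periodization trick, pointwise majorization by the $W$-norm, and the $\ell^2$-Fourier-series argument, respectively), and your Weierstrass $M$-test formulation of (c) is just a repackaging of the paper's tail-plus-finite-sum $\varepsilon$-argument. The only cosmetic difference is that in (b) your reindexing $x = x_0 + j$ yields the bound $\norm{f}_W$ directly, whereas the paper picks up an inessential factor $2^d$ by counting how many lattice points can fall in a translated unit cube.
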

\begin{proof}
	\begin{enumerate}[(a)]
		\item By the periodization trick we have
		\begin{equation}
			\int_\mathcal{Q} |Z f(x,\omega)| \, dx \leq \int_\mathcal{Q} \sum_{k \in \Z^d} |f(x+k)| \, dx = \norm{f}_1,
		\end{equation}
		for all $\omega \in \Rd$, and therefore
		\begin{equation}
			\int_\mathcal{Q} \left( \int_\mathcal{Q} |Zf(x,\omega) \, dx \right) \, d \omega \leq \norm{f}_1.
		\end{equation}
		
		\item We have
		\begin{equation}
			|Zf(x,\omega)| \leq \sum_{k \in \Z^d} |f(x+k)| \leq 2^d \norm{f}_W,
		\end{equation}
		where the factor $2^d$ stems from the fact that any interval of length one contains at most 2 distinct points of minimum distance 1 between each other. Hence, any translated cube $\mathcal{Q}+k$ contains at most $2^d$ points of the form $(x+k)$, $k \in \Z^d$.
		
		\item Given $\varepsilon > 0$, there is $N > 0$ such that
		\begin{equation}
			\sum_{|k| > N} \norm{f \, T_k \indicator_\mathcal{Q}}_\infty < \frac{\varepsilon}{4}.
		\end{equation}
		Then, the main term $\sum_{|k| \leq N} f(x+k) e^{-2 \pi i k \cdot \omega}$ is uniformly continuous on compact sets of $\R^{2d}$, and there exists a $\delta > 0$, such that
		\begin{equation}
			\left| \sum_{|k| \leq N} f(x'+k) e^{-2 \pi i k \cdot \omega'} - \sum_{|k| \leq N} f(x+k) e^{-2 \pi i k \cdot \omega} \right| < \frac{\varepsilon}{2},
		\end{equation}
		for $|x'-x|+|\omega'-\omega| < \delta$. Consequently $|Zf(x',\omega') - Zf(x,\omega)| < \varepsilon$ if $|x'-x|+|\omega'-\omega| < \delta$.
		
		\item The periodization of the $L^2$-norm of $f$ yields
		\begin{equation}
			\int_\mathcal{Q} \sum_{k \in \Z^d} |f(x+k)|^2 \, dx = \int_{\Rd} |f(x)|^2 \, dx < \infty.
		\end{equation}
		This implies that the sequence $\left(f(x+k)\right)_{k \in \Z^d} \in \ell^2(\Z^d)$ for almost all $x \in \Rd$. Consequently, $Zf(x, .)$ is a Fourier series with square-summable coefficients and therefore in $L^2(\mathcal{Q}, d\omega)$. In particular, $Zf$ is defined almost everywhere.
	\end{enumerate}
\end{proof}

We collect a few more properties of the Zak transform. The interpretation of the equalities depends on the function space from which $f$ comes. First, we note its quasi-periodicity, that is, for $l \in \Z^d$
\begin{equation}
	Z f(x, \omega+l) = Z f(x,\omega)
\end{equation}
and
\begin{equation}
	Z f(x+l,\omega) = e^{2 \pi i l \cdot \omega} Z f(x,\omega).
\end{equation}
Thus, $Z f$ is completely determined by its values on the (unit) cube $[0,1]^d \times [0,1]^d = \mathcal{Q} \times \mathcal{Q} \subset \R^{2d}$. It also behaves nicely with time-frequency shifts.
\begin{equation}\label{eq_covariance_Zak}
	Z (M_\eta T_\xi f)(x,\omega) = e^{2 \pi i \eta \cdot x} Z f(x-\xi, \omega - \eta),
\end{equation}
as can be shown by a direct computation. Equation \eqref{eq_covariance_Zak} resembles the covariance principle of the STFT and is one reason why the Zak transform is considered a joint time-frequency representation. For the special choice $\xi = k$, $\eta = l$ with $k,l \in \Z^d$, we obtain
\begin{align}\label{eq_Zak_Tk_Ml}
	Z(M_l T_k f)(x,\omega) & = e^{2 \pi i l \cdot x} Zf(x-k,\omega-l)\\
	& = e^{2 \pi i l \cdot x} e^{-2 \pi i k \cdot \omega} Zf(x,\omega).
\end{align}
Note that $T_k$ and $M_l$ commute since $(k,l) \in \Z^{2d}$, which is self-dual. Thus the operators $T_k$ and $M_l$ are mapped to multiplication operators by the Zak transform.

\begin{example}
	\begin{enumerate}[(i)]
		\item Let $d = 1$ and consider the characteristic function $\indicator_{[0,1]}$. Then\footnote{The floor function is $\lfloor x \rfloor = \max \{k \in \Z \mid k \leq x \}$.}
		\begin{equation}
			Z f(x,\omega) = \sum_{k \in \Z} \indicator_{[0,1]}(x+k) e^{-2 \pi i k \omega} = e^{2 \pi i \lfloor x \rfloor \omega} .
		\end{equation}
		In particular, $Z \indicator_{[0,1]}(x,\omega) = 1$ for almost all $(x,\omega) \in [0,1] \times [0,1]$ and $|Z \indicator_{[0,1]}|^2 \equiv 1$. Similarly, if $d \geq 1$, we have $Z \indicator_\mathcal{Q}(x,\omega) = 1$, for almost all $(x,\omega)  \in \mathcal{Q} \times \mathcal{Q}$.
		
		\item For $\varphi_s (t) = e^{-\pi s t^2}$, $s > 0$, we have
		\begin{equation}
			Z \varphi_s (x,-\omega) = \sum_{k \in \Z} e^{-\pi s (x+k)^2} e^{2 \pi i k \omega}.
		\end{equation}
		This has close connections to the Jacobi theta functions in analytic number theory. For example
		\begin{equation}
			\vartheta_3(\tau; z) = \sum_{k \in \Z} e^{\pi i \tau k^2} e^{2 \pi i k z}, \quad (\tau, z) \in \mathbb{H} \times \C.
		\end{equation}
	\end{enumerate}
	Therefore, the restrictions $\tau = i s$, $s > 0$ and $z = \omega \in \R$ yield
	\begin{equation}
		\vartheta_3(i s; \omega) = Z\varphi_s(0,-\omega).
	\end{equation}
	\begin{flushright}
		$\diamond$
	\end{flushright}
\end{example}

We also have an inversion formula for the Zak transform. Let $f \in L^1(\Rd)$, then $Zf(x,\omega) \in L^1(\mathcal{Q}, d\omega)$ for almost all $x$. Therefore, the following computations are justified.
\begin{align}
	\int_{\mathcal{Q}} Zf(x,\omega) \, d \omega
	& = \int_\mathcal{Q} \left( \sum_{k \in \Z^d} f(x+k) e^{-2 \pi i k \cdot \omega} \right) \, d \omega\\
	& = \sum_{k \in \Z^d} \int_\mathcal{Q} f(x+k) e^{-2 \pi i k \cdot \omega} \, d\omega
	= \sum_{k \in \Z^d} f(x+k) \int_\mathcal{Q} e^{-2 \pi i k \cdot \omega} \, d\omega\\
	& = f(x),
\end{align}
for almost all $x \in \Rd$. Also, if $f \in L^1(\Rd)$, then $Zf(x,\omega) \in L^1(\mathcal{Q}, dx)$ for almost all $\omega \in \Rd$. Thus, a similar computation shows that
\begin{align}
	\int_\mathcal{Q} Z f(x, \omega) e^{-2 \pi i x \cdot \omega} \, dx
	& = \int_\mathcal{Q} \left( \sum_{k \in \Z^d} f(x+k) e^{-2 \pi i (k+x) \cdot \omega} \right) \, d x \\
	& = \sum_{k \in \Z^d} \int_\mathcal{Q} f(x+k) e^{-2 \pi i (k+x) \cdot \omega} \, d x
	= \int_{\Rd} f(x) e^{-2 \pi i x \cdot \omega} \, dx\\
	& = \widehat{f}(\omega).
\end{align}

The next result is the Poisson summation formula in disguise and relates the Zak transform of a function with the Zak transform of its Fourier transform.
\begin{proposition}
	Let $f \in W(\Rd)$ and $\widehat{f} \in W(\Rd)$ (so $f$ and $\widehat{f}$ are actually continuous). Then
	\begin{equation}
		Zf(x,\omega) = e^{2 \pi i x \cdot \omega} Z\widehat{f}(\omega,-x),
	\end{equation}
	for all $(x,\omega) \in \R^{2d}$.
\end{proposition}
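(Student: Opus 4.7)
The plan is to recognize the desired identity as a disguised Poisson summation formula applied to a time-frequency shifted copy of $f$. The key observation is that the left-hand side $Zf(x,\omega) = \sum_{k \in \Z^d} f(x+k) e^{-2\pi i k \cdot \omega}$ can be written as a sum $\sum_{k \in \Z^d} g(k)$, where
\begin{equation}
g(t) = f(x+t)\, e^{-2\pi i t \cdot \omega} = (M_{-\omega} T_{-x} f)(t).
\end{equation}

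First I would compute $\widehat{g}$ using the standard commutation rules from \eqref{eq_TFshifts_FT}, namely $\F T_x = M_{-x} \F$ and $\F M_\omega = T_\omega \F$. This gives
\begin{equation}
\widehat{g} = \F(M_{-\omega} T_{-x} f) = T_{-\omega} M_x \widehat{f},
\end{equation}
so that $\widehat{g}(l) = e^{2\pi i x \cdot (l+\omega)} \widehat{f}(l+\omega)$. Second, I would verify that the hypotheses for the pointwise, absolutely convergent Poisson summation formula are in force. Since $W(\Rd)$ is translation invariant and closed under multiplication by pure frequencies (the Wiener norm is manifestly unchanged by $T_x$ and $M_\omega$), both $g$ and $\widehat{g}$ lie in $W(\Rd)$, which is precisely the boxed condition stated just before the proposition guaranteeing pointwise absolute convergence of Poisson summation.

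Third, I would apply Poisson summation $\sum_{k \in \Z^d} g(k) = \sum_{l \in \Z^d} \widehat{g}(l)$ and reshape the right-hand side:
\begin{align}
\sum_{k \in \Z^d} f(x+k) e^{-2\pi i k \cdot \omega}
&= \sum_{l \in \Z^d} e^{2\pi i x \cdot (l+\omega)} \widehat{f}(l+\omega) \\
&= e^{2\pi i x \cdot \omega} \sum_{l \in \Z^d} \widehat{f}(\omega + l) \, e^{-2\pi i l \cdot (-x)} \\
&= e^{2\pi i x \cdot \omega} Z\widehat{f}(\omega, -x),
\end{align}
which is the claimed identity.

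There is no real obstacle: the entire proof is a bookkeeping exercise in the commutation relations between $\F$, $T_x$, and $M_\omega$. The only point worth flagging is that one must justify pointwise (not just almost-everywhere) equality for \emph{every} $(x,\omega) \in \R^{2d}$. This is exactly why the hypothesis $f, \widehat{f} \in W(\Rd)$ is made: it simultaneously forces continuity of $f$ and $\widehat{f}$ (hence of $Zf$ and $Z\widehat{f}$ by Lemma \ref{lem_Zak_prop}(c), since $W(\Rd) \cap C(\Rd) = W_0(\Rd)$ once we know the functions are continuous, which follows from the boxed Poisson summation discussion) and supplies the absolute convergence needed on both sides of the applied Poisson identity.
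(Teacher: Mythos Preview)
Your proof is correct and follows essentially the same approach as the paper: define $g = M_{-\omega} T_{-x} f$, compute $\widehat{g} = T_{-\omega} M_x \widehat{f}$, and apply the Poisson summation formula $\sum_k g(k) = \sum_l \widehat{g}(l)$. Your version is more explicit about verifying that $g,\widehat{g}\in W(\Rd)$ so that the pointwise Poisson summation hypothesis is met, which the paper's proof leaves implicit.
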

\begin{proof}
	We apply the Poisson summation formula to the function $g(t) = M_{-\omega} T_{-x} f(t)$.
	\begin{align}
		Zf(x,\omega) & = \sum_{k \in \Z^d} g(k)
		= \sum_{k \in \Z^d} \widehat{g}(k)
		= \sum_{k \in \Z^d} T_{-\omega} M_x \widehat{f}(k)
		= \sum_{k \in \Z^d} \widehat{f}(k+\omega) e^{2 \pi i x \cdot (k+\omega)}\\
		& = e^{2 \pi i x \cdot \omega} Z \widehat{f}(\omega,-x).
	\end{align}
\end{proof}

The Zak transform can also be seen as a unitary operator from $\Lt$ to $L^2(\mathcal{Q} \times \mathcal{Q})$, which is shown in the following Plancherel-like result.
\begin{theorem}\label{thm_Zak_unitary}
	If $f \in W(\Rd)$, then
	\begin{equation}
		\iint_{\mathcal{Q} \times \mathcal{Q}} |Zf(x,\omega)|^2 \, d(x,\omega) = \norm{f}_2^2.
	\end{equation}
	Consequently, $Z$ extends to a unitary operator from $\Lt$ onto $L^2(\mathcal{Q} \times \mathcal{Q})$.
\end{theorem}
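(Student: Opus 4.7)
The plan is to prove the norm identity first for the dense subspace $W(\mathbb{R}^d) \subset L^2(\mathbb{R}^d)$ by combining Parseval's theorem for Fourier series on $\mathcal{Q}$ with the periodization trick (Lemma \ref{lem_periodization_trick}), then extend by density and identify the image via an orthonormal basis argument.

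First I would fix $f \in W(\mathbb{R}^d)$. By Lemma \ref{lem_Zak_prop}(b) the series defining $Zf(x,\omega)$ converges uniformly and $Zf \in L^\infty(\mathbb{R}^{2d})$, so the computation is justified termwise. For each fixed $x \in \mathcal{Q}$, $Zf(x,\cdot)$ is precisely the Fourier series on $\mathcal{Q}$ with coefficients $\bigl(f(x+k)\bigr)_{k \in \mathbb{Z}^d}$. Since $f \in W(\mathbb{R}^d) \subset L^2(\mathbb{R}^d)$ and the sequence $\bigl(f(x+k)\bigr)_k$ lies in $\ell^2(\mathbb{Z}^d)$ for a.e.\ $x$ (by the periodization trick applied to $|f|^2$), Parseval's theorem for Fourier series yields
\begin{equation}
\int_{\mathcal{Q}} |Zf(x,\omega)|^2 \, d\omega = \sum_{k \in \mathbb{Z}^d} |f(x+k)|^2
\end{equation}
for a.e.\ $x \in \mathcal{Q}$. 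Integrating over $x \in \mathcal{Q}$ and applying Lemma \ref{lem_periodization_trick} to $|f|^2 \in L^1(\mathbb{R}^d)$ gives
\begin{equation}
\iint_{\mathcal{Q} \times \mathcal{Q}} |Zf(x,\omega)|^2 \, d(x,\omega) = \int_{\mathcal{Q}} \sum_{k \in \mathbb{Z}^d} |f(x+k)|^2 \, dx = \int_{\mathbb{R}^d} |f(x)|^2 \, dx = \|f\|_2^2.
\end{equation}

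Next I would extend $Z$ by density. Since $W(\mathbb{R}^d)$ contains all bounded compactly supported functions, it is dense in $L^2(\mathbb{R}^d)$, and the isometry $Z : W(\mathbb{R}^d) \to L^2(\mathcal{Q} \times \mathcal{Q})$ extends uniquely to an isometry on all of $L^2(\mathbb{R}^d)$. This extension agrees with the a.e.-defined series expression from Lemma \ref{lem_Zak_prop}(d).

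The remaining step is surjectivity. Here I would use that $\{M_l T_k \indicator_{\mathcal{Q}} \mid k,l \in \mathbb{Z}^d\}$ is an orthonormal basis of $L^2(\mathbb{R}^d)$ (the standard Fourier orthonormal basis on each translated cube). From the example computing $Z\indicator_{\mathcal{Q}} \equiv 1$ on $\mathcal{Q}\times\mathcal{Q}$ together with the covariance identity \eqref{eq_Zak_Tk_Ml}, one obtains
\begin{equation}
Z(M_l T_k \indicator_{\mathcal{Q}})(x,\omega) = e^{2\pi i l \cdot x} e^{-2\pi i k \cdot \omega}
\end{equation}
for $(x,\omega) \in \mathcal{Q} \times \mathcal{Q}$. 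These functions form the canonical Fourier orthonormal basis of $L^2(\mathcal{Q}\times\mathcal{Q})$. So $Z$ sends one orthonormal basis onto another, hence is unitary. The only mild obstacle is bookkeeping in the density/convergence step (ensuring the quasi-periodic extension of the $L^2$-version of $Zf$ is well-defined a.e.); this is handled by the fact that quasi-periodicity is preserved under $L^2$-limits, so the extended operator still maps into functions determined by their values on $\mathcal{Q}\times\mathcal{Q}$.
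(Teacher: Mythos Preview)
Your proof is correct and follows essentially the same approach as the paper: Parseval for Fourier series in the $\omega$-variable, the periodization trick in the $x$-variable, extension by density of $W(\Rd)$ in $\Lt$, and surjectivity via the observation that $Z$ maps the orthonormal basis $\{M_l T_k \indicator_{\mathcal{Q}}\}$ of $\Lt$ onto the Fourier orthonormal basis of $L^2(\mathcal{Q}\times\mathcal{Q})$.
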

\begin{proof}
	For fixed $x$, the Zak transform is a Fourier series with coefficients $f(x+k)$, $k \in \Z^d$. If $f \in \Lt$, then these coefficients are in $\ell^2(\Z^d)$ for almost all $x$. Hence, Plancherel's theorem for Fourier series implies
	\begin{equation}
		\int_\mathcal{Q} |Zf(x,\omega)|^2 \, d\omega = \sum_{k \in \Z^d} |f(x+k)|^2.
	\end{equation}
	For the $x$-integration, the periodization trick yields
	\begin{equation}
		\int_\mathcal{Q} \left( \int_\mathcal{Q} |Zf(x,\omega)|^2 \, d\omega \right) \, dx = \sum_{k \in \Z^d} \int_\mathcal{Q} |f(x+k)|^2 \, dx = \norm{f}_2^2.
	\end{equation}
	Thus, $Z$ is an isometry on the subspace $W(\Rd)$, which is dense in $L^2(\Rd)$. By a density argument, $Z$ therefore extends to an isometry on $\Lt$.
	
	Next, note that $Z \indicator_\mathcal{Q}(x,\omega) = 1$ for almost all $(x,\omega) \in \mathcal{Q} \times \mathcal{Q}$ and by \eqref{eq_Zak_Tk_Ml}
	\begin{equation}
		Z(M_l T_k \indicator_\mathcal{Q})(x,\omega) = e^{2 \pi i (l \cdot x - k \cdot \omega)},
	\end{equation}
	for $k,l \in \Z^d$ and $(x,\omega) \in \mathcal{Q} \times \mathcal{Q}$. This means that the Zak transform $Z$ maps the orthonormal basis $\mathcal{G}(\indicator_\mathcal{Q}, \Z^{2d})$ of $\Lt$ onto the orthonormal basis $\{ e^{2 \pi i (l \cdot x - k \cdot \omega)} \mid k,l \in \Z^d\}$ of $L^2(\mathcal{Q} \times \mathcal{Q})$. Therefore, $Z$ is surjective and, hence, a unitary operator.
\end{proof}

We are now going to study the Zak transform and its relationship to Gabor frames. We start with a result on the Gabor frame operator.
\begin{theorem}
	Let $g \in \Lt$ and consider the lattice $\Z^{2d}$ and write $S_g$ for the associated Gabor frame operator. Then\footnote{We may use different windows $g$ and $\widetilde{g}$ for the analysis and synthesis process. In this case the proof shows that $Z(S_{g, \widetilde{g}} f) = \overline{Z g} Z\widetilde{g} Zf$.}
	\begin{equation}
		Z( S_g f) = |Zg|^2 Zf.
	\end{equation}
	Thus, the Gabor frame operator is equivalent to the multiplication operator $Z S_g Z^{-1}$ with multiplier $|Zg|^2$ on $L^2(\mathcal{Q} \times \mathcal{Q})$.
\end{theorem}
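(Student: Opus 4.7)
The plan is to exploit the fact, just established in Theorem \ref{thm_Zak_unitary}, that $Z$ is a unitary isomorphism from $\Lt$ onto $L^2(\mathcal{Q}\times\mathcal{Q})$, and that by \eqref{eq_Zak_Tk_Ml} the Zak transform diagonalizes all integer time-frequency shifts, turning them into multiplications by the Fourier basis $\{e^{2\pi i(l\cdot x - k\cdot\omega)}\}_{k,l\in\Z^d}$ of $L^2(\mathcal{Q}\times\mathcal{Q})$. So the strategy is to push the whole frame operator $S_g$ through $Z$ term by term and recognize what comes out as a Fourier expansion on $\mathcal{Q}\times\mathcal{Q}$.

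First I would write $S_g f = \sum_{k,l\in\Z^d}\langle f, M_l T_k g\rangle\, M_l T_k g$ and assume for the moment that $\G(g,\Z^{2d})$ is at least a Bessel system, so the series converges unconditionally in $\Lt$ and $Z$ may be interchanged with the sum by Lemma \ref{lem_operator_uncond_conv}. Applying $Z$ and using \eqref{eq_Zak_Tk_Ml} gives
\begin{equation}
Z(S_g f)(x,\omega) = Zg(x,\omega) \sum_{k,l\in\Z^d}\langle f, M_l T_k g\rangle\, e^{2\pi i(l\cdot x - k\cdot\omega)}.
\end{equation}
Next, since $Z$ is unitary, the coefficient $\langle f, M_l T_k g\rangle$ equals $\langle Zf, Z(M_l T_k g)\rangle_{L^2(\mathcal{Q}\times\mathcal{Q})}$, and applying \eqref{eq_Zak_Tk_Ml} inside this bracket yields
\begin{equation}
\langle f, M_l T_k g\rangle = \iint_{\mathcal{Q}\times\mathcal{Q}} Zf(x,\omega)\,\overline{Zg(x,\omega)}\, e^{-2\pi i(l\cdot x - k\cdot\omega)}\, d(x,\omega),
\end{equation}
which is precisely the $(k,l)$-th Fourier coefficient of the function $F := Zf\,\overline{Zg}$ on $\mathcal{Q}\times\mathcal{Q}$ with respect to the orthonormal basis $\{e^{2\pi i(l\cdot x - k\cdot\omega)}\}$.

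Therefore the sum displayed above is the Fourier series expansion of $F$, so it converges in $L^2(\mathcal{Q}\times\mathcal{Q})$ to $Zf(x,\omega)\,\overline{Zg(x,\omega)}$, giving
\begin{equation}
Z(S_g f)(x,\omega) = Zg(x,\omega)\cdot Zf(x,\omega)\cdot\overline{Zg(x,\omega)} = |Zg(x,\omega)|^2\, Zf(x,\omega),
\end{equation}
as desired. Equivalently, $Z S_g Z^{-1}$ is multiplication by $|Zg|^2$ on $L^2(\mathcal{Q}\times\mathcal{Q})$.

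The only real subtlety, and what I expect to be the main obstacle, is justifying that $F = Zf\,\overline{Zg}$ actually lies in $L^2(\mathcal{Q}\times\mathcal{Q})$ so that its Fourier series converges in the $L^2$ sense (and thus can be identified with the Zak-side sum above). If $g\in\Lt$ only, then $Zg\in L^2(\mathcal{Q}\times\mathcal{Q})$ need not be bounded, and the pointwise product with $Zf$ is merely in $L^1$. This is circumvented by first proving the identity for $g$ in a dense subspace where $Zg\in L^\infty$ (e.g.\ $g\in W(\Rd)$, using Lemma \ref{lem_Zak_prop}(b)), where everything converges in $L^2$ and both sides are bona fide $L^2$ functions; equivalently, one may first assume $\G(g,\Z^{2d})$ is a Bessel system so that $S_g$ is bounded on $\Lt$ and both sides of the identity depend continuously on $g$. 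The general case then follows by approximation, using that $Z$ is an isometry and that the map $g\mapsto |Zg|^2\cdot Zf$ is continuous from $\Lt$ into $L^1(\mathcal{Q}\times\mathcal{Q})$ for fixed $f$.
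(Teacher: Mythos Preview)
Your proof is correct and follows essentially the same route as the paper: apply $Z$ term by term using \eqref{eq_Zak_Tk_Ml}, rewrite each coefficient $\langle f, M_l T_k g\rangle$ via unitarity of $Z$ as a Fourier coefficient of $Zf\,\overline{Zg}$ on $\mathcal{Q}\times\mathcal{Q}$, and recognize the resulting series as the Fourier expansion of that product. The paper carries out exactly these steps, though it does not pause over the $L^2$-versus-$L^1$ issue for $Zf\,\overline{Zg}$ that you flag; your suggestion to first treat $g\in W(\Rd)$ (so $Zg\in L^\infty$) and then pass to general $g\in\Lt$ by density is a clean way to make the argument fully rigorous.
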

\begin{proof}
	We have
	\begin{equation}
		Z(M_l T_k g)(x,\omega) = e^{2 \pi i (l \cdot x - k \cdot \omega)} Zg(x,\omega).
	\end{equation}
	Hence,
	\begin{align}
		Z(S_g f)(x,\omega) & = \sum_{k,l \in \Z^d} \langle f, M_l T_k g \rangle \, Z (M_l T_k g)(x,\omega)\\
		& = \sum_{k,l \in \Z^d} \langle Z f, Z(M_l T_k g) \rangle Z(M_l T_k g)(x,\omega)\\
		& = \left(\sum_{k,l \in \Z^d} \iint_{\mathcal{Q} \times \mathcal{Q}} Zf(\xi,\eta) \overline{Z g(\xi,\eta)} e^{-2 \pi i (l \cdot \xi - k \cdot \eta)} \, d(\xi,\eta) \, e^{2 \pi i (l \cdot x - k \cdot \omega)}\right) Zg(x,\omega).
	\end{align}
	Since $\{e_{k,l}(x,\omega) = e^{2 \pi i (l \cdot x - k \cdot \omega)} \mid k,l \in \Z^d \}$ is an orthonormal basis for $L^2(\mathcal{Q} \times \mathcal{Q})$, the expression in the brackets is just the orthonormal expansion of
	\begin{equation}
		Zf \overline{Zg} = \sum_{k,l} \langle Zf \overline{Zg}, e_{k,l} \rangle \, e_{k,l}
	\end{equation}
	in $L^2(\mathcal{Q} \times \mathcal{Q})$. Therefore, the result follows.
\end{proof}
\begin{corollary}\label{cor_Zak_bounds}
	Let $g \in \Lt$ and consider the Gabor system $\G(g, \Z^{2d})$.
	\begin{enumerate}[(a)]
		\item $S_g$ is bounded on $\Lt$ if and only if $|Zg|^2 \in L^\infty(\R^{2d})$.
		\item $\G(g, \Z^{2d})$ is a frame if and only if
		\begin{equation}
			0 < a \leq |Zg(x,\omega)| \leq b < \infty
		\end{equation}
		for almost all $(x,\omega) \in \R^{2d}$. In this case the optimal frame bounds are
		\begin{align}
			A & = \essinf_{(x,\omega) \in \mathcal{Q} \times \mathcal{Q}} |Zg(x,\omega)|^2,\\
			B & = \esssup_{(x,\omega) \in \mathcal{Q} \times \mathcal{Q}} |Zg(x,\omega)|^2.
		\end{align}
		\item $\G(g, \Z^{2d})$ is an orthonormal basis for $\Lt$ if and only if $|Zg(x,\omega)|^2 = 1$ for almost all $(x,\omega) \in \R^{2d}$.
	\end{enumerate}
\end{corollary}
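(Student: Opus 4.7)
The plan is to leverage the preceding theorem, which states that $Z$ conjugates the Gabor frame operator $S_g$ into the multiplication operator $M_{|Zg|^2}$ on $L^2(\mathcal{Q} \times \mathcal{Q})$, together with Theorem \ref{thm_Zak_unitary}, which says $Z$ is a unitary map from $\Lt$ onto $L^2(\mathcal{Q} \times \mathcal{Q})$. Unitary equivalence preserves boundedness, the spectrum, and positive operator inequalities, so every statement about $S_g$ translates into a statement about multiplication by $|Zg|^2$. The key preliminary observation is that $|Zg|^2$ is $\Z^{2d}$-periodic (immediate from the quasi-periodicity of $Zg$), so $L^\infty$ and $\text{essinf}/\text{esssup}$ bounds on $\mathcal{Q} \times \mathcal{Q}$ are equivalent to the corresponding bounds on all of $\R^{2d}$.

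For part (a), I would use the standard fact that on $L^2$ of a measure space, multiplication by a measurable function $\phi$ defines a bounded operator if and only if $\phi \in L^\infty$, with operator norm equal to $\|\phi\|_\infty$. Applied to $ZS_gZ^{-1} = M_{|Zg|^2}$, this yields
\begin{equation}
\|S_g\|_{op} = \|M_{|Zg|^2}\|_{op} = \esssup_{(x,\omega) \in \mathcal{Q} \times \mathcal{Q}} |Zg(x,\omega)|^2 = \||Zg|^2\|_\infty,
\end{equation}
so $S_g$ is bounded iff $|Zg|^2 \in L^\infty(\R^{2d})$.

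For part (b), I would invoke Proposition \ref{pro_frame_properties}: $\G(g,\Z^{2d})$ is a frame with bounds $0 < A \leq B < \infty$ if and only if $A\,I_{\Lt} \leq S_g \leq B\,I_{\Lt}$. Conjugating by the unitary $Z$, this is equivalent to $A \leq M_{|Zg|^2} \leq B$ on $L^2(\mathcal{Q}\times\mathcal{Q})$, which in turn holds iff $A \leq |Zg(x,\omega)|^2 \leq B$ almost everywhere. The optimal constants are then $A = \essinf |Zg|^2$ and $B = \esssup |Zg|^2$ on $\mathcal{Q}\times\mathcal{Q}$, matching the claim in the statement.

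For part (c), the forward direction uses Lemma \ref{lem_ONB_tight_frame}(a): an orthonormal basis is a tight frame with $A = B = 1$ of unit-norm vectors, so by part (b) we must have $|Zg|^2 = 1$ almost everywhere. Conversely, if $|Zg|^2 \equiv 1$ a.e., then by part (b) the system is a tight frame with $A=B=1$, and by Theorem \ref{thm_Zak_unitary}
\begin{equation}
\|g\|_2^2 = \|Zg\|_{L^2(\mathcal{Q}\times\mathcal{Q})}^2 = \iint_{\mathcal{Q}\times\mathcal{Q}} |Zg(x,\omega)|^2\, d(x,\omega) = 1,
\end{equation}
and since time-frequency shifts are isometries, every $\pi(k,l)g$ is a unit vector; Lemma \ref{lem_ONB_tight_frame}(a) then concludes that $\G(g,\Z^{2d})$ is an orthonormal basis. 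No step is really a serious obstacle here — the whole argument is a dictionary translation via the unitary $Z$, with the only minor care being to record that $\|g\|_2 = 1$ is automatic in (c) rather than an extra hypothesis.
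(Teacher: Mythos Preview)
Your proposal is correct and follows essentially the same approach as the paper: both reduce everything to the unitary equivalence $ZS_gZ^{-1}=M_{|Zg|^2}$, invoke the standard facts about boundedness/invertibility of multiplication operators for (a) and (b), and for (c) use that $|Zg|^2\equiv 1$ forces a tight frame with $A=B=1$ together with $\norm{g}_2^2=\iint_{\mathcal{Q}\times\mathcal{Q}}|Zg|^2=1$ and Lemma~\ref{lem_ONB_tight_frame}(a). The only cosmetic difference is that for the forward direction of (c) the paper argues directly that an orthonormal basis has $S_g=I_{L^2}$, hence $|Zg|^2=1$, whereas you go via part (b); both are equally short.
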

\begin{proof}
	We note that a multiplication operator $f \mapsto m f$ is bounded if and only if $m \in L^\infty$, and is invertible if and only if $m^{-1} \in L^\infty$ (see, e.g., \cite{Con_FA90}).
	Therefore $(a)$ and $(b)$ follow immediately, since the conjugation $Z S_g Z^{-1}$ preserves the spectrum.
	
	As for $(c)$, if $\G(g, \Z^{2d})$ is an orthonormal basis, then $S_g = I_{L^2}$, and consequently $Z S_g Z^{-1}$ amounts to multiplication by $|Zg|^2 = 1$. Conversely, if $|Zg|^2=1$ a.e., then by $(b)$, $\G(g, \Z^{2d})$ is a tight frame with bounds $A=B=1$. By Theorem \ref{thm_Zak_unitary}
	\begin{equation}
		\norm{g}_2^2 = \iint_{\mathcal{Q} \times \mathcal{Q}} |Zg(x,\omega)|^2 \, d(x,\omega) = 1.
	\end{equation}
	Now, since $\norm{M_l T_k g}_2^2 = \norm{g}_2^2 = 1$, the Gabor system $\G(g,\Z^{2d})$ is an orthonormal basis by Lemma \ref{lem_ONB_tight_frame}.
\end{proof}

\subsection{The Balian-Low Theorem}
The Balian-Low Theorem can be seen as a ``no-go result" in the sense that it tells us that an orthonormal basis for $\Lt$ cannot come from a Gabor system with a nice window. Now, there are many versions of Balian-Low type results and they can also be seen as uncertainty principles. We will state two versions of the Balian-Low theorem, one for functions in the Hilbert space $\Lt[]$ ($d=1$), which is the original version, and one for the Wiener space $W(\Rd)$. 
\begin{theorem}[Balian-Low, Hilbert space version]
	If $\G(g, \Z^2)$ is an orthonormal basis for $\Lt[]$, then either $x \, g(x) \notin \Lt[]$ or $g'(x) \notin \Lt[]$.\footnote{From the time-frequency analysis point of view, it would be more appropriate to say \textit{either $x \, g(x) \notin \Lt[]$ or $\omega \, \widehat{g}(\omega) \notin \Lt[]$}.}
\end{theorem}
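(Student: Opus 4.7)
The plan is to derive a contradiction by evaluating the sesquilinear form $\langle Pg, Xg\rangle$ in two incompatible ways, where $X$ and $P = \frac{1}{2\pi i}\partial_x$ are the position and momentum operators from Theorem~\ref{thm_HPW_UP}. Assume toward contradiction that both $xg(x) \in L^2(\R)$ and $g'(x) \in L^2(\R)$, so that $g$ lies in the natural domains of $X$, $P$, $XP$, and $PX$. Since $\mathcal{G}(g,\Z^2)$ is an orthonormal basis we have $\|g\|_2 = 1$, and by Corollary~\ref{cor_Zak_bounds}(c) we have $|Zg(x,\omega)| = 1$ for almost every $(x,\omega) \in \mathcal{Q}\times \mathcal{Q}$.

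On the physical-space side, the integration-by-parts argument already performed in the proof of Theorem~\ref{thm_HPW_UP} is valid under our assumptions, yielding $2\Re \int_\R x g'(x)\overline{g(x)}\,dx = -\|g\|_2^2 = -1$. Since $\langle Pg, Xg\rangle = \frac{1}{2\pi i}\int_\R x g'(x)\overline{g(x)}\,dx$, this implies
\begin{equation*}
\Im\, \langle Pg, Xg\rangle \;=\; \frac{\|g\|_2^2}{4\pi} \;=\; \frac{1}{4\pi} \;\neq\; 0.
\end{equation*}
In particular $\langle Pg, Xg\rangle$ is not real.

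On the phase-space side, differentiating the Zak series term by term gives the operator identities
\begin{equation*}
Z(Pf)(x,\omega) \;=\; \frac{1}{2\pi i}\,\partial_x Zf(x,\omega), \qquad Z(Xf)(x,\omega) \;=\; x\, Zf(x,\omega) + \frac{i}{2\pi}\,\partial_\omega Zf(x,\omega),
\end{equation*}
valid whenever $f', xf \in L^2(\R)$. Since $Z$ is a unitary isomorphism from $L^2(\R)$ onto $L^2(\mathcal{Q}\times \mathcal{Q})$ by Theorem~\ref{thm_Zak_unitary}, the inner product $\langle Pg, Xg\rangle$ equals the $L^2(\mathcal{Q}\times \mathcal{Q})$ inner product of these Zak-side expressions. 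Differentiating the a.e.\ identity $|Zg|^2 = 1$ shows that $\overline{Zg}\,\partial_x Zg$ and $\overline{Zg}\,\partial_\omega Zg$ are purely imaginary, so one can write $\partial_x Zg = i\alpha\, Zg$ and $\partial_\omega Zg = i\beta\, Zg$ for real-valued $\alpha,\beta$ on $\mathcal{Q}\times \mathcal{Q}$. Then $\overline{Zg}\,\partial_x Zg = i\alpha$ and $\partial_x Zg\cdot\overline{\partial_\omega Zg} = \alpha\beta\,|Zg|^2 = \alpha\beta$, and substituting gives
\begin{equation*}
\langle Pg, Xg\rangle \;=\; \frac{1}{2\pi}\iint_{\mathcal{Q}\times\mathcal{Q}} x\,\alpha(x,\omega)\,d(x,\omega) \;-\; \frac{1}{4\pi^2}\iint_{\mathcal{Q}\times\mathcal{Q}} \alpha(x,\omega)\beta(x,\omega)\,d(x,\omega),
\end{equation*}
which is real. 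This contradicts the previous paragraph, completing the proof.

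The main obstacle is regularity: the two Zak intertwining formulas must be justified using only $f',xf \in L^2$, so that $\partial_x Zg$ and $\partial_\omega Zg$ a priori exist only as distributional derivatives. The clean way is to verify everything first for $f\in\mathcal{S}(\R)$ where term-by-term differentiation is legal, and then extend by density, using unitarity of $Z$ together with continuity of the form $(f,h)\mapsto\langle Pf,Xh\rangle$ in the graph norm $\|f\|_2 + \|f'\|_2 + \|xf\|_2$. A related subtlety is that the pointwise factorizations $\partial_x Zg = i\alpha Zg$ and $\partial_\omega Zg = i\beta Zg$ must be understood on the full-measure set $\{Zg\neq 0\}$, with $\alpha := \Im(\overline{Zg}\,\partial_x Zg)$ and $\beta := \Im(\overline{Zg}\,\partial_\omega Zg)$ defined there; the null set where $Zg$ vanishes plays no role in the final integrals.
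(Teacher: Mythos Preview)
Your argument is correct, but it takes a genuinely different route from the paper. Both proofs derive the same contradiction: the canonical commutation relation forces $\langle Pg, Xg\rangle$ to have nonzero imaginary part, while the orthonormal basis property forces it to be real. Where they differ is in the second step. The paper never touches the Zak transform here; instead it expands $\langle Xg, Pg\rangle$ in the orthonormal basis $\{M_lT_kg\}$, uses the algebraic identities $X\,M_lT_k = k\,M_lT_k + M_lT_k X$ and $P\,M_lT_k = l\,M_lT_k + M_lT_k P$ (together with orthogonality $\langle g, M_lT_kg\rangle = 0$), and after reindexing finds $\langle Xg, Pg\rangle = \langle Pg, Xg\rangle$. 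The domain issue for $XP$ and $PX$ is then handled by a $C_c^\infty$ approximation in the graph norm. Your approach trades this ONB manipulation for the single consequence $|Zg| = 1$ a.e.\ from Corollary~\ref{cor_Zak_bounds}(c), plus the intertwining identities $Z(Pf) = \tfrac{1}{2\pi i}\partial_xZf$ and $Z(Xf) = xZf + \tfrac{i}{2\pi}\partial_\omega Zf$, which let you read off that $\langle Pg, Xg\rangle$ is real. Your route is slightly less elementary (it uses the Zak machinery and $H^1$-type product rules) but perhaps more conceptual, since it explains geometrically why the obstruction appears: a unimodular function on the torus cannot have a global phase compatible with quasi-periodicity.

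One small correction: your sentence ``so that $g$ lies in the natural domains of $X$, $P$, $XP$, and $PX$'' is not justified by the hypotheses $xg, g' \in L^2$ alone (for $PX$ one would need $xg' \in L^2$), and in fact the paper's approximation argument exists precisely to avoid this assumption. Fortunately your proof never actually uses membership in $\mathrm{dom}(XP)\cap\mathrm{dom}(PX)$ --- you only pair $Pg$ with $Xg$, which requires just $Pg, Xg \in L^2$ --- so this is a harmless overstatement that you should simply delete. The boundary term in your integration by parts does vanish under these hypotheses (since $(x|g|^2)' = |g|^2 + 2x\,\Re(g\overline{g'}) \in L^1$ and $xg \in L^2$ forces the limit to be zero), so your first step is sound as stated.
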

\begin{proof}
	The proof relies on the commutation relations for the position and momentum operators, which are defined as
	\begin{equation}
		X g(x) = x g(x)
		\quad \text{ and } \quad
		P g(x) = \frac{1}{2 \pi i} \, g'(x).
	\end{equation}
	Recall that $X$ and $P$ are self-adjoint and that
	\begin{equation}
		(PX - XP)g = \frac{1}{2 \pi i}	g,
	\end{equation}
	for $g \in \text{dom}(XP) \cap \text{dom}(PX)$. Furthermore, we have $\F(Pg) = X \F g$.
	
	We prove the theorem by contradiction. Suppose $\G(g,\Z \times \Z)$ is an orthonormal basis of $\Lt[]$, in particular $g \neq 0$, and suppose that $X g \in \Lt[]$ and $P g \in \Lt[]$. Then, using the orthonormal expansion of $Xg$, we have
	\begin{equation}
		\langle X g, P g \rangle = \sum_{k,l \in \Z} \langle X g, M_l T_k g \rangle \langle M_l T_k g, P g \rangle.
	\end{equation}
	We will now re-write the inner products in the series expansion. Since
	\begin{equation}
		X M_l T_k g(x) = (k+x-k)e^{2 \pi i l \cdot x} g(x-k) = k M_l T_k g(x) + M_l T_k X g(x),
	\end{equation}
	we have
	\begin{align}
		\langle X g, M_l T_k g \rangle & = \langle g, X M_l T_k g \rangle\\
		& = k \langle g, M_l T_k g \rangle + \langle g, M_l T_k X g \rangle\\
		& = 0 + \langle T_{-k} M_{-l} g, X g \rangle,
	\end{align}
	where we used the fact that $\langle g, M_l T_k g \rangle = 0$ because we assume $\G(g, \Z \times \Z)$ to be an orthonormal basis. Similarly,
	\begin{align}
		\langle P M_l T_k g, g \rangle & = \langle X T_l M_{-k} \widehat{g}, \widehat{g} \rangle\\
		& = l \langle T_l M_{-k} \widehat{g}, \widehat{g} \rangle + \langle T_l M_{-k} X \widehat{g}, \widehat{g} \rangle\\
		& =\langle M_l T_k P g, g \rangle.
	\end{align}
	Combining the above results, we obtain
	\begin{equation}
		\langle X g, P g \rangle = \sum_{k,l \in \Z} \langle P g, T_{-k} M_{-l} g \rangle \langle T_{-k} M_{-l} g, X g \rangle = \langle P g, X g \rangle.
	\end{equation}
	If we knew that $g \in \text{dom}(P X) \cap \text{dom}(X P)$, then we could rewrite this identity as
	\begin{equation}
		0 = \langle (P X - X P) g, g \rangle = \frac{1}{2 \pi i} \norm{g}_2^2.
	\end{equation}
	We choose a sequence $(g_n)_{n \in \N}$ in $C^\infty_c(\R)$, such that $\norm{g_n - g}_2 \to 0$, $\norm{X g_n - X g}_2 \to 0$ and $\norm{P g_n - P g}_2 \to 0$ (such a sequence exists). Then
	\begin{equation}
		\lim_{n \to \infty} \left( \langle X g_n, P g_n \rangle - \langle P g_n, X g_n \rangle \right) = \langle X g, P g \rangle - \langle P g, X g \rangle = 0.
	\end{equation}
	On the other hand, since $g_n \in \mathcal{S}(\R) \subset \text{dom}(P X) \cap \text{dom}(X P)$, this limit is also
	\begin{equation}
		\lim_{n \to \infty} \langle (P X - X P) g_n, g_n \rangle = \frac{1}{2 \pi i} \lim_{n \to \infty} \norm{g_n}_2^2 = \frac{1}{2 \pi i} \norm{g}_2^2.
	\end{equation}
	Thus, $g = 0$, contradicting the assumption that $\G(g, \Z \times \Z)$ is an orthonormal basis.
\end{proof}

\begin{remark}
	While the classical uncertainty principle provides a lower bound on the deviations in time and frequency, i.e.,
	\begin{equation}
		\norm{X g}_2 \norm{P g}_2 = \norm{X g}_2 \norm{X \widehat{g}}_2 \geq \frac{1}{4\pi} \norm{g}_2^2,
	\end{equation}
	the Balian-Low theorem (BLT) implies that a window of an orthonormal Gabor basis possesses the maximal uncertainty
	\begin{equation}
		\norm{X g}_2 \norm{P g}_2 = \infty.
	\end{equation}
	For the Hilbert space $\Lt$, this result holds for the conjugate variables $(x_k,\omega_k)$, $k = 1, \ldots , d$.
	
	There are also more general versions for (symplectic) lattices and more general index sets. For more details on this we refer, e.g., to \cite{GroHanHeiKut02}.
	\begin{flushright}
		$\diamond$
	\end{flushright}
\end{remark}

We will now state the Wiener amalgam version of the BLT.
\begin{theorem}[BLT, Wiener amalgam version]
	If $\G(g, \Z^{2d})$ is a frame for $\Lt$, then both $g \notin W_0(\Rd)$ and $\widehat{g} \notin W_0(\Rd)$
\end{theorem}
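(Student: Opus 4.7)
The plan is to derive a contradiction from the Zak transform characterization of Gabor frames together with the continuity that $W_0(\R^d)$ would provide. Assume $\G(g,\Z^{2d})$ is a frame. By Corollary \ref{cor_Zak_bounds}(b) there exist constants $0 < a \le b < \infty$ such that
\begin{equation}
a \le |Zg(x,\omega)| \le b \quad \text{for almost every } (x,\omega) \in \R^{2d}.
\end{equation}
Suppose toward a contradiction that $g \in W_0(\R^d)$. Then by Lemma \ref{lem_Zak_prop}(c), $Zg$ is continuous on $\R^{2d}$, so the a.e.\ lower bound upgrades to a pointwise lower bound and $Zg$ is \emph{nowhere} zero.

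The core of the argument is to show that a continuous, nowhere vanishing function on $\R^{2d}$ cannot satisfy the quasi-periodicity relations of the Zak transform. I will first do this in dimension one. Writing the quasi-periodicities as
\begin{equation}
Zg(x+1,\omega) = e^{2\pi i \omega} Zg(x,\omega), \qquad Zg(x,\omega+1) = Zg(x,\omega),
\end{equation}
nonvanishing and continuity allow us to lift $Zg/|Zg|$ to a continuous phase $\varphi : \R^2 \to \R$ with $Zg = |Zg| e^{2\pi i \varphi}$. The two quasi-periodicities force
\begin{equation}
\varphi(x+1,\omega) - \varphi(x,\omega) = \omega + n_1, \qquad \varphi(x,\omega+1) - \varphi(x,\omega) = n_2,
\end{equation}
where $n_1,n_2 \in \Z$ are constants by continuity. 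Computing $\varphi(1,1)-\varphi(0,0)$ along the two rectangle paths $(0,0)\to(1,0)\to(1,1)$ and $(0,0)\to(0,1)\to(1,1)$ yields $n_1+n_2$ versus $n_1+n_2+1$, a contradiction. Equivalently, the winding number of $\omega \mapsto Zg(1,\omega) = e^{2\pi i \omega} Zg(0,\omega)$ over $[0,1]$ is at least $1$, yet by the second periodicity this loop is contractible in $\C\setminus\{0\}$.

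To reduce general $d$ to this one-dimensional obstruction, I will fix coordinates $(x_2,\dots,x_d,\omega_2,\dots,\omega_d)$ to arbitrary values and consider the continuous, nowhere-vanishing function of the two variables $(x_1,\omega_1)$ obtained by restriction. Its quasi-periodicities in $(x_1,\omega_1)$ are exactly those of the one-dimensional Zak transform, so the planar winding-number argument above supplies the contradiction. Thus $g \notin W_0(\R^d)$.

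Finally, for the Fourier side I will invoke the identity $Zg(x,\omega) = e^{2\pi i x\cdot\omega}\, Z\widehat{g}(\omega,-x)$, which shows that $Zg$ is continuous on $\R^{2d}$ if and only if $Z\widehat{g}$ is. Applying the same argument to $\widehat{g}$ (using that $\G(\widehat{g},\Z^{2d})$ has the same frame bounds as $\G(g,\Z^{2d})$, since the Fourier transform is unitary and maps the lattice $\Z^{2d}$ to itself after a symplectic rotation that preserves $|Zg|$) yields $\widehat{g} \notin W_0(\R^d)$. The principal obstacle is the topological step: verifying cleanly that continuity plus the two quasi-periodicities of $Zg$ force a zero, which is the Zak transform incarnation of the classical fact that any section of the Heisenberg line bundle over the torus has a zero.
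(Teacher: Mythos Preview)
Your proposal is correct and follows essentially the same route as the paper: the paper isolates the topological step as a separate lemma (a continuous Zak transform must vanish somewhere) proved by the same phase-lifting contradiction, only carried out directly in $\R^{2d}$ via a cocycle $\kappa(k,l)$ rather than by your reduction to the $(x_1,\omega_1)$-plane. For the Fourier side the paper simply invokes that $\G(\widehat{g},\Z^{2d})$ is again a frame and reruns the argument, which is exactly your ``applying the same argument to $\widehat{g}$''; the Zak identity you cite is not needed here.
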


To prove the amalgam version of the BLT we show a quite special property of the Zak transform.
\begin{lemma}
	If $Zf$ is continuous on $\R^{2d}$, then $Zf$ has a zero in $\mathcal{Q} \times \mathcal{Q}$.
\end{lemma}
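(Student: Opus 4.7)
My plan is to argue by contradiction: assume $Zf$ is continuous and nowhere vanishing on $\R^{2d}$, and derive an inconsistency with the quasi-periodicity relations $Zf(x,\omega+l)=Zf(x,\omega)$ and $Zf(x+l,\omega)=e^{2\pi i l\cdot\omega}Zf(x,\omega)$ (for $l\in\Z^d$) by a winding-number computation on the boundary of a 2-dimensional slice. The reduction to $d=1$ is straightforward: fixing $x_2,\ldots,x_d,\omega_2,\ldots,\omega_d$ and letting $F(s,t)=Zf(s,x_2,\ldots,x_d,t,\omega_2,\ldots,\omega_d)$, the quasi-periodicities specialize to $F(s,t+1)=F(s,t)$ and $F(s+1,t)=e^{2\pi i t}F(s,t)$. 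Thus it suffices to prove the lemma for $d=1$.

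For $d=1$, the key observation is that $\mathcal{Q}\times\mathcal{Q}=[0,1]^2$ is simply connected. If $Zf$ is continuous and never vanishes there, standard covering-space arguments produce a continuous real-valued lift $\phi\colon[0,1]^2\to\R$ with $Zf(x,\omega)=|Zf(x,\omega)|\,e^{i\phi(x,\omega)}$. In particular the total change of $\phi$ traversing the boundary $\partial([0,1]^2)$ counterclockwise must be zero. The main step is then to show that the quasi-periodicity forces this total change to equal $2\pi$, which is the desired contradiction.

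To obtain this, I compare $\phi$ on pairs of opposite edges. From $Zf(s,1)=Zf(s,0)$, the function $\phi(s,1)-\phi(s,0)$ is continuous and $2\pi\Z$-valued, hence equals a constant $2\pi m$. From $Zf(1,\omega)=e^{2\pi i\omega}Zf(0,\omega)$ together with $|Zf(1,\omega)|=|Zf(0,\omega)|$, the function $\phi(1,\omega)-\phi(0,\omega)-2\pi\omega$ is continuous and $2\pi\Z$-valued, hence equals a constant $2\pi k$. Summing the increments of $\phi$ along bottom, right, top (reversed), and left (reversed) edges, the contributions from the top and bottom cancel, and the right and left together contribute $\bigl[\phi(0,1)+2\pi+2\pi k-\phi(0,0)-2\pi k\bigr]+\bigl[\phi(0,0)-\phi(0,1)\bigr]=2\pi$. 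This contradicts the vanishing of the total change and concludes the proof.

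The only place requiring real care is the existence of the continuous argument $\phi$ and the justification that $\phi(s,1)-\phi(s,0)$ and $\phi(1,\omega)-\phi(0,\omega)-2\pi\omega$ are locally constant—both follow from elementary continuity of integer-valued functions on a connected interval, so no deep machinery is needed. The essential content is the mismatch between the trivial winding on a simply connected square and the nontrivial phase twist $e^{2\pi i\omega}$ imposed by the quasi-periodicity.
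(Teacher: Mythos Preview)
Your argument is correct and is essentially the same as the paper's: both assume a nowhere-vanishing continuous $Zf$, lift to a continuous real argument on a simply connected domain, and obtain a contradiction from the quasi-periodicity relations. The only cosmetic differences are that the paper lifts on all of $\R^{2d}$ and compares the argument at integer lattice points (computing $\varphi(k,l)$ in two ways), whereas you slice to $d=1$ and phrase the same cocycle obstruction as a winding-number mismatch around $\partial([0,1]^2)$; these are two formulations of the identical computation.
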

\begin{proof}
	Assume that $Zf(x,\omega) \neq 0$ for all $(x, \omega) \in \R^{2d}$. Since $Zf$ is continuous and $\R^{2d}$ is simply connected, there exists a continuous function $\varphi(x,\omega)$, such that
	\begin{equation}
		Zf(x,\omega) = |Zf(x,\omega)| e^{2 \pi i \varphi(x,\omega)}.
	\end{equation}
	We note that in this particular case a continuous logarithm exists. The quasi-periodicity
	\begin{equation}
		Zf(x+k,\omega+l) = e^{2 \pi i k \cdot \omega} Zf(x,\omega)
	\end{equation}
	implies the existence of a function $\kappa: \Z^{2d} \to \Z$, such that
	\begin{equation}
		\varphi(x+k,\omega+l) = \varphi(x,\omega) + k \cdot \omega + \kappa(k,l).
	\end{equation}
	We compute $\varphi(k,l) = \varphi(0,0) + \kappa(k,l)$ in two ways:
	\begin{align}
		\varphi(k,l) & = \varphi(0,l) + k \cdot l + \kappa(k,0)\\
		& = \left( \varphi(0,0) + \kappa(0,l) \right) + k \cdot l + \kappa(k,0)
	\end{align}
	and
	\begin{equation}
		\varphi(k,l) = \varphi(k,0) + \kappa(0,l) = \left(\varphi(0,0) + \kappa(k,0)\right) + \kappa(0,l).
	\end{equation}
	The two resulting expressions for $\kappa$ yield the contradiction
	\begin{equation}
		\kappa(k,l) = \kappa(0,l) + \kappa(k,0) + k \cdot l = \kappa(0,l) + \kappa(k,0).
	\end{equation}
	This contradiction shows that the initial assumption $Zf(x,\omega) \neq 0$ cannot hold.
\end{proof}
\begin{proof}(\textit{BLT, Wiener amalgam version}).
	If the window $g \in W_0(\Rd)$, then $Zg$ is continuous by Lemma \ref{lem_Zak_prop}. Hence, $Zg$ has a zero in $\mathcal{Q} \times \mathcal{Q}$ by the above lemma. Therefore, by Corollary \ref{cor_Zak_bounds} the lower frame bound vanishes and $\G(g,\Z^{2d})$ cannot be a frame. Hence, if $\G(g,\Z^{2d})$ is a frame, then $g \notin W_0(\Rd)$.

	We note that $\G(g, \Z^{2d})$ is a frame if and only if $\G(\widehat{g}, \Z^{2d})$ is a frame. Therefore, $\widehat{g} \notin W_0(\Rd)$ as well.
\end{proof}

At this point, it should be mentioned that neither of the presented versions of the BLT implies the other \cite[Chap.~8.4, Rem.~3]{Gro01}. We note that there are similar results for other function spaces as well (re-call also metatheorems \ref{mthm_A}, \ref{mthm_B}, \ref{mthm_C}). The common theme is the impossibility of constructing an orthonormal Gabor basis from a nice function with a nice Fourier transform. This impossibility of the existence of ONBs of the form $\G(g, \Z \times \Z)$ with (mild) decay and smoothness conditions on $g$ is the very reason why over-complete Gabor systems are studied and used in applications.

\subsection{Density of Gabor Frames}
The Balian-Low theorem states that we cannot obtain an orthonormal basis from a Gabor system where the window $g$ has good time-frequency concentration. It is accompanied by the density principle, which should be seen as an uncertainty principle for Gabor systems. In order to prove the density principle, we will introduce necessary results (partially) without proof. The results are part of a general duality theory for Gabor systems $\G(g,\L)$ and $\G(g,\L^\circ)$. We refer to the treatise \cite{GroKop19} (see also \cite{Kop17}).

The \textit{Fundamental Identity of Gabor Analysis} (FIGA) is a main tool to relate a Gabor system $\G(g,\L)$ to its adjoint system $\G(g,\L^\circ)$. This is of interest for the characterization of Gabor frames. So far, we have seen the frame inequality, which gives the condition on a set to be a frame. On the other hand, there are Riesz sequences, which are defined in a similar, but contrary manner. A sequence (or set) $\{f_\gamma  \mid \gamma \in \Gamma \}$ in a Hilbert space $\mathcal{H}$ is a Riesz sequence if and only if there exist positive constants $0 < A \leq B < \infty$ such that
\begin{equation}\label{eq_Riesz}
	A \norm{c}_{\ell^2}^2 \leq \norm{ \sum_{\gamma \in \Gamma} c_\gamma f_\gamma}_\mathcal{H}^2 \leq B \norm{c}_{\ell^2}^2, \qquad \forall c =(c_\gamma)_{\gamma \in \Gamma} \in \ell^2(\Gamma).
\end{equation}
A Riesz sequence is called Riesz basis if it is complete in $\mathcal{H}$, i.e., its linear span is dense in $\mathcal{H}$.

The completeness of a Riesz basis is an additional assumption and does not follow from the norm equivalence \eqref{eq_Riesz}: Take an orthonormal basis for $\mathcal{H}$ and remove one element. This family still satisfies \eqref{eq_Riesz}, but its linear span is clearly not dense in $\mathcal{H}$. In particular, a Riesz sequence need not be a frame since the latter is always (over)complete.

Conversely, a frame need not be a Riesz sequence: The union of two orthonormal bases is a frame, but there exists a non-trivial linear combination of zero since this system is linearly dependent. This contradicts the lower bound in \eqref{eq_Riesz}.

We will not further elaborate on the general theory of Riesz sequences and frames, but state ``the duality" between the Gabor systems $\G(g,\L)$ and $\G(g, \L^\circ)$ in terms of frames and Riesz sequences.

The first important result is the following.
\begin{theorem}[Bessel Duality]
	Let $g \in \Lt$ and $\L \subset \R^{2d}$ be a lattice. Then $\G(g,\L)$ is a Bessel sequence if and only if $\G(g, \L^\circ)$ is a Bessel sequence.
\end{theorem}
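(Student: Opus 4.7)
The theorem is one half of the fundamental duality between a Gabor system and its adjoint, and the natural tool is the Fundamental Identity of Gabor Analysis (FIGA), i.e.\ the symplectic Poisson summation formula applied to ambiguity functions. My plan is to reformulate Bessel as a boundedness property of the Gabor frame operator, derive a Janssen-type representation that is manifestly symmetric in $\Lambda$ and $\Lambda^\circ$, and extend the result from a convenient dense subspace to all of $L^2(\mathbb{R}^d)$.

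First, I would observe that $\G(g,\L)$ is Bessel with bound $B$ if and only if the positive sesquilinear form $(f_1,f_2)\mapsto \sum_{\l\in\L}\langle f_1,\pi(\l)g\rangle \overline{\langle f_2,\pi(\l)g\rangle}$ is bounded on $L^2(\Rd)\times L^2(\Rd)$ with norm $\le B$; by the Riesz representation theorem this is equivalent to boundedness of the positive self-adjoint Gabor frame operator $S_{g,\L}$ with $\|S_{g,\L}\|_{op}\le B$. Thus the assertion becomes: $S_{g,\L}$ is bounded on $L^2$ iff $S_{g,\L^\circ}$ is bounded on $L^2$.

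Second, for $g\in S_0(\Rd)=M^1(\Rd)$ (a dense subspace), the function $V_g g$ lies in $W(\R^{2d})$, so the symplectic Poisson summation formula applies pointwise and absolutely. Applied to the appropriate products of STFTs, it yields FIGA in the form
\[
\sum_{\l\in\L} V_{g_1}f_1(\l)\,\overline{V_{g_2}f_2(\l)} \;=\; \vol(\L)^{-1}\sum_{\l^\circ\in\L^\circ} V_{g_2}g_1(\l^\circ)\,\overline{V_{f_2}f_1(\l^\circ)}.
\]
Specializing $g_1=g_2=g$, $f_1=f_2=f$ gives the weak Janssen identity $\langle S_{g,\L}f,f\rangle = \vol(\L)^{-1}\sum_{\l^\circ}V_gg(\l^\circ)\,\overline{V_ff(\l^\circ)}$. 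Swapping the roles of $(f,g)$ and the lattices $(\L,\L^\circ)$ (using $(\L^\circ)^\circ=\L$ and $\vol(\L^\circ)=\vol(\L)^{-1}$) gives the companion expression for $\langle S_{g,\L^\circ}f,f\rangle$, and the two identities combine to show that boundedness of one quadratic form as $f$ ranges over the unit ball of $L^2$ is equivalent to boundedness of the other, up to the explicit volume factor $\vol(\L)$ relating the respective Bessel bounds.

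Third, to extend from $M^1$ to arbitrary $g\in L^2(\Rd)$, I would approximate $g$ by a sequence $g_n\in M^1$ with $g_n\to g$ in $L^2$. The synthesis operator $D_{g,\L}:\ell^2(\L)\to L^2$ depends continuously on $g$ (via $\|D_{g,\L}-D_{g_n,\L}\|_{op}\le \|D_{g-g_n,\L}\|_{op}$ together with a uniform bound from the Bessel hypothesis on the ``limiting side''), which allows Bessel bounds to be propagated to the $L^2$-limit; the symmetric conclusion then follows. The main obstacle is precisely this convergence issue: the Bessel property for $g$ does not guarantee that $V_gg\in\ell^1(\L^\circ)$ or even $\ell^2(\L^\circ)$, so the Janssen series must be interpreted in the weak operator topology, and the approximation step requires careful propagation of the Bessel bound from $g_n$ to $g$ without losing the symmetry between $\L$ and $\L^\circ$. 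Once these convergence matters are handled, the duality is just the built-in $\L\leftrightarrow\L^\circ$ symmetry of FIGA.
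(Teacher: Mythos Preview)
The paper does not actually prove this theorem; it is listed among the ``necessary results (partially) without proof'' and attributed to \cite{GroKop19} and \cite{Kop17}. So there is no in-text argument to compare against, and I evaluate your proposal on its own merits.

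Your Step~2 is essentially empty. For $g\in M^1(\Rd)$ the Tolimieri--Orr bound (the Remark immediately following Theorem~\ref{thm_FIGA}) already shows that \emph{both} $\G(g,\L)$ and $\G(g,\L^\circ)$ are Bessel, so on $M^1$ the equivalence is trivially true and FIGA gives you nothing to transport. Moreover, the ``symmetry'' you invoke is not there: specializing FIGA with $f_1=f_2=f$ and $g_1=g_2=g$ yields $\langle S_{g,\L}f,f\rangle=\vol(\L)^{-1}\sum_{\l^\circ}V_gg(\l^\circ)\overline{V_ff(\l^\circ)}$, while $\langle S_{g,\L^\circ}f,f\rangle=\sum_{\l^\circ}|V_gf(\l^\circ)|^2$. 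These are different bilinear expressions in $f$ and $g$, and swapping $\L\leftrightarrow\L^\circ$ does not turn one into the other. The entire content of the theorem therefore sits in your Step~3, and there the approximation argument fails: if $\G(g,\L)$ is Bessel and $g_n\to g$ in $\Lt$ with $g_n\in M^1$, you obtain no uniform control on the Bessel constants of $\G(g_n,\L^\circ)$, which may blow up even while those of $\G(g_n,\L)$ stay bounded. You correctly flag this as ``the main obstacle'' but do not resolve it, so the proposal is a plan with its decisive step missing.

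The proofs in the cited references avoid approximating $g$ altogether. One standard route: $\G(g,\L^\circ)$ is Bessel iff its Gram operator $\bigl(\langle\pi(\mu)g,\pi(\nu)g\rangle\bigr)_{\mu,\nu\in\L^\circ}$ is bounded on $\ell^2(\L^\circ)$. Up to a unitary, this Gram operator is twisted convolution on $\ell^2(\L^\circ)$ by the sequence $a=(\langle g,\pi(\l^\circ)g\rangle)_{\l^\circ\in\L^\circ}$, whereas the Janssen representation exhibits $\vol(\L)\,S_{g,\L}$ as twisted convolution on $\Lt$ by the \emph{same} sequence $a$. One then proves that these two twisted-convolution operators are bounded simultaneously (with equal norms), working directly from the Bessel hypothesis on one side.
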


The structures of $\G(g,\L)$ and its dual system $\G(g,\L^\circ)$ are intimately related: A Gabor frame on one side corresponds to a Riesz sequence on the other. Furthermore, dual windows of $\G(g,\L)$ are characterized by a biorthogonality condition along the adjoint lattice.

We introduce a more general notion of the frame operator. The frame operator with analysis window $g$ and synthesis window $\widetilde{g}$ and index set $\Gamma \subset \R^{2d}$ is given by
\begin{equation}
	S_{g,\widetilde{g},\Gamma} f = \sum_{\gamma \in \Gamma} \langle f, \pi(\gamma) g \rangle \pi(\gamma) \widetilde{g}.
\end{equation}
\begin{definition}
	Let $g \in \Lt$ and let $\G(g,\L)$ be a Bessel sequence. We call $\widetilde{g} \in \Lt$ a dual window to $\G(g,\L)$, if $\G(\widetilde{g},\L)$ is a Bessel sequence and the reconstruction property $S_{g,\widetilde{g},\L} = S_{\widetilde{g}, g , \L} = I$ is satisfied.
\end{definition}
Now, if $\G(g, \L)$ is a Gabor frame, then there exists a dual window $\widetilde{g}$, such that $S_{g,\widetilde{g},\L} = S_{\widetilde{g}, g , \L} = I$. We state the result without proof and refer to \cite{Gro01} or \cite{Gro14}.
\begin{theorem}[Wexler-Raz Biorthogonality relations]
	Let $g \in \Lt$ and $\L \subset \R^{2d}$ be a lattice. Then, the following are equivalent.
	\begin{enumerate}[(i)]
		\item $\G(g,\L)$ is a frame for $\Lt$.
		\item $\G(g,\L^\circ)$ is a Bessel sequence and there exists a dual window $\widetilde{g} \in \Lt$ such that $\G(\widetilde{g}, \L^\circ)$ is a Bessel sequence and
		\begin{equation}\label{eq_Wex-Raz}
			\langle \widetilde{g}, \pi(\l^\circ) g \rangle = \vol(\L) \delta_{0,\l^\circ}, \quad \forall \l^\circ \in \L^\circ.
		\end{equation}
	\end{enumerate}
\end{theorem}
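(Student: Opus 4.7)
The plan is to use the Janssen representation of the Gabor frame operator, which is itself a consequence of the Fundamental Identity of Gabor Analysis (FIGA). The FIGA is a bilinear identity obtained by applying the symplectic Poisson summation formula to the quantity $\langle S_{g,\widetilde{g},\Lambda} f, h \rangle$: under suitable Bessel hypotheses one has
\begin{equation}
\sum_{\lambda \in \Lambda} \langle f, \pi(\lambda) g \rangle \langle \pi(\lambda) \widetilde{g}, h \rangle \;=\; \vol(\Lambda)^{-1} \sum_{\lambda^\circ \in \Lambda^\circ} \langle \widetilde{g}, \pi(\lambda^\circ) g \rangle \langle \pi(\lambda^\circ) f, h \rangle,
\end{equation}
so that $S_{g,\widetilde{g},\Lambda} = \vol(\Lambda)^{-1} \sum_{\lambda^\circ \in \Lambda^\circ} \langle \widetilde{g}, \pi(\lambda^\circ) g \rangle \, \pi(\lambda^\circ)$ in the weak operator sense. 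All of my work below rests on this identity together with the Bessel duality between $\G(g,\L)$ and $\G(g,\L^\circ)$ that was stated just before the theorem.

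For the direction (i)$\Rightarrow$(ii): assuming $\G(g,\L)$ is a frame, Bessel duality immediately gives that $\G(g,\L^\circ)$ is Bessel. I then take $\widetilde{g} = S_{g,\L}^{-1} g$, the canonical dual window supplied by Proposition~\ref{pro_canonical_dual_Gabor}; since $\G(\widetilde{g},\L)$ is again a frame, Bessel duality applied once more yields that $\G(\widetilde{g},\L^\circ)$ is Bessel. The reconstruction property gives $S_{g,\widetilde{g},\L} = I$, and feeding this into the Janssen representation produces the operator identity
\begin{equation}
I \;=\; \vol(\L)^{-1} \sum_{\lambda^\circ \in \L^\circ} \langle \widetilde{g}, \pi(\lambda^\circ) g \rangle\, \pi(\lambda^\circ).
\end{equation}
To extract (\ref{eq_Wex-Raz}) I then exploit a ``linear independence'' property of the collection $\{\pi(\lambda^\circ)\}_{\lambda^\circ \in \L^\circ}$: testing the operator identity on suitable pairs $(f,h)$ and using the covariance relations, the coefficients $\vol(\L)^{-1}\langle \widetilde{g}, \pi(\lambda^\circ) g\rangle$ are uniquely determined, forcing them to coincide with those of the trivial expansion $I = \vol(\L)^{-1} \cdot \vol(\L) \cdot \pi(0)$.

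For the reverse direction (ii)$\Rightarrow$(i): by Bessel duality, the assumption that $\G(g,\L^\circ)$ is Bessel already gives the upper frame bound for $\G(g,\L)$. Plugging the biorthogonality relations directly into the right-hand side of the Janssen representation collapses the $\L^\circ$-sum to its $\lambda^\circ = 0$ term and yields $S_{g,\widetilde{g},\L} = I$. Hence for every $f \in \Lt$,
\begin{equation}
\norm{f}_2^2 = \langle S_{g,\widetilde{g},\L} f, f\rangle = \sum_{\lambda \in \L} \langle f,\pi(\lambda)g\rangle \langle \pi(\lambda)\widetilde{g}, f\rangle \leq \Big(\sum_{\lambda\in\L}|\langle f,\pi(\lambda)g\rangle|^2\Big)^{1/2}\Big(\sum_{\lambda\in\L}|\langle f,\pi(\lambda)\widetilde{g}\rangle|^2\Big)^{1/2},
\end{equation}
and invoking the Bessel bound $B'$ for $\G(\widetilde{g},\L)$ (again by Bessel duality applied to $\G(\widetilde{g},\L^\circ)$) gives the lower frame bound $A = 1/B'$.

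The main obstacle is two-fold and both points live inside the FIGA/Janssen step. First, one must justify the interchange of summations that turns the $\L$-sum defining $S_{g,\widetilde{g},\L}$ into the $\L^\circ$-sum of twisted time-frequency shifts; this is delicate because the natural unconditional convergence takes place only under an $\ell^1$-type condition on the ambiguity samples $\{A(\widetilde{g},g)(\lambda^\circ)\}_{\lambda^\circ}$, which needs to be circumvented here by a Bessel-based weak operator argument (or by density, reducing first to windows in $M^1$ and extending by continuity). Second, the coefficient-extraction step in (i)$\Rightarrow$(ii) requires a genuine argument that no nontrivial finite linear combination $\sum c_{\lambda^\circ} \pi(\lambda^\circ)$ can vanish as an operator; this is where the structure of the Heisenberg group and the orthogonality relations for the STFT enter, and it is the technical heart of the proof.
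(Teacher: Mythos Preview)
The paper does not actually prove this theorem: it is explicitly stated without proof, with a reference to \cite{Gro01} and \cite{Gro14}, and only the remark that ``the proof relies on the Fundamental Identity of Gabor Analysis'' is given. So there is no paper-proof to compare against beyond that one-line hint.

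Your outline via the FIGA/Janssen representation is exactly the standard route and is consistent with the paper's remark. A few comments on the details you flag. First, note that the FIGA in these notes (Theorem~\ref{thm_FIGA}) is proved only for $g,\widetilde g\in M^1(\Rd)$, not for general $g\in\Lt$; your suggestion to reduce to $M^1$ windows and extend by continuity is therefore not optional but essential if you want the full $\Lt$ statement, and this step is genuinely nontrivial (it is where the Bessel hypotheses on both $\G(g,\L^\circ)$ and $\G(\widetilde g,\L^\circ)$ are really used). Second, your ``coefficient extraction'' step in (i)$\Rightarrow$(ii) is cleanest not via an abstract linear-independence argument but by conjugating the Janssen identity $I=\vol(\L)^{-1}\sum_{\l^\circ}\langle\widetilde g,\pi(\l^\circ)g\rangle\,\pi(\l^\circ)$ with $\pi(\mu)$ for $\mu\in\L$ and using the commutation relation $\pi(\mu)^{-1}\pi(\l^\circ)\pi(\mu)=e^{2\pi i\sigma(\mu,\l^\circ)}\pi(\l^\circ)=\pi(\l^\circ)$; this shows the coefficients form a trigonometric series on the torus $\R^{2d}/\L$ that sums to a constant, whence all nonzero Fourier coefficients vanish. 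This is essentially the argument you gesture at, made concrete.
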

We remark that the proof relies on the Fundamental Identity of Gabor Analysis, which we will state (and prove) after the following lemma, which we state without proof (see \cite[Chap.~2]{Kop17} for the details).
\begin{lemma}\label{lem_Vgf_Vgh_M1}
	Let $g, \widetilde{g} \in M^1(\Rd)$, $f \in M^p(\Rd)$ and $h \in M^q(\Rd)$ with $\frac{1}{p} + \frac{1}{q} = 1$. Then, the product of the STFTs $\overline{V_g f} \, V_{\widetilde{g}} h$ is in $M^1(\R^{2d})$.
\end{lemma}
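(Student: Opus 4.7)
The plan is to verify the defining condition of $M^1(\R^{2d})$ directly: exhibit a window $\Phi\in\mathcal{S}(\R^{2d})$ for which the STFT of $F:=\overline{V_g f}\,V_{\widetilde g} h$ lies in $L^1(\R^{4d})$. Since $M^{1}(\R^{2d})$ is independent of the choice of Schwartz window, a convenient choice will be the tensor-product window $\Phi=\phi\otimes\overline{\phi}$ with $\phi\in\mathcal{S}(\Rd)$ (one may take $\phi=g_0$, the standard Gaussian, so that $\Phi\in\mathcal{S}(\R^{2d})\subset M^1(\R^{2d})$).

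Next I would derive an ``STFT-of-STFT'' factorization for the tensor window $\Phi$. Unfolding both STFTs and applying Parseval in one of the inner variables---in exactly the spirit of the computations leading to Lemma~\ref{lem_Vgf_product}---produces a pointwise identity of the schematic form
\begin{equation*}
 |V_\Phi(V_g f)(u_1,u_2,\omega_1,\omega_2)| \;=\; |V_\phi f(u_1,\omega_2)|\cdot|V_\phi g(u_1-u_2,\omega_2-\omega_1)|,
\end{equation*}
and the analogous identity for $V_\Phi(V_{\widetilde g} h)$. Substituting these into $V_\Phi F$ (which, because $F$ is a pointwise product on $\R^{2d}$, can be written as a convolution-type pairing of the two constituent STFTs through the window $\Phi$) and applying Fubini, one obtains a pointwise majorization of $|V_\Phi F|$ by a positive kernel built from $|V_\phi f|,\,|V_\phi h|,\,|V_\phi g|,\,|V_\phi\widetilde g|$.

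The final step is to integrate the resulting bound over $\R^{4d}$ and apportion the variables. The hypothesis $g,\widetilde g\in M^1(\Rd)$ furnishes $\|V_\phi g\|_{L^1(\R^{2d})}=\|g\|_{M^1}<\infty$ and likewise for $\widetilde g$; these two $L^1$-factors are absorbed in the ``difference'' variables produced by the factorization via Young's convolution inequality. The hypotheses $f\in M^p(\Rd)$ and $h\in M^q(\Rd)$ give $V_\phi f\in L^p(\R^{2d})$ and $V_\phi h\in L^q(\R^{2d})$, and H\"older's inequality with $\tfrac{1}{p}+\tfrac{1}{q}=1$ in the remaining two variables yields
\begin{equation*}
 \|F\|_{M^1(\R^{2d})} = \|V_\Phi F\|_{L^1(\R^{4d})} \;\lesssim\; \|f\|_{M^p}\|h\|_{M^q}\|g\|_{M^1}\|\widetilde g\|_{M^1}<\infty.
\end{equation*}

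The hard part will be the bookkeeping in the STFT-of-STFT factorization: tracking the affine coordinate changes and phase factors that arise from the tensor structure of $\Phi$, and then correctly grouping the four resulting integration variables so that the $M^1$-regularity of $g$ and $\widetilde g$ is exploited by Young's inequality while the $M^p$--$M^q$ duality is exploited by H\"older. Once this factorization is written out explicitly (an extension of the computation behind Lemma~\ref{lem_Vgf_product}), the final H\"older/Young combination is routine.
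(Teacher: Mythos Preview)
The paper does not prove this lemma; it is stated without proof with a reference to \cite[Chap.~2]{Kop17}. So there is no ``paper's own proof'' to compare against, and your task is really to outline a proof from scratch.

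Your overall strategy is the standard one and does lead to a proof: an STFT-of-STFT factorization together with the convolution identity for the STFT of a pointwise product, followed by H\"older for the $(f,h)$ pair and an $L^1$--$L^1$ estimate for the $(g,\widetilde g)$ pair. The final bound $\|F\|_{M^1(\R^{2d})}\lesssim\|f\|_{M^p}\|h\|_{M^q}\|g\|_{M^1}\|\widetilde g\|_{M^1}$ is exactly what drops out.

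One concrete correction, however: the tensor window $\Phi=\phi\otimes\overline{\phi}$ does \emph{not} produce the clean two-factor identity you wrote. The factorization
\[
|V_\Phi(V_g f)(z,\zeta)|=|V_\varphi f(\,\cdot\,)|\cdot|V_\varphi g(\,\cdot\,)|
\]
holds when the window is $\Phi=V_\varphi\varphi$, because $V_g f=\F_2\mathcal{T}_a(f\otimes\overline g)$ and $\F_2\mathcal{T}_a(\varphi\otimes\overline\varphi)=V_\varphi\varphi$; the metaplectic operator $\F_2\mathcal{T}_a$ then transfers the tensor-product STFT on $\R^{2d}$ to the desired identity. With the plain tensor window you would get an extra integral rather than a product. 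For the product $F=\overline{V_g f}\,V_{\widetilde g}h$ one then takes the window $\Phi=V_\varphi\varphi\cdot\overline{V_\varphi\varphi}=|V_\varphi\varphi|^2\in\mathcal{S}(\R^{2d})$, so that your convolution-in-frequency identity
\[
V_{\Phi_1\Phi_2}(F_1F_2)(z,\zeta)=\int_{\R^{2d}}V_{\Phi_1}F_1(z,\eta)\,V_{\Phi_2}F_2(z,\zeta-\eta)\,d\eta
\]
applies with each factor enjoying the two-term factorization. After an affine change of the $6d$ integration variables $(z,\zeta,\eta)$, the integral separates into the product of a $\zeta$-independent H\"older pairing $\int|V_\varphi f|\,|T_{w}V_\varphi h|\le\|f\|_{M^p}\|h\|_{M^q}$ and a correlation $\int|V_\varphi g|\,|T_{w}V_\varphi\widetilde g|$ whose $L^1$-norm in $w$ is $\|g\|_{M^1}\|\widetilde g\|_{M^1}$ by Fubini. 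With this window adjustment your sketch goes through; the remaining work is indeed only bookkeeping.
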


\begin{theorem}[Fundamental Identity of Gabor Analysis]\label{thm_FIGA}
	Let $f, h \in \Lt$, $g, \widetilde{g} \in M^1(\Rd)$ and $\L \subset\R^{2d}$ a lattice. Then
	\begin{equation}
		\sum_{\l \in \L} V_{g} f(\l) \, \overline{V_{\widetilde{g}} h(\l)}
		= \vol(\L)^{-1} \sum_{\l^\circ \in \L^\circ} V_{g} \widetilde{g}(\l^\circ) \, \overline{ V_{f} h(\l^\circ)},
	\end{equation}
	with absolute convergence on both sides.
\end{theorem}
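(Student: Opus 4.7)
The plan is to deduce FIGA from the symplectic Poisson summation formula applied at the origin to the function
\[
F(z) \;=\; V_g f(z)\,\overline{V_{\widetilde g} h(z)}, \qquad z \in \R^{2d}.
\]
Evaluating the symplectic Poisson summation formula at $z=0$ gives
\[
\sum_{\l \in \L} F(\l) \;=\; \vol(\L)^{-1} \sum_{\l^\circ \in \L^\circ} \F_\sigma F(\l^\circ),
\]
so once I identify $\F_\sigma F(\l^\circ) = V_g \widetilde g(\l^\circ)\,\overline{V_f h(\l^\circ)}$ the identity follows.

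Before invoking Poisson, I need the absolute convergence/regularity hypotheses. Here Lemma~\ref{lem_Vgf_Vgh_M1} does the heavy lifting: with $g,\widetilde g \in M^1(\Rd)$ and $f,h \in L^2(\Rd) = M^2(\Rd)$, Hölder's dualities applied in the lemma give $F \in M^1(\R^{2d})$. Since $M^1 \hookrightarrow W$ and $\F(M^1) = M^1 \hookrightarrow W$ (the modulation space $M^1$ is Fourier-invariant), both $F$ and $\F F$ lie in $W(\R^{2d})$, which is exactly the Wiener-amalgam hypothesis under which the Poisson summation formula — equivalently its symplectic version — holds pointwise and absolutely on both sides.

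The computational core is the identification of $\F_\sigma F$. I would apply Lemma~\ref{lem_Vgf_product} with $(f_1,g_1,f_2,g_2)=(f,g,h,\widetilde g)$, which yields
\[
\F\bigl(V_g f\,\overline{V_{\widetilde g}h}\bigr)(x,\omega) \;=\; \bigl(V_h f\,\overline{V_{\widetilde g}g}\bigr)(-\omega,x).
\]
Using $\F_\sigma F(z) = \F F(Jz)$ with $J(x,\omega)=(\omega,-x)$, this collapses to
\[
\F_\sigma F(x,\omega) \;=\; V_h f(x,\omega)\,\overline{V_{\widetilde g}g(x,\omega)}.
\]
To recast this in the form appearing in the statement, I use the elementary symmetries $V_h f(x,\omega) = e^{-2\pi i x\cdot\omega}\,\overline{V_f h(-x,-\omega)}$ and $V_{\widetilde g}g(x,\omega) = e^{-2\pi i x\cdot\omega}\,\overline{V_g\widetilde g(-x,-\omega)}$; the two phase factors cancel, and after summing I relabel $\l^\circ \mapsto -\l^\circ$ (the adjoint lattice is symmetric about the origin) to arrive at $\sum_{\l^\circ} V_g\widetilde g(\l^\circ)\overline{V_f h(\l^\circ)}$.

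The main obstacle is really the first step: ensuring that $F$ has enough regularity/decay for the symplectic Poisson summation formula to be applicable with absolute convergence. With $g,\widetilde g$ only in $L^2$ one has no control at all; the strength of the $M^1$ hypothesis and of Lemma~\ref{lem_Vgf_Vgh_M1} is precisely that it delivers $F \in M^1(\R^{2d}) \subset W \cap \F W$, which is what makes the Poisson machinery legal. Once that is in hand, the rest of the argument is the short algebraic computation of $\F_\sigma F$ described above.
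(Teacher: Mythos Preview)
Your proposal is correct and follows essentially the same strategy as the paper: invoke Lemma~\ref{lem_Vgf_Vgh_M1} to place $F=V_g f\,\overline{V_{\widetilde g}h}$ in $M^1(\R^{2d})$, apply the symplectic Poisson summation formula, and identify $\F_\sigma F$. The only difference is in that last identification: you quote Lemma~\ref{lem_Vgf_product} and then massage with the STFT reflection symmetry, whereas the paper recomputes $\F_\sigma F$ directly by absorbing the phase $e^{-2\pi i\sigma(z',z)}$ into a time-frequency shift of $\widetilde g$ and $h$ (via the commutation $\pi(z')^{-1}\pi(z)\pi(z')=e^{2\pi i\sigma(z',z)}\pi(z)$) and then applies the orthogonality relations~\eqref{eq_OR}. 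Your route is slightly more economical since Lemma~\ref{lem_Vgf_product} is already in hand; the paper's route makes the role of the orthogonality relations and the symplectic structure more transparent.
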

\begin{proof}
	The technical assumptions for the Poisson summation formula to hold point-wise are fulfilled by Lemma \ref{lem_Vgf_Vgh_M1}. We will use the symplectic Fourier transform and the symplectic version of the Poisson summation formula to prove the result.
	\begin{align}\label{eq_FIGA_proof}
		\sum_{\l \in \L} V_{g} f(\l) \, \overline{V_{\widetilde{g}} h(\l)}
		& =	\vol(\L)^{-1} \sum_{\l^\circ \in \L^\circ} \F_\sigma \left(V_{g} f \, \overline{V_{\widetilde{g}} h}\right) (\l^\circ)
	\end{align}
	We write the symplectic Fourier transform explicitly as
	\begin{align}
		\F_\sigma \left(V_{g} f \, \overline{V_{\widetilde{g}} h}\right) (z)
		& =  \iint_{\R^{2d}} V_g f (z') \, \overline{V_{\widetilde{g}} h(z')} e^{-2 \pi i \sigma(z',z)} \, dz'.
	\end{align}
	Now, from the commutation relations \eqref{eq_comm_rel}, we obtain
	\begin{align}
		\pi(z')^{-1} \pi(z) \pi(z') = e^{2 \pi i \sigma(z',z)} \pi(z)
		\quad \text{ and } \quad
		V_{\pi(z')g} (\pi(z') f)(z) & = \langle f, \pi(z')^{-1} \pi(z) \pi(z') g \rangle\\
		& = e^{-2 \pi i \sigma(z',z)} V_g f(z).
	\end{align}
	Therefore,
	\begin{align}
		\F_\sigma \left(V_{g} f \, \overline{V_{\widetilde{g}} h}\right) (z)
		& =  \iint_{\R^{2d}} V_g f (z') \, \overline{V_{\widetilde{g}} h(z')} e^{-2 \pi i \sigma(z',z)} \, dz'\\
		& = \iint_{\R^{2d}} V_g f(z') \, \overline{V_{\pi(-z) \widetilde{g}} (\pi(-z) h) (z')} \, dz'\\
		& = \langle f, \pi(-z) h \rangle \overline{\langle g, \pi (-z) \widetilde{g} \rangle},
	\end{align}
	where we used the orthognoality relation \eqref{eq_OR} to obtain the last line in the above equation. Plugging this into \eqref{eq_FIGA_proof}, we get
	\begin{align}
		\sum_{\l \in \L} V_{g} f(\l) \, \overline{V_{\widetilde{g}} h(\l)}
		& =	\vol(\L)^{-1} \sum_{\l^\circ \in \L^\circ} \F_\sigma \left(V_{g} f \, \overline{V_{\widetilde{g}} h}\right) (\l^\circ)\\
		& = \vol(\L)^{-1} \sum_{\l^\circ \in \L^\circ} V_h f(-\l^\circ) \overline{V_{\widetilde{g}} g (-\l^\circ)}\\
		& =  \vol(\L)^{-1} \sum_{\l^\circ \in \L^\circ} V_g \widetilde{g}(\l^\circ) \overline{V_f h(\l^\circ)}.
	\end{align}
\end{proof}

\begin{remark}
	The frame inequality combined with Theorem \ref{thm_FIGA} yields the following bounds on the sharp upper frame bound.

	\smallskip
	
	\textit{Let $g \in M^1(\Rd)$, $\norm{g}_2 = 1$ and $\L \subset \R^{2d}$ a lattice. Let $\G(g,\L)$ be a Gabor system. Then $\G(g, \L)$ is actually a Bessel sequence with sharp Bessel bound $B$ and we have}
	\begin{equation}\label{eq_bounds}
		\norm{(V_g g(\l))}_{\ell^2(\L)} = \sum_{\l \in \L} |V_g g(\l)|^2 \leq B \leq \vol(\L)^{-1} \sum_{\l^\circ \in \L^\circ} |V_g g(\L^\circ)|= \vol(\L)^{-1} \norm{(V_g g(\l^\circ))}_{\ell^1(\L^\circ)}.
	\end{equation}

	\smallskip
	
	The lower bound follows immediately from the frame inequality by setting $f = g$. The upper frame bound follows by using the FIGA with $h = f$ and $\widetilde{g} = g$;
	\begin{align}
		\sum_{\l \in \L} |V_g f(\l)|^2 & = \sum_{\l \in \L} V_g f(\l) \overline{V_g f(\l)}\\
		& = \vol(\L)^{-1} \sum_{\l^\circ \in \L^\circ} V_g g(\L^\circ) \overline{V_f f (\l^\circ)}\\
		& \leq \vol(\L)^{-1} \sum_{\l^\circ \in \L^\circ} |V_g g(\L^\circ)| |Af (\l^\circ)| \\
		& \leq \left(\vol(\L)^{-1} \sum_{\l^\circ \in \L^\circ} |V_g g(\l^\circ)|\right) \norm{f}_2^2 .
	\end{align}
	Here, we used the fact that $|V_f f(x,\omega)| = |Af(x,\omega)| \leq Af(0,0) = \norm{f}_2^2$ from Lemma \ref{lem_ambi_max}. Setting
	\begin{equation}
		\widetilde{B} = \vol(\L)^{-1} \sum_{\l^\circ \in \L^\circ} |V_g g(\l^\circ)|
	\end{equation}
	we see that
	\begin{equation}
		\sum_{\l \in \L} |V_g f(\l)|^2 \leq \widetilde{B} \norm{f}, \quad \forall f \in \Lt.
	\end{equation}
	Hence, $\widetilde{B}$ is a Bessel bound (not necessarily the sharpest one) for the Gabor system $\G(g, \L^\circ)$ and we have $B \leq \widetilde{B}$, which finishes the proof of \eqref{eq_bounds}.
	
	For Gabor systems of the form $\G(g, \alpha \Z \times \beta \Z)$ with $g \in \mathcal{S}(\R)$ the result was proven by Tolimieri and Orr \cite{TolOrr95}. Therefore, $\widetilde{B}$ might also be referred to as the Tolimieri-Orr bound.
	\begin{flushright}
		$\diamond$
	\end{flushright}
\end{remark}

The Wexler-Raz biorthogonality relations yield a characterization of all dual windows in the lattice case.
\begin{corollary}\label{cor_Wex-Raz}
	Suppose $g, \widetilde{g} \in \Lt$ and $\L \subset \R^{2d}$ a lattice such that $\G(g,\L)$ and $\G(\widetilde{g},\L)$ are Bessel sequences. Then $\widetilde{g}$ is a dual window to $\G(g,\L)$ if and only if the Wexler-Raz biorthogonality relations \eqref{eq_Wex-Raz} are satisfied.
\end{corollary}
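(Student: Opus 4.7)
My plan is to use the Fundamental Identity of Gabor Analysis (Theorem \ref{thm_FIGA}) to translate the reconstruction property $S_{g,\widetilde g,\L}=I$ into a statement about the STFT coefficients $V_g\widetilde g(\l^\circ)=\langle\widetilde g,\pi(\l^\circ)g\rangle$ sampled along the adjoint lattice $\L^\circ$. Because both $\G(g,\L)$ and $\G(\widetilde g,\L)$ are assumed Bessel, Bessel duality supplies the Bessel property of $\G(g,\L^\circ)$ and $\G(\widetilde g,\L^\circ)$, which is what justifies the FIGA manipulations on $\Lt\times\Lt$ (by density from the $M^1$ case stated in the excerpt).

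First I would compute, for arbitrary $f,h\in\Lt$,
\begin{equation*}
\langle S_{g,\widetilde g,\L}f,h\rangle=\sum_{\l\in\L}V_gf(\l)\,\overline{V_{\widetilde g}h(\l)},
\end{equation*}
and apply FIGA to rewrite this as
\begin{equation*}
\langle S_{g,\widetilde g,\L}f,h\rangle=\vol(\L)^{-1}\sum_{\l^\circ\in\L^\circ}V_g\widetilde g(\l^\circ)\,\overline{V_fh(\l^\circ)}=\vol(\L)^{-1}\sum_{\l^\circ\in\L^\circ}\langle\widetilde g,\pi(\l^\circ)g\rangle\,\langle\pi(\l^\circ)f,h\rangle,
\end{equation*}
using $\overline{V_fh(\l^\circ)}=\langle\pi(\l^\circ)f,h\rangle$.

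The forward implication is then immediate: if the Wexler--Raz relations hold, only the $\l^\circ=0$ term survives with coefficient $\vol(\L)^{-1}\cdot\vol(\L)=1$, giving $\langle f,h\rangle$. Varying $f,h$ yields $S_{g,\widetilde g,\L}=I$, and the biorthogonality is symmetric under swapping $g$ and $\widetilde g$ (since $\L^\circ=-\L^\circ$ and $\overline{\langle\widetilde g,\pi(\l^\circ)g\rangle}=\langle g,\pi(-\l^\circ)\widetilde g\rangle$ up to a phase), so $S_{\widetilde g,g,\L}=I$ as well, and $\widetilde g$ is a dual window.

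For the converse, duality produces the identity
\begin{equation*}
\langle f,h\rangle=\vol(\L)^{-1}\sum_{\l^\circ\in\L^\circ}\langle\widetilde g,\pi(\l^\circ)g\rangle\,\langle\pi(\l^\circ)f,h\rangle\qquad\forall f,h\in\Lt,
\end{equation*}
i.e.\ the weak operator identity $\vol(\L)^{-1}\sum_{\l^\circ}c_{\l^\circ}\pi(\l^\circ)=I$ with $c_{\l^\circ}:=\langle\widetilde g,\pi(\l^\circ)g\rangle$. The main obstacle is extracting the individual values $c_{\l^\circ}=\vol(\L)\delta_{0,\l^\circ}$ from this single global identity; this amounts to linear independence (in the strong operator topology) of the time--frequency shifts indexed by $\L^\circ$. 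The cleanest route is to apply $V_\varphi$ for a Gaussian window $\varphi$: since $V_\varphi(\pi(\l^\circ)f)(z)$ equals $V_\varphi f(z-\l^\circ)$ up to a unimodular phase $e^{2\pi i\sigma(\l^\circ,z)/\text{something}}$, and since for $f=\varphi$ the function $V_\varphi\varphi=A\varphi$ is a nowhere-vanishing Gaussian on $\R^{2d}$, the identity becomes an absolutely convergent lattice Fourier-type expansion whose coefficients are forced to be $c_{\l^\circ}=\vol(\L)\delta_{0,\l^\circ}$ by uniqueness of Fourier coefficients on any $\L^\circ$-fundamental domain (via the Zak-type transform adapted to $\L^\circ$). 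This is precisely the Wexler--Raz biorthogonality.
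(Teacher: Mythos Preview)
The paper does not prove this corollary; it is simply stated as a consequence of the Wexler--Raz theorem (itself quoted without proof, with a reference to \cite{Gro01,Gro14}) and of Theorem~\ref{thm_FIGA}. Your outline via FIGA and the Janssen representation is the standard route, and the forward implication is essentially fine. There is, however, a real technical issue you gloss over.

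Theorem~\ref{thm_FIGA} is stated for windows $g,\widetilde g\in M^1(\Rd)$, while the corollary assumes only $g,\widetilde g\in\Lt$ with the Bessel property. Your phrase ``by density from the $M^1$ case'' does not close this: approximating $g,\widetilde g$ in $\Lt$ by $M^1$ functions gives no uniform control over the Bessel constants of the approximating systems, and on the right-hand side you also need $(V_f h(\l^\circ))_{\l^\circ}\in\ell^2(\L^\circ)$, which fails for generic $f,h\in\Lt$. The clean fix is to exploit the symmetry of FIGA in the pairs $(g,\widetilde g)$ and $(f,h)$: apply Theorem~\ref{thm_FIGA} with the \emph{test functions} $f,h$ taken in $M^1(\Rd)$ (playing the role of the $M^1$ windows there), obtain the Janssen identity $\langle S_{g,\widetilde g,\L}f,h\rangle=\vol(\L)^{-1}\sum_{\l^\circ}c_{\l^\circ}\langle\pi(\l^\circ)f,h\rangle$ on the dense subspace $M^1\times M^1$, and then extend by boundedness of $S_{g,\widetilde g,\L}$.

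For the converse your Gaussian sketch is vague. A crisper argument, already implicit in the proof of Theorem~\ref{thm_FIGA}, is to conjugate the weak identity $I=\vol(\L)^{-1}\sum_{\l^\circ}c_{\l^\circ}\pi(\l^\circ)$ by $\pi(z)$ and use $\pi(z)^{-1}\pi(\l^\circ)\pi(z)=e^{2\pi i\sigma(z,\l^\circ)}\pi(\l^\circ)$. Pairing with fixed $f,h\in M^1$ yields, for every $z\in\R^{2d}$, the absolutely convergent Fourier series $\langle f,h\rangle=\vol(\L)^{-1}\sum_{\l^\circ}c_{\l^\circ}\langle\pi(\l^\circ)f,h\rangle\,e^{2\pi i\sigma(z,\l^\circ)}$ (absolute convergence because $(c_{\l^\circ})\in\ell^2(\L^\circ)$ by Bessel duality and $(V_f h(\l^\circ))\in\ell^2(\L^\circ)$ for $f,h\in M^1$). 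Since the series equals a constant in $z$, uniqueness of Fourier coefficients forces $c_{\l^\circ}\langle\pi(\l^\circ)f,h\rangle=0$ for $\l^\circ\neq0$; taking $h=\pi(\l^\circ)f$ gives $c_{\l^\circ}=0$, and the $\l^\circ=0$ term gives $c_0=\vol(\L)$.
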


We can now state and proof the density theorem for Gabor systems.
\begin{theorem}[Density Theorem]
	Let $g \in \Lt$ and $\L \subset \R^{2d}$ be a lattice. If $\G(g,\L)$ is a frame, then $\vol(\L) \leq 1$.
\end{theorem}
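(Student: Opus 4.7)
The plan is to reduce to a Parseval (tight with bound $1$) Gabor frame and then apply the Wexler–Raz biorthogonality relations. Let $S = S_{g,\L}$ be the Gabor frame operator of $\G(g,\L)$. By Proposition \ref{pro_canonical_dual_Gabor}, $S$ commutes with every $\pi(\l)$, $\l \in \L$; by functional calculus the positive invertible operator $S^{-1/2}$ inherits this commutation. Setting $h = S^{-1/2} g$, a direct computation of $S_{h,\L}$ using $\pi(\l) h = S^{-1/2} \pi(\l) g$ then gives $S_{h,\L} = S^{-1/2} S S^{-1/2} = I_{L^2}$. Hence $\G(h,\L)$ is a Parseval Gabor frame and in particular $h$ is its own canonical dual window, while being Bessel (as any frame is).

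Next I would apply the characterisation of dual windows via biorthogonality (Corollary \ref{cor_Wex-Raz}), which relies on Bessel duality ensuring that $\G(h,\L^\circ)$ is also Bessel. Since $h$ is dual to itself inside $\G(h,\L)$, the Wexler–Raz relations yield
\begin{equation}
\langle h, \pi(\l^\circ) h\rangle \;=\; \vol(\L)\,\delta_{0,\l^\circ}, \qquad \l^\circ \in \L^\circ.
\end{equation}
Taking $\l^\circ = 0$ gives the key identity $\norm{h}_2^2 = \vol(\L)$.

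Finally, I would apply the Parseval frame identity $\norm{f}_2^2 = \sum_{\l\in\L} |\langle f,\pi(\l) h\rangle|^2$ to the particular choice $f = h$. Isolating the $\l = 0$ term:
\begin{equation}
\norm{h}_2^2 \;=\; |\langle h,h\rangle|^2 + \sum_{\l\in\L\setminus\{0\}} |\langle h,\pi(\l) h\rangle|^2 \;\geq\; \norm{h}_2^4.
\end{equation}
Since $g\neq 0$ forces $h\neq 0$, dividing by $\norm{h}_2^2$ yields $\norm{h}_2^2 \leq 1$, and combining with $\norm{h}_2^2 = \vol(\L)$ gives $\vol(\L) \leq 1$.

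The only delicate step is the first one, namely verifying that the tight-frame window $h = S^{-1/2} g$ generates a \emph{Gabor} system (so that the Wexler–Raz machinery is available); this is precisely where the commutation of $S$, and hence of $S^{-1/2}$, with the time–frequency shifts indexed by $\L$ plays the decisive role. Everything else amounts to reading off the $\l^\circ = 0$ term in Wexler–Raz and the $\l = 0$ term in the Parseval expansion.
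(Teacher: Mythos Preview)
Your proof is correct and essentially identical to the paper's: both pass to the Parseval window $h = S^{-1/2}g$ (using the commutation of $S^{-1/2}$ with $\pi(\l)$), extract $\norm{h}_2^2 = \vol(\L)$ from the $\l^\circ = 0$ term of Wexler--Raz, and then bound $\norm{h}_2^4 \le \norm{h}_2^2$ by isolating the $\l = 0$ term in the Parseval expansion applied to $f = h$. The only cosmetic difference is that the paper reads off $\vol(\L) = \langle S^{-1}g,g\rangle$ from Wexler--Raz applied to the pair $(g, S^{-1}g)$ and then rewrites this as $\norm{S^{-1/2}g}_2^2$, whereas you apply Wexler--Raz directly to the self-dual pair $(h,h)$; these are the same computation.
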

\begin{proof}
	Let $\G(g,\L)$ be a frame and write $S_\G$ for the associated Gabor frame operator. Since in this case the frame operator is a positive invertible operator, the operator $S_\G^{-1/2}$ is a well-defined positive operator on $\Lt$. In particular it is self-adjoint.
	
	By Corollary \ref{cor_Wex-Raz}, the canonical dual window $g^\circ = S^{-1}_\G g$ fulfills the Wexler-Raz biorthogonality relations \eqref{eq_Wex-Raz}. This yields
	\begin{equation}
		\vol(\L) = \langle S^{-1}_\G g, g \rangle = \langle S^{-1/2}_\G g, S^{-1/2}_\G g \rangle = \norm{S^{-1/2}_\G g}_2^2.
	\end{equation}
	By Lemma \ref{lem_ONB_tight_frame} $\G(S^{-1/2}_\G g, \L)$ is a tight frame with frame bounds $A = B = 1$. Therefore
	\begin{align}
		\norm{S_\G^{-1/2} g}_2^4 & = |\langle S^{-1/2}_\G g, S^{-1/2}_\G g \rangle |^2\\
		& \leq \sum_{\l \in \L} |\langle S^{-1/2}_\G g, S^{-1/2}_\G \pi(\l) g \rangle |^2 = \norm{S^{-1/2} g}_2^2.
	\end{align}
	Hence, we obtain
	\begin{equation}
		\vol(\L) = \norm{S_\G^{-1/2} g}_2^2 \leq 1.
	\end{equation}
\end{proof}

Note that while the density principle gives a necessary condition (which might be far from sufficiency), the BLT states that this necessary density condition is never sharp for ``nice" windows.
\begin{corollary}
	If $\G(g, \L)$ is a frame for $\Lt$ and if either $g \in W_0(\Rd)$ or $\widehat{g} \in W_0(\Rd)$, then $\vol(\L) < 1$.
\end{corollary}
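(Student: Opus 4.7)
The plan is to combine the Density Theorem just proved, which gives $\vol(\L) \leq 1$, with the Wiener amalgam Balian--Low theorem, in order to rule out equality. I argue by contradiction: assume $\vol(\L) = 1$. Following the final computation in the proof of the Density Theorem, the vector $\widetilde g := S_\G^{-1/2} g$ satisfies $\norm{\widetilde g}_2^2 = \vol(\L) = 1$ and generates the tight Gabor frame $\G(\widetilde g, \L)$ with frame bounds $A = B = 1$. By Lemma~\ref{lem_ONB_tight_frame}\,(a), this system is then an \emph{orthonormal basis} for $\Lt$; equivalently, $\G(g,\L)$ is a Riesz basis, since $S_\G^{-1/2}$ is a topological isomorphism of $\Lt$.

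Next, I would reduce to the case of the integer lattice $\Z^{2d}$, in order to apply the Wiener amalgam Balian--Low theorem proved above. Without loss of generality I may assume $g \in W_0(\R^d)$: in the remaining case $\widehat g \in W_0$, the Fundamental Identity~\eqref{eq_fitf} gives $|V_{\widehat g} \widehat f(\l)| = |V_g f(J^{-1}\l)|$, so $\G(\widehat g, \L)$ is a frame with the same bounds as $\G(g, J^{-1}\L)$, and $\vol(J^{-1}\L) = \vol(\L) = 1$, returning us to the first case (with $\widehat g$ in place of $g$). For $d = 1$, since $Sp(\R,2) = SL(\R,2)$, every volume-one lattice in $\R^2$ has the form $\L = S\Z^2$ with $S \in Sp(\R,2)$, and by the invariance theorem at the end of Section~\ref{sec_symp_meta}, $\G(\widehat S^{-1} g,\Z^2)$ is a Gabor frame with identical frame bounds. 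Factoring $\widehat S^{-1}$ into the metaplectic generators $\widehat V_Q$, $\widehat D_{L,m}$ and $\widehat J$, one then verifies that each chirp and each dilation maps $W_0$ into $W_0$ (modulation by a unimodular continuous function and linear rescaling both preserve continuity together with the amalgam summability $\sum_k \sup_\mathcal{Q} |\,\cdot\,(x+k)| < \infty$), while $\widehat J$ exchanges $W_0$ with $\F W_0$. Consequently the transformed window $\widehat S^{-1} g$ lies in $W_0 \cup \F W_0$. For $d > 1$ one proceeds by the same strategy, combining metaplectic operators with coordinate reductions to bring $\L$ to $\Z^{2d}$ while preserving the joint class $W_0 \cup \F W_0$.

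Finally, the reduced system $\G(\widehat S^{-1} g, \Z^{2d})$ is a Gabor frame for $\Lt$ whose window lies in $W_0(\R^d)$ or whose Fourier transform lies in $W_0(\R^d)$; both alternatives directly contradict the Wiener amalgam Balian--Low theorem, which asserts that no such frame can exist over $\Z^{2d}$. This contradiction forces $\vol(\L) < 1$, completing the proof.

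\textbf{Main obstacle.} The delicate step is the reduction to $\Z^{2d}$ while keeping the hypothesis intact. In dimension one this is entirely clean because of $Sp(\R,2) = SL(\R,2)$ and the easy tracking of $W_0$-invariance through chirps, dilations, and $\widehat J$. In higher dimensions, however, not every $SL(\R,2d)$-matrix is symplectic, and the reduction must either invoke a finer decomposition of volume-one lattices into symplectic pieces and coordinate swaps, or else bypass the reduction entirely by proving the Balian--Low theorem directly for the general lattice $\L$ (e.g., via a Zak transform adapted to $\L$, using a topological argument analogous to the one that forces $Z f$ to vanish somewhere on $\mathcal{Q}\times\mathcal{Q}$ when $f \in W_0$).
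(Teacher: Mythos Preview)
Your overall strategy---combine the Density Theorem with a Balian--Low obstruction at $\vol(\L)=1$---is exactly what the paper intends; the paper states the corollary without proof, treating it as an immediate consequence of those two ingredients (and citing \cite{GroHanHeiKut02} for the general-lattice BLT that it does not itself prove). You are right to notice that the amalgam BLT in the notes is established only for $\Z^{2d}$, so some bridge is needed.

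However, your bridge via metaplectic reduction has a genuine gap, already in dimension $d=1$. The step ``each chirp and each dilation maps $W_0$ into $W_0$, while $\widehat J$ exchanges $W_0$ with $\F W_0$; consequently $\widehat S^{-1}g\in W_0\cup\F W_0$'' does not follow. A linear image $\widehat S^{-1}(W_0)$ contained in the union of two subspaces must lie entirely in one of them, so you would need either $\widehat S^{-1}(W_0)\subset W_0$ or $\widehat S^{-1}(W_0)\subset\F W_0$. But after one application of $\widehat J$ you are in $\F W_0$, and then a chirp $\widehat V_Q$ need not keep you there: for $h=\widehat k\in\F W_0$ one has $\F(\widehat V_Q h)=(\text{Fresnel chirp})*k^\vee$, whose modulus is a dilate of $|\widehat{(\widehat V_{1/Q}k^\vee)}|$; since there exist $k\in W_0$ with $\widehat k\notin W_0$ (take any compactly supported continuous $k\notin A(\R)$, so $\widehat k\notin L^1\supset W_0$), this convolution need not lie in $W_0$. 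The same computation shows $\widehat V_Q h\notin W_0$ in general, so $\widehat V_Q(\F W_0)\not\subset W_0\cup\F W_0$. Equivalently, the operator $\widehat U_P=\widehat J^{-1}\widehat V_{-P}\widehat J$ does \emph{not} preserve $W_0$ (nor map it into $\F W_0$), so already a single upper shear breaks your tracking argument. The metaplectic group preserves $M^1$, but $W_0$ is strictly larger and is not stable under $Mp(\R,2)$.

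The fix you mention at the end is the right one: run the Zak-transform argument directly for the lattice $\L$. Write $\L=M\Z^{2d}$, define $Z_\L f(x,\omega)$ adapted to $M$, and repeat the quasi-periodicity/continuous-logarithm contradiction to force a zero of $Z_\L g$ when $g\in W_0$ (the same argument gives $\widehat g\in W_0$). This avoids metaplectic operators altogether and works uniformly in $d$; it is also the route taken in the literature the paper cites.
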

In particular, if we consider the standard Gaussian window $g_0(t) = 2^{d/4} e^{-\pi t^2}$, then we see that $\G(g_0, \Z^{2d})$ is not a frame for $\Lt$. This holds more generally for any set $\Gamma \subset \R^{2d}$ with lower Beurling density $D^-(\Gamma) = 1$.

\section{The Bargmann Transform}\label{sec_BT}
For many reasons the Gaussian function is one of the most popular windows in time-frequency analysis and it is of special interest to study the STFT and Gabor systems with a Gaussian window.

Recall that the $L^2$-normalized standard Gaussian on $\Rd$ is given by
\begin{equation}
	g_0(t) = 2^{d/4} e^{- \pi t^2}.
\end{equation}
Then
\begin{align}\label{eq_STFT_BT_motivation}
	V_{g_0} f(x,\omega) & = 2^{d/4} \int_{\Rd} f(t) e^{-\pi(t-x)^2} e^{-2 \pi i \omega \cdot t} \, dt\\
	& = 2^{d/4} \int_{\Rd} f(t) e^{-\pi t^2} e^{2 \pi x \cdot t} e^{-\pi x^2} e^{-2 \pi i \omega \cdot t} \, dt\\
	& = 2^{d/4} e^{-\pi i x \cdot \omega} e^{-\frac{\pi}{2}(x^2+\omega^2)} \int_{\Rd} f(t) e^{-\pi t^2} e^{2 \pi t \cdot (x - i \omega)} e^{-\frac{\pi}{2}(x - i \omega)^2} \, dt.
\end{align}
Let us convert $(x,\omega) \in \R^{2d}$ into a complex vector $z = x+ i \omega \in \C^d$. Recall that, for complex vectors, $w \cdot z = w^T z$ does not yield the inner product on $\C^d$, so $z^2 = (x+iy) \cdot (x+iy)$, whereas $|z|^2 = \overline{z} \cdot z = (x-iy) \cdot (x+iy)$. Then, comparing with the formula above, the following definition appears quite naturally.
\begin{definition}
	The Bargmann transform of a function $f$ on $\Rd$ is the function $Bf$ on $\C^d$ defined by
	\begin{equation}\label{eq_BT}
		Bf(z) = 2^{d/4} \int_{\Rd} f(t) e^{2 \pi t \cdot z - \pi t^2 - \frac{\pi}{2} z^2} \, dt.
	\end{equation}
	The (Bargmann-)Fock space $\F^2(\C^d)$ is the Hilbert space of all entire functions $F$ on $\C^d$ for which the norm
	\begin{equation}
		\norm{F}_{\F^2}^2 = \int_{\C^d} |F(z)|^2 e^{-\pi |z|^2} \, dz
	\end{equation}
	is finite. The inner product on $\F^2(\C^d)$ is given by
	\begin{equation}
		\langle F, G \rangle_{\F^2} = \int_{\C^d} F(z) \overline{G(z)} e^{-\pi |z|^2} \, dz.
	\end{equation}
\end{definition}
By means of the small calculation in \eqref{eq_STFT_BT_motivation}, we may re-write several of the statements about the STFT by means of the Bargmann transform.

\begin{proposition}\label{pro_BT}
	\begin{enumerate}[(a)]
		\item If $f$ is a function on $\Rd$ with polynomial growth, then its Bargmann transform is an entire function on $\C^d$. If we write $z = x + i \omega$, then
		\begin{equation}\label{eq_STFT_BT}
			V_{g_0}f(x,-\omega) = e^{\pi i x \cdot \omega} e^{-\frac{\pi}{2} |z|^2} Bf(z).
		\end{equation}
		\item If $f \in \Lt$, then
		\begin{equation}
			\norm{f}_2^2 = \int_{\C^d} |Bf(z)|^2 e^{-\pi |z|^2} \, dz = \norm{B f}_{\F^2}^2.
		\end{equation}
		Thus, $B$ is an isometry from $\Lt$ into $\F^2(\C^d)$.
	\end{enumerate}
\end{proposition}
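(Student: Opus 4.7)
My plan is to deduce both parts essentially from the computation leading up to Definition \ref{eq_BT} together with Corollary \ref{cor_Vgf_L2}. The key observation is that the manipulation performed in \eqref{eq_STFT_BT_motivation} already expresses $V_{g_0} f(x,\omega)$ in terms of a Gaussian-weighted integral of $f$ against an exponential which is holomorphic in a complex variable, so the bulk of the work is bookkeeping rather than new estimates.

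For part (a), I will first verify that $Bf$ is well-defined and entire under the polynomial growth hypothesis. Writing $z = x + i \omega$, the real part of the exponent is
\begin{equation*}
\operatorname{Re}\bigl(2 \pi t \cdot z - \pi t^2 - \tfrac{\pi}{2} z^2\bigr) = -\pi (t-x)^2 + \tfrac{\pi}{2}(x^2 + \omega^2),
\end{equation*}
so the Gaussian $e^{-\pi(t-x)^2}$ dominates any polynomial $|f(t)| \leq C(1+|t|)^N$ uniformly on compact subsets of $\C^d$. This justifies both absolute convergence and the interchange of $\partial_{\bar{z}_j}$ with the integral, yielding holomorphy in each variable separately; by Hartogs' theorem (or by direct verification via Morera) $Bf$ is entire on $\C^d$. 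The identity \eqref{eq_STFT_BT} is then a direct reread of \eqref{eq_STFT_BT_motivation}: setting $w = x - i\omega$ in that display shows $V_{g_0} f(x,\omega) = e^{-\pi i x \cdot \omega} e^{-\frac{\pi}{2}(x^2+\omega^2)} Bf(w)$, and replacing $\omega$ by $-\omega$ gives exactly the claimed formula with $z = x + i \omega$ and $|z|^2 = x^2 + \omega^2$.

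For part (b), I will take absolute values in \eqref{eq_STFT_BT} to obtain the pointwise identity
\begin{equation*}
|V_{g_0} f(x,-\omega)|^2 = e^{-\pi |z|^2} |Bf(z)|^2, \qquad z = x + i\omega,
\end{equation*}
and then integrate over $\R^{2d}$. The left-hand side integrates to $\norm{V_{g_0} f}_2^2$ (the change of variables $\omega \mapsto -\omega$ preserves Lebesgue measure), which equals $\norm{f}_2^2$ by Corollary \ref{cor_Vgf_L2} since $\norm{g_0}_2 = 1$. The right-hand side, upon the identification $\C^d \cong \R^{2d}$ via $z = x + i\omega$ with $dz = dx\, d\omega$, is precisely $\norm{Bf}_{\F^2}^2$. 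This chain of equalities establishes both the isometry statement and, implicitly, the fact that $Bf \in \F^2(\C^d)$ whenever $f \in \Lt$.

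The only real subtlety I anticipate lies in part (a): strictly speaking one should check that $Bf$ is holomorphic as a function of several complex variables rather than merely smooth, but this is handled cleanly by differentiating under the integral sign in each variable $z_j$ separately (with the Gaussian factor providing uniform integrability of the derivative bounds on compact sets), so no heavy machinery is needed. Part (b) is essentially automatic once part (a) is in hand, since it is merely a rewriting of Plancherel's identity for the STFT under the change of variables $(x,\omega) \mapsto x + i\omega$.
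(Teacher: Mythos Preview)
Your proposal is correct and follows essentially the same approach as the paper: for (a), the paper also argues that polynomial growth of $f$ together with the Gaussian factor gives absolute and locally uniform convergence, permitting differentiation under the integral sign to conclude that $Bf$ is entire, and identifies \eqref{eq_STFT_BT} as a rewriting of \eqref{eq_STFT_BT_motivation}; for (b), the paper likewise invokes the STFT isometry $\norm{V_{g_0} f}_2 = \norm{f}_2$ from Corollary~\ref{cor_Vgf_L2}. Your version is slightly more explicit (computing the real part of the exponent, invoking Hartogs), but there is no substantive difference.
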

\begin{proof}
	\begin{enumerate}[(a)]
		\item If $|f(t)| = \mathcal{O}(|t|^N)$, then the integral in \eqref{eq_BT} converges absolutely for every $z \in \C^d$ and uniformly over compact sets in $\C^d$. Therefore, one can differentiate under the integral (with respect to $z$) and, hence, $Bf(z)$ is an entire function.
		
		Equation \eqref{eq_STFT_BT} is just \eqref{eq_STFT_BT_motivation} rewritten in another notation.
		
		\item The statement follows form the fact that
		\begin{equation}
			\norm{V_g f}_2 = \norm{f}_2 \norm{g}_2
		\end{equation}
		and $\norm{g_0}_2 = 1$.
	\end{enumerate}
\end{proof}
The next aim is to show that $B$ is a unitary mapping from $\Lt$ onto $\F^2(\C^d)$. In view of the above proposition, we only need to show that the range of $B$ is dense in $\F^2(\C^d)$. For this purpose we will carry out a study of entire functions of several complex variables. Due to the use of the multi-index notation, the difference to the theory in one variable will almost not be visible.
\begin{theorem}
	\begin{enumerate}[(a)]
		\item The collection of monomials of the form
		\begin{equation}
			e_\alpha(z) = \left(\frac{\pi^{|\alpha|}}{\alpha!}\right)^{1/2} z^\alpha = \prod_{k =1}^d \left(\frac{\pi^{\alpha_k}}{\alpha_k!}\right)^{1/2} z_k^{\alpha_k},
		\end{equation}
		$\alpha = (a_1, \ldots , \alpha_d)$, $\alpha_k \in \N_0$, forms an orthonormal basis for $\F^2(\C^d)$.
		
		\item $\F^2(\C^d)$ is a reproducing kernel Hilbert space, that is
		\begin{equation}
			|F(z)| \leq \norm{F}_{\F^2} \, e^{\frac{\pi}{2}|z|^2}, \quad \forall z \in \C^d.
		\end{equation}
		The reproducing kernel is $K_w(z) = e^{\pi \overline{w} \cdot z}$, i.e.,
		\begin{equation}
			F(w) = \langle F, K_w \rangle_{\F^2}.
		\end{equation}
	\end{enumerate}
\end{theorem}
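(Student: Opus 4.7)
The plan is to prove (a) and (b) together, since the orthonormal basis property from (a) will deliver the reproducing kernel and the pointwise estimate in (b) almost for free. I will organize the argument around three steps: orthogonality of the monomials, completeness, and the reproducing kernel identity.

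First I would verify orthogonality. Writing $z_k = r_k e^{i\theta_k}$ in polar coordinates for each complex coordinate, the measure $e^{-\pi|z|^2}\,dz$ factors as $\prod_k r_k\, e^{-\pi r_k^2}\,dr_k\,d\theta_k$. The angular integrals $\int_0^{2\pi} e^{i(\alpha_k-\beta_k)\theta_k}\,d\theta_k = 2\pi\,\delta_{\alpha_k,\beta_k}$ immediately yield $\langle z^\alpha, z^\beta\rangle_{\F^2}=0$ when $\alpha\neq\beta$. For the diagonal, the substitution $u=\pi r_k^2$ in the radial integral gives $\int_0^\infty r_k^{2\alpha_k+1} e^{-\pi r_k^2}\,dr_k = \alpha_k!/(2\pi^{\alpha_k+1})$, so that $\norm{z^\alpha}_{\F^2}^2 = \alpha!/\pi^{|\alpha|}$. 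This fixes the normalization constants exactly as stated and confirms that $\{e_\alpha\}$ is orthonormal.

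The completeness step is the main obstacle. Since every $F\in\F^2(\C^d)$ is entire, it admits a Taylor expansion $F(z)=\sum_\alpha c_\alpha z^\alpha$ converging absolutely and uniformly on compact subsets of $\C^d$. I would first show the Parseval-type identity $\norm{F}_{\F^2}^2=\sum_\alpha |c_\alpha|^2\,\alpha!/\pi^{|\alpha|}$. On a ball $B_R=\{|z|\le R\}$ uniform convergence of the series allows termwise integration, and the orthogonality computed above collapses the double sum to a diagonal sum $\sum_\alpha |c_\alpha|^2 \int_{B_R}|z^\alpha|^2 e^{-\pi|z|^2}\,dz$. Letting $R\to\infty$, the monotone convergence theorem yields the desired identity (possibly with value $+\infty$). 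Since $F\in\F^2$, the sum is finite, which simultaneously shows that the partial sums $F_N=\sum_{|\alpha|\le N} c_\alpha z^\alpha$ form a Cauchy sequence in $\F^2$ whose limit coincides pointwise with $F$; hence $F=\sum_\alpha \langle F,e_\alpha\rangle\,e_\alpha$ in $\F^2$, proving completeness of the basis.

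With the orthonormal expansion in hand, the reproducing kernel and the pointwise bound follow by direct computation. Cauchy--Schwarz gives
\begin{equation}
|F(z)|^2 = \Bigl|\sum_\alpha \langle F,e_\alpha\rangle\, e_\alpha(z)\Bigr|^2 \le \norm{F}_{\F^2}^2 \sum_\alpha \frac{\pi^{|\alpha|}}{\alpha!}\,|z^\alpha|^2.
\end{equation}
The last sum factors as $\prod_{k=1}^d \sum_{n\ge 0}(\pi|z_k|^2)^n/n! = e^{\pi|z|^2}$, yielding $|F(z)|\le \norm{F}_{\F^2}\,e^{\frac{\pi}{2}|z|^2}$. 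In particular, for each $w\in\C^d$ the evaluation functional $F\mapsto F(w)$ is bounded on $\F^2$, so it is represented by a unique $K_w\in\F^2$. The orthonormal expansion determines $K_w$ explicitly: $K_w=\sum_\alpha \overline{e_\alpha(w)}\,e_\alpha$, and the same factorization argument gives $K_w(z)=\sum_\alpha \pi^{|\alpha|}\overline{w}^\alpha z^\alpha/\alpha! = \prod_k e^{\pi\overline{w_k}z_k} = e^{\pi\,\overline{w}\cdot z}$. The reproducing identity $\langle F,K_w\rangle_{\F^2}=\sum_\alpha \langle F,e_\alpha\rangle\, e_\alpha(w)=F(w)$ then closes the argument.
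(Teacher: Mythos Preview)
Your proposal is correct and follows essentially the same strategy as the paper: polar-coordinate orthogonality, completeness via the Taylor expansion and orthogonality on exhausting compact sets, then Cauchy--Schwarz plus Riesz representation for part (b). The only noteworthy variation is in the completeness step: the paper argues that $\langle F, e_\beta\rangle_{\F^2}=0$ for all $\beta$ forces all Taylor coefficients $c_\beta$ to vanish (trivial orthogonal complement), whereas you establish a Parseval identity and show the Taylor partial sums converge to $F$ in $\F^2$-norm. Both routes rely on the same truncation-and-interchange mechanism, so this is a stylistic rather than substantive difference; your version has the small advantage of giving the expansion $F=\sum_\alpha \langle F,e_\alpha\rangle e_\alpha$ explicitly, while the paper's polydiscs (as opposed to your balls $B_R$) make the finite-region integrals factor completely, which is convenient but not essential.
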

\begin{proof}
	\begin{enumerate}[(a)]
		\item We write each variable in polar coordinates; $z_k = r_k e^{i \theta_k}$. We start with computing the inner product of $z^\alpha$ with $z^\beta$ restricted to the poly-disc
		\begin{equation}
			P_R = \{z \in \C^d \mid |z_k| \leq R, \, k=1, \ldots d\}:
		\end{equation}
		\begin{align}
			\langle z^\alpha, z^\beta \rangle_{\F^2} & = \int_{P_R} z^\alpha \overline{z^\beta} e^{-\pi |z|^2}\\
			& =\prod_{k=1}^d \int_{|z_k| \leq R} z_k^{\alpha_k} \overline{z_k}^{\beta_k} e^{-\pi|z_k|^2} \, dz_k\\
			& = \prod_{k=1}^d \int_0^R \int_0^{2\pi} r_k^{\alpha_k + \beta_k + 1} e^{i(\alpha_k - \beta_k) \theta_k} e^{-\pi r_k^2} \, d\theta_k \, dr_k
		\end{align}
		If $\alpha \neq \beta$, then this integral equals 0 for all $R > 0$ and therefore
		\begin{equation}
			\langle z^\alpha, z^\beta\rangle_{\F^2} = \lim_{R \to \infty} \int_{P_R} z^\alpha \overline{z^\beta} e^{-\pi|z^2|} \, dz = 0.
		\end{equation}
		On the other hand, if $\alpha = \beta$ then
		\begin{equation}
			\int_{P_R} |z^\alpha|^2 e^{-\pi|z|^2} \, dz = \prod_{k=1}^d \left(2 \pi \int_0^R r^{2 \alpha_k + 1} e^{-\pi r_k^2} \, dr_k \right) = \mu_{\alpha,R}
		\end{equation}
		Consequently $\{\mu^{-1/2}_{\alpha,R} z^\alpha \mid \alpha \in \N_0^d\}$ is an orthonormal system in $L^2(P_R, e^{-\pi|z|^2} \, dz)$. For $R = \infty$, by making the change of variables $s = \pi r^2$ ($dr = \frac{ds}{2 \pi r}$), we can continue as follows:
		\begin{equation}
			\mu_{\alpha, \infty} = \prod_{k=1}^d \left(\int_0^\infty \left(\frac{s}{\pi}\right)^{\alpha_k} e^{-s} \, ds \right) = \prod_{k=1}^d \frac{\Gamma(\alpha_k)}{\pi^{\alpha_k}} = \prod_{k=1}^d \frac{\alpha_k!}{\pi^{\alpha_k}} = \frac{\alpha!}{\pi^{|\alpha|}}.
		\end{equation}
		In particular, we see that $\{e_\alpha \mid \alpha \in \N_0^d\}$ is an orthonormal system in $\F^2(\C^d)$.
		
		To prove completeness of the system in $\F^2(\C^d)$, we start from the power series expansion of $F$ in $\F^2(\C^d)$, which has the form
		\begin{equation}
			F(z) = \sum_{\alpha \in \N_0^d} c_\alpha z^\alpha.
		\end{equation}
		Suppose that $\langle F , e_\beta \rangle_{\F^2} = 0$ for all $\beta \in \N_0^d$. Then
		\begin{equation}
			\langle F, e_\beta \rangle_{\F^2} = \lim_{R \to \infty} \left( \frac{\pi^{|\beta|}}{\beta!} \right)^{1/2} \int_{P_R} \left(\sum_{\alpha \geq 0} c_\alpha z^\alpha \right) \overline{z^\beta} e^{-\pi |z|^2} \, dz.
		\end{equation}
		Since the power series expansion in the integral converges uniformly on compact sets, we can interchange the order of integration and summation to obtain
		\begin{equation}
			\int_{P_R} F(z) \overline{z^\beta} e^{-\pi|z|^2} \, dz  = \sum_{\alpha \geq 0} c_\alpha \int_{P_R} z^\alpha \overline{z^\beta} e^{-\pi |z|^2} \, dz = c_\beta \, \mu_{\beta,R}.
		\end{equation}
		Thus, by assumption
		\begin{equation}
			\langle F, e_\beta \rangle_{\F^2} = \left( \frac{\pi^{|\beta|}}{\beta!} \right)^{1/2} c_\beta \lim_{\R \to \infty} \mu_{\beta,R} = 0.
		\end{equation}
		This implies that $c_\beta = 0$ for all $\beta$ and thus $F \equiv 0$. As $\{e_\alpha\}$ is an orthonormal system in $\F^2(\C^d)$, it now follows that it is an orthonormal basis for $\F^2(\C^d)$.
		
		\item Since $F(z) = \sum_{\alpha \geq 0} \langle F, e_\alpha \rangle_{\F^2} \, e_\alpha(z)$, we obtain by the Cauchy-Schwarz inequality that
		\begin{equation}
			|F(z)| \leq \left( \sum_{\alpha \geq 0} | \langle F, e_\alpha \rangle_{\F^2}|^2 \right)^{1/2} \left( \sum_{\alpha \geq 0} \frac{\pi^\alpha}{\alpha!} |z^\alpha|^2 \right)^{1/2} = \norm{F}_{\F^2} \, e^{\frac{\pi}{2} |z|^2}.
		\end{equation}
		Thus, point evaluations are continuous linear functionals on the Hilbert space $\F^2(\C^d)$. By the Riesz representation theorem it follows that for each $w \in \C^d$ there is a function $K_w \in \F^2(\C^d)$ such that
		\begin{equation}
			F(w) = \langle F, K_w \rangle_{\F^2}
		\end{equation}
		Expanding $K_w$ with respect to the orthonormal basis $e_\alpha$ and using the above equation, we obtain $K_w$ explicitly as
		\begin{align}
			K_w(z) & = \sum_{\alpha \geq 0} \langle K_w, e_\alpha \rangle_{\F^2} \, e_\alpha(z)\\
			& = \sum_{\alpha \geq 0} \overline{e_\alpha(w)} \, e_\alpha(z)\\
			& = \sum_{\alpha \geq 0} \frac{\pi^{|\alpha|}}{\alpha!} \overline{w^\alpha} \, z^\alpha\\
			& = e^{\pi \overline{w} \cdot z}.
		\end{align}
	\end{enumerate}
\end{proof}
Equipped with the reproducing kernel $K_w$ for $\F^2(\C^d)$, we can prove that the Bargmann transform maps onto $\F^2(\C^d)$.
\begin{theorem}\label{thm_BT_unitary}
	The Bargmann transform is a unitary operator from $\Lt$ onto $\F^2(\C^d)$.
\end{theorem}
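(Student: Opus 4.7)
The plan is to leverage the fact that $B\colon \Lt\to\F^2(\C^d)$ is already an isometry by Proposition~\ref{pro_BT}(b). Because an isometry between Hilbert spaces automatically has closed range, all that remains is to prove that the range of $B$ is dense in $\F^2(\C^d)$; unitarity will then follow.

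To exhibit a convenient dense subset, I would use the reproducing-kernel structure of $\F^2(\C^d)$ obtained in the preceding theorem. The span of the kernels $\{K_w \mid w \in \C^d\}$, with $K_w(z)=e^{\pi \overline{w}\cdot z}$, is dense in $\F^2(\C^d)$: if $F \in \F^2(\C^d)$ were orthogonal to every $K_w$, the reproducing property would give $F(w)=\langle F,K_w\rangle_{\F^2}=0$ for all $w\in\C^d$, forcing $F\equiv 0$. Hence it is enough to produce, for each $w\in \C^d$, an $\Lt$ preimage of $K_w$ under $B$.

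The natural candidate is the complex-shifted Gaussian
\begin{equation}
f_w(t) = 2^{d/4}\, e^{-\pi t^2 + 2\pi t \cdot \overline{w}}, \qquad t \in \Rd,
\end{equation}
which lies in $\Lt$ because $|f_w(t)|^2 = 2^{d/2}\, e^{-2\pi (t-\Re\overline{w})^2 + 2\pi (\Re\overline{w})^2}$ is integrable. Plugging $f_w$ into the definition of $B$ and completing the square in $t$ (the complex translation is justified by contour deformation, which is standard for Gaussians, together with $\int_{\Rd}e^{-2\pi u^2}\,du = 2^{-d/2}$) should yield
\begin{align}
Bf_w(z)
& = 2^{d/2}\, e^{-\frac{\pi}{2} z^2}\int_{\Rd} e^{-2\pi t^2 + 2\pi t\cdot(\overline{w}+z)}\, dt \\
& = 2^{d/2}\cdot 2^{-d/2}\, e^{-\frac{\pi}{2} z^2 + \frac{\pi}{2}(\overline{w}+z)^2}
 = e^{\frac{\pi}{2}\overline{w}^2}\, K_w(z).
\end{align}
Thus $K_w = e^{-\frac{\pi}{2}\overline{w}^2}\, Bf_w$ lies in the range of $B$ for every $w\in\C^d$, density is proved, and $B$ is unitary.

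I do not anticipate a serious obstacle: the only genuine computation is the Gaussian integral in $Bf_w$, which is routine once the square is completed, and the substantive ingredients are exactly the isometry of Proposition~\ref{pro_BT}(b) together with the reproducing-kernel structure established in the theorem just before. A less economical alternative would be to introduce the Hermite basis $\{h_\alpha\}$ of $\Lt$ and verify $Bh_\alpha = e_\alpha$ (so that $B$ sends one orthonormal basis to another), which works but requires importing the machinery of Hermite functions and is appreciably more computational.
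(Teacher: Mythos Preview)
Your proof is correct and follows essentially the same route as the paper: both use the isometry from Proposition~\ref{pro_BT}(b), then show density of the range by exhibiting each reproducing kernel $K_w$ as (a scalar multiple of) the Bargmann transform of a shifted Gaussian, invoking the reproducing property to conclude that anything orthogonal to the range vanishes identically. The only cosmetic difference is that the paper writes the preimage as the time-frequency shift $M_{-\eta}T_\xi g_0$ (with $w=\xi+i\eta$) and computes $B(M_{-\eta}T_\xi g_0)$ via the relation $V_{g_0}f(x,-\omega)=e^{\pi i x\cdot\omega}e^{-\frac{\pi}{2}|z|^2}Bf(z)$ together with the known value of $V_{g_0}(M_\eta T_\xi g_0)$, whereas you write the same function (up to the scalar $e^{\pi\xi^2}$) as $f_w(t)=2^{d/4}e^{-\pi t^2+2\pi t\cdot\overline w}$ and evaluate the Gaussian integral directly; the two computations yield the same conclusion $B(\text{shifted Gaussian})=\text{const}\cdot K_w$.
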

\begin{proof}
	In Proposition \ref{pro_BT} we have proved that the Bargmann transform is an isometry. Thus its range is a closed subspace of $\F^2(\C^d)$. Therefore, if we show that $B(\Lt)$ is dense in $\F^2(\C^d)$, then it follows that $B(\Lt) = \F^2(\C^d)$, so $B$ is surjective, which proves the claim.
	
	Using Example \ref{ex_STFT_g0} together with the orthogonality relations \ref{thm_ortho} we obtain
	\begin{equation}
		V_{g_0} (M_\eta T_\xi g_0)(x,\omega) = e^{-\pi i (x+\xi) \cdot (\omega - \eta)} e^{-\frac{\pi}{2} ((x-\xi)^2 + (\omega-\eta)^2)}.
	\end{equation}
	On the other hand, writing $z=x+i\omega$ and $w=\xi+i\eta$, we obtain
	\begin{align}
		B(M_{-\eta} T_\xi g_0)(z) & = e^{-\pi i x \cdot \omega} e^{\frac{\pi}{2}|z|^2} V_{g_0}(M_{-\eta} T_\xi g_0)(x,-\omega)\\
		& = e^{-\pi i x \cdot \omega} e^{\frac{\pi}{2}(x^2+\omega^2)} e^{-\pi i (x+\xi) \cdot (-\omega + \eta)} e^{-\frac{\pi}{2} ((x-\xi)^2 + (-\omega+\eta)^2)}\\
		& = e^{-\pi i \xi \cdot \eta} e^{-\frac{\pi}{2}(\xi^2+\eta^2)} e^{\pi (x \cdot \xi+ \omega \cdot \eta + i(\xi \cdot \omega - x \cdot \eta))}\\
		& = e^{-\pi i \xi \cdot \eta} e^{-\frac{\pi}{2} |w|^2} e^{\pi \overline{w} \cdot z}.	
	\end{align}
	
	We can rewrite this in short as
	\begin{equation}
		B(M_{-\eta} T_\xi g_0)(z) = e^{-\pi i \xi \cdot \eta} e^{-\frac{\pi}{2}|w|^2} K_w(z).
	\end{equation}
	This shows that the reproducing kernel of $\F^2(\C^d)$ is in the range of $B$. Now suppose that for some $F \in \F^2(\C^d)$ we have $\langle F, B f \rangle_{\F^2} = 0$ for all $f \in \Lt$. In particular, by the above equation we have for all $w \in \C^d$ that
	\begin{align}
		0 = \langle F, B(M_{-\eta} T_\xi g_0) \rangle_{\F^2}
		=  e^{\pi i \xi \cdot \eta} e^{-\frac{\pi}{2} |w|^2} \langle F, K_w \rangle_{\F^2}
		=  e^{\pi i \xi \cdot \eta} e^{-\frac{\pi}{2} |w|^2} F(w).
	\end{align}
	Therefore, $F \equiv 0$ and therefore the range of $B$ is dense in $\Lt$.
\end{proof}
Note that the Bargmann transform sends the collection of time-frequency shifted Gaussians $\{ M_{-\eta} T_\xi g_0 \}$ to the normalized reproducing kernel (up to a phase factor).
\begin{align}
	\norm{B(M_{-\eta} T_{\xi} g_0)}_{\F^2}^2 & = \norm{e^{-\frac{\pi}{2}|w|^2} K_w(z)}_{\F^2}^2 = \int_{\C^d} |e^{-\frac{\pi}{2}|w|^2} K_w(z)|^2 e^{-\pi |z|^2} \, dz\\
	& = \int_{\C^d} e^{-\pi |w|^2} e^{\pi (\overline{w} \cdot z + w \cdot \overline{z})} e^{-\pi |z|^2} \, dz \\
	& = \int_{\C^d} e^{-\pi|w-z|^2} \, dz = 1, \quad \forall w \in \C^d .
\end{align}

\NO{
\hrule
\medskip
The material after this point has not been presented in the lecture course. The only exception is Theorem \ref{thm_Lyu_Seip_Wall} (ii).
\medskip
\hrule
\medskip}

\NO{Since the Bargmann transform is a unitary operator, the pre-image of the orthonormal basis $\{e_\alpha \}$ consisting of the functions $h_\alpha = B^{-1} e_\alpha \in \Lt$ is an orthonormal basis for $\Lt$. The functions $h_\alpha$ are the Hermite functions. Even without an explicit formula we can derive their most important property for Fourier analysis.
\begin{proposition}
	The Hermite functions are eigenfunctions of the Fourier transform; specifically, for all $\alpha \geq 0$ we have
	\begin{equation}
		\F h_\alpha = (-i)^{|\alpha|} h_\alpha.
	\end{equation}
\end{proposition}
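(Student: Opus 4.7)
The plan is to transfer the problem from $L^2(\R^d)$ to the Fock space $\F^2(\C^d)$ via the Bargmann transform, and to prove the intertwining relation
\begin{equation}
B\hat{f}(z) = Bf(-iz), \qquad z \in \C^d.
\end{equation}
Once this is established, the conclusion is immediate: by construction $Bh_\alpha = e_\alpha$, so the homogeneity of the monomial $e_\alpha(z) = (\pi^{|\alpha|}/\alpha!)^{1/2}z^\alpha$ yields
\begin{equation}
B\hat{h}_\alpha(z) = Bh_\alpha(-iz) = (-i)^{|\alpha|}\,e_\alpha(z) = (-i)^{|\alpha|} Bh_\alpha(z),
\end{equation}
and since $B$ is unitary (Theorem~\ref{thm_BT_unitary}), $\hat{h}_\alpha = (-i)^{|\alpha|} h_\alpha$.

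To prove the intertwining relation I would start from the bridge between Bargmann and STFT given in Proposition~\ref{pro_BT}(a),
\begin{equation}
V_{g_0}f(x,-\omega) = e^{\pi i x\cdot\omega}\,e^{-\tfrac{\pi}{2}|z|^2}\,Bf(z), \qquad z = x+i\omega,
\end{equation}
and combine it with the fundamental identity of time-frequency analysis (Proposition~\ref{pro_fitf}). Since the standard Gaussian is Fourier-invariant ($\widehat{g_0} = g_0$), that identity specializes to $V_{g_0}f(x,\omega) = e^{-2\pi i x\cdot\omega} V_{g_0}\hat{f}(\omega,-x)$, which I would rewrite (swapping the roles of $f$ and $\hat f$ and using that $g_0$ is even so that reflection and conjugation interact cleanly) as
\begin{equation}
V_{g_0}\hat{f}(x,-\omega) = e^{2\pi i x\cdot\omega}\,V_{g_0}f(\omega,x).
\end{equation}
The right-hand side is again an STFT at a Gaussian, so I can re-express it via the Bargmann formula, this time with $X = \omega$ and $-Y = x$; the corresponding complex coordinate is $X+iY = \omega - ix = -iz$. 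After tracking the phase factors and using $|-iz| = |z|$, the exponential prefactors $e^{\pi i x\omega}$ and $e^{-\pi|z|^2/2}$ on both sides match, and I read off $B\hat{f}(z) = Bf(-iz)$.

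The only subtle point is bookkeeping of signs and phases: the asymmetry between $(x,\omega)$ and $(x,-\omega)$ in the Bargmann–STFT dictionary means one must be careful to identify the correct complex coordinate at each step. Modulo that, everything reduces to one application of the fundamental identity and one homogeneity computation; the irreducibility or density issues of the Hermite system have already been absorbed into the unitarity of $B$.
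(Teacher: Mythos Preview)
Your argument is correct and is essentially the paper's own proof: the paper also applies the fundamental identity (with $\widehat{g_0}=g_0$) to rewrite $V_{g_0}\widehat{h_\alpha}(x,-\omega)$ as $e^{2\pi i x\cdot\omega}V_{g_0}h_\alpha(\omega,x)$, then uses the Bargmann--STFT bridge to recognize $Bh_\alpha(\omega-ix)=e_\alpha(-iz)=(-i)^{|\alpha|}e_\alpha(z)$, and concludes by injectivity of $V_{g_0}$. Your formulation packages this as the general intertwining relation $B\widehat{f}(z)=Bf(-iz)$ before specializing to $h_\alpha$, which is a nice way to say it, but the ingredients and computations are identical; one small wording point: the key input is $\widehat{g_0}=g_0$ rather than the evenness of $g_0$.
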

\begin{proof}
	For the proof we combine the fundamental identity \eqref{eq_fitf} with Proposition \ref{pro_BT}.
	\\
	Writing $z = x+i\omega \in \C^d$, we have
	\begin{equation}
		V_{g_0}h_\alpha (x,-\omega) = e^{\pi i x \cdot \omega} B h_\alpha(z) e^{-\frac{\pi}{2}|z|^2} = e^{\pi i x \cdot \omega} e^{-\frac{\pi}{2} |z|^2} e_\alpha.
	\end{equation}
	On the other hand, using \eqref{eq_fitf} and the property $\F g_0 = g_0$, we obtain
	\begin{align}
		V_{g_0} \widehat{h_\alpha}(x,-\omega) & = V_{\widehat{g_0}} \widehat{h_\alpha}(x,-\omega)\\
		& = e^{2 \pi i x \cdot \omega} V_{g_0} h_\alpha(\omega,x)\\
		& = e^{2 \pi i x \cdot \omega} e^{-\pi i x \cdot \omega} e^{-\frac{\pi}{2}|z|^2} B h_\alpha(\omega-ix)\\
		& = e^{\pi i x \cdot \omega} e^{-\frac{\pi}{2} |z|^2} \left( \frac{\pi^{|\alpha|}}{\alpha!} \right)^{1/2} (\omega-ix)^\alpha\\
		& = e^{\pi i x \cdot \omega} e^{-\frac{\pi}{2}|z|^2} \left( \frac{\pi^{|\alpha|}}{\alpha!} \right)^{1/2} (-i z)^\alpha\\
		& = (-i)^{|\alpha|} e^{\pi i x \cdot \omega} e^{-\frac{\pi}{2}|z|^2} e_\alpha(z).
	\end{align}
	Since $V_{g_0}$ is one-to-one, it follows that $\F h_\alpha = (-i)^{|\alpha|} h_\alpha$.
\end{proof}
}

\NO{
We note that if $f \in \Lt$ is a finite linear combination of Hermite functions, then $\norm{f}_2 = \norm{\widehat{f}}_2$. However, this is not an independent proof of Plancherel's theorem because the argument is circular. We have used Plancherel's theorem to show that the Bargmann transform is an isometry. In order to derive the result we also used that $\norm{V_gf}_2^2 = \norm{f}_2^2 \norm{g}_2^2$, which is a corollary of the orthogonality relations \eqref{eq_OR} and in order to derive these, we used Plancherel's theorem. The insight of this statement is that the unitarity of the Fourier transform and of the Bargmann transform are equivalent. The results in this section show how special the Gaussian function is, because the STFT with a Gaussian window is, except for a weighting factor, an entire function. Hence, we can switch to complex analysis to prove statements about $V_{g_0}$. This is the main reason why much more results are known for the STFT with a Gaussian window than with other windows. The fact that complex analysis in several variables also differs greatly from the one-dimensional theory is the main reason why, despite knowing much about STFTs with one-dimensional Gaussian windows, higher dimensional Gaussian Gabor Systems still keep many mysteries.
\\
Getting back to unitary representations of the Heisenberg Group, we may now write the Heisenberg group as $\mathbf{H} = \C^d \times \R$. For $z = x+i\omega \in \C^d$ and $w = \xi+i\eta$, we compute $B \rho(z,\tau) f$ for $f \in \Lt$. We start by writing $Bf(w)$ as a representation coefficient. Recall that $\langle f, \rho(x,\omega,\tau) g \rangle = e^{-2 \pi i \tau} e^{\pi i x \cdot \omega} V_g f(x,\omega)$. Then
\begin{align}
	Bf(w) & =e^{-\pi i \xi \cdot \eta} e^{\frac{\pi}{2} |w|^2} V_{g_0} f(\xi, -\eta)\\
	& =e^{-\pi i \xi \cdot \eta} e^{\frac{\pi}{2}	|w|^2} e^{-\pi i \xi \cdot (-\eta)} \langle f, \rho(\xi,-\eta,0) g_0 \rangle\\
	& = e^{\frac{\pi}{2}|w|^2} \langle f, \rho(\overline{w}, 0) g_0 \rangle.
\end{align}
Consequently,
\begin{align}
	B(\rho(z,\tau) f)(w) & = e^{\frac{\pi}{2}|w|^2} \langle \rho(z,\tau) f, \rho(\overline{w},0) g_0 \rangle\\
	& = e^{\frac{\pi}{2}|w|^2} \langle f, \rho(-z,-\tau) \rho(\overline{w},0) g_0 \rangle\\
	& = e^{2 \pi i \tau} e^{\frac{\pi}{2}|w|^2} e^{-\pi i \Im (-z \cdot \overline{\overline{w}})} \langle f, \rho(\overline{w}-z,0) g_0 \rangle\\
	& = e^{2 \pi i \tau} e^{\pi i \Im (w \cdot z)} e^{\frac{\pi}{2} |w|^2} e^{-\frac{\pi}{2}|w-\overline{z}|^2} Bf(w-\overline{z})\\
	& =e^{2 \pi i \tau} e^{\pi w \cdot z} e^{-\frac{\pi}{2}|z|^2} B(f(w-\overline{z})).
\end{align}
Here, we used the fact that $|w|^2-|w-\overline{z}|^2 = 2 \Re(\overline{w} \cdot z) - |z|^2$ \footnote{\NO{Also, note that the standard symplectic form is $\sigma(z,w) = \sigma((x,\omega),(\xi,\eta)) = \Im(z \cdot \overline{\omega})$.}}. Setting
\begin{equation}
	\beta(z, \tau) F(w) = e^{2 \pi i \tau} e^{\pi w \cdot z} e^{-\frac{\pi}{2}|z|^2} F(w-\overline{z}),
\end{equation}
we have derived the Bargmann representation of the Heisenberg group. Moreover, we have shown that it is unitarily equivalent to the Schrödinger representation via the Bargmann transform, i.e.,
\begin{equation}
	\beta(z,\tau) = \beta(x,\omega,\tau) = B \rho(x,\omega,\tau)B^{-1}.
\end{equation}
}

\NO{
\section{The Frame Set of a Window Function}\label{sec:frame_set}
One main issue in Gabor analysis is to determine all Gabor systems $\G(g, \Gamma)$ which form a Gabor frame. This question, however, is far too general to be answered completely. At the moment, the best we can hope for is to determine all sets $\Gamma \subset \R^{2d}$ such that for a fixed window $g$ the system $\G(g,\Gamma)$ is a Gabor frame. Even this task is extremely challenging and no general method is known to solve this problem. It is a case-by-case study and the tools to be used may differ largely for different (classes of) windows. In order to have a more systematic approach to this, still, very general problem, the notion of the frame set has been introduced (see e.g., \cite{Gro14}).
\\
\indent
However, we may distinguish between several types of frame sets and we are now going to introduce the list which was set up in \cite{Faulhuber_LyuNes_2019}. This generalizes the ideas from \cite{Gro14}.
\\
\indent
For a fixed dimension $d$ and a fixed window $g$ we define:
\begin{itemize}
	\item The $\alpha$-frame set
	\begin{equation}
		\mathfrak{F}_\alpha (g) = \{ \alpha \Z^{2d} \mid G(g, \alpha \Z^{2d}) \text{ is a frame}\}
	\end{equation}
	\item The separable or $(\alpha,\beta)$-frame set
	\begin{equation}
		\mathfrak{F}_{(\alpha,\beta)} (g) = \{ \alpha \Z^d \times \beta \Z^d \mid G(g, \alpha \Z^d \times \beta \Z^d) \text{ is a frame}\}
	\end{equation}
	For $d > 1$, we allow $\alpha$ and $\beta$ to be multi-indices ($\alpha \Z^d = \alpha_1 \Z \times \ldots \times \alpha_d \Z$, same for $\beta$).
	\item The lattice or $\L$-frame set
	\begin{equation}
		\mathfrak{F}_\L (g) = \{ \L \subset \R^{2d} \text{ lattice} \mid \G(g,\L) \text{ is a frame}\}
	\end{equation}
	\item The frame set
	\begin{equation}
		\mathfrak{F}(g) = \{ \Gamma \subset \R^{2d} \mid \G(g,\Gamma) \text{ is a frame}\}
	\end{equation}
\end{itemize}
We note that we have the following chain of inclusions
\begin{equation}
	\mathfrak{F}_\alpha (g) \subset \mathfrak{F}_{(\alpha,\beta)} (g) \subset \mathfrak{F}_\L (g) \subset \mathfrak{F} (g).
\end{equation}
We remark that in \cite{Faulhuber_LyuNes_2019} also the symplectic frame set has been introduced
\begin{equation}
	\mathfrak{F}_\sigma (g) = \{ \L_\sigma \subset \R^{2d} \text{ symplectic lattice} \mid \G(g, \L_\sigma) \text{ is a frame}\}.
\end{equation}
The chain of inclusion given in \cite{Faulhuber_LyuNes_2019} is however wrong. It is only true that
\begin{equation}
	\mathfrak{F}_\alpha (g) \subset \mathfrak{F}_\sigma (g) \subset \mathfrak{F}_\L (g) \subset \mathfrak{F} (g),
\end{equation}
but, in general, $\L_{(\alpha, \beta)}$ need not be in $\mathfrak{F}_\sigma (g)$ even if $\G(g,\L_{(\alpha, \beta)})$ is a frame. However, if $\alpha, \beta \in \R_+$ (and not multi-indices), then $\mathfrak{F}_{(\alpha, \beta)}(g) \subset \mathfrak{F}_\sigma (g)$.
\\
\indent
It is of current interest to better understand Gabor systems with lattices which are not symplectic. This is due to the fact that it is unknown how to classify those lattices and how to build the proper metaplectic-like operators. This problem does not occur in dimension $d=1$ as any 2-dimensional lattice is symplectic.
\\
\indent
In the case $d = 1$, some frame sets are known, even though there has been little progress until the work of Gröchenig and Stöckler \cite{GroechenigStoeckler_TotallyPositive_2013}. The only frame set of a (family of) function(s) where the entire frame set is known, is the Gaussian, which is why we will treat it in a separate section. But before, we will give a list of functions (ignoring the Gaussian) where at least the separable frame set is known.
\begin{itemize}
	\item The one-sided exponential $e^{-t} \indicator_{\R_+}(t) \; (\notin M^1(\R))$ \cite{Jan96} (Janssen, 1996):
	\begin{equation}
		\mathfrak{F}_{(\alpha,\beta)}(e^{-t} \indicator_{\R_+}(t)) = \{ \alpha \Z \times \beta \Z \mid \alpha \beta \leq 1 \}
	\end{equation}
	\item The hyperbolic secant $\tfrac{1}{\cosh(\pi t)} \; (\in M^1(\R))$ \cite{JanssenStrohmer_Secant_2002} (Janssen and Strohmer, 2002):
	\begin{equation}
		\mathfrak{F}_{(\alpha,\beta)}(\cosh(\pi t)^{-1}) = \{\alpha \Z \times \beta \Z \mid \alpha \beta < 1 \}
	\end{equation}
	\item The two-sided exponential $e^{-|t|} \; (\in M^1(\R))$ \cite{Janssen_CriticalDensity_2003} (Janssen, 2003):
	\begin{equation}
		\mathfrak{F}_{(\alpha,\beta)}(e^{-|t|}) = \{\alpha \Z \times \beta \Z \mid \alpha \beta < 1 \}
	\end{equation}
\end{itemize}
We say that the above frame sets are \textit{full}, as the necessary condition implied by the corresponding Balian-Low theorem is also sufficient. We note that if $\G(g, \alpha \Z \times \beta \Z)$ is a frame, then so is $\G( \F g, \beta \Z \times \alpha \Z)$ by the results on the symplectic and the metaplectic group in Section \ref{sec_symp_meta} \footnote{Also re-call the Fundamental Identity of Time-Frequency Analysis \ref{pro_fitf}, which is often interpreted in the way that the Fourier transform rotates the time-frequency plane by $90^\circ$.}. Therefore, the functions
\begin{itemize}
	\item $\F (e^{-t} \indicator_{\R_+}(t))(\omega) = \frac{1}{1 + 2 \pi i \omega}$
	\item $\F (e^{-|t|})(\omega) = \frac{2}{1 + 4 \pi^2 \omega^2} = \frac{2}{(1 + 2 \pi i \omega)(1 - 2 \pi i \omega)}$
\end{itemize}
possess full frame sets as well. The hyperbolic secant is a fixed-point of the Fourier transform and, hence, its Fourier transform does not provide a new window function with a full frame set. Gröchenig and Stöckler \cite{GroechenigStoeckler_TotallyPositive_2013} realized that the above mentioned window functions are so-called totally positive functions of finite type.
\begin{definition}
	A non-constant, measurable function $g: \R \to \R$ is called totally positive, if for any $N \in \N$ and any two sets of increasing real numbers
	\begin{equation}
		x_1 < x_2 < \ldots < x_N \qquad \textnormal{ and } \qquad y_1 < y_2 < \ldots < y_N,
	\end{equation}
	the matrix
	\begin{equation}
		M_g = \left( g(x_j-y_k) \right)_{j,k=1}^N
	\end{equation}
	is positive semidefinite, i.e.,
	\begin{equation}
		x \cdot M_g x \geq 0, \qquad \forall x \in \R^N.
	\end{equation}
\end{definition}
There is a nice characterization of totally positive functions by means of their Fourier transform due to I.~Schoenberg \cite{Schoe47, Schoe51}\footnote{Schoenberg's characterization was actually in terms of the two-sided Laplace transform, but the characterization is basically the same.}, which might as well serve as their definition.
\begin{theorem}[Schoenberg]
	A function $g: \R \to \R$ is totally positive if and only if its Fourier transform can be written as
	\begin{equation}
		\widehat{g} (\omega) = C \, e^{- \gamma \omega^2} e^{-2 \pi i \delta \omega} \prod_{k = 1}^\infty \frac{e^{2 \pi i \delta_k \omega}}{1 + 2 \pi i \delta_k \omega},
	\end{equation}
	with $C > 0$, $\gamma \geq 0$, $\delta, \delta_k \in \R$ and $0< \gamma + \sum_{k = 1}^\infty \delta_k^2 < \infty$.
\end{theorem}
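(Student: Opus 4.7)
The plan is to treat the two implications separately, as they require very different techniques.

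For the sufficiency direction (the product form implies total positivity), the idea is to identify each factor of $\widehat{g}$ as the Fourier transform of a known totally positive function, and to show that total positivity is preserved under the operations corresponding to multiplication on the Fourier side. First I would verify the building blocks by a direct determinant computation: the Gaussian $e^{-\gamma \omega^2}$ is the Fourier transform of a Gaussian $c_\gamma e^{-\pi^2 t^2/\gamma}$, and Gaussians are totally positive because the kernel $e^{-a(x-y)^2} = e^{-ax^2} e^{2axy} e^{-ay^2}$ can be expanded into an absolutely convergent series whose coefficient matrices are Gramians and hence positive semidefinite. Next, each factor $(1+2\pi i \delta_k \omega)^{-1}$ is (up to a sign and a constant) the Fourier transform of a one-sided exponential $e^{-t/\delta_k}\indicator_{\pm\R_+}$, whose total positivity is the prototype case already used implicitly in the example of the one-sided exponential. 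The modulation factors $e^{-2\pi i \delta \omega}$ and $e^{2\pi i \delta_k \omega}$ correspond to translations on the time side, and total positivity is manifestly translation-invariant since the definition uses only the differences $x_j - y_k$. Finally, multiplication on the Fourier side corresponds to convolution on the time side, so I would show that convolution preserves total positivity by using the Cauchy--Binet formula: for $N$ chosen sample points,
\begin{equation}
\det\bigl((g_1 * g_2)(x_j-y_k)\bigr)_{j,k=1}^N = \int_{\R^N} \det(g_1(x_j-u_l))\, \det(g_2(u_l-y_k))\, d u_1 \ldots d u_N,
\end{equation}
after a Cauchy--Binet expansion, and note that both determinants can be arranged to be non-negative by reordering the $u_l$. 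A final approximation argument handles the infinite product by truncating and passing to the limit using the convergence assumption $\sum \delta_k^2 < \infty$, which guarantees uniform convergence of the product on compact subsets of $\R$.

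For the necessity direction (total positivity implies the product form), the strategy is to reduce the statement to Hadamard's factorization theorem applied to the reciprocal $1/\widehat{g}$. First I would show that, under suitable integrability assumptions on $g$, the Fourier transform $\widehat{g}$ extends to an entire function of order at most $2$, and that $\widehat{g}(\omega) \neq 0$ for $\omega$ in a neighborhood of $\R$, so that $1/\widehat{g}$ is entire of order at most $2$ as well. Total positivity then enters through a careful analysis of the zeros: the positive semidefiniteness of all finite minors forces the zeros of $1/\widehat{g}$ to lie on the imaginary axis (this is the analogue of the classical result that the Laplace transform of a Pólya frequency function has only real negative zeros). Hadamard's factorization then yields
\begin{equation}
\frac{1}{\widehat{g}(\omega)} = e^{Q(\omega)} \prod_{k} \Bigl(1 + \frac{2\pi i \delta_k \omega}{1}\Bigr) e^{-2\pi i \delta_k \omega},
\end{equation}
with $Q$ a polynomial of degree at most $2$, and taking reciprocals and reorganizing the exponential factors produces the claimed representation, with the $e^{-\gamma \omega^2}$ factor coming from the quadratic part of $Q$ and the linear part contributing $e^{-2\pi i \delta \omega}$.

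The hard part will be the necessity direction, and specifically the step that forces the zeros of $1/\widehat{g}$ onto the imaginary axis; this requires translating the combinatorial non-negativity of all minors of $(g(x_j-y_k))$ into a complex-analytic statement about the location of zeros, and it is where Schoenberg's original argument is most intricate. Everything else reduces either to direct computations for the canonical atoms (Gaussian and one-sided exponential) or to the preservation of total positivity under the algebraic operations corresponding to the product structure on the Fourier side.
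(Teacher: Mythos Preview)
The paper does not actually prove this theorem. Schoenberg's characterization is stated as a quoted result with a citation to \cite{Schoe47, Schoe51} (with the footnote that Schoenberg worked with the two-sided Laplace transform), and the text immediately moves on to use it. So there is no argument in the paper against which to compare your proposal.

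That said, your outline broadly follows the classical route. A couple of points to tighten if you pursue it. First, the Cauchy--Binet identity you wrote for the convolution determinant is not literally correct: integrating over all of $\R^N$ overcounts by $N!$, and one has to restrict to the ordered simplex $u_1 < \dots < u_N$ (or insert the factor $1/N!$ and argue via antisymmetry) to get a genuine non-negative integrand. Second, in the necessity direction the assertion that $\widehat{g}$ extends to an entire function of order at most $2$ is exactly the crux and does not follow from soft integrability assumptions; Schoenberg obtains it from the variation-diminishing property of totally positive kernels, which first forces exponential decay of $g$ and then controls the growth of the Laplace transform. Your identification of the zero-location step as the hard part is accurate, but the preceding analytic continuation step is already nontrivial and should not be assumed.
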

As we are interested in Gabor systems with totally positive functions, we may as well assume that $\delta = 0$, as it corresponds to a time-shift of the function, which has no effect on the properties of the Gabor system. Hence, our functions of interest can be characterized by
\begin{equation}
	\widehat{g} (\omega) = C \, e^{- \gamma \omega^2} \prod_{k = 1}^\infty \frac{e^{2 \pi i \delta_k \omega}}{1 + 2 \pi i \delta_k \omega}.
\end{equation}
If $\gamma = 0$ and only finitely many $\delta_k$ are different from zero, then we call $g$ a totally positive function of finite type;
\begin{equation}
	\widehat{g} (\omega) = C \, \prod_{k = 1}^M \frac{e^{2 \pi i \delta_k \omega}}{1 + 2 \pi i \delta_k \omega}, \qquad M \in \N.
\end{equation}
If $\gamma \neq 0$ and, still, only finitely many $\delta_k$ are different from zero, we call $g$ a totally positive function of finite Gaussian type;
\begin{equation}
	\widehat{g} (\omega) = C \, e^{- \gamma \omega^2} \prod_{k = 1}^M \frac{e^{2 \pi i \delta_k \omega}}{1 + 2 \pi i \delta_k \omega}, \qquad M \in \N.
\end{equation}
The following result is due to Gröchenig and Stöckler \cite{GroechenigStoeckler_TotallyPositive_2013}.
\begin{theorem}[Gröchenig and Stöckler]
	Let $g$ be a totally positive function of finite type $M \geq 2$, then
	\begin{equation}
		\mathfrak{F}_{(\alpha, \beta)}(g) = \{ \alpha \Z \times \beta \Z \mid \alpha \beta < 1\}.
	\end{equation}
\end{theorem}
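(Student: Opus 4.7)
The plan is to handle the two inclusions separately. The necessary direction, $\alpha\beta<1$, is obtained as follows. The density principle yields $\alpha\beta\le 1$ once one checks that $\G(g,\alpha\Z\times\beta\Z)$ actually being a frame forces the volume bound. To exclude $\alpha\beta=1$, I would apply the Wiener-amalgam Balian-Low theorem. A totally positive function of finite type $M\ge 2$ is a convolution of at least two one-sided (or two-sided) exponentials, hence continuous and exponentially decaying, so $g\in W_0(\R)$; its Fourier transform $\widehat g(\omega)=C\prod_{k=1}^M(1+2\pi i\delta_k\omega)^{-1}e^{2\pi i\delta_k\omega}$ decays like $|\omega|^{-M}$ with $M\ge 2$ and is smooth, so $\widehat g\in W_0(\R)$ as well. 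The Wiener-amalgam BLT then rules out any frame at volume one. Note that this is exactly where the type $M=1$ case must be excluded, since $e^{-t}\indicator_{\R_+}$ is discontinuous at $0$ and fails to lie in $W_0(\R)$.

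For the sufficient direction, the approach is to work on the dual side via the Wexler-Raz/Janssen duality and construct an explicit dual window. Concretely, by the Bessel duality result and the Wexler-Raz biorthogonality relations quoted above, it suffices to show that for every $(\alpha,\beta)$ with $\alpha\beta<1$ there exists $\widetilde g\in L^2(\R)$ with $\G(\widetilde g,\alpha\Z\times\beta\Z)$ a Bessel system and
\begin{equation*}
\langle \widetilde g,\,\pi(\tfrac{k}{\beta},\tfrac{l}{\alpha})g\rangle \;=\; \alpha\beta\,\delta_{k,0}\delta_{l,0}\qquad\forall (k,l)\in\Z^2.
\end{equation*}
Expanding the inner products in a Zibulski-Zeevi / partial-Zak decomposition, the system splits (over the torus dual to the coarser lattice) into a countable family of finite biinfinite linear systems whose matrices have entries of the form
\begin{equation*}
\bigl(g(\alpha k - l/\beta + \tau)\,e^{-2\pi i l\omega/\beta}\bigr)_{k,l},
\end{equation*}
parametrised by $(\tau,\omega)$ in a fundamental domain. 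This is where I would invoke the total positivity of $g$: by Schoenberg's characterisation combined with Karlin's classical theory of totally positive kernels, matrices of the shape $(g(x_i-y_j))_{i,j}$ with increasing nodes are totally positive, and for $M\ge 2$ they are in fact strictly sign-regular of full rank. This uniformly bounds the inverses of the pre-Gramian matrices in $\tau$ and $\omega$, producing a bounded measurable (and in fact smooth) solution, which via the inverse partial-Zak transform yields the dual window $\widetilde g$ in $L^2(\R)$.

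The main obstacle is the uniform invertibility of the pre-Gramian on the whole fundamental domain, since the parameter $(\tau,\omega)$ runs over a compact set but one needs a lower bound that survives the limit $\alpha\beta\to 1^-$ (at which point the BLT must bite). The totally positive structure is exactly what provides strict positivity of all relevant minors, and hence a lower bound on the smallest singular value depending only on $g$, $\alpha$, $\beta$. Granting this uniform bound, the construction gives both the dual window and, simultaneously, the frame bounds for $\G(g,\alpha\Z\times\beta\Z)$, completing the proof.
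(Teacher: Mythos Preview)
The paper does not contain a proof of this theorem; it is quoted from \cite{GroechenigStoeckler_TotallyPositive_2013} and immediately followed only by the remark that totally positive functions of finite type $M\ge 2$ lie in $M^1(\R)$, so that the frame set is full in the Balian--Low sense. There is therefore nothing to compare your argument against in these notes.

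That said, a few comments on your plan. The necessary direction is fine and can be carried out entirely with the tools developed in the notes: the density theorem gives $\alpha\beta\le 1$, and since for $M\ge 2$ the window is a genuine convolution of one-sided exponentials it is continuous and exponentially decaying, hence in $W_0(\R)$, so the Corollary following the amalgam BLT excludes $\alpha\beta=1$. Your observation that $M=1$ fails here because of the jump discontinuity is exactly right.

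For the sufficient direction your outline is in the spirit of the original Gr\"ochenig--St\"ockler argument (pre-Gramian analysis on the adjoint lattice, total positivity to control minors, construction of a dual window satisfying Wexler--Raz), but you should be aware that what you call ``the main obstacle'' is not a technical lemma one grants and moves on from: the uniform lower bound on the singular values of the pre-Gramians over the whole parameter set is the substance of the theorem. In the original paper this is obtained not by an abstract compactness/continuity argument but by an explicit finite-section scheme in which total positivity forces specific sign patterns that allow a left inverse to be written down with controlled $\ell^\infty$ norm. Your sketch does not yet contain the mechanism that produces this bound, so as written it is a correct roadmap but not a proof.
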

We note that any totally positive function of finite type $M \geq 2$ is in the modulation space $M^1(\R)$ (i.e., in Feichtinger's algebra $S_0(\R)$). Hence, the frame set of totally positive functions of finite type is full.
\\
\indent
Now, one might get the impression that the Balian-Low theorem is the only restriction for separable Gabor systems. However, there is another function for which the separable frame set is known, namely the characteristic function of an interval. The first parts of the frame set where described by Janssen and became known as Janssen's tie \cite{Janssen_Tie_2003}. The gaps where finally filled by Dai and Sun \cite{DaiSun_ABC_2016}. Their work describes in 17 different cases and on more than 100 pages for which parameters the Gabor system with the characteristic function of an interval and a separable lattice is a frame.
}

\NO{
\subsection{The Frame Set of a Gaussian}
As we have seen, for some window classes, it is known that the necessary condition imposed by the Balian-Low theorem is also sufficient for the Gabor system $\G(g, \alpha\Z \times \beta \Z)$ to be a frame. If we ask for general point sets $\Gamma$, or even only lattices $\L$, then there is precisely one window (or actually one class of windows) for which a full characterization of Gabor frames is known, namely the Gaussian window $g_0$ (and its metaplectic deformations as well as scalar multiples). In dimensions $d > 1$, hardly anything is known about (general) characterizations of Gabor frames.
\\
We will exploit the properties of the Bargmann transform to formulate the question on determining the frame set of a 1-dimensional (standard) Gaussian as a sampling problem in the Bargmann-Fock space $\F^2(\C)$. We consider relatively separated (also called uniformly discrete) sets $\Gamma \subset \C$, that is $\inf_{k \neq l} |z_k - z_l| > 0$ for all $z_k, z_l \in \Gamma$. The set $\Gamma$ is called a set of sampling for the Bargmann-Fock space $\F^2(\C)$ if there exist positive constants $0< A \leq B < \infty$ such that
\begin{equation}
	A \norm{F}_{\F^2}^2 \leq \sum_{z \in \Gamma} |F(z)|^2 e^{-\pi |z|^2} \leq B \norm{F}_{\F^2}^2, \quad \forall F \in \F^2(\C).
\end{equation}
The condition that $\Gamma \subset \C$ is relatively separated already ensures that the upper frame bound is finite. Since the Bargmann transform $B$ maps $\Lt[]$ unitarily to $\F^2(\C)$ by Theorem \ref{thm_BT_unitary}, we see that $\Gamma \subset \C$ is a set of sampling for $\F^2(\C)$ if and only if the real version of $\Gamma \subset \R^2$ yields a Gabor frame for the standard Gaussian $g_0$ \footnote{Again, we identify $\C$ and $\R^2$ in the natural way; $\C \ni z = x + i y \, \leftrightarrow (x,y) \in \R^2$.}.
}

\subsubsection{The Theorem of Lyubarskii and Seip and Wallstén}
The following theorem is so important that it deserves to have an own section. It still is one of the most celebrated results in Gabor analysis and was proven independently by Lyubarskii \cite{Lyu92} and Seip and Wallsten \cite{Sei92_1}, \cite{SeiWal92} in the year 1992.
\begin{theorem}\label{thm_Lyu_Seip_Wall}
	\begin{enumerate}[(i)]
		\NO{\item A discrete set $\Gamma \subset \C$ is a set of sampling for $\F^2(\C)$ if and only if $\Gamma$ can be expressed as a finite union of relatively separated sets and if it contains a relatively separated subset $\Gamma_0$ with $D^-(\Gamma_0) > 1$.}
		\item The Gabor system $\G(g_0, \Gamma)$ is a frame for $\Lt[]$ if and only if $\Gamma \subset \R^2$ is relatively separated and its lower Beurling density satisfies $D^-(\Gamma) > 1$.
	\end{enumerate}
\end{theorem}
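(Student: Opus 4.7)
The plan is to transport the problem to the Bargmann--Fock space via the unitary Bargmann transform $B : \Lt[] \to \F^2(\C)$ from Theorem \ref{thm_BT_unitary}. Using Proposition \ref{pro_BT}, one has $|V_{g_0}f(x,\omega)| = e^{-\frac{\pi}{2}|z|^2}|Bf(z)|$ with $z = x - i\omega$, so identifying $\Gamma \subset \R^2$ with $\L := \{x-i\omega : (x,\omega) \in \Gamma\} \subset \C$ (which preserves Beurling densities), the Gabor frame inequality for $\G(g_0,\Gamma)$ becomes the sampling inequality
\begin{equation}
A\norm{F}_{\F^2}^2 \leq \sum_{z \in \L}|F(z)|^2 e^{-\pi|z|^2} \leq B\norm{F}_{\F^2}^2 \quad \forall F \in \F^2(\C).
\end{equation}
The theorem is thus equivalent to the Lyubarskii--Seip--Wallstén characterization of sampling sets for $\F^2(\C)$: relatively separated sequences of lower Beurling density strictly exceeding $1$.

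For necessity, I would first argue that relative separation follows from the upper bound, using the normalized reproducing kernel $k_w = K_w/\norm{K_w}_{\F^2}$: since $|k_w(z)|^2 e^{-\pi|z|^2} = e^{-\pi|z-w|^2}$, any cluster point of $\L$ would let the right-hand sum blow up when testing $F = k_w$ for $w$ in the cluster. The density lower bound $D^-(\L) \geq 1$ then follows by a Beurling--Landau / Ramanathan--Steger type comparison: compute the trace of the finite-section operator $\sum_{z \in \L \cap B_R} k_z \otimes \overline{k_z}$ (which is approximately $\#(\L \cap B_R)$), compare with the effective ``dimension'' of $\F^2$-functions essentially concentrated in $B_R$ (which scales like $\pi R^2$), and let $R \to \infty$. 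The upgrade to strict inequality $D^-(\L) > 1$ is the delicate point and uses that $g_0 \in M^1(\R)$ together with an amalgam Balian--Low obstruction: equality $D^-(\L) = 1$ on a sub-sequence would, after extracting a weakly convergent translate, produce a Gabor frame at critical density with a window in $M^1$, which is forbidden.

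For sufficiency the core technical step is, given relatively separated $\L$ with $D^-(\L) > 1$, to construct an entire function $\varphi$ of order $2$, vanishing on (a suitable perturbation of) $\L$, with a two-sided growth estimate of the form
\begin{equation}
c_1 e^{\frac{\pi}{2}|z|^2}\,\mathrm{dist}(z,\L) \leq |\varphi(z)| \leq c_2 e^{\frac{\pi}{2}|z|^2}
\end{equation}
outside a neighborhood of $\L$. Such $\varphi$ is built as a Weierstrass--$\sigma$-type product tuned to the Fock kernel, with canonical factors renormalized by phases depending on a smoothed counting function of $\L$; the strict density slack $D^-(\L) - 1 > 0$ is what ensures that the resulting product has growth rate exactly $\pi|z|^2/2$ rather than larger and is bounded below off its zeros. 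From $\varphi$, a Lagrange-type interpolation
\begin{equation}
F(w) = \sum_{z \in \L} \frac{\varphi(w)}{\varphi'(z)(w-z)} F(z), \quad F \in \F^2(\C),
\end{equation}
combined with standard reproducing-kernel estimates in $\F^2$, delivers the lower sampling bound. The main obstacle is precisely this construction and two-sided control of $\varphi$: naive Weierstrass products on an arbitrary relatively separated set do not carry the correct envelope, and extracting a uniform lower bound off the zero set is the technical heart of the Lyubarskii and Seip--Wallstén papers. A complete proof requires developing a Paley--Wiener--Beurling theory adapted to entire functions of order two in the Fock setting, which is well beyond the back-of-the-envelope scope of this plan.
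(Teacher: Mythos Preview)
The paper does not give a proof of this theorem; it only records a one-paragraph sketch (in the grayed-out material following the statement) noting that the Bargmann transform converts the Gabor frame question into a sampling problem in $\F^2(\C)$, and that the latter is settled by building a Weierstrass $\sigma$-type product with zeros on $\Gamma$ and extracting growth estimates from the density hypothesis. Your plan follows exactly this route and is in fact considerably more detailed than the paper's own sketch, so there is nothing substantive to compare.
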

\NO{
We note that the two statements are equivalent by the Bargmann transform. The proof relies, as pointed out above, on the Bargmann transform, which allows to switch to complex analysis (in one variable) and the sampling problem in the Bargmann-Fock space. In $\F^2(\C)$, entire functions with prescribed zeros can be constructed explicitly by means of (a version of) the Weierstrass sigma function
\begin{equation}
	\sigma (z) = z \prod_{\gamma \in \Gamma \backslash 0} \left(1- \frac{z}{\gamma}\right) e^{\frac{z}{\gamma} + \frac{z^2}{2 \gamma^2}}.
\end{equation}
The function $\tau(z) = \frac{\sigma(z)}{z}$ then takes the role of an interpolating function \footnote{\NO{Note the similarity of $\tau(z)$ to the role of sinc function $\frac{\sin(\pi x)}{\pi x}$ in the WNKS sampling theorem; $$\sin(\pi z) = \pi z \prod_{k \in \Z \slash \{0\}} \left(1 - \frac{z}{k}\right) e^{\frac{ z}{k}}, \quad z \in \C.$$}}. Then, from the lower Beurling density of the zero set, growth estimates for $\sigma$ are obtained. This leads to the result that $\Gamma$ is a set of sampling if and only if its lower Beurling density is (strictly) greater than 1.
}

\NO{
\subsection{The Frame Set in Higher Dimensions}
}
\NO{
As already mentioned, only few results on Gabor frames for $\Lt$, $d > 1$, are known. We state here the main result on tensor products of frames from \cite{Bou08} (see also Appendix \ref{app_Tensor}).
\begin{theorem}
	Let $\Gamma_1$ and $\Gamma_2$ be two countable sets. The sequence $(e_{\gamma_k})_{\gamma_k \in \Gamma_k}$ is a frame for the Hilbert space $\mathcal{H}_k$, $k=1,2$, if and only if $(e_{\gamma_1} \otimes e_{\gamma_2})_{(\gamma_1, \gamma_2) \in \Gamma_1 \times \Gamma_2}$ is a frame for $\mathcal{H}_1 \otimes \mathcal{H}_2$.
\end{theorem}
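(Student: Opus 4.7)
The plan is to exploit the factorization of the tensor inner product, $\langle f_1 \otimes f_2, g_1 \otimes g_2 \rangle = \langle f_1, g_1 \rangle_{\mathcal{H}_1} \langle f_2, g_2 \rangle_{\mathcal{H}_2}$, together with $\norm{f_1 \otimes f_2}^2 = \norm{f_1}_{\mathcal{H}_1}^2 \norm{f_2}_{\mathcal{H}_2}^2$. These give the fundamental identity
\begin{equation}
\sum_{(\gamma_1,\gamma_2) \in \Gamma_1 \times \Gamma_2} |\langle f_1 \otimes f_2, e_{\gamma_1} \otimes e_{\gamma_2} \rangle|^2
= \Big(\sum_{\gamma_1 \in \Gamma_1} |\langle f_1, e_{\gamma_1} \rangle|^2\Big) \Big(\sum_{\gamma_2 \in \Gamma_2} |\langle f_2, e_{\gamma_2} \rangle|^2\Big),
\end{equation}
which is the workhorse of the proof. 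Since everything is non-negative, interchanging the order of summation in the double sum is automatic by Fubini-Tonelli.

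For the direction ``$\Rightarrow$'' I would first use the factorization above to verify the frame inequality with bounds $A_1 A_2, B_1 B_2$ on \emph{elementary} tensors $f_1 \otimes f_2$. To extend to all of $\mathcal{H}_1 \otimes \mathcal{H}_2$, I would invoke the frame operator. The tensor Bessel sequence defines, via $\langle S f, g \rangle = \sum_{(\gamma_1,\gamma_2)} \langle f, e_{\gamma_1} \otimes e_{\gamma_2}\rangle \langle e_{\gamma_1} \otimes e_{\gamma_2}, g \rangle$, a bounded positive operator $S$ on $\mathcal{H}_1 \otimes \mathcal{H}_2$, and on elementary tensors a direct computation (using that each component sum is absolutely convergent because the $(e_{\gamma_k})$ are Bessel) shows $S(f_1 \otimes f_2) = S_1 f_1 \otimes S_2 f_2$, where $S_k$ is the component frame operator. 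By linearity and continuity, $S = S_1 \otimes S_2$ on the whole tensor Hilbert space. The frame hypothesis gives $A_k I_{\mathcal{H}_k} \leq S_k \leq B_k I_{\mathcal{H}_k}$ by Proposition \ref{pro_frame_properties}, and functional calculus for commuting positive tensor-factor operators yields $A_1 A_2 I \leq S_1 \otimes S_2 \leq B_1 B_2 I$; taking $\langle S f, f \rangle$ then delivers the tensor frame inequality for arbitrary $f$.

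For the direction ``$\Leftarrow$'' I would apply the assumed frame inequality with bounds $0 < A \leq B < \infty$ to elementary tensors $f_1 \otimes f_2$, and again use the factorization to obtain
\begin{equation}
A \norm{f_1}^2 \norm{f_2}^2 \leq \Big(\sum_{\gamma_1} |\langle f_1, e_{\gamma_1}\rangle|^2\Big) \Big(\sum_{\gamma_2} |\langle f_2, e_{\gamma_2}\rangle|^2 \Big) \leq B \norm{f_1}^2 \norm{f_2}^2.
\end{equation}
I would then fix a single $f_2^\star \in \mathcal{H}_2$ for which $c := \sum_{\gamma_2} |\langle f_2^\star, e_{\gamma_2}\rangle|^2$ lies in $(0, \infty)$, and divide through by $c$ to obtain a frame inequality for $(e_{\gamma_1})$ with bounds $A/c$ and $B/c$; the role of $\mathcal{H}_1$ and $\mathcal{H}_2$ is symmetric. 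Producing such an $f_2^\star$ is where the main care is needed: the lower bound $A > 0$ forces both component Bessel sums to be not identically zero (otherwise the product would vanish for every choice of $f_1, f_2$, contradicting $A > 0$), and the upper bound $B < \infty$ then forces each of them to be finite for every argument, because a Bessel sum that takes the value $+\infty$ somewhere would force the other factor to vanish identically for every choice, contradicting the previous step. Once both Bessel properties are in place, any $f_2^\star \in \{e_{\gamma_2^\star}\}$ with $e_{\gamma_2^\star} \neq 0$ (which exists, else the tensor sequence is trivially zero and not a frame) will satisfy $0 < c < \infty$.

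The main obstacle is the passage from elementary tensors to general elements of $\mathcal{H}_1 \otimes \mathcal{H}_2$ in the forward direction, which is why I prefer to channel the argument through the identification $S = S_1 \otimes S_2$ rather than through direct manipulation of frame sums on finite linear combinations $\sum_k f_1^{(k)} \otimes f_2^{(k)}$; the latter route runs into cross-term complications that the operator-theoretic viewpoint bypasses cleanly via the positivity of $S_1, S_2$ and the order property $A_1 I \leq S_1 \leq B_1 I$, $A_2 I \leq S_2 \leq B_2 I \Rightarrow A_1 A_2 I \leq S_1 \otimes S_2 \leq B_1 B_2 I$.
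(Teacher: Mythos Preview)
The paper does not actually prove this theorem; it merely states it with attribution to \cite{Bou08} and refers to Appendix~\ref{app_Tensor}, which only recalls the definition of the tensor Hilbert space. So there is no paper proof against which to compare your proposal.

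Your argument is correct in substance. The forward direction via the identification $S = S_1 \otimes S_2$ and the operator inequality $A_1 A_2\, I \leq S_1 \otimes S_2 \leq B_1 B_2\, I$ is the clean way to handle general (non-elementary) vectors, and your justification of the latter inequality through positivity of tensor factors is exactly right; one convenient decomposition is $S_1 \otimes S_2 - A_1 A_2\, I = (S_1 - A_1 I) \otimes S_2 + A_1 I \otimes (S_2 - A_2 I)$. One small point you glide over: before writing ``the tensor Bessel sequence defines \ldots'', you need the Bessel property on \emph{all} of $\mathcal{H}_1 \otimes \mathcal{H}_2$, not just elementary tensors. This is most easily obtained by noting that the analysis operator of the tensor system is $C_1 \otimes C_2 : \mathcal{H}_1 \otimes \mathcal{H}_2 \to \ell^2(\Gamma_1) \otimes \ell^2(\Gamma_2) \cong \ell^2(\Gamma_1 \times \Gamma_2)$, which is bounded with norm $\leq \sqrt{B_1 B_2}$ because each $C_k$ is bounded; this is the same operator-tensor mechanism you already invoke for $S$. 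Your backward direction is careful and complete: the two-step argument (lower bound $\Rightarrow$ neither component sum is identically zero; upper bound $\Rightarrow$ both component sums are everywhere finite) is precisely what is needed to produce an $f_2^\star$ with $0 < c < \infty$, and this is indeed the point where a careless treatment can go wrong.
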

One consequence of the above theorem for higher dimensional Gabor systems is that the necessary density condition can no longer be sufficient.
\begin{example}
	Consider the Hilbert space $\Lt[2] \cong \Lt[] \otimes \Lt[]$ and the corresponding standard Gaussian\footnote{\NO{We are a bit sloppy with our notation here as $g_0(t) = 2^{d/4} e^{- \pi |t|^2}$ denotes the standard Gaussian in any dimension.}}
	\begin{equation}
		g_0(t_1,t_2) = 2^{1/2} e^{-\pi(t_1^2 + t_2^2)} = g_0(t_1) \otimes g_0(t_2).
	\end{equation}
		and consider the two Gabor systems $\G_1(g_0, \Z^2)$ and $\G_2(g_0,\alpha \Z^2)$ with $\alpha < 1$. Then, the Gabor system $\G = \G_1 \otimes \G_2$ is a (standard) Gaussian Gabor system in $\Lt[2]$ with arbitrarily high density, but it can never be a Gabor frame for $\Lt[2]$ because $\G_1$ is not a Gabor frame for its respective Hilbert space $\Lt[]$.
	\flushright{$\diamond$}
\end{example}
It is of course possible to construct a wealth of Gabor systems (not only Gaussian) with arbitrarily high density which fail to be frames. Also, it is straight forward to extend the method to tensor product Hilbert spaces of the form $\mathcal{H}_1 \otimes \ldots \otimes \mathcal{H}_N$.
}

\newpage
\begin{appendices}
\section{Appendices}
\subsection{More Uncertainty Principles}
This is a collection of some more uncertainty principles without proof. The interested reader is referred to \cite{Gro03_FeiStr} and \cite{Gro01}.
\begin{theorem}[Lieb's inequalities]
	Assume $f,g \in \Lt$. Then
	\begin{equation}
		\iint_{\R^{2d}} |V_g f(x,\omega)|^p \, d(x,\omega)
		\begin{cases}
			\leq \left(\tfrac{2}{p}\right)^d (\norm{f}_2 \norm{g_2})^p & 2 \leq p < \infty\\
			\geq \left(\tfrac{2}{p}\right)^d (\norm{f}_2 \norm{g_2})^p & 1 \leq p \leq 2\\
		\end{cases}
	\end{equation}
\end{theorem}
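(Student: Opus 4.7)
The plan is to use the pointwise identity $V_g f(x,\omega) = \mathcal{F}\bigl(f\cdot T_x\bar g\bigr)(\omega)$ together with the fundamental fact that Gaussians are extremal, and reduce the two inequalities to the sharp Hausdorff--Young inequality of Babenko--Beckner combined with the sharp Young convolution inequality of Brascamp--Lieb. The pivot is $p=2$, where the orthogonality relations (Theorem \ref{thm_ortho}) give equality $\iint_{\R^{2d}} |V_gf|^2 = \|f\|_2^2\|g\|_2^2$, and one sees that $(2/p)^d = 1$ in this case. The inequality direction flips across $p=2$, which is what drives the dichotomy in the statement.

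For $p \geq 2$, I would fix $x$ and apply the sharp Hausdorff--Young inequality in the $\omega$-variable to $h_x(t) := f(t)\overline{g(t-x)}$, obtaining
\begin{equation}
\left(\int_{\R^d}|V_gf(x,\omega)|^p\,d\omega\right)^{1/p} \leq A_{p'}^d\,\|h_x\|_{p'}
\end{equation}
with Babenko--Beckner constant $A_{p'}$ and $p'$ the conjugate exponent. Raising to the $p$th power and integrating in $x$, the identity $\|h_x\|_{p'}^{p'} = (|f|^{p'}*|g^\vee|^{p'})(x)$ reduces the whole estimate to bounding $\||f|^{p'}*|g^\vee|^{p'}\|_{p/p'}$. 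I would then invoke sharp Young convolution with the symmetric choice of exponents $s=t=2/p'$ and $r=p/p'$; the admissibility $\tfrac{1}{s}+\tfrac{1}{t}=\tfrac{1}{r}+1$ reduces exactly to the duality relation $\tfrac{1}{p}+\tfrac{1}{p'}=1$, and the bound becomes a constant times $\|f\|_2^{p'}\|g\|_2^{p'}$. Collecting constants gives the claimed upper bound, which I would verify is $(2/p)^d(\|f\|_2\|g\|_2)^p$ by checking the Gaussian extremizer: for $g_0(t)=2^{d/4}e^{-\pi t^2}$ one computes $|V_{g_0}g_0|^p = e^{-p\pi(x^2+\omega^2)/2}$ and $\iint e^{-p\pi(x^2+\omega^2)/2}\,d(x,\omega) = (2/p)^d$, confirming sharpness.

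For $1 \leq p \leq 2$, I would run exactly the same machinery but with the reverse Hausdorff--Young and reverse Young inequalities of Brascamp--Lieb, which hold with the same Babenko--Beckner prefactor but with the inequality flipped (this is the regime where the Fourier transform is \emph{norm-expanding} rather than norm-contracting for the right choice of exponents on Gaussian-like inputs). The same bookkeeping of constants then produces the reverse inequality $\iint|V_gf|^p \geq (2/p)^d(\|f\|_2\|g\|_2)^p$. As a sanity check, $p=1$ should recover a statement known to hold from the Cauchy--Schwarz bound $\|V_gf\|_\infty \leq \|f\|_2\|g\|_2$ combined with $\|V_gf\|_2^2 = \|f\|_2^2\|g\|_2^2$ via Cauchy--Schwarz in the other direction, and $p=2$ degenerates to equality.

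The main obstacle is producing the sharp constant $(2/p)^d$. The naive interpolation route via Riesz--Thorin between the $p=\infty$ bound (Cauchy--Schwarz, constant $1$) and the $p=2$ equality only yields the non-optimal constant $1$ for all $p\in[2,\infty]$, which misses the whole point of the statement. Getting the sharp constant forces the simultaneous use of the two sharp inputs (Babenko--Beckner and Brascamp--Lieb Young), and verifying that the product $A_{p'}^{dp}\cdot C_{s,t,r}^{dp/p'}$ of the two sharp constants collapses to exactly $(2/p)^d$ is the delicate bookkeeping step; it succeeds precisely because Gaussians are the common extremizers of both inequalities and the Gaussian $g_0$ saturates the STFT estimate.
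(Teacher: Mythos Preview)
The paper does not prove this theorem; it appears in an appendix explicitly introduced as ``a collection of some more uncertainty principles without proof,'' with only the remark that the proof requires the sharp constants in Young's inequality and a pointer to Lieb's original article \cite{Lie90}. So there is no in-paper argument to compare against, and the relevant question is whether your outline is correct.

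For $p\geq 2$ your plan is exactly Lieb's proof (also the one reproduced in \cite{Gro01}): sharp Hausdorff--Young (Babenko--Beckner) in the $\omega$-variable applied to $h_x=f\cdot T_x\overline g$, followed by sharp Young on the resulting convolution $|f|^{p'}*|g^\vee|^{p'}$ with the symmetric exponents $s=t=2/p'$, $r=p/p'$. The product of the two sharp constants does collapse to $(2/p)^d$, and the Gaussian check you describe confirms it.

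For $1\leq p<2$, however, there is a genuine gap. A ``reverse Hausdorff--Young inequality'' of the form $\|\widehat h\|_p\geq C\|h\|_{p'}$ for general $h\in L^{p'}$ simply does not exist: by adding oscillation one can make $\|\widehat h\|_p$ arbitrarily small relative to $\|h\|_{p'}$ when $p<2$. The reverse sharp Young inequality of Brascamp--Lieb is real, but it requires nonnegative integrands and exponents in $(0,1]$, and even so you cannot reverse the Fourier step that your scheme needs. Thus ``run the same machinery with the inequalities flipped'' does not go through. Lieb's actual argument for the lower bound does not mirror the $p\geq 2$ case; it is more delicate and exploits the special multiplicative structure of products of STFTs (identities of the type in Lemma~\ref{lem_Vgf_product}) together with the already-established upper bound at the conjugate exponent. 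Note also that your $p=1$ sanity check, combining $\|V_gf\|_\infty\leq\|f\|_2\|g\|_2$ with $\|V_gf\|_2^2=\|f\|_2^2\|g\|_2^2$, only yields $\|V_gf\|_1\geq\|f\|_2\|g\|_2$, i.e.\ constant $1$, not the claimed $2^d$; so it cannot confirm the sharp bound.
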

These inequalities are quite deep results and their proof requires the sharp constants in Hölder's inequality. A consequence of Lieb's inequalities is the following result.
\begin{theorem}[Gröchenig]
	Suppose that $\norm{f}_ = \norm{g}_2 = 1$. If $U \subset \R^{2d}$ and $\varepsilon \geq 0$ are chosen such that
	\begin{equation}
		\iint_U |V_g f(x,\omega)|^2 \, d(x,\omega) \geq (1-\varepsilon).
	\end{equation}
	Then, we have
	\begin{equation}
		|U| \geq \left( \tfrac{p}{2} \right)^{\frac{2d}{p-2}} (1-\varepsilon)^{\frac{p}{p-2}},
		\qquad
		\forall p > 2.
	\end{equation}
	In particular,
	\begin{equation}
		|U| \geq \sup_{p > 2} \left( \tfrac{p}{2} \right)^{\frac{2d}{p-2}} (1-\varepsilon)^{\frac{p}{p-2}} \geq 2^d (1- \varepsilon)^2,
		\qquad (p=4).
	\end{equation}
\end{theorem}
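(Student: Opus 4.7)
The plan is to apply Hölder's inequality on the set $U$ together with the upper bound in Lieb's inequalities. More precisely, for $p > 2$, I would write $|V_gf|^2 \indicator_U = |V_gf|^2 \cdot \indicator_U$ and apply Hölder's inequality on $\R^{2d}$ with conjugate exponents $\frac{p}{2}$ and $\frac{p}{p-2}$ (noting that $\frac{2}{p} + \frac{p-2}{p} = 1$). This yields
\begin{equation}
\iint_U |V_gf(x,\omega)|^2 \, d(x,\omega) \leq \left(\iint_{\R^{2d}} |V_gf(x,\omega)|^p \, d(x,\omega)\right)^{2/p} \left(\iint_{\R^{2d}} \indicator_U(x,\omega) \, d(x,\omega)\right)^{(p-2)/p}.
\end{equation}
The second factor is simply $|U|^{(p-2)/p}$.

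For the first factor, I would invoke Lieb's upper inequality (valid since $p > 2$) using the normalization $\norm{f}_2 = \norm{g}_2 = 1$:
\begin{equation}
\iint_{\R^{2d}} |V_gf(x,\omega)|^p \, d(x,\omega) \leq \left(\tfrac{2}{p}\right)^d,
\end{equation}
so that the first factor is bounded by $\left(\tfrac{2}{p}\right)^{2d/p}$. Combining with the hypothesis $\iint_U |V_gf|^2 \, d(x,\omega) \geq 1-\varepsilon$, we obtain
\begin{equation}
1 - \varepsilon \leq \left(\tfrac{2}{p}\right)^{2d/p} |U|^{(p-2)/p}.
\end{equation}

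Solving for $|U|$ by raising both sides to the power $\frac{p}{p-2}$ (which is positive since $p > 2$) gives
\begin{equation}
|U| \geq (1-\varepsilon)^{p/(p-2)} \left(\tfrac{p}{2}\right)^{2d/(p-2)},
\end{equation}
which is the claimed estimate. Taking the supremum over all $p > 2$ yields the stated consequence, and specializing to $p = 4$ gives $\left(\tfrac{4}{2}\right)^{2d/2}(1-\varepsilon)^{4/2} = 2^d(1-\varepsilon)^2$, confirming the final inequality. There is no real obstacle here once Lieb's inequalities are taken as given; the only care required is the bookkeeping of exponents in the Hölder duality and ensuring $p > 2$ so that both $p/2$ and $p/(p-2)$ are valid conjugate exponents in $(1,\infty)$.
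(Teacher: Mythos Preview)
Your proof is correct and is exactly the standard argument: Hölder on $U$ with exponents $\tfrac{p}{2}$ and $\tfrac{p}{p-2}$, followed by Lieb's upper bound. Note, however, that the paper does not actually prove this theorem; it appears in an appendix explicitly labelled as a collection of uncertainty principles stated \emph{without proof}, with a reference to \cite{Gro03_FeiStr} and \cite{Gro01} for details. So there is no paper proof to compare against, but your argument is the one found in those references.
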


The classical Heisenberg-Pauli-Weyl uncertainty principle is minimized by dilated, time-frequency shifted Gauss functions. As the standard Gaussian is a fixed point of the Fourier transform, one may therefore suspect that Gaussian decay is the fastest possible simultaneous decay for a pair of Fourier transforms $(f, \widehat{f})$. This guess is answered affirmatively by Hardy's Theorem.
\begin{theorem}[Hardy]
	Let $f \in \Lt$ and assume that
	\begin{equation}
		f(x) = \mathcal{O}(e^{-a \pi x^2})
		\quad \text{ and } \quad
		\widehat{f}(\omega) = \mathcal{O}(e^{-b \pi \omega^2}),
	\end{equation}
	for some $a,b, > 0$. Then, three cases occur;
	\begin{enumerate}[(i)]
		\item If $a b = 1$, then $f(x) = c e^{-a \pi x^2}$.
		\item If $a b > 1$, then $f \equiv 0$.
		\item If $a b < 1$, then any finite linear combinations of Hermite functions satisfies these decay conditions.
	\end{enumerate}
\end{theorem}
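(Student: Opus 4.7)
The approach is via complex analysis, extending $\widehat{f}$ to an entire function and invoking Phragm\'en--Lindel\"of / Hadamard factorization. First I would reduce to $d=1$: if $f$ satisfies the hypotheses on $\Rd$, then for almost every $(x_2,\dots,x_d)$ the slice $t \mapsto f(t,x_2,\dots,x_d)$ satisfies the one-dimensional hypotheses (with the same $a,b$) and the cases assemble by Fubini. Next, the dilation $f \mapsto D_\lambda f$ with $\lambda = (a/b)^{1/4}$ converts the $(a,b)$-hypothesis into the $(\sqrt{ab},\sqrt{ab})$-hypothesis, so I may assume $a=b$.

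For case (i) with $ab=1$ (so $a=b=1$ after reduction), the bound $|f(t)|\leq Ce^{-\pi t^2}$ allows $\widehat{f}$ to extend to an entire function
\begin{equation}
F(z) = \int_\R f(t)\, e^{-2\pi i zt}\, dt,
\end{equation}
and completing the square in the integrand yields $|F(z)| \leq C' e^{\pi y^2}$ for $z=x+iy$. The auxiliary function $G(z) = F(z) e^{\pi z^2}$ is then entire of order $\leq 2$ and satisfies $|G(\omega)|\leq C$ for $\omega\in\R$ (from the decay of $\widehat{f}$) and $|G(iy)|\leq C$ for $y\in\R$ (from the decay of $f$ and $ab=1$). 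The main step, and the expected obstacle, is to pass from this two-axis boundedness, in the borderline case of an order-$2$ function on sectors of angle $\pi/2$, to the conclusion that $G$ is constant. The cleanest route is Hadamard's factorization, which writes $G(z) = z^m e^{Q(z)} P(z)$ with $Q$ a polynomial of degree $\leq 2$ and $P$ a canonical product over the zeros. Writing $Q(z)=Az^2+Bz+C$, boundedness on $\R$ forces $\Re A \leq 0$ and $\Re B = 0$, while boundedness on $i\R$ forces $\Re A \geq 0$ and $\Im B = 0$; thus $Q(z)=i\alpha z^2 + C$ with $\alpha\in\R$. A parallel analysis rules out nontrivial zeros (any zero combined with the two-axis bound forces growth incompatible with $|G(z)| \leq C' e^{\pi x^2}$ via the minimum modulus principle), so $G(z) = e^{i\alpha z^2 + C}$ outright.

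To pin down $\alpha$ I would invert the Fourier transform explicitly:
\begin{equation}
f(t) = e^{C}\sqrt{\tfrac{\pi}{\pi - i\alpha}}\, e^{-\pi^2 t^2/(\pi - i\alpha)},
\end{equation}
which has $|f(t)| = \mathrm{const}\cdot e^{-\pi^3 t^2/(\pi^2+\alpha^2)}$. Compatibility with $|f(t)|\leq Ce^{-\pi t^2}$ requires $\pi^3/(\pi^2+\alpha^2) \geq \pi$, i.e.\ $\alpha=0$; hence $G$ is constant and $f(t) = c\, e^{-\pi t^2}$, proving (i).

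Cases (ii) and (iii) follow from (i). For (ii), $ab>1$: set $a' = \sqrt{a/b} < a$ and $b' = \sqrt{b/a} < b$, so $a'b'=1$; then $f$ still meets the $(a',b')$-hypotheses (they are weaker), and (i) forces $f(t) = c\,e^{-a'\pi t^2}$. But then $\widehat{f}(\omega)$ is a constant multiple of $e^{-b'\pi\omega^2}$ with $b' < b$, contradicting $|\widehat{f}(\omega)| = O(e^{-b\pi\omega^2})$ unless $c=0$, so $f\equiv 0$. For (iii), $ab<1$: fix any $c \in (b,1/a)$ and any polynomial $p$; then $f(t) = p(t)\, e^{-\pi t^2/c}$ satisfies $|f(t)| = O(e^{-a\pi t^2})$ (since $1/c > a$ absorbs the polynomial factor) and has Fourier transform $\widehat{f}(\omega) = q(\omega)\, e^{-\pi c\omega^2}$ for a polynomial $q$, satisfying $|\widehat{f}(\omega)| = O(e^{-b\pi\omega^2})$ (since $c > b$). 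Letting $p$ range over Hermite polynomials yields the advertised family; these are in fact dense in an appropriate subspace, showing the case is truly ``wild''.
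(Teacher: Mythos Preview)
The paper does not prove this theorem; it appears in an appendix explicitly introduced as ``a collection of some more uncertainty principles without proof'', with a pointer to the literature. So there is no argument to compare against, and I will assess your proposal on its own merits.

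The global plan is the classical one, and your derivations of (ii) and (iii) from (i) are clean and correct. Two steps in (i) do not go through as written.

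First, the reduction to $d=1$ by slicing is wrong: the one--dimensional Fourier transform of the slice $t\mapsto f(t,x_2,\dots,x_d)$ is not the slice $\omega_1\mapsto\widehat f(\omega_1,x_2,\dots,x_d)$, so the decay hypothesis on $\widehat f$ gives you no control over it. In higher dimensions one either runs the entire--function argument directly in $\C^d$, or passes through the Radon transform.

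Second, and this is the substantive gap, the Hadamard--factorization step does not work. Writing $G=z^m e^{Q}P$ and reading off constraints on $Q(z)=Az^2+Bz+C$ from boundedness on the axes presupposes that you may ignore $P$; but $\log|P|$ can be large and oscillatory along $\R$ and $i\R$, so boundedness of $|G|$ there does not force $\Re(Ax^2+Bx)$ or $\Re(-Ay^2+iBy)$ to be bounded separately. Your sentence ``any zero combined with the two--axis bound forces growth incompatible with $|G(z)|\le C'e^{\pi x^2}$ via the minimum modulus principle'' is not an argument: the minimum modulus principle supplies lower bounds on $|G|$ off a small exceptional set, which is the wrong direction, and there is no a~priori obstruction to an order--$2$ function with zeros satisfying both the axis bounds and the global bound $Ce^{\pi x^2}$. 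The standard route bypasses factorization entirely: one applies Phragm\'en--Lindel\"of in each quadrant, handling the borderline order~$2$ on a sector of opening $\pi/2$ by an $\varepsilon$--perturbation (e.g.\ replacing $G$ by $G(z)e^{i\varepsilon z^2}$, which leaves the axis bounds intact and pushes the interior growth strictly below critical), or equivalently substitutes $w=z^2$ so that $G(\sqrt{w})G(-\sqrt{w})$ becomes an order--$1$ function bounded on the real line. Either way one gets $G$ bounded on all of $\C$, hence constant by Liouville, and your final computation forcing $\alpha=0$ becomes unnecessary.
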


Once again, this uncertainty principle can be rephrased in terms of the Rihaczek distribution.

\textit{Assume that $|Rf(x,\omega)| = \mathcal{O}(e^{-\pi (a x^2 + b \omega)^2})$. If $a b = 1$, then $f(x) = C e^{-a \pi x^2}$. If $a b > 1$, then $f \equiv 0$.}

According to Metatheorem \ref{mthm_C} we may conjecture a Hardy-like uncertainty principle for other time-frequency representations as well.
\begin{theorem}[Gröchenig, Zimmermann]
	Assume that
	\begin{equation}
		|V_g f(x,\omega)| = \mathcal{O}(e^{-\frac{\pi}{2} (a x^2 + b \omega^2)}),
	\end{equation}
	for some constants $a,b > 0$. Then three cases can occur;
	\begin{enumerate}[(i)]
		\item If $a b = 1$ and $V_g f \not\equiv 0$, then, both, $f$ and $g$ are scalar multiples of a time-frequency shift of a Gaussian of the form $e^{-a \pi x^2}$.
		\item If $a b > 1$, then either $f \equiv 0$ or $g \equiv 0$ (or both).
		\item If $a b < 1$, then the decay condition is satisfied whenever $f$ and $g$ are finite linear combinations of Hermite functions.
	\end{enumerate}
\end{theorem}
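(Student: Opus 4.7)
The plan is to reduce the statement, via isotropic dilation, to a radially symmetric decay estimate, then to settle the Gaussian-window case directly through the Bargmann transform and Liouville's theorem, and finally to extend to a general window by symmetry between $f$ and $g$.

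First I would normalize the two rates to a common value. For the isotropic dilation operator $\mathcal{D}_\lambda$ one checks by a change of variables that $V_{\mathcal{D}_\lambda g}(\mathcal{D}_\lambda f)(x,\omega) = V_g f(\lambda^{-1}x,\lambda\omega)$, so the hypothesis $|V_gf(x,\omega)|=\mathcal{O}(e^{-\pi(ax^2+b\omega^2)/2})$ transforms, after choosing $\lambda=(b/a)^{1/4}$, into the radial bound $|V_{\tilde g}\tilde f(z)|=\mathcal{O}(e^{-\pi r|z|^2/2})$ with $r=\sqrt{ab}$. The three cases $ab\lessgtr 1$ or $=1$ become $r\lessgtr 1$ or $r=1$, so I may assume this symmetric form. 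Case~(iii) is then easy: by Example~\ref{ex_STFT_g0} and Lemma~\ref{lem_aux_Schwartz}, $V_{g_0}h_\alpha(x,\omega)$ equals $e^{-\pi(x^2+\omega^2)/2}$ times a polynomial in $(x,\omega)$, so for $r<1$ the polynomial factor is absorbed into the Gaussian slack $e^{-\pi(1-r)(x^2+\omega^2)/2}$, and the bound passes to finite linear combinations.

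For the Gaussian-window case $g=g_0$, Proposition~\ref{pro_BT} gives the key identity $V_{g_0}f(x,-\omega)=e^{\pi ix\omega}e^{-\pi|z|^2/2}Bf(z)$ with $z=x+i\omega$. Consequently the entire function $F:=Bf$ satisfies
\begin{equation*}
|F(z)|=e^{\pi|z|^2/2}\,|V_{g_0}f(x,-\omega)|\leq C\,e^{-\pi(r-1)|z|^2/2}.
\end{equation*}
If $r=1$, then $F$ is a bounded entire function on $\C$, hence constant by Liouville; expanding in the orthonormal basis $\{e_\alpha\}$ of $\mathcal{F}^2(\C)$, only the constant coefficient survives, so $Bf$ corresponds to the ground Hermite function, giving $f=c\,g_0$. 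If $r>1$, the same estimate forces $|F(z)|\to 0$ as $|z|\to\infty$, and a bounded entire function vanishing at infinity is identically zero; thus $Bf\equiv 0$ and, by the unitarity of $B$, $f\equiv 0$. Since the hypothesis is invariant under time-frequency shifts of $f$ only up to a translation of $V_gf$ (by the covariance principle, Proposition~\ref{pro_covar}), the ``scalar multiple of a time-frequency shift of a Gaussian $e^{-a\pi x^2}$'' in the statement is the general conclusion after undoing the dilation of Step~1.

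The extension to a general window $g$ uses the symmetry $|V_fg(x,\omega)|=|V_gf(-x,-\omega)|$, so the pair $(g,f)$ satisfies the same hypothesis as $(f,g)$, with the roles reversed. Thus, once we know the Gaussian case, it suffices to show that the decay of $V_gf$ implies a Gaussian decay of the same rate for $V_{g_0}f$ (and, by symmetry, for $V_{g_0}g$), since we could then conclude from Step~2 that both $f$ and $g$ are time-frequency shifts of $g_0$ (equivalently, of $e^{-a\pi x^2}$ after unwinding the dilation). This last step is the main obstacle: the obvious change-of-window identity writes $V_{g_0}f$ as a twisted convolution of $V_gf$ with $V_{g_0}g$, and convolving two Gaussians of rates $r$ and $1$ produces rate $r/(r+1)<1$, losing precisely the factor needed to re-apply the Bargmann/Liouville argument. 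The standard remedy is to avoid passing through $V_{g_0}f$: instead, one uses the orthogonality relations together with the pointwise bound $|V_gf(z)|\leq\|f\|_2\|g\|_2$ and the constraint $\iint|V_gf|^2=\|f\|_2^2\|g\|_2^2$ to force equality in a Lieb-type inequality, whose extremisers are known to be exactly (time-frequency shifts of) Gaussians. Establishing this extremality characterization, and checking it cooperates with the radial bound obtained in Step~1, is where the technical weight of the proof lies.
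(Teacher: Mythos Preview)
First, note that the paper does not prove this theorem: it is listed in the appendix ``More Uncertainty Principles'' explicitly \emph{without proof}, with a pointer to \cite{Gro03_FeiStr} and \cite{Gro01}. So there is no paper proof to compare against; I evaluate your proposal on its own merits. Your reduction by dilation (modulo the typo $\lambda=(b/a)^{1/4}$, which should be $(a/b)^{1/4}$), your treatment of case~(iii), and your Gaussian-window argument via the Bargmann transform and Liouville are all fine and standard.

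The genuine gap is the passage to a general window. Your proposed remedy --- ``force equality in a Lieb-type inequality'' using the orthogonality relation $\iint|V_gf|^2=\|f\|_2^2\|g\|_2^2$ and the pointwise bound $|V_gf|\le\|f\|_2\|g\|_2$ --- does not work: the orthogonality relation holds for \emph{all} $f,g\in L^2$, so it cannot single out Gaussians, and the decay hypothesis $|V_gf(z)|=\mathcal{O}(e^{-\pi r|z|^2/2})$ does not imply equality in Lieb's inequality $\|V_gf\|_p^p\le(2/p)^d(\|f\|_2\|g\|_2)^p$. You have correctly diagnosed why the naive change-of-window convolution estimate loses a factor, but the cure you sketch is not a proof. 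The actual Gr\"ochenig--Zimmermann argument stays with complex analysis: one observes that for each fixed $x$ the function $\omega\mapsto V_gf(x,\omega)=\mathcal{F}(f\,T_x\overline{g})(\omega)$ has Gaussian decay, and via the fundamental identity~\eqref{eq_fitf} the same holds with the roles of $x$ and $\omega$ interchanged; one then applies Hardy's theorem (in the classical form stated just above in the appendix) slice-by-slice to conclude that $f\,T_x\overline{g}$ is a Gaussian for every $x$, from which the form of $f$ and $g$ is extracted. Your Bargmann/Liouville step is the right engine for the \emph{Gaussian}-window case, but bridging to an arbitrary window requires this Hardy-per-slice argument (or an equivalent Phragm\'en--Lindel\"of argument), not an appeal to Lieb extremisers.
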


\subsection{Hilbert-Schmidt Operators}
Let $\mathcal{H}$ be a separable Hilbert space. A bounded operator $K: \mathcal{H} \to \mathcal{H}$ is called a Hilbert-Schmidt operator if
\begin{equation}
	\sum_{n \in \N} \norm{K e_n}_\mathcal{H}^2 < \infty ,
\end{equation}
for some orthonormal basis $\{e_n \mid n \in \N\}$ of $\mathcal{H}$. The Hilbert-Schmidt norm of $K$ is given by
\begin{equation}
	\norm{K}_{H.S.} = \left( \sum_{n \in \N} \norm{K e_n}_\mathcal{H}^2 \right)^{1/2}.
\end{equation}
This quantity is then independent of the choice of basis. The Hilbert-Schmidt norm dominates the operator norm. Let $f = \sum_{n \in \N} c_n e_n$, where $c_n = c_n(f)$, $\{e_n \mid n \in \N\}$ an orthonormal basis and $\norm{f}_\mathcal{H} = 1 = \sum_{n \in \N} |c_n|^2$, then
\begin{equation}
	\norm{K}_{op}^2 = \sup_{\stackrel{f \in \mathcal{H}}{\norm{f}_\mathcal{H}=1}} \norm{K f}_\mathcal{H}^2 \leq \sum_{n \in \N} |c_n|^2 \norm{K e_n}_\mathcal{H}^2 \leq \sum_{n \in \N} |c_n|^2 \sum_{n \in \N} \norm{K e_n}_\mathcal{H}^2 = \norm{K}_{H.S.}^2 .
\end{equation}
If $\mathcal{H} = \Lt$ and $K$ is an integral operator with integral kernel $k$, that is
\begin{equation}
	K f(x) = \int_{\Rd} k(x,y) f(y) \, dy ,
\end{equation}
then $K$ is Hilbert-Schmidt if and only if $k \in \Lt[2d]$. In this case
\begin{equation}
	\norm{K}_{H.S.} = \norm{k}_2 .
\end{equation}

\subsection{Neumann Series}\label{app_Neumann}
If a linear operator $U$ on a Banach space $\mathcal{B}$ is close enough to the identity operator $I$, then it is invertible. To be more precise, if $\norm{U-I}_{op} < 1$, then U is invertible on $\mathcal{B}$ and the inverse operator is given by the Neumann series
\begin{equation}
	U^{-1} = \sum_{k = 0}^\infty (I-U)^k.
\end{equation}
Furthermore,
\begin{equation}
	\norm{U^{-1}}_{op} \leq \frac{1}{1- \norm{I-U}_{op}}
\end{equation}
The convergence of the series should be understood in the sense of the operator norm
\begin{equation}
	\norm{U^{-1} - \sum_{k=1}^n (I-U)^k}_{op} \to 0, \quad n \to \infty.
\end{equation}
For details we refer to \cite[Thm.~2.2.3]{Christensen_2016}, \cite[App.~A.3]{Gro01} or \cite[p.~48]{Heuser_FA}.

\subsection{Toeplitz Matrices and Laurent Operators}\label{app_Toeplitz}
For details on this section and further reading we refer to \cite{BoettcherGrudsky_Toeplitz_2012}. A Toeplitz matrix $M$ is a (not necessarily square) matrix, which is constant on the diagonals, i.e.,
\begin{equation}
	M =
	\begin{pmatrix}
		m_0 & m_{-1} & m_{-2} & \ldots & \ldots & m_{-k}\\
		m_1 & m_0 & m_{-1} & m_{-2} & \ldots & m_{-(k-1)}\\
		\vdots & \ddots & \ddots & \ddots & \ddots & \vdots\\
		m_l & \ldots & \ldots & \ldots & \ldots & \cdot
	\end{pmatrix}
\end{equation}
A Laurent operator is a bi-infinite (square) Toeplitz matrix acting on $\ell^2(\Z)$;
\begin{equation}
	L =
	\begin{pmatrix}
		\ddots & \ddots & \ddots & \ddots & \ddots & \ddots &\ddots\\
		\ddots & m_1 & m_0 & m_{-1} & m_{-2} & m_{-3} & \ddots\\
		\ddots & m_2 & m_1 & m_0 & m_{-1} & m_{-2} & \ddots\\
		\ddots & m_3 & m_2 & m_1 & m_0 & m_{-1} & \ddots\\
		\ddots & \ddots & \ddots & \ddots & \ddots & \ddots & \ddots
	\end{pmatrix}
\end{equation}
The following theorem summaries the main results from \cite[Chap.~1]{BoettcherGrudsky_Toeplitz_2012}
\begin{theorem}
	Let $m(t) \in C(\T)$ with Fourier expansion
	\begin{equation}
		m(t) = \sum_{k \in \Z} m_k e^{2 \pi i k t},
	\end{equation}
	and let $L(m)$ be the associated Laurent operator. Then
	\begin{equation}
		\norm{L(m)}_{op} = \norm{m}_\infty
		\quad \text{ and } \quad
		\norm{L(m)^{-1}}_{op} \leq \norm{L(m^{-1})}_{op} = \norm{m^{-1}}_\infty
	\end{equation}
\end{theorem}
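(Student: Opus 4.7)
The key observation is that a Laurent operator $L(m)$ is nothing but convolution with the sequence $(m_k)_{k\in\Z}$ of Fourier coefficients of $m$, and convolution on $\ell^2(\Z)$ corresponds under the Fourier series isomorphism to pointwise multiplication on $L^2(\T)$. The plan is therefore to conjugate $L(m)$ into a multiplication operator and then invoke the standard description of the operator norm of a multiplication operator on $L^2(\T)$.

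Concretely, I would first set up the unitary map
\begin{equation}
\mathcal{U}\colon \ell^2(\Z)\to L^2(\T),\qquad \mathcal{U}(c_k)_{k\in\Z}(t)=\sum_{k\in\Z} c_k\,e^{2\pi i k t},
\end{equation}
which is unitary by Plancherel's theorem for Fourier series. For a finitely supported sequence $c=(c_k)$ a direct computation gives $(L(m)c)_k=\sum_{j\in\Z} m_{k-j}c_j$, i.e.\ $L(m)c=(m_k)*c$. Since $\mathcal{U}$ interchanges convolution on $\Z$ with pointwise multiplication on $\T$, I get $\mathcal{U}L(m)\mathcal{U}^{-1}f = m\cdot f$ for every trigonometric polynomial $f$, and by density this identity extends to all $f\in L^2(\T)$. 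Thus $L(m)$ is unitarily equivalent to the multiplication operator $M_m\colon f\mapsto m f$ on $L^2(\T)$.

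From here the conclusions are quick. For the first equality, I use the well-known fact that a multiplication operator $M_\varphi$ with $\varphi\in L^\infty(\T)$ has operator norm $\|M_\varphi\|_{op}=\|\varphi\|_\infty$ (the upper bound is immediate from Hölder; the lower bound follows by testing on characteristic functions of sets where $|\varphi|$ is close to its essential supremum, or, since $m\in C(\T)$, by testing on a peak function localized near a maximum of $|m|$). Unitary equivalence then yields $\|L(m)\|_{op}=\|M_m\|_{op}=\|m\|_\infty$. For the second statement, the same reasoning applied to $m^{-1}\in C(\T)$ (when it is defined) gives $\|L(m^{-1})\|_{op}=\|m^{-1}\|_\infty$. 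Moreover, if $L(m)$ is invertible, then so is $M_m$, which forces $m$ to be bounded below away from $0$ on $\T$; in that case $M_m^{-1}=M_{m^{-1}}$, so conjugating back we obtain $L(m)^{-1}=L(m^{-1})$ and the inequality becomes an equality. (If $L(m)$ is not invertible, the left-hand side is interpreted as $+\infty$ and the inequality is trivial, which is presumably the reason the author states it with $\leq$.)

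The whole argument is essentially bookkeeping once the unitary equivalence $\mathcal{U}L(m)\mathcal{U}^{-1}=M_m$ is in place, so there is no real obstacle. The only point that requires a moment of care is justifying that identity on a dense subspace and extending by continuity, which needs the a priori bound $\|L(m)\|_{op}\leq \|m\|_\infty$ to ensure that $L(m)$ is bounded on all of $\ell^2(\Z)$; this bound itself follows from Young's convolution inequality applied to $\ell^1(\Z)$-approximations of $(m_k)$, or simply from the Plancherel computation $\|L(m)c\|_{\ell^2}=\|\mathcal{U}L(m)c\|_{L^2}=\|m\,\mathcal{U}c\|_{L^2}\leq \|m\|_\infty\|c\|_{\ell^2}$ on the dense subspace.
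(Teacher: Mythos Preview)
Your argument is correct and is precisely the standard one: conjugate $L(m)$ by the Fourier series isomorphism $\mathcal{U}\colon\ell^2(\Z)\to L^2(\T)$ to the multiplication operator $M_m$, and read off both statements from the elementary spectral theory of multiplication operators. Note, however, that the paper does not give its own proof of this theorem; it is stated in the appendix as a summary of results from \cite[Chap.~1]{BoettcherGrudsky_Toeplitz_2012} and used as a black box. Your write-up is exactly the argument one finds there, so there is nothing to compare.
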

If $m$ is ``nice" enough, then the bound for the inverse frame operator is sharp as well.

\subsection{Proof of Hudson's Theorem}
We will now prove that $W(f,f) = W f > 0$ if and only if $f$ is a Gaussian function of the form
\begin{equation}
	f(t) = e^{-\pi t \cdot A t + 2 \pi b \cdot t + c},
\end{equation}
where $A \in GL(\C,d)$, $\Re(A) > 0$ (i.e., positive definite) and $A^* = \overline{A}^T = A$, $b \in \C^d$, $c \in \C$.

For the sufficiency, we note that any Gaussian function $f$ can be written as a scalar multiple of a  finite composition of metaplectic operators applied to the standard Gaussian $g_0(t) = 2^{d/4} e^{-\pi t^2}$ and an appropriate time-frequency shift. By the symplectic covariance principle for the Wigner transform, i.e., $W(f,g)(S z) = W(\widehat{S}^{1} f, \widehat{S}^{-1} g)(z)$, $S \in Sp(\R,2d)$ and $\widehat{S} = \mu(S)$, it is therefore sufficient to establish the sufficiency for the standard Gaussian. As
\begin{equation}
	W g_0(x,\omega) = 2^d e^{-2 \pi (x^2+\omega^2)},
\end{equation}
this part is done.

For the necessity part, we make use of the Bargmann transform and use some complex analysis. Assume that $f \in \Lt$ (non-zero) and $W f \geq 0$. We take the inner product of $Wf$ with the Wigner distribution of $g_0$ and note that, by using the covarince principle in Proposition \ref{pro_Wigner},
\begin{equation}
	W(M_{-\omega} T_x g_0) > 0, \quad \forall (x,\omega) \in \R^{2d}.
\end{equation}
Therefore,
\begin{equation}\label{eq_WfWg0}
	\langle W f, W(M_{-\omega} T_x g_0) \rangle = \iint_{\R^{2d}} Wf(\xi, \eta) W(M_{-\omega} T_x g_0)(\xi, \eta) \, d(\xi, \eta) > 0, \quad \forall (x, \omega) \in \R^{2d}.
\end{equation}
Now, we apply Moyal's formula \eqref{eq_Moyal} and Proposition \ref{pro_BT} for the Bargmann transform to identify the inner product $\langle Wf, W(M_{-\omega} T_x g_0) \rangle$ as the Bargmann transform of $f$. Writing $z = x + i \omega \in \C^d$, we obtain
\begin{align}
	\langle W f, W(M_{-\omega} T_x g_0) \rangle
	& = | \langle f, M_{-\omega} T_x g_0 \rangle|^2\\
	& = |V_{g_0} f(x,-\omega)|^2\\
	& = |Bf(z)|^2 e^{-\pi|z|^2}.
\end{align}
Since the entire function $Bf$ does not vanish by \eqref{eq_WfWg0}, there exists an entire function $q(z)$, such that
\begin{equation}
	B f(z) = e^{q(z)}.
\end{equation}
Furthermore, since
\begin{equation}
	|Bf(z)|e^{-\frac{\pi}{2}|z|^2} \leq \norm{V_{g_0} f}_\infty \leq \norm{f}_2 \norm{g_0}_2 = \norm{f}_2,
\end{equation}
$Bf$ satisfies the growth estimate
\begin{equation}
	|Bf(z)| \leq \norm{f}_2 e^{\frac{\pi}{2} |z|^2}.
\end{equation}
By taking the logarithm, we obtain the estimate
\begin{equation}
	|\Re \left( q(z) \right)| \leq c + \frac{\pi}{2} |z|^2.
\end{equation}
It follows by Caratheodory's inequality of complex analysis (see Appendix \ref{app_Caratheodory}) that $q$ itself satisfies
\begin{equation}
	|q(z)| \leq  C_1 + C_2 |z|^2.
\end{equation}
Therefore, $q$ must be a quadratic polynomial of the form
\begin{equation}
	q(z) = \pi A' z^2 + 2 \pi b' \cdot z + c'.
\end{equation}
The restriction of $B f$ to vectors in $i \Rd$ is
\begin{equation}
	Bf(i \omega) e^{-\frac{\pi}{2} \omega^2} = e^{q(i \omega) - \frac{\pi}{2} \omega^2} = e^{- \pi (\frac{1}{2} + A') \omega^2 + 2 \pi i b' \cdot \omega + c'},
\end{equation}
which is a Gaussian function. Next, we express the restriction of $V_{g_0} f$ to $\{0\} \times \R$ in two different ways. On one hand,
\begin{equation}
	V_{g_0} f(0, -\omega) = \langle f, M_{-\omega} g_0 \rangle = \F(f \, g_0)(-\omega).
\end{equation}
On the other hand, by using Proposition \ref{pro_BT} once more, we see that
\begin{equation}
	V_{g_0} f(0, - \omega) = Bf(i \omega) e^{- \frac{\pi}{2} \omega^2}.
\end{equation}
Therefore, $\F(f \, g_0)$ is in $\Lt$ and is a Gaussian function. It follows, by using the operator $\mu(J) = (-i)^{d/2} \F$, that the function $f \, g_0$ is itself a Gaussian function. This implies that $f$ itself is a Gaussian function and since $f \in \Lt$, $\Re(A) > 0$ is necessary.
\begin{flushright}
	$\square$
\end{flushright}

\subsection{Caratheodory's Inequality}\label{app_Caratheodory}
This inequality provides size estimates for an analytic function $f$ when size estimates are known only for the real part of $f$. As a special case we have the following theorem for entire functions of one variable \cite[p.~3]{Boa54}.
\begin{theorem}
	If $f$ is entire on $\C$ and $\Re f(z) \leq A_\varepsilon |z|^{\alpha + \varepsilon}$ for all $z \in \C$, then $f$ is a polynomial of degree at most $\alpha$.
\end{theorem}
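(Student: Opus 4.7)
The plan is to convert the one-sided growth bound on $\Re f$ into a two-sided bound on $|f|$ via the Borel-Carath\'eodory inequality, and then extract the polynomial conclusion from Cauchy's estimates on the Taylor coefficients.

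Recall the Borel-Carath\'eodory inequality: if $g$ is holomorphic on a neighborhood of $\overline{\mathbf{B}_R(0)}$ and $M(R) := \sup_{|z|=R} \Re g(z)$, then for every $0 < r < R$
\begin{equation}
\max_{|z|=r} |g(z)| \;\leq\; \frac{2r}{R-r}\,\max\{M(R),0\} \;+\; \frac{R+r}{R-r}\,|g(0)|.
\end{equation}
This is the technical heart of the matter; I would treat it as a known lemma (standard complex-analysis reference) and simply apply it. First I would apply it to $f$ with the choice $r = R/2$: using the hypothesis $\Re f(z) \leq A_\varepsilon |z|^{\alpha+\varepsilon}$ on $|z| = R$, we obtain
\begin{equation}
\max_{|z| = R/2} |f(z)| \;\leq\; 2 A_\varepsilon R^{\alpha+\varepsilon} + 3|f(0)| \;\leq\; C_\varepsilon\, R^{\alpha+\varepsilon}
\end{equation}
for all $R$ sufficiently large, where $C_\varepsilon$ depends only on $\varepsilon$ (and on $f(0)$, $A_\varepsilon$).

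Next, since $f$ is entire, write $f(z) = \sum_{n=0}^\infty a_n z^n$ and apply the Cauchy coefficient estimates on the circle $|z| = R/2$:
\begin{equation}
|a_n| \;\leq\; (R/2)^{-n} \max_{|z|=R/2}|f(z)| \;\leq\; 2^n C_\varepsilon \, R^{\alpha+\varepsilon - n}.
\end{equation}
For every fixed $n > \alpha + \varepsilon$, letting $R \to \infty$ forces $a_n = 0$. Hence $f$ is a polynomial of degree at most $\lfloor \alpha + \varepsilon \rfloor$. Since $\varepsilon > 0$ was arbitrary, $a_n = 0$ for every $n > \alpha$, so $f$ is a polynomial of degree at most $\alpha$ (meaning $\deg f \leq \lfloor \alpha \rfloor$).

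The main obstacle is none other than invoking Borel-Carath\'eodory; once that is in hand, the argument reduces to a clean two-line Cauchy-estimate computation. If one wanted a self-contained proof, the Borel-Carath\'eodory step itself can be proved by applying the Schwarz lemma to the auxiliary function $h(z) = f(z)/(2M(R) - f(z))$ (suitably normalized so that $f(0) = 0$), which maps the disk into itself; this is where all the real work resides.
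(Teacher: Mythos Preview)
Your proof is correct; the Borel--Carath\'eodory inequality followed by Cauchy's estimates is exactly the standard route to this result. Note that the paper does not actually supply its own proof of this theorem: it is stated in the appendix with a reference to Boas, \textit{Entire Functions}, p.~3, and the argument found there is essentially the one you have written.
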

\begin{corollary}[Caratheodory's inequality]
	If $f(z) = f(z_1, \ldots, z_d)$ is entire on $\C^d$ and
	\begin{equation}
		\Re f(z) \leq D + E |z|^2,
	\end{equation}
	then $f$ is a quadratic polynomial of the form $f(z) = A z^2 + b \cdot z + c$ for some $d \times d$ matrix over $\C$, $b \in \C^d$ and $c \in \C$.
\end{corollary}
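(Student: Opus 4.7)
The plan is to reduce the multivariable claim to the one-variable theorem proved just above by restricting $f$ to every complex line through the origin, and then to recover the global polynomial structure from the power series expansion of $f$.

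First I would fix an arbitrary $w \in \C^d$ and consider the entire function $g_w(\lambda) = f(\lambda w)$ of one complex variable. The growth hypothesis on $f$ yields
\begin{equation}
\Re g_w(\lambda) = \Re f(\lambda w) \leq D + E|w|^2 |\lambda|^2,
\end{equation}
which, for any $\varepsilon > 0$, is dominated by $A_{w,\varepsilon}|\lambda|^{2+\varepsilon}$ once $|\lambda|$ is large enough (and bounded on the compact disk where the power fails to dominate, so an overall constant absorbs the remainder). The preceding theorem, applied with $\alpha = 2$, then forces $g_w$ to be a polynomial in $\lambda$ of degree at most $2$, for \emph{every} choice of $w \in \C^d$.

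Next, I would expand $f$ in its Taylor series at the origin, $f(z) = \sum_{\alpha \in \N_0^d} c_\alpha z^\alpha$, which converges throughout $\C^d$ because $f$ is entire. Regrouping by total degree gives $f(z) = \sum_{n \geq 0} P_n(z)$ where $P_n(z) = \sum_{|\alpha|=n} c_\alpha z^\alpha$ is homogeneous of degree $n$, and therefore
\begin{equation}
g_w(\lambda) = \sum_{n \geq 0} \lambda^n P_n(w).
\end{equation}
Uniqueness of the Taylor coefficients of $g_w$ in $\lambda$, combined with the fact that $g_w$ has degree at most two, forces $P_n(w) = 0$ for every $n \geq 3$ and every $w \in \C^d$. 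A homogeneous polynomial vanishing identically on $\C^d$ has all of its coefficients equal to zero, so $c_\alpha = 0$ whenever $|\alpha| \geq 3$, and $f$ is thus a polynomial of total degree at most $2$.

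Finally, having $f(z) = c + b \cdot z + z^T \widetilde{A} z$ for some $c \in \C$, $b \in \C^d$, and $\widetilde{A} \in \C^{d\times d}$, I would symmetrize $\widetilde{A}$ by replacing it with $A = \tfrac{1}{2}(\widetilde{A} + \widetilde{A}^T)$, which does not alter the quadratic form, to arrive at $f(z) = A z^2 + b \cdot z + c$ in the paper's notation. The only delicate point in the argument is the first step, namely that the additive constant $D$ in the growth estimate on $\Re f$ is compatible with the pure-power hypothesis of the one-variable theorem; this is the cosmetic obstacle, and it is handled either by separately estimating $\Re g_w$ on $|\lambda| \leq 1$ (where it is trivially bounded) and on $|\lambda| \geq 1$ (where $|\lambda|^2 \leq |\lambda|^{2+\varepsilon}$), or, more robustly, by invoking the Borel--Caratheodory inequality on each disk $|\lambda| \leq R$ to upgrade the real-part bound to $|g_w(\lambda)| \leq C_1 + C_2|\lambda|^2$ and then applying a Liouville-type argument.
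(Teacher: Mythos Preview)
Your argument is correct and is precisely the natural deduction the paper has in mind: the paper states the one-variable theorem and then records the $\C^d$-version as a corollary without proof, so the intended route is exactly the slicing-by-complex-lines reduction you carry out. Your handling of the additive constant $D$ and the passage from ``each restriction has degree $\le 2$'' to ``all homogeneous components $P_n$ with $n\ge 3$ vanish'' via the Taylor expansion are both fine.
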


\subsection{Tensor Hilbert Spaces}\label{app_Tensor}
We repeat a few of the details on tensor Hilbert spaces given in \cite{Bou08}. Let $\mathcal{H}_1$ and $\mathcal{H}_2$ be two Hilbert spaces with inner product $\langle . , . \rangle_k$ and norm $\norm{.}_k$, $k, = 1,2$, respectively. By $\mathcal{H}_1 \otimes_a \mathcal{H}_2$ we denote the algebraic tensor product of the two spaces. We can define an inner product on $\mathcal{H}_1 \otimes_a \mathcal{H}_2$ in the following way;
\begin{equation}
	\langle f_1 \otimes f_2, g_1 \otimes g_2 \rangle = \langle f_1,g_1\rangle_1 \langle f_2, g_2 \rangle_2, \quad \forall f_1,g_1 \in \mathcal{H}_1, \; \forall f_2, g_2 \in \mathcal{H}_2.
\end{equation}
The completion under this inner product is a Hilbert space, which we denote by $\mathcal{H}_1 \otimes \mathcal{H}_2$. This is called the topological tensor product of the Hilbert spaces $\mathcal{H}_1$ and $\mathcal{H}_2$.

\end{appendices}

\newpage

\end{document}